\documentclass[numbers=withendperiod]{scrartcl}
\usepackage{appendix}
\usepackage{libertinus}
\usepackage[T1]{fontenc}

\usepackage[hypertexnames=false]{hyperref}
\usepackage[utf8]{inputenc}
\usepackage[dvipsnames,table]{xcolor}
\usepackage{multirow}
\usepackage{float}
\usepackage{quiver}

\usepackage[backend=bibtex,style=alphabetic, maxbibnames=4, maxalphanames=4]{biblatex} 

\setcounter{biburllcpenalty}{9999}  
\addbibresource{biblio.bib}
\usepackage{bm}

\usepackage{caption}
\usepackage{subcaption}
\usepackage{import}
\usepackage{xifthen}
\usepackage{pdfpages}
\usepackage{transparent}
\usepackage{float}
\newcommand{%
    \def\svgwidth{\columnwidth}
    \import{./figures/}{.pdf_tex}
}[1]{%
    \def\svgwidth{\columnwidth}
    \import{./figures/}{#1.pdf_tex}
}

\usetikzlibrary{math}

\usepackage{amsmath}
\usepackage{amsthm}
\usepackage{amssymb}
\usepackage{mathrsfs}
\usepackage{dsfont}
\usepackage{tikz-cd}
\usepackage{mathtools}
\usepackage{hyperref}

\newtheorem{thm}{Theorem}[subsection]

\newtheorem*{thm*}{Theorem}
\newtheorem*{theorem*}{Theorem}
\newtheorem*{prop*}{Proposition}
\newtheorem*{lem*}{Lemma}
\newtheorem*{question*}{Question}

\newtheorem{proposition}[thm]{Proposition}
\newtheorem{prop}[thm]{Proposition}
\newtheorem{corollary}[thm]{Corollary}
\newtheorem{cor}[thm]{Corollary}

\newtheorem{lemma}[thm]{Lemma}
\newtheorem{lem}[thm]{Lemma}

\theoremstyle{definition}
\newtheorem{definition}[thm]{Definition}
\newtheorem{defn}[thm]{Definition}

\theoremstyle{remark}
\newtheorem{remark}[thm]{\textsc{Remark}}

\newtheorem{rmk}[thm]{\textsc{Remark}}

\newtheorem{example}[thm]{\textsc{Example}}

\newtheorem{notation}[thm]{\textsc{Notation}}

\makeatletter
\@addtoreset{thm}{section}
\@addtoreset{thm}{chapter}
\makeatother


\newcommand{\operator}[1]{\operatorname{#1}}
\newcommand{\rk}{\operatorname{rk}}
\newcommand{\ONE}{\mathds{1}}
\newcommand{\id}{\operator{id}}
\newcommand{\Id}{\operator{Id}}
\renewcommand{\d}{\mathrm{d}}


\newcommand{\mscr}[1]{\mathscr{#1}}


\newcommand{\eps}{\varepsilon}

\newcommand{\tensor}{\otimes}

\newcommand{\dirSum}{\oplus}
\newcommand{\DirSum}{\bigoplus}

\newcommand{\comp}{\circ}

\newcommand{\iso}{\cong}

\newcommand{\define}{\coloneqq}


\newcommand{\into}{\hookrightarrow}
\newcommand{\onto}{\twoheadrightarrow}
\newcommand{\xto}{\xrightarrow}

\newcommand{\lowxto}[1]{\mathrel{\raisebox{-2pt}{$\xrightarrow{#1}$}}}
\newcommand{\lowxot}[1]{\mathrel{\raisebox{-2pt}{$\xleftarrow{#1}$}}}
\newcommand{\isoto}{\lowxto{\cong}}
\newcommand{\isoot}{\lowxot{\cong}}
\newcommand{\eqto}{\lowxto{\simeq}}
\newcommand{\eqot}{\lowxot{\simeq}}
\newcommand{\ot}{\leftarrow}

\newcommand{\IN}{\mathds{N}}
\newcommand{\IZ}{\mathds{Z}}

\newcommand{\IR}{\mathds{R}}
\newcommand{\IC}{\mathds{C}}
\newcommand{\RP}{\mathds{R}\mathds{P}}

\newcommand{\categoryname}{\mathbf}
\newcommand{\cat}[1]{\mbf{#1}} 
\newcommand{\sh}[1]{\mc{#1}}   
\newcommand{\st}[1]{\sh{#1}}   
\newcommand{\ger}[1]{\sh{#1}}  
\newcommand{\orb}[1]{\sh{#1}}  
\newcommand{\f}[1]{\mathcal{#1}} 

\newcommand{\twocat}{\mscr}
\newcommand{\oneObjBicat}{\mathbf{1}}
\newcommand{\opp}{\mathrm{op}}
\newcommand{\ring}{\mathrm{o}}
\newcommand{\point}{\ast}

\newcommand{\Vect}{\cat{Vect}}
\newcommand{\VCat}{\cat{VCat}}
\newcommand{\VSt}{\cat{VSt}}
\newcommand{\CAT}{\cat{Cat}}
\newcommand{\Set}{\cat{Set}}
\newcommand{\Man}{\cat{Man}}

\newcommand{\CartSp}{\cat{CartSp}}
\newcommand{\SmSt}{\cat{SmSt}}

\newcommand{\Rep}{\cat{Rep}}
\renewcommand{\Vec}{\cat{Vec}}
\newcommand{\VecInf}{\cat{Vec}^\infty}
\newcommand{\sVec}{\cat{sVec}}
\newcommand{\Semi}{\cat{Semi}}
\newcommand{\bSemi}{\overline{\cat{Semi}}}
\newcommand{\Op}{\cat{Op}}
\newcommand{\St}{\cat{St}}
\newcommand{\Fun}{\cat{Fun}}
\newcommand{\Shv}{\cat{Shv}}
\newcommand{\Mod}{\cat{Mod}}
\newcommand{\Desc}{\cat{Desc}}
\newcommand{\Cinf}{{C^\infty}}
\newcommand{\CinfOf}[1]{{C^\infty_{#1}}}
\newcommand{\ShC}{\cat{Shv_{\Cinf}}}
\newcommand{\Sky}{\cat{Sky}}
\newcommand{\stC}{{\underline{\cat{C}}}}
\newcommand{\shO}{\sh{O}}   

\newcommand{\IO}{\mathbb{O}}
\newcommand{\IA}{\mathbb{A}}
\newcommand{\IB}{\mathbb{B}}
\newcommand{\TY}{\cat{T}\cat{Y}}
\newcommand{\DZ}{\mscr{Z}}

\newcommand{\B}{\mathscr{B}}
\renewcommand{\H}{\mathrm{H}}
\newcommand{\ev}{\mathrm{ev}}
\newcommand{\coev}{\mathrm{coev}}
\newcommand{\Hom}{\mathrm{Hom}}
\newcommand{\sHom}{\underline{\mathrm{Hom}}} 
\newcommand{\End}{\mathrm{End}}

\newcommand{\Aut}{\mathrm{Aut}}
\newcommand{\ob}{\operatorname{ob}}
\newcommand{\Irr}{\operatorname{Irr}}
\newcommand{\supp}{\operatorname{supp}}
\newcommand{\Ind}{\operatorname{Ind}}
\newcommand{\K}{\mathrm{K}} 
\newcommand{\colim}{\operatorname{colim}}
\newcommand{\DiffSt}{\categoryname{DiffSt}}
\newcommand{\LinSt}{\categoryname{LinSt}}
\newcommand{\EtSt}{\categoryname{EtSt}}

\newcommand{\Cat}{\categoryname{Cat}}
\newcommand{\Gpd}{\categoryname{Gpd}}

\newcommand{\ii}{\mathbf{i}}

\newcommand{\im}{\mathrm{im}}

\newcommand{\SU}{\mathrm{SU}}
\newcommand{\SO}{\mathrm{SO}}
\newcommand{\IV}{\mathbb{V}} 

\newcommand{\Et}{\mathrm{Et}}
\newcommand{\StackEt}{\widetilde{\mathrm{Et}}}

\newcommand{\shff}{\#} 


\newcommand{\Kar}{\mathrm{Kar}}
\newcommand{\swap}{\mathrm{swap}}

\newcommand{\Gr}{\mathrm{Gr}}         
\renewcommand{\restriction}{\mathord{\upharpoonright}}
\newcommand{\restr}{\restriction}
\newcommand{\vdim}{\operatorname{vdim}}
\newcommand{\Bilin}{\mathrm{Bilin}}
\newcommand{\Quad}{\mathrm{Quad}}
\newcommand{\Mat}{\mathrm{Mat}}

\setcounter{tocdepth}{1}

\title{Manifold Tensor Categories}
\author{Christoph Weis}

\emergencystretch=1em

\begin{document}

\maketitle

\begin{abstract}
    We introduce Manifold tensor categories,
    which make precise the notion of a 
    \emph{tensor category with a manifold of simple objects}.
    A basic example is the category of vector spaces graded 
    by a Lie group.
    Unlike classic tensor category theory, our setup 
    keeps track of the smooth (and topological) structure of 
    the manifold of simple objects.
    We set down the necessary definitions for Manifold tensor 
    categories and a generalisation we term 
    Orbifold tensor categories.
    We also construct a number of examples, most notably 
    two families of examples we call 
    Interpolated Tambara-Yamagami categories and 
    Interpolated Quantum Group categories.
    Finally, we show conditions under which pointwise duality 
    data in an Orbifold tensor category automatically assembles 
    into smooth duality data. Our proof uses the classic
    Implicit function theorem from differential geometry.
    This document is an edited excerpt of the author's thesis.
\end{abstract}

\tableofcontents

\section*{Introduction}
\label{sec:IntroManifoldOrbifoldTensorCats}
\addcontentsline{toc}{section}{\nameref{sec:IntroManifoldOrbifoldTensorCats}}
A fusion category (we work over $\IC$ throughout) is a linear autonomous monoidal 
semisimple category with finitely many simple objects 
(see Section~\ref{sec:tensorCategories}).
Up to equivalence, the data of a fusion category consists of a finite set
of simple objects $\{X_i\}$ together with the data of a monoidal structure,
often encoded as a pair of fusion relations
\begin{equation*}
    X_i \cdot X_j = \sum_k N_{ij}^k X_k
\end{equation*}
and associators
\begin{equation*}
    \alpha_{i,j,k}: (X_i \tensor X_j) \tensor X_k \isoto X_i \tensor (X_j \tensor X_k).
\end{equation*}
The theory of fusion categories has links to
Conformal Field Theory~\cite{bartels2009conformal, henriques2017bicommutant},
Topological Quantum Field Theory~\cite{kirillov2010turaev, douglas2020dualizable, henriques2017chern},
and subfactor theory of von Neumann algebras~\cite{fuchs2008fusion, calegari2011cyclotomic, grossman2012quantum, jones2014classification}.
A detailed account of the theory is given in~\cite{etingof2016tensor}.

One may build a tensor category
$\Vec^\omega[G]$ associated to any group $G$ and
group cocycle $\omega \in \mathrm{Z}^3(\B G, \IC^\times)$:
The category has simple objects $\{\IC_g\}$, and monoidal structure linearly extended from
\begin{gather*}
    \IC_g \tensor \IC_h = \IC_{gh}                                 \\
    \alpha_{g,h,k}      = \omega(g,h,k) \in \End(\IC_{ghk}) = \IC.
\end{gather*}
The pentagon equation is satisfied precisely because $\omega$ is a cocycle.
Every object has left and right duals, in particular ${(\IC_g)}^\vee = \IC_{g^{-1}}$.
We refer to $\Vec^\omega[G]$ as the categorified $\omega$-twisted group ring of $G$.
If $G$ is finite, $\Vec^\omega[G]$ is a fusion category.

If $G$ is a Lie group, $\Vec^\omega[G]$ inherits the smoothness of $G$.
We encode this smooth structure
by considering $\Vec^\omega[G]$ as a stack over the site $\Man$ of 
manifolds.
These categories have a genuine manifold of simple objects: the Lie group $G$.
For $G=T$ a torus, such tensor categories were already considered
in~\cite{freed2010topological}, where they played a central role in the
study of toral Chern-Simons theory.

The tensor categories introduced above are all group-like: every simple object
$X \in \cat{C}$ admits an inverse $Y$ such that $X \tensor Y \iso Y \tensor X \iso \ONE$,
the unit object of the category.
Many interesting fusion categories are not of this kind, for example the
representation categories $\Rep(H)$ of finite (non-abelian) groups,
the quantum group categories $\cat{C}(\mathfrak{g},k)$
(Example~\ref{ex:RepkLG})
and the Tambara-Yamagami categories $\TY(A,\chi,\tau)$
(Example~\ref{ex:TambaraYamagamiFusionCategory}).

Manifold tensor categories are a generalisation of fusion categories that
make precise the notion of a
\emph{tensor category with a manifold of simple objects}.
They should not be viewed as exotic objects. Indeed, already the finite-dimensional
(complex) representations of the group $\IZ$ form such a category:
its simple objects assemble into the manifold $\IC^\times$. The unitary
representations form a tensor subcategory with manifold of simple objects $S^1$.
Manifold tensor categories also appear in the study of CFTs at 
central charge $c=1$:
It is shown in~\cite{thorngren2021fusion,chang2021lorentzian} 
that the quantum symmetries of such CFTs assemble into topological spaces 
(the authors 
work without the smooth structure present on these spaces).

We further allow ourselves to talk about \emph{orbifolds of simple objects}.
Natural examples
may again be found in representation theory. Let $\Gamma$ be a virtually
abelian group, ie.\ a group fitting into a short exact sequence
\[\IZ^n \to \Gamma \to H\]
where $H$ is a finite group. The category of (unitary)
representations of $\Gamma$ is an orbifold tensor category.
The simple objects of $\Rep^u(\Gamma)$ form the moduli space of points
of the orbifold $[T^n/H]$, where $T^n=\widehat{\IZ^n}$
is the $n$-torus of representations of $\IZ^n$. The $H$-action is
induced by the conjugation action of $H$ on $\Gamma$.
See Figure~\ref{fig:introExamplesGroupReps} for a depiction
of the moduli space of simple objects in a few examples.
\begin{figure}[htp]
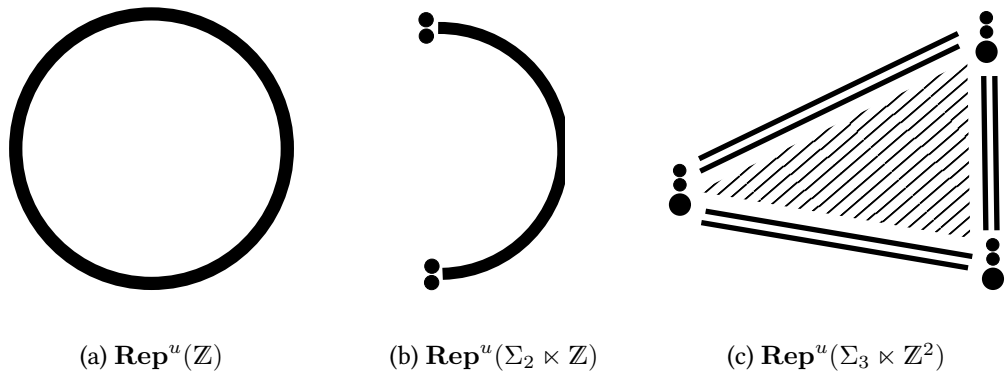

    \centering
    \begin{subfigure}[t]{0.3\textwidth}
        \centering
        \begin{minipage}[t]{0.85\textwidth}
    \def\svgwidth{\columnwidth}
    \import{./figures/}{circle.pdf_tex}

        \end{minipage}
        \caption{$\Rep^u(\IZ)$}
    \end{subfigure}
    \begin{subfigure}[t]{0.3\textwidth}
        \centering
        \begin{minipage}[t]{0.45\textwidth}
    \def\svgwidth{\columnwidth}
    \import{./figures/}{RepDinf.pdf_tex}

        \end{minipage}
        \caption{$\Rep^u(\Sigma_2 \ltimes\IZ)$}
        \label{fig:RepDinf}
    \end{subfigure}
    \begin{subfigure}[t]{0.3\textwidth}
        \centering
        \begin{minipage}[t]{1\textwidth}
    \def\svgwidth{\columnwidth}
    \import{./figures/}{RepSig3T2.pdf_tex}

        \end{minipage}
        \caption{$\Rep^u(\Sigma_3 \ltimes \IZ^2)$}
        \label{fig:RepSig3Z2}
    \end{subfigure}
    \caption{Orbifold tensor categories arising in the representation theory of
    virtually abelian groups.}
    \label{fig:introExamplesGroupReps}
\end{figure}

The space of simple objects in Figures~\ref{fig:RepDinf} and~\ref{fig:RepSig3Z2}
does not itself form an orbifold.
In fact, the moduli space of simple objects
in these two examples is not even Hausdorff.
It is the 
\emph{moduli space of simple skyscraper sheaves} on 
the orbifolds $[S^1/\Sigma_2]$ and
$[(S^1 \times S^1)/\Sigma_3]$, respectively.
Orbifold tensor categories (Definition~\ref{def:orbifoldTensorCategory})
are defined such that their simple objects form this moduli space.
This is what we mean by a tensor category with an orbifold of simple
objects.

Any fusion category is an example of a manifold tensor category
whose manifold of simple objects is a finite set
(Section~\ref{sec:fusionCatsAreManifusion}).
The product of two orbifold tensor categories is again an orbifold
tensor category, and so a further source of examples are simple products
of the form $\cat{C} \times \Vec^\omega[G]$, where $\cat{C}$ is
a fusion category. These product examples are
hardly interesting by themselves, but
we leverage them to build further orbifold tensor categories in
Sections~\ref{sec:interpolatedQGroups} and \ref{sec:interpolatedTambaraYamagami}.
Recall that there is a Tambara-Yamagami category $\TY(H,\chi,\tau)$ associated
to a finite abelian group $H$, a symmetric non-degenerate
bicharacter $\chi:H\tensor H \to \IC^\times$, and
a square root $\tau$ of $1/|H|$ in $\IC^\times$
(see Example~\ref{ex:TambaraYamagamiFusionCategory}).
It has set of simple objects $H \coprod \{m\}$,
and fusion rules ($a,b \in H$):
\begin{align*}
    a \tensor b \iso ab                 &  &
    a \tensor m \iso m \iso m \tensor a &  &
    m \tensor m \iso \DirSum_{a \in H}a.
\end{align*}
The expression for the associator involves $\chi$ and $\tau$.
We build families of
\emph{interpolated Tambara-Yamagami categories} whose manifolds
of simples are of the form $G \coprod G/H$, for a Lie group $G$ and
central subgroup $H \subset Z(G)$.
These categories contain a Tambara-Yamagami category
$\TY(H,\chi,\tau)$ as a sub-manifold tensor category (for suitable
$\chi$ and $\tau$).
The fusion rules are direct
extensions of the above ($g,g^\prime \in G$):
\begin{align*}
    g \tensor g^\prime \iso gg^\prime                              &  &
    g \tensor g^\prime H \iso gg^\prime H \iso gH \tensor g^\prime &  &
    gH \tensor g^\prime H \iso \DirSum_{h \in H} gg^\prime h.
\end{align*}
We show an example of the resulting manifold of simple objects in
Figure~\ref{fig:IntroTYcirclesZ3}.
\begin{figure}[htp]
    \centering
    \begin{minipage}[t]{0.7\textwidth}
    \def\svgwidth{\columnwidth}
\begingroup%
  \makeatletter%
  \providecommand\color[2][]{%
    \errmessage{(Inkscape) Color is used for the text in Inkscape, but the package 'color.sty' is not loaded}%
    \renewcommand\color[2][]{}%
  }%
  \providecommand\transparent[1]{%
    \errmessage{(Inkscape) Transparency is used (non-zero) for the text in Inkscape, but the package 'transparent.sty' is not loaded}%
    \renewcommand\transparent[1]{}%
  }%
  \providecommand\rotatebox[2]{#2}%
  \newcommand*\fsize{\dimexpr\f@size pt\relax}%
  \newcommand*\lineheight[1]{\fontsize{\fsize}{#1\fsize}\selectfont}%
  \ifx\svgwidth\undefined%
    \setlength{\unitlength}{365.05078951bp}%
    \ifx\svgscale\undefined%
      \relax%
    \else%
      \setlength{\unitlength}{\unitlength * \real{\svgscale}}%
    \fi%
  \else%
    \setlength{\unitlength}{\svgwidth}%
  \fi%
  \global\let\svgwidth\undefined%
  \global\let\svgscale\undefined%
  \makeatother%
  \begin{picture}(1,0.57239566)%
    \lineheight{1}%
    \setlength\tabcolsep{0pt}%
    \put(0,0){\includegraphics[width=\unitlength,page=1]{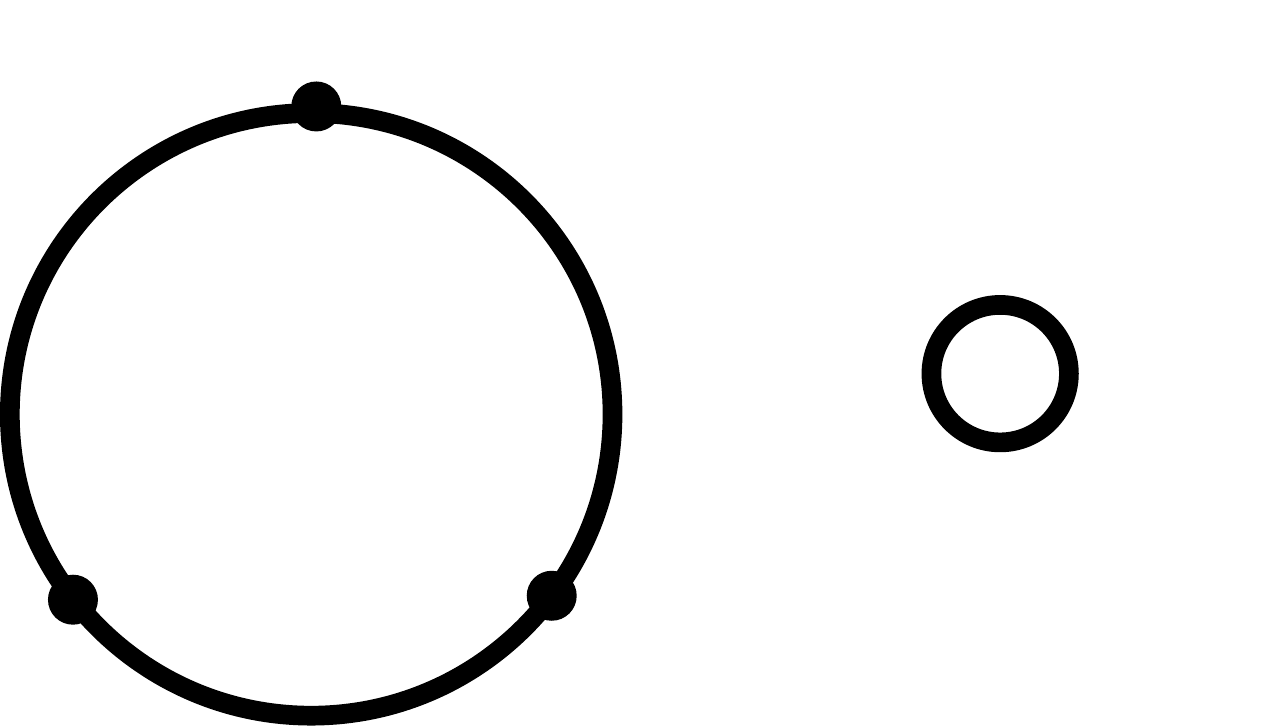}}%
    \put(0.2733972,0.50915552){\color[rgb]{0,0,0}\makebox(0,0)[lt]{\lineheight{1.25}\smash{\begin{tabular}[t]{l}$1$\end{tabular}}}}%
    \put(0.08350114,0.11172278){\color[rgb]{0,0,0}\makebox(0,0)[lt]{\lineheight{1.25}\smash{\begin{tabular}[t]{l}$\omega$\end{tabular}}}}%
    \put(0.46506703,0.11291759){\color[rgb]{0,0,0}\makebox(0,0)[lt]{\lineheight{1.25}\smash{\begin{tabular}[t]{l}$\omega^2$\end{tabular}}}}%
    \put(0.81967173,0.36218652){\color[rgb]{0,0,0}\makebox(0,0)[lt]{\lineheight{1.25}\smash{\begin{tabular}[t]{l}m\end{tabular}}}}%
    \put(0,0){\includegraphics[width=\unitlength,page=2]{TYcirclesZ3.pdf}}%
  \end{picture}%
\endgroup%

    \end{minipage}
    \caption{Interpolated Tambara-Yamagami category for the subgroup
        $\IZ/3 \subset S^1$. The simples of $\TY(\IZ/3,\chi,\tau)$ are
        marked. The size of the right circle indicates that multiplication by
        elements in the left circle is given by the $\times 3$ map. All
        objects on the right circle are non-invertible.}
    \label{fig:IntroTYcirclesZ3}
\end{figure}

In a similar vein, we combine
the quantum group categories $\cat{C}(\mathfrak{g},k)$
with smooth categorified group rings $\Vec^\omega[G]$.\footnote{
    Recall from Example~\ref{ex:RepkLG} that $\cat{C}(\mathfrak{g},k)$
    is equivalent to $\Rep^k LG$ for the simply-connected Lie group
    $G$ integrating $\mathfrak{g}$. 
}
We build examples with underlying manifolds of the form
$G^{\coprod n}$ and $\left(G^{\coprod n}\right) \coprod G/H$,
for compact connected Lie groups $G$ and central subgroups $H \subset Z(G)$.
Figure~\ref{fig:introIntQuantumGps} shows two examples
of interpolated quantum group categories. 
\begin{figure}[htp]
    \centering
    \begin{subfigure}[t]{0.45\textwidth}
        \centering
        \begin{minipage}[t]{0.9\textwidth}
            \centering
    \def\svgwidth{\columnwidth}
\begingroup%
  \makeatletter%
  \providecommand\color[2][]{%
    \errmessage{(Inkscape) Color is used for the text in Inkscape, but the package 'color.sty' is not loaded}%
    \renewcommand\color[2][]{}%
  }%
  \providecommand\transparent[1]{%
    \errmessage{(Inkscape) Transparency is used (non-zero) for the text in Inkscape, but the package 'transparent.sty' is not loaded}%
    \renewcommand\transparent[1]{}%
  }%
  \providecommand\rotatebox[2]{#2}%
  \newcommand*\fsize{\dimexpr\f@size pt\relax}%
  \newcommand*\lineheight[1]{\fontsize{\fsize}{#1\fsize}\selectfont}%
  \ifx\svgwidth\undefined%
    \setlength{\unitlength}{240.90113122bp}%
    \ifx\svgscale\undefined%
      \relax%
    \else%
      \setlength{\unitlength}{\unitlength * \real{\svgscale}}%
    \fi%
  \else%
    \setlength{\unitlength}{\svgwidth}%
  \fi%
  \global\let\svgwidth\undefined%
  \global\let\svgscale\undefined%
  \makeatother%
  \begin{picture}(1,0.96143871)%
    \lineheight{1}%
    \setlength\tabcolsep{0pt}%
    \put(0,0){\includegraphics[width=\unitlength,page=1]{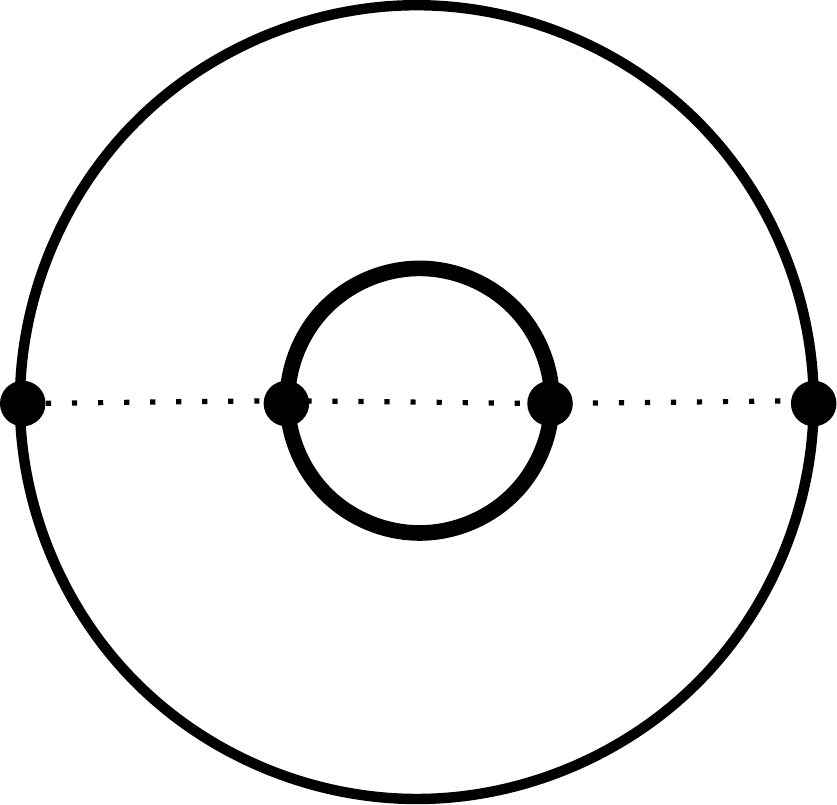}}%
    \put(0.05782315,0.5300391){\color[rgb]{0,0,0}\makebox(0,0)[lt]{\lineheight{1.25}\smash{\begin{tabular}[t]{l}$1$\end{tabular}}}}%
    \put(0.38435379,0.52323631){\color[rgb]{0,0,0}\makebox(0,0)[lt]{\lineheight{1.25}\smash{\begin{tabular}[t]{l}$\lambda$\end{tabular}}}}%
    \put(0.69727889,0.52663775){\color[rgb]{0,0,0}\makebox(0,0)[lt]{\lineheight{1.25}\smash{\begin{tabular}[t]{l}$2\lambda$\end{tabular}}}}%
    \put(1.01360552,0.52663775){\color[rgb]{0,0,0}\makebox(0,0)[lt]{\lineheight{1.25}\smash{\begin{tabular}[t]{l}$3\lambda$\end{tabular}}}}%
  \end{picture}%
\endgroup%

        \end{minipage}
        \caption{Interpolated $\cat{C}(\mathfrak{su}_2,3)$}
    \end{subfigure}
    \begin{subfigure}[t]{0.45\textwidth}
        \centering
        \begin{minipage}[t]{0.85\textwidth}
            \centering
    \def\svgwidth{\columnwidth}
\begingroup%
  \makeatletter%
  \providecommand\color[2][]{%
    \errmessage{(Inkscape) Color is used for the text in Inkscape, but the package 'color.sty' is not loaded}%
    \renewcommand\color[2][]{}%
  }%
  \providecommand\transparent[1]{%
    \errmessage{(Inkscape) Transparency is used (non-zero) for the text in Inkscape, but the package 'transparent.sty' is not loaded}%
    \renewcommand\transparent[1]{}%
  }%
  \providecommand\rotatebox[2]{#2}%
  \newcommand*\fsize{\dimexpr\f@size pt\relax}%
  \newcommand*\lineheight[1]{\fontsize{\fsize}{#1\fsize}\selectfont}%
  \ifx\svgwidth\undefined%
    \setlength{\unitlength}{293.06923453bp}%
    \ifx\svgscale\undefined%
      \relax%
    \else%
      \setlength{\unitlength}{\unitlength * \real{\svgscale}}%
    \fi%
  \else%
    \setlength{\unitlength}{\svgwidth}%
  \fi%
  \global\let\svgwidth\undefined%
  \global\let\svgscale\undefined%
  \makeatother%
  \begin{picture}(1,1.02019258)%
    \lineheight{1}%
    \setlength\tabcolsep{0pt}%
    \put(0,0){\includegraphics[width=\unitlength,page=1]{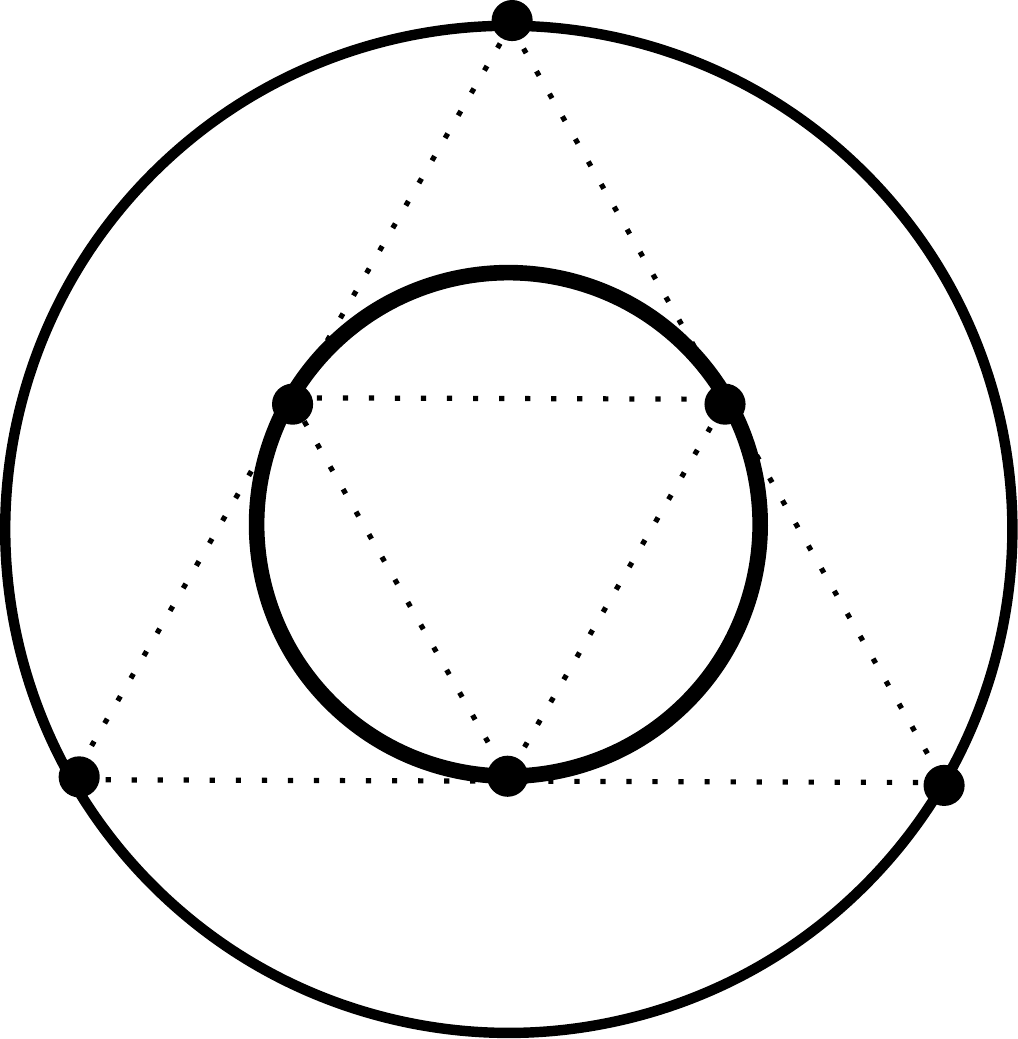}}%
    \put(0.1225538,0.28471591){\color[rgb]{0,0,0}\makebox(0,0)[lt]{\lineheight{1.25}\smash{\begin{tabular}[t]{l}$1$\end{tabular}}}}%
    \put(0.54193839,0.29268201){\color[rgb]{0,0,0}\makebox(0,0)[lt]{\lineheight{1.25}\smash{\begin{tabular}[t]{l}$\lambda_1$\end{tabular}}}}%
    \put(0.96971073,0.27632824){\color[rgb]{0,0,0}\makebox(0,0)[lt]{\lineheight{1.25}\smash{\begin{tabular}[t]{l}$2\lambda_1$\end{tabular}}}}%
    \put(0.33936912,0.64559752){\color[rgb]{0,0,0}\makebox(0,0)[lt]{\lineheight{1.25}\smash{\begin{tabular}[t]{l}$\lambda_2$\end{tabular}}}}%
    \put(0.53355095,1.02283297){\color[rgb]{0,0,0}\makebox(0,0)[lt]{\lineheight{1.25}\smash{\begin{tabular}[t]{l}$2\lambda_2$\end{tabular}}}}%
    \put(0.70188134,0.66691068){\color[rgb]{0,0,0}\makebox(0,0)[lt]{\lineheight{1.25}\smash{\begin{tabular}[t]{l}$\lambda_1 + \lambda_2$\end{tabular}}}}%
  \end{picture}%
\endgroup%

        \end{minipage}
        \caption{Interpolated $\cat{C}(\mathfrak{su}_3,2)$}
    \end{subfigure}
    \caption{Simple objects of interpolated quantum group categories
        associated to subgroups of $S^1$.
        Objects of $\cat{C}(\mathfrak{g},k)$ are
        marked. The dotted lines are a mere visual aid: they emphasise
        the shape of the Weyl alcove of the quantum group category.
        The action of the unit component on the other circle is the usual
        left action of $S^1$ on itself in both categories depicted.}
    \label{fig:introIntQuantumGps}
\end{figure}
An interpolated fusion category $\stC$ built from $\Vec^\omega[G]$ and a 
fusion category $\cat{D}$ comes with a monoidal functor
\[
    \Vec^\omega[G] \boxtimes \cat{D} \to \stC,
\]
and the monoidal structure on $\stC$ may be completely read off from
this functor. 

The construction of the examples above requires a specific choice of
associator $\omega$ on $\Vec^\omega[G]$.
The underlying smooth linear category of $\Vec[G]$ is the moduli stack
of skyscraper sheaves on the manifold $G$.
Not all associators on $\omega \in \H^3(\B G, \IC^\times)$
can be realised as part of a monoidal structure on this stack.
In fact, the transgression homomorphism
\begin{equation*}
    \H^3(\B G, \IC^\times) \to \H^2(G, \IC^\times)
\end{equation*}
sends an associator $\omega$ to the equivalence class of a \emph{gerbe}
$\ger{G}_\omega$ over the manifold $G$.\footnote{
    The transgression map on cohomology arises from the universal $G$-bundle over $\B G$.
    It was lifted to the context of gerbes in~\cite{carey2005bundle}.
}
The associator $\omega$ gives rise to a monoidal structure
on the moduli stack of $\ger{G}_\omega$-twisted skyscraper sheaves
over $G$. We discuss gerbes and twisted sheaves in Section~\ref{sec:GerbesAndTwistedSheaves},
and in more detail in Appendix~\ref{app:twistedSheaves}.

Besides introducing orbifold tensor categories and
constructing examples, we study their general structure.
Most of our results are proved under the assumption that the underlying
orbifold is effective (a notion we recall in Section~\ref{sec:orbifolds}).
We show that if the monoidal unit is supported at a point with trivial
stabiliser, then every point of the orbifold of simple objects has
trivial stabiliser. In other words, manifold tensor categories are 
precisely those effective orbifold tensor categories whose monoidal 
unit has trivial stabiliser (Corollary~\ref{cor:manifoldONEmanifoldALL}).
The main theorem of this note, however, is a categorification
of the following statement in classical Lie theory (recalled as
Lemma~\ref{lem:inverse_automatically_smooth}):
\begin{lem*}
    A smooth monoid all of whose elements are invertible
    is a Lie group: the map $x \mapsto x^{-1}$ assigning inverses is automatically smooth.
\end{lem*}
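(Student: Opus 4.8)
The plan is to deduce smoothness of the inversion map $\iota\colon x \mapsto x^{-1}$ from the implicit function theorem applied to the multiplication $\mu\colon M \times M \to M$, the geometric input being that \emph{left translations are diffeomorphisms}. First I would record the purely algebraic fact that a monoid in which every element is invertible is a group: inverses are then two-sided and unique, so $\iota$ is a well-defined map of sets and the only question is its smoothness.

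Next I would show that for each $g \in M$ the left translation $L_g \define \mu(g, -)\colon M \to M$ is a diffeomorphism. Smoothness of $L_g$ is immediate, since $L_g$ is the restriction of the smooth map $\mu$ to the submanifold $\{g\} \times M$. Associativity gives $L_x \after L_{x^{-1}} = L_{x x^{-1}} = L_e = \id$ and likewise $L_{x^{-1}} \after L_x = \id$, so $L_x$ and $L_{x^{-1}}$ are mutually inverse smooth maps. Hence $L_x$ is a diffeomorphism and, in particular, its differential $(\d L_x)_y\colon T_y M \to T_{xy} M$ is a linear isomorphism at every point $y$.

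Then I would feed this into the implicit function theorem. Fix $x_0 \in M$ and set $y_0 \define x_0^{-1}$, so that $\mu(x_0, y_0) = e$. The partial derivative of $\mu$ in the second variable at $(x_0, y_0)$ is exactly $(\d L_{x_0})_{y_0}\colon T_{y_0} M \to T_e M$, which is invertible by the previous step. The implicit function theorem therefore produces an open neighbourhood $U \ni x_0$ and a smooth map $f\colon U \to M$ with $f(x_0) = y_0$ and $\mu(x, f(x)) = e$ for all $x \in U$. Finally, I would identify $f$ with $\iota$ by algebra rather than by an a priori continuity argument: the equation $x f(x) = e$ exhibits $f(x)$ as a right inverse of $x$, and in a group the right inverse is unique and equals the two-sided inverse, so $f(x) = x^{-1}$ throughout $U$. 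Thus $\iota\restr_U = f$ is smooth, and since $x_0$ was arbitrary, $\iota$ is smooth on all of $M$.

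I do not expect a serious obstacle in this classical setting; the proof is short once the right objects are named. The one point deserving care is recognising that the second-variable partial of $\mu$ at $(x_0, x_0^{-1})$ \emph{is} the differential of a left translation, which is precisely what converts the hypothesis ``every element is invertible'' into the nondegeneracy condition required by the implicit function theorem. The algebraic identification $f = \iota$ is what lets me bypass a separate proof that $\iota$ is continuous. In the categorified theorem that follows, I would expect these same two ingredients — a translation-type functor being an equivalence, and an implicit-function argument — to reappear, with the genuine difficulty shifted to formulating ``nondegeneracy'' and ``local solvability'' for duality data on an orbifold rather than for points of a manifold.
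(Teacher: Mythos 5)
Your proof is correct. Where it differs from the paper is in how the implicit function theorem enters. You verify the nondegeneracy hypothesis at \emph{every} point: since each $x_0$ is invertible, each left translation $L_{x_0}$ is a diffeomorphism (with smooth inverse $L_{x_0^{-1}}$), so the second-variable partial of $\mu$ at $(x_0, x_0^{-1})$ is invertible, and the IFT produces $\iota$ locally as the implicit solution of $\mu(x,y)=e$, pinned down by uniqueness of inverses in a group. The paper instead applies the implicit function theorem only at the identity --- this is Lemma~\ref{lem:smooth_monoid_locally_group_like}, which needs no invertibility hypothesis at all (submersivity at $(x_0,x_0)$ comes from unitality alone, so the lemma holds for partial smooth monoids) --- and then globalises by homogeneity: $m$ is a submersion, so $m^{-1}(e) \subset M \times M$ (the graph of $\iota$) is a submanifold; the projection $p_1 : m^{-1}(e) \to M$ is equivariant for the transitive smooth action $x \cdot (a,b) = (xa,\, bx^{-1})$, hence of constant rank; the rank is full at $(e,e)$ by the local lemma; therefore $p_1$ is a diffeomorphism and $\iota = p_2 \comp p_1^{-1}$ is smooth. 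Your route is shorter and self-contained for this statement. The paper's decomposition --- a local statement near the unit plus a propagation argument --- is chosen because it is the pattern that categorifies: Lemma~\ref{lem:smooth_monoid_locally_group_like} is reused in the proof of Lemma~\ref{lem:A_is_rigid} (local dualisability of the family $\IA$ near the monoidal unit), after which Theorem~\ref{thm:global_dualisability} globalises via uniqueness of duals and descent, precisely because in the categorified setting one cannot run a pointwise IFT argument at an arbitrary family but must work near the unit and transport. Your closing remark anticipated exactly this shift.
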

Making use of the Inverse Function Theorem, we show
that this statement admits the following categorification 
(Theorem~\ref{thm:global_dualisability}):
\begin{thm*}[Smooth Dualisability]
    In an orbifold tensor category with simple monoidal unit,
    pointwise dualisibility implies dualisibility of all smooth families of objects.
\end{thm*}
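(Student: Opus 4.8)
The plan is to categorify the classical proof of Lemma~\ref{lem:inverse_automatically_smooth} beat for beat, with the tensor product playing the role of the monoid multiplication, the unit $\ONE$ playing the role of the identity element $e$, and a dual object $X^\vee$ together with its evaluation and coevaluation playing the role of the inverse $x^{-1}$. It is worth first recalling the shape of the classical argument in a form suited to categorification. Left translation $L_a$ by an element $a$ is smooth, and when $a$ is invertible its set-theoretic inverse $L_{a^{-1}}$ is also smooth, so $L_a$ is a diffeomorphism and its derivative $d_2\mu|_{(a,b)}$ is an isomorphism at every point. Consequently the multiplication $\mu$ is a submersion, the fibre $\mu^{-1}(e)$ is a smooth submanifold, and the first projection restricts to a bijective local diffeomorphism on it; inversion is then the second projection of the inverse of this diffeomorphism, hence smooth. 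The crucial input is purely pointwise --- invertibility of $a$ --- yet it forces the derivative of multiplication to be invertible everywhere.

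First I would reduce the statement to its essential case. By semisimplicity a smooth family $X$ over a test manifold $S$ decomposes, locally on $S$, into pointwise-simple families, and duals of direct sums are direct sums of duals, so it suffices to produce smooth duality data for a family that is pointwise simple, i.e.\ (after trivialising the relevant framing) a smooth map $S \to \cat{C}$ landing in simple objects. For such a family, pointwise dualisability supplies at each $s \in S$ a dual simple $X_s^\vee$ together with maps $\coev_s \colon \ONE \to X_s \tensor X_s^\vee$ and $\ev_s \colon X_s^\vee \tensor X_s \to \ONE$ satisfying the zig-zag identities; the content of the theorem is that these may be chosen to vary smoothly in $s$.

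Next I would identify the categorical avatar of ``$dL_a$ is an isomorphism''. The simplicity of $\ONE$ is exactly what is needed: it forces $\End(\ONE)=\IC$ and the multiplicity of $\ONE$ in $X_s \tensor X_s^\vee$ to equal one, so that $\Hom(\ONE, X_s \tensor X_s^\vee)$ and $\Hom(X_s^\vee \tensor X_s, \ONE)$ are lines and the composite appearing in the zig-zag is a non-degenerate pairing of these lines against $\End(X_s)=\IC$. This non-degeneracy, holding pointwise by dualisability, is the linearised invertibility that drives the Inverse Function Theorem, just as invertibility of $a$ made $d_2\mu$ invertible. Assembling the Hom-spaces over $S$ into finite-rank smooth vector bundles --- possible because tensoring is a smooth operation in the stack and the category is semisimple --- I would write the zig-zag identities as a smooth equation $Z(X^\vee,\coev,\ev)=(\id_X,\id_{X^\vee})$ in smooth sections of these bundles. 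Pointwise dualisability is a pointwise solution, the computation above shows the derivative of $Z$ in the unknowns is a fibrewise isomorphism, and the Implicit Function Theorem then produces a smooth solution on a neighbourhood of each point of $S$.

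Finally I would globalise and descend. Because $\ONE$ is simple, a dual together with its duality data is unique up to unique isomorphism, so the local smooth solutions agree on overlaps and glue to a global smooth dual family on $S$; the same uniqueness renders the construction canonically equivariant, so in the orbifold (and gerbe-twisted) setting it descends along the stabiliser actions. The main obstacle I anticipate is precisely the passage from the stacky, a priori infinite-dimensional problem to an honest finite-dimensional smooth equation to which the classical Inverse Function Theorem applies, and in particular ensuring that the dual \emph{object} --- not merely the evaluation and coevaluation for a fixed guess of $X^\vee$ --- varies smoothly. The resolution is the combination of semisimplicity, which makes the relevant Hom-bundles finite-rank and smooth, and simplicity of the unit, which supplies the multiplicity-one non-degeneracy that renders the linearised problem invertible.
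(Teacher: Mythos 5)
Your plan applies the Implicit Function Theorem directly to the zig-zag identities, viewed as an equation $Z(X^\vee,\coev,\ev)=(\id_X,\id_{X^\vee})$ ``in smooth sections of Hom-bundles''. This is where the proof breaks: that equation cannot even be posed until the unknown $X^\vee$ is itself a smooth family. The sheaves $\sHom(\ONE, X \tensor X^\vee)$ and $\sHom(X^\vee \tensor X, \ONE)$ are finite-rank $\Cinf$-modules over $S$ only after a smooth family $X^\vee: S \to \cat{C}$ has been fixed, and producing such a family is precisely the content of the theorem. The pointwise duals $X_s^\vee$ carry no a priori smooth structure: their supports are the ``inverses'' of $\supp X_s$ under the locally defined multiplication of supports, and smoothness of that inversion is the categorified analogue of smoothness of $x \mapsto x^{-1}$ --- the very thing being proved. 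So the IFT has nothing finite-dimensional to bite on: either you fix a smooth candidate $X^\vee$ (your proposal gives no construction of one), or you let $X^\vee$ vary as an object, in which case the equation is not one between sections of vector bundles. Semisimplicity and the multiplicity-one non-degeneracy coming from simplicity of $\ONE$ do not fill this hole; they only control $\ev$ and $\coev$ once the dual family exists. A secondary problem: the opening reduction to pointwise-simple families fails over orbifold points --- a family can be simple \emph{as a family} while its value at an orbifold point is the non-simple twisted group ring $\IO$ --- so in the orbifold setting one cannot even locally assume the fibres are simple.

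The paper's proof instead applies the IFT at the level of the underlying manifold, not to the duality equations. From $\IA \tensor \IA$, where $\IA$ is the basic mobile family induced by a chart around $\supp \ONE$, Lemma~\ref{lem:A_tensor_A} extracts support lifts organising into a partial smooth monoid $\mu$ on the chart (Corollary~\ref{cor:mu_is_monoid_multiplication}); Lemma~\ref{lem:smooth_monoid_locally_group_like} --- the actual Implicit Function Theorem step --- produces a smooth local inverse $s$, and the smooth candidate dual of $\IA$ is then simply the pullback $\IB = s^\ast \IA$. The duality equations for $(\IA,\IB)$ reduce to invertibility of a single smooth scalar function, whose non-vanishing at the unit requires the associator computation of Lemma~\ref{lem:projectionAndInclusionOfUnitSummandNonzero} (this is where your ``non-degeneracy from simplicity of $\ONE$'' genuinely enters, but as a statement about components of the associator, not as a fibrewise-invertible derivative). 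General families are then handled structurally rather than analytically: a family simple at $x$ is locally a summand of $f^\ast \IA \tensor \f{S}(x)$ (Lemma~\ref{lem:locally_summand_of_pullback_from_A_tensor_X}), the latter has the manifestly smooth dual ${\f{S}(x)}^\vee \tensor f^\ast\IA^\vee$, and summands of dualisable objects are dualisable in a Karoubi-complete category (Lemma~\ref{lem:summands_of_rigids_are_rigid}). Your final gluing step --- uniqueness of duality data up to unique isomorphism yields descent --- is exactly the paper's proof of Theorem~\ref{thm:global_dualisability}, and your instinct to categorify Lemma~\ref{lem:inverse_automatically_smooth} is the right one; what is missing is the insight that the IFT must first be applied to the support data near the unit to manufacture a smooth candidate dual, before any Hom-bundle can be written down.
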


We begin by introducing twisted sheaves on orbifolds and smooth tensor stacks
in Section~\ref{sec:prelim}. In particular, we show that a gerbe over an orbifold
point with stabiliser $\Gamma$ is essentially the data of a 2-cocycle $\theta \in Z(\B \Gamma,\IC^\times)$,
and that twisted skyscraper sheaves over this orbifold point correspond to $\theta$-twisted
representations of $\Gamma$.
In Section~\ref{sec:orbisimples}, we introduce the notion of a linear category
with an orbifold of simple objects (\emph{orbisimple categories}).
We study the structure of such categories, and show that they
may be thought of as linearisations of gerbes.
We give the definition of orbifold tensor categories in
Section~\ref{sec:orbifoldTensorCategories}. The remainder of that section
is devoted to the construction of explicit examples, in particular the families
of interpolated fusion categories. To this end, we
study central group actions on orbifold tensor categories.
Section~\ref{sec:orbifusion} is concerned with structural results on
orbifold tensor categories: we study the interplay between the smooth
and the monoidal structure.
In Section~\ref{sec:smooth_rig}, we prove the Smooth Dualisability Theorem.
For readers unfamiliar with the notion of (pseudo)monoids internal to
monoidal bicategories, or the theory of sheaves and stacks,
we provide a quick recall and references in Appendix~\ref{ch:background}.
In Appendix~\ref{app:twistedSheaves}, we prove a number of technical
results on twisted sheaves over orbifolds.\footnote{In fact,
we work in the more general setting of \'etale stacks.}
In particular, we establish the classic pullback-pushforward 
adjunction in this context.

\subsection*{Acknowledgements}
First of all, I thank my supervisor Andr\'e Henriques.
Andr\'e, without your support, guidance and intellectual generosity
towards me, this document would 
not exist. In particular, thank you for encouraging me
to study manifold tensor categories.
This note has further benefitted from many, many 
conversations with other mathematicians.
I particularly thank 
Luciana Basualdo Bonatto,
Lukas Brantner,
Arun Debray,
Thibault D\'ecoppet,
Christoph Dorn,
Chris Douglas, 
Dan Freed, 
Nora Ganter,
Dominic Joyce,
Kiran Luecke,
David Reutter,
Jan Steinebrunner,
Ryan Thorngren, 
Konrad Waldorf,
Thomas Wasserman
and
Tom Zielinski
for useful discussions and feedback on earlier drafts of
this document.

\section{Twisted Sheaves on Orbifolds}
\label{sec:prelim}
We will work over $\IC$ throughout.
The category of $\IC$-vector spaces will be denoted by $\VecInf$,
the subcategory of finite-dimensional vector spaces by $\Vec \subset \VecInf$.
We write $\VecInf$-$\Cat$ to denote the bicategory of
$\VecInf$-enriched categories, and $\VCat \subset \VecInf$-$\Cat$ to denote the 
full subbicategory on
those categories that are complete under direct sums.

A reminder on sheaves and stacks is given in Section~\ref{sec:sheavesAndStacks}.
We denote by $\Man$ the site of smooth ($\IR$-)manifolds, equipped with the
surjective submersion Grothendieck topology, and the structure ring
$\Cinf$ of smooth $\IC$-valued functions.
Given a cover $Y \onto M$ in $\Man$, we denote the associated iterated fibre products by
$Y^{[n]} \define Y \times_M Y \times_M \cdots Y$.
We use $\ShC(M)$ to denote the category of $\Cinf$-modules over a manifold $M$,
and freely refer to such $\Cinf$-modules simply as sheaves.
A particular class of $\Cinf$-modules is provided by vector bundles
$E \to M$, which we identify with their sheaves of sections.

\subsection{Twisted Representations}
\label{sec:projectiveReps}
Recall that a finite-dimensional $\IC$-linear representation of a discrete group $\Gamma$
is a vector space $V \in \Vec$, equipped with a homomorphism
$\rho:\Gamma \to \End(V)$.
This may be rephrased by saying a $\IC$-linear representation
is a morphism $\underline{\Gamma} \to \Vec$,
where $\underline{\Gamma}$ denotes the category with a single object whose
endomorphisms form the group $\Gamma$ and $\Vec$ denotes the
category of $\IC$-modules.
The endomorphisms assigned to group elements $g,h,gh \in \Gamma$
satisfy the condition
\begin{equation*}
    \rho(g) \rho(h) = \rho(gh).
\end{equation*}
In a \emph{projective representation},
this equation only holds up to a scalar:
\begin{equation*}
    \rho(g) \rho(h) = \theta(g,h)\rho(gh).
\end{equation*}
In the above, $\theta:\Gamma \times \Gamma \to \IC^\times$ is a
map of sets,
and $\IC^\times$ acts via the usual $\IC$-module structure
on $V \in \Vec$.
Requiring associativity of $\rho$ yields
\begin{equation*}
    \theta(g,h)\theta(gh,k) = \theta(h,k)\theta(g,hk)
\end{equation*}
for every triple $g,h,k \in \Gamma$.
This is the equation of a 2-cocycle $\theta \in Z^2(\B \Gamma, \IC^\times)$.
We call a representation of the above form a
\emph{$\theta$-twisted representation} of $\Gamma$.
Twisted representations for a specific 2-cocycle assemble into a category
$\Rep^\theta(\Gamma)$.

\begin{example}
    \label{ex:twistedKleinFourRep}
    Denote two chosen generators of the Klein four group
    $\IV = \IZ/2 \times \IZ/2$ by $x$ and $p$.
    We pick a 2-cocycle $\theta:\IV \times \IV \to \IC^\times$
    representing the generator of $\H^2(\B\IV,\IC^\times) \simeq \IZ/2$,
    setting $\theta(a,b)=1$ except
    \begin{equation*}
        \theta(p,x)=\theta(xp,xp)=\theta(p,xp)=\theta(xp,x)=-1.
    \end{equation*}

    A 2-dimensional $\theta$-twisted representation $\pi$
    of $\IV$ is furnished by the Pauli matrices
    \begin{align*}
        x \mapsto \sigma_x =
        \begin{pmatrix}
            0 & 1 \\ 1 & 0
        \end{pmatrix}
         &  &
        p \mapsto \sigma_z =
        \begin{pmatrix}
            1 & 0 \\ 0 & -1
        \end{pmatrix}
    \end{align*}
    which satisfy the necessary and sufficient relations
    $\sigma_x^2=\sigma_z^2=\Id$ and $\sigma_x \sigma_z = -\sigma_z \sigma_x$.
\end{example}

The tensor product of a $\theta$-twisted representation with a
$\theta'$-twisted representation is a $\theta\cdot\theta'$-twisted
representation, where $\cdot$ denotes the group operation
in $Z^2(\Gamma,\IC^\times)$.
A trivialisation $\theta'/\theta = \d \kappa$ is equivalently a
1-dimensional $\theta/\theta'$-projective representation
\begin{equation*}
    \kappa(x)\kappa(y) = \frac{\theta(x,y)}{\theta'(x,y)}\kappa(xy).
\end{equation*}
Tensoring with $\kappa$ constitutes an explicit equivalence
\begin{equation*}
    \Rep^{\theta'}(\Gamma) \isoto \Rep^{\theta}(\Gamma).
\end{equation*}
In particular, a trivialisation $\theta = \d \kappa$ gives rise to an
equivalence $\Rep(\Gamma) \isoto \Rep^\theta(\Gamma)$
between $\theta$-twisted representations and untwisted representations.
The discussion above has the following corollary.
\begin{cor}
    \label{cor:no1DrepsforNontrivialTheta}
    The category $\Rep^\theta(\Gamma)$ contains a
    1-dimensional representation if and only if $\theta$
    is trivialisable.
\end{cor}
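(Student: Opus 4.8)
The plan is to unwind the definition of a $1$-dimensional object of $\Rep^\theta(\Gamma)$ and recognise its defining equation as the assertion that $\theta$ is a coboundary. A $1$-dimensional $\theta$-twisted representation is a homomorphism-up-to-$\theta$ valued in $\End(\IC) = \IC$; writing $\kappa(g) \in \IC$ for the scalar by which $g$ acts, the twisting condition reads $\kappa(g)\kappa(h) = \theta(g,h)\kappa(gh)$. First I would check that each $\kappa(g)$ is automatically invertible, so that $\kappa$ is an honest representation rather than the zero object: normalising $\theta$ so that $\kappa(e) = 1$, the relation $\kappa(g)\kappa(g^{-1}) = \theta(g,g^{-1})\kappa(e) = \theta(g,g^{-1}) \in \IC^\times$ forces $\kappa(g) \neq 0$ for every $g$. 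Hence a $1$-dimensional object of $\Rep^\theta(\Gamma)$ is precisely a set-map $\kappa \colon \Gamma \to \IC^\times$ satisfying the displayed relation.

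The \emph{key observation} is that this relation is literally the statement $\theta = \d\kappa$, i.e. that $\theta$ is a coboundary and therefore trivialisable. This is exactly the correspondence already recorded in the paragraph preceding the corollary (specialised to $\theta' = 1$): a $1$-dimensional $\theta$-twisted representation is the same datum as a trivialisation of $\theta$. Thus the corollary is a direct translation between the twisted-representation condition and the $2$-coboundary equation in $Z^2(\B\Gamma, \IC^\times)$.

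The two implications then follow at once. For the forward direction, the existence of a $1$-dimensional representation supplies a cochain $\kappa$ with $\d\kappa = \theta$, trivialising $\theta$. For the converse, given a trivialisation $\theta = \d\kappa$, the cochain $\kappa$ itself satisfies the twisting relation and hence defines a $1$-dimensional object of $\Rep^\theta(\Gamma)$; equivalently, one transports the trivial $1$-dimensional object of $\Rep(\Gamma)$ along the equivalence $\Rep(\Gamma) \isoto \Rep^\theta(\Gamma)$ induced by $\kappa$, which preserves underlying dimension since it acts by tensoring with the $1$-dimensional object $\kappa$.

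I expect no genuine obstacle: the argument is bookkeeping of conventions. The only point deserving a word of care is the normalisation guaranteeing that $\kappa$ takes values in $\IC^\times$ rather than merely in $\IC$, so that the map $\kappa$ is truly invertible and yields an actual $1$-dimensional representation and not the zero map.
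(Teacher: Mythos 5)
Your proof is correct and takes essentially the same route as the paper: the paper's own argument is exactly the identification, recorded in the discussion immediately preceding the corollary, of a trivialisation $\theta = \d\kappa$ with a $1$-dimensional $\theta$-twisted representation via the relation $\kappa(x)\kappa(y) = \theta(x,y)\kappa(xy)$, together with transport of the trivial representation along the induced equivalence $\Rep(\Gamma) \isoto \Rep^\theta(\Gamma)$. Your additional check that the scalars $\kappa(g)$ are automatically invertible is correct bookkeeping that the paper leaves implicit.
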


\begin{rmk}
A 2-cocycle $\theta$ also defines a central extension
\footnote{
   We made the assumption that $\Gamma$ is discrete.
   As a result all $\IC^\times$-extensions
   of $\Gamma$ are split as topological spaces,
   and we may avoid dealing with Segal-Mitchison cocycles
   (see eg.~\cite{schommer2011central} or~\cite{weis2022centre}
   for a review).
}
\begin{equation*}
  \IC^\times \to \widetilde{\Gamma} \to \Gamma,
\end{equation*}
where $\widetilde{\Gamma}$ is the set
$\IC^\times \times \Gamma$ equipped with a multiplication
\begin{equation*}
   (z,\gamma) \cdot_\theta (z^\prime,\gamma^\prime) =
   (z\cdot z^\prime \cdot \theta(\gamma,\gamma^\prime),
   \gamma \cdot \gamma^\prime).
\end{equation*}
Note that the extension built from $\theta$ comes with a distinguished
section of sets $\Gamma \to \widetilde{\Gamma}$ sending
$\gamma \mapsto (1,\gamma)$.
The central extension is classified up to equivalence by the
cohomology class $[\theta] \in \H^2(\B \Gamma, \IC^\times)$.
Every 1-chain $\kappa$ such that $\d \kappa=\theta/\theta'$
defines an explicit isomorphism of the corresponding extensions.
The $\theta$-twisted representations of $\Gamma$ in fact
correspond to representations of $\widetilde{\Gamma}$ where 
the central $\IC^\times \into \widetilde{\Gamma}$ 
acts by the identity character.
We will not need this correspondence.
\end{rmk}

To discuss the restriction-induction adjunction, it is useful to
think about twisted representations using the language of rings and modules.
\begin{defn}
    The \emph{$\theta$-twisted group ring} of $\Gamma$ is
    the (non-commutative) algebra $\IC^\theta[\Gamma]$ with underlying
    vector space the $\IC$-span of the set $\Gamma$, and multiplication
    defined by
    \begin{equation*}
        \underline{\gamma_1} \cdot \underline{\gamma_2} =
        \theta(\gamma_1,\gamma_2) \underline{\gamma_1 \gamma_2}.
    \end{equation*}
\end{defn}

The $\theta$-twisted representations of $\Gamma$ defined above are
precisely the modules over this group ring
\begin{equation*}
    \Rep^\theta(\Gamma) = \Mod_{\IC^\theta[\Gamma]}.
\end{equation*}

The $\theta$-twisted group ring plays the role of the
\emph{regular representation} in ordinary representation theory.
Using twisted character theory (as developed
in~\cite{cheng2015character}),
one may prove that the $\theta$-twisted regular representation
decomposes in the usual way.
Denote by $\Irr^\theta(G)$ a set of representatives for
the $\theta$-twisted irreducible representations of $G$.
\begin{prop}[{\cite[Prop 2.3]{cheng2015character}}]
    \label{prop:decompositionTwistedGroupRing}
    The twisted group ring decomposes as a direct sum of irreducible representations
    and contains each of them with multiplicity their dimension:
    \begin{equation*}
        \IC^\theta[\Gamma] \iso
        \DirSum_{V \in \Irr^\theta(G)} {V^{\dirSum \dim V}}.
    \end{equation*}
\end{prop}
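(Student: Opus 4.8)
The plan is to run the classical Wedderburn argument, paying attention only to the one place where the cocycle $\theta$ genuinely intervenes. Throughout I assume $\Gamma$ is finite, so that $\IC^\theta[\Gamma]$ is a finite-dimensional associative $\IC$-algebra, and I normalise $\theta$ so that $\theta(e,-)=\theta(-,e)=1$ and $\underline{e}$ is the unit.

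First I would establish that $\IC^\theta[\Gamma]$ is semisimple, i.e. prove a twisted Maschke theorem. Given a $\theta$-twisted representation $\rho$ on $V$ and a subrepresentation $W \subseteq V$, choose any $\IC$-linear projection $p: V \onto W$ and average it to $\tilde p \define \frac{1}{|\Gamma|}\sum_{g\in\Gamma}\rho(g)\,p\,\rho(g)^{-1}$. The key computation is that $\rho(h)\rho(g) = \theta(h,g)\rho(hg)$ while $\rho(g)^{-1}\rho(h)^{-1} = (\rho(h)\rho(g))^{-1} = \theta(h,g)^{-1}\rho(hg)^{-1}$, so that conjugating $\tilde p$ by $\rho(h)$ and reindexing the sum by $g \mapsto hg$ leaves it unchanged. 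Hence $\tilde p$ commutes with every $\rho(h)$, and it is clearly still a projection with $\im \tilde p \subseteq W$ and $\tilde p|_W = \id$. Thus $W$ admits a $\Gamma$-invariant complement, so $\Rep^\theta(\Gamma) = \Mod_{\IC^\theta[\Gamma]}$ is semisimple.

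With semisimplicity in hand the rest is formal and blind to the twist. By Wedderburn–Artin, $\IC^\theta[\Gamma] \iso \DirSum_i \End_\IC(V_i)$ as algebras, where $\{V_i\}$ is a set of representatives of the simple modules, which by the identification $\Rep^\theta(\Gamma) = \Mod_{\IC^\theta[\Gamma]}$ are exactly the representatives $\Irr^\theta(\Gamma)$. Viewing each matrix block $\End_\IC(V_i) \iso \Mat_{\dim V_i}(\IC)$ as a left module over itself, it splits into its columns, contributing $\dim V_i$ copies of the simple module $V_i$. Since the left regular action of $\IC^\theta[\Gamma]$ on itself is precisely the $\theta$-twisted regular representation, summing over $i$ yields $\IC^\theta[\Gamma] \iso \DirSum_{V \in \Irr^\theta(\Gamma)} V^{\dirSum \dim V}$, as claimed.

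The only genuinely $\theta$-dependent step is the twisted Maschke argument, so I expect the main obstacle to be the bookkeeping of the cocycle factors there; everything downstream of semisimplicity is the standard Wedderburn decomposition. One could instead follow Cheng's route via twisted characters and orthogonality, computing the multiplicity of $V$ as $\langle \chi_{\mathrm{reg}}, \chi_V\rangle = \dim V$, but this requires setting up twisted class functions supported on the $\theta$-regular conjugacy classes and is more delicate than the structural argument above.
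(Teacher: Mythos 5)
Your proof is correct, but it takes a genuinely different route from the paper: the paper does not prove this proposition at all, importing it from Cheng's work with the remark that it follows from \emph{twisted character theory} (twisted class functions supported on $\theta$-regular conjugacy classes, orthogonality relations, and the multiplicity computation $\langle \chi_{\mathrm{reg}},\chi_V\rangle = \dim V$). Your argument instead is purely structural: the twisted Maschke averaging is right --- the cocycle factors $\theta(h,g)$ and $\theta(h,g)^{-1}$ cancel in $\rho(h)\rho(g)\,p\,\rho(g)^{-1}\rho(h)^{-1} = \rho(hg)\,p\,\rho(hg)^{-1}$, so $\tilde p$ is an equivariant projection onto $W$ --- and once $\IC^\theta[\Gamma]$ is known to be semisimple, Wedderburn--Artin over the algebraically closed field $\IC$ gives $\IC^\theta[\Gamma] \iso \DirSum_i \Mat_{\dim V_i}(\IC)$, whose left regular module is $\DirSum_i V_i^{\dirSum \dim V_i}$; since left multiplication on $\IC^\theta[\Gamma]$ is precisely the $\theta$-twisted regular representation (this is the cocycle identity $\theta(h,k)\theta(g,hk)=\theta(g,h)\theta(gh,k)$) and the simple modules are exactly $\Irr^\theta(\Gamma)$ under the identification $\Rep^\theta(\Gamma)=\Mod_{\IC^\theta[\Gamma]}$, the claimed decomposition follows. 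Your normalisation $\theta(e,-)=\theta(-,e)=1$ is harmless: replacing $\theta$ by a cohomologous cocycle gives an isomorphic twisted group ring (the paper's tensoring-with-$\kappa$ equivalence), and without it one simply takes $\theta(e,e)^{-1}\underline{e}$ as the unit. The trade-off between the two routes: your argument is shorter, self-contained, and needs only finiteness of $\Gamma$ and characteristic zero; the character-theoretic route the paper leans on yields strictly more information (orthogonality of twisted characters, multiplicity formulas, and the count of irreducibles by $\theta$-regular classes), which Cheng needs elsewhere but which is not required for this statement.
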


A homomorphism of groups $f:H \to \Gamma$ induces a
pullback 2-cocycle $f^\ast\theta$ for $H$, and
a homomorphism
\begin{equation*}
    f:\IC^{f^\ast\theta}[H] \to \IC^\theta[\Gamma]
\end{equation*}
(we abuse notation to denote it by the same letter).

Recall the \emph{restriction} functor $f^\ast:\Mod_{S}\to \Mod_{R}$ associated to a
map of rings $f:R \to S$ is given by
$f^\ast = \Hom_S({}_S S_R, - ) = {}_R S_S \tensor_S -$,
and thus has both a left and a right adjoint, supplied by the
$\tensor$-$\Hom$-adjunction:
\begin{equation*}
    (f_! \dashv f^\ast \dashv f_\ast):
    \Mod_{R}
    \ \substack{\longrightarrow\\\longleftarrow\\\longrightarrow} \
    \Mod_{S}.
\end{equation*}
The left adjoint $f_! = {}_S S_R \tensor -$ is called
\emph{extension of scalars}. When $S$ and $R$ are twisted group
rings, this operation corresponds to \emph{induction} of 
twisted representations.
The right adjoint $f_\ast = \Hom_R({}_R S_S, -)$ is called
\emph{coextension of scalars} and specialises to \emph{coinduction} for
group rings.

\begin{prop}
    \label{prop:inductionIsCoinduction}
    If $f:H \into \Gamma$ is an inclusion of \emph{finite groups},
    induction is equivalent to coinduction:
    \begin{equation*}
        f_! \simeq f_\ast: \Mod_{\IC^{f^\ast\theta}[H]} \to
        \Mod_{\IC^{\theta}[\Gamma]}.
    \end{equation*}
\end{prop}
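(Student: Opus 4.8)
The plan is to recognise the inclusion $f\colon H \into \Gamma$ as a \emph{Frobenius extension} of the twisted group rings $R \define \IC^{f^\ast\theta}[H]$ and $S \define \IC^\theta[\Gamma]$, and to exploit the resulting self-duality of $S$ as a bimodule. Using the bimodule descriptions $f_! = {}_S S_R \tensor_R -$ and $f_\ast = \Hom_R({}_R S_S, -)$ recalled above, I would reduce the claim to producing a single isomorphism of $(S,R)$-bimodules ${}_S S_R \iso {}_S (S^\ast)_R$, where $S^\ast \define \Hom_R({}_R S, R)$ is the left $R$-linear dual. Such a bimodule isomorphism upgrades at once to a natural isomorphism $f_! \iso f_\ast$ of functors, which is exactly the assertion.

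First I would record that $S$ is finitely generated and free as a left $R$-module. Picking representatives $\gamma_1,\dots,\gamma_n$ for the right cosets $H\backslash\Gamma$, the elements $\underline{\gamma_i}$ form a basis: the left action $\underline{h}\cdot\underline{\gamma_i} = \theta(h,\gamma_i)\,\underline{h\gamma_i}$ sweeps out the coset $H\gamma_i$ with invertible scalars $\theta(h,\gamma_i) \in \IC^\times$, so each $R\,\underline{\gamma_i}$ is free of rank one and $S = \DirSum_i R\,\underline{\gamma_i}$. Finite generation and projectivity are precisely what the tensor--hom (dual basis) isomorphism requires: for every $V \in \Mod_R$ the natural map $S^\ast \tensor_R V \to \Hom_R({}_R S, V)$, $\phi \tensor v \mapsto (x \mapsto \phi(x)\,v)$, is then an isomorphism. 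This identifies $f_\ast(V) \iso {}_S (S^\ast)_R \tensor_R V$ and reduces everything to the bimodule isomorphism $S \iso S^\ast$.

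To construct that isomorphism I would use the \emph{conditional expectation} $E\colon S \to R$ which projects onto the $R$-span of $H$, sending $\underline{\gamma}$ to itself when $\gamma \in H$ and to $0$ otherwise. A direct check on basis elements shows $E$ is $(R,R)$-bilinear (the cocycle factors produced by multiplication are absorbed by $E$), so the $R$-valued pairing $\langle a,b\rangle \define E(ab)$ is associative, $\langle ab,c\rangle = \langle a,bc\rangle$. Define $\beta\colon S \to S^\ast$ by $\beta(a) = (x \mapsto E(xa))$. Associativity of the pairing makes $\beta$ left $S$-linear and right $R$-linearity of $E$ makes $\beta$ right $R$-linear, so $\beta$ is a morphism of $(S,R)$-bimodules. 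Injectivity follows by evaluation: if $E(xa)=0$ for all $x$, then choosing $x=\underline{\gamma_0^{-1}}$ with $\gamma_0$ running over a transversal of $\Gamma/H$ extracts, coset by coset, the coefficients $c_\gamma\,\theta(\gamma_0^{-1},\gamma)$ as distinct basis vectors of $R$ with nonzero scalars, forcing all $c_\gamma = 0$. Since $\dim_\IC S = \dim_\IC S^\ast = |\Gamma|$, injectivity yields the desired isomorphism.

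The step I expect to be the real obstacle is \textbf{nondegeneracy} of $\langle-,-\rangle$, equivalently surjectivity of $\beta$: this is exactly the Frobenius property of the extension, and it is the only place where \emph{finiteness} of $[\Gamma:H]$ is essential --- without it $S$ fails to be finitely generated over $R$ and the dual basis argument collapses. The accompanying bookkeeping is to check that the twisting cocycle never spoils bilinearity of $E$ or of the pairing; this reduces to elementary cocycle identities (for instance $\theta(\gamma,\gamma^{-1}) = \theta(\gamma^{-1},\gamma)$, obtained by specialising the $2$-cocycle equation), but carries no conceptual difficulty. An alternative, more structural phrasing of the same argument is to observe that $S$ is a symmetric Frobenius $\IC$-algebra and that $E$ is the relative Frobenius form of $R \into S$; I would mention this viewpoint, since it makes transparent why induction and coinduction are forced to agree.
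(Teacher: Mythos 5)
Your proof is correct, but it takes a genuinely different route from the paper. The paper proceeds by writing down an explicit natural isomorphism (a twisted Nakayama isomorphism)
\begin{equation*}
    \Hom_{\IC^{f^\ast\theta}[H]}(\IC^\theta[\Gamma],V) \to
    \IC^{\theta}[\Gamma] \tensor_{\IC^{f^\ast\theta}[H]} V, \qquad
    \phi \mapsto \frac{1}{|H|} \sum_{x \in \Gamma}
    \frac{1}{\theta(x^{-1},x)}\, \underline{x^{-1}} \tensor \phi(\underline{x}),
\end{equation*}
together with an explicit inverse, and verifies well-definedness by direct cocycle manipulations. You instead argue structurally: $S \define \IC^\theta[\Gamma]$ is free of finite rank over $R \define \IC^{f^\ast\theta}[H]$ on coset representatives, so $f_\ast \simeq \Hom_R({}_RS,-) \simeq S^\ast \tensor_R -$ by the dual basis lemma, and the conditional expectation $E$ yields an $(S,R)$-bimodule map $\beta: S \to S^\ast$ which is injective and hence bijective by the dimension count $\dim_\IC S = \dim_\IC S^\ast = |\Gamma|$. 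All the steps check out; in fact your worry about the twisting cocycle is unfounded in a pleasant way --- bilinearity of $E$ and associativity of the pairing need no cocycle identities beyond the associativity of $S$ itself (into which the cocycle condition is already baked), and since $f^\ast\theta$ is the restriction of $\theta$, multiplication of $H$-elements in $S$ agrees with multiplication in $R$ on the nose. What your approach buys is conceptual clarity: it exhibits $R \into S$ as a Frobenius extension and makes the equivalence of induction and coinduction inevitable rather than a formulaic coincidence. What the paper's approach buys is the explicit formula itself: the paper immediately reuses the Nakayama map as the unit of the ambidextrous adjunction, computes $\eps \comp \eta = |\Gamma|/|H|$, and splits the resulting idempotent (Corollary~\ref{cor:KaroubiCompletionOfInductionIsAll}); recovering these data from your $\beta$ would require unwinding the dual basis identification, which is possible but less direct.
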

The proof below is adapted to the $\theta$-twisted case
from~\cite{MSE2012NakayamaIso,MSE2020NakayamaIso2}.
\begin{proof}
    Let $V$ be a left $\IC^{f^\ast\theta}[H]$-module.
    The isomorphism between the induction and coinduction functor is furnished
    by the twisted Nakayama isomorphism
    \begin{align*}
        \mathrm{Nak}:\Hom_{\IC^{f^\ast\theta}[H]}(\IC^\theta[\Gamma],V) & \to
        \IC^{\theta}[\Gamma] \tensor_{\IC^{f^\ast\theta}[H]} V                                                  \\
        \phi                                                          & \mapsto \frac{1}{|H|} \sum_{x \in \Gamma}
        \frac{1}{\theta(x^{-1},x)} \underline{x^{-1}} \tensor \phi(\underline{x}),
    \end{align*}
    which is manifestly natural in $V$.
    Its inverse is given by
    \begin{align*}
        \mathrm{Nak}^{-1}:\IC^{\theta}[\Gamma] \tensor_{\IC^{f^\ast\theta}[H]} V & \to
        \Hom_{\IC^{f^\ast\theta}[H]}(\IC^\theta[\Gamma],V)                                 \\
        \underline{x} \tensor v                                                    & \mapsto
        \theta(x^{-1},x) \sum_{h \in H} \frac{1}{\theta(h,x^{-1})}\underline{hx^{-1}}^\ast \tensor h.v,
    \end{align*}
    where $\underline{x}^\ast \in \Hom_{\IC}(\IC^\theta[G], \IC)$ denotes the
    function that sends $\underline{x} \mapsto 1$ and all other generators to 0.

    It is tedious but straightforward to check that these
    morphisms are indeed well-defined and compatible with the
    $\IC^{\theta}[\Gamma]$-module structure.
    We show why the map $\mathrm{Nak}^{-1}$ is well-defined as an example.
    In $\IC^{\theta}[\Gamma] \tensor_{\IC^{f^\ast\theta}[H]} V$,
    \begin{equation*}
        \underline{x} \tensor v = \underline{xh^{-1}h}\tensor v =
        \frac{1}{\theta(xh^{-1},h)} \underline{xh^{-1}} \tensor h.v
    \end{equation*}
    for all $h \in H$. Under $\mathrm{Nak}^{-1}$, this is sent to
    \begin{align*}
        \frac{1}{\theta(xh^{-1},h)} \underline{xh^{-1}} \tensor h.v & \mapsto
        \frac{\theta(hx^{-1},xh^{-1})}{\theta(xh^{-1},h)} \sum_{h^\prime \in H}
        \frac{1}{\theta(h^\prime,hx^{-1})}\underline{h^\prime hx^{-1}}^\ast \tensor h^\prime.h.v \\ &=
        \sum_{h^\prime \in H} \frac{\theta(hx^{-1},xh^{-1})\theta(h^\prime,h)}{\theta(xh^{-1},h)\theta(h^\prime,hx^{-1})}
        \underline{h^\prime hx^{-1}}^\ast \tensor (h^\prime h).v                                 \\ &=
        \sum_{h'' \in H} \frac{\theta(hx^{-1},xh^{-1})\theta(h''h^{-1},h)}{\theta(xh^{-1},h)\theta(h''h^{-1},hx^{-1})}
        \underline{h''x^{-1}}^\ast \tensor h''.v
    \end{align*}
    The cocycle conditions for the triples $(y,1,z)$ and $(y,y^{-1},y)$ imply that $\theta(y,1)=\theta(1,z)$ and
    $\theta(y,y^{-1})=\theta(y^{-1},y)$ for all $y,z \in \Gamma$.
    Together with these facts, the cocycle conditions for the triples
    $(xh^{-1},h,x^{-1})$ and $(h''h^{-1},h,x^{-1})$ complete the
    above verification:
    \begin{align*}
        \sum_{h'' \in H} \frac{\theta(hx^{-1},xh^{-1})\theta(h''h^{-1},h)}{\theta(xh^{-1},h)\theta(h''h^{-1},hx^{-1})}
        \underline{h''x^{-1}}^\ast \tensor h''.v & =
        \sum_{h'' \in H} \frac{\theta(x^{-1},x)}{\theta(h'',x^{-1})} \underline{h''x^{-1}}^\ast \tensor h''.v
        \\ &= \mathrm{Nak}^{-1}(\underline{x} \tensor v). \qedhere
    \end{align*}
\end{proof}

\begin{defn}[\cite{lauda2006frobenius}]
    An \emph{ambidextrous adjunction} is a
    triple of adjunctions
    \begin{equation*}
        F \dashv G \dashv F': \cat{D}\ \substack{\longrightarrow\\\longleftarrow\\\longrightarrow}\ \cat{C}
    \end{equation*}
    equipped with
    an isomorphism of functors $F \simeq F'$.
\end{defn}
The endofunctor $T=F'G \simeq FG \in \End(\cat{C})$ is
simultaneously a monad because of the adjunction $G \dashv F'$ and a
comonad because $F \dashv G$.
In other words, it is simultaneously an algebra
$(T, \mu, \eta)$ and a coalgebra $(T,\delta,\eps)$ in $\End(\cat{C})$.
The multiplication $\mu$ and comultiplication $\delta$
satisfy the Frobenius axiom
\begin{equation*}
    (\mu \tensor \ONE) \comp (\ONE \tensor \delta) =
    \delta \comp \mu = (\ONE \tensor \mu) \comp (\delta \tensor \ONE)
\end{equation*}
by the interchange law in bicategories (see~\cite{lauda2006frobenius}).
This structure is referred to as a \emph{Frobenius monad}
(first studied in~\cite{street2004frobenius}).

By Proposition~\ref{prop:inductionIsCoinduction}, we obtain an ambidextrous adjunction
\begin{equation*}
    \begin{tikzcd}
        {\Mod_{\IC^{f^\ast\theta}[H]}} &
        {\Mod_{\IC^{\theta}[\Gamma]}}.
        \arrow["f_\ast"{name=0, anchor=north},
        bend right, from=1-1, to=1-2]
        \arrow["f^\ast"{name=1, anchor=south},
        bend right, from=1-2, to=1-1]
        \arrow["{\bot\ \&\ \top}"{anchor=center},
        draw=none, from=0, to=1]
    \end{tikzcd}
\end{equation*}
Denote the associated monad on $\Mod_{\IC^\theta[\Gamma]}$ by
$(T,\mu,\eta,\delta,\eps)$.
For definiteness, we set
\begin{equation*}
    T(V) = 
    \IC^\theta[\Gamma] 
    \tensor_{\IC^{f^\ast\theta}[H]}
    \IC^\theta[\Gamma] 
    \tensor_{\IC^\theta[\Gamma]} V.
\end{equation*}
Then the counit $\eps: T \to \Id$ is given by evaluation
\begin{align*}
    \eps: T(V) & \to V                                    \\
    \underline{x} \tensor \underline{x'} \tensor v
               & \mapsto \underline{x}.\underline{x'}.v =
    \theta(x,x') \underline{xx'}.v
\end{align*}
while the unit $\eta: \Id \to T$
uses the Nakayama isomorphism given in the proof of
Proposition~\ref{prop:inductionIsCoinduction}
\begin{align*}
    \eta_V: V & \to T(V) \\
    v         & \mapsto
    \frac{1}{|H|} \sum_{x \in \Gamma}
    \frac{1}{\theta(x^{-1},x)} \underline{x^{-1}} \tensor
    \underline{x} \tensor v.
\end{align*}

The composite $\eps \comp \eta$, an endomorphism of the identity functor,
amounts to multiplication by a scalar:
\begin{align*}
    \eps_V \comp \eta_V: V & \to V   \\
    v                      & \mapsto
    \frac{1}{|H|} \sum_{x \in \Gamma} v = \frac{|\Gamma|}{|H|}v.
\end{align*}

This way, the natural transformation
\begin{equation*}
    e \define \frac{|H|}{|\Gamma|} \eta \comp \eps: T \to T
\end{equation*}
defines an idempotent on $T$, split by
\begin{equation*}
    \begin{tikzcd}
        {T} & \Id.
        \arrow["{\eps}"', shift right, from=1-1, to=1-2]
        \arrow["{\frac{|H|}{|\Gamma|}\eta}"', shift right, from=1-2, to=1-1]
    \end{tikzcd}
\end{equation*}
This identifies the identity functor as a direct summand of $T$,
and in particular $V \in \Mod_{\IC^\theta[\Gamma]}$ as a direct summand of
$T(V)$.
\begin{corollary}
    \label{cor:KaroubiCompletionOfInductionIsAll}
    The category $\Mod_{\IC^\theta[\Gamma]}$ is a
    Karoubi completion of the essential image of
    $\Mod_{\IC^{f^\ast\theta}[H]}$ under the
    induction functor.
    In other words, every $\IC^\theta[\Gamma]$-module
    is a direct summand of a module induced up from
    the subgroup $H$.
\end{corollary}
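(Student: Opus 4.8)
The plan is to read the corollary off directly from the split idempotent constructed in the preceding paragraphs, so only a small amount of formal bookkeeping remains. First I would recall that the natural transformation $e = \frac{|H|}{|\Gamma|}\,\eta \comp \eps$ was shown to be an idempotent on the monad $T = f_! f^\ast$, and that it splits through the identity functor via the retraction $\eps: T \to \Id$ and the section $\frac{|H|}{|\Gamma|}\eta: \Id \to T$. The key relation is $\eps \comp \bigl(\frac{|H|}{|\Gamma|}\eta\bigr) = \frac{|H|}{|\Gamma|}(\eps\comp\eta) = \id_{\Id}$, which uses precisely the scalar computation $\eps \comp \eta = \frac{|\Gamma|}{|H|}\id$.

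Next I would evaluate this splitting at an arbitrary module $V \in \Mod_{\IC^\theta[\Gamma]}$. The section $\frac{|H|}{|\Gamma|}\eta_V : V \to T(V)$ and retraction $\eps_V : T(V) \to V$ satisfy $\eps_V \comp \bigl(\frac{|H|}{|\Gamma|}\eta_V\bigr) = \id_V$, so $V$ is a retract of $T(V)$. By construction $T(V) = f_!\bigl(f^\ast(V)\bigr)$, where $f^\ast(V)$ is the restriction of $V$ to an $\IC^{f^\ast\theta}[H]$-module; hence $T(V)$ lies in the essential image of the induction functor $f_!$. This already establishes the informal second assertion of the corollary: every $\IC^\theta[\Gamma]$-module is a direct summand of a module induced up from $H$.

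To phrase this as the Karoubi completion statement I would invoke two standard facts. First, $\Mod_{\IC^\theta[\Gamma]}$ is idempotent-complete, since it is abelian and every idempotent endomorphism of a module splits. Second, when a full additive subcategory $\mathcal{A}$ sits inside an idempotent-complete category $\mathcal{B}$, the Karoubi completion of $\mathcal{A}$ is identified with the full subcategory of $\mathcal{B}$ on the retracts of objects of $\mathcal{A}$. Taking $\mathcal{A}$ to be the essential image of $f_!$ and $\mathcal{B} = \Mod_{\IC^\theta[\Gamma]}$, the previous paragraph shows that the retract-closure of $\mathcal{A}$ is all of $\mathcal{B}$, whence $\Mod_{\IC^\theta[\Gamma]}$ is the Karoubi completion of the essential image of induction.

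I do not anticipate a genuine obstacle, since the substantive input — the ambidexterity $f_! \simeq f_\ast$ from Proposition~\ref{prop:inductionIsCoinduction} and the explicit Nakayama (co)unit whose composite is the nonzero scalar $|\Gamma|/|H|$ — has already been carried out. The only point requiring care is the invertibility of $|\Gamma|/|H|$ in $\IC$, which is exactly what turns $e$ into an honest idempotent and guarantees the splitting; this holds because $H \subset \Gamma$ are \emph{finite} and we work over $\IC$.
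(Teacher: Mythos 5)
Your proof is correct and takes essentially the same route as the paper's: both exhibit an arbitrary $V$ as a retract of $T(V)=f_!f^\ast V$ via the split idempotent $\frac{|H|}{|\Gamma|}\,\eta \comp \eps$ coming from the ambidextrous adjunction, so that every module is a summand of one induced from $H$. Your closing remarks (idempotent-completeness of the module category and the retract-closure description of the Karoubi completion) simply make explicit what the paper leaves implicit.
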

\begin{proof}
    It suffices to show that every $V \in \Mod_{\IC^\theta[\Gamma]}$
    is a summand of a module of the form $f_\ast W$, where
    $W \in \Mod_{\IC^{f^\ast\theta}[H]}$.
    Setting $W = f^\ast V$, $V$ is a summand of $f_\ast W = T(V)$
    by the discussion above.
\end{proof}

In the special case where $H=\point \into \Gamma$ is the
inclusion of the trivial subgroup, the restriction functor takes
the underlying vector space of a $\theta$-twisted $\Gamma$-representation.
The induction functor sends $V \in \Vec$ to
the representation
\begin{equation*}
    f_\ast V = \IC^\theta[\Gamma] \tensor V.
\end{equation*}

\subsection{Orbifolds as Groupoids}
\label{sec:orbifolds}
Just as a manifold is locally modelled on $\IR^n$,
an orbifold is locally modelled on $\IR^n/\Gamma$, where $\Gamma$
is a finite group acting on $\IR^n$.
As argued in~\cite{moerdijk1997orbifolds,moerdijk2002introduction,lerman2010orbifolds},
orbifolds are best thought of as stacks presented by Lie groupoids.

A \emph{Lie groupoid}
$G_\bullet$ is a pair of manifolds $G_1, G_0$ with two surjective submersions
$s,t:G_1 \rightrightarrows G_0$ and smooth maps
\begin{align*}
    \comp: G_1 \times_{G_0} G_1 \to G_1 &  &
    \id_{-}: G_0 \to G_1                &  &
    (-)^{-1}: G_1 \to G_1,
\end{align*}
giving it the structure of a groupoid internal to $\Man$.
The \emph{stabiliser group} $G_x$ of a point $x \in G_0$ is
the Lie group of arrows $x \to x$.

The Lie groupoid $G_\bullet$ is called \emph{proper} if the map
\begin{equation*}
    G_1 \xrightarrow{(s,t)} G_0 \times G_0
\end{equation*}
is proper.
It is \emph{\'etale} if the maps $s,t$ are local homeomorphisms $G_1 \to G_0$.
In a proper Lie groupoid, all stabiliser groups are compact.
In an \'etale Lie groupoid, they are discrete.
In a proper \'etale Lie groupoid, all stabiliser groups are finite.

\begin{example}
    The Lie groupoid $M \rightrightarrows M$ associated to a manifold $M$
    is a proper \'etale Lie groupoid.
\end{example}

\begin{example}
    The \emph{translation groupoid} $[M/\Gamma]$
    associated to a (right) action $M \curvearrowleft \Gamma$
    is given by
    \begin{equation*}
        \begin{tikzcd}
            M \times \Gamma \arrow[r, "\pi_M", shift right] \arrow[r, "\rho"', shift left] & M.
        \end{tikzcd}
    \end{equation*}
    It is proper iff the group action is proper in the
    traditional sense. It is \'etale iff $\Gamma$ is discrete.
\end{example}

\begin{example}
    The \emph{classifying stack} $[\point/\Gamma]$ is the special case where
    $M$ is a point. It is proper iff $\Gamma$ is compact and \'etale iff
    $\Gamma$ is discrete.
\end{example}

A \emph{smooth functor} of Lie groupoids $G_\bullet \to H_\bullet$
is a pair of smooth maps
$G_0 \to H_0$, $G_1 \to H_1$ compatible with composition and units.
Denote by 
\[
    \Hom(-,G_\bullet): \Man^\opp \to \CAT
\]
the 2-presheaf that sends a manifold $M$
to the category of smooth
functors $(M \rightrightarrows M) \to (G_1 \rightrightarrows G_0)$.
The stack associated to a Lie groupoid is given by the stackification
of this 2-presheaf,
${\Hom(-,G_\bullet)}^\#$.
We call it the stack \emph{represented by} $G_\bullet$.

Recall that a \emph{differentiable stack} is a stack which may be represented by a
Lie groupoid.
\begin{definition}
    A differentiable stack is an \emph{orbifold} if it is representable by a
    proper \'etale Lie groupoid.
\end{definition}

When $G_\bullet$ is an orbifold, the stabiliser group $G_x$ of $x \in G_0$
acts on a small neighbourhood of $x$~\cite[Prop III.5.2]{carchedi2011categorical}.
Recall a group action is called \emph{effective} if the only element
acting trivially is the identity element.
\begin{definition}
    An orbifold is \emph{effective} if all stabilisers $G_x$ act effectively
    on a small neighbourhood of $x$.
\end{definition}

Effectivity can be checked locally. Every
orbifold is locally of the form $[M/\Gamma]$.
Effectivity of the orbifold is then equivalent to the effectivity of the
group action $M \curvearrowleft \Gamma$ for each such local patch.
In particular, effectivity implies that a generic point of the
orbifold has trivial stabiliser.

\begin{example}
    The stack $[\point/H]$ is an orbifold iff the group $H$ is finite.
    If $H$ is nontrivial, $[\point/H]$ is \emph{not} an effective orbifold.
\end{example}

Let $\pi_0: \CAT \to \Set$ denote the functor sending a category to its set of isomorphism classes.
The points of an orbifold $\orb{M}$ form a category $\orb{M}(\point)$. Its set of isomorphism classes
$|\orb{M}| \define \pi_0\orb{M}(\point)$ comes equipped with the following data:
Each functor $f:M \to \orb{M}$ induces a map of sets $|f|:M \to |\orb{M}|$, and $f$ is determined by
$|f|$ up to isomorphism. The collection of maps to $|\orb{M}|$ obtained this way assemble into
a sheaf of sets
\begin{equation*}
    \pi_0 \comp \orb{M}: \Man^\opp \to \CAT \to \Set,
\end{equation*}
whose value on the point is $|\orb{M}|$. In fact, the sheaf
satisfies the condition of being \emph{concrete}~\cite{baez2011convenient}:
\begin{defn}
    A sheaf $\sh{F}:\Man^\opp \to \Set$ is \emph{concrete}
    if for all $U \in \Man$, the map
    \begin{align*}
        \sh{F}(U) & \to [\Hom(\point,U),\sh{F}(\point)] \\
        p         & \mapsto (u \mapsto u^\ast p)
    \end{align*}
    is injective.
\end{defn}
This condition says that sections over $U \in \Man$ are
`allowed families of sections over $\point \in \Man$':
every section is determined by its restrictions over each point $\point \to U$.
It was shown in~\cite{baez2011convenient} that this data is equivalent to
that of a \emph{diffeological space} in the sense
of~\cite{souriau1980groupes,souriau1984groupes,iglesias2013diffeology}.
The sheaf $\pi_0 \comp \orb{M}$ equips the set
$(\pi_0 \comp \orb{M})(\point)=|\orb{M}|$ with a \emph{diffeology}.
The value of the sheaf on $U \in \Man$ is the set of maps of underlying sets
$U \to |\orb{M}|$ that admit a lift to a map of orbifolds.
This gives $|\orb{M}|$ a topology: the
\emph{D-topology}~\cite{iglesias2013diffeology} is the
finest topology for which all these maps are continuous.
For an orbifold represented by a Lie groupoid $G_\bullet=G_1 \rightrightarrows G_0$,
this topology is the topology of the quotient space
$|G_\bullet|=G_0/G_1$.\footnote{
    The quotient space is obtained by quotienting out the underlying topological space of $G_0$ by
    the equivalence relation induced by $G_1$.
}
This is true more generally for differentiable
stacks~\cite{watts2014diffeological}.
\begin{thm}[{\cite[Thm B]{watts2017differential}}]
    The diffeological space $|\orb{M}|$ encodes the
    orbifold $\orb{M}$ up to equivalence.
\end{thm}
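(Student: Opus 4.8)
The plan is to promote the assignment $\orb{M} \mapsto |\orb{M}|$ to a fully faithful functor from the $2$-category of effective orbifolds to the category of diffeological spaces; a fully faithful functor reflects equivalences, so this yields the claim that $|\orb{M}|$ determines $\orb{M}$ up to equivalence. Functoriality is already available from the discussion above: a morphism of orbifolds induces a morphism of the associated concrete sheaves $\pi_0 \comp \orb{M}$, and hence a smooth map of diffeological spaces. Since orbifolds are stacks and therefore satisfy descent, and since every orbifold is locally equivalent to a translation groupoid $[\IR^n/\Gamma]$ with $\Gamma$ finite acting effectively (and, after averaging a Riemannian metric, orthogonally), both the reconstruction of objects and the lifting of morphisms reduce to local statements about the single diffeological chart $|[\IR^n/\Gamma]| \iso \IR^n/\Gamma$.

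First I would recover the local invariants $n$ and $\Gamma$ (up to conjugacy in $O(n)$) from the diffeological space $\IR^n/\Gamma$. Effectivity guarantees that the locus of points with trivial stabiliser is open and dense; near such a regular point the diffeology is exactly that of an open subset of $\IR^n$, which fixes the dimension $n$ and the smooth structure on the principal stratum. To pin down the group near a singular point I would use that the diffeologically admissible real-valued functions on $\IR^n/\Gamma$ are precisely the $\Gamma$-invariant smooth functions on $\IR^n$; by Schwarz's theorem these are generated by a finite set of polynomial invariants, whose common image realises $\IR^n/\Gamma$ as a semialgebraic subset of a Euclidean space, and the local geometry of this image (together with the slice theorem) recovers $\Gamma$. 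Effectivity is exactly what makes this possible: an element acting trivially would leave no trace in the diffeology, which is why the non-effective $[\point/H]$ is \emph{not} encoded by its underlying space.

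The fullness step is the technical heart. I must show that every smooth map of diffeological spaces $\IR^n/\Gamma \to \IR^m/\Lambda$ lifts to a $\Gamma$-equivariant smooth map of charts $\IR^n \to \IR^m$ covering a homomorphism of the structure groups, and that such local lifts glue to a morphism of representing groupoids. The construction of one lift again invokes Schwarz's theorem: composing the given map with the invariant coordinates of $\IR^m/\Lambda$ produces $\Gamma$-invariant smooth functions on $\IR^n$, which factor through polynomial generators and assemble into a smooth equivariant map, with the covering homomorphism determined uniquely by effectivity. The principal obstacle I anticipate is controlling these lifts across the singular strata and checking the cocycle compatibilities needed to descend the collection of local lifts to a single globally defined morphism of stacks.

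Finally, faithfulness follows from effectivity once more: an effective orbifold morphism is determined by its underlying continuous map of coarse quotient spaces, so distinct orbifold morphisms give distinct diffeological maps. Combining the reconstruction of $(n,\Gamma)$, fullness, and faithfulness shows $\orb{M} \mapsto |\orb{M}|$ is fully faithful, and therefore $|\orb{M}|$ encodes $\orb{M}$ up to equivalence.
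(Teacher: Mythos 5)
The paper itself offers no proof of this statement --- it is quoted from Watts --- so I am judging your proposal against the cited argument. Your strategy has a genuine gap: the functor $\orb{M} \mapsto |\orb{M}|$ from effective orbifolds to diffeological spaces is \emph{not} fully faithful, so full faithfulness cannot be the route to the theorem. It already fails to be faithful. Consider the two morphisms $S^1 \to [\IR/(\IZ/2)]$ given by the trivial double cover $S^1 \times \IZ/2$ and by the connected double cover of $S^1$, each equipped with the constant equivariant map to the fixed point $0 \in \IR$. These are non-isomorphic maps of stacks (the underlying principal $\IZ/2$-bundles are not isomorphic, and any bundle isomorphism automatically commutes with the constant maps), yet both induce the same map of diffeological spaces, namely the constant map at the singular point of $|[\IR/(\IZ/2)]| = [0,\infty)$. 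So your closing claim that ``an effective orbifold morphism is determined by its underlying continuous map of coarse quotient spaces'' is false; effectivity does not help, because the information lost is the bundle/isotropy data of the morphism, which is invisible on quotient spaces. Fullness is equally problematic: for a general diffeologically smooth map (as opposed to a diffeomorphism), equivariant local lifts need be neither unique nor mutually compatible, so the cocycle problem you yourself flag as ``the principal obstacle'' genuinely cannot be solved --- my constant-map example shows that even when lifts exist they can assemble into several inequivalent stack morphisms.

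What the theorem asserts --- and what Watts proves --- is the weaker statement of essential injectivity: a diffeomorphism $|\orb{M}| \isoto |\orb{N}|$ of diffeological spaces can be promoted to an equivalence $\orb{M} \simeq \orb{N}$ of orbifolds. Your local ingredients (you are right that effectivity must be assumed, as $[\point/H]$ shows; density of the regular stratum recovering $n$; recovering $\Gamma$ from the local diffeology via invariant functions) are the correct ones, but they must be aimed at \emph{diffeomorphisms} only. For a diffeomorphism of local models $\IR^n/\Gamma \to \IR^n/\Gamma'$, effectivity gives both the existence of an equivariant smooth lift and its uniqueness up to translation by the group actions, and it is exactly this uniqueness that supplies the cocycle data needed to glue local lifts into a global equivalence of stacks --- precisely the step that breaks down for arbitrary smooth maps. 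Restricting to diffeomorphisms loses nothing: essential injectivity is all the theorem claims, and a fully faithful comparison of morphisms is not available.
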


\subsection{Sheaves over Orbifolds}
We recall the relationship between an atlas and a representing groupoid for a stack.
Then we introduce a number of sites associated to an \'etale stack, most notably the small
site associated to an atlas.

\begin{defn}
    A map of smooth stacks $F:\st{X} \to \st{Y}$ is \emph{representable} if for
    every map $M \to \st{Y}$ with domain a manifold, the pullback
    $M \times_{\st{Y}} \st{X}$ is representable by a manifold.
    It is \emph{representably submersive} (we also call this a \emph{cover}) 
    if the maps $M \times_{\st{Y}} \st{X} \to M$ are further
    surjective submersions.
    It is \emph{representably \'etale} if the maps
    $M \times_{\st{Y}} \st{X} \to M$ are \'etale maps (ie.\
    local diffeomorphisms).
\end{defn}

The fibre product $\st{X} \times_\st{Z} \st{Y}$ of two maps of stacks
$f:\st{X} \to \st{Z} \ot \st{Y}:g$
is computed object-wise. It assigns to
$U \in \Man$ the category with:
\begin{itemize}
    \item objects: triples $(X \in \st{X}(U),Y \in \st{Y}(U),\phi:f(X) \isoto g(Y) \in \st{Z}(U))$
    \item morphisms $(X,Y,\phi) \to (X^\prime,Y^\prime,\phi^\prime)$: tuples
          $(\alpha:X \to X^\prime,\beta:Y \to Y^\prime)$ such that
          $\phi^\prime \comp f(\alpha) = g(\beta) \comp \phi$.
\end{itemize}

\begin{defn}
    \label{defn:atlas}
    An \emph{atlas} of a differentiable stack $\st{X}$ is a cover
    $X_0 \to \st{X}$ of $\st{X}$ by a manifold $X_0$.
    If the cover is representably \'etale, we call it an
    \'etale atlas.
\end{defn}

\begin{proposition}[{\cite[Prop 4.31]{lerman2010orbifolds}}]
    \label{prop:LieGpdFromAtlas}
    Given a stack $\st{X}$ with an atlas $X_0 \to \st{X}$,
    it is represented by the Lie groupoid
    \begin{equation*}
        X_0 \times_{\st{X}} X_0 \rightrightarrows X_0.
    \end{equation*}
\end{proposition}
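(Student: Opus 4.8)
The plan is to assemble the Lie groupoid from the atlas, exhibit a canonical comparison morphism to $\st{X}$, and show it is an equivalence.

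First I would check that $X_1 \define X_0 \times_{\st{X}} X_0$ is a manifold carrying a Lie groupoid structure over $X_0$. Since the atlas $a: X_0 \to \st{X}$ is a cover, it is in particular representable, so applying the defining property to $a$ itself (with test manifold $X_0$) shows $X_1$ is representable by a manifold; and because $a$ is representably submersive, both projections $s,t: X_1 \rightrightarrows X_0$ are surjective submersions. The remaining structure maps come from the universal property of the fibre product: the unit $\id: X_0 \to X_1$ is the diagonal classifying the identity isomorphism $a \isoto a$; the inverse swaps the two factors and inverts the comparison isomorphism; and composition $\comp: X_1 \times_{X_0} X_1 \to X_1$ forgets the middle object of a triple in $X_0 \times_{\st{X}} X_0 \times_{\st{X}} X_0$ while composing the two comparison isomorphisms. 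These are all smooth, and the groupoid axioms follow from coherence of composition of isomorphisms in $\st{X}$.

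Next I would construct the comparison morphism. By the explicit description of the fibre product, a point of $X_1$ over a test manifold $M$ is a triple $(f,g,\phi)$ with $f,g: M \to X_0$ and $\phi: a \comp f \isoto a \comp g$ in $\st{X}(M)$. In particular $a$ carries a tautological isomorphism $\phi^{\mathrm{taut}}: a \comp s \isoto a \comp t$ over $X_1$, and one checks it satisfies the cocycle condition over $X_1 \times_{X_0} X_1$. This is exactly a descent datum presenting $\st{X}$ as a $2$-categorical quotient of $X_1 \rightrightarrows X_0$, equivalently a morphism of prestacks $\Hom(-, X_1 \rightrightarrows X_0) \to \st{X}$ sending an object $f: M \to X_0$ to $a \comp f$. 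Since $\st{X}$ is already a stack, this factors through the stackification, yielding
\begin{equation*}
    \Phi: \Hom(-, X_1 \rightrightarrows X_0)^\# \to \st{X}.
\end{equation*}

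Finally I would prove $\Phi$ is an equivalence by analysing the prestack before stackification. Over $M$, its objects are the smooth maps $f: M \to X_0$, and a morphism $f \to g$ is a lift $\eta: M \to X_1$ with $s \comp \eta = f$ and $t \comp \eta = g$ — that is, by the description of $X_1$, precisely an isomorphism $a \comp f \isoto a \comp g$ in $\st{X}(M)$. Hence the comparison functor is fully faithful on each $\st{X}(M)$. It fails to be essentially surjective before stackification, since an arbitrary $T \to \st{X}$ need not lift to $X_0$; but because $a$ is a cover, the submersion $T \times_{\st{X}} X_0 \to T$ admits local sections, so every map into $\st{X}$ lifts through $a$ after passing to a cover of its domain, which is exactly local essential surjectivity. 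The main obstacle, and the step I would treat most carefully, is passing from these two facts to an equivalence of stacks: one invokes that a morphism of prestacks which is objectwise fully faithful and locally essentially surjective induces an equivalence on stackifications. Full faithfulness is preserved by stackification, local essential surjectivity becomes genuine essential surjectivity in the stack $\st{X}$, and together they furnish an inverse to $\Phi$, giving $\st{X} \simeq \Hom(-, X_1 \rightrightarrows X_0)^\#$.
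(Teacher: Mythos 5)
The paper does not actually contain a proof of this proposition: it is quoted from Lerman's paper (Prop.\ 4.31 of that reference) and used as a black box, so there is no internal argument to compare yours against. On its own merits, your proof is correct and complete, and it is the natural argument given this paper's convention that the stack represented by a Lie groupoid is the stackification of $\Hom(-,G_\bullet)$: representability of the cover gives $X_1$ as a manifold with submersive source and target, the universal property of the fibre product supplies the smooth structure maps, and the comparison morphism $\Phi$ is an equivalence because it is objectwise fully faithful (morphisms $f \to g$ over $M$ are exactly maps $M \to X_1$ lifting $(f,g)$, i.e.\ isomorphisms $a \comp f \isoto a \comp g$) and locally essentially surjective (a surjective submersion admits local sections). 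The one step carrying real weight is the lemma that a fully faithful, locally essentially surjective morphism of prestacks becomes an equivalence after stackification; you name it explicitly, and its hypotheses hold here — note in particular that the Hom-presheaves of $\Hom(-,X_1 \rightrightarrows X_0)$ are already sheaves (smooth maps into $X_1$ glue), so full faithfulness genuinely survives stackification. For comparison, Lerman's cited proof phrases the represented stack in terms of principal groupoid bundles (torsors) rather than stackified Hom-prestacks; the two formulations are equivalent, and your route avoids having to set up the torsor dictionary.
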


\begin{cor}
    A stack is differentiable iff it admits an atlas.
    It is \'etale iff it admits an \'etale atlas.
\end{cor}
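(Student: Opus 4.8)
The plan is to read both equivalences off Proposition~\ref{prop:LieGpdFromAtlas} in one direction, and off the canonical atlas of a representing groupoid in the other; each ``étale'' refinement is then obtained by running the same argument with the class ``surjective submersion'' replaced by ``local diffeomorphism''.

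For the ``if'' directions, I would start by assuming $\st{X}$ admits an atlas $X_0 \to \st{X}$. By Proposition~\ref{prop:LieGpdFromAtlas}, $\st{X}$ is represented by the groupoid $X_0 \times_{\st{X}} X_0 \rightrightarrows X_0$, so it remains only to check that this is a genuine Lie groupoid. Since the atlas is representably submersive, pulling the cover $X_0 \to \st{X}$ back along itself shows that $X_0 \times_{\st{X}} X_0$ is a manifold and that the two projections (which play the role of source and target) are surjective submersions; hence $\st{X}$ is differentiable. If moreover the atlas is étale, the same base change exhibits source and target as local diffeomorphisms, so the representing groupoid is étale and $\st{X}$ is an étale stack.

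For the ``only if'' directions, suppose $\st{X}$ is represented by a Lie groupoid $G_\bullet = G_1 \rightrightarrows G_0$. I would use the canonical map $p: G_0 \to \st{X}$ together with the standard identification $G_0 \times_{\st{X}} G_0 \iso G_1$, under which the two projections become the source and target maps $s,t$. To see that $p$ is an atlas I would invoke descent: the properties ``surjective submersion'' and ``local diffeomorphism'' are local on the target and stable under base change, so a representable map can be tested for them after pulling back along a single cover. Pulling $p$ back along itself yields $s: G_1 \to G_0$, which is a surjective submersion by the groupoid axioms; hence $p$ is a representably submersive cover, i.e.\ an atlas, and $\st{X}$ admits one. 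When $G_\bullet$ is étale, the same pullback presents $s$ as a local diffeomorphism, so $p$ is representably étale and furnishes an étale atlas.

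The main obstacle sits in this last direction: justifying that the canonical map $p:G_0 \to \st{X}$ is representable with $G_0 \times_{\st{X}} G_0 \iso G_1$, and that submersiveness (resp.\ étaleness) genuinely descends along $p$. The fibre-product identification rests on the 2-Yoneda and stackification machinery underlying Proposition~\ref{prop:LieGpdFromAtlas} --- concretely, a map $M \to \st{X}$ from a manifold is (locally) a $G_\bullet$-bundle, and $M \times_{\st{X}} G_0$ is its total space, hence a manifold --- while the descent step relies on faithfully-flat descent for these two local-on-target, base-change-stable classes of maps. Once both ingredients are in place the remaining verifications are routine bookkeeping.
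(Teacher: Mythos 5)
Your ``if'' direction is exactly the paper's: invoke Proposition~\ref{prop:LieGpdFromAtlas} and base-change the given atlas along itself to see that the resulting groupoid is a Lie groupoid (\'etale when the atlas is). The problem is in the ``only if'' direction, where your argument is circular as written. You propose to verify that $p\colon G_0 \to \st{X}$ is a representably submersive cover by ``testing after pulling back along a single cover'', and the cover you pull back along is $p$ itself. But descent of this kind is only available along a map already known to be a cover: the reduction from an arbitrary test map $M \to \st{X}$ to the chosen cover $U \to \st{X}$ proceeds by forming $M \times_{\st{X}} U \to M$ and using that \emph{this} is a surjective submersion of manifolds --- which, when $U = G_0$, is precisely the statement you are trying to prove. (In the ``if'' direction the same move is harmless, because there the atlas is given to be a cover.)

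The repair is the one you gesture at in your final paragraph, but it must replace the circular step rather than supplement it. By Lemma~\ref{lem:stackificationLocallyGlued}, any $M \to \st{X}$ admits a cover $\{U_i \to M\}$ over which it factors through $G_0$ (sections of the stackification locally come from the prestack $\Hom(-,G_\bullet)$, whose sections over $U_i$ are smooth maps $U_i \to G_0$). Over such a local lift one has $U_i \times_{\st{X}} G_0 \iso U_i \times_{G_0} G_1$, a base change of the source map $s\colon G_1 \to G_0$, hence a surjective submersion (and \'etale when $G_\bullet$ is \'etale); since both properties are local on the target for maps of manifolds, $M \times_{\st{X}} G_0 \to M$ inherits them. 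Equivalently, the principal-bundle description you mention already suffices: the projection of a principal $G_\bullet$-bundle is a surjective submersion by definition, so no further descent along $p$ is needed. Once the cover property of $p$ is secured this way, your descent-along-$p$ argument for the \'etale refinement does become legitimate, and it is a clean alternative to the paper's route, which instead reduces \'etaleness of the representable submersion $\pi$ to discreteness of its fibres over points and factors those points through $X_0$. For comparison, the paper simply asserts the submersion property of $\pi$ as standard; your proposal attempts to prove it, but the attempt as written does not go through.
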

\begin{proof}
    The Lie groupoid constructed in Proposition~\ref{prop:LieGpdFromAtlas}
    has source and target morphisms given by the pullbacks of the atlas
    projection along itself.
    A representably \'etale map $X_0 \to \st{X}$ pulls back to \'etale
    source and target maps $X_0 \times_{\st{X}} X_0 \to X_0$, thus
    giving the presented stack the structure of an \'etale stack.
    This covers the `if' direction of the statement.

    If $\st{X}$ is represented by $X_1 \rightrightarrows X_0$,
    there is a natural map $\pi:X_0 \to \st{X}$ induced by
    the identity map on objects.
    It is straightforward to check that $X_1$ represents
    $X_0 \times_{\st{X}} X_0$.
    The map $X_0 \to \st{X}$ is representably \'etale
    if and only if the pullback maps
    $X_0 \times_{\st{X}} X_0$ are:
    as $X_0 \to \st{X}$ is a representable submersion,
    it suffices to check that the pullback atlas
    over any point $\point \to \st{X}$ is \'etale (ie.\ discrete).
    Any point may be factored as $\point \to X_0 \to \st{X}$,
    and the resulting atlas
    $\point \times_\st{X} X_0 \to \point$ is pulled back
    from the \'etale map $X_0 \times_{\st{X}} X_0 \to X_0$.
    It is thus \'etale it itself, which shows $X_0 \times_{\st{X}} X_0$
    is representably \'etale.
\end{proof}

\begin{example}
    The morphism $\xi:\ast \to [\ast/G]$ is representable. For
    any morphism $f:M \to [\ast/G]$, the pullback
    $M \times_{[\ast/G]} \ast$ is the total space of the
    $G$-bundle classified by $f$.
    The map $\xi$ is further \emph{representably \'etale} if $G$ is discrete.
    More generally, morphisms $[\ast/H] \to [\ast/G]$
    are in bijection with homomorphisms $H \to G$.
    They are representable iff the associated homomorphism
    is injective, and representably \'etale iff the quotient $G/H$ is discrete.
\end{example}

\begin{defn}
    \label{def:bigSite}
    The \emph{big site} of a stack $\st{X}$ has underlying category
    $\Man/\st{X}$,
    whose objects are maps $f:M \to \st{X}$, where $M \in \Man$.
    A morphism $(f:M \to \st{X}) \to (g:N \to \st{X})$ is a pair
    $(h:M \to N, \phi: g \comp h \Rightarrow f)$ making
    \begin{equation*}
        \begin{tikzcd}
            M && N \\
            {} \\
            & {\orb{M}}
            \arrow["h", from=1-1, to=1-3]
            \arrow[""{name=0, anchor=center, inner sep=0}, "f"', from=1-1, to=3-2]
            \arrow["g", from=1-3, to=3-2]
            \arrow["\phi", shift left=1, shorten <=14pt, shorten >=14pt, Rightarrow, from=1-3, to=0]
        \end{tikzcd}
    \end{equation*}
    commute.
    The structure as a ringed site is inherited via the forgetful functor
    $\Man/\st{X} \to \Man$.
    We retain the notation $\Cinf$ for the structure sheaf.
\end{defn}
Sheaves over the big site of $\st{X}$ are
to be thought of as families of sheaves over $\Man$
\emph{parameterised by $\st{X}$}. In particular,
sheaves over the big site of the point $\point \in \Man$
are sheaves over the site $\Man/\point \simeq \Man$.
This is markedly different
from the category of sheaves over the site $\Op(\point)$.

For an \'etale stack $\orb{M}$, one can define a site which recovers
the behaviour of the site $\Op(X)$ of open subsets of a space $X$.
(See in particular~\cite{carchedi2010sheaf}.) We discuss
other versions of this site in Appendix~\ref{app:twistedSheaves}.
\begin{defn}
    The \emph{small site} (also called the \emph{\'etale site}) of an \'etale stack $\orb{M}$
    is the full subsite $\Et(\orb{M}) \subset \Man/\st{X}$
    on objects $(f:M \to \orb{M})$ where $f$ is representably \'etale.
\end{defn}

\begin{defn}
    A \emph{sheaf/stack} on an \'etale stack is a
    sheaf/stack on its small site.
\end{defn}
It is not hard to check that for a manifold $M$, the site
$\Op(M)$ is dense in $\Et(M)$, validating the above definition.
We will use $\ShC(\orb{M})$ to denote the category of $C^\infty$-modules over the
small site of $\orb{M}$, and refer to them simply as sheaves over $\orb{M}$.

\begin{example}
    \label{ex:sheavesOnBGamma}
    The small site of $[\point/\Gamma]$ has a dense subsite
    with a single object $\point$ whose endomorphisms form the group $\Gamma$.
    The structure sheaf $\Cinf$ assigns $\IC$ with its
    usual ring structure to $\point$.
    The category of sheaves of $\Cinf$-modules is the category
    \begin{equation*}
        \ShC([\point/\Gamma]) = \Rep(\Gamma)
    \end{equation*}
    of $\IC$-linear representations of $\Gamma$.
\end{example}

Let $\sh{F}$ be a sheaf over an orbifold $\orb{M}$.
For each point $x: \point \to \orb{M}$, one may compute
the pullback $x^\ast\sh{F} \in \ShC(\point)=\Vec$.
The points for which this pullback is non-empty form
a full subcategory $\categoryname{Supp}\ \sh{F} \subset \orb{M}(\point)$.
\begin{defn}
    \label{def:supportOfSheafOverOrbifold}
    The \emph{support} of a sheaf over an orbifold is
    the diffeological subspace
    of the quotient space given by
    \begin{equation*}
        \supp\sh{F} \define \pi_0 \categoryname{Supp}\ \sh{F} \subset 
        \pi_0 \orb{M}(\point) = |\orb{M}|.
    \end{equation*}
\end{defn}

The support plays well with pushforwards and pullbacks.
\begin{lemma}
    \label{lem:behaviourOfSupportUnderPushfwdAndPullback}
    Let $\sh{F} \in \ShC(\orb{M})$, $\sh{F}' \in \ShC(\orb{N})$,
    be sheaves of $\Cinf$-modules over \'etale stacks
    and $f:\orb{M} \to \orb{N}$ a map between the stacks.
    Then
    \begin{align*}
        \supp{f_\ast \sh{F}} = f(\supp \sh{F}) \subset |\orb{N}|
         &  &
        \supp{f^\ast \sh{F}'} = f^{-1}(\supp \sh{F}') \subset |\orb{M}|
    \end{align*}
\end{lemma}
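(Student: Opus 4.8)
My plan is to test both identities one point at a time, using the description of the support from Definition~\ref{def:supportOfSheafOverOrbifold}: a point lies in $\supp\sh{G}$ exactly when the pullback of $\sh{G}$ to that point is nonzero. Concretely, for any sheaf $\sh{G}$ over an \'etale stack and any point $y:\point\to\orb{N}$ one has $y\in\supp\sh{G}$ if and only if $y^\ast\sh{G}\neq 0$ in $\ShC(\point)=\Vec$. Both equalities therefore reduce to computing the fibre of $f^\ast\sh{F}'$ (respectively $f_\ast\sh{F}$) over an arbitrary point and deciding when it vanishes.

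The pullback identity follows at once from the strict functoriality of pullback. For a point $x:\point\to\orb{M}$ the composite $f\comp x$ is just the point $f(x):\point\to\orb{N}$, so $x^\ast f^\ast\sh{F}'\iso(f\comp x)^\ast\sh{F}'=f(x)^\ast\sh{F}'$. Hence $x$ lies in $\supp f^\ast\sh{F}'$ precisely when $f(x)\in\supp\sh{F}'$, i.e.\ precisely when $x\in f^{-1}(\supp\sh{F}')$, which is the asserted equality $\supp f^\ast\sh{F}'=f^{-1}(\supp\sh{F}')$.

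For the pushforward I would argue by base change along points. Fix $y:\point\to\orb{N}$, form the fibre $\orb{F}_y\define\orb{M}\times_{\orb{N}}\point$ with its projections $q:\orb{F}_y\to\orb{M}$ and $p:\orb{F}_y\to\point$, and invoke the pullback--pushforward adjunction of Appendix~\ref{app:twistedSheaves} (checked \'etale-locally) to obtain a base-change isomorphism $y^\ast f_\ast\sh{F}\iso p_\ast q^\ast\sh{F}$. By the pullback identity already proved, $q^\ast\sh{F}$ is supported on $q^{-1}(\supp\sh{F})$, and this is nonempty exactly when the fibre $f^{-1}(y)$ meets $\supp\sh{F}$, that is, exactly when $y\in f(\supp\sh{F})$. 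One inclusion is then immediate: if $y\notin f(\supp\sh{F})$ the fibre avoids the support, so $q^\ast\sh{F}=0$ and hence $y^\ast f_\ast\sh{F}\iso p_\ast q^\ast\sh{F}=0$, giving $\supp f_\ast\sh{F}\subseteq f(\supp\sh{F})$.

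The reverse inclusion is the crux and the step I expect to be the main obstacle, because it requires that pushing a nonzero sheaf on the fibre forward to the point not annihilate it --- i.e.\ that $p_\ast$ reflect nonvanishing. This is genuinely false for an arbitrary map (pushforward to a point is an invariants functor, which can kill a nonzero representation), so the \'etale/representable nature of the maps in play must be used: the fibre $\orb{F}_y$ of a representable \'etale map over a point is a discrete manifold with trivial stabilisers, and $p_\ast$ along such a fibre is a finite product, which visibly preserves nonvanishing. More conceptually, in the stacky directions this nonvanishing is exactly the statement that the relevant coinduction agrees with induction, which is Proposition~\ref{prop:inductionIsCoinduction}; since induction carries nonzero modules to nonzero modules, $p_\ast q^\ast\sh{F}\neq 0$ whenever $q^\ast\sh{F}\neq 0$. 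Combining the two inclusions yields $\supp f_\ast\sh{F}=f(\supp\sh{F})$, and the real work of the proof is thus concentrated in (i) pinning down the base-change isomorphism in the \'etale-stack setting and (ii) the \'etale-local reduction of $p_\ast$ to the finite-group (co)induction of Proposition~\ref{prop:inductionIsCoinduction}.
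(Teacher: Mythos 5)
Your proof of the pullback identity is correct, and it is exactly the paper's (very terse) argument: the paper's entire proof of Lemma~\ref{lem:behaviourOfSupportUnderPushfwdAndPullback} is the one-line remark that both statements ``may be checked pointwise,'' which for the pullback amounts precisely to your observation that $x^\ast f^\ast \sh{F}' \iso {(f\comp x)}^\ast \sh{F}'$.

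For the pushforward identity your proposal is more elaborate than the paper's, but as written it has two genuine gaps. First, the base-change isomorphism $y^\ast f_\ast \sh{F} \iso p_\ast q^\ast \sh{F}$ along a point is asserted, not proved: the paper's base-change result (Lemma~\ref{lem:baseChange}) applies only when the map one pulls back along is representably \'etale, and a point $y:\point \to \orb{N}$ of a positive-dimensional \'etale stack is \emph{not} representably \'etale (its pullback along an atlas is a discrete set sitting inside a positive-dimensional manifold). You flag this yourself as ``the real work,'' but it is left undone, so the argument is incomplete. Second, your resolution of the reverse inclusion uses that $f$ is representably \'etale (so that fibres over points are discrete with finite stabilisers, where Proposition~\ref{prop:inductionIsCoinduction} applies); this is not a hypothesis of the lemma, which allows an arbitrary map of \'etale stacks.

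That said, the obstacle you identified is real, and it actually shows the lemma is \emph{false} in the stated generality --- a point the paper's pointwise one-liner glosses over. Take $f:[\point/(\IZ/2)] \to \point$ and let $\sh{F}$ be the sign representation, viewed as a sheaf on $[\point/(\IZ/2)]$ via Example~\ref{ex:sheavesOnBGamma}. Since $f_\ast$ is right adjoint to $f^\ast$, it is the invariants functor, so $f_\ast\sh{F}=0$ and $\supp f_\ast \sh{F} = \emptyset$, whereas $f(\supp\sh{F})$ is the point: the inclusion $f(\supp\sh{F}) \subseteq \supp f_\ast\sh{F}$ fails. So the extra hypotheses you smuggle in are not a defect of your taste but a necessity for the statement; in the paper's actual applications (e.g.\ the proof of Proposition~\ref{prop:locallyMorphismsAreFamiliesOfMatrices}) the maps involved are graphs of smooth maps, i.e.\ representable closed embeddings of manifolds, where both your argument and the paper's pointwise check are sound. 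To turn your proposal into a complete proof you would need to (i) restrict the statement to, say, representable (or representably \'etale) $f$, and (ii) actually establish the point-fibre base change for the pullback $y^\ast$ (the $\tensor_{\Cinf}$-pullback, not the stalk), which is where the remaining content lies.
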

\begin{proof}
    This may be checked pointwise. The stalk of the pushforward/pullback sheaf is
    non-empty precisely when the condition in the statement is satisfied.
\end{proof}

\begin{example}
    Let $E$ be a vector bundle over a manifold $M$ and $f:M \to \orb{N}$ a map to an \'etale stack.
    Then the support of $f_\ast E$ is $\im{f} \subset |\orb{N}|$.
\end{example}

\subsection{Gerbes and Twisted Sheaves}
\label{sec:GerbesAndTwistedSheaves}
We will follow~\cite{schommer2011central} and give a modern
definition of gerbes.
We introduce $\ger{G}$-twisted sheaves (associated to a
$\IC^\times$-gerbe $\ger{G}$) in Definition~\ref{def:GtwistedSheaf}.

Let $G$ be a group object in $\DiffSt$~\cite[Def 41]{schommer2009classification}
and $\st{X}$ a differentiable stack.
Denote by $\DiffSt/\st{X}$ the overcategory of $\st{X}$.
The object $\st{X} \times G$ is then a group object
in $\DiffSt/\st{X}$.
\begin{defn}[{\cite[Def 70]{schommer2011central}}]
    \label{defn:principal2GrpBundle}
    A \emph{principal $G$-bundle over $\st{X}$} is a stack
    $\ger{G} \in \DiffSt/\st{X}$, equipped with
    a right action of $\st{X} \times G$, locally equivalent to the
    trivial principal $G$-bundle:
    There exists a cover $f:\st{Y} \onto \st{X}$ such that the pullback
    $f^\ast \ger{G}=\st{Y} \times_{\st{X}} \ger{G} \in \DiffSt/\st{Y}$ is $G$-equivariantly
    equivalent to $\st{Y} \times G$ with the standard $\st{Y} \times G$-action.
\end{defn}
A principal $G$-bundle is also called a \emph{$G$-torsor}.

\begin{defn}
    \label{defn:ICtimesGerbeAsPrincipalBundle}
    A \emph{$\IC^\times$-gerbe} over $\st{X}$ is a principal
    $[\point/\IC^\times]$-bundle over $\st{X}$.
\end{defn}
We will exclusively work with $\IC^\times$-gerbes,
and just refer to them as gerbes.
The \emph{trivial gerbe} over a stack $\st{X}$ is the gerbe
\begin{equation*}
    \ger{I}_\st{X} \define \st{X} \times [\point/\IC^\times]
\end{equation*}
equipped with the projection morphism $\ger{I}_\st{X} \to \st{X}$.
When the base is clear from context, we omit the subscript and
simply write $\ger{I}$ to denote the trivial gerbe.

We may pull back a gerbe $\ger{G} \to \st{X}$ along a map
$\st{Y} \to \st{X}$ of stacks by forming the fibre product
\begin{equation*}
    \begin{tikzcd}
        f^\ast \ger{G} \ar[r] \ar[d] & \ger{G} \ar[d] \\
        \st{Y} \ar[r] & \st{X}.
        \arrow["\lrcorner"{anchor=center, pos=0.125}, draw=none, from=1-1, to=2-2]
    \end{tikzcd}
\end{equation*}
The trivial gerbe $\ger{I}_\st{X} \to \st{X}$ pulls back to the trivial gerbe
$\ger{I}_\st{Y} \to \st{Y}$ for any map $\st{Y} \to \st{X}$.
\begin{defn}
    A \emph{trivialisation} of a gerbe $\ger{G} \to \st{X}$ is an
    isomorphism $\ger{I}_\st{X} \eqto \ger{G}$ of gerbes over $\st{X}$.
    We call a gerbe \emph{trivialisable} if it admits a trivialisation.
\end{defn}
Every trivialisation gives rise to a section $\st{X} \to \ger{G}$
by composing $\ger{I}_\st{X} \eqto \ger{G}$ with the canonical section
$\st{X} \to \ger{I}_\st{X}$ of the trivial gerbe.
The $[\point/\IC^\times]$-torsor structure makes this assignment an equivalence
between trivialisations and sections.

By definition, every differentiable stack admits a cover by a
manifold, at atlas. Every cover $\st{Y} \to \st{X}$ is refined by
such an atlas. This means we may always replace $\st{Y}$ in
Definition~\ref{defn:principal2GrpBundle} by a manifold.
The local triviality of gerbes allows for a description of gerbes
over differentiable stacks via $\IC^\times$-extensions of Lie groupoids.
\begin{defn}
    A \emph{central $\IC^\times$-extension} of a Lie groupoid $X_\bullet$ is a
    smooth functor of Lie groupoids
    \begin{equation*}
        \begin{tikzcd}
            P & {X_1} \\
            {X_0} & {X_0}
            \arrow["\id",from=2-1, to=2-2]
            \arrow[from=1-1, to=1-2]
            \arrow[shift left=1, from=1-2, to=2-2]
            \arrow[shift right=1, from=1-2, to=2-2]
            \arrow[shift right=1, from=1-1, to=2-1]
            \arrow[shift left=1, from=1-1, to=2-1]
        \end{tikzcd}
    \end{equation*}
    together with a $\IC^\times$-action on $P$ that makes
    $P \to X_1$ a principal $\IC^\times$-bundle.
    The action is required to commute with composition
    of morphisms in $P \rightrightarrows X_0$.
\end{defn}

\begin{example}
    Central Lie groupoid $\IC^\times$-extensions of
    $[\point/G]=G \rightrightarrows \point$ are in bijection
    with central Lie group $\IC^\times$-extensions of $G$.
\end{example}

There is a dictionary between $\IC^\times$-gerbes over differentiable stacks
and central $\IC^\times$-ex\-ten\-sions of Lie
groupoids.
\begin{lemma}[\cite{laurent2009non,behrend2011differentiable}]
    \label{lem:gerbesVsLieGroupoidExtensions}
    Let $X_\bullet=X_1 \rightrightarrows X_0$ be a Lie groupoid presenting
    a stack $\st{X}$.
    There is a one-to-one correspondence between
    equivalence classes of
    $\IC^\times$-gerbes $\ger{G}$ over $\st{X}$
    equipped with a trivialisation over $X_0$,
    and isomorphism classes of central
    $\IC^\times$-extensions $\ger{G}_\bullet \to X_\bullet$.
\end{lemma}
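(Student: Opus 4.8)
The plan is to exhibit the correspondence as an instance of descent along the atlas $X_0 \to \st{X}$, which by Proposition~\ref{prop:LieGpdFromAtlas} has $X_0^{[2]} = X_0 \times_{\st{X}} X_0 = X_1$. Since a $\IC^\times$-gerbe is by Definition~\ref{defn:ICtimesGerbeAsPrincipalBundle} a $[\point/\IC^\times]$-torsor, and such torsors are locally trivial (Definition~\ref{defn:principal2GrpBundle}), a gerbe $\ger{G} \to \st{X}$ is recovered by descent from its restriction to $X_0$ together with gluing data on the iterated fibre products $X_0^{[n]}$. My first step is to use the chosen trivialisation to rigidify this descent: a trivialisation of $f^\ast \ger{G}$ over the atlas is the same as a section $\sigma: X_0 \to \ger{G}$ over $X_0 \to \st{X}$, and fixing it collapses the a priori $2$-categorical descent to a $1$-categorical problem.

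For the direction from gerbes to extensions, I would pull $\sigma$ back along the two structure maps $s,t: X_1 \to X_0$. The fibre product $X_1 = X_0 \times_{\st{X}} X_0$ carries a tautological $2$-isomorphism between the two composites $X_1 \to X_0 \to \st{X}$, so $s^\ast\sigma$ and $t^\ast\sigma$ are two sections of one and the same restricted gerbe $\ger{G}|_{X_1}$. Because the fibres of a $[\point/\IC^\times]$-torsor are copies of $[\point/\IC^\times]$, the isomorphisms from $s^\ast\sigma$ to $t^\ast\sigma$ form a principal $\IC^\times$-bundle $P \define \Isom(s^\ast\sigma, t^\ast\sigma) \to X_1$. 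I would then verify that fibrewise composition of isomorphisms, evaluated over $X_2 = X_0^{[3]}$ where one has the three sections attached to the three projections to $X_0$, supplies a multiplication $P \times_{X_0} P \to P$ covering the composition of $X_\bullet$ and commuting with the $\IC^\times$-action. This is exactly the data of a central $\IC^\times$-extension $P \rightrightarrows X_0$ of $X_\bullet$.

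Conversely, a central extension $P \rightrightarrows X_0$ \emph{is} a $\IC^\times$-bundle on $X_1$ together with an associative multiplication over $X_2$, which is precisely the gluing data for a $[\point/\IC^\times]$-torsor trivialised over $X_0$; descent along $X_0 \to \st{X}$ then produces a gerbe $\ger{G}$ over $\st{X}$, canonically trivialised over the atlas. I would close the argument by checking that the two constructions are mutually inverse on the relevant classes: an equivalence of gerbes respecting the trivialisations induces, and is induced by, an isomorphism of the associated $\IC^\times$-bundles compatible with multiplication, i.e.\ an isomorphism of central extensions.

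The hard part will be the coherence bookkeeping in the gerbe-to-extension direction. Concretely, I must show that the higher associativity constraint of the $[\point/\IC^\times]$-torsor structure, which lives over the quadruple fibre product $X_0^{[4]} = X_3$, translates \emph{exactly} into the associativity of the multiplication on $P$, and that fixing $\sigma$ over $X_0$ really does kill all remaining $2$-categorical data so that no spurious condition survives on higher overlaps. Since the coefficient $2$-group $[\point/\IC^\times] \iso \B\IC^\times$ has $\IC^\times$ as its only nontrivial homotopy, the descent truncates after one categorical level once the section is chosen; making this truncation rigorous --- rather than asserting informally that ``the cocycle condition is associativity'' --- is where the real work lies. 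This is carried out in \cite{laurent2009non,behrend2011differentiable}, whose approach I would follow.
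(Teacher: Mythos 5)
Your proposal is correct, and since the paper offers no proof of this lemma at all --- it is imported verbatim from \cite{laurent2009non,behrend2011differentiable} --- your sketch (trivialisation $=$ section over the atlas, the $\IC^\times$-bundle $\Isom(s^\ast\sigma,t^\ast\sigma)$ over $X_1$ with multiplication from the coherence over $X_0^{[3]}$, and descent of the extension data back to a gerbe) is precisely the standard argument of those same references. The bookkeeping you flag as the hard part is exactly what those sources carry out, so deferring to them is consistent with what the paper itself does.
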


Given a presentation of a gerbe $\ger{G} \to \st{X}$ by
a $\IC^\times$-extension $P \to X_1 \rightrightarrows X_0$, we may extract a
2-cocycle $\theta \in Z^2(\st{X},\IC^\times)$~\cite[App A]{felder2008gerbe}:
If we choose $X_0$ fine enough, we may assume $X_1$ is
a disjoint union of contractible manifolds, and the $\IC^\times$-bundle $P \to X_1$
is equivalent to $X_1 \times \IC^\times \to X_1$.
The class of the $\IC^\times$-bundle in $\H^1(X_1,\IC^\times)$ becomes
a class in the cohomology of $\st{X}$ (see~\cite{behrend2004cohomology}
for a nice review of cohomology theories for stacks).
The class $[\theta] \in \H^2(\st{X},\IC^\times)$
of this 2-cocycle only depends on the equivalence class of the
gerbe $\ger{G} \to \st{X}$. The converse is also true:
\begin{thm}[{\cite{giraud1966cohomologie},\cite[Thm 3.13]{laurent2009non}}]
    \label{thm:gerbesClassifiedByH2}
    $\IC^\times$-gerbes over $\st{X}$ are classified
    up to equivalence by their associated class in $\H^2(\st{X},\IC^\times)$.
\end{thm}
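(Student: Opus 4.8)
The plan is to upgrade the already-constructed assignment $\ger{G} \mapsto [\theta] \in \H^2(\st{X}, \IC^\times)$ into a bijection by exhibiting an explicit inverse and checking the two directions agree, working throughout with a fixed atlas and the \v{C}ech--simplicial model for the cohomology of the stack. Concretely, I would fix an atlas $X_0 \to \st{X}$ and present $\st{X}$ by the Lie groupoid $X_\bullet = X_0 \times_{\st{X}} X_0 \rightrightarrows X_0$ as in Proposition~\ref{prop:LieGpdFromAtlas}. Refining $X_0$ if necessary, I may assume that the nerve $X_\bullet$ consists of disjoint unions of contractible manifolds, so that every $\IC^\times$-bundle appearing on any $X_n$ is trivialisable. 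Under this fineness assumption the cohomology $\H^\ast(\st{X}, \IC^\times)$ is computed by the total complex of the double complex built from \v{C}ech cohomology of the sheaf of smooth $\IC^\times$-valued functions along each $X_n$ together with the alternating-face simplicial differential; a degree-$2$ class is then represented by a single groupoid $2$-cocycle $\theta \in Z^2(X_\bullet, \IC^\times)$, i.e.\ a smooth $\IC^\times$-valued function on $X_2 = X_1 \times_{X_0} X_1$ satisfying the cocycle identity.

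Surjectivity is the easy direction. Given a class in $\H^2(\st{X}, \IC^\times)$, I represent it by such a groupoid $2$-cocycle $\theta$ and form the trivial $\IC^\times$-bundle $P = X_1 \times \IC^\times \to X_1$ equipped with the $\theta$-twisted composition law, exactly as in the central-extension multiplication $(z,\gamma)\cdot_\theta(z',\gamma') = (zz'\theta(\gamma,\gamma'),\gamma\gamma')$ recorded earlier. This is a central $\IC^\times$-extension of $X_\bullet$, so Lemma~\ref{lem:gerbesVsLieGroupoidExtensions} produces a gerbe $\ger{G}_\theta \to \st{X}$ whose extracted class is $[\theta]$ by construction.

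For injectivity, suppose two gerbes $\ger{G}, \ger{G}'$ carry the same class. After passing to a common refinement of their trivialising atlases, Lemma~\ref{lem:gerbesVsLieGroupoidExtensions} presents them by central extensions with cocycles $\theta, \theta'$ satisfying $[\theta]=[\theta']$, so $\theta/\theta' = \d\kappa$ for some $\IC^\times$-valued $1$-cochain $\kappa$ on $X_1$. Just as a trivialisation $\theta/\theta' = \d\kappa$ induced the explicit equivalence $\Rep^{\theta'}(\Gamma) \isoto \Rep^{\theta}(\Gamma)$ in Section~\ref{sec:projectiveReps}, here $\kappa$ defines a $\IC^\times$-equivariant isomorphism $P \isoto P'$ of central extensions over $X_\bullet$, which by the dictionary of Lemma~\ref{lem:gerbesVsLieGroupoidExtensions} descends to an equivalence $\ger{G} \eqto \ger{G}'$ of gerbes over $\st{X}$. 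The same computation simultaneously establishes well-definedness: a coboundary change of cocycle yields an isomorphic gerbe, and refining the atlas leaves the class unchanged.

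The main obstacle is the last, bookkeeping-heavy point: verifying that the extracted class really is independent of all choices and that the \v{C}ech--simplicial complex genuinely computes $\H^2(\st{X}, \IC^\times)$. This requires a spectral-sequence (or acyclicity) argument showing that, once the atlas is fine enough that all bundles on the $X_n$ trivialise, the $\IC^\times$-coefficient double complex collapses onto the groupoid cocycles, together with a compatibility check that refining the atlas --- replacing $X_0$ by a finer cover and comparing the two presenting groupoids --- induces an isomorphism on $\H^2$ intertwining the two gerbe-to-cocycle assignments. I would either carry this out directly using the Morita-invariance of stack cohomology, or cite it from~\cite{laurent2009non, behrend2004cohomology}; the geometric content of the theorem is entirely contained in the two short constructions above, with this step supplying the rigour.
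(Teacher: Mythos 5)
The paper itself offers no proof of this statement --- it is imported wholesale from~\cite{giraud1966cohomologie} and~\cite[Thm 3.13]{laurent2009non} --- so your attempt must stand on its own, and as written it has a genuine gap at the stated level of generality. Everything hinges on your ``fineness'' assumption: that after refining the atlas, every class in $\H^2(\st{X},\IC^\times)$ is represented by a single smooth groupoid $2$-cocycle $\theta$ on $X_1 \times_{X_0} X_1$, and every coboundary by a globally smooth $\kappa$ on $X_1$. This reduction fails for differentiable stacks with non-discrete stabilisers, which the theorem is claimed to cover. Refining $X_0$ only shrinks $X_1$ in the directions of the base of $(s,t):X_1 \to X_0 \times X_0$; the isotropy directions are untouched. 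Concretely, take $\st{X}=[\point/PU(n)]$, with $\H^2(\st{X},\IC^\times) \iso \H^3(\mathrm{B}PU(n),\IZ) \iso \IZ/n$ generated by the gerbe presented by the central extension $U(1) \to U(n) \to PU(n)$. For \emph{any} atlas $X_0 \to \st{X}$, the isotropy fibres of $X_1$ over the diagonal of $X_0 \times X_0$ are copies of $PU(n)$, and the $\IC^\times$-bundle $P \to X_1$ of any presenting central extension restricts on them to the bundle $U(n) \to PU(n)$, which is non-trivial (a trivialisation would give $\pi_1(U(n)) \iso \IZ/n \times \IZ$, contradicting $\pi_1(U(n)) = \IZ$). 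So $P$ is not trivialisable over any atlas: this class is represented by no smooth groupoid cocycle, your surjectivity construction cannot reach the corresponding gerbe, and by the same mechanism two stack-cohomologous groupoid cocycles need not differ by $\d\kappa$ for a global smooth $\kappa$ on $X_1$, breaking injectivity. This is precisely the Segal--Mitchison phenomenon the paper flags in its footnote in Section~\ref{sec:projectiveReps}, where discreteness of $\Gamma$ is assumed exactly to rule it out.

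The gap is repairable in two different ways, and it matters which one you choose. If you restrict to \'etale stacks --- the orbifold setting in which the paper actually applies the theorem --- stabilisers are discrete, atlases with all relevant fibre products disjoint unions of contractibles exist, and your argument goes through essentially as written; it is the argument of Corollary~\ref{cor:classificationOfGerbesOnQuotientOrbifolds} run globally. At the stated generality, however, you must represent degree-$2$ classes in the full \v{C}ech--simplicial total complex (equivalently on a simplicial cover/hypercover, as the paper does for degree-$3$ classes in Section~\ref{sec:categorifiedGroupRings}): a class is then a pair consisting of \v{C}ech data for a possibly non-trivial bundle $P \to X_1$ together with a multiplication cocycle over a cover of the space of composable arrows, and the dictionary of Lemma~\ref{lem:gerbesVsLieGroupoidExtensions} must be matched against such pairs rather than against $\theta$-twisted trivial bundles. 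With that heavier bookkeeping your surjectivity and injectivity steps survive; this is in essence the proof in~\cite{laurent2009non}.
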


In particular, every gerbe over a stack $\st{X}$ with trivial
$\H^2(\st{X},\IC^\times)$ admits a trivialisation.
\begin{cor}
    \label{cor:classificationOfGerbesOnQuotientOrbifolds}
    Let $U$ be a contractible Euclidean space with a $\Gamma$-action
    (we assume $\Gamma$ is finite).
    Equivalence classes of $\IC^\times$-gerbes over the stack
    $[U/\Gamma]$ are in bijection with $\H^2(\B\Gamma,\IC^\times)$.
\end{cor}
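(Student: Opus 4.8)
The plan is to feed the general classification of gerbes, Theorem~\ref{thm:gerbesClassifiedByH2}, into a cohomology computation whose only geometric input is the contractibility of $U$.

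First I would apply Theorem~\ref{thm:gerbesClassifiedByH2} to identify equivalence classes of gerbes over $[U/\Gamma]$ with the group $\H^2([U/\Gamma], \IC^\times)$. The $\Gamma$-equivariant map $U \to \point$ induces a morphism of stacks $\pi: [U/\Gamma] \to [\point/\Gamma] = \B\Gamma$, and the claim becomes that the pullback $\pi^\ast: \H^2(\B\Gamma, \IC^\times) \to \H^2([U/\Gamma], \IC^\times)$ is an isomorphism.

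To compute the target I would present $[U/\Gamma]$ by its translation groupoid $U \times \Gamma \rightrightarrows U$ and use the spectral sequence of the associated nerve, whose $p$-simplices are the disjoint unions $U \times \Gamma^p$:
\[
    E_1^{p,q} = \H^q(U \times \Gamma^p, \IC^\times) \Rightarrow \H^{p+q}([U/\Gamma], \IC^\times).
\]
Contractibility of $U$ is the crux: it forces the manifold directions to be cohomologically invisible, collapsing the sequence onto its bottom row $q = 0$. The surviving complex is the inhomogeneous bar complex of $\Gamma$, which computes exactly $\H^\bullet(\B\Gamma, \IC^\times)$; running the same argument for $\B\Gamma$ (where the nerve already consists of the finite sets $\Gamma^p$) shows that $\pi^\ast$ is the identity on these bottom rows, hence an isomorphism in every degree, in particular in degree two.

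The main obstacle is the precise meaning of $\H^\ast(-, \IC^\times)$ for the gerbe coefficients. Since the band $\IC^\times$ is a Lie group, the relevant sheaf is that of \emph{smooth} $\IC^\times$-valued functions, so contractibility does not by itself kill the bottom-row coefficients: one finds $E_1^{p,0} = \mathrm{Maps}(\Gamma^p, C^\infty(U, \IC^\times))$, and the collapsed complex computes $\H^\bullet(\Gamma, C^\infty(U, \IC^\times))$. I would close this gap with the exponential sequence $0 \to \IZ \to C^\infty(U, \IC) \to C^\infty(U, \IC^\times) \to 0$ of $\Gamma$-modules: as $U$ is contractible and $\Gamma$ is finite, $C^\infty(U,\IC)$ is a $\IQ$-vector space with vanishing higher $\Gamma$-cohomology, so the connecting maps yield $\H^n(\Gamma, C^\infty(U,\IC^\times)) \iso \H^{n+1}(\Gamma, \IZ) \iso \H^n(\Gamma, \IC^\times)$ for $n \geq 1$, recovering $\H^2(\B\Gamma, \IC^\times)$. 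The same exponential sequence over the point identifies the group-cohomology target, so the two computations match and $\pi^\ast$ is the asserted bijection.
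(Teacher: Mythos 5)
Your proof is correct, but it takes a genuinely different route from the paper's. The paper uses Theorem~\ref{thm:gerbesClassifiedByH2} only once, to trivialise the pullback of a gerbe over the atlas $U \onto [U/\Gamma]$ (since $\H^2(U,\IC^\times)=0$); it then invokes Lemma~\ref{lem:gerbesVsLieGroupoidExtensions} to identify gerbes with central $\IC^\times$-extensions of the translation groupoid $U \times \Gamma \rightrightarrows U$, notes that every $\IC^\times$-bundle over $U \times \Gamma$ is trivial so that only the composition datum remains, reads that datum as a smoothly varying family of central extensions of $\Gamma$ by $\IC^\times$, and concludes constancy because $\H^2(\B\Gamma,\IC^\times)$ is finite and $U$ is connected. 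You instead use Theorem~\ref{thm:gerbesClassifiedByH2} at full strength to reduce to computing $\H^2([U/\Gamma],\IC^\times)$, collapse the nerve spectral sequence (the same contractibility input kills $\H^{q>0}$ of $U\times\Gamma^p$ with smooth $\IC^\times$-coefficients, via the exponential sequence), and then dispose of the $U$-dependence homologically: finiteness of $\Gamma$ enters through the vanishing of $\H^{\geq 1}(\Gamma, \Cinf(U,\IC))$ for the $\IQ$-vector space $\Cinf(U,\IC)$, rather than through the paper's constancy-of-a-map-to-a-finite-set argument. Your route buys a stronger statement --- it exhibits the natural pullback $\pi^\ast:\H^2(\B\Gamma,\IC^\times) \to \H^2([U/\Gamma],\IC^\times)$ as an isomorphism and handles injectivity and surjectivity in one stroke --- and it is essentially a hands-on proof of the paper's remark following the corollary, that $[U/\Gamma]$ has the homotopy type of $\B\Gamma$. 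What the paper's route buys is elementarity (no spectral sequences) and, more importantly, the explicit presentation of any gerbe by a constant groupoid extension, which is exactly what gets reused in Lemma~\ref{lem:localNormalFormOfGerbes} and in the twisted-sheaf computations afterwards. One small point of care in your write-up: the claim that $\pi^\ast$ is ``the identity on the bottom rows'' is loose --- it is the map induced by postcomposition with the inclusion of constants $\IC^\times \into \Cinf(U,\IC^\times)$ --- but your final paragraph repairs this correctly via naturality of the two exponential sequences over $U$ and over the point.
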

\begin{proof}
    By assumption, $U$ is contractible, so $\H^2(U,\IC^\times)=0$.
    By Theorem~\ref{thm:gerbesClassifiedByH2},
    every gerbe over $[U/\Gamma]$ admits a trivialisation
    over the atlas $U \onto [U/\Gamma]$.
    Lemma~\ref{lem:gerbesVsLieGroupoidExtensions}
    then identifies equivalence classes of gerbes over $[U/\Gamma]$
    with equivalence classes of central $\IC^\times$-extensions
    of the Lie groupoid $U \times \Gamma \rightrightarrows U$.
    Up to equivalence, every $\IC^\times$-bundle $P$ over
    $U \times \Gamma$ is trivial, so the only extra data
    is the composition on the morphism space $P$ of the extension.
    It is easily checked that this encodes above each point $u \in U$
    a central extension of $\Gamma$ by $\IC^\times$.
    This central extension may vary smoothly across $U$, but
    as $\Gamma$ is finite, so is the group
    $\H^2(\B\Gamma, \IC^\times)$ of equivalence classes of central extensions.
    The induced morphism $U \to \H^2(\B\Gamma,\IC^\times)$ is thus constant.
    This establishes a map from $\IC^\times$-gerbes to $\H^2(\B\Gamma,\IC^\times)$.
    Groupoid extensions which induce equivalent central extensions
    of $\Gamma$ by $\IC^\times$ are equivalent.
\end{proof}

\begin{rmk}
    More conceptually, the above statement is a consequence of the fact that
    the stack $[U/\Gamma]$ has the homotopy type of $\B\Gamma$ (in
    the $\infty$-topos of smooth $\infty$-stacks), hence
    $\H^2([U/\Gamma],\IC^\times)=\H^2(\B\Gamma,\IC^\times)$.
\end{rmk}

Every orbifold may be covered by suborbifolds of the form $[U/\Gamma]$,
where $U$ is an open ball in some Cartesian space $\IR^n$, equipped
with a right action by a finite group $\Gamma$.
Pick a point $x_0 \in U$ with maximal stabiliser
$\Aut(x_0) = \Gamma$. Any gerbe $\ger{G}$ over the orbifold pulls back
to a gerbe $[\point/\widetilde{\Gamma}] \to [\point/\Gamma]$.
Let $\IC^\times \into \widetilde{\Gamma} \to \Gamma$ be some central extension
representing this gerbe according to
Corollary~\ref{cor:classificationOfGerbesOnQuotientOrbifolds}.
The class of this central extension in $\H^2(\B\Gamma,\IC^\times)$
represents this gerbe up to equivalence.
We obtain a useful local normal form for $\IC^\times$-gerbes over these
quotient suborbifolds.
We record the outcome of this discussion in the following lemma:
\begin{lemma}
    \label{lem:localNormalFormOfGerbes}
    Let $[U/\Gamma]$ be a quotient suborbifold of an orbifold
    $\orb{M}$ equipped with a gerbe $\ger{G}$.
    Assume further $U$ is contractible, and contains a
    $\Gamma$-fixed point over which $\ger{G}$ is classified
    by the central extension
    \begin{equation*}
        \IC^\times \into \widetilde{\Gamma} \xto{\pi} \Gamma.
    \end{equation*}
    Then the restriction of $\ger{G}$ to $[U/\Gamma]$ admits a
    presentation by the constant $\IC^\times$-extension
    \begin{equation*}
        \begin{tikzcd}
            U \times \widetilde{\Gamma} & {U \times \Gamma} \\
            {U} & {U}
            \arrow["\id",from=2-1, to=2-2]
            \arrow["\id \times \pi", from=1-1, to=1-2]
            \arrow[shift left=1, from=1-2, to=2-2]
            \arrow[shift right=1, from=1-2, to=2-2]
            \arrow[shift right=1, from=1-1, to=2-1]
            \arrow[shift left=1, from=1-1, to=2-1]
        \end{tikzcd}
    \end{equation*}
    of Lie groupoids.
\end{lemma}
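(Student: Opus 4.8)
The plan is to reduce the claim to the cohomological classification of gerbes (Theorem~\ref{thm:gerbesClassifiedByH2}) and then feed the resulting equivalence back through the groupoid-extension dictionary of Lemma~\ref{lem:gerbesVsLieGroupoidExtensions}. First I would restrict to the suborbifold $[U/\Gamma]$ together with its atlas $U \onto [U/\Gamma]$. As $U$ is contractible, $\H^2(U,\IC^\times)=0$, so by (the proof of) Corollary~\ref{cor:classificationOfGerbesOnQuotientOrbifolds} the restriction $\ger{G}|_{[U/\Gamma]}$ is trivialisable over $U$ and hence presented by \emph{some} central $\IC^\times$-extension of $U \times \Gamma \rightrightarrows U$. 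The target presentation, the constant extension $U \times \widetilde\Gamma \rightrightarrows U$, corresponds under Lemma~\ref{lem:gerbesVsLieGroupoidExtensions} to the pullback $p^\ast \ger{G}_{\widetilde\Gamma}$ of the gerbe $\ger{G}_{\widetilde\Gamma}$ over $\B\Gamma$ presented by $\widetilde\Gamma$, along the projection $p \colon [U/\Gamma] \to [\point/\Gamma] = \B\Gamma$ induced by $U \to \point$. So it is enough to prove that $\ger{G}|_{[U/\Gamma]}$ and $p^\ast\ger{G}_{\widetilde\Gamma}$ agree, compatibly with their trivialisations over $U$.

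To compare the two classes in $\H^2([U/\Gamma],\IC^\times)$, let $x_0 \colon \B\Gamma = [\point/\Gamma] \to [U/\Gamma]$ be the map induced by the inclusion of the $\Gamma$-fixed point. The composite $p \comp x_0$ is induced by $\point \to U \to \point$ and is therefore the identity of $\B\Gamma$, so $x_0^\ast \comp p^\ast = \id$ on $\H^2(-,\IC^\times)$. Since $U$ is contractible, $[U/\Gamma]$ has the homotopy type of $\B\Gamma$ and $p^\ast$ is an isomorphism (the remark after Corollary~\ref{cor:classificationOfGerbesOnQuotientOrbifolds}); hence $x_0^\ast = (p^\ast)^{-1}$. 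By hypothesis $x_0^\ast[\ger{G}] = [\widetilde\Gamma]$, and therefore $[\ger{G}|_{[U/\Gamma]}] = p^\ast x_0^\ast [\ger{G}] = p^\ast[\widetilde\Gamma] = [p^\ast\ger{G}_{\widetilde\Gamma}]$. Theorem~\ref{thm:gerbesClassifiedByH2} now supplies a gerbe equivalence $\ger{G}|_{[U/\Gamma]} \eqto p^\ast\ger{G}_{\widetilde\Gamma}$, and transporting it through Lemma~\ref{lem:gerbesVsLieGroupoidExtensions} should exhibit $\ger{G}|_{[U/\Gamma]}$ as presented by the constant extension $U \times \widetilde\Gamma \rightrightarrows U$.

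The step I expect to be the main obstacle is this last transport. The dictionary of Lemma~\ref{lem:gerbesVsLieGroupoidExtensions} matches gerbes \emph{equipped with a trivialisation over the atlas}, so I must check that the abstract equivalence can be chosen to respect these trivialisations; equivalently, that the resulting isomorphism of extensions is defined over all of $U$ rather than only up to a bundle automorphism. This is precisely where contractibility of $U$ re-enters: $\H^1(U,\IC^\times)=0$ forces any two trivialisations of $\ger{G}|_{[U/\Gamma]}$ over $U$ to be equivalent, pinning down the central extension up to isomorphism. A more hands-on variant avoids Theorem~\ref{thm:gerbesClassifiedByH2} altogether: present the extension by a smooth groupoid $2$-cocycle $\theta$ on $U \times \Gamma \rightrightarrows U$, note as in Corollary~\ref{cor:classificationOfGerbesOnQuotientOrbifolds} that its fibrewise class is locally constant, hence constant and equal to $[\widetilde\Gamma]$ by evaluation at $x_0$, and then smooth the pointwise coboundaries into a global equivalence over the contractible $U$. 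In either approach the analytic heart is the smooth (or continuous) choice of trivialising $1$-cochains over $U$.
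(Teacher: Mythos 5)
Your proposal is correct and follows essentially the same route as the paper: the paper gives no standalone proof of Lemma~\ref{lem:localNormalFormOfGerbes}, recording it instead as the outcome of the discussion around Corollary~\ref{cor:classificationOfGerbesOnQuotientOrbifolds} (trivialise over the contractible atlas $U$, observe the fibrewise class in the finite group $\H^2(\B\Gamma,\IC^\times)$ is constant, evaluate it at the fixed point, and conclude equivalence with the constant extension) --- which is precisely your ``hands-on variant,'' while your main route via $p \comp x_0 = \id$ and the remark that $[U/\Gamma]$ has the homotopy type of $\B\Gamma$ is just a more conceptual packaging of the same class comparison. Your worry about trivialisation compatibility also resolves even more simply than you suggest: transport the canonical trivialisation of the constant presentation through the abstract gerbe equivalence supplied by Theorem~\ref{thm:gerbesClassifiedByH2}, and the dictionary of Lemma~\ref{lem:gerbesVsLieGroupoidExtensions} applies directly.
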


A $\IC^\times$-gerbe $\ger{G}$ over an \'etale stack $\orb{M}$ allows a twisting
of the notion of sheaf over said stack. The following material is introduced in more detail in Appendix~\ref{app:twistedSheaves}.
We begin by describing the notion of a twisted sheaf for a trivialisable gerbe.
If $\ger{G}$ is trivialisable over $\orb{M}$, the category of \emph{$\ger{G}$-twisted sheaves} over
it has objects pairs $(\tau: \ger{I} \eqto \ger{G}, \sh{F} \in \ShC(\orb{M}))$, which we write as
\[
    \tau \cdot \sh{F}.
\]
The morphisms $\tau \cdot \sh{F} \to \tau' \cdot \sh{F}'$ are given by morphisms of sheaves ``twisted by the
trivialisations'': the composite of trivialisations $\tau^{-1} \tau'$ is an endomorphism of $\ger{I}$, and
thus defines a $\IC^\times$-bundle over $\orb{M}$. We denote the action of the associated complex line bundle $\mscr{L}_{\tau,\tau'}$
on a sheaf $\sh{F}'$ by
\[
    \tau^{-1} \tau' \cdot \sh{F}' \define {\mscr{L}}_{\tau,\tau'} \tensor_{\Cinf} \sh{F}'.
\]
A morphism $\phi:\tau \cdot \sh{F} \to \tau' \cdot \sh{F}'$ is a morphism of sheaves
\[
    \bar{\phi}:\sh{F} \to \tau^{-1} \tau' \cdot \sh{F}'.
\]
The composite
\[
    \tau\cdot\sh{F} \xto{\phi} \tau'\cdot\sh{F}' \xto{\phi'} \tau'' \cdot \sh{F}''
\]
is given by the morphims of $\Cinf$-modules
\[
    \sh{F} \xto{\bar{\phi}} \tau^{-1} \tau' \cdot \sh{F}' \xto{\tau^{-1}\tau' \cdot \bar{\phi}'}
    \tau^{-1} \tau' \tau'^{-1} \tau'' \cdot \sh{F}'' \simeq \tau^{-1} \tau'' \cdot \sh{F}''.
\]
We denote the resulting category by $\Shv^{\ger{G}}(\orb{M})$. A choice of trivialisation $\tau$ induces a functor
\begin{alignat*}{1}
    \tau \cdot -: \ShC(\orb{M}) & \to \Shv^{\ger{G}}(\orb{M}) \\
    \sh{F}                      & \mapsto \tau \cdot \sh{F},
\end{alignat*}
and this functor is an equivalence (see Lemma~\ref{lem:tauCdotIsEquivalence} and below).
This way, $\ger{G}$-twisted sheaves for a trivialisable gerbe are (non-canonically) identified
with usual sheaves of $\Cinf$-modules.

A map $f: \orb{N} \to \orb{M}$ of \'etale stacks induces a pullback functor
\begin{align*}
    f^\ast: \Shv^{\ger{G}}(\orb{M}) & \to \Shv^{f^\ast\ger{G}}(\orb{N})      \\
    \tau \cdot \sh{F}               & \mapsto f^\ast\tau \cdot f^\ast\sh{F},
\end{align*}
which makes the square
\[\begin{tikzcd}
        {\Shv^{\ger{G}}(\orb{M})} & {\Shv^{f^\ast\ger{G}}(\orb{N})} \\
        {\ShC(\orb{M})} & {\ShC(\orb{N})}
        \arrow["{f^\ast}", from=1-1, to=1-2]
        \arrow["{f^\ast}", from=2-1, to=2-2]
        \arrow["{\tau \cdot -}"', from=1-1, to=2-1]
        \arrow["{f^\ast\tau \cdot -}", from=1-2, to=2-2]
    \end{tikzcd}\]
commute for any choice of trivialisation $\tau$.

Now we drop the assumption that $\ger{G}$ is trivialisable.
The \'etale site of $\orb{M}$ has a dense subsite $\Et^\ger{G}(\orb{M}) \subset \Et(\orb{M})$ on objects
$u:U \to \orb{M}$ where the pullback gerbe $u^\ast\ger{G}$ admits a trivialisation.
The assignment
\begin{alignat*}{1}
    \Shv^{\ger{G}}:\Et^\ger{G}(\orb{M})^\opp & \to \VCat \subset \VecInf\text{-}\Cat                                            \\
    (u:U \to \orb{M})                        & \mapsto \Shv^{u^\ast\ger{G}}(U)                                                  \\
    \left((f,\phi):v \to u\right)            & \mapsto \left(f^\ast: \Shv^{u^\ast\ger{G}}(U) \to \Shv^{v^\ast\ger{G}}(V)\right)
\end{alignat*}
extends to a $\VCat$-valued stack over $\orb{M}$ (Lemma~\ref{lem:ShvGIsAStack}).

\begin{defn}
    \label{def:GtwistedSheaf}
    A \emph{$\ger{G}$-twisted sheaf} over the \'etale stack $\orb{M}$ is a global section of $\Shv^{\ger{G}}$.
\end{defn}
Given any cover
$\pi:Y \onto \orb{M}$ over which $\ger{G}$ is trivialisable, the category of $\ger{G}$-twisted sheaves over $\orb{M}$
is equivalent to the category of descent data of $\Shv^{\ger{G}}$ over $Y$.
A $\ger{G}$-twisted sheaf is represented by a twisted sheaf $\tau \cdot \sh{F} \in \Shv^{\pi^\ast\ger{G}}(Y)$,
equipped with a descent isomorphism
\[
    \phi: p_1^\ast(\tau \cdot \sh{F}) \eqto p_2^\ast (\tau \cdot \sh{F})
\]
in $\Shv^{p_{\orb{M}}^\ast\ger{G}}(Y^{[2]})$, subject to the condition that
\[\begin{tikzcd}
        & {p_2^\ast (\tau \cdot \sh{F})} \\
        {p_1^\ast (\tau \cdot \sh{F})} && {p_3^\ast (\tau \cdot \sh{F})}
        \arrow["{p_{13}^\ast\phi}"', from=2-1, to=2-3]
        \arrow["{p_{12}^\ast\phi}", from=2-1, to=1-2]
        \arrow["{p_{23}^\ast\phi}", from=1-2, to=2-3]
    \end{tikzcd}\]
commutes.

Given a specific trivialisation $\tau:\ger{I} \eqto \pi^\ast\ger{G}$ over $Y$ with associated line bundle
${\mscr{L}} = {\mscr{L}}_{p_1^\ast \tau,p_2^\ast \tau}$ over $Y^{[2]}$,
we may use the equivalence $\tau \cdot -: \ShC(Y) \eqto \Shv^{\pi^\ast\ger{G}}(Y)$ to replace the datum $\tau \cdot \sh{F}$
above with a sheaf of $\Cinf$-modules over $Y$.
This identifies the category of descent data with the category of pairs
$(\sh{F} \in \ShC(Y),\bar{\phi}:p_1^\ast\sh{F} \eqto \mscr{L} \tensor p_2^\ast\sh{F})$,
satisfying the cocycle condition
\[\begin{tikzcd}
        {p_{12}^\ast\mscr{L}\tensor p_2^\ast\sh{F}} && {p_{12}^\ast\mscr{L}\tensor p_{23}^\ast\mscr{L} \tensor p_3^\ast\sh{F}} \\
        {p_1^\ast\sh{F}} && {p_{13}^\ast\mscr{L}\tensor p_3^\ast\sh{F},}
        \arrow["{p_{13}^\ast\bar{\phi}}", from=2-1, to=2-3]
        \arrow["{p_{12}^\ast\bar{\phi}}", from=2-1, to=1-1]
        \arrow["{p_{12}^\ast\mscr{L} \tensor p_{23}^\ast\bar{\phi}}", from=1-1, to=1-3]
        \arrow["\simeq"{marking}, draw=none, from=1-3, to=2-3]
    \end{tikzcd}\]
where the natural isomorphism $p_{12}^\ast\mscr{L} \tensor p_{23}^\ast\mscr{L} \simeq p_{13}^\ast\mscr{L}$
is part of the data of the gerbe.\footnote{In the triangular cocycle condition, this equivalence is hidden
    in the composition operation between morphisms of twisted sheaves.}

\begin{example}
    \label{ex:twistedSheavesOverBGamma}
    Consider the gerbe $\ger{G}$ over $[\point/\Gamma]$ corresponding
    to a group extension $\IC^\times \to \widetilde{\Gamma} \to \Gamma$.
    The gerbe admits a trivialisation over the atlas $Y=\point \to [\point/\Gamma]$,
    with associated line bundle over
    $Y^{[2]} = \point \times_{\point/\Gamma} \point = \Gamma$
    given by $\mscr{L}=\tilde{\Gamma} \to \Gamma$.
    We may pick a trivialisation $\tilde{\Gamma} \iso \IC^\times \times \Gamma$.
    Then the isomorphism of line bundles
    $p_{12}^\ast\mscr{L} \tensor p_{23}^\ast\mscr{L} \eqto p_{13}^\ast\mscr{L}$
    over $Y^{[3]} = \Gamma \times \Gamma$ is the data of a 2-cocycle
    $\theta: \Gamma \times \Gamma \to \IC^\times$.
    By Lemma~\ref{lem:localNormalFormOfGerbes}, all gerbes over $[\point/\Gamma]$
    are of this form (up to equivalence).

    The category of $\ger{G}$-twisted sheaves over $[\point/\Gamma]$
    is then given by a vector space $V$ (the $\Cinf$-module over $\point$),
    equipped with an isomorphism of pullback sheaves over $\Gamma$.
    This isomorphism is the data of a $\IC$-linear isomorphism
    $\phi_{\gamma}: V \to V$ for all $\gamma \in \Gamma$.
    The cocycle condition is twisted by $\theta$, and precisely recovers the definition
    of a $\theta$-twisted representation.
\end{example}

\subsection{The Twisted Equivariantisation Adjunction}
\label{sec:twistedEquivariantisationAdjunction}
Let $\cat{C} \in \VCat$ be a $\VecInf$-enriched category complete under direct sums,
equipped with a group action $\Gamma \to \End(\cat{C})$.
The \emph{equivariantisation of $\Gamma \curvearrowright \cat{C}$} is
a category $\cat{C}^\Gamma$, whose objects
are objects $X \in \cat{C}$ equipped with $\Gamma$-equivariance data.
When $\Gamma$ is finite, the obvious forgetful functor
$u: \cat{C}^\Gamma \to \cat{C}$
admits a simultaneous left and right adjoint, the induction
functor $I$. It sends an object $X \in \cat{C}$ to the orbit
of $X$ under the $\Gamma$-action
$X \mapsto \DirSum_{\gamma \in \Gamma} \gamma.X,$
equipped with the obvious equivariance data.

An example of this in the $\VecInf$-enriched context is the category of
sheaves of $\Cinf$-modules over a quotient orbifold $[M/\Gamma]$:
\[
    \ShC([M/\Gamma]) \simeq {\ShC(M)}^\Gamma.
\]
The data of the ambidextrous adjunction between $u$ and $I$ induces a
Frobenius monad on $\ShC([M/\Gamma])$,
and in particular exhibits every
object of $\ShC([M/\Gamma])$ as a direct summand of an object of the form
$I(X)$.\footnote{
    Ambidextrous adjunctions and the associated Frobenius monad are
    discussed more in Section~\ref{sec:projectiveReps}.
}
In this section, we show that these results also hold for twisted sheaves
in the presence of a non-trivial gerbe over $[M/\Gamma]$.
The statement below is Proposition~\ref{prop:twistedPullbackPushfwdAdjunction}
in Appendix~\ref{app:twistedSheaves}.
\begin{proposition}
    \label{prop:twistedSheavesAdjunction}
    The functor $f^\ast:\Shv^{\ger{G}}(\orb{M}) \to \Shv^{f^\ast\ger{G}}(\orb{N})$ induced
    by a map $f:\orb{N}\to \orb{M}$ of \'etale stacks admits a right adjoint
    \[
        f_\ast:\Shv^{f^\ast\ger{G}}(\orb{N}) \to \Shv^{\ger{G}}(\orb{M}).
    \]
\end{proposition}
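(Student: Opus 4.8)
The plan is to reduce the twisted statement to the untwisted pullback--pushforward adjunction for $\Cinf$-modules, and then transport that adjunction along a cover trivialising $\ger{G}$ using the fact that $\Shv^{\ger{G}}$ is a stack (Lemma~\ref{lem:ShvGIsAStack}). I take as the foundational input the untwisted adjunction $f^\ast \dashv f_\ast$ for sheaves of $\Cinf$-modules over \'etale stacks---this is the classic adjunction for a morphism of ringed \'etale topoi, and it is precisely the case of the trivial gerbe. First I would choose a cover $\pi:Y \onto \orb{M}$ over which $\ger{G}$ trivialises, with trivialising line bundle $\mscr{L}$ over $Y^{[2]}$. Pulling back along $f$ gives a cover $\bar\pi:Y_{\orb{N}} \define Y \times_{\orb{M}} \orb{N} \onto \orb{N}$ trivialising $f^\ast\ger{G}$, a map $\bar f:Y_{\orb{N}} \to Y$ lying over $f$, and trivialising line bundle $\bar f^{[2]\ast}\mscr{L}$ over $Y_{\orb{N}}^{[2]}$; crucially, the \v{C}ech nerve of $\bar\pi$ is the pullback along $\orb{N}\to\orb{M}$ of the \v{C}ech nerve of $\pi$, so that $Y_{\orb{N}}^{[n]} = Y^{[n]} \times_{\orb{M}} \orb{N}$ and every resulting square of projections is cartesian. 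Presenting objects as descent data then identifies $\Shv^{\ger{G}}(\orb{M})$ with pairs $(\sh{F} \in \ShC(Y),\ \bar\phi:p_1^\ast\sh{F} \eqto \mscr{L}\tensor p_2^\ast\sh{F})$ and $\Shv^{f^\ast\ger{G}}(\orb{N})$ with pairs $(\sh{H} \in \ShC(Y_{\orb{N}}),\ \bar\psi:q_1^\ast\sh{H} \eqto \bar f^{[2]\ast}\mscr{L}\tensor q_2^\ast\sh{H})$, subject to the triangular cocycle condition recalled above; under these presentations the commuting square relating $f^\ast$ to the untwisted pullback identifies $f^\ast$ with $(\sh{F},\bar\phi) \mapsto (\bar f^\ast\sh{F}, \bar f^{[2]\ast}\bar\phi)$.

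Second, I would build the candidate right adjoint $f_\ast$ on descent data by applying the untwisted pushforward $\bar f_\ast:\ShC(Y_{\orb{N}}) \to \ShC(Y)$ to the sheaf component. To promote $\bar f_\ast\sh{H}$ to a descent datum over $Y$ I need a descent isomorphism $p_1^\ast\bar f_\ast\sh{H} \eqto \mscr{L}\tensor p_2^\ast\bar f_\ast\sh{H}$, and this is supplied by the composite
\[
    p_1^\ast\bar f_\ast\sh{H}
    \;\simeq\; \bar f^{[2]}_\ast q_1^\ast\sh{H}
    \;\xto{\bar f^{[2]}_\ast\bar\psi}\; \bar f^{[2]}_\ast\bigl(\bar f^{[2]\ast}\mscr{L}\tensor q_2^\ast\sh{H}\bigr)
    \;\simeq\; \mscr{L}\tensor\bar f^{[2]}_\ast q_2^\ast\sh{H}
    \;\simeq\; \mscr{L}\tensor p_2^\ast\bar f_\ast\sh{H},
\]
where $\bar f^{[2]}:Y_{\orb{N}}^{[2]} \to Y^{[2]}$ is the induced map. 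The two outer isomorphisms are the Beck--Chevalley base change $p_i^\ast\bar f_\ast \simeq \bar f^{[2]}_\ast q_i^\ast$ for the cartesian squares above, and the middle isomorphism is the projection formula $\bar f^{[2]}_\ast(\bar f^{[2]\ast}\mscr{L}\tensor -) \simeq \mscr{L}\tensor \bar f^{[2]}_\ast(-)$. That the resulting datum again satisfies the cocycle condition follows from the cocycle condition for $\bar\psi$ together with compatibility of the gerbe's associativity datum $p_{12}^\ast\mscr{L}\tensor p_{23}^\ast\mscr{L}\simeq p_{13}^\ast\mscr{L}$ with base change across the triple fibre products.

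Third, I would verify the adjunction $f^\ast \dashv f_\ast$ by descending the unit and counit of the untwisted adjunction. The transformations $\eta:\Id \to \bar f_\ast\bar f^\ast$ and $\eps:\bar f^\ast\bar f_\ast \to \Id$ are morphisms of $\Cinf$-modules, and I would check that they are intertwined with the descent isomorphisms---again via base change and the projection formula---so that they assemble into natural transformations of twisted-sheaf descent data and hence into a unit and counit for $f^\ast \dashv f_\ast$. The triangle identities hold because they hold for $\bar f^\ast \dashv \bar f_\ast$ before descent, and naturality in $\sh{F}$ and $\sh{H}$ is inherited from the untwisted functors. Independence of the whole construction from the choices of cover and trivialisation follows from the equivalences $\tau\cdot-$ being part of the stack structure of $\Shv^{\ger{G}}$, so that the constructed adjunction descends to a well-defined adjunction on global sections.

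The hard part will be establishing the two compatibilities for the untwisted pushforward in the $\Cinf$-module setting: the Beck--Chevalley base-change isomorphism $p_i^\ast\bar f_\ast \simeq \bar f^{[2]}_\ast q_i^\ast$ for the cartesian squares of (pulled-back) covers, and its coherence with the gerbe's associativity datum so that the cocycle condition is genuinely preserved. Base change for $\Cinf$-modules is not formal and relies on the maps involved being well-behaved submersions obtained by pulling back the cover from $\orb{M}$; I expect checking this, and its triple-fibre-product coherence, to be the most delicate step. The projection formula is comparatively routine, since $\mscr{L}$ is a line bundle and hence locally free of rank one. A less explicit alternative would be to note that the twisted-sheaf categories are presentable and that $f^\ast$ preserves colimits (being computed locally as the untwisted pullback), so that a right adjoint exists by the adjoint functor theorem; I nonetheless prefer the descent construction, since it produces $f_\ast$ in the concrete form needed for the projection-formula and base-change computations used later.
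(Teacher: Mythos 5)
Your proposal follows essentially the same route as the paper's proof in Appendix~\ref{app:twistedSheaves}: take the untwisted adjunction for $\Cinf$-modules over \'etale stacks as input, present twisted sheaves as descent data over a cover trivialising $\ger{G}$ (pulled back along $f$), and define the pushforward on descent data by exactly the composite of base change and the projection formula that the paper uses (Lemmas~\ref{lem:baseChange} and~\ref{lem:projectionFormula}), with the cocycle condition inherited from that of the descent isomorphism. The only cosmetic difference is the final verification: you descend the unit and counit, whereas the paper notes that the forgetful functors from descent data to $\ShC$ are faithful and that the untwisted hom-set bijection restricts to morphisms of descent data --- both amount to reducing the adjunction to the untwisted case.
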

We call $f_\ast$ the \emph{pushforward of twisted sheaves}. Recall that a trivialisation of $\ger{G}$
identifies the pullback of twisted sheaves with the pullback of $\Cinf$-modules. By uniqueness of
adjoints, the same is true for the pushforward:
a trivialisation of $\ger{G}$ (over the target of the map $f$) identfies the pushforward of
twisted sheaves along $f$ with the ordinary pushforward.

\begin{example}
    \label{ex:pushforwardTwistedSheafBGamma}
    Let $\ger{G} \to [\point/\Gamma]$ be the gerbe corresponding to the
    2-cocycle $\theta:\Gamma \times \Gamma \to \IC^\times$
    as in Example~\ref{ex:twistedSheavesOverBGamma}.
    A homomorphism of groups $H \into \Gamma$ gives rise to a
    map $h:[\point/H] \to [\point/\Gamma]$.
    The associated adjunction $h^\ast:\Rep^{\alpha\restriction_H}(H) \leftrightarrows \Rep^\alpha(\Gamma):h_\ast$
    is the restriction-induction adjunction of twisted representations given in
    Section~\ref{sec:projectiveReps}.
\end{example}

We now generalise Example~\ref{ex:pushforwardTwistedSheafBGamma} to
the case of a quotient orbifold $[U/\Gamma]$, where $U$ is contractible.
Up to equivalence, any gerbe over $[U/\Gamma]$ is of the form
$[U/\tilde{\Gamma}] \to [U/\Gamma]$, where $\tilde{\Gamma} = \Gamma \times \IC^\times$
with multiplication twisted by a 2-cocycle $\theta$. This is the local normal
form for gerbes established in Lemma~\ref{lem:localNormalFormOfGerbes}.
We pick a gerbe $\ger{G}$ over $[U/\Gamma]$ and assume it is of this form.

We view a $\ger{G}$-twisted sheaf over $[U/\Gamma]$ as
a sheaf $\sh{F}$ over $U$, together with a
descent isomorphism $\phi:p_1^\ast\sh{F} \eqto p_2^\ast\sh{F}$ over
$U \times_{[U/\Gamma]} U = U \times \Gamma$, satisfying the coycle condition.
In the case at hand, this is a set of isomorphisms
$\{\phi_\gamma \define \phi\restriction_{U \times {\gamma}}: \gamma^\ast\sh{F} \eqto \sh{F}\}_{\gamma \in \Gamma},$
such that
\[
    \phi_{\gamma'} \comp \phi_{\gamma} =
    \theta(\gamma',\gamma) \cdot \phi_{\gamma' \cdot \gamma}
\]
(cf. Example~\ref{ex:twistedSheavesOverBGamma}).
Using this description, the pullback associated to the cover
$q:U \onto [U/\Gamma]$ is given by
\begin{align*}
    q^\ast: \Shv^\ger{G}([U/\Gamma]) & \to \ShC(U)     \\
    (\sh{F},\phi)                    & \mapsto \sh{F}.
\end{align*}

\begin{prop}
    \label{prop:twistedEquivariantisationAdjunction}
    The functor $q^\ast$ has a simultaneous left and right adjoint, the induction functor
    \begin{alignat*}{1}
        I: \ShC(U) & \to \Shv^\ger{G}([U/\Gamma])                             \\
        \sh{F}     & \mapsto \left(\DirSum_{\gamma \in \Gamma} \gamma^\ast\sh{F}, \phi\right),
    \end{alignat*}
    where the isomorphism $\phi$ over $U \times \Gamma$ has components
    \[
        \phi_{\gamma_1}:
        \DirSum_{\gamma \in \Gamma} \gamma_1^\ast \gamma^\ast\sh{F} \xto{\simeq}
        \DirSum_{\gamma \in \Gamma} {(\gamma_1 \cdot \gamma)}^\ast\sh{F}
        \xto{\dirSum \theta(\gamma_1,\gamma)}
        \DirSum_{\gamma \in \Gamma} {(\gamma_1 \cdot \gamma)}^\ast\sh{F}
    \]
    given by the 2-cocycle $\theta$.\footnote{
        Note that because $\Gamma$ acts on $U$ from the right, the pullback
        morphisms compose
        as $\gamma_1^\ast \gamma^\ast \simeq {(\gamma_1 \cdot \gamma)}^\ast$.
    }
\end{prop}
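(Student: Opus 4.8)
The plan is to establish the two adjunctions $I \dashv q^\ast$ and $q^\ast \dashv I$ separately, in each case by writing down an explicit natural bijection of hom-sets; since the same functor $I$ serves as both left and right adjoint, this is exactly the asserted simultaneous adjunction. Throughout I use the concrete model recalled above: an object of $\Shv^{\ger{G}}([U/\Gamma])$ is a pair $(\sh{F}, \{\phi_\gamma\})$ with $\phi_\gamma \colon \gamma^\ast\sh{F} \eqto \sh{F}$ subject to $\phi_{\gamma'} \comp \phi_\gamma = \theta(\gamma',\gamma)\,\phi_{\gamma'\gamma}$, and $q^\ast$ simply discards the family $\{\phi_\gamma\}$. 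The whole argument is the sheaf-theoretic counterpart of the twisted-group-ring computation behind Proposition~\ref{prop:inductionIsCoinduction}, with $\ShC(U)$ and the pullback functors $\gamma^\ast$ in the roles of $\Vec$ and the $\Gamma$-action; alternatively, one may invoke Proposition~\ref{prop:twistedSheavesAdjunction} to produce the right adjoint $q_\ast$ abstractly and then identify it with the explicit formula for $I$, so that the genuinely new content is the left adjunction.

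For $I \dashv q^\ast$, I would show that a morphism $I\sh{F} = (\DirSum_\gamma \gamma^\ast\sh{F}, \phi) \to (\sh{G}, \psi)$ of twisted sheaves is the same datum as a bare morphism $\sh{F} \to \sh{G}$ in $\ShC(U)$. Such a morphism is a map of sheaves $\Phi \colon \DirSum_\gamma \gamma^\ast\sh{F} \to \sh{G}$ intertwining the descent data; compatibility with $\phi$ and $\psi$ forces the $\gamma$-summand component to be $\psi_\gamma \comp \gamma^\ast\Phi_e$, so $\Phi$ is pinned down by its restriction $\Phi_e \colon \sh{F} \to \sh{G}$ to the identity summand, and conversely every $\Phi_e$ extends by this formula. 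Restriction to the identity summand and this extension are visibly mutually inverse and natural in both variables; the induced unit $\sh{F} \to \DirSum_\gamma \gamma^\ast\sh{F}$ is the inclusion of the identity summand and the counit is assembled from the $\psi_\gamma$.

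For $q^\ast \dashv I$, I would dually show that a morphism $(\sh{G}, \psi) \to I\sh{F}$ is the same as a morphism $\sh{G} \to \sh{F}$: a twisted-sheaf map $\Psi \colon \sh{G} \to \DirSum_\gamma \gamma^\ast\sh{F}$ is determined by its projection $\Psi_e$ onto the identity summand, the remaining components being recovered as $\gamma^\ast\Psi_e \comp \psi_\gamma^{-1}$ up to the appropriate $\theta$-scalar, and any $\Psi_e$ arises this way. This is the exact analogue of reconstructing an intertwiner from its value on a fundamental domain, and the $\theta$-normalisations that appear are precisely those of the twisted Nakayama isomorphism in the proof of Proposition~\ref{prop:inductionIsCoinduction}.

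The main obstacle is bookkeeping the cocycle $\theta$. Because the descent isomorphism of $I\sh{F}$ carries the extra factors $\theta(\gamma_1,\gamma)$, the naive ``(co)restrict to the identity summand'' maps are compatible with the twisted equivariance relations only after the correct $\theta$-dependent scalars are inserted, and showing that these scalars are both forced and internally consistent --- equivalently, that the reconstructed $\Phi$ and $\Psi$ really do intertwine the twisted descent data and that the two prescriptions invert one another --- is the entire substance. As in that earlier computation it reduces to the cocycle identities for representative triples such as $(\gamma',\gamma,\gamma^{-1})$ and $(\gamma^{-1},\gamma,\gamma')$, together with the normalisations $\theta(\gamma,1)=\theta(1,\gamma)$ and $\theta(\gamma,\gamma^{-1})=\theta(\gamma^{-1},\gamma)$. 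Everything else --- naturality, and the fact that the shared functor $I$ makes the adjunction ambidextrous in the sense of Section~\ref{sec:projectiveReps} --- is then formal.
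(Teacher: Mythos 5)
Your proposal is correct and follows essentially the same route as the paper: the paper also proves the adjunction by an explicit natural bijection of hom-sets, sending a twisted-equivariant morphism into $I\sh{F}'$ to its identity-summand component $\beta_e$ and reconstructing the remaining components from $\beta_e$ using the equivariance data $\phi_\gamma$ of the other object, with the $\theta$-cocycle identities guaranteeing consistency. The only cosmetic difference is that the paper writes out $q^\ast \dashv I$ and declares $I \dashv q^\ast$ analogous, whereas you sketch both directions explicitly.
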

Note that the functor $I$ is indeed well-defined: the condition
$\phi_{\gamma'} \comp \phi_{\gamma} =
    \theta(\gamma',\gamma) \cdot \phi_{\gamma' \cdot \gamma}$
follows from the 2-cocycle condition satisfied by $\theta$.
\begin{proof}
    We show the adjunction $q^\ast \dashv I$. The adjunction
    $I \dashv q^\ast$ is proved the same way.
    We do this by exhibiting a natural isomorphism
    \[
        \Hom_{\Shv^\ger{G}([U/\Gamma])}((\sh{F},\phi),I\sh{F}')
        \simeq
        \Hom_{\ShC(U)}(\sh{F},\sh{F'}). 
    \]
    A map $(\sh{F},\phi) \to I\sh{F}'$ is a $\theta$-twisted
    equivariant map $\beta:\sh{F} \to \DirSum_\gamma \gamma^\ast \sh{F}$.
    The natural equivalence of $\Hom$-spaces is given by
    \[
        \left(\beta:\sh{F} \to \DirSum_\gamma \gamma^\ast\sh{F}'\right)
        \mapsto
        \left(\beta_e:\sh{F} \to \sh{F}'\right).
    \]
    Its inverse is the map
    \[
        (\beta_e:\sh{F} \to  \sh{F}') \mapsto (\beta':
        \sh{F} \to \DirSum_\gamma \gamma^\ast\sh{F}'),
    \]
    with component $\beta'_\gamma:\gamma^\ast\sh{F}\to \sh{F}'$ given by
    \[
        \beta'_\gamma = (\gamma^\ast \beta_e) \comp \phi_\gamma.
    \]
\end{proof}

By the uniqueness of adjoints, there is a natural isomorphism of functors
\begin{align*}
    I \simeq q_\ast.
\end{align*}

\begin{corollary}
    \label{cor:twistedSheavesAreLocallySummandsOfMobiles}
    Every twisted sheaf on $[U/\Gamma]$
    is a summand of a twisted sheaf in the image of the functor
    \[
        q_\ast: \ShC(U) \to \Shv^\ger{G}([U/\Gamma]).
    \]
\end{corollary}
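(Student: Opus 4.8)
The plan is to reproduce, in the geometric setting of twisted sheaves, the argument already used for twisted group rings in Corollary~\ref{cor:KaroubiCompletionOfInductionIsAll}. Proposition~\ref{prop:twistedEquivariantisationAdjunction} together with the identification $I \simeq q_\ast$ exhibits $q_\ast$ as both a left and a right adjoint of $q^\ast$, so we are in the situation of an ambidextrous adjunction. As recalled in Section~\ref{sec:projectiveReps}, the associated endofunctor $T \define q_\ast q^\ast$ on $\Shv^\ger{G}([U/\Gamma])$ is then a Frobenius monad: in particular it comes with a unit $\eta:\Id \to T$, furnished by the adjunction $q^\ast \dashv q_\ast$, and a counit $\eps: T \to \Id$, furnished by the adjunction $q_\ast \dashv q^\ast$.

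The one substantive step is to compute the composite $\eps \comp \eta$, an endomorphism of the identity functor, and to check that it is multiplication by the scalar $|\Gamma|$. The cover $q:U \to [U/\Gamma]$ plays the role of restriction along the inclusion of the trivial subgroup $\{e\} \into \Gamma$, so this is precisely the specialisation (at $H=\{e\}$) of the scalar $|\Gamma|/|H|$ computed in the representation-theoretic case. Concretely, I would read off $\eta$ and $\eps$ from the hom-set isomorphisms exhibited in the proof of Proposition~\ref{prop:twistedEquivariantisationAdjunction}: $\eta$ is the Nakayama-normalised diagonal inclusion into the $\Gamma$-orbit, and $\eps$ evaluates a section of the orbit after applying the descent isomorphism. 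In the composite, the $\theta$-twists contributed by the gerbe cancel exactly as in the twisted Nakayama isomorphism of Proposition~\ref{prop:inductionIsCoinduction}, and each of the $|\Gamma|$ summands contributes one copy of the identity. Since $\Gamma$ is finite and we work over $\IC$, the scalar $|\Gamma|$ is invertible.

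Granting this, the conclusion is formal. For an arbitrary twisted sheaf $\sh{E} \in \Shv^\ger{G}([U/\Gamma])$, the two morphisms
\[
    \sh{E} \xto{\eta_{\sh{E}}} T(\sh{E}) \xto{\frac{1}{|\Gamma|}\eps_{\sh{E}}} \sh{E}
\]
compose to the identity, so $\sh{E}$ is a direct summand of $T(\sh{E}) = q_\ast q^\ast \sh{E}$. Taking $\sh{F} \define q^\ast \sh{E} \in \ShC(U)$ realises $\sh{E}$ as a summand of $q_\ast \sh{F}$, which is the assertion. Note that no abstract idempotent-completeness is required, since the splitting is produced explicitly by $\eta$ and the rescaled $\eps$.

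The main obstacle is the scalar computation, and within it the bookkeeping of the gerbe twist: I must make sure that $\eta$ and $\eps$ are genuinely the unit and counit of the two \emph{distinct} adjunctions (so that $\eps \comp \eta$ lands in $\End(\Id)$ rather than $\End(T)$), and that the $\theta$-factors appearing in the descent isomorphism over $U \times \Gamma$ cancel in the composite. Both points follow the template of the twisted Nakayama computation in the proof of Proposition~\ref{prop:inductionIsCoinduction}; everything else is a formal consequence of the ambidextrous adjunction.
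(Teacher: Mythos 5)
Your proposal is correct and follows essentially the same route as the paper: the paper's proof likewise splits the identity functor off $q_\ast q^\ast$ using the unit of $q^\ast \dashv q_\ast$ and the counit of $q_\ast \dashv q^\ast$ supplied by Proposition~\ref{prop:twistedEquivariantisationAdjunction}, so that every twisted sheaf becomes a summand of $q_\ast q^\ast$ applied to it. The one divergence is the scalar: the paper asserts that $\varepsilon \comp \eta$ is already the identity natural transformation, whereas you compute $\varepsilon \comp \eta = |\Gamma| \cdot \id$ and rescale; with the unit and counit both normalised so that their $e$-components are identities (which is what the Proposition's proof and its ``proved the same way'' companion produce), your value $|\Gamma|$ is the consistent one --- it also matches the paper's own computation $\varepsilon \comp \eta = |\Gamma|/|H|$ for twisted group rings in Section~\ref{sec:projectiveReps} specialised at $H=\{e\}$ --- and the discrepancy is immaterial, since any invertible scalar produces the splitting.
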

\begin{proof}
    The following computation takes place in $\End(\Shv^\ger{G}([U/\Gamma]))$.
    Examining the adjunction data given in the proof of
    Proposition~\ref{prop:twistedEquivariantisationAdjunction},
    the counit and unit compose to the identity natural transformation
    \[
        1: \Id_{\Shv^\ger{G}([U/\Gamma])} \xto{\eta} q_\ast q^\ast \xto{\varepsilon}
        \Id_{\Shv^\ger{G}([U/\Gamma])},
    \]
    hence the composite
    \[
        e:q_\ast q^\ast \xto{\varepsilon} \Id_{\Shv^\ger{G}([U/\Gamma])}
        \xto{\eta} q_\ast q^\ast
    \]
    is an idempotent, split by $\varepsilon$ and $\eta$.
    They identify the identity functor as a summand of $q_\ast q^\ast$.
    In particular, this means every object $V$ is a summand of
    $q_\ast q^\ast V$ which is manifestly in the image of $q_\ast$.
\end{proof}

\subsection{The Stack of Twisted Skyscraper Sheaves}
Let $\ger{G} \to \orb{M}$ be an \'etale stack equipped with a gerbe $\ger{G}$.
We make the two definitions below in analogy with the usual support of a sheaf (Definition~\ref{def:supportOfSheafOverOrbifold}),
and untwisted skyscraper sheaves (Definition~\ref{def:skyscraperSheaf}).
\begin{defn}
    \label{def:twistedSheafSupport}
    The \emph{support} of a twisted sheaf $\sh{F} \in \Shv^\ger{G}(\orb{M})$
    is the subset 
    \[
        \supp \sh{F} \define \{[x] \in |\orb{M}| \text{ s.t. } x^\ast \sh{F} \neq 0\} \subset |\orb{M}|.
    \]
\end{defn}

\begin{defn}
    \label{def:twistedSkyscraperSheaf}
    A \emph{$\ger{G}$-twisted skyscraper sheaf} over $\orb{M}$ is a 
    $\ger{G}$-twisted sheaf over $\orb{M}$ with finite support,
    such that the pullback along any point is finite-dimensional 
    as a $\Cinf$-module.
\end{defn}

\begin{example}
    Every $\ger{G}$-twisted skyscraper sheaf is a direct sum of $\ger{G}$-twisted
    sheaves whose support is (the equivalence class of)
    a single point $x \to \orb{M}$.
    As spelled out in Example~\ref{ex:twistedSheavesOverBGamma}, such sheaves
    correspond (non-canonically!) to twisted representations
    of the stabiliser group $\Gamma_x$.
\end{example}

Let $\ger{G} \to M$ be a gerbe over a manifold.
A choice of trivialisation $\tau_x$ of $x^\ast\ger{G}$ for each point $x: \point \into M$
induces an equivalence of categories between the category of
$\ger{G}$-twisted skyscraper sheaves and untwisted skyscraper sheaves over $M$.
This equivalence is not canonical: different choices of trivialisations
lead to different equivalences. When $\ger{G}$ is non-trivial,
it does not admit a global trivialisation, and the $\tau_x$
cannot be chosen to vary smoothly with $x \in M$.

The difference between twisted and untwisted skyscraper sheaves is detected by families of such
sheaves.
We directly describe the \emph{stack of families of $\ger{G}$-twisted sheaves over $\orb{M}$}.
It assigns
\begin{align*}
    \Shv^\ger{G}_\orb{M}: \Man^\opp & \to \VCat                                               \\
    U                               & \mapsto \Shv^{p_\orb{M}^\ast\ger{G}}(\orb{M} \times U),
\end{align*}
where $p_\orb{M}: \orb{M} \times U \to \orb{M}$ is the usual projection.
A morphisms $f:V \to U \in \Man$ is sent to the pullback morphism
\begin{equation*}
    {(\id_\orb{M} \times f)}^\ast:
    \Shv^{p_\orb{M}^\ast\ger{G}}(\orb{M} \times U) \to
    \Shv^{p_\orb{M}^\ast\ger{G}}(\orb{M} \times V).
\end{equation*}
This assignment uses the isomorphism
\begin{equation*}
    {\left((\id_\orb{M} \times f) \comp p_\orb{M}\right)}^\ast \ger{G}
    \eqto p_\orb{M}^\ast\ger{G}
\end{equation*}
and the associated canonical equivalence
\begin{equation*}
    \Shv^{{\left((\id_\orb{M} \times f) \comp\ p_\orb{M}\right)}^\ast \ger{G}}
    (\orb{M} \times V)
    \eqto \Shv^{p_\orb{M}^\ast\ger{G}}(\orb{M} \times V).
\end{equation*}
\begin{notation}
    To avoid clutter, we drop the pullbacks along $p_\orb{M}$ in the superscript from now on.
\end{notation}

The category assigned to $\point \in \Man$ is the
category $\Shv^\ger{G}(\orb{M})$ of $\ger{G}$-twisted
sheaves over $\orb{M}$. The subcategory
$\Sky^\ger{G}(\orb{M}) \subset \Shv^\ger{G}(\orb{M})$
of twisted skyscraper sheaves is cut out by the condition
that the twisted sheaves have finite support.

A twisted skyscraper sheaf may be thought of as a finite
set of points $x: \point \to \orb{M}$, equipped with
finite-dimensional $\ger{G}_x$-twisted representations of their
stabiliser groups $\Gamma_x$ in $\orb{M}$.
Varying such a skyscraper sheaf smoothly should amount to
smoothly varying the points and representations involved.
The remainder of this section is dedicated to making this notion precise.

Let $U \in \Man$ be a parameterising manifold
and $\pi:Y \onto \orb{M}$ a cover of $\orb{M}$, equipped with
a trivialisation $\tau:\ger{I} \eqto \pi^\ast \ger{G}$ of the pullback gerbe.
A map $f: U \to Y$ defines a graph
\[
    \Gr(f)\define U \xto{\Delta} U \times U \xto{f \times \id_U} Y \times U.
\]
Pushforward along $\Gr(f)$ is a functor
\[\Gr(f)_\ast:\ShC(U) \to \ShC(Y \times U) \overset{\tau}{\simeq}
    \Shv^{\pi^\ast\ger{G}}(Y \times U).\]
A $U$-family of skyscraper sheaves over the point is a $U$-family of
vector spaces (ie.\ a vector bundle over $U$).
\begin{definition}
    A sheaf $\f{S} \in \ShC(Y \times U)$ is a \emph{basic} $U$-family
    of skyscraper sheaves if it decomposes as a finite direct sum of
    pushforwards of vector bundles on $U$ via the graph of a smooth function:
    \[ \f{S} \iso \dirSum_{i=0}^r \Gr(f_i)_\ast E_i \]
    for $f_i:U \to Y$, $E_i \in \Vect(U) \subset \ShC(U)$.
\end{definition}

Let $Y \onto \orb{M}$ be a cover of $\orb{M}$ and
$p:\tilde{U} \to U$ a cover of the parameterising family $U$.
The map $\pi \times p:Y \times \tilde{U} \onto \orb{M} \times U$
and the trivialisation $\tau$ of the gerbe induce an adjunction
\[\ShC(Y \times \tilde{U}) \leftrightarrows
    \Shv^\ger{G}(\orb{M} \times U).\]
We define $U$-families of twisted skyscraper sheaves over an orbifold
$\orb{M}$ in a way that is manifestly local on $\orb{M}$ and on $U$:
\begin{defn}
    \label{defn:familyOfTwistedSkyscraperSheaves}
    A twisted sheaf $\f{S} \in \Shv^\ger{G}_\orb{M}(U)$
    is a \emph{$U$-family of $\ger{G}$-twisted skyscraper sheaves on $\orb{M}$}
    if there exists a cover $Y \onto \orb{M}$ with a trivialisation of the pullback gerbe,
    and a cover $\tilde{U} \onto U$,
    such that the pullback of $\f{S}$ to $Y \times \tilde{U}$ is a basic
    $\tilde{U}$-family.

    We denote the subcategory of $U$-families of skyscraper sheaves by
    \[\Sky^\ger{G}_\orb{M}(U) \subset \Shv^\ger{G}_\orb{M}(U).\]
\end{defn}

\begin{example}
    \label{ex:SkyOfPoint}
    Over the point $\point \in \Man$ (equipped with the trivial gerbe $\ger{I}$),
    the condition of being a
    basic $U$-family of skyscraper sheaves in $\ShC(\point \times U)=\ShC(U)$
    is equivalent to that of being a vector bundle over $U$.
    The stack of skyscraper sheaves over the point is thus
    \begin{equation*}
        \Sky_\point \simeq \Vect,
    \end{equation*}
    the stack of vector bundles over $\Man$.
    The functor $\ShC$ takes coproducts $\coprod_i (\ger{G}_i \to M_i)$ of manifolds equipped
    with gerbes to products of categories, and hence so does $\Sky$.
    In particular, for a finite set $S \in \Man$,
    \begin{equation*}
        \Sky_S \simeq \Vect^{\dirSum S},
    \end{equation*}
    the direct sum of $S$ copies of $\Vect$.
\end{example}

The restriction of a $U$-family of
$\ger{G}$-twisted skyscraper sheaves on $\orb{M}$ along any point $\point \to U$
is a $\ger{G}$-twisted skyscraper sheaf on $\orb{M}$. Up to isomorphism, this is a
finite set of points $x_i:\point \to \orb{M}$, together with finite-dimensional
$\theta_i$-twisted representations of the stabilisers $\Gamma_{x_i}$, where $\theta_i$ denotes
a 2-cocycle representing the restriction of the gerbe $\ger{G}$ to $[\point/\Gamma_{x_i}]$.
The definition of $U$-families in particular implies that the total rank (the sum of the dimensions
of the representations involved) is locally constant across $U$.

\begin{example}
    Some consequences of the definition may already be seen by looking at the support
    of a $U$-family of skyscraper sheaves. As an object
    $\f{S} \in \Shv^{\ger{G}}(U \times \orb{M})$, 
    the support of a $U$-family is a subset of $U \times |\orb{M}|$.
    The support of a pushforward sheaf $\Gr(f)_\ast E$ is given by
    the graph of $|f|:U \to \orb{M}$. This means the support of a family
    of twisted skyscraper sheaves must be locally (in the parameterising 
    family $U$) given by the
    union of finitely many such graphs.
    The reader may like to think of them as graphs of smooth,
    multivalued functions $U \to |\orb{M}|$.
    The following subsets can arise as the support of $\f{S}$
    \begin{center}
        \begin{tikzpicture}
            \begin{scope}[shift={(0,0)}]
                \draw (0,0) -- node[left] {$|\orb{M}|$} (0,3);
                \draw (0,0) -- node[below] {$U$} (3,0);
                \draw (0,2) .. controls (1,-1) and (2,4) .. (3,1);
            \end{scope}
            \begin{scope}[shift={(5,0)}]
                \draw (0,0) -- node[left] {$|\orb{M}|$} (0,3);
                \draw (0,0) -- node[below] {$U$} (3,0);
                \draw (0,1) -- (1,1);
                \draw (1,1) .. controls (2,1) and (1,2.5) .. (3,2.5);
                \draw (0,1) -- (3,1);
            \end{scope}
            \begin{scope}[shift={(10,0)}]
                \draw (0,0) -- node[left] {$|\orb{M}|$} (0,3);
                \draw (0,0) -- node[below] {$U$} (3,0);
                \draw (0,2) .. controls (1,2) and (2,1) .. (3,1);
                \draw (0,0.5) .. controls (1,0.5) and (2, 2.5) .. (3,2.5);
            \end{scope}
        \end{tikzpicture}
    \end{center}
    while the subsets below may not.
    \begin{center}
        \begin{tikzpicture}
            \begin{scope}[shift={(0,0)}]
                \draw (0,0) -- node[left] {$|\orb{M}|$} (0,3);
                \draw (0,0) -- node[below] {$U$} (3,0);
                \draw (0,1.5) .. controls (1,1.5) and (1,1) .. (1.5,1);
                \draw (1.5,2) .. controls (2,2) and (2, 1) .. (3,1);
            \end{scope}
            \begin{scope}[shift={(5,0)}]
                \draw (0,0) -- node[left] {$|\orb{M}|$} (0,3);
                \draw (0,0) -- node[below] {$U$} (3,0);
                \draw (0,2.5) .. controls (3.5,2.5) and (3.5,0.5) .. (0,0.5);
            \end{scope}
            \begin{scope}[shift={(10,0)}]
                \draw (0,0) -- node[left] {$|\orb{M}|$} (0,3);
                \draw (0,0) -- node[below] {$U$} (3,0);
                \draw (0,1.5) .. controls (1,1.5) and (3,3) .. (1.5,1.5);
                \draw (1.5,1.5) .. controls (0,0) and (2,1.5) .. (3,1.5);
            \end{scope}
        \end{tikzpicture}
        \begin{tikzpicture}
            \begin{scope}[shift={(0,0)}]
                \draw (0,0) -- node[left] {$|\orb{M}|$} (0,3);
                \draw (0,0) -- node[below] {$U$} (3,0);
                \draw (0,2) -- (1.5,2);
                \draw (1.5,2) -- (3,1);
            \end{scope}
            \begin{scope}[shift={(5,0)}]
                \draw (0,0) -- node[left] {$|\orb{M}|$} (0,3);
                \draw (0,0) -- node[below] {$U$} (3,0);
                \draw (0,1.5) .. controls (1,1.5) and (1.5,4) .. (1.5,2);
                \draw (1.5,2) -- (1.5,1);
                \draw (1.5,1) .. controls (1.5,-1) and (2,1.5) .. (3,1.5);
            \end{scope}
            \begin{scope}[shift={(10,0)}]
                \draw (0,0) -- node[left] {$|\orb{M}|$} (0,3);
                \draw (0,0) -- node[below] {$U$} (3,0);
                \foreach \y in {1,1.5,2,...,8}{
                        \draw (0,0.5+2/\y) -- (3,0.5+2/\y);
                    }
            \end{scope}
        \end{tikzpicture}
    \end{center}
\end{example}

\begin{defn}
    \label{def:stackOfTwistedSkysheaves}
    The \emph{stack of $\ger{G}$-twisted skyscraper sheaves}
    is the full substack
    \[
        \Sky^\ger{G}_\orb{M} \subset \Shv^\ger{G}_\orb{M}
    \]
    which assigns to $U \in \Man$ the category of
    $U$-families of $\ger{G}$-twisted skyscraper sheaves
    on $\orb{M}$.
\end{defn}

\begin{lemma}
    The condition above indeed cuts out a substack.
\end{lemma}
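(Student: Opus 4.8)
The plan is to exploit that the ambient prestack $\Shv^\ger{G}_\orb{M}$ is already a stack (Lemma~\ref{lem:ShvGIsAStack}) and that $\Sky^\ger{G}_\orb{M}$ is a \emph{full} subprestack. For a full subprestack of a stack, descent of morphisms is inherited from the ambient stack, so being a substack reduces to two statements about objects: (a) the class of $U$-families of twisted skyscraper sheaves is stable under restriction along any $g:V \to U$, and (b) membership is local on $U$, i.e.\ if $\{U_\alpha \to U\}$ is a cover and each restriction $\f{S}\restriction_{U_\alpha}$ is a $U_\alpha$-family, then $\f{S}$ is a $U$-family. I would verify (a) and (b) in turn.

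For (a) I would reduce to the basic families of Definition~\ref{defn:familyOfTwistedSkyscraperSheaves}. Suppose $\f{S}$ is a $U$-family, witnessed by covers $Y \onto \orb{M}$ (trivialising $\ger{G}$) and $\tilde{U} \onto U$ over which the pullback is basic, say $\iso \DirSum_i \Gr(f_i)_\ast E_i$. Setting $\tilde{V}\define \tilde{U} \times_U V$, a cover of $V$, it suffices to see that graph pushforwards pull back to graph pushforwards. Each graph $\Gr(f_i):\tilde{U} \to Y \times \tilde{U}$ is a closed embedding (a section of the projection), and the square relating $\Gr(f_i)$ to $\Gr(f_i \comp \tilde{g})$ along $\id_Y \times \tilde{g}:Y \times \tilde{V} \to Y \times \tilde{U}$ is cartesian; base change for the pushforward (the twisted pullback--pushforward formalism of Appendix~\ref{app:twistedSheaves}, specialising Proposition~\ref{prop:twistedSheavesAdjunction}) then gives ${(\id_Y \times \tilde{g})}^\ast \Gr(f_i)_\ast E_i \iso \Gr(f_i \comp \tilde{g})_\ast \tilde{g}^\ast E_i$. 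As $\tilde{g}^\ast E_i$ is again a vector bundle, the pullback of $\f{S}$ to $Y \times \tilde{V}$ is basic, so $g^\ast\f{S}$ is a $V$-family.

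For (b) the key observation is that the notion of $U$-family does not depend on the chosen cover $Y$ of $\orb{M}$: I would fix, once and for all, a single cover $Y \onto \orb{M}$ trivialising $\ger{G}$, and argue that $\f{S}$ is a $U$-family precisely if its pullback to $Y \times \tilde{U}$ is basic for \emph{some} cover $\tilde{U} \onto U$. Granting this, (b) is immediate: choosing for each $\alpha$ a cover $\tilde{U}_\alpha \onto U_\alpha$ over which $\f{S}\restriction_{U_\alpha}$ pulls back to a basic family on $Y \times \tilde{U}_\alpha$, the disjoint union $\tilde{U}\define \coprod_\alpha \tilde{U}_\alpha$ covers $U$, and since $Y \times \tilde{U} = \coprod_\alpha (Y \times \tilde{U}_\alpha)$ while basicness is checked componentwise over a disjoint union, the pullback of $\f{S}$ to $Y \times \tilde{U}$ is basic. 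Hence $\f{S}$ is a $U$-family.

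The main obstacle is exactly this cover-invariance in the $\orb{M}$-direction. To prove it I would compare $Y$ with any second trivialising cover through a common étale refinement $r:Y \to Y'$ and show that pulling a basic family back along $r \times \id$ preserves basicness after a further refinement of $\tilde{U}$. The point is that for a graph pushforward $\Gr(f)_\ast E$ with $f:\tilde{U} \to Y'$, the support of its pullback is $\{(y,u) : r(y)=f(u)\}$, which is étale over $\tilde{U}$ via the projection; by the local normal form of étale maps this locus is, locally on $\tilde{U}$, a finite disjoint union of graphs of smooth sections $g_j:\tilde{U}\to Y$ with $r \comp g_j = f$, and the pullback restricts on each sheet to $\Gr(g_j)_\ast E$. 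Thus, after refining $\tilde{U}$ so that these sheets separate, the pullback is again a finite sum of graph pushforwards, i.e.\ basic; finiteness of the support (Definition~\ref{defn:familyOfTwistedSkyscraperSheaves}) guarantees that only finitely many sheets occur. The reverse implication, that basicness descends along the cover $r$, follows from the same local picture together with descent for $\Cinf$-modules, completing the argument.
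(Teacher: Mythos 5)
Your part (a) --- stability of the family condition under pullback along $g:V \to U$ --- is exactly the paper's proof: the paper likewise pulls back the witnessing product cover $\pi \times p$, uses that pullbacks compose, and applies base change (Lemma~\ref{lem:baseChange}) to get ${(\id_Y \times \tilde{g})}^\ast \Gr(f_i)_\ast E_i \iso \Gr(f_i \comp \tilde{g})_\ast \tilde{g}^\ast E_i$. That is in fact essentially the \emph{entire} content of the paper's proof: descent of morphisms is implicit in fullness, as you say, and locality of membership in $U$ is dispatched there in a single sentence (``manifestly local in $U$'').

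The genuine gap is in your part (b). The reformulation you build it on --- fix one trivialising cover $Y \onto \orb{M}$ once and for all, and claim that $\f{S}$ is a $U$-family precisely if its pullback to $Y \times \tilde{U}$ is basic for some cover $\tilde{U} \onto U$ --- is false, and your sheet argument cannot repair it. Take $\orb{M} = S^1$ with the trivial gerbe, $U = \point$, and $\f{S}$ the skyscraper sheaf at $x_0 \in S^1$: this is a $\point$-family, witnessed by the identity cover $S^1 \to S^1$. But if the fixed cover is the universal cover $Y = \IR \onto S^1$ (\'etale, surjective, trivialising), the pullback of $\f{S}$ to $\IR$ has support $\{x_0 + 2\pi n\}_{n \in \IZ}$ and so is not a \emph{finite} direct sum of graph pushforwards, for any cover of the point. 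The same example refutes the step ``finiteness of the support guarantees that only finitely many sheets occur'': \'etale maps need not be finite, and finiteness of $\supp \f{S}(u)$ downstairs does not bound the number of preimages upstairs. There is a second, independent obstruction even for covers with finite fibres: take $\orb{M} = \IR$, $U = \IR$, $\f{S} = \Gr(\id_\IR)_\ast \Cinf$ the diagonal family, and the fixed cover $Y = (-\infty,1) \sqcup (0,\infty) \onto \IR$. On the chart $(-\infty,1)$ the pullback is supported on a \emph{partial} graph near $u=1$, and a partial graph is never a finite union of full graph pushforwards, no matter how one refines $\tilde{U}$. So ``basic after pullback'' is not cover-independent: it holds only for covers \emph{adapted} to $\f{S}$ (finite preimages over the support, no branch of the support crossing out of a chart), which is exactly why Definition~\ref{defn:familyOfTwistedSkyscraperSheaves} quantifies existentially over $Y$.

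To be fair, you have put your finger on something the paper's one-line treatment glosses over: gluing local witnesses $(Y_\alpha, \tilde{U}_\alpha)$ into a single witness does require an argument, since the naive choice $Y = \coprod_\alpha Y_\alpha$, $\tilde{U} = \coprod_\alpha \tilde{U}_\alpha$ fails on the cross terms $Y_\beta \times \tilde{U}_\alpha$ for exactly the reasons above. But the repair cannot be cover-independence, because that is false; a correct argument has to construct, from the local data, a \emph{new} cover of $\orb{M}$ adapted to all of $\f{S}$ (large enough charts along each branch of the support, with the trivialisation discrepancies absorbed into the vector bundles), rather than compare everything to a cover chosen in advance.
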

\begin{proof}
    The condition of being a $U$-family is manifestly local in $U$, so 
    it is preserved under gluing.
    We prove that the condition of being a family of skyscraper sheaves is preserved
    by the pullback morphisms in $\Shv^\ger{G}_\orb{M}$.
    Let $\f{S} \in \Sky^\ger{G}_\orb{M}(U) \subset \Shv^\ger{G}(\orb{M} \times U)$.
    By Definition~\ref{defn:familyOfTwistedSkyscraperSheaves},
    there exists a product cover
    \[
        \pi \times p: Y \times \tilde{U} \onto \orb{M} \times U
    \]
    with a trivialisation $\tau$ of the pullback gerbe,
    such that ${(\pi \times p)}^\ast\f{S}$ decomposes as a finite direct sum
    \[
        {(\pi \times p)}^\ast\f{S} \iso
        \DirSum_{i=0}^r \tau \cdot {\Gr(f_i)}_\ast E_i,
    \]
    where $f_i:\tilde{U} \to Y$ are smooth maps and
    $E_i \onto \tilde{U}$ vector bundles over $\tilde{U}$.

    Consider the pullback morphism ${(\id_{\orb{M}}\times g)}^\ast$
    associated to a map $g:V \to U$.
    The product cover $\pi \times p$ pulls back to a product cover
    $\pi \times g^\ast p:Y \times \tilde{V} \onto \orb{M} \times V$:
    \[\begin{tikzcd}
            {Y \times \tilde{V}} & {Y \times\tilde{U}} \\
            {\orb{M}\times V} & {\orb{M} \times U}.
            \arrow["{\pi \times p}", from=1-2, to=2-2]
            \arrow["{\pi\times g^\ast p}"', from=1-1, to=2-1]
            \arrow["{\id_Y \times \tilde{g}}", from=1-1, to=1-2]
            \arrow["{\id_{\orb{M}}\times g}"', from=2-1, to=2-2]
            \arrow["\lrcorner"{anchor=center, pos=0.125}, draw=none, from=1-1, to=2-2]
        \end{tikzcd}\]
    Pullbacks compose, so
    \[
        {(\pi \times g^\ast p)}^\ast{(\id_\orb{M} \times g)}^\ast \f{S}
        \simeq
        {(\id_Y \times \tilde{g})}^\ast {(\pi \times p)}^\ast \f{S},
    \]
    which reduces the proof to showing that
    \[
        {(\id_Y \times \tilde{g})}^\ast (\tau \cdot {\Gr(f_i)}_\ast E_i)
    \]
    is a basic $\tilde{V}$-family of skyscraper sheaves.
    This follows from base change (Lemma~\ref{lem:baseChange}) along the pullback diagram
    \[\begin{tikzcd}
            {\tilde{V}} & {\tilde{U}} \\
            {Y \times \tilde{V}} & {Y\times \tilde{U}}.
            \arrow["{\tilde{g}}", from=1-1, to=1-2]
            \arrow["{{\Gr(f_i\comp \tilde{g})}}"', from=1-1, to=2-1]
            \arrow["{\Gr(f_i)}", from=1-2, to=2-2]
            \arrow["{\id_Y \times \tilde{g}}"', from=2-1, to=2-2]
            \arrow["\lrcorner"{anchor=center, pos=0.125}, draw=none, from=1-1, to=2-2]
        \end{tikzcd}\]
    It yields an isomorphism
    \[
        {(\id_Y \times \tilde{g})}^\ast \Gr(f_i)_\ast E_i \simeq
        \Gr{(f_i \comp \tilde{g})}_\ast \tilde{g}^\ast E_i.
    \]
    The right hand side is the pushforward of the vector bundle $\tilde{g}^\ast E_i \to V$
    along the graph of the smooth map $f_i\comp \tilde{g}:V \to Y$,
    and induces a basic $V$-family of skyscraper sheaves
    \[
        ({(\id_Y \times \tilde{g})}^\ast \tau) \cdot {\Gr(f_i \comp \tilde{g})}_\ast \tilde{g}^\ast E_i. \qedhere
    \]
\end{proof}

\subsection{Linear Stacks}
\label{sec:linearStacks}
Recall we write $\VCat$ to denote the bicategory of $\dirSum$-complete $\VecInf$-enriched categories.
It is symmetric monoidal under the tensor product of enriched
$\dirSum$-complete categories
introduced by Kelly in~\cite[Ch 6.5]{kelly1982basic}
and studied in more detail in~\cite{franco2013tensor}.
Let $\cat{A},\cat{B} \in \VCat$, then their tensor product
is obtained by first taking the ordinary product of enriched categories, and then completing
freely under direct sums.
The resulting category $\cat{A} \boxtimes \cat{B}$ has objects formal finite direct sums
$\DirSum_i a_i \boxtimes b_i$ (where $a_i \in \cat{A}, b_i \in \cat{B}$) and the $\Hom$ spaces
are extended linearly from
\begin{equation*}
    \Hom(a \boxtimes b,a' \boxtimes b') \define
    \Hom_{\cat{A}}(a,a') \tensor \Hom_{\cat{B}}(b,b').
\end{equation*}

This tensor product extends the Deligne tensor product
used in the fusion category literature~\cite[Ch 1.11]{etingof2016tensor}.
\begin{thm}[{\cite[Thm 27]{franco2013tensor}}]
    \label{thm:boxproductIsDeligneproductIfAbelian}
    Let $\cat{A},\cat{B}$ be two semisimple abelian categories
    with finite-dimensional $\Hom$-spaces and objects of finite length.
    The product $\cat{A} \boxtimes \cat{B}$ is equivalent to their Deligne
    tensor product.
\end{thm}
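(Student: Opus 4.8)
The plan is to identify the simple objects on both sides and to exploit the fact that a $\IC$-linear semisimple abelian category with finite-dimensional $\Hom$-spaces is determined up to equivalence by its set of isomorphism classes of simple objects. First I would analyse the Kelly product $\cat{A} \boxtimes \cat{B}$ directly. Since $\cat{A}$ and $\cat{B}$ are semisimple, every object $a_i$ (resp. $b_i$) appearing in a formal sum $\DirSum_i a_i \boxtimes b_i$ decomposes into simples, and the canonical functor $(a,b) \mapsto a \boxtimes b$ is additive in each variable because its effect on $\Hom$-spaces is bilinear. Hence every object of $\cat{A} \boxtimes \cat{B}$ is a finite direct sum of objects $s \boxtimes t$ with $s \in \Irr(\cat{A})$ and $t \in \Irr(\cat{B})$.

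The key computation is then the $\Hom$-spaces between these generators. By definition $\Hom(s \boxtimes t, s' \boxtimes t') = \Hom_{\cat{A}}(s,s') \tensor \Hom_{\cat{B}}(t,t')$. Working over $\IC$ and using finite-dimensionality of $\Hom$-spaces, Schur's lemma gives $\End(s) = \IC$ and $\Hom_{\cat{A}}(s,s') = 0$ for $s \not\iso s'$ (a finite-dimensional division algebra over the algebraically closed field $\IC$ is $\IC$ itself). Thus $\Hom(s \boxtimes t, s' \boxtimes t')$ is $\IC$ when $(s,t) \iso (s',t')$ and $0$ otherwise, so the $s \boxtimes t$ form a set of pairwise non-isomorphic simple objects with endomorphism algebra $\IC$. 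Since every object is a finite direct sum of them, $\cat{A} \boxtimes \cat{B}$ is semisimple abelian (kernels and cokernels of maps between semisimple objects are direct summands), with $\Irr(\cat{A} \boxtimes \cat{B}) = \Irr(\cat{A}) \times \Irr(\cat{B})$.

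Next I would produce the comparison with the Deligne product. The canonical bi-additive functor $\cat{A} \times \cat{B} \to \cat{A} \boxtimes \cat{B}$ is exact in each variable (everything in sight is semisimple), so by the universal property defining the Deligne tensor product it factors through a right-exact functor $\bar{F}: \cat{A} \boxtimes_{\mathrm{Del}} \cat{B} \to \cat{A} \boxtimes \cat{B}$. To see $\bar{F}$ is an equivalence it suffices, by semisimplicity of both sides, to check it on simple objects: $\bar{F}$ sends the simple $s \boxtimes_{\mathrm{Del}} t$ to $s \boxtimes t$, it is essentially surjective because these generate under direct sums, and it is fully faithful once it induces isomorphisms on $\Hom$-spaces between simples.

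The step I expect to be the main obstacle is the input on the Deligne side: one must know that $\Irr(\cat{A} \boxtimes_{\mathrm{Del}} \cat{B}) = \Irr(\cat{A}) \times \Irr(\cat{B})$ with $\Hom_{\mathrm{Del}}(s \boxtimes t, s' \boxtimes t') = \Hom_{\cat{A}}(s,s') \tensor \Hom_{\cat{B}}(t,t')$, so that $\bar{F}$ is fully faithful rather than merely essentially surjective. This is the structural description of the Deligne product of semisimple categories and can be cited, or avoided entirely by the more symmetric route of verifying directly that $(\cat{A} \boxtimes \cat{B}, F)$ satisfies the universal property of the Deligne product: given any bi-additive, bi-right-exact $G: \cat{A} \times \cat{B} \to \cat{D}$, one sets $\tilde{G}(\DirSum_i a_i \boxtimes b_i) = \DirSum_i G(a_i,b_i)$ and lets $\tilde{G}$ act on the generating $\Hom$-spaces by $f \tensor g \mapsto G(f,g)$; the only things to check are well-definedness, right-exactness, and uniqueness up to natural isomorphism, all of which follow from bilinearity of $G$ together with the $\dirSum$-completion built into the Kelly construction.
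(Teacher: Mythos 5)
Your proof is correct, but it is worth pointing out that the paper does not prove this statement at all: it is quoted verbatim from L\'opez Franco's work (Theorem 27 of the cited reference), whose result is far more general --- it identifies Kelly-type tensor products with Deligne's product for arbitrary locally finite abelian categories, semisimple or not, and its proof requires genuine machinery (existence of the Deligne product, analysis of finitely cocomplete tensor products). Your argument instead exploits semisimplicity at every step, and this is exactly what makes it elementary and self-contained: the Kelly product $\cat{A} \boxtimes \cat{B}$ is analysed by hand (objects decompose into $s \boxtimes t$ because linear functors preserve biproducts; Schur's lemma plus finite-dimensionality of $\Hom$-spaces over the algebraically closed field $\IC$ pins down the $\Hom$-spaces; hence $\cat{A}\boxtimes\cat{B}$ is equivalent to finitely supported $\Irr(\cat{A})\times\Irr(\cat{B})$-graded vector spaces, so semisimple abelian), after which the comparison with the Deligne product is pure bookkeeping. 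Of your two suggested endgames, the second is the better one: route (a) needs as input that $\Irr(\cat{A}\boxtimes_{\mathrm{Del}}\cat{B}) = \Irr(\cat{A})\times\Irr(\cat{B})$ with the expected $\Hom$-spaces, which is a statement of essentially the same strength as the theorem itself (citable, but it makes the argument feel like relabelling), whereas route (b) --- verifying directly that your explicit semisimple category satisfies Deligne's universal property, with exactness of $\tilde{G}$ automatic because every additive functor out of a semisimple abelian category is exact --- uses nothing about the Deligne product beyond its definition. The only polish route (b) still needs is the 2-categorical half of the universal property (that precomposition with $\cat{A}\times\cat{B} \to \cat{A}\boxtimes\cat{B}$ is fully faithful on natural transformations, not merely essentially surjective on functors), which is the same finite-direct-sum bookkeeping. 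In short: the paper's citation buys generality and brevity; your argument buys a transparent, self-contained proof in precisely the semisimple case the paper actually uses, and it also implicitly explains why the paper's $\dirSum$-completion version of the Kelly product agrees with the finitely cocomplete one appearing in the cited reference.
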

In particular, $\cat{A} \boxtimes \cat{B}$ is again semisimple abelian
with finite-dimensional $\Hom$-spaces and objects of finite length.

The symmetric monoidal structure on $\VCat$ induces a symmetric monoidal structure
on the bicategory $\VSt$ of $\VCat$-valued stacks over $\Man$.
The product of $\st{X}, \st{Y} \in \VSt$ is computed by first
forming the pointwise product
\[
    \st{X} \boxtimes^0 \st{Y}: U \mapsto \st{X}(U) \boxtimes \st{Y}(U),
\]
and then stackifying to obtain
\[
    \st{X} \boxtimes \st{Y} \define {(\st{X} \boxtimes^0 \st{Y})}^\shff.
\]
Every category $\cat{C} \in \VCat$ is canonically
a module over $\Vec$: ${\IC}^r \in \Vec$ acts by $X \mapsto X^{\dirSum r}$,
and morphisms act via the $\Vec$-module structure of $\VecInf$.

We may pull back $\Vec$ along $\Man \to \point$ to obtain the
locally constant stack ${\Vec}^\mathrm{const}$,
which assigns the category $\Vec$ to any Cartesian space $U \in \CartSp \subset \Man$
and the identity functor to smooth maps between them.
Any stack in $\VSt/\Man$ is canonically equipped with an action by $\Vec^\mathrm{const}$.

\begin{example}
    \label{ex:VecActionOnSkyscrapers}
    For the stack $\Sky^\ger{G}_\orb{M}$ of $\ger{G}$-twisted skyscraper sheaves
    over an orbifold $\orb{M}$, the action by a vector space $V \in \Vec$
    corresponds to tensoring twisted sheaves $\sh{F} \in \Sky^\ger{G}_\orb{M}(U)
        \subset \Shv^\ger{G}(\orb{M} \times U)$
    with the sheaf of sections $\Cinf(\orb{M} \times U, V)$
    of the trivial $V$-bundle over $\orb{M} \times U$.
    We view this as tensoring the twisted skyscraper sheaf
    over each point $u \in U$ with the vector space $V$.
\end{example}
The $\Vec$-action in Example~\ref{ex:VecActionOnSkyscrapers} admits an immediate extension:
Instead of tensoring the skyscraper sheaf over each point $u \in U$
with the same vector space $V$, we may tensor the $U$-family with
a vector bundle $P$ over $U$.
This equips the stack of skyscraper sheaves with an action by $\Vect$, the
$\Vec$-enriched stack of vector bundles.
The stack $\Vect$ is symmetric monoidal via the usual tensor product
of vector bundles.

\begin{defn}
    \label{def:linearStack}
    A \emph{linear stack} is a $\Vect$-module, ie.\
    a stack $\st{F} \in \VSt/\Man$
    equipped with an action of the stack of vector bundles.
\end{defn}
The \emph{bicategory of linear stacks} $\LinSt$ is the bicategory of $\Vect$-modules.
A 1-mor\-phism of $\Vect$-modules $\st{X},\st{Y}$ is a map
\begin{equation*}
    F: \st{X} \to \st{Y}
\end{equation*}
of underlying $\Vec$-enriched stacks, equipped with an invertible 2-cell
$s$ implementing compatibility with the $\Vect$-module structures $\odot_\st{X}, \odot_\st{Y}$.
For each $U \in \Man$, $P \in \Vect(U)$, $X \in \st{X}(U)$, this 2-cell
is an equivalence
\begin{equation*}
    s_U (X,P):F(X \odot_\st{X} P) \eqto F(X) \odot_\st{Y} P.
\end{equation*}
A 2-morphism between two 1-morphisms $(F, s), (F',s'):\st{X} \to \st{Y}$
is a 2-morphism $a:F \to F'$, satisfying the compatibility condition
\[\begin{tikzcd}
        {F(X \odot_\st{X} P)} & {F(X) \odot_\st{Y} P} \\
        {F'(X \odot_\st{X} P)} & {F'(X) \odot_\st{Y} P}
        \arrow["{s_U(X,P)}", from=1-1, to=1-2]
        \arrow["{a(X \odot P)}"', from=1-1, to=2-1]
        \arrow["{s'_U(X,P)}"', from=2-1, to=2-2]
        \arrow["{a(X) \odot P}", from=1-2, to=2-2]
    \end{tikzcd}\]
for all triples $U, P, X$ as above.

Let $X,Y \in \st{X}(U)$ be two sections of a linear stack $\st{X}$ over
$U \in \Man$.
The restriction functors upgrade $\Hom_{\st{X}(U)}(X,Y)$ to
a presheaf
\begin{alignat*}{1}
    \sHom(X,Y):{(\Man/U)}^\opp & \to \VecInf                       \\
    (f:V \to U)                & \mapsto \Hom(f^\ast X, f^\ast Y).
\end{alignat*}
The descent condition implies that this is actually a sheaf on $\Man/U$.
The $\Vect$-module structure upgrades this sheaf to a $\Cinf$-module:
We use the natural isomorphisms $Y \simeq Y \odot \Cinf$ and
$\Cinf \simeq \sHom_{\Vect}(\Cinf,\Cinf)$, to write down the action map
\[
    \sHom(X,Y) \tensor \Cinf \simeq \sHom(X,Y \odot \Cinf) \tensor
    \sHom(\Cinf,\Cinf) \to \sHom(X,Y).
\]
\begin{lemma}
    \label{lem:vectorBundlesActOnHomSheavesOfLinearStacks}
    The action map extends to an isomorphism of $\Cinf$-modules
    \[
        \sHom(X,Y) \tensor_{\Cinf} P \simeq \sHom(X,Y \odot P)
    \]
    natural in $X,Y \in \st{X}(U)$ and $P \in \Vect(U)$.
\end{lemma}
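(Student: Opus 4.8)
The plan is to write down the natural comparison morphism and then prove it is an isomorphism by reducing to the case of a trivial bundle. The map
\[
    \alpha_P:\sHom(X,Y) \tensor_{\Cinf} P \to \sHom(X,Y \odot P)
\]
is built exactly like the $\Cinf$-action map preceding the lemma: over $f:V \to U$, a section of $P$ is a $\Cinf_V$-linear map $\Cinf_V \to f^\ast P$, so that $f^\ast Y \odot -$ together with the unit isomorphism $f^\ast Y \simeq f^\ast Y \odot \Cinf_V$ and the canonical identification $f^\ast(Y \odot P) \simeq f^\ast Y \odot f^\ast P$ (restriction functors are $\Vect$-linear) turns it into a morphism $f^\ast Y \to f^\ast(Y \odot P)$; precomposing with a section $\phi \in \sHom(X,Y)(V)$ yields an element of $\sHom(X,Y \odot P)(V)$. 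This assignment is $\Cinf$-bilinear in the two arguments and natural in $V$, hence descends to a morphism of sheaves of $\Cinf$-modules $\alpha_P$, natural in $X,Y \in \st{X}(U)$ and in $P \in \Vect(U)$.

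First I would treat free bundles. For $P = \Cinf$ the unit isomorphism $Y \odot \Cinf \simeq Y$ identifies $\alpha_{\Cinf}$ with the identity of $\sHom(X,Y)$, so it is invertible. Both sides of $\alpha$ are additive functors of $P$: the source because $- \tensor_{\Cinf} -$ distributes over direct sums, and the target because the module action satisfies $Y \odot (P \dirSum P') \simeq (Y \odot P) \dirSum (Y \odot P')$ and $\sHom(X,-)$ preserves finite direct sums (the fibre categories $\st{X}(V)$ lie in $\VCat$ and are thus $\dirSum$-complete). Since $\alpha$ is natural in $P$ it respects these splittings, so $\alpha_{\Cinf^{\dirSum r}} \simeq \DirSum_{i} \alpha_{\Cinf}$ is an isomorphism for every $r$.

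Finally I would pass from free to arbitrary bundles by locality. Being an isomorphism of sheaves of $\Cinf$-modules on $\Man/U$ is a local condition on $U$, so it suffices to verify it after restricting to a cover $\{V_a \to U\}$ on which $P$ trivialises. Over each $V_a$ a local frame gives an isomorphism $P|_{V_a} \iso \Cinf_{V_a}^{\dirSum r}$, and naturality of $\alpha$ in the bundle variable identifies $\alpha_{P|_{V_a}}$ with $\alpha_{\Cinf_{V_a}^{\dirSum r}}$, which is an isomorphism by the previous paragraph. These local isomorphisms are restrictions of the single globally defined natural transformation $\alpha_P$, so they automatically agree on overlaps and exhibit $\alpha_P$ as an isomorphism over all of $U$; naturality in $X$, $Y$ and $P$ is inherited from the construction.

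The main obstacle I anticipate is not any single computation but the bookkeeping needed to guarantee that $\alpha_P$ is genuinely a morphism of sheaves whose restriction to a trivialising chart coincides, under a local frame, with the trivial-bundle map. Concretely, one must invoke the module-action axioms to know that $\sHom(X, Y \odot -)$ is additive and that the unit and distributivity isomorphisms are natural, so that the reduction to $P = \Cinf$ is compatible with gluing. Once this naturality is secured, every remaining step is formal.
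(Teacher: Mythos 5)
Your proposal is correct and follows essentially the same route as the paper: both construct the comparison map by viewing sections of $P$ as maps $\Cinf \to P$ (the paper writes this as $P \simeq \sHom_\Vect(\Cinf,P)$) and applying $Y \odot -$, and both verify it is an isomorphism by checking locally, where $P$ may be replaced by $\Cinf^{\dirSum r}$ and both sides reduce to ${\big(\sHom(X,Y)\big)}^{\dirSum r} \simeq \sHom(X,Y^{\dirSum r})$. Your write-up merely makes explicit the additivity and gluing bookkeeping that the paper leaves implicit.
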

\begin{proof}
    We use the natural isomorphism $P \simeq \sHom_\Vect(\Cinf,P)$
    to build the map of $\Cinf$-modules
    \[
        \sHom(X,Y \odot \Cinf) \tensor_{\Cinf} \sHom_\Vect(\Cinf,P)
        \to \sHom(X, Y \odot P).
    \]
    It is manifestly natural in $X,Y$ and $P$.
    That this map is an isomorphism may be checked locally.
    Hence, we may assume $P \simeq {(\Cinf)}^{\dirSum r}$,
    and the two sides are isomorphic to
    \[
        {\big(\sHom(X,Y)\big)}^{\dirSum r} \simeq 
        \sHom(X, Y^{\dirSum r}).
        \qedhere
    \]
\end{proof}

The monoidal product of two linear stacks $\st{X},\st{Y}$ is
computed by first computing the pointwise product
$\st{X} \boxtimes^0_\Vect \st{Y}(U)$: Its objects are formal direct sums
$\DirSum_i X_i \boxtimes Y_i$ for $X_i \in \st{X}(U), Y_i \in \st{Y}(U)$,
and the morphism spaces are extended linearly from
\[
    \Hom(X\boxtimes Y,X' \boxtimes Y') \define \Hom(X,X') \tensor_{\Cinf(U)}
    \Hom(Y,Y').
\]
\begin{defn}
    The product of linear stacks $\st{X},\st{Y}$ is the linear stack
    \[
        \st{X} \boxtimes_\Vect \st{Y} \define {(\st{X} \boxtimes_\Vect^0 \st{Y})}^\shff.
    \]
\end{defn}

Note there is a map
$\st{X} \boxtimes \st{Y} \to \st{X} \boxtimes^0_\Vect \st{Y}$
given by the identity on objects, and by the natural quotient map
\begin{align*}
    \Hom(X,X') \tensor_\IC \Hom(Y,Y') \to
    \Hom(X,X') \tensor_{\Cinf(U)} \Hom(Y,Y')
\end{align*}
on $\Hom$-spaces over $U \in \Man$.
Postcomposing with the natural map from a stack to its stackification yields the quotient map
\begin{equation*}
    \st{X} \boxtimes \st{Y} \to \st{X} \boxtimes_\Vect \st{Y}.
\end{equation*}

\begin{lemma}
    \label{lem:specifyVectModuleMapEasily}
    A map of $\Vect$-modules
    \[
        F:\st{X} \boxtimes_\Vect \st{Y} \to \st{Z}
    \]
    is specified (up to equivalence) by a morphism of $\Vec$-modules
    \[
        F^0:\st{X} \boxtimes^0 \st{Y} \to \st{Z},
    \]
    subject to the following condition:
    For every parameterising manifold $U \in \Man$ and
    object $x \boxtimes y \in \st{X}(U) \boxtimes \st{Y}(U)$,
    the map induced by $F^0_U$ on the endomorphisms
    \[
        \End(x \boxtimes y) = \End(x) \tensor \End(y) \to \End(F^0_U(x \boxtimes y))
    \]
    is $\Cinf$-bilinear, ie.\ $F^0_U$ sends both
    $c \cdot \id_x \tensor id_y$ and $\id_x \tensor c \cdot \id_y$ to $c \cdot \id_{F^0_U(x \boxtimes y)}$
    for all smooth functions $c \in \Cinf(U)$.
\end{lemma}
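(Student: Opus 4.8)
The plan is to split the claimed correspondence into two elementary reflections: one that removes the stackification, and one that passes between the $\IC$-linear and the $\Cinf(U)$-linear pointwise products. Throughout I write $q^0:\st{X}\boxtimes^0\st{Y}\to\st{X}\boxtimes_\Vect^0\st{Y}$ for the identity-on-objects functor induced over each $U\in\Man$ by the quotient map $\Hom_\st{X}(X,X')\tensor_\IC\Hom_\st{Y}(Y,Y')\onto\Hom_\st{X}(X,X')\tensor_{\Cinf(U)}\Hom_\st{Y}(Y,Y')$. First, since $\st{Z}$ is a linear stack, hence a stack, the universal property of stackification in the bicategory $\LinSt$ of $\Vect$-modules identifies $\Vect$-module maps out of $\st{X}\boxtimes_\Vect\st{Y}=(\st{X}\boxtimes_\Vect^0\st{Y})^\shff$ with $\Vect$-module maps out of the unstackified pointwise product $\st{X}\boxtimes_\Vect^0\st{Y}$. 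This reduces the lemma to a statement about the pointwise prestacks.

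For the forward direction, given a $\Vect$-module map $G:\st{X}\boxtimes_\Vect^0\st{Y}\to\st{Z}$ I set $F^0:=G\comp q^0$. As $q^0$ is the identity on objects and intertwines the constant-bundle action, it is a map of $\Vec^{\mathrm{const}}$-modules, so $F^0$ is a morphism of $\Vec$-modules; and because $G$ is defined on the $\Cinf(U)$-tensor product, $F^0$ automatically annihilates $c\phi\tensor\psi-\phi\tensor c\psi$, which is precisely the asserted $\Cinf$-bilinearity. The substantive direction is the converse: starting from a $\Vec$-module map $F^0$ satisfying only the stated condition on \emph{endomorphisms}, I must show that $F^0$ descends along $q^0$, i.e.\ that it is $\Cinf$-bilinear on \emph{all} morphism spaces. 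This is the crux of the argument. The trick is to absorb an arbitrary pair $\phi:X\to X'$, $\psi:Y\to Y'$ into endomorphisms of the biproducts $\hat X:=X\dirSum X'$ and $\hat Y:=Y\dirSum Y'$, which exist since the fibre categories are $\dirSum$-complete. Writing $\tilde\phi:=i_{X'}\comp\phi\comp p_X\in\End(\hat X)$ and similarly $\tilde\psi\in\End(\hat Y)$, the componentwise composition rule in the product category $\st{X}(U)\boxtimes\st{Y}(U)$ yields $(c\tilde\phi)\tensor\tilde\psi=\bigl((c\cdot\id_{\hat X})\tensor\id_{\hat Y}\bigr)\comp(\tilde\phi\tensor\tilde\psi)$ and $\tilde\phi\tensor(c\tilde\psi)=\bigl(\id_{\hat X}\tensor(c\cdot\id_{\hat Y})\bigr)\comp(\tilde\phi\tensor\tilde\psi)$. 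Applying the functor $F^0$ and invoking the endomorphism hypothesis $F^0\bigl((c\cdot\id_{\hat X})\tensor\id_{\hat Y}\bigr)=c\cdot\id=F^0\bigl(\id_{\hat X}\tensor(c\cdot\id_{\hat Y})\bigr)$ gives $F^0((c\tilde\phi)\tensor\tilde\psi)=F^0(\tilde\phi\tensor(c\tilde\psi))$. Conjugating back by the (co)projections, i.e.\ composing with $p_{X'}\tensor p_{Y'}$ and $i_X\tensor i_Y$ and using functoriality, recovers $F^0((c\phi)\tensor\psi)=F^0(\phi\tensor(c\psi))$ for the original morphisms. Hence $F^0$ factors through $q^0$ as an $\IC$-linear, and in fact $\Cinf$-linear, functor $G$.

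It then remains to upgrade this descended functor $G$ to a genuine $\Vect$-module map. Its module $2$-cell is assembled from the $\Vec^{\mathrm{const}}$-module coherence datum of $F^0$ together with the $\Cinf$-linearity just established: every vector bundle $P$ is locally of the form $(\Cinf)^{\dirSum r}$, for which $\odot P$ reduces to the constant-bundle action present on any object of $\VSt/\Man$, and the transition morphisms between such local trivialisations are $\Cinf$-linear, hence respected by $G$. This is exactly where Lemma~\ref{lem:vectorBundlesActOnHomSheavesOfLinearStacks}, identifying the $\Cinf$-module structure on the $\Hom$-sheaves of a linear stack, is invoked to glue the local data into a well-defined $2$-cell $s$. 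The assignments $G\mapsto F^0$ and $F^0\mapsto G$ are then mutually inverse up to the evident natural equivalences, which yields the claimed correspondence.

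I expect the third paragraph—promoting the hypothesis from endomorphisms to genuine $\Cinf$-bilinearity on all hom-spaces via the biproduct-and-interchange manoeuvre—to be the only real content. The stackification reflection of the first paragraph and the module-structure bookkeeping of the last paragraph are formal, relying only on the universal property of $(-)^\shff$ and on local triviality of vector bundles.
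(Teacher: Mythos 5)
Your proposal is correct and follows essentially the same route as the paper: show that the endomorphism condition forces $F^0$ to descend along the quotient $\st{X}\boxtimes^0\st{Y} \to \st{X}\boxtimes^0_\Vect\st{Y}$, then extend to the stackification by its universal property (the paper's own proof is just a terser version of exactly these two steps, plus the module-structure bookkeeping you spell out). The only substantive difference is cosmetic: your biproduct detour is unnecessary, because the identity it secretly relies on, $c\cdot\tilde\phi = (c\cdot\id_{\hat X})\comp\tilde\phi$ --- which holds by definition of the $\Cinf$-action on the $\Hom$-sheaves of a linear stack --- applies just as well to the original morphisms, giving $(c\phi)\tensor\psi = \left((c\cdot\id_{X'})\tensor\id_{Y'}\right)\comp(\phi\tensor\psi)$ directly, after which the endomorphism hypothesis at $X'\boxtimes Y'$ and functoriality of $F^0$ complete the descent without ever passing through $\End(X\dirSum X')$.
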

\begin{proof}
    The condition on $F^0$ ensures that the morphism
    $F^0:\st{X} \boxtimes^0 \st{Y} \to \st{Z}$
    factors through the quotient map
    \[
        \st{X} \boxtimes^0 \st{Y} \to \st{X} \boxtimes_\Vect^0 \st{Y}.
    \]
    By the universal property of stackfication, the induced
    map $\st{X} \boxtimes_\Vect^0 \st{Y} \to \st{Z}$
    extends to a map $\st{X} \boxtimes_\Vect \st{Y} \to \st{Z}$,
    unique up to an invertible 2-cell.
\end{proof}

\begin{lemma}
    \label{lem:boxVectPresentsProductOverVect}
    The product $\st{X} \boxtimes_\Vect \st{Y}$ presents the colimit of the diagram
    \[\begin{tikzcd}
            {        \st{X} \boxtimes \Vect \boxtimes \Vect \boxtimes \st{Y}} & {        \st{X} \boxtimes \Vect \boxtimes \st{Y}} & {        \st{X} \boxtimes \st{Y}}
            \arrow[shift right=4, from=1-1, to=1-2]
            \arrow[shift left=4, from=1-1, to=1-2]
            \arrow[from=1-1, to=1-2]
            \arrow[shift right=2, from=1-2, to=1-3]
            \arrow[shift left=2, from=1-2, to=1-3]
            \arrow[from=1-3, to=1-2]
            \arrow[shift right=2, from=1-2, to=1-1]
            \arrow[shift left=2, from=1-2, to=1-1]
        \end{tikzcd}\]
    coequalising the action of $\Vect$ on $\st{X}$ and $\st{Y}$.\footnote{
        The splitting maps in the simplicial diagram are
        given by the unit $\Cinf \in \Vect$. We suppress 2-cells.}
\end{lemma}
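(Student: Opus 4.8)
The plan is to reduce the stack-level bicolimit to a pointwise computation, using that stackification preserves bicolimits, and then to recognise the resulting pointwise colimit as a relative tensor product of $\VecInf$-enriched categories. First I would note that the bar diagram in the statement is obtained by applying stackification $(-)^\shff$ to the corresponding diagram of \emph{pointwise} products of $\VCat$-valued presheaves, whose face maps are the pointwise actions of $\Vect$ on $\st{X}$ and on $\st{Y}$ and whose degeneracies insert the unit $\Cinf \in \Vect$. Since stackification is a bi-left-adjoint to the inclusion of $\VSt$ into the bicategory $\PSh$ of $\VCat$-valued presheaves, it preserves bicolimits; and bicolimits in $\PSh$ are computed objectwise. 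Thus it suffices to identify, for each fixed $U \in \Man$, the reflexive coequalizer of
\[
    \st{X}(U) \boxtimes \Vect(U) \boxtimes \st{Y}(U) \rightrightarrows \st{X}(U) \boxtimes \st{Y}(U)
\]
in $\VCat$ with the value $(\st{X} \boxtimes_\Vect^0 \st{Y})(U) = \st{X}(U) \boxtimes_{\Vect(U)} \st{Y}(U)$; applying $(-)^\shff$ to this identification returns $\st{X} \boxtimes_\Vect \st{Y}$ by definition.

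For the pointwise statement I would set $R \define \Vect(U)$, a symmetric monoidal $\VecInf$-category with unit $\Cinf$ and $\End(\Cinf) = \Cinf(U)$, and recognise the above coequalizer as the categorified two-sided bar presentation $M \tensor_R N = \coker(M \tensor_\IC R \tensor_\IC N \rightrightarrows M \tensor_\IC N)$. On objects the augmentation is essentially surjective, since both $\st{X}(U) \boxtimes \st{Y}(U)$ and its relative analogue have objects the formal finite sums $\DirSum_i a_i \boxtimes b_i$; the two face maps impose exactly the relation $a \odot P \boxtimes b \simeq a \boxtimes P \odot b$. On $\Hom$-spaces the coequalizer quotients $\Hom_{\st{X}(U)}(a,a') \tensor_\IC \Hom_{\st{Y}(U)}(b,b')$ by the difference of the left and right $R$-actions, which is precisely the balancing relation defining the tensor product $\tensor_{\Cinf(U)}$ used in the formula for $\boxtimes_\Vect^0$.

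The main obstacle is passing from tensoring over the entire monoidal category $R = \Vect(U)$ to tensoring over the endomorphism ring $\Cinf(U)$ of its unit, since a priori the $R$-balancing involves all bundles $P$, not just trivial ones. Here I would invoke the smooth Serre--Swan theorem: over the paracompact finite-dimensional manifold $U$ every vector bundle is a direct summand of a trivial bundle, so $R = \Vect(U)$ is generated under $\dirSum$ and retracts by the unit $\Cinf$. Together with the $\dirSum$-completeness of the objects of $\VCat$, this lets me reduce the balancing relation to the case $P \iso \Cinf^{\dirSum n}$, where the two face maps differ by the scalar action of $\Cinf(U) = \End(\Cinf)$ and the coequalizer is manifestly $\Hom \tensor_{\Cinf(U)} \Hom$. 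Finally I would match the universal cocone of the coequalizer with the universal property of $\boxtimes_\Vect^0$ recorded in Lemma~\ref{lem:specifyVectModuleMapEasily} (a cocone is exactly a functor out of $\st{X}(U) \boxtimes \st{Y}(U)$ that is $\Cinf(U)$-bilinear on endomorphisms), so that stackifying the identification yields the claimed presentation of $\st{X} \boxtimes_\Vect \st{Y}$.
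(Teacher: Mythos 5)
Your route is correct in outline and shares the paper's two load-bearing ideas --- that stackification, being a left 2-adjoint, lets you compute the colimit at the level of presheaves, and that cocones are matched with functors which are $\Cinf$-bilinear on endomorphisms via Lemma~\ref{lem:specifyVectModuleMapEasily} --- but the execution is genuinely different. The paper never computes the pointwise value of the colimit: it characterises the presheaf colimit $C$ purely by maps into $\VCat$-valued \emph{stacks} $\underline{\cat{D}}$, shows such a cocone is the same datum as a map out of $\st{X} \boxtimes_\Vect \st{Y}$ (building the 2-cell $\kappa$ from the balancing isomorphism supplied by Lemma~\ref{lem:vectorBundlesActOnHomSheavesOfLinearStacks}), and concludes by Yoneda. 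You instead identify the objectwise colimit at each $U$ with $\st{X}(U)\boxtimes_{\Vect(U)}\st{Y}(U)$ as a categorified relative tensor product and then stackify. Your route buys a concrete description of the pointwise colimit and the pleasant slogan that tensoring over $\Vect(U)$ reduces to tensoring over $\End(\Cinf)=\Cinf(U)$ because the unit generates under sums and retracts; the paper's route buys robustness, since it only ever needs the sheaf-level Lemma~\ref{lem:vectorBundlesActOnHomSheavesOfLinearStacks}, which is verified by local triviality.

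Two steps of your write-up need repair. First, the middle paragraph computes the bicolimit as if it were a $1$-categorical coequalizer: a codescent object in $\VCat$ is not obtained by quotienting $\Hom$-spaces by the difference of the face maps --- the degree-one term contributes new \emph{isomorphisms}, and the triple-product term (which you drop from your displayed diagram) imposes the multiplicativity coherence on them. The rigorous content of your argument therefore sits entirely in your final step, the universal-property matching; and for the category of cocones to be \emph{equivalent} to the category of bilinear functors you must also show that the 2-cell $\kappa$ is essentially unique given its underlying functor --- this is precisely where your retract argument is genuinely needed, not merely convenient. Second, the Serre--Swan step is false as stated for manifolds with infinitely many connected components: a bundle whose rank is unbounded over the components of $U$ is not a summand of any finite-rank trivial bundle, yet such $U$ lie in $\Man$. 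This is harmlessly repaired: since stackification only depends on the restriction to a dense subsite (cf.\ Lemma~\ref{lem:everythingIsDense}), it suffices to establish your pointwise identification over $\CartSp$, where every bundle is trivial --- or to argue with $\Hom$-sheaves and local triviality, which is exactly why the paper phrases the key input as Lemma~\ref{lem:vectorBundlesActOnHomSheavesOfLinearStacks} rather than as a global statement about $\Vect(U)$.
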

\begin{proof}
    We show this by a direct computation.
    It is enough to check the statement on 2-presheaves: as a left adjoint, stackification
    preserves colimits.

    Denote the colimit of the diagram, evaluated at $U \in \Man$ by $C(U)$.
    By the universal property of the colimit, a functor
    $F:C(U) \to \cat{D}$ into a category $\cat{D} \in \VCat$
    is the data of a functor
    $F:\st{X} \boxtimes \st{Y} \to \cat{D}$, equipped with an invertible 2-cell
    $\kappa$ whose components
    \[
        \kappa_U(P,X,Y): F\left((X \odot P) \boxtimes Y\right)
        \to \left(X \boxtimes (Y \odot P)\right),
    \]
    are natural in $P \in \Vect(U)$, $X \in \st{X}(U)$, $Y \in \st{Y}(U)$.
    This 2-cell is required to be multiplicative, in that
    \[\begin{tikzcd}[column sep=huge]
            {F\left(((X \odot P) \odot Q) \boxtimes Y\right)} & {F\left((X \odot P) \boxtimes (Y \odot Q)\right)} \\
            {F\left( (X \odot (P \tensor Q)) \boxtimes Y \right)} & {F\left(X \boxtimes (Y \odot (P \tensor Q))\right)}
            \arrow["{\kappa_U(X,P\tensor Q,Y)}"', from=2-1, to=2-2]
            \arrow["{\kappa_U(X,P,Y\odot Q)}", from=1-2, to=2-2]
            \arrow["{\kappa_U(X\odot P,Q,Y)}", from=1-1, to=1-2]
            \arrow["\simeq"', from=1-1, to=2-1]
        \end{tikzcd}\]
    commutes (the unlabeled morphism is part of the $\Vect$-module structure). Further, it is required to be unital:
    The triangle
    \[\begin{tikzcd}
            & {F\left(X \boxtimes Y\right)} \\
            {F\left((X \odot \Cinf) \boxtimes Y\right)} && {F\left(X \boxtimes (Y \odot \Cinf)\right)}
            \arrow["{\kappa_U(X,\Cinf,Y)}"', from=2-1, to=2-3]
            \arrow[from=1-2, to=2-1]
            \arrow[from=1-2, to=2-3]
        \end{tikzcd}\]
    whose top sides are given by the unitors of the $\Vect$-action, commutes.
    A map $\underline{F}:C \to \underline{\cat{D}}$ from the colimit to a
    $\VCat$-valued stack $\underline{\cat{D}}$ carries this structure
    pointwise, and requires the 2-cells to be compatible under the pullback morphisms.

    The unitality and naturality of the 2-cell $\kappa$ imply that the induced functor
    $\underline{F}^0:\st{X} \boxtimes^0 \st{Y} \to \underline{\cat{D}}$ satisfies the
    condition of Lemma~\ref{lem:specifyVectModuleMapEasily}.
    Hence, $\underline{F}$
    factors through a morphism $\st{X} \boxtimes_\Vect \st{Y} \to \underline{\cat{D}}$.

    The construction above provides a natural functor
    \[
        \Hom(C,\underline{\cat{D}}) \to \Hom(\st{X} \boxtimes_\Vect \st{Y},\underline{\cat{D}}).
    \]
    We now construct a natural inverse to this functor.
    Let $\underline{F}': \st{X} \boxtimes_\Vect \st{Y} \to \underline{\cat{D}}$ be a map
    of $\VCat$-valued stacks.
    We need to give the data of a map $\st{X} \boxtimes \st{Y} \to
        \underline{\cat{D}}$ and a 2-cell $\kappa$ as above.
    The former is obtained by precomposing $\underline{F}'$ with the quotient
    morphism $\st{X} \boxtimes \st{Y} \to \st{X} \boxtimes_\Vect \st{Y}$.
    The latter is induced by the natural isomorphism
    \[
        (X \odot P) \boxtimes Y \simeq X \boxtimes (Y \odot P),
    \]
    provided by Lemma~\ref{lem:vectorBundlesActOnHomSheavesOfLinearStacks}:
    \begin{align*}
        \sHom\left(A \boxtimes B, (X \odot P) \boxtimes Y\right) & =
        \sHom(A,X \odot P) \tensor_\Cinf \sHom(B,Y)                                                                               \\
                                                                 & \simeq \sHom(A,X) \tensor_\Cinf P \tensor_\Cinf \sHom(B,Y)     \\
                                                                 & \simeq \sHom\left(A\boxtimes B, X\boxtimes (Y \odot P)\right).
    \end{align*}
    (This is natural in $A,X \in \st{X}(U)$, $B,Y \in \st{Y}(U)$ and $P \in \Vect(U)$.)
    The isomorphism above is one of $\Cinf$-modules, which implies
    compatibility with the pullback functors of the stack.
    The natural isomorphism is also compatible with the tensor product of
    vector bundles, and unital. As such, we obtain a map
    $C \to \underline{\cat{D}}$.

    Thus far, we have constructed natural maps
    $\Hom(C,\underline{\cat{D}}) \leftrightarrows \Hom(\st{X}\boxtimes_\Vect\st{Y},\underline{\cat{D}})$,
    By the co-Yoneda Lemma, these are really maps $C \leftrightarrows \st{X}\boxtimes_\Vect\st{Y}$.
    It is straightforward to check they are mutual inverses.
\end{proof}

Given a category $\cat{D} \in \VCat$,
there is an induced linear stack
\[
    \underline{\cat{D}} \define \cat{D}^\mathrm{const} \boxtimes \Vect.
\]

\begin{prop}
    \label{prop:boxtimesVectTurnsVecModulesIntoVectModules}
    The assignment
    \[
        \cat{D} \mapsto \underline{\cat{D}}
    \]
    extends to a fully faithful, monoidal embedding
    \[
        \VCat \into \LinSt.
    \]
\end{prop}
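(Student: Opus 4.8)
\section*{Proof proposal}

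The plan is to factor the assignment $\cat{D} \mapsto \underline{\cat{D}}$ as a composite of two 2-functors and to establish functoriality, monoidality, and local full faithfulness for the factors. Write $p:\Man \to \point$ for the terminal map, so that $(-)^{\mathrm{const}} = p^\ast : \VCat \to \VSt$ is the constant-stack functor, and let $F_\Vect \define -\boxtimes \Vect : \VSt \to \LinSt$ denote the free $\Vect$-module functor, the $\Vect$-action on $\st{X}\boxtimes \Vect$ being the one on the right-hand tensor factor. Then $\underline{(-)} = F_\Vect \comp p^\ast$. Functoriality is immediate: a $\VecInf$-functor $G:\cat{C}\to\cat{D}$ is sent to $G^{\mathrm{const}}\boxtimes \id_\Vect$, which is automatically $\Vect$-linear because the module structure lives on the untouched factor, and 2-cells are transported the same way.

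For monoidality I would argue that both factors are strong monoidal. The functor $p^\ast$ is strong symmetric monoidal: since both stackification and the pointwise product $\boxtimes^0$ are colimit-type constructions, one has natural equivalences $(\cat{C}\boxtimes\cat{D})^{\mathrm{const}} \simeq \cat{C}^{\mathrm{const}} \boxtimes \cat{D}^{\mathrm{const}}$ and $p^\ast\Vec \simeq \Vec^{\mathrm{const}}$, the unit of $\VSt$. The free-module functor $F_\Vect$ is strong monoidal because $\Vect$ is a commutative algebra object in $\VSt$: Lemma~\ref{lem:boxVectPresentsProductOverVect} exhibits $\boxtimes_\Vect$ as the coequaliser of the two $\Vect$-actions, and evaluating it on free modules yields the natural equivalence
\[
    (\st{X}\boxtimes\Vect) \boxtimes_\Vect (\st{Y}\boxtimes\Vect) \simeq (\st{X}\boxtimes\st{Y})\boxtimes\Vect,
\]
with $\Vect = \Vec^{\mathrm{const}}\boxtimes\Vect$ serving as monoidal unit. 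Composing, $\underline{(-)}$ is strong monoidal and sends the unit $\Vec$ to the unit $\Vect$ of $\LinSt$, as required.

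The substance is local full faithfulness, i.e.\ that for all $\cat{C},\cat{D}$ the functor $\Hom_{\VCat}(\cat{C},\cat{D}) \to \Hom_{\LinSt}(\underline{\cat{C}},\underline{\cat{D}})$ is an equivalence. I would produce a quasi-inverse by chaining the two adjunctions underlying the factorisation. The free--forgetful adjunction for $\Vect$-modules gives $\Hom_{\LinSt}(\cat{C}^{\mathrm{const}}\boxtimes\Vect, \underline{\cat{D}}) \simeq \Hom_{\VSt}(\cat{C}^{\mathrm{const}}, U_\Vect\underline{\cat{D}})$, where $U_\Vect$ is the forgetful functor, and the geometric-morphism adjunction $p^\ast \dashv p_\ast = \ev_\point$ gives $\Hom_{\VSt}(\cat{C}^{\mathrm{const}}, U_\Vect\underline{\cat{D}}) \simeq \Hom_{\VCat}(\cat{C}, (U_\Vect\underline{\cat{D}})(\point))$. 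It then remains to identify the point-category $(U_\Vect\underline{\cat{D}})(\point)=\underline{\cat{D}}(\point)$ with $\cat{D}$: since $\Vect(\point)\simeq\Vec$ is the monoidal unit of $\boxtimes$, the pointwise formula gives $\underline{\cat{D}}(\point) \simeq \cat{D}\boxtimes\Vec \simeq \cat{D}$. Both adjunctions are 2-adjunctions, so the composite is an equivalence of hom-categories; chasing a functor $G:\cat{C}\to\cat{D}$ through it — its image $\underline{G}$ restricts at the point to $G\boxtimes\id_\Vec \simeq G$ — shows the composite is a quasi-inverse to the hom-functor induced by $\underline{(-)}$. Full faithfulness follows, and the embedding property is then a consequence of conservativity, since an equivalence $\underline{\cat{C}}\simeq\underline{\cat{D}}$ restricts along $\ev_\point$ to an equivalence $\cat{C}\simeq\cat{D}$.

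The main obstacle I expect is the careful treatment of the constant stack and of stackification at the terminal object. One must verify that $p^\ast \dashv \ev_\point$ genuinely induces equivalences of hom-categories, not merely bijections on isomorphism classes, and that $\ev_\point$ is insensitive to the stackification in $\underline{\cat{D}} = (\cat{D}^{\mathrm{const}}\boxtimes^0\Vect)^\shff$, so that $\underline{\cat{D}}(\point)\simeq\cat{D}$ really holds. This is where the triviality of vector bundles over Cartesian spaces — so that $\Vect(U)$ is the category of free $\Cinf(U)$-modules and the presheaf $\cat{D}^{\mathrm{const}}\boxtimes^0\Vect$ already satisfies descent well enough near the point — does the work. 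Everything else is formal bookkeeping once the two adjunctions and this pointwise computation are in hand.
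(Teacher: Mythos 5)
Your argument is sound, and for the monoidal structure it coincides with the paper's proof: both rest on Lemma~\ref{lem:boxVectPresentsProductOverVect} and the natural equivalence $(\cat{C}^{\mathrm{const}}\boxtimes\Vect)\boxtimes_\Vect(\cat{D}^{\mathrm{const}}\boxtimes\Vect)\simeq{(\cat{C}\boxtimes\cat{D})}^{\mathrm{const}}\boxtimes(\Vect\boxtimes_\Vect\Vect)$, with $\Vect\boxtimes_\Vect\Vect\simeq\Vect$. Where you genuinely differ is full faithfulness. The paper constructs the inverse $G\mapsto G_0$ by hand (evaluation at the point, conjugated by unitors) and then verifies explicitly, using the $\Vect$-module 2-cells of $G$ together with compatibility with pullback along $U\to\point$, that $G\simeq G_0\boxtimes\Id_\Vect$. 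You instead factor $\underline{(-)}=(-\boxtimes\Vect)\comp p^\ast$ and derive the equivalence of $\Hom$-categories formally from two biadjunctions: free $\dashv$ forgetful for $\Vect$-modules, and $p^\ast\dashv\ev_\point$. The transposes along your adjunctions produce exactly the paper's $G_0$, and the paper's explicit computation of $G\simeq G_0\boxtimes\Id_\Vect$ is precisely the triangle-identity content of the free--forgetful biadjunction you assert; similarly its use of pullback-compatibility is the content of $p^\ast\dashv\ev_\point$. What your route buys is modularity and a cleaner conceptual statement; what it costs is that neither biadjunction is established anywhere in the paper, and in this bicategorical setting (pseudomonoid modules, stackification, coherence 2-cells) each requires a verification of comparable length to the paper's direct one, so the work is displaced rather than removed. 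Finally, both routes lean on the identification $\underline{\cat{D}}(\point)\simeq\cat{D}\boxtimes\Vec$, which the paper also invokes without proof; you are right to single this out as the delicate point, since in the surjective-submersion topology the point has nontrivial covers (any nonempty manifold surjects onto it), so invariance of the value at $\point$ under stackification is a genuine check rather than a triviality.
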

\begin{proof}
    The map induced on $\Hom$-spaces by the above assignment is
    \begin{alignat*}{1}
        \Hom_{\VCat}(\cat{C},\cat{D}) & \to
        \Hom_{\Mod_\Vect}(\underline{\cat{C}},\underline{\cat{D}})     \\
        F                             & \mapsto F \boxtimes \Id_\Vect.
    \end{alignat*}
    This is an equivalence of bicategories.
    We exhibit an inverse by sending a map of stacks 
    $G: \underline{\cat{C}} \to \underline{\cat{D}}$
    to the functor
    \[
        G_0: \cat{C} \eqto \cat{C} \boxtimes \Vec \xto{G(\point)}
        \cat{D} \boxtimes \Vec \eqot \cat{D}.
    \]
    Here, we make explicit use of the equivalence
    $\underline{\cat{C}}(\point) \simeq \cat{C} \boxtimes \Vec$, and
    the unitor $\cat{C} \simeq \cat{C} \boxtimes \Vec$.\footnote{
        The unitor is part of the canonical $\Vec$-module structure on objects of $\VCat$.
    }
    The composite
    \[
        F \mapsto F \boxtimes \Id_\Vect \mapsto {(F \boxtimes \Id_\Vect)}_0 \simeq F
    \]
    is the identity on the nose.
    In the other direction, $G \mapsto G_0 \mapsto G_0 \boxtimes \Id_\Vect$.
    There is a natural equivalence between $G$ and $G_0 \boxtimes \Id_\Vect$,
    whose component at $c \boxtimes P \in \underline{\cat{C}}(U)$ is
    \begin{alignat*}{2}
        G^0(c) \boxtimes P \simeq \left(G^0(c) \boxtimes \Cinf(U)\right) \odot P
        \simeq G\left(c \boxtimes \Cinf(U)\right) \odot P \simeq G(c \boxtimes P).
    \end{alignat*}
    The outer two equivalences use the $\Vect$-module structure of $G$,
    and the middle equivalence uses compatiblity with pullback along the map
    $U \to \point$.

    The monoidal structure of the 2-functor is constructed via the equivalence
    \[
        (\cat{C}^\mathrm{const} \boxtimes \Vect) \boxtimes_\Vect
        (\cat{D}^\mathrm{const} \boxtimes \Vect)
        \simeq {(\cat{C} \boxtimes \cat{D})}^\mathrm{const} \boxtimes (\Vect \boxtimes_\Vect \Vect),
    \]
    which is natural in $\cat{C}$ and $\cat{D}$.
    By Lemma~\ref{lem:boxVectPresentsProductOverVect},
    \[
        \Vect \boxtimes_\Vect \Vect \simeq \Vect,
    \]
    and we get the required 2-cell
    \[
        \underline{\cat{C}} \boxtimes_\Vect \underline{\cat{D}}
        \eqto \underline{\cat{C} \boxtimes \cat{D}}.
    \]
    The remaining pieces of data for the monoidal 2-functor
    is the map of monoidal units
    \[
        \Vect \simeq \Vec^\mathrm{const} \boxtimes \Vect,
    \]
    and 2-cells relating the associators and unitors
    (see~\cite[Def 3.3]{houston2013linear}).
    The coherence conditions for a functor of monoidal bicategories
    are difficult to check, but in this case vastly simplified by the fact
    that the associators in $\VCat$ and $\LinSt$ can be chosen to be identities.
\end{proof}

\begin{rmk}
    Proposition~\ref{prop:boxtimesVectTurnsVecModulesIntoVectModules} says that
    the functor ${(-)}^{\mathrm{const}} \boxtimes \Vect$ freely turns
    $\Vec$-modules into $\Vect$-modules.
\end{rmk}

\section{Orbisimple Categories}
\label{sec:orbisimples}
A fusion category is a linear monoidal category $\cat{C}$ whose underlying
linear category admits an equivalence
\[
    \cat{C} \overset{\mathrm{lin.}}{\iso} \Vec^{\dirSum S},
\]
where $S$ is a finite set. To define orbifold tensor categories, we
would like to replace the right hand side with
a symbol $\Vec^{\dirSum \orb{M}}$, where $\orb{M}$ is an orbifold.
The corresponding object must keep track of the smooth structure of $\orb{M}$.
Its role in this story is played by the stack
$\Sky^\ger{G}_\orb{M}$ of Definition~\ref{def:stackOfTwistedSkysheaves}.

\begin{defn}
    \label{def:orbisimpleCat}
    An \emph{orbisimple category} is a linear stack over $\Man$ equivalent to
    ${\Sky}_{\orb{M}}^{\ger{G}}$ for some orbifold $\orb{M}$ and gerbe $\ger{G}$ over $\orb{M}$.
    If $\orb{M}$ can be represented by a manifold, we also call such a stack a \emph{manisimple category}.
\end{defn}
We use the adjective \emph{compact} when $\orb{M}$ can be chosen to be compact.

\begin{remark}
    Orbisimple and manisimple are shortenings of the more descriptive but unwieldy terms
    \emph{orbifold/manifold semisimple}.
\end{remark}

In this section, we establish a number of structural results about orbisimple categories.
We begin with the study of the underlying linear semisimple category,
the value of an orbisimple category on the point.
The category of $U$-families of objects is no longer semisimple for
non-trivial $U$, and not all morphisms $\f{A} \to \f{B}$ admit an epi-mono/image factorisation
$\f{A} \onto \f{C} \into \f{B}$.
The central result of this section is the
fact that \emph{locally}, such factorisations still exist
(Proposition~\ref{prop:local_factorisation}).

\subsection{The Category of Points}
Let $\orb{M}$ be an orbifold, and $\ger{G}$ a gerbe over it. We study
the linear stack $\underline{\cat{C}} \define \Sky_\orb{M}^\ger{G}$, and in particular
$\cat{C} \define \underline{\cat{C}}(\point)$, the category of
$\ger{G}$-twisted skyscraper sheaves on $\orb{M}$.
By definition, any orbisimple category admits an equivalence
to such a stack.

\begin{notation}
    We denote by ${\cat{C}}^U \define \stC(U)$ the category of $U$-families.
    An object $\f{S} \in {\cat{C}}^U$ is also suggestively denoted by
    $\f{S}: U \to \cat{C}$.
    Given a point $x:\point \to U$, we denote the pullback by
    $\f{S}(x) \define x^\ast\f{S}$.
    For the pullback along a map $V \to U$, we write $\f{S}\restriction_V$.
    We will use the same notation for morphisms of families and their pullbacks.
\end{notation}


Let $x:\point \to \st{M}$ be a point of $\st{M}$.
We write $\Gamma_x \define \End(x)$ for its stabiliser group (which is finite
by assumption). We denote the restriction of a gerbe $\ger{G} \to \orb{M}$
along $x$ by $\ger{G}_x = x^\ast \ger{G}$. By
Corollary~\ref{cor:classificationOfGerbesOnQuotientOrbifolds},
the gerbe is locally (around $x$) characterised by a class
$[\theta_x] \in \H^2(\B\Gamma_x,\IC^\times)$. We write $\theta_x$ to denote a
representing 2-cocycle for this cohomology class.

\begin{defn}
    We call $\cat{C}^{(x)} \define (x^\ast\stC)(\point)$ the \emph{category at $x$}.
\end{defn}

The category at $x$ includes into $\cat{C}$ as the full subcategory on
skyscraper sheaves whose support is concentrated at $[x]$.
We abuse notation and denote by $\cat{C}^{(x)}$ its essential image under this
inclusion.
There are no morphisms between sheaves with disjoint support,
so non-isomorphic points $x \not\iso y$ of $\orb{M}$ yield
disjoint full subcategories ${\cat{C}}^{(x)} \cap {\cat{C}}^{(y)} = \{0\}$.
Meanwhile, an isomorphism $\phi:x \eqto x'$ induces an equivalence
$\phi^\ast:x^\ast \eqto x'^\ast$, and by extension ${\cat{C}}^{(x)} \simeq {\cat{C}}^{(x')}$.

\begin{lemma}
    \label{lem:cat_at_point_props}
    Let $x: \point \to \orb{M}$ be a point of $\orb{M}$ with stabiliser $\Gamma_x$,
    and a representing 2-cocycle $\theta_x$ for $\ger{G}_x$.
    Denote by $\IC$ the canonical line bundle over the point.
    Then
    \begin{enumerate}
        \item $\cat{C}^{(x)} \iso \Rep^{\theta_x}(\Gamma_x)$ as linear categories,
              such that $x_\ast{\IC} \iso \IC^{\theta_x}[\Gamma_x]$
        \item  ${\cat{C}}^{(x)} \subset \cat{C}$ is the subcategory
              Karoubi-generated by $x_\ast{\IC}$
        \item Every simple object of $\cat{C}$ is contained in
              ${\cat{C}}^{(x)}$ for some point $x: \point \to \orb{M}$.
    \end{enumerate}
\end{lemma}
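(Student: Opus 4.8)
The plan is to prove the three parts in sequence, with part (1) carrying the essential geometric content that the remaining parts then exploit purely algebraically. First I would establish (1) by passing to a local normal form near $x$. Since the stabiliser $\Gamma_x$ acts on a small neighbourhood of $x$, a patch of $\orb{M}$ containing $x$ is modelled by $[U/\Gamma_x]$ with $U$ contractible and $x$ a $\Gamma_x$-fixed point; by Lemma~\ref{lem:localNormalFormOfGerbes} the restriction $\ger{G}_x$ is then presented by a constant $\IC^\times$-extension classified by a $2$-cocycle $\theta_x$. A twisted skyscraper sheaf supported at $[x]$ is determined by its restriction to this patch, and hence to the residual point $[\point/\Gamma_x]$, so Example~\ref{ex:twistedSheavesOverBGamma} identifies $\cat{C}^{(x)}$ with the category $\Rep^{\theta_x}(\Gamma_x)$ of $\theta_x$-twisted representations as linear categories.

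To pin down the image of $x_\ast\IC$ under this equivalence, I would observe that the point $x$ represents the composite $\point \into [\point/\Gamma_x] \into \orb{M}$, in which the first map is the inclusion of the trivial subgroup. Proposition~\ref{prop:twistedSheavesAdjunction} supplies the pushforward $x_\ast$, and the special case of induction from the trivial subgroup recorded just after Corollary~\ref{cor:KaroubiCompletionOfInductionIsAll} computes $x_\ast\IC \iso \IC^{\theta_x}[\Gamma_x] \tensor \IC \iso \IC^{\theta_x}[\Gamma_x]$, the twisted group ring (equivalently, the twisted regular representation).

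Part (2) then follows representation-theoretically. Under the equivalence of (1), $x_\ast\IC$ is the twisted group ring, which by Proposition~\ref{prop:decompositionTwistedGroupRing} contains every $\theta_x$-twisted irreducible as a summand. Since $\Rep^{\theta_x}(\Gamma_x)$ is semisimple, every object is a finite direct sum of irreducibles, hence a direct summand of a direct sum of copies of $x_\ast\IC$; thus $\cat{C}^{(x)}$ is exactly the subcategory Karoubi-generated by $x_\ast\IC$ (alternatively one may quote Corollary~\ref{cor:twistedSheavesAreLocallySummandsOfMobiles} specialised to $U=\point$). For part (3) I would use that a simple object $S \in \cat{C}$ is in particular a twisted skyscraper sheaf, and so has finite support; since there are no nonzero morphisms between sheaves of disjoint support, $S$ splits as a direct sum indexed by the points of $\supp S$, and simplicity forces $\supp S = \{[x]\}$ for a single point, whence $S \in \cat{C}^{(x)}$.

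The step I expect to be the main obstacle is the local reduction in part (1): justifying rigorously that a twisted skyscraper sheaf concentrated at $[x]$ is equivalent data to a $\theta_x$-twisted representation of $\Gamma_x$, i.e.\ that restriction to the local model $[\point/\Gamma_x]$ loses no information and correctly transports the gerbe $\ger{G}_x$ to the cocycle $\theta_x$. Once this identification and the computation of $x_\ast\IC$ are in hand, parts (2) and (3) are short.
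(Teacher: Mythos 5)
Your proposal is correct and follows essentially the same route as the paper's proof: part (1) rests on the identification of twisted sheaves over $[\point/\Gamma_x]$ with $\theta_x$-twisted representations (Example~\ref{ex:twistedSheavesOverBGamma}, which you justify via Lemma~\ref{lem:localNormalFormOfGerbes}), part (2) on the fact that every twisted representation is a summand of copies of the twisted group ring (the paper cites Corollary~\ref{cor:KaroubiCompletionOfInductionIsAll}, while you reach the same conclusion via Proposition~\ref{prop:decompositionTwistedGroupRing} and semisimplicity, mentioning the paper's citation as an alternative), and part (3) on the splitting of a finitely supported skyscraper sheaf over its support points. The only cosmetic difference is that your write-up makes explicit the local reduction and the computation $x_\ast\IC \iso \IC^{\theta_x}[\Gamma_x]$, which the paper leaves implicit in its citations.
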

\begin{proof}
    Property 1 is discussed in Example~\ref{ex:twistedSheavesOverBGamma},
    and Property 2 is Corollary~\ref{cor:KaroubiCompletionOfInductionIsAll}.
    A skyscraper
    sheaf with support at multiple non-isomorphic points splits into a
    direct sum of skyscraper sheaves with support at a single isomorphism
    class. Hence, a simple skyscraper sheaf must be contained in some
    ${\cat{C}}^{(x)}$, which establishes Property 3.
\end{proof}

Denote by $\Irr{\cat{C}}$ the set of isomorphism classes of simple objects of
$\cat{C}$.
\begin{cor}
    \label{cor:catOfPointsDirSumOfRepCats}
    A choice of representing points $\{x: \point \to \orb{M}\}$ and
    representing 2-cocycles $\theta_x$ for all
    $[x] \in |\orb{M}|$ yields an explicit bijection of sets
    \begin{equation*}
        \Irr{\cat{C}} \isoto \coprod_{[x] \in |\orb{M}|} \Irr{{\cat{C}}^{(x)}}
        = \coprod_{[x] \in |\orb{M}|} \Irr{\Rep^{\theta_x}(\Gamma_x)},
    \end{equation*}
    and in fact,
    \begin{equation*}
        \cat{C} \isoto \DirSum _{[x] \in |\orb{M}|} \Rep^{\theta_x}(\Gamma_x)
    \end{equation*}
    as linear categories.
    In particular, $\cat{C}$ is a direct sum of semisimple abelian categories, and
    hence semisimple and abelian itself.
\end{cor}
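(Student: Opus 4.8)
The plan is to read this off from Lemma~\ref{lem:cat_at_point_props} together with two structural facts already recorded in the text: that $\ger{G}$-twisted skyscraper sheaves have finite support and split according to it, and that sheaves with disjoint support admit no morphisms between them. First I would recall that, by Definition~\ref{def:twistedSkyscraperSheaf}, every object $\f{F} \in \cat{C}$ is a twisted skyscraper sheaf with finite support, and (as observed in the Example following that definition) any such sheaf decomposes as a finite direct sum $\f{F} \iso \DirSum_{[x] \in \supp \f{F}} \f{F}_{[x]}$ of sheaves each supported at a single isomorphism class $[x] \in |\orb{M}|$, with $\f{F}_{[x]} \in \cat{C}^{(x)}$. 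This exhibits every object of $\cat{C}$ as a finite direct sum of objects drawn from the subcategories $\cat{C}^{(x)}$.

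Next I would invoke the observation made just before Lemma~\ref{lem:cat_at_point_props}, that there are no nonzero morphisms between sheaves with disjoint support, so $\Hom_\cat{C}(\f{F}_{[x]},\f{G}_{[y]}) = 0$ whenever $[x] \neq [y]$. Combined with the preceding decomposition, this is exactly the statement that $\cat{C}$ is the categorical direct sum of its full subcategories $\cat{C}^{(x)}$: an object is a family $(\f{F}_{[x]})_{[x]}$ with $\f{F}_{[x]} \in \cat{C}^{(x)}$ and only finitely many nonzero terms, and morphisms are block-diagonal. Choosing a representative $x$ and a representing cocycle $\theta_x$ for each class $[x] \in |\orb{M}|$ and applying Property 1 of Lemma~\ref{lem:cat_at_point_props}, namely $\cat{C}^{(x)} \iso \Rep^{\theta_x}(\Gamma_x)$, then yields the claimed equivalence $\cat{C} \isoto \DirSum_{[x] \in |\orb{M}|} \Rep^{\theta_x}(\Gamma_x)$ of linear categories. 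The bijection on isomorphism classes of simple objects follows formally, since a simple object in a direct sum of linear categories lies in exactly one summand and is simple there; this recovers $\Irr\cat{C} \isoto \coprod_{[x]} \Irr\cat{C}^{(x)}$ (the surjectivity being Property 3 of the lemma).

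For the final assertion I would argue semisimplicity summand by summand: each $\Rep^{\theta_x}(\Gamma_x)$ is the category of finite-dimensional modules over the twisted group ring $\IC^{\theta_x}[\Gamma_x]$, a finite-dimensional $\IC$-algebra, and Proposition~\ref{prop:decompositionTwistedGroupRing} exhibits its regular representation as a direct sum of irreducibles, so the algebra is semisimple (twisted Maschke) and $\Rep^{\theta_x}(\Gamma_x)$ is semisimple abelian. A direct sum of semisimple abelian categories is again semisimple abelian, which gives the conclusion. The one point requiring care — and the only genuine obstacle, though a mild one — is that the index set $|\orb{M}|$ may be infinite, so the direct sum of categories must be the one whose objects have finitely many nonzero components; this is precisely guaranteed by the finiteness of support built into Definition~\ref{def:twistedSkyscraperSheaf}, so no object or morphism in $\cat{C}$ can spread across infinitely many points, and the equivalence is well-defined.
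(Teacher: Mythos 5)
Your proposal is correct and follows essentially the same route as the paper: the corollary is presented there as an immediate consequence of Lemma~\ref{lem:cat_at_point_props} together with the preceding observation that sheaves with disjoint support admit no nonzero morphisms, which is exactly the decomposition-by-support and block-diagonal-morphisms argument you spell out. Your additional care about the finiteness of support (so the direct sum over a possibly infinite $|\orb{M}|$ is the finitely-supported one) and the justification of semisimplicity of each $\Rep^{\theta_x}(\Gamma_x)$ via Proposition~\ref{prop:decompositionTwistedGroupRing} are sound elaborations of details the paper leaves implicit.
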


A choice of representatives for $|\orb{M}|$ associates to a simple object
$X \in \cat{C}$ a point $x$ and an irreducible $\theta_x$-twisted representation
$\rho$ of its stabiliser $\Gamma_x$.
The dimension of the underlying vector space of $\rho$ and $|\Gamma_x|$ are independent
of the choices made.
\begin{defn}
\label{def:dimensionOfObject}
    With notation as above, the \emph{dimension} of a simple object is
    \begin{equation*}
        \dim{X} \define \frac{\vdim{\rho}}{|\Gamma_x|},
    \end{equation*}
    where $\vdim$ denotes the \emph{vector space dimension} of the
    representation $\rho$.
    This is extended linearly to all objects in $\cat{C}$.
\end{defn}

\begin{example}
    Consider the orbifold $\orb{M}=[\IR/(\IZ/2)]$, equipped with the trivial gerbe.
    Its quotient space is $|\orb{M}| = \IR^+_0$.
    The category $\Sky_{\orb{M}}(\point)$ is generated under direct sum by
    a half-line of objects $\IC_{x>0}$, and a pair of objects
    $\IC^+_0, \IC^-_0 \in \Rep(\IZ/2)$, corresponding to the trivial and sign representation
    of the stabiliser $\IZ/2$ at $0:\point \to [\IR/(\IZ/2)]$.
    They assemble into the moduli space of simple objects as
    \begin{center}
        \begin{tikzpicture}
            \filldraw[black] (-0.07,0.07) circle (1pt);
            \filldraw[black] (-0.07,-0.07) circle (1pt);
            \draw[line width=2pt] (0,0) -- (3,0);
        \end{tikzpicture}
    \end{center}
    and their dimensions are
    \begin{align*}
        \dim{\IC_{x>0}} = 1 &  & \dim{\IC^+_0} = \dim{\IC^-_0} = \tfrac{1}{2}.
    \end{align*}
\end{example}

\begin{defn}
    The \emph{support} $\supp{X}$ of a simple object $X \in \cat{C}$ is the isomorphism
    class $[x] \in |\orb{M}|$ such that $X$ is in the essential image of ${\cat{C}}^{(x)}$.
    It is extended to $\cat{C}$ by sending direct sums to unions of sets.
\end{defn}
Ie.\ the support of an object $Z \in \cat{C}$ is the union
of the supports of the simple objects it decomposes into.
\begin{remark}
    Each object $X \in \cat{C}$ is a summand
    of an object of the form
    \[
        \overline{X}=\DirSum_{i=0}^n x_{i,\ast} E_i,
    \]
    where $n \in \IN$ is a positive integer, the $x_i: \point \to \orb{M}$ are points of the orbifold,
    and the $E_i$ are vector spaces (= vector bundles over the point).
    The support of $X$ is then a subset of the union of the points
    $[x_i]: \point \to |\orb{M}|$.
\end{remark}

\subsection{Morphisms of Families}
\label{sec:local_structure}
This section is the most technical part of this note. We study morphism spaces between
families of twisted skyscraper sheaves.
In an abelian category $\cat{D}$, every morphism $F:A \to B$ admits an
epi-mono factorisation
\[
    F: A \onto \im F \into B.
\]
If $\cat{D}$ is further semisimple, any morphism may be written as a
matrix of maps of simple summands.
In this case, $\im F$ may be identified both as a summand
of $A$ and a summand of $B$.
We established in Corollary~\ref{cor:catOfPointsDirSumOfRepCats} that the
category $\cat{C}$ is semisimple and abelian.
This is not the case for $\cat{C}^U$ unless $U$ is discrete:
\begin{example}
    Consider two simple $\IR$-families $\f{A}$, $\f{B}$ of skyscraper sheaves
    over a manifold $M$.
    Assume further the support $\supp \f{A}(x) = \supp \f{B}(x)$ agrees for
    $x<0 \in \IR$ and is disjoint for $x>0$ (see the example picture below).
    \begin{center}
        \begin{tikzpicture}
            \begin{scope}
                \draw (0,0) -- node[below]{$\IR$} (3,0);
                \draw (0,0) -- node[left]{$M$} (0,3);
                \draw (0,1.5) -- (3,1.5);
            \end{scope}
            \begin{scope}[shift={(5,0)}]
                \draw (0,0) -- node[below]{$\IR$} (3,0);
                \draw (0,0) -- node[left]{$M$} (0,3);
                \draw (0,1.5) .. controls (2,1.5) and (2,1.5) .. (3,2.5);
            \end{scope}
        \end{tikzpicture}
    \end{center}
    Then there exists a morphism
    $F:\f{A} \to \f{B}$, non-zero on $x<0$ and zero for $x \geq 0$.
    The families $\f{A}$ and $\f{B}$ don't share any summands,
    yet there exists a non-zero map between them. Hence $\cat{C}^\IR$ is not
    semisimple.
    The map $F$ also admits neither kernel nor cokernel, hence $\cat{C}^\IR$ is
    not abelian.
\end{example}
However, the epi-mono factorisation via a summand still holds locally in $\cat{C}^U$:
We show in Proposition~\ref{prop:local_factorisation} that
for every morphism $F:\f{A} \to \f{B}$ and $x \in U$, there locally
exists an isomorphism of direct summands of $\f{A}$ and $\f{B}$
which is equal to $F$ at $x$. (The isomorphism does not, in general, equal
$F$ away from $x$ --- not even in a small neighbourhood.)
This local factorisation result has an important implication,
which we record as Corollary~\ref{cor:locally_share_summand}:
The existence of a morphism $F: \f{A} \to \f{B}$ which is non-zero
at $x \in U$ implies that $\f{A}$ and $\f{B}$ share a non-trivial
summand in a neighbourhood of $x$.
Our strategy to prove Proposition~\ref{prop:local_factorisation}
will be to construct a local embedding of the morphism space 
$\Hom_{\cat{C}^U}(\f{A},\f{B})$ into a
space of smooth matrix-valued functions.

Let $\ger{G} \to \orb{M}$ be an orbifold equipped with a gerbe.
We may choose a cover of $\orb{M}$ by a disjoint union $\coprod_i [M_i/\Gamma_i] \onto \orb{M}$
of quotient orbifolds such that all $M_i$ are contractible and all $\Gamma_i$ are finite.
Let $\iota:[M/\Gamma] \into \orb{M}$ be the inclusion of such a quotient suborbifold.
It induces a pair of adjoint maps of stacks
\[\begin{tikzcd}
        {\Shv^{\iota^\ast\ger{G}}_{[M/\Gamma]}} & {\Shv^\ger{G}_\orb{M}},
        \arrow["{\iota_\ast}"', shift right=1, from=1-1, to=1-2]
        \arrow["{\iota^\ast}"', shift right=1, from=1-2, to=1-1]
    \end{tikzcd}\]
which identifies $\Shv^{\iota^\ast\ger{G}}_{[M/\Gamma]}$ as the full substack of $\Shv^\ger{G}_\orb{M}$
on (families of) sheaves whose support is concentrated in $[M/\Gamma] \subset \orb{M}$.
Any family of skyscraper sheaves $\f{S} \in \Sky^\ger{G}_\orb{M}(U)$ locally decomposes into a direct sum of families
that factor through the inclusion of disjoint suborbifolds of $\orb{M}$.
Hence, we may study the local structure of $\Sky^\ger{G}_\orb{M}$ by
studying $\Sky^{\iota^\ast\ger{G}}_{[M/\Gamma]}$.

Consider a quotient orbifold $[M/\Gamma]$ as above with gerbe $\ger{G}$, equipped with an atlas $M \onto [M/\Gamma]$
and a trivialisation of the pullback gerbe over $M$. The twisted equivariantisation adjunction of
Proposition~\ref{prop:twistedEquivariantisationAdjunction}
gives an adjoint pair of maps of stacks
\[\begin{tikzcd}
        {\Shv^\ger{I}_M} && {\Shv^{\ger{G}}_{[M/\Gamma]}}.
        \arrow["I=q_\ast"', shift right=1, from=1-1, to=1-3]
        \arrow["u=q^\ast"', shift right=1, from=1-3, to=1-1]
    \end{tikzcd}\]
The components of these maps at $U \in \Man$ are given by the adjoint pair of functors
\[\begin{tikzcd}
        {\ShC(M\times U)} && {\Shv^{\ger{G}}([M/\Gamma]\times U)}.
        \arrow["{I_U \define {(q \times \id_U)}_\ast}"', shift right=1, from=1-1, to=1-3]
        \arrow["{u_U \define{(q \times \id_U)}^\ast}"', shift right=1, from=1-3, to=1-1]
    \end{tikzcd}\]
Both $I_U$ and $u_U$ preserve the property of being a family of twisted skyscraper sheaves,
hence this pair of maps restricts to an adjunction
\[\begin{tikzcd}
        {\Sky^\ger{I}_M} && {\Sky^{\ger{G}}_{[M/\Gamma]}}.
        \arrow["I"', shift right=1, from=1-1, to=1-3]
        \arrow["u"', shift right=1, from=1-3, to=1-1]
    \end{tikzcd}\]

Every $U$-family $\f{T} \in \Sky_M(U) \subset \ShC(M \times U)$ locally decomposes as a direct sum of
pushforwards of vector bundles: There
exists a cover $Y \onto U$ with a trivialisation of the pullback gerbe,
a finite set of functions $f_i:Y \to M$, and vector bundles $E_i$ over $Y$ such
that
\[
    \f{T}\restriction_Y \iso \dirSum_i {\Gr(f_i)}_\ast E_i.
\]
The image of a family ${\Gr(f)}_\ast E$ under the induction functor
$I$ associated to $M \onto [M/\Gamma]$ is the family
${\Gr(q \comp f)}_\ast E$. Any vector bundle $E$ locally splits as a direct sum
of line bundles, so
any family in the image of $I$ locally decomposes as the pushforward of
a line bundle via the graph of a function $g:Y \to [M/\Gamma]$.
Conversely, if a family is locally of this form, it is in the image of an
induction functor.
\begin{defn}
    \label{def:basicMobileAndMobileFamilies}
    A family $\f{S}:U \to \cat{C}$ is a \emph{basic mobile} if it is
    locally equivalent to a family of the form ${\Gr(f)}_\ast \Cinf$,
    where $f$ is a smooth map and $\Cinf$ is the (sheaf of sections of)
    the trivial line bundle over $U$.
    A family is \emph{mobile} if it is locally a direct sum of
    basic mobiles.
\end{defn}
By the discussion above, mobile families are precisely the families which
locally lie in the image of an induction functor.
\begin{example}
    Recall from Lemma~\ref{lem:cat_at_point_props} that
    $\Sky^\ger{G}_{\orb{M}}(\point)$ decomposes as a direct sum
    \[
        \Sky^\ger{G}_{\orb{M}}(\point) \iso \DirSum_{[x] \in |\orb{M}|}
        \Rep^{\theta_x}(\Gamma_x).
    \]
    The mobile objects (mobile $\point$-families) are precisely those objects
    which are isomorphic to direct sums of twisted group rings
    \[
        \IC^{\theta_x}[\Gamma_x] \in \Rep^{\theta_x}(\Gamma_x).
    \]
\end{example}

\begin{definition}
    Let $s: U \to \orb{M}$ be a smooth map and $\tau$
    a trivialisation of the pullback gerbe over $U$.
    The
    \emph{basic mobile induced by $s$ and $\tau$} is the $U$-family
    \[
        \Ind^\tau(s) \define {\Gr(s)}_\ast \tau \cdot \Cinf.
    \]
    Given a set of smooth maps $\{s_i: U \to \orb{M}\}$ and trivialisations $\tau_i$,
    we denote the associated mobile by
    \[
        \Ind^{\{\tau_i\}}\{s_i\} \define \DirSum_i \Ind^{\tau_i} s_i \define
        \DirSum_i\Gr(s_i)_\ast \tau_i \cdot \Cinf,
    \]
    and call it the \emph{mobile induced by $\{s_i, \tau_i\}$}.
\end{definition}
Mobile families play well with pullbacks.
Given a map $f:V \to U$, there is a canonical isomorphism
\begin{equation*}
    f^\ast\Ind^\tau(s) \simeq \Ind^{f^\ast\tau}(s \comp f) \in \cat{C}^V.
\end{equation*}
Pullbacks of basic mobiles yield basic mobiles and thus the same is true for mobile families.
Each family $\f{A}:U \to \cat{C}$ is locally a summand of a mobile family $\overline{\f{A}}$.

\begin{lemma}
    \label{lem:trivialisationsInIndDontMatterLocally}
    Let $U \in \Man$ be contractible and $f: U \to \orb{M}$ a smooth map.
    For any pair $(\tau,\tau')$ of
    trivialisations of the pullback gerbe $f^\ast\ger{G}$,
    the families induced by $(f,\tau)$ and $(f,\tau')$ are
    abstractly isomorphic:
    \[
        \Ind^\tau(f)\restriction_Y \iso \Ind^{\tau'}(f)\restriction_Y.
    \]
\end{lemma}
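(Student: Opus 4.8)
The plan is to reduce the statement to the contractibility of $U$, via the observation that two trivialisations of the same gerbe differ by a complex line bundle. Recall from the discussion preceding Definition~\ref{def:GtwistedSheaf} that the composite $\tau^{-1}\tau'$ of two trivialisations of $f^\ast\ger{G}$ is an automorphism of the trivial gerbe $\ger{I}$ over $U$, and so defines a $\IC^\times$-bundle with associated complex line bundle $\mscr{L}_{\tau,\tau'}$ over $U$. By the definition of morphisms in $\Shv^{f^\ast\ger{G}}(U)$, a morphism $\tau\cdot\Cinf \to \tau'\cdot\Cinf$ is precisely the datum of a morphism of $\Cinf$-modules $\Cinf \to \tau^{-1}\tau'\cdot\Cinf = \mscr{L}_{\tau,\tau'}\tensor_{\Cinf}\Cinf \simeq \mscr{L}_{\tau,\tau'}$.

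First I would use that $U$ is contractible: then $\H^1(U,\IC^\times)$ vanishes, so the line bundle $\mscr{L}_{\tau,\tau'}$ is trivial and admits a nowhere-vanishing global section, i.e.\ an isomorphism $\lambda:\Cinf \isoto \mscr{L}_{\tau,\tau'}$ of $\Cinf$-modules. Under the dictionary above, $\lambda$ is exactly the data of a morphism $\phi:\tau\cdot\Cinf \to \tau'\cdot\Cinf$, and since $\lambda$ is an isomorphism of $\Cinf$-modules the morphism $\phi$ is an isomorphism in $\Shv^{f^\ast\ger{G}}(U)$, with inverse furnished by $\lambda^{-1}$.

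Finally I would push forward along the graph. The functor $\Gr(f)_\ast:\Shv^{f^\ast\ger{G}}(U) \to \Shv^{\ger{G}}(\orb{M}\times U)$ supplied by Proposition~\ref{prop:twistedSheavesAdjunction} (using $p_{\orb{M}}\comp\Gr(f)=f$, so that $\Gr(f)^\ast p_{\orb{M}}^\ast\ger{G} = f^\ast\ger{G}$) carries the isomorphism $\phi$ to an isomorphism
\[
    \Ind^\tau(f)=\Gr(f)_\ast(\tau\cdot\Cinf)\isoto \Gr(f)_\ast(\tau'\cdot\Cinf)=\Ind^{\tau'}(f)
\]
in $\cat{C}^U$. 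As this already holds over $U$, restriction along any map $Y\to U$ preserves it, yielding $\Ind^\tau(f)\restriction_Y \iso \Ind^{\tau'}(f)\restriction_Y$.

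I expect the only genuine content to be the first step — correctly identifying that the ambiguity in the choice of trivialisation is measured by the line bundle $\mscr{L}_{\tau,\tau'}$ on $U$ — after which contractibility does all the work. The remaining steps are formal: the translation between isomorphisms of $\mscr{L}_{\tau,\tau'}$ and isomorphisms of twisted sheaves is built into the definition of $\Shv^{f^\ast\ger{G}}(U)$, and functoriality of pushforward is immediate. The one point to flag is that the resulting isomorphism is genuinely only \emph{abstract}: it depends on the (non-canonical) choice of section $\lambda$ and does not intertwine the two presentations compatibly with the identity on $f$, which is precisely why the lemma claims no more than an abstract isomorphism.
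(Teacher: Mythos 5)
Your proof is correct and follows essentially the same route as the paper's: the difference of trivialisations is measured by the line bundle $\mscr{L}_{\tau,\tau'}$ over $U$, contractibility of $U$ trivialises it, and this trivialisation transports to an isomorphism of the induced families. The only cosmetic difference is that you construct the isomorphism upstairs in $\Shv^{f^\ast\ger{G}}(U)$ and push it forward along $\Gr(f)$, whereas the paper phrases the change of trivialisation directly as tensoring the pushed-forward family with the (trivial) line bundle.
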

\begin{proof}
    The different choice of trivialisation amounts to tensoring with the
    line bundle $\mscr{L}_{\tau,\tau'}$ (associated to $\tau^{-1}\tau'$), pushed forward from $U$ via $\Gr(f)$.
    As $U$ is contractible, this line bundle is trivial, and tensoring
    with it does not change the isomorphism type.
\end{proof}
Every manifold admits a cover by contractible manifolds, hence we may
locally ignore the chosen trivialisations when we are only working up to
isomorphism.
\begin{notation}
    \label{notation:IndWithoutTrivialisation}
    Let $f:U \to \orb{M}$ be as in
    Lemma~\ref{lem:trivialisationsInIndDontMatterLocally}.
    We write $\Ind(f)$ to denote the isomorphism type of the
    family $\Ind^\tau(f)$.
\end{notation}


\begin{definition}
    A \emph{mobile cover} of a family $\f{A}$ is a mobile family $\overline{\f{A}}$,
    equipped with data $\f{A} \into \overline{\f{A}} \onto \f{A}$ identifying
    $\f{A}$ as a summand,
    such that the support of the two families agrees pointwise:
    \[
        \supp(\overline{\f{A}}) = \supp(\f{A}) \subset |\orb{M}|.
    \]
\end{definition}

\begin{lem}
    \label{lem:locally_summand_of_mobile}
    Each $\f{A}: U \to \cat{C}$ locally admits a mobile cover around any $x \in U$.
    Such a cover can be obtained by applying the equivariantisation monads $Iu$
    associated to a choice of orbifold charts around $\supp \f{A}(x)$.
\end{lem}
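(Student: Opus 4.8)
The plan is to reduce the statement to the quotient-orbifold charts handled by the twisted equivariantisation adjunction, apply the induction monad $Iu$ there, and then verify that this construction does not enlarge the support.

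First I would localise around $x$. Since $\f{A}(x)$ is a twisted skyscraper sheaf, its support $\supp \f{A}(x) \subset |\orb{M}|$ is a finite set of isomorphism classes; I choose for each of them an orbifold chart $[M_j/\Gamma_j] \into \orb{M}$ with $M_j$ contractible and $\Gamma_j$ finite, arranging the charts to be pairwise disjoint in $|\orb{M}|$ (possible since the underlying space of an orbifold is Hausdorff). Because $\f{A}$ is a family of skyscraper sheaves, its support is locally a finite union of graphs of smooth maps into $|\orb{M}|$; by continuity these graphs remain inside $\coprod_j [M_j/\Gamma_j]$ on a small enough neighbourhood $U_x$ of $x$, and, since sheaves with disjoint support do not interact, $\f{A}\restriction_{U_x}$ splits as $\DirSum_j \f{A}_j$ with $\f{A}_j$ supported in the $j$-th chart. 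This is exactly the local decomposition into families factoring through disjoint suborbifolds asserted just before the lemma.

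Next I would treat each $\f{A}_j$ inside its chart using the adjunction $I_j = {(q_j \times \id)}_\ast \dashv u_j = {(q_j \times \id)}^\ast$ attached to the atlas $q_j: M_j \onto [M_j/\Gamma_j]$. Since $\Gamma_j$ is finite this adjunction is ambidextrous, and the splitting constructed in the proof of Corollary~\ref{cor:twistedSheavesAreLocallySummandsOfMobiles} (unit and counit compose to the identity, so $e = \eta \comp \varepsilon$ is an idempotent split by $\varepsilon$ and $\eta$) exhibits $\f{A}_j$ as a direct summand of $\overline{\f{A}_j} \define I_j u_j \f{A}_j$. The family $\overline{\f{A}_j}$ lies in the image of the induction functor $I_j$, and by the discussion preceding the lemma every such family is mobile. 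Taking $\overline{\f{A}} \define \DirSum_j \overline{\f{A}_j}$ then produces a mobile family having $\f{A}\restriction_{U_x} = \DirSum_j \f{A}_j$ as a summand, and obtained precisely by applying the monads $I_j u_j$ of the chosen charts.

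It remains to check the support condition. Applying Lemma~\ref{lem:behaviourOfSupportUnderPushfwdAndPullback} twice to the surjective submersion $q_j \times \id$ gives $\supp I_j u_j \f{A}_j = (q_j \times \id)\big({(q_j \times \id)}^{-1}(\supp \f{A}_j)\big) = \supp \f{A}_j$, the last equality because the map is surjective onto $[M_j/\Gamma_j] \times U_x$. Hence $\supp \overline{\f{A}} = \bigcup_j \supp \overline{\f{A}_j} = \bigcup_j \supp \f{A}_j = \supp \f{A}\restriction_{U_x}$, so $\overline{\f{A}}$ is a mobile cover of $\f{A}$ around $x$. The main obstacle is the first step: making rigorous that the family genuinely separates along the disjoint charts on a neighbourhood of $x$. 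This rests on the skyscraper condition forcing the support to be a union of graphs of smooth maps, together with the vanishing of morphisms between disjointly supported sheaves; once this reduction is in place, the monad and support computations are routine.
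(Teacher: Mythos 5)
Your proposal is correct and follows essentially the same route as the paper's proof: split $\f{A}$ locally into summands supported in pairwise disjoint quotient-orbifold charts around $\supp\f{A}(x)$, apply the twisted equivariantisation monad $Iu$ of each chart, and invoke Corollary~\ref{cor:twistedSheavesAreLocallySummandsOfMobiles} to exhibit $\f{A}$ as a summand of the mobile family $Iu\f{A}$. The only difference is that you spell out the two steps the paper leaves implicit (the local splitting along disjoint charts, and the support computation via Lemma~\ref{lem:behaviourOfSupportUnderPushfwdAndPullback}), both of which you handle correctly.
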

\begin{proof}
    Pick pairwise non-isomorphic representatives $y_i:\point \to \orb{M}$ for
    $\supp \f{A}(x) \subset |\orb{M}|$, and pick pairwise disjoint orbifold charts
    $M_i \to [M_i/\Gamma_i] \into \orb{M}$ around the points $y_i$.
    There exists a neighbourhood $V$ of $x \in U$ such that
    $\f{A}\restriction_V$ splits into a direct sum of families with support
    concentrated in one of the orbifold charts.

    Without loss of generality, we may assume that $\f{A}$ has support concentrated
    in a single quotient orbifold $[M/\Gamma]$.
    Let $I \dashv u$ be the twisted equivariantisation adjunction associated to
    the orbifold chart $M \onto [M/\Gamma]$.
    The family $Iu\f{A}$ is mobile because it is in the image of $I$.
    It is easy to check that its support pointwise agrees with that of $\f{A}$.
    Lastly, Corollary~\ref{cor:twistedSheavesAreLocallySummandsOfMobiles} tells us
    that the unit and counit of the adjunction identify $\f{A}$ as a
    direct summand of $Iu\f{A}$.
\end{proof}

The monad $Iu$ is trivial for an orbifold patch where the quotient group
$\Gamma$ is trivial. For an effective orbifold, this is the generic region.
\begin{cor}
    \label{cor:families_in_generic_region_are_mobile}
    Assume $\orb{M}$ is an effective orbifold.
    Let $\f{A}: U \to \cat{C}$ be such that $\supp \f{A}(x)$ is
    contained in the generic region of the orbifold.
    Then $\f{A}$ is mobile in a neighbourhood of $x$.
\end{cor}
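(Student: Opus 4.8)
The plan is to invoke Lemma~\ref{lem:locally_summand_of_mobile} and to observe that, in the generic region of an effective orbifold, the orbifold charts used in its proof may be taken to be honest manifolds, whereupon the equivariantisation monad $Iu$ degenerates to the identity.

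First I would unpack the hypothesis. Writing $\supp\f{A}(x) = \{[y_1],\dots,[y_n]\}$ with representatives $y_i:\point \to \orb{M}$, the assumption that $\supp\f{A}(x)$ lies in the generic region means, by effectivity, that each $y_i$ has trivial stabiliser $\Gamma_{y_i}$. By the local structure of proper \'etale groupoids --- the stabiliser group acting on a small neighbourhood of a point, as recalled in Section~\ref{sec:orbifolds} --- a point with trivial stabiliser admits an orbifold chart of the form $[M_i/\Gamma_{y_i}] = M_i$, i.e.\ an honest manifold. I would choose these charts pairwise disjoint, contractible, and small enough that the pullback gerbe trivialises over each.

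Next I would run the argument of Lemma~\ref{lem:locally_summand_of_mobile} with precisely these charts. On a manifold chart the atlas map $q:M_i \onto [M_i/\Gamma_{y_i}]$ is the identity, so the induction functor $I = q_\ast$ and the restriction $u = q^\ast$ are (equivalent to) identity functors, and hence the monad $Iu$ is trivial. Consequently the mobile cover furnished by the lemma satisfies $Iu\f{A} \simeq \f{A}$ on a neighbourhood $V$ of $x$. Since any family in the image of an induction functor is mobile --- a vector bundle splits locally into line bundles, so $I$ turns it into a direct sum of basic mobiles --- the family $Iu\f{A}$, and therefore $\f{A}\restriction_V$, is mobile.

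The only genuine content beyond quoting the lemma is the claim that effectivity permits trivial-group charts around generic points, so this is the step I would write out with most care: it rests on the slice theorem identifying a neighbourhood of a point with trivial stabiliser with a manifold. Once that is in place, the remainder is the routine observation that the induction monad collapses to the identity as soon as the quotient group is trivial, at which point the mobility of $\f{A}$ follows immediately from the definition of families of twisted skyscraper sheaves.
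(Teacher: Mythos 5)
Your proposal is correct and follows the paper's own route: the paper proves this corollary precisely by the remark that the equivariantisation monad $Iu$ of Lemma~\ref{lem:locally_summand_of_mobile} becomes trivial on charts where the quotient group $\Gamma$ is trivial, which by effectivity is exactly the generic region, so the mobile cover $Iu\f{A}$ coincides with $\f{A}$ itself. Your additional care about the slice theorem (trivial stabiliser yielding an honest manifold chart) is a fair elaboration of what the paper leaves implicit, but it is not a different argument.
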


Let $q: M \to \orb{M}$ be an \'etale map and $\tau$
a trivialisation of the pullback gerbe $q^\ast\ger{G}$.
Let $\f{A}: U \to \cat{C}$ be a $U$-family of $\ger{G}$-twisted
skyscraper sheaves with support contained in the image of $q$.
\begin{defn}
    \label{def:supportLift}
    A \emph{set of support lifts for $\f{A}$} with respect to
    $(q:M \to \orb{M},\tau:\ger{I}\eqto q^\ast\ger{G})$ is a collection
    $\{{\tilde{s}}_i: U \to M\}_{i \in I}$, such that the induced mobile family
    \[
        \Ind^{\{\tilde{s}_i^\ast\tau\}}(q\comp\tilde{s}_i)
        = \DirSum_{i \in I} \Ind^{\tilde{s}_i^\ast\tau}(q\comp \tilde{s}_i)
    \]
    is a mobile cover of $\f{A}$.
\end{defn}

Given that we can always find mobile covers locally
(Lemma~\ref{lem:locally_summand_of_mobile}), we can always locally find
support lifts:
\begin{lem}
    \label{lem:local_support_lifts_exist}
    For any family $\f{A}: U \to \cat{C}$, and $x \in U$, there exists an open
    neighbourhood $V \ni x$, a chart $M \to \orb{M}$ and support lifts
    $\{{\tilde{s}}_i: V \to M\}$ for $\f{A}$.
\end{lem}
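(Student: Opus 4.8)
The plan is to bootstrap off Lemma~\ref{lem:locally_summand_of_mobile}, which already supplies a mobile cover $\overline{\f{A}} \iso Iu\f{A}$ of $\f{A}$ near $x$, built from the twisted equivariantisation monad $Iu$ for a chart $q: M \onto [M/\Gamma] \into \orb{M}$ carrying a trivialisation $\tau$ of $q^\ast\ger{G}$ (concentrating $\supp\f{A}(x)$ in a single quotient chart, exactly as in that proof). The only gap between this mobile cover and a set of support lifts is that the basic-mobile summands of $\overline{\f{A}}$ must be exhibited as pushforwards $\Gr(q \comp \tilde{s}_i)_\ast \Cinf$ along graphs of honest smooth maps $\tilde{s}_i$ defined on a neighbourhood of $x$ in $U$ itself, rather than on some auxiliary cover.

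First I would unpack $u\f{A} = q^\ast\f{A}$, which is a $U$-family of (now $\tau$-untwisted) skyscraper sheaves over the manifold $M$. By Definition~\ref{defn:familyOfTwistedSkyscraperSheaves} there is a cover $p: \tilde{U} \onto U$ over which it becomes basic, say $p^\ast u\f{A} \iso \DirSum_i \Gr(f_i)_\ast E_i$ with $f_i: \tilde{U} \to M$ smooth and $E_i$ vector bundles on $\tilde{U}$. Since $p$ is a surjective submersion it admits a local section $\sigma: V \to \tilde{U}$ through any chosen preimage of $x$, on a neighbourhood $V \ni x$ that I shrink to be contractible. As $p \comp \sigma = \id_V$, base change (Lemma~\ref{lem:baseChange}) along the graphs gives $(u\f{A})\restr_V = \sigma^\ast p^\ast u\f{A} \iso \DirSum_i \Gr(\tilde{s}_i)_\ast \sigma^\ast E_i$, where I set $\tilde{s}_i \define f_i \comp \sigma: V \to M$.

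The central step is then to apply the induction functor. Induction commutes with restriction to the open $V \subset U$ and sends $\Gr(f)_\ast E$ to $\Gr(q \comp f)_\ast E$, so $\overline{\f{A}}\restr_V \iso \DirSum_i \Gr(q \comp \tilde{s}_i)_\ast \sigma^\ast E_i$. Over the contractible $V$ each $\sigma^\ast E_i$ trivialises as $\Cinf^{\dirSum r_i}$, whence each summand splits into $r_i$ copies of the basic mobile $\Gr(q \comp \tilde{s}_i)_\ast \Cinf$; by Lemma~\ref{lem:trivialisationsInIndDontMatterLocally} the choice of trivialisation is immaterial over $V$, so each such copy is precisely $\Ind^{\tilde{s}_i^\ast\tau}(q \comp \tilde{s}_i)$. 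Re-indexing to absorb the multiplicities $r_i$ produces a finite collection $\{\tilde{s}_i: V \to M\}$ with $\Ind^{\{\tilde{s}_i^\ast\tau\}}(q \comp \tilde{s}_i) \iso \overline{\f{A}}\restr_V$.

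Finally I would verify these are support lifts in the sense of Definition~\ref{def:supportLift}: $\overline{\f{A}}$ is a mobile cover of $\f{A}$ (pointwise equal support, with $\f{A}$ a summand) by Lemma~\ref{lem:locally_summand_of_mobile}, and restriction to $V$ preserves both conditions, so $\Ind^{\{\tilde{s}_i^\ast\tau\}}(q \comp \tilde{s}_i)$ is a mobile cover of $\f{A}\restr_V$ as required. I expect the one genuine obstacle to be the descent from the submersion-cover $\tilde{U}$ — on which the definition of a family only guarantees a basic decomposition — down to an actual neighbourhood in $U$; the existence of local sections of surjective submersions is exactly what resolves this, after which contractibility of $V$ disposes of both the bundle ranks $r_i$ and the gerbe trivialisations.
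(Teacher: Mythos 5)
Your proposal is correct and takes the same route the paper does: the paper states this lemma with no written proof, presenting it as an immediate consequence of Lemma~\ref{lem:locally_summand_of_mobile} (mobile covers exist locally, and mobile families are locally direct sums of basic mobiles, i.e.\ pushforwards along graphs that lift to the chart). Your argument simply fills in the details the paper leaves implicit --- in particular the descent from the submersion cover $\tilde{U}$, over which the basic decomposition is guaranteed by Definition~\ref{defn:familyOfTwistedSkyscraperSheaves}, to an honest neighbourhood $V \ni x$ via a local section of the surjective submersion, followed by the use of contractibility of $V$ to split the bundles $\sigma^\ast E_i$ into lines and to dispose of the choice of trivialisations via Lemma~\ref{lem:trivialisationsInIndDontMatterLocally}.
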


There are families that indeed only admit these lifts locally.
\begin{example}
    Denote by $S^1$ the standard circle.
    There is a multivalued smooth map $S^1 \to S^1$, sending
    each point $x$ to a pair of opposite points $\pm x/2$, rotating at half speed.
    There are $S^1$-families of skyscraper sheaves on $S^1$ with this support
    function, but they do not admit a global support lift as there
    exists no map $S^1 \to S^1$ of degree $1/2$.
\end{example}

\begin{proposition}
    \label{prop:locallyMorphismsAreFamiliesOfMatrices}
    Let $\f{A},\f{B}:U \to \cat{C}$ be two $U$-families.
    There exists a cover $Y \onto U$, and a faithful embedding
    \[
        \Hom_{\cat{C}^Y}(\f{A}\restriction_Y,\f{B}\restriction_Y) \into
        \Cinf(Y, \Mat_{m \times n}(\IC)) = \Mat_{m \times n}(\CinfOf{Y})
    \]
    into a space of smooth functions valued in complex $(m \times n)$-matrices
    for some $m,n \in \IN$.
\end{proposition}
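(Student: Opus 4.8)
The plan is to reduce the statement to an explicit computation of morphisms between basic mobiles over a manifold chart, and to realise the embedding as the assignment sending a morphism of families to its tuple of values at the points of $Y$. First I would reduce to mobile families. By Lemma~\ref{lem:locally_summand_of_mobile} each of $\f{A},\f{B}$ is, after shrinking $U$, a summand of a mobile cover $\overline{\f{A}},\overline{\f{B}}$; writing $\iota,\pi$ for the splitting data, the map $F \mapsto \iota_{\f{B}} \comp F \comp \pi_{\f{A}}$ embeds $\Hom_{\cat{C}^U}(\f{A},\f{B})$ faithfully into $\Hom_{\cat{C}^U}(\overline{\f{A}},\overline{\f{B}})$, injectivity being forced by $\pi_{\f{B}}\iota_{\f{B}}=\id$ and $\pi_{\f{A}}\iota_{\f{A}}=\id$. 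Passing to a cover $Y \onto U$ supplied by Lemma~\ref{lem:local_support_lifts_exist}, I may assume both covers decompose as finite direct sums of basic mobiles $\overline{\f{A}}\restr_Y \iso \DirSum_i \Ind(s_i)$ and $\overline{\f{B}}\restr_Y \iso \DirSum_j \Ind(t_j)$, where the $s_i,t_j$ lift to smooth maps $\tilde{s}_i,\tilde{t}_j:Y \to M$ into an étale chart $q:M \to \orb{M}$ with trivialised pullback gerbe. By additivity of $\Hom$ it then suffices to embed each $\Hom_{\cat{C}^Y}(\Ind(s_i),\Ind(t_j))$ into a single copy of $\CinfOf{Y}$; assembling these over the finitely many pairs $(i,j)$ yields the desired target $\Mat_{m\times n}(\CinfOf{Y})$.

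Next I would compute the $\Hom$ between two basic mobiles. Since $\Ind(s_i) \iso I(\Gr(\tilde{s}_i)_\ast \Cinf)$ for the induction functor $I$ of the twisted equivariantisation adjunction $I \dashv q^\ast$ (Proposition~\ref{prop:twistedEquivariantisationAdjunction}), the adjunction isomorphism together with the explicit formula $q^\ast I \iso \DirSum_{\gamma} \gamma^\ast$ rewrites
\[
    \Hom_{\cat{C}^Y}(\Ind(s_i),\Ind(t_j)) \iso
    \Hom_{\ShC(M \times Y)}\big(\Gr(\tilde{s}_i)_\ast \Cinf,\ \DirSum_{\gamma \in \Gamma} \gamma^\ast \Gr(\tilde{t}_j)_\ast \Cinf\big).
\]
Each summand on the right is again the pushforward of a line bundle along the graph of a smooth map, so the problem reduces to morphisms between two honest graph-supported sheaves $\Gr(a)_\ast \Cinf,\Gr(b)_\ast \Cinf$ over the \emph{manifold} $M \times Y$, with $a,b:Y \to M$. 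For these, $\Cinf$-linearity of a sheaf morphism forces it to be multiplication by a single smooth function $c \in \CinfOf{Y}$ subject to $(a-b)\cdot c = 0$; thus $\Hom$ is canonically a subspace of $\CinfOf{Y}$, and the value of the morphism at a point $y$ is multiplication by $c(y)$. This is the base case that drives the whole argument, and the relation $(a-b)c=0$ records exactly that a morphism is supported on the coincidence locus $\{a=b\}$.

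The embedding is then $F \mapsto (y \mapsto F(y))$, the tuple of values at the points of $Y$, and the step I expect to be the main obstacle is faithfulness: that a morphism of families vanishing at every $y \in Y$ is already zero. The reductions above identify this with injectivity of $c \mapsto (y \mapsto c(y))$ on the subspace $\{c:(a-b)c=0\}\subset \CinfOf{Y}$, which is immediate; the real content is the base-case computation, since \emph{a priori} a sheaf morphism between families whose supports merge along a closed set could carry transverse or jet data, and it is precisely $\Cinf$-linearity that excludes this and pins $\Hom$ down to the scalar multiplications above. Smoothness of the assembled matrix entries is then nothing but the smoothness of each such $c$, while compatibility of the adjunction isomorphism and of the summand splittings with restriction along points $y \in Y$ ensures that the matrix read off is genuinely the pointwise-value matrix, completing the embedding.
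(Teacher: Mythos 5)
Your proposal is correct, and it lands on the same computational core as the paper's proof: morphisms between pushforwards of trivial line bundles along graphs of smooth maps into a chart are multiplications by smooth functions annihilated by the coincidence ideal (hence supported on the interior of the coincidence locus), and these assemble into a faithful matrix-valued embedding. The genuine difference is in how the orbifold equivariance is handled. The paper never passes to mobile covers: it restricts $\f{A},\f{B}$ along the chart $q:M \onto [M/\Gamma]$ and uses that the forgetful functor $u=q^\ast$ is \emph{faithful} on morphisms (its image being the $\ger{G}$-twisted $\Gamma$-equivariant ones), then decomposes $u\f{A},u\f{B}$ into graph pushforwards and applies the pullback-pushforward adjunction. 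You instead inflate $\f{A},\f{B}$ to mobile covers (Lemma~\ref{lem:locally_summand_of_mobile}), split off by idempotents, and compute $\Hom(\Ind(s_i),\Ind(t_j))$ \emph{exactly} via the other half of the same ambidextrous adjunction, $I \dashv q^\ast$ together with $q^\ast I \iso \DirSum_\gamma \gamma^\ast$. So both routes rest on the same two ingredients (the twisted equivariantisation adjunction and the local normal form of families); yours trades the paper's faithfulness argument for an exact adjunction computation, at the cost of one extra summand-splitting step. Two small repairs are needed. First, your claim that each $\Hom_{\cat{C}^Y}(\Ind(s_i),\Ind(t_j))$ embeds into a \emph{single} copy of $\CinfOf{Y}$ is not what your own computation gives: it produces an embedding into $\CinfOf{Y}^{\dirSum |\Gamma|}$, one copy per $\gamma$-translate, and this cannot in general be compressed --- e.g.\ when $\tilde{s}_i=\tilde{t}_j$ is constant at a $\Gamma$-fixed point, this $\Hom$-space is the constant family at the twisted group ring, of rank $|\Gamma|$ over $\CinfOf{Y}$. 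This is harmless: it only enlarges the matrix size $n$. Second, you need the lifts $\tilde{s}_i$ and $\tilde{t}_j$ to land in a \emph{common} chart $M$ (otherwise the displayed $\Hom$ over $M \times Y$ makes no sense); this is arranged by applying Lemma~\ref{lem:local_support_lifts_exist} to the single family $\f{A}\dirSum\f{B}$, or by the paper's preliminary splitting into summands supported in pairwise disjoint quotient suborbifolds.
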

\begin{proof}
    We prove the statement in a neighbourhood $V$ of a point $x \in U$.
    By choosing $V$ small enough, we may assume the family $\f{A}\restriction_V$
    splits as a direct sum $\dirSum_i\f{A}_i$ of families
    whose supports are contained in pairwise disjoint suborbifolds
    $[M_i/\Gamma_i] \into \orb{M}$. By further restricting $V$, we may assume
    the same is true for $\f{B}$.
    Families with disjoint support have no non-zero morphisms between,
    so it suffices to treat the case where $\f{A}$ and $\f{B}$
    factor through the same quotient orbifold $[M/\Gamma] \into \orb{M}$.

    Denote the induced gerbe over $[M/\Gamma]$ by $\ger{G}$, and equip
    $q:M\to [M/\Gamma]$ with a trivialisation of $q^\ast\ger{G}$.
    Now consider the twisted equivariantisation adjunction
    associated to $q:M\onto [M/\Gamma]$.
    We may think of
    $\f{A},\f{B} \in \Sky^\ger{G}_{[M/\Gamma]}(U) \subset
        \Shv^\ger{G}([M/\Gamma]\times U)$ as objects in
    $\Sky_M(U) \subset \ShC(M \times U)$,
    equipped with $\ger{G}$-twisted $\Gamma$-equivariance data.
    The forgetful functor $u$ discards the equivariance data of an equivariant sheaf.
    It faithfully embeds
    \begin{equation*}
        \Hom_{{\Shv^\ger{G}([M/\Gamma] \times U)}}( \f{A} , \f{B} ) \into
        \Hom_{\ShC(M \times V)}(u \f{A}, u \f{B} )
    \end{equation*}
    as the $\Gamma$-equivariant morphisms.

    By possibly restricting $V$ further, we may assume $u\f{A}$ and
    $u\f{B}$ are of the form
    \begin{align*}
        u\f{A} \iso \dirSum_{i=0}^m {\Gr(f_i)}_\ast \Cinf &  &
        u\f{B} \iso \dirSum_{j=0}^n {\Gr(g_j)}_\ast \Cinf,
    \end{align*}
    where $\{f_i:V \to M\}$ and $\{g_j:V \to M\}$ are smooth functions.
    Using the isomorphism of the pullback-pushforward adjunction,
    \begin{align*}
        \Hom_{\ShC(M\times V)}\left(u \f{A},u\f{B}\right) & \iso
        \Hom_{\ShC(M\times V)}\left(\dirSum_{i=0}^m {\Gr(f_i)}_\ast \Cinf,
        \dirSum_{j=0}^m {\Gr(g_j)}_\ast \Cinf\right)                                         \\
                                                          & \iso \dirSum_{i,j}\Hom_{\ShC(V)}
        \left(\Gr(g_j)^\ast\Gr(f_i)_\ast\Cinf,\Cinf\right).
    \end{align*}
    We examine a single summand of the form
    \[
        \Hom_{\ShC(V)}\left(\Gr(g)^\ast\Gr(f)_\ast\Cinf,\Cinf\right).
    \]
    By Lemma~\ref{lem:behaviourOfSupportUnderPushfwdAndPullback},
    we know the sheaf $\Gr(g)^\ast\Gr(f)_\ast\Cinf$ is zero on the complement
    of the subset $V_{f=g} \subset V$ where the functions are equal.
    If a morphism $\Gr(g)^\ast\Gr(f)_\ast\Cinf \to \Cinf$ is non-zero at a
    point $v \in V$, it must stay non-zero in a neighbourhood of $v$.
    Hence, such a morphism can only be non-zero on the interior
    $V_{f=g}^\ring$ of $V_{f=g}$.
    Restricted to $V_{f=g}^\ring$, however, $\Gr(g)^\ast\Gr(f)_\ast\Cinf = \Cinf$,
    so the $\Hom$-space becomes
    \[
        \Hom_{\ShC(V_{f=g}^\ring)}(\Cinf,\Cinf) = \Cinf(V_{f=g}^\ring,\IC).
    \]
    This way, we obtain an embedding
    \begin{align*}
        \Hom_{\ShC(M\times V)}(u \f{A},u\f{B})
         & \into \Hom_{\ShC(V)}(\Cinf,\Cinf)^{\dirSum m\times n} \simeq \Mat_{m \times n}(\CinfOf{V}).
    \end{align*}
    The image of the embedding is the subspace of those matrix-valued functions which
    are $\ger{G}$-twisted $\Gamma$-equivariant\footnote{Recall that the summands of $u\f{A}$ and $u\f{B}$ are acted upon by $\Gamma$.
        The identifications made above translate this into an action on the set of rows
        and the set of columns of the matrices.}
    and whose $(i,j)$-component is zero outside the open set
    $V_{f_i=g_j}^\ring$.
\end{proof}

Let $F:\f{A} \to \f{B}$ be a morphism of $U$-families, and pick an embedding
\[
    \Hom(\f{A}, \f{B}) \into \Mat_{m\times n}(\Cinf)
\]
around $x \in U$. This identifies $F(x)$ with a matrix.
There is an open neighbourhood $V$ of $x$ where this matrix is in the image
of the embedding.
Denote by $\bar{F}:\f{A}\restriction_V \to \f{B}\restriction_V$ the
morphism of $V$-families which is sent to the constant function at $F(x)$
under the above embedding.

\begin{notation}
    Let $\f{S}:U \to \cat{C}$ and $p \in \End(\f{S})$ a split idempotent.
    We denote the summand splitting the idempotent by $(\f{S},p)$.
\end{notation}
\begin{lemma}
    \label{lem:projection_locally_constant}
    Let $\overline{\f{A}}:U \to \cat{C}$ be a $U$-family, and
    $p \in \End(\overline{\f{A}})$ an idempotent.
    Let $\overline{p}$ be a map obtained as above by locally extending
    $p_x$ for some point $x \in U$.
    Then $\overline{p_x}$ is an idempotent.
    Let $\overline{\cat{C}^U}$ be a completion of
    $\cat{C}^U$ where both $p$ and $\overline{p}$ are split.
    Then the summands of
    $\overline{\f{A}}$ picked out by $p$ and $\overline{p}$
    are isomorphic in a neighbourhood of $x$.
\end{lemma}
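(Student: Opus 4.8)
The plan is to transport the problem into the matrix model supplied by Proposition~\ref{prop:locallyMorphismsAreFamiliesOfMatrices}. Restricting to a contractible neighbourhood $V \ni x$ over which that proposition applies to the pair $\f{A} = \f{B} = \overline{\f{A}}$, we obtain a faithful embedding
\[
    \End_{\cat{C}^V}(\overline{\f{A}}) \into \Mat_{n \times n}(\CinfOf{V}).
\]
Inspecting the construction in that proof, in the endomorphism case this embedding is moreover a unital homomorphism of $\Cinf(V)$-algebras onto a $\Cinf(V)$-subalgebra $\mathcal{A}$: composition of endomorphisms of the direct-sum family $\overline{\f{A}}$ corresponds to matrix multiplication, and $\id_{\overline{\f{A}}}$ maps to $\ONE$. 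Write $M_p \in \mathcal{A}$ for the matrix function representing $p$. Since $p^2 = p$ and the embedding is multiplicative, $M_p(v)^2 = M_p(v)$ for every $v \in V$; in particular $M_p(x)$ is an idempotent matrix. By construction $\overline{p}$ is the endomorphism represented (on a small enough neighbourhood) by the constant function $v \mapsto M_p(x)$, which lies in $\mathcal{A}$. As this constant function is idempotent-valued and the embedding is a faithful algebra map, $\overline{p}^2 = \overline{p}$, proving the first assertion.

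For the second assertion I would use the standard conjugation trick for idempotents. Set
\[
    g \define \overline{p}\, p + (\ONE - \overline{p})(\ONE - p) \in \End(\overline{\f{A}}\restriction_V).
\]
A direct computation gives $g p = \overline{p}\, g$, so that conjugation by $g$ carries $p$ to $\overline{p}$ as soon as $g$ is invertible. Under the embedding, $g$ becomes $M_p(x)\, M_p + (\ONE - M_p(x))(\ONE - M_p)$; evaluating at $v = x$, where $M_p$ agrees with the constant $M_p(x)$, yields the identity matrix. Hence the determinant of the representing matrix of $g$ equals $1$ at $x$, and by continuity it is nonvanishing on some smaller neighbourhood $V' \ni x$.

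The crux is to promote this invertibility inside the ambient matrix algebra to invertibility inside $\End(\overline{\f{A}}\restriction_{V'})$ itself; this is where the $\Cinf$-algebra structure is essential, since a subalgebra need not be closed under taking inverses of elements that happen to be invertible in the overalgebra. I would invoke Cayley--Hamilton: the representing matrix of $g$ satisfies its characteristic polynomial, an identity all of whose terms (powers of $g$, and $\Cinf(V')$-multiples of $\ONE$) already lie in the $\Cinf(V')$-subalgebra $\mathcal{A}$. The constant term is $\pm\det(g)$, a unit in $\Cinf(V')$, so solving this relation for $\ONE$ expresses $g^{-1}$ as a $\Cinf(V')$-linear combination of powers of $g$; thus $g^{-1} \in \mathcal{A}$ and $g$ is invertible in $\End(\overline{\f{A}}\restriction_{V'})$ with $\overline{p} = g\, p\, g^{-1}$. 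Finally, in the completion $\overline{\cat{C}^{V'}}$ where $p$ and $\overline{p}$ split, the relation $g p = \overline{p}\, g$ (equivalently $p\, g^{-1} = g^{-1}\overline{p}$) shows that $g p : (\overline{\f{A}}, p) \to (\overline{\f{A}}, \overline{p})$ and $p\, g^{-1} : (\overline{\f{A}}, \overline{p}) \to (\overline{\f{A}}, p)$ are well-defined and mutually inverse, exhibiting the two summands as isomorphic on a neighbourhood of $x$.
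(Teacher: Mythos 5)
Your proof is correct, but it reaches the isomorphism by a genuinely different route than the paper. The paper's argument, after the same pointwise idempotency check, is more economical: it uses the idempotents themselves as the comparison morphisms. Namely, $\overline{p}\,p$ and $p\,\overline{p}$ define morphisms $(\overline{\f{A}},p) \rightleftarrows (\overline{\f{A}},\overline{p})$ in the completion; since $p(x)=\overline{p}(x)$, both composites $p\,\overline{p}\,p$ and $\overline{p}\,p\,\overline{p}$ equal the identity of the respective summand at $x$, hence (by smoothness of the matrix entries) remain invertible on a neighbourhood of $x$, so the two summands are isomorphic there. You instead run the standard conjugation argument: $g \define \overline{p}\,p + (\ONE-\overline{p})(\ONE-p)$ satisfies $gp = \overline{p}g$, equals the identity at $x$, and is therefore invertible nearby, giving $\overline{p} = g\,p\,g^{-1}$. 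What your version buys is precision on a point the paper glosses over: invertibility of the representing matrix must be promoted to invertibility \emph{inside} the image of the faithful embedding of $\End(\overline{\f{A}}\restriction_{V'})$ (a $\Cinf$-subalgebra need not be closed under inverses taken in $\Mat_{n\times n}(\CinfOf{V'})$), and your Cayley--Hamilton step handles exactly this — note the paper's ``full rank in a neighbourhood'' step quietly faces the same issue for the composites $p\,\overline{p}\,p$ and $\overline{p}\,p\,\overline{p}$. Your argument also yields the slightly stronger conclusion that the two idempotents are conjugate, not merely that their summands are isomorphic; the cost is the extra bookkeeping of $g$ and the algebra-homomorphism property of the embedding, which you correctly flag as needing to be read off from the construction in Proposition~\ref{prop:locallyMorphismsAreFamiliesOfMatrices}.
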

\begin{proof}
    Idempotency can be checked pointwise,
    thus $\overline{p}$ is an idempotent.
    We will use the idempotents themselves to establish the isomorphism.
    An idempotent matrix has the property that all its eigenvalues are
    either 0 or 1.
    Eigenvalues vary smoothly in a smooth family of matrices, hence
    the rank of an idempotent matrix is locally constant.
    Now assume $p$ and $\overline{p}$ are split by summands
    $(\overline{\f{A}},p)$ and $(\overline{\f{A}},\overline{p})$, respectively.
    The maps $p$ and $\overline{p}$ now define morphisms
    $(\overline{\f{A}},p) \rightleftarrows (\overline{\f{A}},\overline{p})$.
    As $p(x)=\overline{p}(x)$, the maps are isomorphisms at $x$, 
    ie.\ they have full rank.
    This is then also true in an open neighbourhood,
    which proves the desired isomorphism.
\end{proof}
By Lemma~\ref{lem:locally_summand_of_mobile}, every family
$\f{A}:U \to \cat{C}$ is locally a summand of a family of the form
$\overline{\f{A}}=\DirSum_i\Gr(f_i)_\ast E_i$.
This summand is picked out by an idempotent
$p \in \End(\overline{\f{A}})$. 
We show that this idempotent may be diagonalised around $x$.

\begin{cor}[Local Normal Form]
    \label{cor:locally_karoubi_completion_distributes}
    A family $\f{A}:U \to \cat{C}$ locally decomposes as
    \begin{equation*}
        \f{A} \iso \DirSum_i (\Gr(s_i)_\ast \tau_i \cdot E_i, p_i),
    \end{equation*}
    where the $s_i:U \to \orb{M}$ are smooth maps, the $\tau_i$
    are trivialisations of the pullback gerbes,
    the $E_i$ are vector bundles on $U$, and the $p_i$ are idempotents
    on $\Gr(s_i)_\ast \tau_i \cdot E_i$.
    The set $\{s_i\}$ can be chosen to be obtained
    from any set of
    local support lifts $\{{\tilde{s}}_i: U \to M\}$ for $\f{A}$
    to a chart $q:M \to \orb{M}$.
\end{cor}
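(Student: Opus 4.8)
The plan is to build a mobile cover out of the given support lifts, embed the idempotent that splits off $\f{A}$ into a space of matrix-valued functions, replace it by a locally constant idempotent, and then read off the desired decomposition from the \emph{block structure} of that constant matrix. First I would apply Lemma~\ref{lem:local_support_lifts_exist} to obtain, on a neighbourhood $V \ni x$, support lifts $\{\tilde{s}_i:V \to M\}$ to a chart $q:M \to \orb{M}$ together with a trivialisation $\tau$ of $q^\ast\ger{G}$. By Definition~\ref{def:supportLift} these assemble into a mobile cover
\[
    \overline{\f{A}} = \DirSum_i \Gr(q \comp \tilde{s}_i)_\ast\, \tilde{s}_i^\ast\tau \cdot \Cinf
\]
of $\f{A}$, so that $\f{A} \iso (\overline{\f{A}}, p)$ is the summand split off by an idempotent $p \in \End(\overline{\f{A}})$. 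Writing $s_i \define q \comp \tilde{s}_i$, the target decomposition is obtained precisely by block-diagonalising $p$.

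Next I would invoke Proposition~\ref{prop:locallyMorphismsAreFamiliesOfMatrices} to embed $\End(\overline{\f{A}})$, after shrinking $V$, into $\Mat_{m \times m}(\CinfOf{V})$, identifying $p$ with a matrix of smooth functions $(p_{jk})$ whose $(j,k)$-entry vanishes outside the open set $V_{\tilde{s}_j = \tilde{s}_k}^\ring$ on which the two lifts agree. I then replace $p$ by the locally constant idempotent $\overline{p}$ of Lemma~\ref{lem:projection_locally_constant}, whose constant value is the matrix $p(x)$; that lemma guarantees $\overline{p}$ is again idempotent and that $(\overline{\f{A}}, p) \iso (\overline{\f{A}}, \overline{p})$ on a neighbourhood of $x$, so it suffices to decompose the latter.

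The crux is to see that $\overline{p} = p(x)$ is block diagonal for the equivalence relation ``$\tilde{s}_j$ and $\tilde{s}_k$ agree on a neighbourhood of $x$'' on the index set. Indeed, if the lifts do not agree near $x$, then $x$ lies outside the open set $V_{\tilde{s}_j = \tilde{s}_k}^\ring$, whence $p_{jk}(x) = 0$; conversely, for the constant value $p_{jk}(x)$ to define a valid morphism on $V$ the entry must be supported on all of $V$, which (after shrinking $V$) forces $\tilde{s}_j = \tilde{s}_k$ throughout $V$ whenever $p_{jk}(x) \neq 0$. Thus $\overline{p}$ splits as a direct sum of idempotents $p_i$, one per class, and $(\overline{\f{A}}, \overline{p}) \iso \DirSum_i (\overline{\f{A}}_i, p_i)$. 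Within the $i$-th class all the lifts coincide with a single $\tilde{s}$ near $x$, so $\overline{\f{A}}_i \iso \Gr(s_i)_\ast \tau_i \cdot E_i$ for $s_i = q \comp \tilde{s}$, the trivialisation $\tau_i = \tilde{s}^\ast\tau$, and the trivial bundle $E_i$ of rank the size of the class; here Lemma~\ref{lem:trivialisationsInIndDontMatterLocally} ensures that, $V$ being contractible, the isomorphism type is insensitive to the chosen trivialisations, so the $E_i$ may be taken to be honest vector bundles and the $s_i$ drawn directly from the given support lifts. This yields $\f{A} \iso \DirSum_i (\Gr(s_i)_\ast \tau_i \cdot E_i, p_i)$.

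I expect the main obstacle to be exactly this block-diagonalisation step: one must pass from the pointwise datum $p(x)$ to a genuine constant morphism and then argue that the support constraint of Proposition~\ref{prop:locallyMorphismsAreFamiliesOfMatrices} simultaneously kills the off-diagonal blocks (via boundary-vanishing of the $p_{jk}$) and forces the support lifts within a block to coincide on a \emph{whole} neighbourhood rather than merely at $x$. Everything else — existence of the support lifts, idempotency of $\overline{p}$, and the irrelevance of the trivialisations on a contractible patch — is delivered by the cited results.
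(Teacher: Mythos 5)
Your proof is correct and takes essentially the same route as the paper's own: build a mobile cover from the given support lifts, split off $\f{A}$ by an idempotent, replace that idempotent by its locally constant extension via Lemma~\ref{lem:projection_locally_constant}, and observe that the resulting constant idempotent is block-diagonal with respect to the germs of the lifts at $x$. The only difference is one of detail: your matrix-embedding argument for why $p(x)$ cannot mix summands whose lifts have distinct germs (and why the constant extension is a valid morphism after shrinking $V$) is precisely the content the paper compresses into the sentence ``at $x$, we can decompose the projection as stated above (in such a way that it doesn't mix the summands).''
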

\begin{proof}
    Near any point $x \in U$, any set of local support lifts corresponds to
    a mobile cover $\widetilde{\f{A}} \iso \DirSum_i \Gr(s_i)_\ast E_i$
    of $\f{A}$, where $s_i = q \comp {\tilde{s}}_i$.
    Around $x$, $\f{A}$ is a summand of $\widetilde{\f{A}}$, and thus picked
    out by an idempotent $p$ on $\f{A}$.
    At $x$, we can decompose the projection as stated above (in such a way
    that it doesn't mix the summands).
    By Lemma~\ref{lem:projection_locally_constant}, we can assume
    idempotents are locally constant, so the decomposition holds
    on an open neighbourhood of $x$.
\end{proof}

\begin{cor}
    \label{cor:CUisKaroubiComplete}
    The category $\cat{C}^U$ of $U$-families is Karoubi complete.
\end{cor}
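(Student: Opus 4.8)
The plan is to reduce Karoubi completeness to the statement that every idempotent splits, and then to split idempotents \emph{locally} on $U$ and glue the local splittings using the fact that $\stC = \Sky^\ger{G}_\orb{M}$ is a stack (Definition~\ref{def:stackOfTwistedSkysheaves}). So let $p \in \End_{\cat{C}^U}(\f{A})$ be an idempotent on a $U$-family $\f{A}$; I want to produce an object $\f{B} \in \cat{C}^U$ together with morphisms $\iota\colon \f{B} \to \f{A}$ and $\pi\colon \f{A} \to \f{B}$ satisfying $\pi \comp \iota = \id_\f{B}$ and $\iota \comp \pi = p$.

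First I would split $p$ in a neighbourhood $V$ of an arbitrary point $x \in U$. By Lemma~\ref{lem:projection_locally_constant} (applied with $\overline{\f{A}} = \f{A}$), after shrinking $V$ the idempotent $p$ becomes, in the idempotent completion of $\cat{C}^V$, isomorphic to the \emph{constant} idempotent $\overline{p}$ obtained by freezing $p$ at $x$ under the matrix embedding of Proposition~\ref{prop:locallyMorphismsAreFamiliesOfMatrices}. The key point is that $\overline{p}$ already splits inside $\cat{C}^V$: under that embedding $\overline{p}$ is a constant matrix subject only to the support constraint (its blocks respecting which support lifts coincide) and the $\ger{G}$-twisted $\Gamma$-equivariance constraint, and the latter is exactly the condition of lying in a product of twisted group algebras $\IC^{\theta_x}[\Gamma_x]$. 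Since the categories at points are the semisimple categories $\Rep^{\theta_x}(\Gamma_x)$ (Corollary~\ref{cor:catOfPointsDirSumOfRepCats}), such a constant equivariant idempotent can be diagonalised by a constant — hence admissible — change of basis, so that it selects a sub-direct-sum of a local mobile cover (Lemma~\ref{lem:locally_summand_of_mobile}); this sub-sum is again a family of twisted skyscraper sheaves and furnishes a splitting $\f{B}_V \in \cat{C}^V$ of $\overline{p}$. Because the completion-image of $(\f{A}\restriction_V, p)$ is then isomorphic to the honest object $\f{B}_V$, the idempotent $p$ splits in $\cat{C}^V$ itself: a retract that is isomorphic in the Karoubi envelope to an object already present in the category is automatically a genuine retract.

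It remains to glue. Over a double overlap $V_\alpha \cap V_\beta$ the two local splittings $\f{B}_\alpha$ and $\f{B}_\beta$ both split the same idempotent $p$, hence are canonically isomorphic via $g_{\alpha\beta} \define \pi_\alpha \comp \iota_\beta$, with inverse $\pi_\beta \comp \iota_\alpha$. Using $\iota_\gamma \comp \pi_\gamma = p$ one checks the cocycle identity $g_{\alpha\beta}\comp g_{\beta\gamma} = g_{\alpha\gamma}$ on the triple overlap, together with the compatibilities $g_{\alpha\beta}\comp \pi_\beta = \pi_\alpha$ and $\iota_\beta \comp g_{\beta\alpha} = \iota_\alpha$ there. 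The descent condition for the stack $\stC$ then assembles the $\f{B}_\alpha$ into a global object $\f{B} \in \cat{C}^U$, and the compatible families $\{\iota_\alpha\}$, $\{\pi_\alpha\}$ into global morphisms $\iota, \pi$; the relations $\pi\comp\iota = \id$ and $\iota\comp\pi = p$ hold globally because they hold on each $V_\alpha$, so $p$ is split.

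I expect the main obstacle to be the local step rather than the gluing. Concretely, one must verify that freezing $p$ at $x$ yields an \emph{admissible} constant idempotent — one in the image of the matrix embedding of Proposition~\ref{prop:locallyMorphismsAreFamiliesOfMatrices}, respecting both the support and the twisted-equivariance constraints — and that the diagonalising conjugation may be taken equivariant, so that semisimplicity of $\Rep^{\theta_x}(\Gamma_x)$ genuinely applies. Equally, the transfer from $\overline{p}$ back to $p$ has to be organised through the Karoubi envelope as a purely formal device, so as not to presuppose the very completeness we are establishing.
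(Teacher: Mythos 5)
Your proof is correct, but it is organised genuinely differently from the paper's. The paper never splits idempotents inside $\cat{C}^U$ at all: it observes that $\cat{C}^U = \Sky^{\ger{G}}_{\orb{M}}(U)$ sits inside the ambient category $\Shv^{\ger{G}}_{\orb{M}}(U)$ of all twisted sheaves, which is Karoubi complete for general sheaf-theoretic reasons, so the corollary reduces to a closure property --- the summand $(\f{S},p)$, which already exists as a twisted sheaf, must again be a \emph{family of skyscraper sheaves}. Since being a family is a local condition, the paper checks it by pulling back to a chart, invoking the Local Normal Form (Corollary~\ref{cor:locally_karoubi_completion_distributes}) to make the idempotent diagonal with respect to the decomposition into basic families $\Gr(f)_\ast \tau \cdot E$, and then splitting the resulting idempotent on the vector bundle $E$ by a subbundle; no gluing argument appears, because the global splitting object is supplied by the ambient stack. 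You instead stay inside the stack of families throughout: you split locally (essentially re-deriving the content of Corollary~\ref{cor:locally_karoubi_completion_distributes} by hand, via freezing, Lemma~\ref{lem:projection_locally_constant}, and pointwise semisimplicity), and then glue the local splittings by descent using the canonical comparison maps $g_{\alpha\beta} = \pi_\alpha \comp \iota_\beta$ between splittings of the same idempotent; your cocycle computations, the transfer ``isomorphic in the Karoubi envelope to an honest object implies genuinely split'', and the appeal to the stack property of Definition~\ref{def:stackOfTwistedSkysheaves} are all sound. What the paper's route buys is brevity, since descent is absorbed into the statement that $\Shv^{\ger{G}}_{\orb{M}}$ is a Karoubi-complete stack; what yours buys is self-containedness and the reusable principle that splitting idempotents is a local problem for any stack of categories. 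Two small imprecisions in your local step, both repairable: the constant equivariant idempotent lives in $\End(\f{A}(x))$, which by Corollary~\ref{cor:catOfPointsDirSumOfRepCats} and Schur's Lemma is a product of matrix algebras over $\IC$, not of twisted group algebras; and the diagonalising constant conjugation must be chosen block-diagonally with respect to which support lifts coincide on $V$ --- equivalently, one may split the constant idempotent non-equivariantly on the chart, where its image is a constant subbundle pushed forward along the common graph, and then transfer the equivariance data along the splitting maps, which is coherent precisely because the idempotent is equivariant.
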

\begin{proof}
    By definition $\cat{C}^U = \Sky_{\orb{M}}^\ger{G}(U) \subset \Shv^\ger{G}_{\orb{M}}(U)$
    is cut out by the condition that $\f{S} \in \cat{C}^U$ locally decomposes
    as a direct sum of sheaves of the form ${\Gr(f)}_\ast \tau\cdot E$ for some
    smooth map $f$, trivialisation $\tau$ of $f^\ast\ger{G}$ and vector bundle $E$.
    The category $\Shv^\ger{G}_{\orb{M}}$ is manifestly Karoubi-complete,
    so it suffices to check this condition is preserved under the operation
    of taking a summand.

    Let $\f{S}:U \to \cat{C}$ and $p \in \End(\f{S})$ an idempotent.
    This idempotent splits in $\Shv_{\orb{M}}^\ger{G}$, and we denote
    the corresponding summand by $(\f{S},p)$.
    The sheaf $\f{S}$ locally pulls back to an object of the form
    $u\f{S} = \dirSum_i {\Gr(f_i)}_\ast \tau_i \cdot E_i$.
    The splitting of $p$ pulls back to a sheaf
    $(u\f{S},up)$. We must show that this sheaf is again of the basic form above.
    By Corollary~\ref{cor:locally_karoubi_completion_distributes},
    we may assume the idempotent is diagonal in the direct-sum decomposition.
    It hence suffices to consider a
    single summand $\Gr(f)_\ast \tau \cdot E$, equipped with an idempotent $p$.
    As $\Gr(f)$ is an embedding, $p$ is really an idempotent on $E$, which is
    split by the subbundle $(E,p)$.
    Hence $(\Gr(f)_\ast \tau \cdot E,p)$ is represented by the basic
    family $\Gr(f)_\ast \tau \cdot (E,p)$, which finishes the proof.
\end{proof}

Recall the dimension of an object $X \in \cat{C}$ (Definition~\ref{def:dimensionOfObject}).
\begin{lemma}
    \label{lem:dim_locally_constant}
    The dimension of an object in a family $\f{S}$ is locally constant.
\end{lemma}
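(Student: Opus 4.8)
The plan is to reduce to a single orbifold chart, establish that on such a chart the intrinsic dimension equals a fixed rescaling of the ordinary vector-space dimension pulled back to the cover, and then invoke the local constancy of idempotent ranks.

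First I would localise. Fix $x \in U$. By the Local Normal Form (Corollary~\ref{cor:locally_karoubi_completion_distributes}), after restricting to a neighbourhood of $x$ the family $\f{S}$ splits as a finite direct sum of summands of basic mobiles, each supported in a single quotient chart $[M/\Gamma] \into \orb{M}$ with $M$ contractible and $\Gamma$ finite. Since $\dim$ is additive over direct sums (Definition~\ref{def:dimensionOfObject}) and the pointwise dimension of a direct sum is the sum of the pointwise dimensions, it suffices to treat a single summand, and hence to assume $\f{S}$ is supported in one chart $q: M \onto [M/\Gamma]$. Write $N = |\Gamma|$.

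The heart of the argument is the identity
\[
    \dim X = \tfrac{1}{N}\, \vdim_M\!\big(q^\ast X\big)
\]
for every object $X \in \cat{C}$ supported in $[M/\Gamma]$, where $\vdim_M$ denotes the total vector-space dimension of the underlying (untwisted) sheaf $q^\ast X$ on $M$. Both sides are additive, so it is enough to check this on a simple $X$ sitting at a point $[y]$ with stabiliser $\Gamma_y$ and given by $\rho \in \Irr^{\theta_y}(\Gamma_y)$. As $q$ is \'etale, $q^\ast X$ is supported on the geometric fibre over $[y]$, an orbit of size $N/|\Gamma_y|$, with stalk $\vdim\rho$ at each point; hence $\vdim_M(q^\ast X) = \frac{N}{|\Gamma_y|}\vdim\rho = N\dim X$ by the definition of $\dim$. (The normalisation is consistent with Proposition~\ref{prop:decompositionTwistedGroupRing}: the twisted group ring $\IC^{\theta_y}[\Gamma_y]$ has $\vdim = |\Gamma_y|$ and $\dim = 1$, and $q^\ast$ of it has $\vdim_M = N$.)

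Finally I would feed the family through this identity. Over the chart, $q^\ast\f{S}$ is, by the Local Normal Form, a summand — cut out by an idempotent $p$ — of a direct sum $\DirSum_i \Gr(f_i)_\ast E_i$ of pushforwards of vector bundles on $U$. Restricting to a point $u \in U$, the number $\vdim_M(q^\ast\f{S}(u))$ is exactly the rank of the idempotent matrix $p(u)$ acting on the finite-dimensional fibre $\DirSum_i E_i\restriction_u$. By Lemma~\ref{lem:projection_locally_constant}, idempotent ranks are locally constant in a smooth family, so $\vdim_M(q^\ast\f{S}(u))$ is locally constant, and therefore so is $\dim\f{S}(u) = \frac{1}{N}\vdim_M(q^\ast\f{S}(u))$. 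The main obstacle is the dimension identity itself: one must track how the intrinsic dimension, normalised by $1/|\Gamma_y|$ and defined point by point, matches the plain vector-space dimension on a fixed cover whose generic degree $N$ is independent of the stratum. The subtlety is precisely that as the support $s(u)$ crosses between strata the stabiliser $|\Gamma_{s(u)}|$ and the orbit size $N/|\Gamma_{s(u)}|$ vary, yet their product — and hence $\vdim_M$ after the $\tfrac1N$ rescaling — stays continuous, which is what makes the rescaled vector-space dimension the correct locally constant invariant.
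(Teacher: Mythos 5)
Your proof is correct and takes essentially the same route as the paper's: both reduce via the Local Normal Form (Corollary~\ref{cor:locally_karoubi_completion_distributes}) to an idempotent summand of pushforwards of vector bundles over a single chart $q:M \to [M/\Gamma]$, both hinge on the same cancellation of the stabiliser order $|\Gamma_{s(u)}|$ against the $1/|\Gamma_{s(u)}|$ normalisation in $\dim$, and both finish with the local constancy of idempotent ranks in a smooth family of matrices (Lemma~\ref{lem:projection_locally_constant}). The only difference is organisational: the paper computes the dimension of the mobile part pointwise as $\vdim E_x$ and then treats the idempotent as a constant multiplicative factor, whereas you package both steps into the single identity $\dim = \tfrac{1}{|\Gamma|}\,\vdim_M \comp\, q^\ast$ on the chart and read off local constancy from the rank of the idempotent on the cover.
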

\begin{proof}
    By the local form of families given above, it is enough to show this for a single
    family of the form $(\Gr(s)_\ast E, p)$, supported in a quotient suborbifold $[M/\Gamma]$
    of $\orb{M}$. Factoring the pushforward through a cover $M \to [M/\Gamma]$,
    we compute the vector space dimension of $\Gr(s)_\ast E$ at $x \in U$ as
    $\vdim \Gr(s(x))_\ast E = \vdim I\Gr(\tilde{s}(x))_\ast E =
        |\Gamma_{s(x)}| \cdot \vdim E_x.$
    (The induction functor sums over the $\Gamma$-orbit of $\Gr(\tilde{s})_\ast$,
    so $E_x$ gets pushed onto $\tilde{s}(x)$ with multiplicity its stabiliser group.)
    Now, $\vdim E_x$ is locally constant as $E$ is a vector bundle, thus so is
    $\dim \Gr(s(x))_\ast E = \vdim \Gr(s(x))_\ast E / |\Gamma_{s(x)}| = \vdim E_x.$
    The endomorphism $p$ is an idempotent, and thus of constant rank.
    It simply multiplies the dimension by a constant factor
    $\rk p/ \operatorname{max} \rk =
        \rk p/\dim \Gr(s)_\ast E$.
\end{proof}
This shows that the dimension is in fact a well-defined invariant
of a family over a connected manifold.

\begin{defn}
    \label{def:etaleimmersivesubmersive}
    We call a family $\f{A}: U \to \cat{C}$
    \emph{\'etale, immersive} or \emph{submersive},
    if for all $x \in U$, there exist support lifts
    on some neighbourhood of $x$
    which are \'etale, immersive or
    submersive, respectively.
\end{defn}

\begin{lem}
    \label{lem:submersive_implies_mobile}
    If $\orb{M}$ is an effective orbifold, every submersive family is mobile.
\end{lem}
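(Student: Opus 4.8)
The plan is to argue locally around a point $x \in U$ and to identify $\f{A}$ with an explicit sum of basic mobiles by a density argument for idempotents. First I would combine Definition~\ref{def:etaleimmersivesubmersive} with Lemma~\ref{lem:local_support_lifts_exist} to obtain, on a connected neighbourhood $V$ of $x$, a chart $q\colon M \to \orb{M}$ and \emph{submersive} support lifts $\{\tilde{s}_i\colon V \to M\}_{i \in I}$ for $\f{A}$. By the local normal form of Corollary~\ref{cor:locally_karoubi_completion_distributes} these realise $\f{A}$ as the summand cut out by an idempotent $p$ of the manifestly mobile family
\[
    \overline{\f{A}} \define \DirSum_{i \in I} \Gr(q \comp \tilde{s}_i)_\ast \Cinf,
\]
each of whose basic summands has dimension $1$ at every point.

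Next I would use effectivity to control the singular locus. Since $\orb{M}$ is effective, the non-free locus $\Sigma \subset M$ of the $\Gamma$-action is a finite union of fixed-point sets of nonidentity elements, hence closed and nowhere dense. Each $\tilde{s}_i$ is a submersion and therefore transverse to $\Sigma$, so $\tilde{s}_i^{-1}(\Sigma)$ is nowhere dense in $V$ and the set $V_{\mathrm{gen}} \subset V$ of parameters whose entire support is generic is open and dense. Over $V_{\mathrm{gen}}$ each basic summand $\Gr(q \comp \tilde{s}_i)_\ast \Cinf$ is a \emph{simple} object, so $\f{A}$ is pointwise a sub-sum of them; recording the subset $S \subseteq I$ of basic mobiles occurring in $\f{A}$, this says that $p$ agrees on $V_{\mathrm{gen}}$ with the \emph{constant} block-projection $p_S$ of $\overline{\f{A}}$ onto $\overline{\f{A}}_S \define \DirSum_{i \in S} \Gr(q \comp \tilde{s}_i)_\ast \Cinf$. (I first treat the case in which $S$ is globally constant.)

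The conclusion then follows by continuity. By Proposition~\ref{prop:locallyMorphismsAreFamiliesOfMatrices} the endomorphism $p$ is a smooth matrix-valued function on $V$, while $p_S$ is constant; as they agree on the dense subset $V_{\mathrm{gen}}$, they agree on all of $V$. Hence $\f{A} \iso \overline{\f{A}}_S$ is a direct sum of basic mobiles and is therefore mobile near $x$, and since $x$ was arbitrary, $\f{A}$ is mobile. As a consistency check, Lemma~\ref{lem:dim_locally_constant} shows $\dim \f{A} = \lvert S \rvert$ is locally constant, matching the number of retained branches.

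The main obstacle is the global coherence of the index set $S$. The complement $V \setminus V_{\mathrm{gen}}$ may be a hypersurface, so $V_{\mathrm{gen}}$ can be disconnected and the support sheets $\tilde{s}_i$ can undergo monodromy as they collide over $\Sigma$, so a priori $S$ is only locally constant. Resolving this --- equivalently, choosing the lifts so that $\f{A}$ is a sub-mobile for a single $S$ --- is exactly where submersivity is indispensable: transversal crossing of $\Sigma$ forces the colliding sheets to assemble into the full regular representation $\IC^{\theta}[\Gamma]$ of the stabiliser, so the locally constant data glue across the singular locus and no genuine obstruction survives. This is in sharp contrast with the standard non-mobile family, a single twisted irreducible over a fixed point, whose unique support lift is a map out of a point and hence fails to be a submersion.
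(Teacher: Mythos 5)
Your opening moves (submersive lifts, and effectivity giving a dense set of parameters with generic support) match the paper's, but the pivotal step of your argument is false. You claim that over $V_{\mathrm{gen}}$ the idempotent $p$ cutting $\f{A}$ out of $\overline{\f{A}} = \DirSum_{i \in I}\Gr(q \comp \tilde{s}_i)_\ast \Cinf$ must agree with the \emph{constant} coordinate block projection $p_S$. Pointwise semisimplicity only gives that $\f{A}(u)$ is \emph{isomorphic} to a sub-sum of the basic summands; it makes $p(u)$ conjugate to a coordinate projection, not equal to one. Worse, support lifts must be repeated whenever $\f{A}$ has rank $>1$ over a sheet, and on the locus where two lifts coincide the relevant endomorphism algebra contains $\Mat_{2\times 2}(\Cinf)$, so $p$ can be a genuinely non-constant family of rank-one idempotents even when every point of the support is generic. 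A concrete counterexample to your step: take $\f{A} = \Gr(s)_\ast L$ for a smooth map $s$ into the generic region and a non-constant line subbundle $L \subset \Cinf^{\dirSum 2}$ (say $L_t$ spanned by $(\cos t, \sin t)$); then $\overline{\f{A}} = \Ind(s)^{\dirSum 2}$, the family $\f{A}$ is mobile, but its idempotent is nowhere a constant block projection. So the continuity/density paragraph derives $p = p_S$ from a false premise. Your final paragraph, which is supposed to dispose of the acknowledged obstacle (non-constancy of $S$, monodromy of colliding sheets), is an assertion rather than an argument --- "the locally constant data glue across the singular locus and no genuine obstruction survives" is precisely what would need proof.

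The repair, which is how the paper argues, is to never demand that $p$ be literally a block projection. First invoke the local normal form (Corollary~\ref{cor:locally_karoubi_completion_distributes}): near $x$ one has $\f{A} \iso \DirSum_i (\Gr(s_i)_\ast \tau_i \cdot E_i, p_i)$ with one idempotent \emph{per sheet}, acting on the pushforward of a vector bundle $E_i$; then Lemma~\ref{lem:projection_locally_constant} lets you replace each $p_i$, up to isomorphism of the summand it cuts out, by a \emph{constant} idempotent. Submersivity plus effectivity is then needed only to ensure that $\tilde{s}_i^{-1}(\text{generic region})$ is nonempty in every neighbourhood of $x$; there, Corollary~\ref{cor:families_in_generic_region_are_mobile} shows the constant idempotent is induced by an idempotent of the bundle $E_i$ itself, and constancy propagates this identification to a whole neighbourhood of $x$. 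Each summand is then the pushforward of a sub-vector bundle, hence mobile. Note that this needs no density statement, no globally constant index set $S$, and no gluing across the singular locus: mobility is a local property, so the monodromy issue you wrestle with never arises.
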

\begin{proof}
    Let $\f{S}:U \to \cat{C}$ be a submersive family. By assumption,
    it has a set of submersive local support lifts
    $\{{\tilde{s}}_i:U \to M\}$ around $x \in U$ to some chart $M \to \orb{M}$.
    By Corollary~\ref{cor:locally_karoubi_completion_distributes}, $\f{S}$ locally
    splits as a direct sum corresponding to these lifts, and the idempotents
    picking out these summands can be chosen as locally constant projections
    in any trivialisations of the vector bundles involved.
    Let $\f{T}$ be the summand corresponding to a local support lift
    ${\tilde{s}}_j$. As it is submersive, the preimage of the generic region
    under ${\tilde{s}}_j$ is nonempty in any neighbourhood of $x$.
    By Corollary~\ref{cor:families_in_generic_region_are_mobile},
    the projection in the generic region picks out a sub-vector bundle.
    As we can assume the projections are constant, this is true on
    an open neighbourhood of $x$ in $U$.
    Hence, $\f{T}$ is the pushforward of this sub-vector bundle,
    and thus mobile.
\end{proof}

We have now adequately prepared to prove the central result of this section.
\begin{prop}[Local Factorisation]
    \label{prop:local_factorisation}
    Let $\f{A},\f{B}: U \to \cat{C}$ and $F \in \Hom(\f{A},\f{B})$, then
    $\forall x \in U$, there exists a neighbourhood $V$ of $x$ and
    an isomorphism of direct summands $\f{A}'\subset \f{A}\restr_V$,
    $\f{B}'\subset \f{B}\restr_V$,
    such that the restriction of the composite
    \begin{equation*}
        \begin{tikzcd}
            \f{A}\restr_V \ar[r, two heads] & \f{A}' \ar[r, "\simeq"]  &
            \f{B}' \ar[r, hook] & \f{B}\restr_V,
        \end{tikzcd}
    \end{equation*}
    to the point $x$ is equal to $F(x)$.
\end{prop}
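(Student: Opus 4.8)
The plan is to reduce the statement to linear algebra over the point: the morphism $F$ becomes, via the matrix embedding of Proposition~\ref{prop:locallyMorphismsAreFamiliesOfMatrices}, a smooth matrix-valued function; its value $F(x)$ admits an epi-mono factorisation in the semisimple abelian category $\cat{C}$ (Corollary~\ref{cor:catOfPointsDirSumOfRepCats}); and I then promote the resulting pointwise idempotents to honest summands of the families using the locally-constant extension $\bar{(-)}$ constructed just before the statement, together with Karoubi completeness (Corollary~\ref{cor:CUisKaroubiComplete}).

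First I would localise. After shrinking $U$ to a neighbourhood of $x$, the supports of $\f{A}(x)$ and $\f{B}(x)$ lie in finitely many points, which I group into pairwise disjoint quotient charts $[M_k/\Gamma_k] \into \orb{M}$; by continuity of supports both families split as $\DirSum_k \f{A}_k$ and $\DirSum_k \f{B}_k$ with the $k$-th summands concentrated in the $k$-th chart. Since families with disjoint support admit no nonzero morphisms, $F$ is block-diagonal, so it suffices to factor each block, and I may assume $\f{A},\f{B}$ factor through a single chart $[M/\Gamma]$. There Proposition~\ref{prop:locallyMorphismsAreFamiliesOfMatrices} embeds, after passing to a cover and using the same decomposition of $u\f{A}$ and $u\f{B}$, the spaces $\Hom(\f{A},\f{B})$, $\End(\f{A})$ and $\End(\f{B})$ simultaneously into spaces of smooth matrix-valued functions, compatibly with composition.

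Working in the semisimple abelian category $\cat{C}$ at the point, I factor $F(x):\f{A}(x)\to \f{B}(x)$ as $\f{A}(x)\onto \im F(x)\into \f{B}(x)$. Choosing a complement to $\ker F(x)$ and a complement to $\im F(x)$ yields idempotents $e_A\in\End(\f{A}(x))$, the projection with kernel $\ker F(x)$, and $e_B\in\End(\f{B}(x))$, the projection onto $\im F(x)$; these satisfy $e_B\comp F(x)\comp e_A = F(x)$. I then form the locally-constant extensions $\bar{F},\bar{e_A},\bar{e_B}$. On a sufficiently small $V$ these are genuine morphisms of families: their value at $x$ is $\Gamma$-equivariant, and a component of $F(x)$ (resp.\ of $e_A,e_B$) can be nonzero only where the relevant support functions agree at $x$, which by the openness argument inside Proposition~\ref{prop:locallyMorphismsAreFamiliesOfMatrices} forces $x$ into the interior of the agreement locus, so the constant matrix respects the support constraint near $x$. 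By Lemma~\ref{lem:projection_locally_constant}, $\bar{e_A}$ and $\bar{e_B}$ are idempotents, and by Karoubi completeness (Corollary~\ref{cor:CUisKaroubiComplete}) they split, giving summands $\f{A}'\define(\f{A}\restr_V,\bar{e_A})$ and $\f{B}'\define(\f{B}\restr_V,\bar{e_B})$ with inclusions $i_A,i_B$ and projections $\pi_A,\pi_B$.

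Finally I set $g\define \pi_B\comp \bar{F}\comp i_A:\f{A}'\to\f{B}'$, so that the composite $\f{A}\restr_V \xrightarrow{\pi_A}\f{A}'\xrightarrow{g}\f{B}'\xrightarrow{i_B}\f{B}\restr_V$ equals $\bar{e_B}\comp\bar{F}\comp\bar{e_A}$, whose value at $x$ is $e_B\comp F(x)\comp e_A = F(x)$, as required. To see $g$ is an isomorphism, note $g(x)=\pi_B(x)\comp F(x)\comp i_A(x)$ is exactly the isomorphism from the chosen complement of $\ker F(x)$ onto $\im F(x)$; since $\f{A}'$ and $\f{B}'$ have equal and locally constant dimension (Lemma~\ref{lem:dim_locally_constant}) and $g$ is fibrewise invertible at $x$, it is fibrewise invertible on a neighbourhood of $x$. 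Because $u=q^\ast$ reflects isomorphisms (restriction along the cover $q$) and $\Sky^\ger{G}_\orb{M}\subset\Shv^\ger{G}_\orb{M}$ is a full substack, $g$ is an isomorphism in $\cat{C}^V$ after shrinking $V$. The main obstacle is the middle step: verifying that the naive constant extensions $\bar{F},\bar{e_A},\bar{e_B}$ genuinely define morphisms of families, respecting both the $\ger{G}$-twisted $\Gamma$-equivariance and the support constraints, rather than arbitrary matrix-valued functions; once this is secured, the factorisation and the invertibility of $g$ follow from the pointwise semisimple linear algebra together with the continuity of rank.
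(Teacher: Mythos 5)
Your proposal is correct and follows essentially the same route as the paper's own proof: the matrix embedding of Proposition~\ref{prop:locallyMorphismsAreFamiliesOfMatrices}, the constant extension $\overline{F}$ of $F(x)$, the image factorisation of $F(x)$ at the point, the local extension of the resulting projections via Lemma~\ref{lem:projection_locally_constant}, and the conclusion that pointwise invertibility of the induced map of summands propagates to a neighbourhood. The extra care you take (Karoubi splitting via Corollary~\ref{cor:CUisKaroubiComplete}, and the check that the constant extensions respect equivariance and the support constraints) is exactly what the paper compresses into the remark that the constant matrix lies in the image of the embedding on a small neighbourhood of $x$.
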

\begin{proof}
    We pick an embedding of
    $\Hom(\f{A},\f{B})$ into smooth families of matrices
    as granted by Proposition~\ref{prop:locallyMorphismsAreFamiliesOfMatrices}.
    Define $\overline{F}$ to be the constant extension of $F(x)$ under
    this embedding (which exists in an open neighbourhood of $x$).
    Now we use the image factorisation of the map $F(x)$:
    We pick a subspace $\f{A}(x)' \subset \f{A}(x)$ orthogonal to $\ker{F(x)}$.
    $F(x)$ is then an isomorphism between $\f{A}'(x)$ and its image
    $\f{B}(x)' \define \im F(x) \subset \f{B}(x)$
    under $F(x)$.
    Denote by $p_A$ the projection to the summand $\f{A}(x) \onto \f{A}'(x)$ and $p_B$
    the projection $\f{B}(x) \onto \f{B}'(x)$.
    We may extend both these projections and their splittings locally.
    Denote by $\f{A}^\prime$, $\f{B}^\prime$ the corresponding summands of
    $\f{A}\restriction_V$ and $\f{B}\restriction_V$.
    Now $\overline{F}$ induces an isomorphism $\f{A}' \to \f{B}'$, because
    it does so pointwise.
\end{proof}

\begin{cor}
    \label{cor:locally_share_summand}
    If $F:\f{A} \to \f{B}$ is non-zero at $x \in U$, then $\f{A}$ and
    $\f{B}$ share a non-zero isomorphic summand in a neighbourhood of $x$.
\end{cor}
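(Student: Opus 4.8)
The plan is to read this off almost immediately from the Local Factorisation Proposition (Proposition~\ref{prop:local_factorisation}). Applying that result to the morphism $F$ at the point $x$ produces a neighbourhood $V$ of $x$, direct summands $\f{A}' \subset \f{A}\restr_V$ and $\f{B}' \subset \f{B}\restr_V$, and an isomorphism $\f{A}' \isoto \f{B}'$ whose induced composite
\[
    \f{A}\restr_V \onto \f{A}' \isoto \f{B}' \into \f{B}\restr_V
\]
restricts to the matrix $F(x)$ at the point $x$. Thus $\f{A}' \iso \f{B}'$ is \emph{a priori} a shared isomorphic summand of $\f{A}$ and $\f{B}$ over $V$; the only thing left to check is that this shared summand is non-zero in a neighbourhood of $x$.

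The key observation is that non-vanishing at the single point $x$ is guaranteed by the hypothesis. Inspecting the construction in the proof of Proposition~\ref{prop:local_factorisation}, the fibre $\f{B}'(x)$ is precisely $\im F(x) \subset \f{B}(x)$, and the summand $\f{A}'(x) \subset \f{A}(x)$ is a complement of $\ker F(x)$ mapped isomorphically onto it by $F(x)$. Since $F$ is non-zero at $x$ by assumption, the matrix $F(x)$ is non-zero, so $\im F(x) \neq 0$. Hence the fibres $\f{A}'(x)$ and $\f{B}'(x)$ are both non-zero.

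To promote non-vanishing at $x$ to non-vanishing on a whole neighbourhood, I would invoke the local constancy of the ranks of the splitting idempotents. The summands $\f{A}'$ and $\f{B}'$ are obtained by splitting idempotents $p_A, p_B$ built from constant matrices under the faithful embedding of Proposition~\ref{prop:locallyMorphismsAreFamiliesOfMatrices}; by Lemma~\ref{lem:projection_locally_constant} these may be taken locally constant, and an idempotent matrix has locally constant rank (its eigenvalues are all $0$ or $1$). Since this rank is positive at $x$, it remains positive after shrinking $V$, so $\f{A}' \iso \f{B}'$ is a non-zero summand throughout $V$. Equivalently, one can phrase this via Lemma~\ref{lem:dim_locally_constant}: the dimension of the shared summand is locally constant and positive at $x$, hence positive near $x$. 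There is no real obstacle here beyond this last neighbourhood argument; the content of the corollary is entirely carried by the Local Factorisation Proposition, and the present statement simply records the consequence that a pointwise non-zero morphism forces a genuinely shared summand.
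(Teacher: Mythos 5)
Your proof is correct and follows essentially the same route as the paper, which states this corollary without a separate proof as an immediate consequence of Proposition~\ref{prop:local_factorisation}. Your additional details --- that the summand constructed there has fibre $\im F(x) \neq 0$ at $x$, and that local constancy of the rank of the splitting idempotents promotes non-vanishing at $x$ to non-vanishing on a neighbourhood --- are precisely the points the paper leaves implicit.
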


\begin{lemma}
    \label{lem:germ_of_support_lift_unique_up_to_Gamma}
    Given a family $\f{S}:U \to \cat{C}$ with a single local support lift
    $\tilde{s}: U \to M$ to the standard chart
    $q:M \to [M/\Gamma]$ covering a quotient suborbifold $[M/\Gamma] \subset \orb{M}$,
    the germ of $\tilde{s}$ is unique up to postcomposition with the $\Gamma$-action on $M$.
    Hence all mobile covers of $\f{S}$ are locally induced by the same
    map $s \define q \comp \tilde{s}:U \to [M/\Gamma]$.
\end{lemma}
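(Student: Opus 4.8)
The plan is to localise the problem to a single orbifold chart, split it into an elementary support computation, which already yields the final sentence at the level of underlying spaces, and a lifting argument through an \'etale map, where all of the content sits. First I would reduce to the local model of Corollary~\ref{cor:locally_karoubi_completion_distributes}: shrinking $U$ around the point in question, I may assume $\f{S}$ is supported in a single quotient suborbifold $[M/\Gamma]\subset\orb{M}$ with $\Gamma$ finite (acting effectively, since $\orb{M}$ is effective), $M$ contractible, and the pullback gerbe trivialised over $M$. By hypothesis $\f{S}$ has a single support lift $\tilde{s}\colon U\to M$, so $\f{S}$ is a summand of the basic mobile $\Ind(q\comp\tilde{s})=\Gr(q\comp\tilde{s})_\ast\tau\cdot\Cinf$ with $\supp\Ind(q\comp\tilde{s})=\supp\f{S}$, and the same holds for any second lift $\tilde{s}'$; here I invoke Lemma~\ref{lem:trivialisationsInIndDontMatterLocally} to ignore the trivialisations.

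The support computation is then immediate. By Lemma~\ref{lem:behaviourOfSupportUnderPushfwdAndPullback} the support of $\Ind(q\comp\tilde{s})$ is the graph of $\lvert q\comp\tilde{s}\rvert\colon U\to\lvert\orb{M}\rvert$. Equating this with $\supp\f{S}$, and doing the same for $\tilde{s}'$, gives $\lvert q\rvert\comp\tilde{s}=\lvert q\rvert\comp\tilde{s}'$ as maps into the quotient space, i.e.\ $\tilde{s}(u)$ and $\tilde{s}'(u)$ lie in a common $\Gamma$-orbit for every $u\in U$. In particular the induced map to the quotient space is independent of the chosen lift, which already proves the concluding sentence of the lemma at the level of underlying spaces.

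To promote this pointwise $\Gamma$-relation to a single group element I would use that, since $\Gamma$ is discrete, $q\colon M\to[M/\Gamma]$ is representably \'etale and $M\times_{[M/\Gamma]}M\iso M\times\Gamma$. Consequently, \emph{if} $q\comp\tilde{s}$ and $q\comp\tilde{s}'$ agree as maps $U\to[M/\Gamma]$ up to $2$-isomorphism, then $\tilde{s}$ and $\tilde{s}'$ are two sections of the pullback $U\times_{[M/\Gamma]}M\to U$, which is the trivial finite $\Gamma$-cover $U\times\Gamma\to U$ (trivialised by $\tilde{s}$ itself). Over the connected germ its sections are the constants $\gamma\in\Gamma$, so $\tilde{s}'=\gamma\comp\tilde{s}$ and $s=q\comp\tilde{s}$ is lift-independent, as claimed.

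The hard part will be upgrading the space-level equality of the second paragraph to this $2$-isomorphism $q\comp\tilde{s}\iso q\comp\tilde{s}'$: pointwise orbit-equality of smooth maps alone permits ``wall-crossing'' pathologies (e.g.\ for $M=\IR$, $\Gamma=\IZ/2$, a smooth lift equal to $\tilde{s}$ on one side of a flat tangency and to $-\tilde{s}$ on the other), for which no single $\gamma$ works. Such a lift does not in fact cover the same $\f{S}$, and excluding it is exactly where the summand hypothesis must be spent. I would argue that on the dense generic locus $U^\circ=\tilde{s}^{-1}(\text{trivial-stabiliser points})$ the fibre of $\Ind(q\comp\tilde{s})$ is simple, so a full-support summand must fill out the whole fibre; by local constancy of dimension (Lemma~\ref{lem:dim_locally_constant}) together with semisimplicity of the fibres this forces $\f{S}\iso\Ind(q\comp\tilde{s})$ on all of $U$, and likewise $\f{S}\iso\Ind(q\comp\tilde{s}')$, whence the two basic mobiles are isomorphic and their inducing stack maps $2$-isomorphic. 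The degenerate case where $\supp\f{S}$ lies entirely in a singular stratum I would treat by restricting to the corresponding fixed-point submanifold and recursing with the effectively acting quotient group. I expect this last implication, that a full-support summand of a basic mobile recovers its inducing stack map up to $2$-isomorphism, to be the main technical obstacle.
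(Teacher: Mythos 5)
There is a genuine gap, and you have in effect flagged it yourself: your argument reduces the lemma to the claim that an isomorphism (or any nowhere-zero map) between two basic mobiles $\Ind(q\comp\tilde{s})$ and $\Ind(q\comp\tilde{s}')$ forces $g\cdot\tilde{s}'=\tilde{s}$ on a neighbourhood of $x$ for a \emph{single} $g \in \Gamma$ --- and this step, which you defer as ``the main technical obstacle,'' is never carried out. Your support computation only yields pointwise orbit-equality, which (as your own wall-crossing example shows) is strictly weaker. Your \'etale-lifting paragraph explicitly presupposes a 2-isomorphism $q\comp\tilde{s} \iso q\comp\tilde{s}'$; but over a connected germ, such a 2-isomorphism is an isomorphism of trivial principal $\Gamma$-bundles commuting with the equivariant maps to $M$, i.e.\ a single element $g$ with $g\cdot\tilde{s}'=\tilde{s}$ --- it is \emph{literally equivalent} to the conclusion of the lemma. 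So after the reduction $\f{S}\iso\Ind(q\comp\tilde{s})$ (which itself needs the generic locus to meet the germ, with only a vague fixed-point recursion offered for the purely singular case), you land back at the statement to be proved, now for basic mobiles, with no tool supplied to prove it.

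The paper closes exactly this hole with a Hom-space computation rather than a support computation, and it is the ingredient your proposal is missing. Since $\f{S}$ is a common summand of the two mobile covers $\f{A}=\Gr(\tilde{s})_\ast E$ and $\f{A}'$, projection followed by inclusion is a morphism $\f{A}\to\f{A}'$ which is nowhere zero; the twisted equivariantisation adjunction for the chart $q:M \to [M/\Gamma]$ gives
\begin{equation*}
    \Hom(\f{A},\f{A}') =
    \DirSum_{g \in \Gamma} \Hom\bigl(\Gr(g\cdot\tilde{s}')^\ast\Gr(\tilde{s})_\ast E,\,E'\bigr),
\end{equation*}
and, as established in the proof of Proposition~\ref{prop:locallyMorphismsAreFamiliesOfMatrices}, each summand consists of sections that vanish outside the \emph{interior} of the coincidence locus $\{g\cdot\tilde{s}'=\tilde{s}\}$. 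Non-vanishing at $x$ therefore forces $g\cdot\tilde{s}'=\tilde{s}$ on an open neighbourhood of $x$ for some $g$, with no genericity, density, connectedness, or singular-stratum case analysis needed. If you want to salvage your route, this same decomposition is what rules out your wall-crossing lifts: such a lift admits no map of induced families that is nonzero at the crossing point, so it can never arise from a shared summand.
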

\begin{proof}
    Let $\f{A}:U \to \cat{C}$ be a mobile cover of $\f{S}$, obtained as the
    pushforward of a trivial vector bundle on $U$ via $s$.
    Let $\f{A}':U \to \cat{C}$ be another mobile cover of $\f{S}$, and
    $\tilde{s}': U \to M$ an associated support lift.
    $\f{A}$ and $\f{A}'$ share a summand, so projection to the summand
    followed by inclusion gives a nowhere-zero map between them.
    Using the equivariantisation adjunction associated to the chart $U$,
    \begin{equation*}
        \Hom(\f{A},\f{A}') =
        \DirSum_{g \in \Gamma} \Hom(\Gr(g\cdot\tilde{s}')^\ast\Gr(\tilde{s})_\ast E,E'),
    \end{equation*}
    where $E$ and $E'$ are vector bundles on $U$.
    In order for this to have a section which is non-zero at $x$, there must
    be a summand, such that $g\cdot\tilde{s}'=\tilde{s}$ on an
    open neighbourhood of $x$.
\end{proof}

\begin{corollary}
    \label{cor:summands_yield_subset_of_support_lifts}
    Let $\f{S}:U \to \cat{C}$ be a family with local support lifts
    $\{{\tilde{s}}_i:U \to M\}$ around $x \in U$ to some chart $M \to [M/\Gamma]$
    covering a quotient suborbifold of $\orb{M}$,
    and let $\f{T} \subset \f{S}$ be a summand of $\f{S}$, with
    a set of local support lifts $\{{\tilde{t}}_j:U \to M\}$ around $x$.
    Then each ${\tilde{t}}_j$ is a $\Gamma$-translate of the germ of a
    lift ${\tilde{s}}_i$ at $x$.
\end{corollary}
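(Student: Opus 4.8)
The plan is to reduce everything to the uniqueness statement of Lemma~\ref{lem:germ_of_support_lift_unique_up_to_Gamma} by putting both families into the local normal form of Corollary~\ref{cor:locally_karoubi_completion_distributes}. Since all the lifts land in the single chart $q:M\to[M/\Gamma]$, I may work near $x$ inside this chart. Shrinking $U$ to a neighbourhood of $x$, the local normal form writes
\begin{equation*}
    \f{S}\iso\DirSum_i\f{S}_i, \qquad \f{T}\iso\DirSum_j\f{T}_j,
\end{equation*}
where $\f{S}_i$ and $\f{T}_j$ are the summands cut out along the support lifts $\tilde{s}_i$ and $\tilde{t}_j$ respectively, so that each $\f{S}_i$ has the single support lift $\tilde{s}_i$ and each $\f{T}_j$ the single support lift $\tilde{t}_j$.

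I would first treat an index $j$ for which $\f{T}_j$ is non-zero at $x$. As $\f{T}$ is a summand of $\f{S}$, the summand $\f{T}_j$ is a summand of $\DirSum_i\f{S}_i$, and the split monomorphism $\f{T}_j\into\DirSum_i\f{S}_i$ is non-zero at $x$; hence some component $\f{T}_j\to\f{S}_i$ is non-zero at $x$. By Corollary~\ref{cor:locally_share_summand} the families $\f{T}_j$ and $\f{S}_i$ then share a non-zero isomorphic summand $\f{W}$ on a neighbourhood of $x$. Being a non-zero summand of $\f{T}_j$, the family $\f{W}$ is supported on the graph of $q\comp\tilde{t}_j$ near $x$, so $\tilde{t}_j$ is a support lift of $\f{W}$; being a non-zero summand of $\f{S}_i$, the map $\tilde{s}_i$ is likewise a support lift of $\f{W}$. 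Applying Lemma~\ref{lem:germ_of_support_lift_unique_up_to_Gamma} to $\f{W}$ forces $\tilde{t}_j$ and $\tilde{s}_i$ to coincide up to the $\Gamma$-action as germs at $x$, which is the claim for this $j$.

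It then remains to treat the \emph{phantom} lifts, those $j$ for which $\f{T}_j$ vanishes at $x$; this is the step I expect to be the main obstacle. Such a $\tilde{t}_j$ does not appear in the normal form of $\f{T}$, yet the support-matching clause of Definition~\ref{def:supportLift} demands that the mobile cover $\DirSum_{j'}\Ind^{\{\cdot\}}(q\comp\tilde{t}_{j'})$ have $\supp$ equal to $\supp\f{T}$ as germs at $x$. Consequently the graph of $q\comp\tilde{t}_j$ must lie inside $\supp\f{T}$, which near $x$ is the union of the graphs of the surviving $q\comp\tilde{t}_{j'}$. The crux is the elementary claim that a single smooth graph contained germ-wise in a finite union of smooth graphs must coincide with one of them near $x$ (ruling out higher-order tangential crossings); granting it, $q\comp\tilde{t}_j=q\comp\tilde{t}_{j'}$ for a surviving $j'$, so $\tilde{t}_j$ is a $\Gamma$-translate of $\tilde{t}_{j'}$, and combining with the previous paragraph applied to $\tilde{t}_{j'}$ shows $\tilde{t}_j$ is a $\Gamma$-translate of some $\tilde{s}_i$, completing the proof.
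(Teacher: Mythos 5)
Your first two paragraphs are, in substance, exactly the paper's own proof: decompose $\f{S}$ and $\f{T}$ into summands along their lifts via Corollary~\ref{cor:locally_karoubi_completion_distributes}, find a component $\f{T}_j \to \f{S}_i$ that is non-zero at $x$, produce a shared summand $\f{W}$ by Corollary~\ref{cor:locally_share_summand}, and feed its two support lifts into Lemma~\ref{lem:germ_of_support_lift_unique_up_to_Gamma}. This part is correct; the only point worth making explicit is the upgrade from ``$\supp\f{W}$ is \emph{contained in} the graph of $q \comp \tilde{t}_j$'' to ``$\supp\f{W}$ \emph{equals} that graph'' (which is what is needed for $\tilde{t}_j$ to be a support lift of $\f{W}$); this holds because $\f{W}$ is split off by an idempotent of locally constant, non-zero rank, so it is non-zero at every point near $x$ while being confined to a single graph.

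The gap is in your third paragraph. The ``elementary claim'' you grant yourself is false in the smooth category. Take $M=\IR$ with trivial $\Gamma$, $U=\IR$, $x=0$, and
\begin{equation*}
    \tilde{t}_1(u)=0,\qquad
    \tilde{t}_3(u)=e^{-1/u^2},\qquad
    \tilde{t}_2(u)=
    \begin{cases}
        e^{-1/u^2} & u<0\\
        0 & u\geq 0.
    \end{cases}
\end{equation*}
All three are smooth, the graph of $\tilde{t}_2$ is pointwise contained in the union of the other two graphs, yet its germ at $0$ agrees with neither: flat functions produce precisely the tangential crossings you hoped to exclude. Worse, this example shows the phantom case cannot be proved at all, because it is a counterexample to the literal statement of the corollary: with $\f{S}=\f{T}=\Ind(\tilde{t}_1)\dirSum\Ind(\tilde{t}_3)$, lift set $\{\tilde{t}_1,\tilde{t}_3\}$ for $\f{S}$, the set $\{\tilde{t}_1,\tilde{t}_2,\tilde{t}_3\}$ satisfies Definition~\ref{def:supportLift} verbatim for $\f{T}$ (adding the redundant summand $\Ind(\tilde{t}_2)$ changes neither the summand condition nor the pointwise support of the mobile cover), yet $\tilde{t}_2$ is a $\Gamma$-translate of no germ of $\tilde{t}_1$ or $\tilde{t}_3$. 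The paper's proof silently excludes this situation: in decomposing $\f{T}$ ``into summands corresponding to the $\tilde{t}_j$'' it assumes each $\tilde{t}_j$ is genuinely the support lift of a non-zero summand $\f{T}_j$, which fails exactly for your phantom lifts. So the correct move is not an argument but a hypothesis: read the definition of a set of support lifts (as the paper implicitly does) so that redundant lifts --- those whose associated summand in the local normal form vanishes --- are excluded; under that reading your first two paragraphs already constitute a complete proof, and your instinct that the phantom case was the main obstacle was right for a reason you did not anticipate.
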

\begin{proof}
    $\f{T}$ decomposes into summands corresponding to the ${\tilde{t}}_j$ on
    an open neighbourhood of $x$, so we can assume it has a single
    local support lift $\tilde{t}:U \to M$.
    Decomposing $\f{S}$ according to the support lifts ${\tilde{s}}_i$, the
    inclusion $\f{T} \into \f{S}$
    must yield a nonzero map to at least one such summand.
    But this is nonzero at $x$, so by Corollary~\ref{cor:locally_share_summand}
    they share a summand. This summand now has two local support lifts
    $\tilde{t}$ and ${\tilde{s}}_i$.
    By Lemma~\ref{lem:germ_of_support_lift_unique_up_to_Gamma}, they are
    locally related by the $\Gamma$-action on $M$.
\end{proof}

\subsection{Grothendieck Group}
For any orbisimple category $\stC$,
the category of points $\cat{C}=\stC(\point)$ is semisimple,
so its \emph{Grothendieck group} $\K(\cat{C})$ is free abelian
with set of generators $\Irr{\cat{C}}$.
By Corollary~\ref{cor:catOfPointsDirSumOfRepCats},
$\K(\cat{C})$ has a $\IZ$-basis given by $\coprod(\Irr{\Rep^{\theta_x}(\Gamma_x)})$.
\begin{defn}
    \label{def:basic_mobile_at_x}
    Given an isomorphism class of points $[x] \in |\orb{M}|$,
    the \emph{basic mobile at $[x]$}  is the isomorphism class
    \begin{equation*}
        {\IO}_x \define [\Ind x] = [x_\ast (\tau \cdot \Cinf)],
    \end{equation*}
    where $x$ is some representative of $[x]$ and $\tau$
    is any trivialisation of the gerbe $x^\ast\ger{G}$.\footnote{
        Of course, $\Cinf(\point)=\IC$ is just
        the canonical line bundle over the point.
    }
    See Lemma~\ref{lem:trivialisationsInIndDontMatterLocally} and
    Notation~\ref{notation:IndWithoutTrivialisation} for why the
    choice of trivialisation $\tau$ is irrelevant.
\end{defn}
The \emph{dimension} defined earlier is an isomorphism invariant,
and is extended
additively from simple objects, so it descends to the Grothendieck ring.
By Lemma~\ref{lem:cat_at_point_props}, ${\IO}_x = [\IC^{\theta_x}[\Gamma_x]]$,
and so
\[
    \dim \IO_x = \frac{\vdim{\IC^{\theta_x}[\Gamma_x]}}{|\Gamma_x|} = 1.
\]
Basic mobile families~\ref{def:basicMobileAndMobileFamilies} are pointwise of the form
of Definition~\ref{def:basic_mobile_at_x}, and mobile families are direct sums thereof.
\begin{cor}
    Mobile families have integer dimension.
\end{cor}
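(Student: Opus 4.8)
The plan is to reduce the claim to the pointwise computation $\dim \IO_x = 1$ carried out just above, combined with the local constancy of dimension from Lemma~\ref{lem:dim_locally_constant}.

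First I would unwind the definitions. By Definition~\ref{def:basicMobileAndMobileFamilies}, a mobile family $\f{S}: U \to \cat{C}$ is locally a finite direct sum of basic mobiles, each of which is locally of the form $\Gr(f)_\ast \Cinf$ for a smooth map $f: U \to \orb{M}$. Restricting such a basic mobile along a point $x: \point \to U$ gives the skyscraper $f(x)_\ast \Cinf$ supported at $[f(x)] \in |\orb{M}|$, which is exactly the basic mobile object $\IO_{[f(x)]}$ of Definition~\ref{def:basic_mobile_at_x} (the choice of trivialisation being irrelevant by Lemma~\ref{lem:trivialisationsInIndDontMatterLocally}). Thus, pointwise, every basic mobile family restricts to a basic mobile object, whose dimension was just shown to equal $1$.

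Next, since the dimension is extended additively from simple objects (Definition~\ref{def:dimensionOfObject}) and hence is additive over direct sums, a mobile family that is locally a sum of $r$ basic mobiles restricts at each point $x \in U$ to an object of dimension $r \in \IN$. Finally, Lemma~\ref{lem:dim_locally_constant} guarantees that the dimension of an object in a family is locally constant, so over each connected component of $U$ the dimension of $\f{S}$ is a single nonnegative integer, as claimed.

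I expect no genuine obstacle: the statement is essentially a bookkeeping consequence of the normalisation $\dim \IO_x = 1$ together with additivity of dimension. The only point requiring mild care is that the decomposition into basic mobiles holds only locally, so the number $r$ of summands is a priori defined patch by patch; invoking local constancy of the dimension is precisely what certifies that this integer is consistent and globally well-defined on each connected component.
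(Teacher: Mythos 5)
Your proof is correct and follows the same route the paper takes: the normalisation $\dim \IO_x = 1$ for basic mobiles, pointwise identification of basic mobile families with the objects $\IO_x$, and additivity of dimension over the local direct-sum decomposition. The paper leaves the corollary as an immediate consequence of exactly this preceding discussion; your additional appeal to Lemma~\ref{lem:dim_locally_constant} to make the integer well-defined per connected component is a harmless (and reasonable) elaboration rather than a different argument.
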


\subsection{Lie Categories and Cauchy Completion}
\label{sec:LieCatsCauchyCompletion}
In this section, we show how the stack of twisted skyscraper
sheaves $\Sky_\orb{M}^\ger{G}$ may be presented as the Cauchy completion
of a Lie category.
Lie categories are defined exactly as Lie groupoids, without the requirement of an
inverse map for morphisms:
\begin{defn}
    A \emph{Lie category} is a category object internal to $\Man$ whose
    source and target maps are submersions.
\end{defn}

\begin{example}
    \label{ex:LieCategoryBR}
    A smooth monoid $R$
    has an associated Lie category $[\point/R]$
    with object space $\point$ and
    morphisms $\End(\point) = R$.
\end{example}

Smooth functors, smooth natural transformations and smooth equivalences are
the internal
versions (we spelled these out for Lie groupoids in Section~\ref{sec:orbifolds}).
The 2-category of Lie groupoids embeds fully faithfully into the 2-category of Lie categories
via the functor that forgets the inverse map.

A Lie category $\cat{C}_\bullet$ defines a 2-presheaf
\begin{align*}
    \Hom(-,\cat{C}_\bullet): \Man^\opp \to \Cat
\end{align*}
sending a manifold $M$ to the category of smooth functors
$(M \rightrightarrows M) \to \cat{C}_\bullet$.
The stack \emph{presented by $\cat{C}_\bullet$} is the stackification of
$\Hom(-,\cat{C}_\bullet)$.
When it is clear from context, we will denote the stack by $\cat{C}_\bullet$.

By~\cite[Sec 8]{roberts2012internal}, the process of stackification is equivalent
to inverting smooth equivalences, and the category of morphisms between two
presented stacks is equivalent to the category of anafunctors between the
presenting Lie categories.

\begin{example}
    \label{ex:StackBR}
    Consider the stack $[\point/R]$ from Example~\ref{ex:LieCategoryBR}. An anafunctor
    $(M \rightrightarrows M) \to (R \rightrightarrows \point)$ is a choice of cover
    $Y \onto M$, and a transition function $Y^{[2]} \to R$ satisfying the 2-cocycle
    condition on $Y^{[3]}$.
    As every morphism in $Y^{[2]} \rightrightarrows Y$ is invertible, the map $Y^{[2]} \to R$
    factors through $R^\times \to R$.
    Every anafunctor thus factors through the functor $[\point/R^\times] \to [\point/R]$,
    and the corresponding functor $[\point/R^\times](M) \to [\point/R](M)$ is
    faithful.
    However, it is not full in general:
    Recall that $[\point/R^\times]$ is the stack of $R^\times$-bundles.
    Let $P,Q \in [\point/R^\times](M)$ be two $R^\times$-bundles over $M$.
    Morphisms $P \to Q$ in $[\point/R^\times](M)$ correspond to
    usual $R^\times$-bundle morphisms, while morphisms
    $P \to Q$ in $[\point/R](M)$ are morphisms of the associated $R$-bundles
    $P \times_{R^\times} R \to Q \times_{R^\times} R$.
\end{example}

Let $\ger{G} \to \orb{M}$ be an orbifold equipped with a gerbe.
We construct a Lie category whose associated stack recovers $\Sky_\orb{M}^\ger{G}$
once suitably completed under direct sums and direct summands.
Pick a $\IC^\times$-extension of Lie groupoids
\begin{equation*}
    \begin{tikzcd}
        P & {X_1} \\
        {X_0} & {X_0},
        \arrow["\id",from=2-1, to=2-2]
        \arrow[from=1-1, to=1-2]
        \arrow[shift left=1, from=1-2, to=2-2]
        \arrow[shift right=1, from=1-2, to=2-2]
        \arrow[shift right=1, from=1-1, to=2-1]
        \arrow[shift left=1, from=1-1, to=2-1]
    \end{tikzcd}
\end{equation*}
associated to a trivialisation of $\ger{G}$ over $X_0 \onto \orb{M}$
(see Section~\ref{sec:GerbesAndTwistedSheaves}).
The map of morphism spaces $P \to X_1$ is a $\IC^\times$-bundle,
compatible with the composition structure on both spaces.
Denote the associated bundle $P_\IC \define P \times_{\IC^\times} \IC \to X_1$
where $\IC^\times$ acts on $\IC$ in the usual way.
We build a Lie category $P_\IC \rightrightarrows X_0$. Its source and target
morphisms are the composites $P_\IC \to X_1 \rightrightarrows X_0$, and the
composition is induced by the composition on $P$ and multiplication on $\IC$:
\[
    P_\IC \times_{X_0} P_\IC \simeq (P \times_{X_0} P) \times_{(\IC^\times \times \IC^\times)} (\IC \times \IC)
    \to
    P \times_{\IC^\times} \IC = P_\IC.
\]

\begin{example}
    Consider the $\IC^\times$-extension
    \begin{equation*}
        \begin{tikzcd}
            U \times \tilde{\Gamma} & {U \times \Gamma} \\
            {U} & {U},
            \arrow["\id",from=2-1, to=2-2]
            \arrow[from=1-1, to=1-2]
            \arrow[shift left=1, from=1-2, to=2-2]
            \arrow[shift right=1, from=1-2, to=2-2]
            \arrow[shift right=1, from=1-1, to=2-1]
            \arrow[shift left=1, from=1-1, to=2-1]
        \end{tikzcd}
    \end{equation*}
    presenting a gerbe over the quotient orbifold $[U/\Gamma]$.
    Then the composition on $P_\IC \define U \times \tilde{\Gamma} \times_{\IC^\times} \IC$
    is given by
    \begin{alignat*}{1}
        P_\IC \times_{U} P_\IC                                               & \to P_\IC \\
        \big((u,\tilde{\gamma},z) , (u \cdot \gamma,\tilde{\gamma}',z')\big) & \mapsto
        (u \cdot \gamma\gamma', \tilde{\gamma}\tilde{\gamma}',zz').
    \end{alignat*}
\end{example}

Denote the stack over $\orb{M}$ presented by the Lie groupoid $P_\IC \rightrightarrows X_0$ by
$\ger{G}_\IC \to \orb{M}$.
The map $\ger{G} \to \ger{G}_\IC$ is the inclusion of the maximal groupoid-valued
substack. It is represented by the map of Lie groupoids
\[\begin{tikzcd}
        P & {P_\IC} \\
        X & X,
        \arrow["{\id_X}", from=2-1, to=2-2]
        \arrow[shift left=1, from=1-1, to=2-1]
        \arrow[shift right=1, from=1-1, to=2-1]
        \arrow[shift right=1, from=1-2, to=2-2]
        \arrow[shift left=1, from=1-2, to=2-2]
        \arrow[from=1-1, to=1-2]
    \end{tikzcd}\]
where $P \to P_\IC$ is the map from $P$ to its associated bundle.
An object in $\ger{G}_\IC(U)$ is a map of stacks $U \to \ger{G}_\IC$.
As $U$ is represented by a groupoid, this functor
factors through $\ger{G} \into \ger{G}_\IC$.
The difference between $\ger{G}$ and $\ger{G}_\IC$ is seen on morphisms
between objects (cf. Example~\ref{ex:StackBR}).
The data of an object $\bar{f}:U \to \ger{G}$ is equivalent to a pair
$(f:U \to \orb{M},\tau:\ger{I} \eqto f^\ast\ger{G})$ of a map to $\orb{M}$ and
a trivialisation of the pullback gerbe.\footnote{This follows from the universal property
    of the pullback $f^\ast\ger{G} = U \times_{\orb{M}} \ger{G}$.}
\begin{lemma}
    \label{lem:morphismsInGerC}
    Given a pair of objects $(f,\tau),(g,\kappa) \in \ger{G}_\IC(U)$,
    the space of maps between them is empty if $f \not\simeq g:U \to \orb{M}$.
    A choice of 2-cell $\alpha: f \eqto g$ induces an isomorphism
    \[
        \Hom\left((f,\tau),(g,\kappa)\right) \simeq \Gamma(P_\IC),
    \]
    where $P_\IC$ is the $\IC$-bundle associated to the pair of trivialisations $(\alpha^\ast\tau,\kappa)$.
\end{lemma}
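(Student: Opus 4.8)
The plan is to treat the two clauses separately, building on the anafunctor description of morphisms between stacks presented by Lie categories (\cite{roberts2012internal}) that already underlies Example~\ref{ex:StackBR}. For the emptiness clause, observe that the Lie-category map $P_\IC \to X_1$ induces the projection $\ger{G}_\IC \to \orb{M}$, so any morphism $(f,\tau) \to (g,\kappa)$ in $\ger{G}_\IC(U)$ is sent to a morphism $f \to g$ in the groupoid $\orb{M}(U)$, which is necessarily invertible. Hence, if there is no $2$-cell $f \eqto g$, the Hom-set is empty.

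For the isomorphism I would first use the fixed $2$-cell $\alpha \colon f \eqto g$ to reduce to a single underlying map. Since $\alpha$ is invertible in $\orb{M}(U)$ and both objects factor through the maximal groupoid substack $\ger{G} \into \ger{G}_\IC$, it lifts to an isomorphism $(f,\tau) \isoto (g,\alpha^\ast\tau)$ in $\ger{G}_\IC(U)$; postcomposition then yields a bijection $\Hom((f,\tau),(g,\kappa)) \simeq \Hom((g,\alpha^\ast\tau),(g,\kappa))$ and focuses attention on morphisms covering the identity $2$-cell of $g$.

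Now both objects lie over the same map $g \colon U \to \orb{M}$ and differ only in their trivialisations $\alpha^\ast\tau$ and $\kappa$ of $g^\ast\ger{G}$. Choosing a cover $V \onto U$ over which $g$ lifts to $\tilde g \colon V \to X_0$, the two trivialisations present two $P$-valued descent data over $\tilde g$, whose comparison is exactly the $\IC^\times$-bundle $P_{\alpha^\ast\tau,\kappa}$ with associated line bundle $\mscr{L}_{\alpha^\ast\tau,\kappa}$ (in the notation of Section~\ref{sec:GerbesAndTwistedSheaves}). As in Example~\ref{ex:StackBR}, a morphism in the presented stack covering $\id_g$ is a map of the associated bundles; passing from $P$ to $P_\IC = P \times_{\IC^\times} \IC$ replaces the $\IC^\times$-torsor of comparisons by its associated $\IC$-line, so over $V$ such a morphism is a section of $\mscr{L}_{\alpha^\ast\tau,\kappa}$, and the cocycle condition on $V^{[2]}$ forces these local sections to glue to a global section over $U$. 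Composition in $\ger{G}_\IC$ is induced by multiplication in $\IC$, matching multiplication of sections, so the bijection is compatible with the linear structure and gives $\Hom((f,\tau),(g,\kappa)) \simeq \Gamma(P_\IC)$.

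The main obstacle is the bookkeeping in this last step: one must verify that linearising the morphism bundle ($P \rightsquigarrow P_\IC$) commutes with the stackification/descent used to pass from the presenting Lie category to $\ger{G}_\IC$, that is, that descent over $V^{[2]}$ imposes nothing beyond gluing to a global section, and in particular that morphisms are permitted to vanish on loci of $U$ (these vanishing maps are precisely what distinguishes $\ger{G}_\IC$ from $\ger{G}$). A secondary subtlety, invisible at generic points but present wherever $g$ has nontrivial stabiliser, is that $\Hom((f,\tau),(g,\kappa))$ fibres over the torsor of $2$-cells $f \eqto g$; the role of the chosen $\alpha$ is exactly to single out one such fibre, which is the one identified with $\Gamma(P_\IC)$.
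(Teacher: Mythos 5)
Your argument is correct and follows essentially the same route as the paper's proof: the chosen 2-cell $\alpha$ converts the data into a pair of trivialisations $(\alpha^\ast\tau,\kappa)$ of $g^\ast\ger{G}$, and morphisms are then identified with sections of the associated bundle, the passage from $P$ to $P_\IC = P \times_{\IC^\times} \IC$ encoding exactly the difference between $\ger{G}$ and $\ger{G}_\IC$; your reduction step and local-descent bookkeeping merely make explicit what the paper asserts in one line. Your closing observation is also accurate and worth keeping: when $g$ has nontrivial automorphisms as a map to $\orb{M}$, the full Hom-space is the disjoint union over 2-cells $\alpha: f \eqto g$ of the spaces of sections of the corresponding bundles, so the stated isomorphism really identifies the fibre over the chosen $\alpha$ --- a point the paper's proof elides but which its subsequent notation $(\alpha,\sigma)$ for morphisms tacitly records.
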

\begin{proof}
    A morphism $(f,\tau) \to (g,\kappa)$
    can only exist if there is a 2-cell $\alpha:f \eqto g$ identifying
    the underlying maps to $\orb{M}$. A particular choice of such a 2-cell $\alpha$
    induces an isomorphism
    \[
        \alpha^\ast: f^\ast\ger{G} \eqto g^\ast\ger{G}.
    \]
    This way, we obtain a pair of trivialisations
    $(\alpha^\ast\tau, \kappa)$ of $g^\ast\ger{G}$.
    A homomorphism $\bar{f} \to \bar{g}$ in $\ger{G}(U)$ is
    the data of a section of the $\IC^\times$-bundle $P$ given by
    $(\alpha^\ast\tau,\kappa)$, while the space of homomorphisms
    in $\ger{G}_{\IC}(U)$ is the space of sections of the associated $\IC$-bundle $P_\IC$.
\end{proof}

\begin{notation}
    We write $(\alpha,\sigma):(f,\tau) \to (g,\kappa)$ to denote
    a representative $(\alpha:f \simeq g,\sigma \in \Gamma(P_\IC))$,
    where $\alpha$ is used in the definition of $P_\IC$.
\end{notation}

Let $(f,\tau) \in \ger{G}_\IC(U)$ and $\sh{F} \in \ShC(U)$.
Recall the map $\Gr(f): U \to \orb{M} \times U$ is the graph of
$f:U \to \orb{M}$.
Denote by
\[
    (f,\tau)\cdot \sh{F} \define \Gr(f)_{\ast}(\tau \cdot \sh{F}) \in
    \Shv^{p_\orb{M}^\ast \ger{G}}(\orb{M} \times U) = \Shv^{\ger{G}}_\orb{M}(U)
\]
the associated $U$-family of $\ger{G}$-twisted sheaves over $\orb{M}$.
The assignment
\[
    \left((f,\tau), \sh{F}\right) \mapsto
    (f,\tau)\cdot \sh{F}
\]
naturally extends to a map of stacks
\begin{align*}
    \ger{G}_\IC \times \ShC & \to \Shv^\ger{G}_\orb{M}.
\end{align*}
We describe the value of this map on a morphism
\[
    \left((\alpha,\sigma),\beta\right):
    \left((f,\tau),\sh{F}\right) \to \left((g,\kappa),\sh{F}'\right)
\]
A 2-cell $\alpha:f \eqto g$ induces a natural equivalence
$\Gr(\alpha)_\ast: {\Gr(f)}_\ast \eqto {\Gr(g)}_\ast$.
A morphism of twisted sheaves $\alpha^\ast\tau \cdot \sh{F} \to \kappa \cdot \sh{F}'$
is given by a map of sheaves
\[
    \sh{F} \to {(\alpha^\ast\tau)}^{-1} \kappa \cdot \sh{F}' \define
    P_\IC \tensor_{\Cinf} \sh{F'}.
\]
A section $\sigma \in \Gamma(P_\IC)$ defines a morphism of twisted sheaves
$\alpha^\ast\tau \cdot \sh{F} \to \kappa \cdot \sh{F}$
with underlying map
\[
    \sigma \tensor \id_{\sh{F}}: \sh{F} \to P_\IC \tensor_{\Cinf} \sh{F}.
\]
The map $\ger{G}_\IC \times \ShC \to \Shv^\ger{G}_\orb{M}$ sends
the morphism $\left((f,\tau),\beta\right)$ to the composite of
natural transformations
\[\begin{tikzcd}
        {{\Gr(f)}_\ast(\tau \cdot \sh{F})} & {\Gr(g)_\ast(\tau\cdot\sh{F})} & {\Gr(g)_\ast(\kappa\cdot\sh{F})} & {\Gr(g)_\ast(\kappa\cdot\sh{F}')}.
        \arrow["\sigma", from=1-2, to=1-3]
        \arrow["\alpha", from=1-1, to=1-2]
        \arrow["\beta", from=1-3, to=1-4]
    \end{tikzcd}\]
(We suppress decorations and simply denote the natural transformations by the datum that induces them.)
By naturality, these maps of twisted sheaves are compatible with the pullback functors,
which identifies this as a map of stacks.
Applied to the distinguished object
$\Cinf \in \ShC(U)$, this yields a map
\begin{alignat*}{1}
    - \cdot \Cinf:\ger{G}_\IC & \to \Shv^{\ger{G}}_\orb{M}   \\
    (f,\tau)                  & \mapsto (f,\tau) \cdot \Cinf.
\end{alignat*}
\begin{lemma}
    The map $- \cdot \Cinf$ lands in $\Sky^{\ger{G}}_\orb{M} \subset \Shv^{\ger{G}}_\orb{M}$.
\end{lemma}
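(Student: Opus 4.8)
The plan is to verify the defining condition of a $U$-family of twisted skyscraper sheaves (Definition~\ref{defn:familyOfTwistedSkyscraperSheaves}) directly, by exhibiting covers over which $(f,\tau)\cdot\Cinf$ becomes basic. Recall that by construction $(f,\tau)\cdot\Cinf = \Gr(f)_\ast(\tau\cdot\Cinf) = \Ind^\tau(f)$; by Lemma~\ref{lem:behaviourOfSupportUnderPushfwdAndPullback} its support is the graph of $|f|\colon U\to|\orb{M}|$, which meets each slice $\{u\}\times|\orb{M}|$ in the single point $|f|(u)$, so the finite-support requirement is automatic and only the local basic structure needs checking. First I would choose an \'etale atlas $\pi\colon Y\onto\orb{M}$ whose components are contractible, so that the pullback gerbe $\pi^\ast\ger{G}$ is trivialisable (Corollary~\ref{cor:classificationOfGerbesOnQuotientOrbifolds}), and set $\tilde U\define U\times_{\orb{M}}Y$. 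This is a manifold cover $p\colon\tilde U\onto U$ carrying a lift $\tilde f\colon\tilde U\to Y$ with $\pi\comp\tilde f\simeq f\comp p$.

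The heart of the argument is a base-change computation. Forming the fibre product $W\define\tilde U\times_{\orb{M}}Y$ identifies the pullback square
\[
    \begin{tikzcd}
        W \ar[r, "r"] \ar[d] & Y\times\tilde U \ar[d, "\pi\times p"] \\
        U \ar[r, "\Gr(f)"] & \orb{M}\times U,
    \end{tikzcd}
\]
where $r(\tilde u,y)=(y,\tilde u)$ and the left vertical map is $(\tilde u,y)\mapsto p(\tilde u)$. Applying base change (Lemma~\ref{lem:baseChange}) I would obtain
\[
    (\pi\times p)^\ast\Gr(f)_\ast(\tau\cdot\Cinf) \iso r_\ast\bigl((W\to U)^\ast(\tau\cdot\Cinf)\bigr).
\]
Since the gerbe is trivialised over $Y$, the pushforward of twisted sheaves along $r$ reduces to the ordinary pushforward of $\Cinf$-modules, and $(W\to U)^\ast(\tau\cdot\Cinf)$ becomes an honest line bundle on $W$.

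It then remains to recognise the right-hand side as a basic family. The projection $W=\tilde U\times_{\orb{M}}Y\to\tilde U$ is the pullback of the finite \'etale map $\pi$, hence itself finite \'etale; refining $\tilde U$ to a cover $\tilde{\tilde U}\onto\tilde U$ over which $W$ splits into finitely many sections $\sigma_i$, each composite $r\comp\sigma_i$ is the graph $\Gr(g_i)$ of a smooth map $g_i\colon\tilde{\tilde U}\to Y$, and the pushforward decomposes as $\DirSum_i\Gr(g_i)_\ast(\text{line bundle})$. Over contractible pieces these line bundles are trivial (Lemma~\ref{lem:trivialisationsInIndDontMatterLocally}), so the pullback of $(f,\tau)\cdot\Cinf$ to $Y\times\tilde{\tilde U}$ is a basic family; this is exactly the condition of Definition~\ref{defn:familyOfTwistedSkyscraperSheaves}, whence $(f,\tau)\cdot\Cinf$ lies in $\Sky^{\ger{G}}_{\orb{M}}$. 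The main obstacle is the base-change bookkeeping, in particular recognising that pulling the single graph $\Gr(f)$ back across the \'etale cover $\pi$ replaces it by the finite $\Gamma$-orbit of graphs encoded in $W$ --- the same mechanism that underlies the induction functor $I=q_\ast$ --- and checking that the various trivialisations collapse to trivial line bundles locally.
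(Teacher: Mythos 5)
Your overall strategy is the same as the paper's: pull back along the product cover $\pi \times p\colon Y \times \tilde{U} \to \orb{M} \times U$, invoke base change (Lemma~\ref{lem:baseChange}), and recognise the result as a pushforward of vector bundles along graphs. In fact, your base-change bookkeeping is \emph{more} careful than the paper's own write-up: the 2-fibre product of $\Gr(f)$ against $\pi \times p$ really is your $W = \tilde{U} \times_{\orb{M}} Y \iso \tilde{U} \times_U \tilde{U}$, with one ``sheet'' for each lift of $f$ through $\pi$, whereas the paper's proof places $\tilde{U}$ in the corner of the square and concludes that the pullback is the single graph pushforward ${\Gr(\tilde{f})}_\ast(p^\ast\tau \cdot p^\ast\Cinf)$. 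That square is not a 2-pullback: for $\orb{M} = S^1$, $Y = \IR$, $U = \point$ and $f$ a point, the pullback of the skyscraper $\Gr(f)_\ast(\tau\cdot\Cinf)$ along $\pi \times p$ has support $\pi^{-1}(f(\point)) \times \tilde{U}$, not the graph of $\tilde{f}$. So the multiple sheets you keep track of are genuinely there, and your version of the computation is the correct one.

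However, your proof has a genuine gap exactly where the finiteness enters: you assert that $\pi$ is \emph{finite} \'etale, hence that $W \to \tilde{U}$ is finite \'etale and splits, after refining $\tilde{U}$, into finitely many sections. An \'etale atlas need not be finite --- $\IR \to S^1$ is an \'etale atlas with contractible total space, and in the example above $W \to \tilde{U}$ is $\IZ \times \IZ \to \IZ$, with infinite fibres. For such a $Y$ the pushforward $r_\ast$ of a line bundle on $W$ has infinite support over each point of $\tilde{U}$, so it is \emph{not} a basic family (basic families are finite direct sums of graph pushforwards), and the conclusion genuinely fails for that choice of cover: contractibility of the components of $Y$ buys you the trivialisation of the gerbe, but not the finiteness. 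The repair uses the existential nature of Definition~\ref{defn:familyOfTwistedSkyscraperSheaves}: you are free to choose $Y$ to be a locally finite disjoint union of orbifold charts $M_i \onto [M_i/\Gamma_i] \into \orb{M}$ with $M_i$ contractible and $\Gamma_i$ finite, which exists precisely because $\orb{M}$ is an orbifold (representable by a proper \'etale groupoid). For such $Y$ the map $\pi$ is representably proper \'etale with finite fibres, $W \to \tilde{U}$ is a finite covering map, and your local splitting into finitely many sections --- the ``finite $\Gamma$-orbit of graphs'' you allude to at the end --- is justified. As written, that finiteness is asserted rather than arranged, and it is the one step of your argument that can actually fail.
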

\begin{proof}
    We must show the condition in Definition~\ref{defn:familyOfTwistedSkyscraperSheaves}
    holds for $(f,\tau) \cdot \Cinf = \Gr(f)_{\ast}(\tau \cdot \Cinf)$.
    Let $Y \onto \orb{M}$ be a cover equipped with a trivialisation $\kappa$ of
    the gerbe. It pulls back to a cover
    \[\begin{tikzcd}
            {\tilde{U}} & Y \\
            U & {\orb{M}}
            \arrow["f"', from=2-1, to=2-2]
            \arrow["{\tilde{f}}", from=1-1, to=1-2]
            \arrow["\pi", from=1-2, to=2-2]
            \arrow["{p}"', from=1-1, to=2-1]
            \arrow["\lrcorner"{anchor=center, pos=0.125}, draw=none, from=1-1, to=2-2]
        \end{tikzcd}\]
    of $U$, and the product cover
    $\pi \times p:Y \times \tilde{U} \to \orb{M} \times U$ is equipped with
    the trivialisation $\kappa$ of the gerbe (pulled back from $Y$).

    Base change (Lemma~\ref{lem:baseChange}) along
    \[\begin{tikzcd}
            {\tilde{U}} & {Y \times \tilde{U}} \\
            U & {\orb{M}\times U}
            \arrow["{\pi \times p}", from=1-2, to=2-2]
            \arrow["{p}"', from=1-1, to=2-1]
            \arrow["{\Gr(f)}"', from=2-1, to=2-2]
            \arrow["{\Gr(\tilde{f})}", from=1-1, to=1-2]
            \arrow["\lrcorner"{anchor=center, pos=0.125}, draw=none, from=1-1, to=2-2]
        \end{tikzcd}\]
    gives an isomorphism
    \[
        {(\pi\times p)}^\ast{\Gr(f)}_\ast(\tau \cdot \Cinf) \simeq
        {\Gr(\tilde{f})}_\ast{p}^\ast(\tau \cdot \Cinf) \simeq
        {\Gr(\tilde{f})}_\ast\big(p^\ast\tau \cdot p^\ast\Cinf\big).
    \]
    The trivialisation $\kappa$ of the gerbe over $Y\times \tilde{U}$
    identifies the pushforward functor ${\Gr(\tilde{f})}_\ast$ with the
    usual pushforward of $\Cinf$-modules:
    \[
        {\Gr(\tilde{f})}_\ast:
        \tau \cdot \sh{F} \mapsto \kappa \cdot
        {\Gr(\tilde{f})}_\ast(\mscr{L} \tensor \sh{F}),
    \]
    where $\mscr{L}$ denotes the $\IC$-bundle associated to
    the pair of trivialisations $({\Gr(\tilde{f})}^\ast\kappa,\tau)$
    of the gerbe over $\tilde{U}$.
    This shows the pullback of $(f,\tau) \cdot \Cinf$ to $Y \times \tilde{U}$
    is equivalent to
    \[
        {\Gr(\tilde{f})}_\ast \big(p^\ast\tau \cdot p^\ast\Cinf\big) \simeq
        \kappa \cdot
        {\Gr(\tilde{f})}_\ast(\mscr{L}\tensor p^\ast\Cinf),
    \]
    the pushforward of the vector bundle
    $\mscr{L} \tensor p^\ast \Cinf$ over $\tilde{U}$ along the
    graph of the map $\tilde{f}:\tilde{U} \to Y$.
    This identifies $(f,\tau)\cdot\Cinf$ as a $U$-family of
    twisted skyscraper sheaves.
\end{proof}

The map
\[
    -\cdot \Cinf:\ger{G}_{\IC} \to \Sky^{\ger{G}}_\orb{M}
\]
is not an equivalence: its component
\[
    \ger{G}_{\IC}(U) \to \Sky^{\ger{G}}_\orb{M}(U)
\]
over $U \in \Man$ is faithful, but neither full nor essentially surjective.

The failure of the inclusion to be full may be blamed on the absence of a
zero object in $\ger{G}_\IC(U)$.
The zero object in $\Sky^{\ger{G}}_\orb{M}(U)$ is given by the zero (twisted)
sheaf. It plays the role of the empty direct sum,
and the existence of zero morphisms makes $\Hom(X,Y)$ a vector space for
all pairs of objects $X,Y \in \Sky^{\ger{G}}_\orb{M}(U)$.
(Cf. Lemma~\ref{lem:morphismsInGerC}, which shows that $\Hom$-spaces
in $\ger{G}_\IC(U)$ may be empty.)
The zero object is preserved under all pullback functors, and so are
zero morphisms between pairs of objects in $\Sky^{\ger{G}}_\orb{M}(U)$.

Denote by $\ger{G}_\IC^+$ the
minimal full substack of $\Sky^{\ger{G}}_\orb{M}$ which
contains the image of $\ger{G}_\IC$ as well as the zero sheaf.
The stack $\ger{G}_\IC^+$ is a full substack of a $\VecInf$-enriched stack
and thus $\VecInf$-enriched itself.
We now Cauchy complete this stack. Cauchy completion is the completion
of a category under absolute
colimits~\cite{lawvere1973metric},
ie.\ colimits which are preserved by any ($\VecInf$-enriched) functor.
As such, this completion may be performed pointwise:
the pullback functors automatically extend to the Cauchy completions
and preserve the added absolute colimits.
For $\VecInf$-enriched categories, Cauchy completion amounts to completing under
direct sums and splitting idempotents.
Denote by $\cat{C}^\dirSum$ the completion of a category $\cat{C}$
under direct sums, and by $\Kar(\cat{C})$ its \emph{Karoubi envelope}:
the universal completion of $\cat{C}$ where all idempotents split.

We denote by $\Kar(\ger{G}_\IC^\dirSum)$ the stack over $\Man$
associated to the prestack
\[
    U \mapsto \Kar\left({\ger{G}_\IC^+(U)}^\dirSum\right)
\]
obtained from $\ger{G}_\IC^+ \subset \Sky_\orb{M}^\ger{G}$ by pointwise Cauchy completion.
\begin{proposition}
    \label{prop:SkyIsCauchyCompletionOfGCPlus}
    The stack of skyscraper sheaves is the Cauchy completion
    \[
        \Sky^\ger{G}_\orb{M} \simeq
        \Kar\left({\ger{G}_\IC^\dirSum}\right).
    \]
    When $\orb{M} = M$ is a manifold, it suffices to complete under direct sums:
    \[
        \Sky^\ger{G}_M \simeq \ger{G}_\IC^\dirSum.
    \]
\end{proposition}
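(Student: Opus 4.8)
The plan is to realise the claimed equivalence as the inclusion of a substack, and to check it is an equivalence by combining pointwise full faithfulness with essential surjectivity checked locally on the base. Since $\Sky^\ger{G}_\orb{M}(U)$ is closed under direct sums and is Karoubi complete (Corollary~\ref{cor:CUisKaroubiComplete}), the universal property of Cauchy completion extends the full inclusion $\ger{G}_\IC^+ \into \Sky^\ger{G}_\orb{M}$ to a map of stacks
\[
    \Phi: \Kar\big(\ger{G}_\IC^\dirSum\big) \to \Sky^\ger{G}_\orb{M},
\]
namely the inclusion of the pointwise Cauchy completion. I would first dispose of full faithfulness, which is the general fact that the Cauchy completion of a full subcategory embeds fully faithfully into any Cauchy-complete category containing it. Concretely, objects of $\Kar(\ger{G}_\IC^\dirSum)(U)$ are pairs $\big(\DirSum_i (f_i,\tau_i)\cdot\Cinf,\, p\big)$, and because $\ger{G}_\IC^+ \subset \Sky^\ger{G}_\orb{M}$ is full and the target has direct sums, morphisms between two such objects are computed on both sides by the same matrices of morphisms $(f_i,\tau_i)\cdot\Cinf \to (g_j,\kappa_j)\cdot\Cinf$, cut out identically by the idempotents.

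The real content is essential surjectivity, which I would establish only locally on $U$. Given $\f{A} \in \Sky^\ger{G}_\orb{M}(U)$ and a point $x \in U$, the Local Normal Form (Corollary~\ref{cor:locally_karoubi_completion_distributes}) provides a neighbourhood on which
\[
    \f{A} \iso \DirSum_i \big(\Gr(s_i)_\ast \tau_i \cdot E_i,\, p_i\big),
\]
exhibiting $\f{A}$ as a summand of $\DirSum_i \Gr(s_i)_\ast \tau_i \cdot E_i$. Shrinking the neighbourhood so that each vector bundle $E_i$ trivialises, $E_i \iso \Cinf^{\dirSum r_i}$, turns each term into $\big((s_i,\tau_i)\cdot\Cinf\big)^{\dirSum r_i}$, a finite direct sum of objects in the image of $-\cdot\Cinf$. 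Hence $\f{A}$ is locally a summand of a direct sum of generators, i.e.\ locally lies in $\Kar(\ger{G}_\IC^\dirSum)$.

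To finish I would invoke that a morphism of stacks which is fully faithful and locally essentially surjective (every object of the target is, locally on the base, in the essential image) is an equivalence: full faithfulness is already pointwise, and local essential surjectivity promotes to genuine essential surjectivity by descent. This promotion is the step I expect to require the most care — everything else is an assembly of the local structure theory of Section~\ref{sec:local_structure} — but it is precisely the sense in which $\Kar(\ger{G}_\IC^\dirSum)$ was constructed as a substack of the ambient stack $\Sky^\ger{G}_\orb{M}$, so the gluing is forced by the stack axioms.

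For the refinement to a manifold $\orb{M}=M$, every stabiliser is trivial, so $M$ is an effective orbifold whose generic region is everything; Corollary~\ref{cor:families_in_generic_region_are_mobile} then shows each family is mobile near every point, i.e.\ locally a genuine direct sum of basic mobiles $\Gr(s_i)_\ast\Cinf = (s_i,\tau_i)\cdot\Cinf$ with no idempotent left to split, so the local essential image already lies in $\ger{G}_\IC^\dirSum$. It remains to check that $\ger{G}_\IC^\dirSum$ is itself Karoubi complete, so that the envelope adds nothing: repeating the argument of Corollary~\ref{cor:CUisKaroubiComplete}, an idempotent on such a direct sum is, summand by summand, an idempotent on a pushed-forward trivial bundle, which splits off a subbundle; a subbundle of a trivial bundle is locally trivial and hence locally a direct sum of basic mobiles, so the splitting exists locally and glues by the stack condition. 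Therefore $\Sky^\ger{G}_M \simeq \ger{G}_\IC^\dirSum$.
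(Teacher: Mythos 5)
Your proof is correct and follows essentially the same route as the paper: local essential surjectivity via the Local Normal Form (Corollary~\ref{cor:locally_karoubi_completion_distributes}) in the general case, and via effectiveness plus Corollary~\ref{cor:families_in_generic_region_are_mobile} in the manifold case, with the ``fully faithful + locally essentially surjective $\Rightarrow$ equivalence'' descent promotion made explicit where the paper leaves it implicit. The only divergence is your closing check that $\ger{G}_\IC^\dirSum$ is Karoubi complete, which is redundant (local essential surjectivity into $\ger{G}_\IC^\dirSum$ already suffices) and as phrased glosses over the fact that an idempotent on a direct sum need not be diagonal --- though the needed diagonalisation is again exactly Corollary~\ref{cor:locally_karoubi_completion_distributes}, so nothing is lost.
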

\begin{proof}
    First, note the stack $\Sky^\ger{G}_\orb{M}$ is indeed
    Cauchy complete: completeness under direct sum is obvious, and
    we proved Karoubi completeness in Corollary~\ref{cor:CUisKaroubiComplete}.

    Now we show that $\ger{G}_\IC^+$ indeed Cauchy completes
    to $\Sky^\ger{G}_\orb{M}$.
    This means every family $\f{S} \in \Sky^\ger{G}_\orb{M}(U)$ locally restricts
    to a section of $\Kar\left({\ger{G}_\IC^\dirSum}\right)$:
    a summand of a sheaf
    \[
        \DirSum_{i=0}^r (f_i,\tau_i)\cdot \Cinf.
    \]
    But this follows directly from the normal form for families of twisted
    skyscraper sheaves established in
    Corollary~\ref{cor:locally_karoubi_completion_distributes}.

    Now assume $\orb{M}=M$ is a manifold. Then $M$ is an effective
    orbifold whose generic region (the region with trivial stabiliser) is
    all of $M$. We may now apply
    Corollary~\ref{cor:families_in_generic_region_are_mobile} to conclude
    that every family $\f{A} \in \Sky^\ger{G}_M(U)$ is mobile,
    and hence a direct sum of objects in $\ger{G}_\IC^+(U)$.
\end{proof}

\section{Orbifold Tensor Categories}
\label{sec:orbifoldTensorCategories}
Recall the notation introduced in Section~\ref{sec:orbisimples}:
$\VCat$ denotes the bicategory of $\dirSum$-complete $\VecInf$-enriched categories,
and $\Vect$ the $\VCat$-valued stack of vector bundles over the site $\Man$ 
of smooth manifolds.
We defined a \emph{linear stack} to be a module over $\Vect$ in the category of
$\VCat$-valued stacks.

\begin{defn}
    \label{def:tensorStack}
    A \emph{tensor stack} is a monoid object in the bicategory of linear stacks.
\end{defn}

Recall also Definition~\ref{def:orbisimpleCat}: An orbisimple category
is a linear stack $\underline{\cat{C}}$ which admits an equivalence
$\underline{\cat{C}} \simeq \Sky_{\orb{M}}^\ger{G}$ for some orbifold
$\orb{M}$ equipped with a gerbe $\ger{G}$. If $\orb{M}$ can be chosen
to be a manifold, such a stack is called a manisimple category.

\begin{defn}
    \label{def:orbifoldTensorCategory}
    An \emph{orbifold/manifold tensor category} is a tensor stack
    whose underlying linear stack is an orbisimple/manisimple category.
\end{defn}

We are primarily interested in the category
$\cat{C} \define \underline{\cat{C}}(\point)$
assigned to $\point \in \Man$.
The remaining data keeps track of the smooth structure of $\cat{C}$.
As $\cat{C} \iso {\Sky}_{\orb{M}}^{\ger{G}}(\point)$, it is automatically
semisimple abelian and Karoubi complete (Corollary~\ref{cor:catOfPointsDirSumOfRepCats}).

Recall that a left dual for an object $Y$
in a tensor category is a pair
$(Y^\vee, \ev: Y^\vee \tensor Y \to \ONE)$ such that there exists a map
$(\coev: \ONE \to Y \tensor Y^\vee)$ satisfying the snake equations
(cf. Definition~\ref{def:dualOfObject}).
An object with both left and right duals is called \emph{dualisable}, and
a tensor category is \emph{autonomous} if all of its objects are dualisable.

\begin{defn}
    \label{def:orbifusion}
    An orbifold/manifold tensor category $\underline{\cat{C}}$
    is \emph{orbifusion}/\-\emph{manifusion}
    if the category $\cat{C} = \underline{\cat{C}}(\point)$
    assigned to the point
    is autonomous and the monoidal unit $\ONE$ is simple.
\end{defn}

In Section~\ref{sec:smooth_rig}, we will prove that this guarantees that the
categories $\underline{\cat{C}}(U)$ an orbifusion category $\underline{\cat{C}}$ assigns to a manifold
$U$ are also autonomous --- so long as the underlying orbifold $\orb{M}$ may be chosen to be
effective.

\begin{example}
    Using Pontryagin duality and Clifford Theory, one may show that
    the categories of finite-dimensional unitary representations of a
    virtually abelian discrete group naturally form an orbifusion category.
    Indeed, an extension of groups $\IZ^n \to G \to H$ with $H$ finite, yields
    \begin{equation*}
        \Rep^u G = {\Rep^u (\IZ^n)}^H \simeq \Sky(T^n)^H \simeq \Sky([T^n/H]),
    \end{equation*}
    where $T^n$ denotes the n-torus.
    The monoidal structure is given by the usual tensor product of representations.
    We depicted the moduli space of simple objects for two such examples
    in Figure~\ref{fig:introExamplesGroupReps}.
\end{example}

We spell out the components of a tensor stack explicitly.
A tensor stack is a sixtuple consisting of
\begin{itemize}
    \item the underlying linear stack
          \begin{equation*}
              \underline{\cat{C}}: \Man^\opp \to \VCat
          \end{equation*}
    \item two morphisms
          \begin{align*}
              \ONE:    & \Vect \to \underline{\cat{C}}       \\
              \tensor: & \underline{\cat{C}} \boxtimes_\Vect
              \underline{\cat{C}} \to \underline{\cat{C}}
          \end{align*}
    \item and three natural isomorphisms
          \begin{align*}
              \alpha: & (- \tensor -) \tensor - \Longrightarrow - \tensor (- \tensor -) \\
              l:      & (\ONE \tensor -) \Longrightarrow \Id                            \\
              r:      & (- \tensor \ONE) \Longrightarrow \Id,
          \end{align*}
\end{itemize}
satisfying the triangle and pentagon equations familiar from the definition
of a monoidal category (see also Section~\ref{sec:monoidalStructures}).

Evaluating the above data at an object $U \in \Man$ yields the data of a
tensor category
$(\cat{C}^U,\tensor^U, \ONE^U, \alpha^U, l^U, r^U)$
equipped with a $\Vect(U)$-module structure (recall $\cat{C}^U=\stC(U)$).
As $\tensor$ is a natural transformation, it comes with naturality 2-isomorphisms for maps $f:V \to U$ in $\Man$:
\begin{equation*}
    \begin{tikzcd}
        (\underline{\cat{C}} \boxtimes \underline{\cat{C}})(U) \arrow[rr, "\tensor^U"] \arrow[dd, "f^\ast \boxtimes f^\ast"]               &  & \underline{\cat{C}}(U) \arrow[dd, "f^\ast"] \\
        &  &                                             \\
        (\underline{\cat{C}} \boxtimes \underline{\cat{C}})(V) \arrow[rr, "\tensor^V"] \arrow[rruu,"\simeq" sloped, shorten=2em, Rightarrow]              &  & \underline{\cat{C}}(V),
    \end{tikzcd}
\end{equation*}
which make the pullback functors monoidal.
The $\Vect$-module structure further equips this with isomorphisms
\begin{align*}
    b_U(X,P,Y):(X \odot P) \tensor Y \eqto X \tensor (Y \odot P),
\end{align*}
natural in $P \in \Vect(U)$ and $X,Y \in \cat{C}^U$.
There are natural isomorphisms
\[
    f^\ast(P \odot X) \simeq f^\ast P \odot f^\ast X,
\]
and the resulting square
\[\begin{tikzcd}
        {f^\ast\left((X \odot P) \tensor Y\right)} && {f^\ast\left(X \tensor (Y \odot P)\right)} \\
        {(f^\ast X \odot f^\ast P) \tensor f^\ast Y} && {f^\ast X \tensor (f^\ast Y \odot f^\ast P)}
        \arrow["\simeq", from=1-1, to=2-1]
        \arrow["{f^\ast b(X,P,Y)}", from=1-1, to=1-3]
        \arrow["\simeq", from=1-3, to=2-3]
        \arrow["{b(f^\ast X,f^\ast P,f^\ast Y)}"', from=2-1, to=2-3]
    \end{tikzcd}\]
commutes.

This data can be combined to yield a ``global'' tensor product,
which we also denote by $\tensor$:
let $U,V \in \Man$, then the global tensor product is the functor
\begin{align*}
    \tensor: \underline{\cat{C}}(U) \boxtimes \underline{\cat{C}}(V) & \to \underline{\cat{C}}(U \times V)                 \\
    (X, Y)                                                           & \mapsto p_1^\ast X \tensor^{U \times V} p_2^\ast Y.
\end{align*}
The associator and unitors are ``globalised'' in the same way.
The pointwise tensor product $\tensor^U$ of $\underline{\cat{C}}(U)$ can be recovered
using the diagonal $\Delta_U:U \to U \times U$:
\begin{equation*}
    X \tensor^U Y = \Delta_U^\ast p_1^\ast X \tensor^U \Delta_U^\ast p_2^\ast Y = \Delta_U^\ast(p_1^\ast X \tensor^{U \times U} p_2^\ast Y) = \Delta_U^\ast (X \tensor Y).
\end{equation*}

\subsection{Fusion Categories are Manifusion Categories}
\label{sec:fusionCatsAreManifusion}
Let $\cat{D}$ be a tensor category and denote by $\Irr{\cat{D}}$ its
set of isomorphism classes of simple objects.
Recall the functor
\[
    - \boxtimes \Vect: \VCat \into \LinSt,
\]
which freely turns $\Vec$-modules into
$\Vect$-modules (ie.\ linear stacks).
This functor is monoidal, as shown in
Lemma~\ref{prop:boxtimesVectTurnsVecModulesIntoVectModules}.
\begin{corollary}
    \label{cor:boxtimesVectSendsTensorCatsToTensorStacks}
    The functor $-\boxtimes \Vect:\cat{D} \mapsto \underline{\cat{D}}$ sends
    $\IC$-linear $\dirSum$-complete monoidal categories to tensor stacks.
\end{corollary}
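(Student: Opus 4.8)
The plan is to recognise this corollary as a formal consequence of the fact, established in Proposition~\ref{prop:boxtimesVectTurnsVecModulesIntoVectModules}, that $-\boxtimes\Vect$ is a \emph{monoidal} $2$-functor $(\VCat,\boxtimes) \to (\LinSt,\boxtimes_\Vect)$, combined with the general principle that monoidal $2$-functors preserve pseudomonoids (monoid objects). Essentially all the genuine work has already been front-loaded into proving that the embedding is monoidal, so what remains is bookkeeping.

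First I would unwind the two definitions in play. A $\IC$-linear $\dirSum$-complete monoidal category $\cat{D}$ is precisely a pseudomonoid in the monoidal bicategory $(\VCat,\boxtimes)$: the bilinear tensor product $\cat{D} \times \cat{D} \to \cat{D}$ factors through a multiplication $m:\cat{D} \boxtimes \cat{D} \to \cat{D}$ by the universal property of $\boxtimes$ as the free $\dirSum$-completion of the ordinary product of enriched categories, the monoidal unit is a functor $\Vec \to \cat{D}$ out of the monoidal unit $\Vec$ of $(\VCat,\boxtimes)$, and the associator and unitors supply the coherence $2$-cells satisfying pentagon and triangle. Dually, Definition~\ref{def:tensorStack} says a tensor stack is exactly a pseudomonoid in $(\LinSt,\boxtimes_\Vect)$.

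Next I would transport this structure along $F \define -\boxtimes\Vect$. Applying $F$ to $m$ and precomposing with the monoidal comparison cell $\underline{\cat{D}} \boxtimes_\Vect \underline{\cat{D}} \eqto F(\cat{D} \boxtimes \cat{D})$ from Proposition~\ref{prop:boxtimesVectTurnsVecModulesIntoVectModules} yields a multiplication $\underline{\cat{D}} \boxtimes_\Vect \underline{\cat{D}} \to \underline{\cat{D}}$. Applying $F$ to the unit and using the identification $F(\Vec) = \Vec^\mathrm{const} \boxtimes \Vect \simeq \Vect$ of the image of the monoidal unit yields a unit $\Vect \to \underline{\cat{D}}$. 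The associator and unitor $2$-cells for $\underline{\cat{D}}$ are obtained by applying $F$ to those of $\cat{D}$ and pasting in the comparison cells, and the naturality and coherence of $F$ carry the pentagon and triangle for $\cat{D}$ to the corresponding axioms for $\underline{\cat{D}}$.

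The only real content is the verification that these coherence axioms survive the transport, i.e.\ that a monoidal $2$-functor sends pseudomonoids to pseudomonoids; this is the step I expect to be the main obstacle. It is nonetheless entirely formal, and, as already remarked in the proof of Proposition~\ref{prop:boxtimesVectTurnsVecModulesIntoVectModules}, is drastically simplified here by the fact that the associators in both $\VCat$ and $\LinSt$ may be chosen to be identities, which collapses most of the coherence data. Since the underlying linear stack of the resulting pseudomonoid is by construction $\cat{D}^\mathrm{const} \boxtimes \Vect = \underline{\cat{D}}$, this exhibits $\underline{\cat{D}}$ as a tensor stack and completes the argument.
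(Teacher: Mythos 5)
Your proposal is correct and matches the paper's own (implicit) argument: the paper states this corollary as an immediate consequence of Proposition~\ref{prop:boxtimesVectTurnsVecModulesIntoVectModules}, relying exactly on the fact that a monoidal 2-functor carries pseudomonoids in $(\VCat,\boxtimes)$ (i.e.\ linear $\dirSum$-complete monoidal categories) to pseudomonoids in $(\LinSt,\boxtimes_\Vect)$ (i.e.\ tensor stacks, per Definition~\ref{def:tensorStack}). Your unwinding of the definitions and the remark that trivial associators collapse the coherence bookkeeping is precisely the content the paper leaves to the reader.
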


The value of the underlying linear stack at $U \in \Man$ is
\begin{equation*}
    \underline{\cat{D}}: U \mapsto \cat{D} \boxtimes \Vect(U).
\end{equation*}
Pick an equivalence of linear categories $\Vec^{\dirSum \Irr{\cat{D}}} \isoto \cat{D}$,
corresponding to a choice of representatives for the set $\Irr{\cat{D}}$.
It induces an equivalence of linear stacks
\begin{equation*}
    \underline{\cat{D}} \isoot
    \Vec^{\dirSum \Irr{\cat{D}}} \boxtimes \Vect \simeq
    \Vect^{\dirSum \Irr{\cat{D}}}.
\end{equation*}
Let $M \in \Man$ be the discrete manifold $\Irr{\cat{D}}$, a disjoint union of $|\Irr{\cat{D}}|$ points.
Then $\Sky_M$ is isomorphic to $\Vect^{\dirSum \Irr{\cat{D}}}$ (see Example~\ref{ex:SkyOfPoint})
as a linear stack. This identifies $\underline{\cat{D}}$ as a
manisimple category (Definition~\ref{def:orbisimpleCat}).
It also carries a monoidal structure induced by the monoidal structure of $\cat{D}$
(see Corollary~\ref{cor:boxtimesVectSendsTensorCatsToTensorStacks}).
\begin{cor}
    The assignment
    \[
        \cat{D} \mapsto \underline{\cat{D}}
    \]
    fully faithfully embeds the bicategory of $\IC$-linear $\dirSum$-complete
    monoidal categories into that of manifold tensor categories.
\end{cor}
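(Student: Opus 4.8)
The plan is to deduce the statement almost entirely from Proposition~\ref{prop:boxtimesVectTurnsVecModulesIntoVectModules}, together with the identification of the underlying stack carried out in the discussion preceding this corollary. First I would recall that a $\IC$-linear $\dirSum$-complete monoidal category is precisely a monoid object in the monoidal bicategory $\VCat$, and that by Definition~\ref{def:tensorStack} a tensor stack is a monoid object in $\LinSt$. Thus the bicategory of $\IC$-linear $\dirSum$-complete monoidal categories is $\mathrm{Mon}(\VCat)$ --- with monoidal functors as $1$-morphisms and monoidal natural transformations as $2$-morphisms --- and the bicategory of tensor stacks is $\mathrm{Mon}(\LinSt)$.

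The key input is that $-\boxtimes\Vect: \VCat \into \LinSt$ is a fully faithful \emph{monoidal} $2$-functor (Proposition~\ref{prop:boxtimesVectTurnsVecModulesIntoVectModules}). Forming monoid objects is $2$-functorial in the monoidal bicategory, so this induces a $2$-functor $\mathrm{Mon}(\VCat) \to \mathrm{Mon}(\LinSt)$ which on objects is exactly $\cat{D}\mapsto\underline{\cat{D}}$ (this is Corollary~\ref{cor:boxtimesVectSendsTensorCatsToTensorStacks}). I would then observe that its essential image lands in manifold tensor categories: the discussion above exhibits an equivalence of linear stacks $\underline{\cat{D}} \simeq \Vect^{\dirSum\Irr{\cat{D}}} \simeq \Sky_M$, where $M \in \Man$ is the discrete manifold $\Irr{\cat{D}}$, so the underlying linear stack of $\underline{\cat{D}}$ is manisimple in the sense of Definition~\ref{def:orbisimpleCat}.

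It remains to check full faithfulness of the induced $2$-functor, that is, that for all $\cat{C},\cat{D}$ the functor on hom-categories
\[
    \Hom_{\mathrm{Mon}(\VCat)}(\cat{C},\cat{D}) \to
    \Hom_{\mathrm{Mon}(\LinSt)}(\underline{\cat{C}},\underline{\cat{D}})
\]
is an equivalence. Here I would argue as follows. A monoidal functor $\underline{\cat{C}} \to \underline{\cat{D}}$ is a $1$-morphism $g$ of linear stacks together with monoidal coherence $2$-cells. Since $-\boxtimes\Vect$ is fully faithful, the underlying equivalence $\Hom_{\VCat}(\cat{C},\cat{D}) \simeq \Hom_{\LinSt}(\underline{\cat{C}},\underline{\cat{D}})$ produces a functor $f:\cat{C}\to\cat{D}$ with $\underline{f} \simeq g$; and, again by full faithfulness on $2$-cells, the monoidal coherence data for $g$ (built from $\underline{f}$, the images of the tensor products, and the structure $2$-cells of the monoidal functor $-\boxtimes\Vect$) corresponds bijectively to coherence data for $f$. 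That these correspond to valid monoidal-functor structure, and that monoidal natural transformations match up, follows because $-\boxtimes\Vect$ preserves the relevant composites coherently.

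The main obstacle is precisely this last transfer of monoidal-functor structure across the embedding: verifying that the local equivalences of hom-categories supplied by Proposition~\ref{prop:boxtimesVectTurnsVecModulesIntoVectModules} upgrade compatibly to the coherence $2$-cells defining monoidal functors and monoidal natural transformations. This is an instance of the general principle that $\mathrm{Mon}(-)$ reflects equivalences along a fully faithful monoidal $2$-functor, and the only genuine work lies in matching the coherence data --- bookkeeping that is substantially simplified, exactly as in the proof of Proposition~\ref{prop:boxtimesVectTurnsVecModulesIntoVectModules}, by the fact that the associators in $\VCat$ and $\LinSt$ may be taken to be identities.
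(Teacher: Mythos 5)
Your proposal is correct and follows exactly the route the paper intends: the paper states this corollary without proof, as an immediate consequence of Proposition~\ref{prop:boxtimesVectTurnsVecModulesIntoVectModules} (the fully faithful monoidal embedding $-\boxtimes\Vect:\VCat \into \LinSt$), Corollary~\ref{cor:boxtimesVectSendsTensorCatsToTensorStacks}, and the preceding identification $\underline{\cat{D}} \simeq \Sky_M$ for $M$ the discrete manifold $\Irr{\cat{D}}$. Your write-up simply makes explicit the standard bookkeeping (functoriality of forming monoid objects and transfer of monoidal coherence data along a fully faithful monoidal 2-functor) that the paper leaves implicit.
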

By restriction, the same is true for fusion categories: they embed fully
faithfully into manifusion categories.
Hence, we don't distinguish between fusion categories and their
associated manifusion categories.

The category of points of $\underline{\cat{D}}$ is
\begin{equation*}
    \underline{\cat{D}}(\point) = \cat{D} \boxtimes \Vect(\point) \simeq \cat{D}.
\end{equation*}
It thus follows that
\begin{itemize}
    \item $\underline{\cat{D}}$ is a compact manifold tensor category iff $\Irr{\cat{D}}$ is finite and
    \item $\underline{\cat{D}}$ is manifusion iff $\cat{D}$ is fusion.
\end{itemize}

We describe the tensor product on $\underline{\cat{D}}$ explicitly.
Pick a skeletal representative for $\cat{D}$, so that the set of simples $X \in \cat{D}$
is identified with the (discrete) manifold $M=\Irr{\cat{D}}$.
Let $X \in \cat{D}$ be simple and $E$ a vector bundle over $U \in \Man$.
Then we denote by $(X,E)$ the $U$-family given by the
sheaf in $\ShC(M \times U) \iso \ShC(\coprod_r U)$
given by the pushforward of $E$ onto the connected component
corresponding to $X$.
Every $U$-family of skyscraper sheaves can be globally decomposed into direct
summands of this form.
The tensor product $\tensor^U$ is given by
\begin{equation*}
    (X,E) \tensor^U (Y,E') = (X\tensor Y, E \tensor E') =
    \DirSum_{Z \in X \tensor Y} (Z, E \tensor E'),
\end{equation*}
where the sum is over simples $Z$ in $X \tensor Y = \DirSum Z$.

\subsection{Categorified Group Rings}
\label{sec:categorifiedGroupRings}
The motivating examples for the study of manifusion categories are the
categorified group rings $\Vec^\omega[G]$ built from a Lie group $G$ and a
cocycle $\omega$ representing a cohomology class
$[\omega] \in \H^3(\B G, \IC^\times)$.
We will build $\Vec^\omega[G]$ as a manifold tensor category by linearising
the corresponding String 2-group $G_\omega$.

We begin by recalling the construction of $G_\omega$ as a smooth 2-group
after~\cite{schommer2011central}.
The cocycle $\omega$ may be represented by a simplicial cover
$Y_\bullet \to BG$ of the simplicial manifold $BG$, equipped with a triple
\[
    (\lambda,\mu,\omega) \in
    \Cinf(Y_1^{[3]},\IC^\times) \times
    \Cinf(Y_2^{[2]},\IC^\times) \times \Cinf(Y_3,\IC^\times)
\]
(we abuse notation and denote the third element of this triple by the same
letter as the entire cocycle).
This data may be used to build a smooth 2-group $G_\omega$:
The 2-cocycle $\lambda \in \Cinf(Y_1^{[3]},\IC^\times)$ encodes a
$\IC^\times$-gerbe over $G$ with a trivialisation over $Y_1$.
As explained in Section~\ref{sec:LieCatsCauchyCompletion},
we may build a Lie category $Y_1^{[2]} \times \IC \rightrightarrows Y_1$
from this data:
the composition map is given by
\begin{align*}
    Y_1^{[3]} \times \IC \times \IC & \to Y_1^{[2]} \times \IC                            \\
    (y,y',y'',z,z')                 & \mapsto (y,y'',z \cdot z' \cdot \lambda(y,y',y'')).
\end{align*}
Let $\ger{G}$ be the gerbe over $G$ presented by $\lambda$.
We denote the associated stack over $\Man$ by $\ger{G}_\IC$.
Recall from Section~\ref{sec:LieCatsCauchyCompletion} that there is a
natural map
\[
    - \cdot \Cinf: \ger{G}_\IC \into \Sky_G^\ger{G}.
\]
The data $\mu$ and $\omega$ equip this stack with the structure of a monoid
object in exact analogy with $G_\omega$.
We show that this structure extends along the inclusion above.
Recall we denoted by $\ger{G}_\IC^+ \subset \Sky_G^\ger{G}$
the full substack on the essential image of the inclusion above and
the zero object.
\begin{lemma}
    \label{lem:monoidStructureOnGCExtendsToGCPlus}
    The monoid structure on $\ger{G}_\IC$ extends to a
    monoid structure on $\ger{G}_\IC^+$.
\end{lemma}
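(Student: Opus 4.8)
The plan is to observe that $\ger{G}_\IC^+$ differs from $\ger{G}_\IC$ only by the single adjoined zero sheaf $0$ (together with the zero morphisms that turn every $\Hom$-space into a vector space), and to extend the multiplication by declaring $0$ to be absorbing. Concretely, writing $m\colon \ger{G}_\IC \times \ger{G}_\IC \to \ger{G}_\IC$ for the existing multiplication, I would define $m^+\colon \ger{G}_\IC^+ \times \ger{G}_\IC^+ \to \ger{G}_\IC^+$ to agree with $m$ on pairs of objects coming from $\ger{G}_\IC$, and to set $m^+(0,X) = m^+(X,0) = 0$ for every object $X$ (including $X = 0$). The unit object of the monoid structure is $(e,\tau) \cdot \Cinf$ for the group identity $e$, which already lies in $\ger{G}_\IC$, so the unit map requires no extension.

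For this assignment to define a functor of stacks, the extension to morphisms must be forced and consistent. Here the key input is that $m$ is $\IC$-bilinear on $\Hom$-spaces: by Lemma~\ref{lem:morphismsInGerC} a morphism in $\ger{G}_\IC(U)$ is a section of the $\IC$-bundle $P_\IC$, and the multiplication of two such sections is their pointwise product twisted by the fixed datum $\mu$, which is bilinear in the two sections. Bilinearity gives $m(\phi,0)=0=m(0,\psi)$, so the only possible extension of $m^+$ to morphisms into or out of $0$ is the zero morphism --- exactly what the absorbing-object prescription demands. With this, preservation of composition and identities is immediate (every composite through $0$ is a zero morphism, and $\id_0$ is the zero endomorphism of $0$). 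Compatibility with the pullback functors also follows, since $m$ is a map of stacks and, as already noted, the pullback functors preserve both the zero object and zero morphisms; hence $m^+$ is again a map of $\VecInf$-enriched stacks.

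It remains to extend the associator and unitors and to check the pentagon and triangle. On triples of objects all lying in $\ger{G}_\IC$ I would keep the $2$-cells built from $\omega$; whenever one of the arguments is $0$, both the source and target of the relevant coherence isomorphism are $0$, and since $\End(0)=0$ the unique (zero $=$ identity) morphism supplies the required invertible component, automatically natural. The pentagon and triangle then hold on the old part by the monoid axioms already satisfied on $\ger{G}_\IC$, while every instance involving a zero argument is a diagram of zero objects and zero morphisms and so commutes trivially. The only real work is therefore bookkeeping: verifying the bilinearity of $m$ on the $\IC$-bundle of morphisms and confirming that the absorbing extension is compatible with the stack (pullback) structure; there is no genuine geometric obstacle, which is why this step is isolated as a lemma rather than absorbed into the main construction.
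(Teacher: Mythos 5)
There is a genuine gap, and it lies in your very first sentence: you assume that $\ger{G}_\IC^+$ is obtained from $\ger{G}_\IC$ by freely adjoining a single zero object together with the zero morphisms. That is not the definition. The paper defines $\ger{G}_\IC^+$ as the minimal \emph{full} substack of $\Sky^{\ger{G}}_\orb{M}$ containing the image of $\ger{G}_\IC$ and the zero sheaf, so its $\Hom$-spaces are inherited from $\Sky^{\ger{G}}_\orb{M}$, and these are strictly larger than ``$\Hom$-spaces of $\ger{G}_\IC$ plus zero morphisms''. Concretely, take $X=(f,\tau)\cdot\Cinf$ and $X'=(f',\tau')\cdot\Cinf$ over $U$ with $f \not\simeq f'$ globally but $f=f'$ on some open subset. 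By Lemma~\ref{lem:morphismsInGerC} there are \emph{no} morphisms $(f,\tau)\to(f',\tau')$ in $\ger{G}_\IC(U)$, yet in the full substack there are nonzero morphisms $X \to X'$: under the embedding of Proposition~\ref{prop:locallyMorphismsAreFamiliesOfMatrices} they correspond to smooth functions on $U$ vanishing outside the interior $V^\ring_{f=f'}$ of the locus where $f=f'$ (e.g.\ a bump function supported there). Such ``partially supported'' morphisms are neither zero nor composites of zero morphisms with morphisms coming from $\ger{G}_\IC(U)$, so your forced absorbing-zero extension simply does not see them, and your monoid structure is not defined on all of $\ger{G}_\IC^+$.

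Defining $\alpha \tensor^{U,+} \beta$ on exactly these new morphisms is where essentially all of the paper's work goes: it uses Proposition~\ref{prop:locallyMorphismsAreFamiliesOfMatrices} to identify $\alpha$ (non-canonically) with an endomorphism $\bar{\alpha} \in \End(X) \iso \Cinf(U)$ vanishing outside $V^\ring_{f=f'}$, tensors endomorphisms using the structure already present on $\ger{G}_\IC$, reinterprets the result as a morphism $X \tensor Y \to X' \tensor Y'$, and then fixes the resulting ambiguity (an invertible function in $\Cinf(U,\IC^\times)$) by matching against the already-defined product on the open set where $\alpha$ and $\beta$ do restrict to morphisms of $\ger{G}_\IC$. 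Without this step the lemma loses its purpose: the subsequent Proposition~\ref{prop:monoidStructureOnGCExtendsToSky} extends the monoid structure from $\ger{G}_\IC^+$ to $\Sky^{\ger{G}}_G$ by linear extension under direct sums (via Proposition~\ref{prop:SkyIsCauchyCompletionOfGCPlus}), and that only works because the structure on $\ger{G}_\IC^+$ already handles all morphisms of the full substack --- general families of skyscraper sheaves have many partially supported morphisms, and these must already be covered before completing under $\dirSum$. Your treatment of the zero object, the associator components at $0$, and the pentagon/triangle in degenerate cases is fine, but it is the trivial part of the proof; the naturality of the associator with respect to the new morphisms would also need the same local-restriction argument you are missing.
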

\begin{proof}
    We begin by showing the monoidal structure extends objectwise.
    Let $U \in \Man$ be a parameterising family. We extend the
    tensor product $\tensor^U$ on
    $\ger{G}_\IC(U)$ to a tensor product $\tensor^{U,+}$ on
    $\ger{G}_\IC^+(U)$.
    On objects, this extension is straightforward:
    let $X,Y \in \ger{G}_\IC^+(U)$ be a pair of objects over
    $U$. If $X$ or $Y$ are the zero object $0$, then $X \tensor^{U,+} Y = 0$.
    Otherwise, both $X$ and $Y$ are objects of $\ger{G}_\IC(U)$, and
    the tensor product is given by $X \tensor^{U,+} Y = X \tensor^U Y$.
    The value of $\tensor^{U,+}$ on morphisms is constructed pointwise:
    Consider a pair of morphisms $(\alpha:X \to X',\beta:Y \to Y')$.
    By restricting $U$ enough, we may assume the objects involved are of the form
    \begin{align*}
        X = (f,\tau) \cdot \Cinf              &  & X' = (f',\tau') \cdot \Cinf   \\
        Y = (g,\kappa) \cdot \Cinf            &  & Y' = (g',\kappa') \cdot \Cinf \\
        X \tensor Y = (h,\lambda) \cdot \Cinf &  &
        X' \tensor Y' = (h',\lambda') \cdot \Cinf.
    \end{align*}
    Note that for $u \in U$, the equalities $f(u)=f'(u)$ and $g(u)=g'(u)$
    imply $h(u)=h'(u)$.
    Proposition~\ref{prop:locallyMorphismsAreFamiliesOfMatrices} allows us
    to embed $\Hom(X,X')$, $\Hom(Y,Y')$ and $\Hom(X \tensor Y, X' \tensor Y')$ into
    the space of smooth $\IC$-valued functions.
    This (non-canonically) identifies $\alpha$ with an endomorphism
    $\bar{\alpha} \in \End(X) \simeq \Cinf(U)$.
    Let $V_{f=f'}^\ring \subset U$ be the interior of the subset of
    $U$ where $f=f'$.
    The endomorphism $\bar{\alpha}$ is 0 on the complement of $V_{f=f'}^\ring$.
    Similarly, $\beta$ is identified with an endomorphism
    $\bar{\beta} \in \End(Y)$ which is 0 on the complement of $V_{g=g'}^\ring$.
    Both $\bar{\alpha}$ and $\bar{\beta}$ are morphisms in $\ger{G}_\IC(U)$,
    so the tensor product
    $\bar{\alpha} \tensor^U \bar{\beta} \in \End(X \tensor Y)$ is well-defined.
    The resulting endomorphism is 0 outside the set
    $V_{f=f'}^\ring \cap V_{g=g'}^\ring$ where $f=f'$ and $g=g'$,
    and thus in particular 0 outside the set $V_{h=h'}^\ring$.

    We may now use the technique above
    in reverse to identify the endomorphism $\bar{\alpha} \tensor^U \bar{\beta}$
    with a morphism
    \[
        \alpha \tensor^{U,+} \beta: X \tensor Y \to X' \tensor Y'.
    \]
    The choices made above imply that this morphism is only canonical up to
    an invertible function $c \in \Cinf(U,\IC^\times)$.
    As $\alpha \tensor^{U,+} \beta$ is 0 outside $W=V_{f=f'}^\ring \cap V_{g=g'}^\ring$,
    we need only fix the ambiguity there. But on $W$, both $\alpha$ and $\beta$
    restrict to morphisms in $\ger{G}_\IC(W)$, and the tensor product is
    already defined. This completes the construction of
    $\tensor^{U,+}$.

    It remains to extend the associators and unitors.
    The associator
    \[
        \alpha^{U,+}(X,Y,Z): (X \tensor Y) \tensor Z \eqto X \tensor (Y \tensor Z)
    \]
    agrees with $\alpha^U(X,Y,Z)$ unless one of $X,Y,Z \in \ger{G}_\IC^+$ is 0,
    in which case it is the zero morphism. The pentagon equation and naturality
    follow from the fact that any morphism commutes with a zero morphism.
    The unitors are dealt with in the same way.

    The monoidal structure described above is manifestly compatible with
    the restriction functors, because the zero object and zero morphisms are
    preserved under restriction.
    This gives $\ger{G}_\IC^+$ the structure of a monoid.
\end{proof}

\begin{proposition}
    \label{prop:monoidStructureOnGCExtendsToSky}
    The monoid structure on $\ger{G}_\IC$ induces on
    $\Sky_G^\ger{G}$ the structure of a manifusion category.
\end{proposition}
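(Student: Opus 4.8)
The plan is to reduce to the already-constructed monoid on $\ger{G}_\IC^+$ and propagate it across a completion. By Proposition~\ref{prop:SkyIsCauchyCompletionOfGCPlus}, since $G$ is a manifold we have an equivalence $\Sky_G^\ger{G} \simeq \ger{G}_\IC^\dirSum$, the mere direct-sum completion of $\ger{G}_\IC^+$ (no idempotent splitting is needed). Lemma~\ref{lem:monoidStructureOnGCExtendsToGCPlus} already supplies a monoid structure $(\tensor,\ONE,\alpha,l,r)$ on $\ger{G}_\IC^+$. So the task splits into two parts: (i) extend this monoid structure along the direct-sum completion so that $\Sky_G^\ger{G}$ becomes a tensor stack whose underlying linear stack is manisimple (the latter being automatic by Definition~\ref{def:orbisimpleCat}, as $G$ is a manifold), giving a manifold tensor category in the sense of Definition~\ref{def:orbifoldTensorCategory}; and (ii) verify the two manifusion conditions of Definition~\ref{def:orbifusion} on the category of points $\cat{C} = \Sky_G^\ger{G}(\point)$.

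For (i) the guiding principle is that direct sums are absolute colimits and the tensor product on $\ger{G}_\IC^+$ is a map of linear stacks, hence $\Vect$-bilinear and in particular additive in each variable. I would therefore extend $\tensor$ to $\ger{G}_\IC^\dirSum$ by the forced formula $\bigl(\DirSum_i X_i\bigr)\tensor\bigl(\DirSum_j Y_j\bigr) \define \DirSum_{i,j} X_i \tensor Y_j$, and extend the associator, the unitors, and the $\Vect$-module coherence $b_U(X,P,Y)$ componentwise. Because $\boxtimes_\Vect$ itself builds in free completion under direct sums (Lemma~\ref{lem:boxVectPresentsProductOverVect}), the completion functor is monoidal for $\boxtimes_\Vect$, so that this data assembles into a genuine map of linear stacks $\Sky_G^\ger{G} \boxtimes_\Vect \Sky_G^\ger{G} \to \Sky_G^\ger{G}$ (here Lemma~\ref{lem:specifyVectModuleMapEasily} lets one check $\Vect$-bilinearity on generating objects); the pentagon and triangle identities for the extended coherences follow from those on $\ger{G}_\IC^+$ by the universal property of direct-sum completion. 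This bookkeeping — carrying the coherence $2$-cells and the $\Vect$-compatibility through the completion — is where I expect the bulk of the effort, although it is morally automatic once absoluteness of direct sums is invoked.

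For (ii) both conditions are read off from the group-like structure inherited from $G_\omega$. The unit $\ONE = (e,\tau)\cdot\Cinf$ is the twisted skyscraper supported at the identity $e \in G$; as $G$ is a manifold, $e$ has trivial stabiliser, so by Lemma~\ref{lem:cat_at_point_props} the category at $e$ is $\Rep(\{1\}) \simeq \Vec$ and $\ONE$ is simple. For autonomy, recall from Corollary~\ref{cor:catOfPointsDirSumOfRepCats} that $\cat{C}$ is semisimple with simple objects the one-dimensional skyscrapers $\IC_g$ indexed by $g \in G$. On underlying supports the tensor product realises the group multiplication of $G$, whence $\IC_g \tensor \IC_{g^{-1}} \simeq \ONE \simeq \IC_{g^{-1}} \tensor \IC_g$, exhibiting every simple object as invertible. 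An invertible object is automatically both left- and right-dualisable, with dual its inverse and with evaluation and coevaluation extracted (after the standard rescaling) from these isomorphisms so that the snake equations hold. Since a direct sum of dualisable objects in a semisimple category is again dualisable, every object of $\cat{C}$ admits duals. Thus $\cat{C}$ is autonomous with simple unit, and $\Sky_G^\ger{G}$ is a manifusion category.
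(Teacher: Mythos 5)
Your proposal is correct and follows essentially the same route as the paper: identify $\Sky_G^\ger{G}$ as the direct-sum completion of $\ger{G}_\IC^+$ via Proposition~\ref{prop:SkyIsCauchyCompletionOfGCPlus}, extend the monoid structure of Lemma~\ref{lem:monoidStructureOnGCExtendsToGCPlus} linearly together with the $\Vect$-compatibility 2-cell defined on generators (the paper likewise defines $s_U(X\boxtimes Y,\Cinf)$ via unitors and extends linearly), and then verify the manifusion conditions on the category of points, where the unit $\IC_e$ is simple and each simple $\IC_g$ has dual $\IC_{g^{-1}}$.
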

\begin{proof}
    The linear stack $\Sky_G^\ger{G}$ is manifestly a manisimple category
    (Definition~\ref{def:orbisimpleCat}), so it suffices to equip it with
    the structure of a monoid in the bicategory of linear stacks.

    Proposition~\ref{prop:SkyIsCauchyCompletionOfGCPlus} says that
    $\Sky^\ger{G}_G$ is the direct sum completion of the stack
    $\ger{G}_\IC^+ \subset \Sky^\ger{G}_G$.
    Hence every family $\f{A}:U \to \Sky^\ger{G}_G$ locally decomposes
    as a direct sum of objects in $\ger{G}_\IC^+$, and we may extend the
    monoid structure obtained from
    Lemma~\ref{lem:monoidStructureOnGCExtendsToGCPlus} linearly.
    This gives $\Sky^\ger{G}_G$ the structure of a monoid
    in the bicategory of $\VCat$-valued stacks.
    To upgrade this to a monoid structure in $\LinSt$,
    we must provide an invertible 2-cell $s$ implementing
    compatibility between the monoid structure and the $\Vect$-module structure
    (see Section~\ref{sec:linearStacks} for details).
    Its component at $U \in \Man$,
    \[
        s_U(X \boxtimes Y,P): (X \tensor Y) \odot P \eqto X \tensor (Y \odot P)
    \]
    must be natural in $P \in \Vect(U)$ and $X, Y \in \Sky^\ger{G}_G(U)$,
    and compatible with the restriction functors, associators, and unitors.
    The objects of $\Sky^\ger{G}_G(U)$ are locally generated under direct sum
    by objects in $\ger{G}_\IC^+(U)$, and the objects of $\Vect(U)$ by
    the object $\Cinf(U)$. Hence, it suffices to define the natural 2-cell
    on these objects. The morphism
    \[
        s_U(X \boxtimes Y,\Cinf): (X \tensor Y) \odot \Cinf(U)
        \eqto X \tensor Y \eqot X \tensor (Y \odot \Cinf(U)),
    \]
    is the composite of unitors for the $\Vect$-action. It is thus
    automatically compatible with restriction functors. By naturality, it also
    commutes with the associators and unitors for the monoid structure.
    The other components of $s$ are obtained by extending linearly, and hence
    share the above properties.
    This way, the monoid structure on $\Sky^\ger{G}_G$ is upgraded to the structure
    of a manifold tensor category.

    The category of points is the category $\Vec^{\omega^\delta}[G^\delta]$
    of $G^\delta$-graded vector spaces, where $G^\delta$ denotes
    $G$ with the discrete topology and $\omega^\delta$ the pullback cocycle.
    Hence, duals exist pointwise (for a simple object $\IC_g$,
    such a dual is provided by $\IC_{g^{-1}}$), and the unit $\IC_e$ is
    a simple object.
    Hence this manifold tensor category is a manifusion category.
\end{proof}

\begin{defn}
    The \emph{categorified group ring} $\Vec^\omega[G]$ associated to a
    Lie group $G$ and a cocycle $\omega \in \H^3(\B G,\IC^\times)$ is
    the manifusion category obtained by linearising $G_\omega$
    via the procedure outlined in
    Proposition~\ref{prop:monoidStructureOnGCExtendsToSky}.
\end{defn}

In particular, $\Vec^\omega[G]$ is built such that the maximal sub-2-group
recovers $G_\omega$.

\begin{rmk}
    Using the language of \emph{multiplicative gerbes}\footnote{
        These multiplicative $\IC^\times$-gerbes are
        usually called multiplicative \emph{bundle} gerbes
        in the literature.
    },
    we may describe the categorified group ring $\Vec^\omega[G]$ more directly.
    The group $\mathrm{H}^3(\B G, \IC^\times)$
    is in bijection with equivalence classes of
    multiplicative gerbes over $G$~\cite{waldorf2010multiplicative,schommer2011central}.
    A multiplicative structure on a gerbe $\ger{G}$ over $G$ is a pair
    of an equivalence of gerbes
    \[
        \mathrm{M}: p_1^\ast \ger{G} \tensor p_2^\ast \ger{G} \to m^\ast \ger{G}
    \]
    over $G \times G$, and an isomorphism $\alpha$
    implementing associativity over $G \times G \times G$,
    satisfying a coherence condition over $G \times G \times G \times G$.
    Evaluated at $(g,h,k,l) \in G^{\times 4}$, this coherence condition
    is the pentagon equation.
    The equivalence $\mathrm{M}$ allows us to pass between
    $p_1^\ast \ger{G} \tensor p_2^\ast \ger{G}$-twisted sheaves and
    $m^\ast \ger{G}$-twisted sheaves (see Appendix~\ref{app:twistedSheaves}).
    We will denote its action on sheaves by a tensor product.

    This structure allows us to equip $\Sky^\ger{G}_G$ (the underlying
    linear stack of $\Vec^\omega[G]$) with a monoid structure as follows:
    The convolution product of two objects $X, Y \in \Vec^\omega[G](\point)$
    is given by
    \begin{equation*}
        X \tensor Y \define m_\ast(\mathrm{M} \tensor p_1^\ast X \tensor p_2^\ast Y),
    \end{equation*}
    where $p_1, p_2, m:G \times G \to G$ are the obvious maps.
    The product is extended to $U$-families by multiplying all pullback und pushforward maps by $\id_U$.
    This is manifestly compatible under pullback of twisted sheaves.

    It remains to discuss the associator.
    We use the letter $q$ to denote projection maps with domain $G^{\times 3}$.
    The following equalities follow from
    monoidality of pullback functors, smooth base change along
    \begin{equation*}
        \begin{tikzcd}
            G \times G \times G \arrow[r, "m_{12} \times \id"] \arrow[d, "q_{12}"] & G \times G \arrow[d, "p_1"] \\
            G \times G \arrow[r, "m"]                                              & G
        \end{tikzcd}
    \end{equation*}
    and the projection formula
    $f_\ast \sh{F} \tensor \sh{G} = f_\ast(\sh{F} \tensor f^\ast \sh{G})$,
    which are proved in Appendix~\ref{app:twistedSheaves}:
    \begin{align*}
        (X \tensor Y) \tensor Z
         & = m_\ast\left(\mathrm{M} \tensor p_1^\ast m_\ast(\mathrm{M} \tensor p_1^\ast X \tensor p_2^\ast Y) \tensor p_2^\ast Z\right)                                \\
         & = m_\ast\left(\mathrm{M} \tensor {(m_{12} \times \id)}_\ast (q_{12}^\ast \mathrm{M} \tensor q_1^\ast X \tensor q_2^\ast Y) \tensor p_2^\ast Z\right)        \\
         & = m_\ast\left(\mathrm{M} \tensor {(m_{12} \times \id)}_\ast (q_{12}^\ast \mathrm{M} \tensor q_1^\ast X \tensor q_2^\ast Y \tensor q_3^\ast Z)\right)        \\
         & = m_{123,\ast} \left({(m_{12} \times \id)}^\ast \mathrm{M} \tensor q_{12}^\ast \mathrm{M} \tensor q_1^\ast X \tensor q_2^\ast Y \tensor q_3^\ast Z\right), \\
        X \tensor (Y \tensor Z)
         & = m_{123,\ast} \left({(\id \times\ m_{23})}^\ast \mathrm{M} \tensor q_{23}^\ast \mathrm{M} \tensor q_1^\ast X \tensor q_2^\ast Y \tensor q_3^\ast Z\right).
    \end{align*}
    The isomorphism
    \begin{equation*}
        \alpha:{(m_{12} \times \id)}^\ast \mathrm{M} \tensor q_{12}^\ast \mathrm{M}
        \isoto {(\id \times\ m_{23})}^\ast \mathrm{M} \tensor q_{23}^\ast \mathrm{M}
    \end{equation*}
    now precisely encodes the associator.
\end{rmk}

\subsection{Group Algebras in Tensor Stacks}
\label{sec:groupAlgebras}
An algebra object $A$ in a tensor category $\cat{C}$ induces a linear category of (right) modules
over $A$, which we denote $\Mod_A(\cat{C})$. 
In this section, we perform this construction in the world of
tensor stacks.
The central result is that when $\stC$ is an orbifold tensor category and
$A$ is a group algebra, then $\Mod_A(\cat{C})$ is naturally an orbisimple category.

Let $A$ be an algebra object in $\cat{C}=\stC(\point)$ (see Section~\ref{sec:algebrasAndModules}).
For each $U \in \Man$, this induces an algebra object $A^U$ in $\cat{C}^U$,
given by the constant family at $A$.
In particular, this implies
$f^\ast A^U \simeq A^V$ for all $f:V \to U$.
We may form the 2-presheaf (valued in $\VCat$) of modules over $A$
\begin{align*}
    \Mod_A(\stC): U \mapsto \Mod_{A^U}(\cat{C}^U),
\end{align*}
which objectwise assigns the category of right modules over the algebra.

\begin{lemma}
    The 2-presheaf $\Mod_A(\stC)$ is a stack.
\end{lemma}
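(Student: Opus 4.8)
The plan is to verify the two stack axioms---gluing of objects and gluing of morphisms (the latter amounting to the sheaf condition on $\underline{\Hom}$-spaces)---for the $\VCat$-valued presheaf $\Mod_A(\stC)$, reducing both to the fact that $\stC$ itself is already a stack. The essential point is that a (right) $A^U$-module is simply an object $M \in \cat{C}^U$ equipped with an action morphism $a: M \tensor A^U \to M$ satisfying the associativity and unit diagrams, and that all of this data and all these conditions are \emph{local in $U$}. Since $A^U$ is the constant family at $A$ and pulls back compatibly ($f^\ast A^U \simeq A^V$), the pullback functors of $\stC$ send $A^U$-modules to $A^V$-modules, so the assignment $U \mapsto \Mod_{A^U}(\cat{C}^U)$ is genuinely a $\VCat$-valued presheaf. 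It remains to check descent.

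First I would fix a cover $\{U_i \to U\}$ (a surjective submersion $Y \onto U$ with its iterated fibre products $Y^{[n]}$) and establish the \textbf{gluing of morphisms}. A morphism of $A$-modules $(M,a) \to (N,b)$ is a morphism $\phi: M \to N$ in $\cat{C}^U$ intertwining the actions, i.e.\ satisfying $b \comp (\phi \tensor \id_A) = \phi \comp a$. Because $\stC$ is a stack, the $\underline{\Hom}$-presheaf $\underline{\Hom}_{\cat{C}}(M,N)$ on $\Man/U$ is already a sheaf (this is exactly the content of the morphism-gluing axiom for $\stC$, used repeatedly in Section~\ref{sec:linearStacks}). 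The module-intertwining condition is a closed condition cut out by an equality of two morphisms in this sheaf, and equalities of sections of a sheaf may be checked on a cover; likewise a compatible family of intertwiners over the $U_i$ that agrees on overlaps glues to a unique morphism in $\cat{C}^U$, which then automatically intertwines the actions since the intertwining equation holds after restriction to the cover and the two sides are sections of a sheaf. Hence $\underline{\Hom}_{\Mod_A}\bigl((M,a),(N,b)\bigr)$ is a sheaf.

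Next I would establish the \textbf{gluing of objects}. Given descent data---modules $(M_i, a_i) \in \Mod_{A^{U_i}}(\cat{C}^{U_i})$ together with isomorphisms of $A$-modules on double overlaps satisfying the cocycle condition on triple overlaps---I first forget the module structure and apply descent in $\stC$ (which holds since $\stC$ is a stack) to glue the underlying objects $M_i$ into a single $M \in \cat{C}^U$, uniquely up to isomorphism, equipped with isomorphisms $M\restr_{U_i} \simeq M_i$ compatible with the descent isomorphisms. The action morphisms $a_i: M_i \tensor A^{U_i} \to M_i$ then form a compatible family of sections of the sheaf $\underline{\Hom}_{\cat{C}}(M \tensor A^U, M)$ over the cover---compatibility on overlaps being precisely the statement that the gluing isomorphisms are $A$-linear---and by the sheaf condition just established they glue to a unique $a: M \tensor A^U \to M$. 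The associativity and unit axioms for $(M,a)$ are equalities of morphisms in $\cat{C}^U$; they hold after restriction to each $U_i$ (where they are the axioms for $(M_i,a_i)$), hence hold on $U$ by the sheaf property of the relevant $\underline{\Hom}$-space. This produces the desired glued module.

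The main obstacle---such as it is---lies not in any single step but in bookkeeping the coherence: one must be careful that the $A$-linearity of the gluing isomorphisms is exactly the overlap-compatibility needed to glue the action maps, and that one uses the monoidal naturality $f^\ast(M \tensor A^U) \simeq f^\ast M \tensor A^V$ (from the tensor structure of the stack, spelled out in Section~\ref{sec:orbifoldTensorCategories}) to identify the restrictions of $M \tensor A^U$ correctly. Once these identifications are in place, every remaining verification is a routine application of the sheaf condition on $\underline{\Hom}$-spaces of $\stC$, which is the one nontrivial input and is available by hypothesis. I therefore expect no genuine difficulty, only the need to assemble the descent of objects and the descent of the action morphism in the right order.
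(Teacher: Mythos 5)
Your proof is correct and follows essentially the same route as the paper: glue the underlying object via descent in $\stC$, then glue the action map using the morphism-gluing part of descent (the paper phrases this as lifting $m$ viewed as a morphism of descent data $(X \tensor A^Y, \phi \tensor \id_A) \to (X,\phi)$ through the equivalence with $\cat{C}^U$, while you phrase it as gluing sections of the sheaf $\sHom(M \tensor A^U, M)$ — these are the same mechanism), and finally verify associativity and unitality by checking equalities of morphisms locally. Your explicit treatment of morphism gluing for $\Mod_A(\stC)$ is a minor completeness bonus over the paper's write-up, but not a different method.
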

\begin{proof}
    We prove that descent data of $A$-modules may be glued uniquely.
    Let $\pi:Y \onto U$ be a cover.
    A descent datum for $\Mod_A(\stC)$ over $Y$ is a triple
    \begin{align*}
        X \in \cat{C}^Y &  & \phi: p_1^\ast X \eqto p_2^\ast X  \in \cat{C}^{Y^{[2]}}
                        &  & m:X \tensor A^Y \to X,
    \end{align*}
    where $(X,\phi)$ forms a descent datum for $\stC$,
    and $m$ gives $X$ the structure of an $A_Y$-module.
    The isomorphism $\phi$ must be compatible with $m$,
    in that
    \[\begin{tikzcd}
            {p_1^\ast X \tensor A^{Y^{[2]}} } & {p_1^\ast X} \\
            {p_2^\ast X \tensor A^{Y^{[2]}} } & {p_2^\ast X}
            \arrow["{p_2^\ast m}", from=2-1, to=2-2]
            \arrow["{p_1^\ast m}", from=1-1, to=1-2]
            \arrow["\phi", from=1-2, to=2-2]
            \arrow["{\phi \tensor \id_A}"', from=1-1, to=2-1]
        \end{tikzcd}\]
    commutes.
    By naturality of the tensor product, the pair
    $(X \tensor A^Y, \phi \tensor \id_A)$ forms a descent datum as well,
    and the commutative square above witnesses $m$ as a morphism of descent
    data
    \[
        m:(X \tensor A^Y, \phi \tensor \id_A) \to (X,\phi).
    \]
    As $\stC$ is a stack, there exists an object $\tilde{X}$,
    unique up to unique isomorphism, which glues the descent datum
    $(X,\phi)$.
    The descent datum $(X \tensor A^Y, \phi \tensor \id_A)$ is then glued
    by $\tilde{X} \tensor A^U$, and the morphism $m$ lifts to
    a unique morphism
    \[
        \tilde{m}:\tilde{X} \tensor A^U \to \tilde{X}.
    \]
    As an action map, $m$ is assocative (and compatible with the unit, a
    datum which we suppress throughout). The condition of associativity
    involves a term $X \tensor A^Y \tensor A^Y$. This is part of a descent
    datum which glues to $\tilde{X} \tensor A^U \tensor A^U$.
    The functor sending objects of $\cat{C}^U$ to their descent data over
    $Y$ is an equivalence, so $\tilde{m}$ satisfies the same
    equations as $m$.
    Hence, the pair $(\tilde{X},\tilde{m})$ is a module over $A^U$ in $\cat{C}^U$.
    It glues the descent datum uniquely up to unique isomorphism,
    which shows $\Mod_A(\stC)$ is a stack.
\end{proof}

\begin{rmk}
    The endomorphism
    \begin{align*}
        - \tensor A: \stC \to \stC,
    \end{align*}
    is part of a monad~\cite{benabou1967introduction} on $\stC$ in $\LinSt$.
    Its unit and multiplication morphisms are induced by the inclusion of the unit
    $\ONE \into A$ and the monoid structure on $\stC$.
    Evaluated at $U \in \Man$, this monad is the usual monad associated to
    the algebra object $A^U \in \cat{C}^U$.
    The \emph{Eilenberg-Moore object} of a monad in a
    bicategory~\cite{street1972formal,lack2002formal} is the object of
    algebras over the monad.
    The stack $\Mod_A(\stC)$ is the Eilenberg-Moore object associated to $- \tensor A$.
\end{rmk}

The \emph{free module map}
\[
    - \tensor A: \stC \to \Mod_A(\stC)
\]
sends an object $X \in \cat{C}^U$ to the object
$X \tensor A \in {\Mod_A(\stC)}^U = \Mod_{A^U}(\cat{C}^U)$,
equipped with the free module structure induced by the algebra
structure on $A$.

\begin{lemma}
    \label{lem:ModAKaroubiCompletionOfFreeModules}
    The stack $\Mod_A(\stC)$ is Karoubi-generated by the
    essential image of the free module map:
    The category $\Mod_{A^U}(\cat{C}^U)$ is the Karoubi completion
    of the essential image of
    \[
        - \tensor A^U: \cat{C}^U \to \Mod_{A^U}(\cat{C}^U).
    \]
\end{lemma}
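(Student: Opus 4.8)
The plan is to prove the fibrewise statement — that for each $U \in \Man$ the category $\Mod_{A^U}(\cat{C}^U)$ is the Karoubi completion of the full subcategory of free modules $X \tensor A^U$ — and then observe that the construction is natural in $U$. Since $\cat{C}^U$ is Karoubi complete (Corollary~\ref{cor:CUisKaroubiComplete}) and the module category inherits this completeness, it suffices to show that every $A^U$-module is a \emph{retract} of a free module: the essential image of the free module map is already closed under direct sums, so its closure under retracts is exactly its Karoubi completion, and showing that closure is everything is the whole content.

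First I would set up the free–forgetful adjunction $F = (- \tensor A^U) \dashv G$, where $G$ forgets the module structure. Its counit at a module $M$ is the action map $a_M\colon FG(M) = M \tensor A^U \to M$, a morphism of $A^U$-modules realising $M$ as a quotient of the free module $M \tensor A^U$. The task then reduces to splitting $a_M$ inside $\Mod_{A^U}(\cat{C}^U)$. This is where the group-algebra structure of $A$ enters: exactly as for the twisted group rings of Section~\ref{sec:projectiveReps}, the free module functor is simultaneously a left and a right adjoint of $G$ (induction agrees with coinduction, Proposition~\ref{prop:inductionIsCoinduction}), so the endofunctor $FG = (- \tensor A^U)$ of $\Mod_{A^U}(\cat{C}^U)$ carries the structure of a Frobenius monad. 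I would produce the requisite Frobenius/separability datum on $A$ — a comultiplication splitting the multiplication up to an invertible scalar — as the categorical avatar of the twisted Nakayama isomorphism.

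Granting this, the argument is formally identical to the one preceding Corollary~\ref{cor:KaroubiCompletionOfInductionIsAll}. The unit $\eta\colon \Id \to FG$ and counit $\eps\colon FG \to \Id$ compose to an endomorphism $\eps \comp \eta$ of the identity functor equal to multiplication by a nonzero scalar $c$; rescaling, $e \define c^{-1}\,\eta \comp \eps$ is an idempotent on $FG$ split by the pair $(\eps,\, c^{-1}\eta)$. This exhibits $\Id$ as a retract of $FG$, so each module $M$ is a retract of the free module $FG(M) = M \tensor A^U$, which is the desired fibrewise statement.

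Finally, because $A^U$ is the constant family at $A$ and each pullback functor $f^\ast$ is monoidal and carries free modules, unit, counit and the splitting idempotent $e$ to their counterparts over the source, the retractions are compatible with restriction; the fibrewise identifications therefore assemble into the claimed equivalence of stacks. The main obstacle is the step flagged above: verifying that the group algebra $A$ in the tensor stack $\stC$ genuinely carries the (special) Frobenius structure making $F \dashv G$ ambidextrous and $\eps \comp \eta$ an invertible scalar. Everything else is a formal consequence of that input, mirroring the representation-theoretic computation already carried out in Section~\ref{sec:projectiveReps}.
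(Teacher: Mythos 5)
Your proposal is correct in substance, but it is a genuinely different argument from the paper's one-line proof --- and the difference is not cosmetic, because the paper's shortcut does not actually work. The paper splits the action map $m:(Z \tensor A^U, m_{\mathrm{free}}) \to (Z,m)$ ``by the unit morphism'' $\id_Z \tensor \eta$; but that section is only a morphism in $\cat{C}^U$, not in $\Mod_{A^U}(\cat{C}^U)$: the module-map condition for $\id_Z \tensor \eta$ reads $(\id_Z \tensor \eta) \comp m = \id_{Z \tensor A^U}$, which already fails for $Z = A$, $m = \mu$. Some separability input is genuinely needed --- for an arbitrary algebra the statement is false (take $\stC = \Sky_\point \simeq \Vect$ and $A = \IC[x]/(x^2)$: the one-dimensional module $\IC$ is not a retract of any free module, since retracts of free modules are projective and finitely generated projectives over this local ring are free). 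Your route through ambidexterity of the free/forgetful adjunction and the rescaled idempotent $e = c^{-1}\,\eta \comp \eps$ supplies exactly this input, so what you flag as the ``main obstacle'' is in fact the entire content of the lemma, and it is surmountable precisely because $A = \DirSum_{h \in H} h$ is a group algebra of a finite group: the required section of $m$ can be written down directly as the Maschke average $z \mapsto \tfrac{1}{|H|} \sum_{h \in H} (z \cdot h) \tensor h^{-1}$ (using invertibility of the summands $h$ of $A$), which \emph{is} a morphism of modules, splits $m$, and is the internal avatar of the Nakayama isomorphism of Proposition~\ref{prop:inductionIsCoinduction} for the inclusion of the trivial subgroup into $H$; once you have it, your formal argument (and its compatibility with the pullback functors, since all the data is constant in $U$) goes through as you describe, exactly parallel to Corollary~\ref{cor:KaroubiCompletionOfInductionIsAll}. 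One caveat: the lemma is stated in the paper for an arbitrary algebra object $A$, so your proof establishes it only in the group-algebra case --- but by the counterexample above that is the correct generality, and it is the only generality in which the paper ever uses the lemma (Lemma~\ref{lem:mapFromGoverHToModA}, Proposition~\ref{prop:ModAisOrbisimple}, Theorem~\ref{thm:quotientByCentralGroupActionIsOrbifoldTensorCategory}); you should say so explicitly rather than present the restriction as a gap. The Karoubi-completeness of $\Mod_{A^U}(\cat{C}^U)$ itself you obtain the same way the paper does, from Corollary~\ref{cor:CUisKaroubiComplete}.
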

\begin{proof}
    First note that $\Mod_{A^U}(\cat{C}^U)$ is indeed Karoubi complete:
    Let $e \in \End(X,m)$ be an idempotent of a module. Then $e:X \to X$
    splits in $\cat{C}^U$ by Corollary~\ref{cor:CUisKaroubiComplete}.
    The restriction of $m$ to the summand picked out by $X$ gives it
    the structure of a right $A^U$-module.

    It remains to show that every module $(Z,m)$ is a summand of a free module.
    The morphism
    \[
        (Z \tensor A^U, m_{\mathrm{free}}) \xto{m} (Z, m)
    \]
    is split by the unit morphism (a datum we suppress throughout),
    and exhibits $(Z,m)$ as such a summand.
\end{proof}

Now, we specialise to group algebras.
Let $H$ be a finite group, and
assume $\stC$ admits a linear equivalence
$\stC \overset{\mathrm{lin}}{\simeq} \Sky^\ger{G}_\orb{M}$.
\begin{lemma}
    \label{lem:HactionOnSkyInducesHactionOnGandM}
    A map $\Vec[H] \to \stC$ induces an action of $H$ on $\ger{G} \to \orb{M}$.
\end{lemma}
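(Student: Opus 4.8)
The plan is to read off the action from the translation autoequivalences determined by the images of the invertible generators of $\Vec[H]$. A map $\Vec[H] \to \stC$ of tensor stacks is in particular a monoidal functor $F$; since each simple $\IC_h \in \Vec[H]$ is invertible (with inverse $\IC_{h^{-1}}$), its image $L_h \define F(\IC_h)$ is an invertible object of $\cat{C} = \stC(\point)$, and the monoidal structure of $F$ supplies coherent isomorphisms $L_h \tensor L_{h'} \simeq L_{hh'}$ and $L_e \simeq \ONE$. Tensoring with an invertible object is an autoequivalence, so the functors $T_h \define (-) \tensor L_h$ are autoequivalences of the underlying linear stack $\stC \simeq \Sky^\ger{G}_\orb{M}$, and the coherence data assemble $\{T_h\}_{h \in H}$ into a (weak) action of $H$ on this linear stack. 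The lemma therefore reduces to the assertion that each $T_h$ is induced by an automorphism $(\phi_h, \psi_h)$ of the geometric datum $\ger{G} \to \orb{M}$, naturally in $h$.

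To recover the underlying map $\phi_h$ of $\orb{M}$, I would use that $T_h$ is an equivalence of \emph{linear stacks}, not merely of the category $\cat{C}$, and hence respects all of the structure intrinsic to $\Sky^\ger{G}_\orb{M}$. Concretely, $T_h$ preserves the class of basic mobiles (Definition~\ref{def:basic_mobile_at_x}) together with their supports --- the data that cut out the decomposition $\cat{C} \simeq \DirSum_{[x] \in |\orb{M}|} \Rep^{\theta_x}(\Gamma_x)$ of Corollary~\ref{cor:catOfPointsDirSumOfRepCats}. This forces $T_h$ to permute the support-blocks, inducing a bijection of $|\orb{M}|$; applying $T_h$ to $U$-families rather than to points shows this bijection is smooth for the diffeology on $|\orb{M}|$, so by the diffeological reconstruction of orbifolds~\cite{watts2017differential} it upgrades to an automorphism $\phi_h$ of the orbifold $\orb{M}$.

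The gerbe datum $\psi_h$ is recovered from the action of $T_h$ on each support-block: over a point $x$ it restricts to an equivalence $\Rep^{\theta_x}(\Gamma_x) \simeq \Rep^{\theta_{\phi_h x}}(\Gamma_{\phi_h x})$, and under the local dictionary between gerbes and their twisting $2$-cocycles (Example~\ref{ex:twistedSheavesOverBGamma} and Lemma~\ref{lem:localNormalFormOfGerbes}) such a compatible family of equivalences is exactly an isomorphism of gerbes $\psi_h \colon \phi_h^\ast \ger{G} \eqto \ger{G}$. Finally, the coherence isomorphisms $L_h \tensor L_{h'} \simeq L_{hh'}$ (together with the associator of $\stC$) provide the composition $2$-cells $\phi_h \comp \phi_{h'} \simeq \phi_{hh'}$ and their gerbe lifts, so that $(\phi_h, \psi_h)$ satisfy the axioms of an action.

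The main obstacle is this reconstruction step: promoting the a priori purely categorical autoequivalence $T_h$ to genuine \emph{smooth} geometric data on $(\orb{M}, \ger{G})$. The delicate points are (i) showing $T_h$ respects the support stratification --- i.e.\ that tensoring with the invertible $L_h$ sends basic mobiles to basic mobiles, which relies on the equivalence being one of stacks over $\Man$ rather than merely of linear categories --- and (ii) checking that the identifications of stabilisers and twisting cocycles furnished pointwise by $T_h$ vary smoothly over parameterising families, so that they descend to an honest automorphism of the orbifold and an honest equivalence of gerbes.
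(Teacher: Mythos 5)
Your opening step --- passing to the autoequivalences $T_h = -\tensor L_h$ of the underlying linear stack and getting coherence from the monoidality of $\Vec[H] \to \stC$ --- agrees with the paper, but the reconstruction strategy you build on top of it has two genuine gaps. First, recovering $\phi_h$ from a smooth bijection of $|\orb{M}|$ via diffeological reconstruction requires $\orb{M}$ to be \emph{effective}: the theorem of \cite{watts2017differential} cannot hold otherwise, since $|[\point/\Gamma]| = \point$ for every finite $\Gamma$. The lemma, and the section it lives in, make no effectivity assumption; it must apply for instance to $\stC = \Rep^u(\Gamma) \simeq \Sky_{[\point/\Gamma]}$, where the quotient space carries no information at all, and in general a smooth bijection of $|\orb{M}|$ only determines an automorphism of the effectivization of $\orb{M}$, not of $\orb{M}$ itself. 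Second, and more fundamentally, your mechanism for the gerbe datum $\psi_h$ rests on a false premise: a linear equivalence $\Rep^{\theta_x}(\Gamma_x) \simeq \Rep^{\theta_{y}}(\Gamma_{y})$ is \emph{not} the same thing as an isomorphism of gerbes over orbifold points, because twisted representation categories remember $(\Gamma_x,\theta_x)$ only up to Morita equivalence. For the nondegenerate cocycle $\theta$ on the Klein four-group (Example~\ref{ex:twistedKleinFourRep}) one has $\IC^\theta[\IV] \iso \Mat_2(\IC)$, hence $\Rep^\theta(\IV) \simeq \Vec \simeq \Rep(\{e\})$; so the pointwise equivalences furnished by $T_h$ cannot, even in principle, supply the ``identifications of stabilisers and twisting cocycles'' your argument asks for. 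That information exists only at the level of families, and your proposal gives no mechanism to extract it there.

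The paper avoids both problems by never passing to the quotient space or to pointwise categories. It uses the inclusion $\ger{G} \into \Sky^\ger{G}_\orb{M}$ of Section~\ref{sec:LieCatsCauchyCompletion}, whose sections over $U$ are basic families \emph{together with their trivialisation data} $(f,\tau)$: invertibility of $h$ gives that $-\tensor h$ preserves this subcategory (this is the analogue of your delicate point (i), and the only nontrivial input in either argument), so $H$ acts directly on the stack $\ger{G}$. The action then descends to $\orb{M}$ because the compatibility 2-cells between $\tensor$ and the $\Vect$-module structure restrict to compatibility between the $H$-action and the $\ger{I}$-action on $\ger{G}$, and $\orb{M}$ is exactly the quotient of $\ger{G}$ by that $\ger{I}$-action. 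In short, the missing idea is to act on the total space of the gerbe first --- where the tensor product visibly lives --- and descend to the base, rather than to act on the base first and try to reassemble the gerbe from pointwise data.
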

\begin{proof}
    Denote the image of $\IC_h \subset \IC[H] \in \Vec[H]$ by $h$.
    The object $h$ is invertible, so $- \tensor h$ preserves the
    image of $\ger{G}$ under the inclusion $\ger{G} \into \Sky^\ger{G}_\orb{M}$
    introduced in Section~\ref{sec:LieCatsCauchyCompletion}.
    Thus $- \tensor h$ descends to a functor on $\ger{G}$, and
    we obtain an action of $H$ on $\ger{G}$.
    The action on $\ger{G}$ descends to an action on $\orb{M}$:
    The map $\ger{G} \into \Sky^\ger{G}_\orb{M}$ extends to a
    map $\ger{G} \times \ger{I} \into \Sky^\ger{G}_\orb{M} \boxtimes \Vect$,
    and the compatibility data between the $H$-action and the
    $\Vect$-module structure restrict to compatibility data
    between the $H$-action and the $\ger{I}$-action on $\ger{G}$.
    The orbifold $\orb{M}$ is the quotient of $\ger{G}$ by the 
    $\ger{I}$-action, so
    the compatibility data is equivalent to descent data for the $H$-action.
\end{proof}

We introduce notation for the $H$-action described in
Lemma~\ref{lem:HactionOnSkyInducesHactionOnGandM}:
The 1-cells of the $H$-action are given by diffeomorphisms
\[\begin{tikzcd}
        {\ger{G}} & {\ger{G}} \\
        {\orb{M}} & {\orb{M}}
        \arrow[from=1-1, to=2-1]
        \arrow[from=1-2, to=2-2]
        \arrow["{h_{\ger{G}}}", from=1-1, to=1-2]
        \arrow["{h_{\orb{M}}}"', from=2-1, to=2-2]
    \end{tikzcd}\]
for $h \in H$. The 2-cells
\[\begin{tikzcd}
        & {\ger{G}} \\
        {\ger{G}} && {\ger{G}}
        \arrow["h", from=2-1, to=1-2]
        \arrow["{h'}", from=1-2, to=2-3]
        \arrow[""{name=0, anchor=center, inner sep=0}, "{hh'}"', from=2-1, to=2-3]
        \arrow["{\mu_{h,h'}}", shorten >=3pt, Rightarrow, from=1-2, to=0]
    \end{tikzcd}\]
implementing multiplicativity of the action are pulled
back from the associator $\alpha$ on $\stC$ along
the map $\ger{G} \to \stC$.

Associated to the $H$-action on $\ger{G} \to \orb{M}$ is the map of quotient stacks
$[\ger{G}/H] \to [\orb{M}/H]$. This fibration of stacks defines a gerbe over $[\orb{M}/H]$
(see Definition~\ref{defn:ICtimesGerbeAsPrincipalBundle}).
Indeed, the $\ger{I}$-action descends along $\ger{G} \to [\ger{G}/H]$
via the compatibility data between the $H$-action and the $\ger{I}$-action.
The local triviality condition also holds:
Let $Y \onto \orb{M}$ be a cover of $\orb{M}$ over which $\ger{G}$ trivialises.
Then $Y \onto \orb{M} \onto [\orb{M}/H]$ provides a cover over which $[\ger{G}/H]$ trivialises.

The category $\Vec[H]$ contains an algebra object $\IC[H]$,
and the map $\Vec[H] \to \stC$ induces the data of an algebra $A$
in $\stC$, the image of $\IC[H]$ under said map.
\begin{lemma}
    \label{lem:mapFromGoverHToModA}
    The map of stacks
    \[
        - \tensor A:\ger{G} \to \stC \to \Mod_A(\stC)
    \]
    factors through the quotient map
    \[
        \ger{G} \to [\ger{G}/H].
    \]
\end{lemma}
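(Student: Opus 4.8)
The plan is to equip the functor $F \define (- \tensor A): \ger{G} \to \Mod_A(\stC)$ (the composite of $\ger{G} \to \stC$ with the free module map) with descent data along the quotient map $\ger{G} \to [\ger{G}/H]$. Recall from Section~\ref{sec:LieCatsCauchyCompletion} that $[\ger{G}/H]$ is presented by the action groupoid $\ger{G} \times H \rightrightarrows \ger{G}$, whose source is the projection $s(X,h)=X$ and whose target is the action $t(X,h) = X \tensor h$ of Lemma~\ref{lem:HactionOnSkyInducesHactionOnGandM}. By the anafunctor description of maps between presented stacks, a factorisation of $F$ through $\ger{G} \to [\ger{G}/H]$ is precisely the data of a natural isomorphism $\eta_h : F(X \tensor h) \eqto F(X)$ for each $h \in H$, compatible over $\ger{G} \times H \times H$ with the multiplicativity 2-cells $\mu_{h,h'}$ of the action. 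So the work is to build the $\eta_h$ and to check this compatibility.

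First I would construct $\eta_h$ from the group-algebra structure of $A$. Writing $A$ for the image of $\IC[H] \in \Vec[H]$ under the monoidal functor $\Vec[H] \to \stC$, left translation in the regular representation gives an isomorphism $\IC_h \tensor \IC[H] \eqto \IC[H]$ which commutes with right multiplication; its image is an isomorphism of right $A$-modules $L_h : h \tensor A \eqto A$. Composing with the associator yields
\[
    \eta_h \define (\id_X \tensor L_h) \comp \alpha_{X,h,A} :
    (X \tensor h) \tensor A \eqto X \tensor (h \tensor A) \eqto X \tensor A,
\]
which is a morphism of right $A$-modules (both factors respect the free module structure) and is manifestly natural in $X \in \ger{G}$.

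The compatibility of the $\eta_h$ with the $\mu_{h,h'}$ is the main obstacle. One must verify that, over $\ger{G} \times H \times H$, pasting $\eta_{h}$ after $\eta_{h'}$ coincides with $\eta_{hh'}$ through $\mu_{h,h'}$. Since the excerpt records that $\mu_{h,h'}$ is pulled back from the associator $\alpha$ of $\stC$, this cocycle diagram is assembled from instances of the pentagon equation together with the single relation
\[
    L_{hh'} = L_h \comp (\id_h \tensor L_{h'}) \comp \alpha_{h,h',A}
\]
among the left translations. This relation holds in $\Vec[H]$ by associativity of left multiplication in the group algebra, and transports to $\stC$ because $\Vec[H] \to \stC$ is monoidal. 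Granting the coherence, the pair $(F,\{\eta_h\})$ constitutes descent data and hence defines a map $[\ger{G}/H] \to \Mod_A(\stC)$ whose precomposition with $\ger{G} \to [\ger{G}/H]$ recovers $F = -\tensor A$, which is the claimed factorisation.
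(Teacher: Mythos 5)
Your proposal is correct and follows essentially the same route as the paper: the paper's descent 2-cells are $\kappa_h = (\id_{-} \tensor \mu_h) \comp \alpha_{-,h,A}$, where $\mu_h$ is the $h$-component of the multiplication on $A \simeq \DirSum_{h \in H} h$ --- which is exactly your left-translation map $L_h$ transported from $\Vec[H]$. Your coherence relation $L_{hh'} = L_h \comp (\id_h \tensor L_{h'}) \comp \alpha_{h,h',A}$ is precisely the associativity of $\mu$ that the paper combines with the pentagon equation and naturality of $\tensor$ to verify the cocycle condition.
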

\begin{proof}
    We show this by equipping the map $- \tensor A:\ger{G} \to \Mod_A(\stC)$ with
    descent data for the cover $\ger{G} \to [\ger{G}/H]$.
    That is a family of 2-cells
    \[
        \kappa_h: (- \tensor h) \tensor A \eqto - \tensor A
    \]
    satisfying the cocycle condition
\[\begin{tikzcd}
	{((-\otimes h)\otimes h') \otimes A} & {(-\otimes h)\otimes A} \\
	{(-\otimes (h\otimes h'))\otimes A} & {-\otimes A}
	\arrow["{\kappa_{h\otimes h'}(-)}"', from=2-1, to=2-2]
	\arrow["{\kappa_h(-)}", from=1-2, to=2-2]
	\arrow["{\kappa_{h'}(-\otimes h)}", from=1-1, to=1-2]
	\arrow["{\alpha_{-,h,h'}\tensor \id_A}"', from=1-1, to=2-1]
\end{tikzcd}\]
    Such a family of 2-cells is provided by
    \[
        \kappa_h: (- \tensor h) \tensor A \xto{\alpha_{-,h,A}}
        - \tensor (h \tensor A) \xto{\id_{-} \tensor \mu_h} - \tensor A,
    \]
    where $\mu_h$ denotes the $h$-component of the multiplication on
    $A \simeq \DirSum_{h \in H} h$.
    This family indeed satisfies the cocycle condition:
    for any $X \in \ger{G}(U)$, the diagram
\[\begin{tikzcd}[column sep=large]
	{((X \otimes h)\otimes h')\otimes A} & {(X \otimes h) \otimes (h' \otimes A)} & {(X \otimes h) \otimes A} \\
	& {X \otimes (h\otimes (h' \otimes A))} & {X\otimes(h\otimes A)} \\
	{(X \otimes (h\otimes h'))\otimes A} & {X \otimes ((h\otimes h')\otimes A)} & {X \otimes A}
	\arrow["{\id_X \otimes \mu_{h \otimes h'}}"', from=3-2, to=3-3]
	\arrow["{\alpha_{X,h,A}}", from=1-3, to=2-3]
	\arrow["{\alpha_{X,h,h'} \otimes \id_A}"', from=1-1, to=3-1]
	\arrow["{\alpha_{X\otimes h,h',A}}", from=1-1, to=1-2]
	\arrow["{\alpha_{X,h,h'\otimes A}}", from=1-2, to=2-2]
	\arrow["{\id_X \otimes \alpha_{h,h',A}}"', from=3-2, to=2-2]
	\arrow["{\alpha_{X,h\otimes h',A}}"', from=3-1, to=3-2]
	\arrow["{(\id_X \otimes \id_h) \otimes \mu_{h'}}", from=1-2, to=1-3]
	\arrow["{\id_X \otimes (\id_h \otimes \mu_{h'})}"', from=2-2, to=2-3]
	\arrow["{\id_X \otimes \mu_h}", from=2-3, to=3-3]
\end{tikzcd}\]
    commutes by the pentagon equation for $\alpha$, the naturality of $\tensor$,
    and associativity of $\mu$.
\end{proof}

Lemma~\ref{lem:mapFromGoverHToModA} constructs a map $[\ger{G}/H] \to \Mod_A(\stC)$.
As $\Mod_A(\stC)$ is $\VCat$-valued (and, in particular, has zero objects),
we may extend this to a map ${[\ger{G}/H]}_\IC \to \Mod_A(\stC)$.
Lemma~\ref{lem:ModAKaroubiCompletionOfFreeModules} implies $\Mod_A(\stC)$
is further Karoubi complete, so
we may extend further to a map of linear stacks
$\Sky_{[\orb{M}/H]}^{[\ger{G}/H]} \to \Mod_A(\stC)$.
\begin{prop}
    \label{prop:ModAisOrbisimple}
    The map
    \[
        \Sky_{[\orb{M}/H]}^{[\ger{G}/H]} \to \Mod_A(\stC)
    \]
    is an equivalence of linear stacks.
    Hence $\Mod_A(\stC)$ is orbisimple.
\end{prop}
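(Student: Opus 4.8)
The plan is to show that the constructed functor
\[
    \Phi: \Sky_{[\orb{M}/H]}^{[\ger{G}/H]} \to \Mod_A(\stC)
\]
is fully faithful and locally essentially surjective as a map of $\VCat$-valued stacks. This suffices: an equivalence of the underlying $\VCat$-valued stacks automatically promotes to an equivalence of linear stacks, since the inverse functor inherits a unique compatible $\Vect$-module structure. Recall that $\Phi$ was obtained by Cauchy-completing the map $[\ger{G}/H] \to \Mod_A(\stC)$ of Lemma~\ref{lem:mapFromGoverHToModA}, using Proposition~\ref{prop:SkyIsCauchyCompletionOfGCPlus} to present the source as the pointwise Cauchy completion of $[\ger{G}/H]_\IC$, and that the target is itself Cauchy complete by Lemma~\ref{lem:ModAKaroubiCompletionOfFreeModules} together with Corollary~\ref{cor:CUisKaroubiComplete}.

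First I would establish full faithfulness. Because the source is the pointwise Cauchy completion of the generating substack $[\ger{G}/H]_\IC$ and the target already admits the relevant absolute colimits, it is enough to check that $\Phi$ is fully faithful on Hom-sheaves between objects of $[\ger{G}/H]_\IC$; full faithfulness is then inherited by the free Cauchy completion. By construction the composite $\ger{G} \to [\ger{G}/H] \xrightarrow{\Phi} \Mod_A(\stC)$ is the free-module functor $- \tensor A$. On the one hand, the free--forgetful adjunction gives
\[
    \Hom_{\Mod_A}(X \tensor A, Y \tensor A) \iso \Hom_{\cat{C}}(X, Y \tensor A) \iso \DirSum_{h \in H} \Hom_{\cat{C}}(X, Y \tensor h).
\]
On the other hand, the induction/equivariantisation adjunction for the $H$-quotient $\ger{G} \onto [\ger{G}/H]$ (an instance of Proposition~\ref{prop:twistedEquivariantisationAdjunction}) computes the Hom-sheaf in $[\ger{G}/H]$ between the corresponding induced objects as the same direct sum over $H$ of Hom-sheaves upstairs on $\ger{G}$, because $- \tensor h$ implements exactly the geometric $H$-action of Lemma~\ref{lem:HactionOnSkyInducesHactionOnGandM}. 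Matching the two descriptions term by term yields the required isomorphism.

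Next I would prove local essential surjectivity. By Lemma~\ref{lem:ModAKaroubiCompletionOfFreeModules} every $A$-module is a summand of a free module $X \tensor A$, and by Proposition~\ref{prop:SkyIsCauchyCompletionOfGCPlus} applied to $\stC$, the object $X$ is locally a summand of a mobile $\DirSum_i (f_i,\tau_i)\cdot\Cinf$ built from $\ger{G}$. Since $\Phi$ is additive, the free module on such a mobile lies in its essential image, so locally every $A$-module is a summand of an object in the essential image. Full faithfulness then upgrades this to essential surjectivity: the summand is cut out by an idempotent on some $\Phi(\f{Z})$, which by full faithfulness is $\Phi$ of an idempotent on $\f{Z}$; this splits in the Cauchy-complete source (Corollary~\ref{cor:CUisKaroubiComplete}), and $\Phi$ carries the splitting to the desired module. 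As essential surjectivity of a map of stacks is a local condition, this finishes the proof.

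The main obstacle is the term-by-term matching of Hom-sheaves in the full faithfulness step: one must verify that the identification of $\DirSum_{h \in H} \Hom_{\cat{C}}(X, Y \tensor h)$ with the induction Hom-space over $\ger{G} \onto [\ger{G}/H]$ is compatible with composition. This amounts to tracking the gerbe trivialisations on $[\ger{G}/H]$ against the multiplicativity $2$-cells $\mu_{h,h'}$ of the $H$-action and the associator $\alpha$ of $\stC$ --- precisely the data whose coherence was verified in the cocycle diagram of Lemma~\ref{lem:mapFromGoverHToModA}. Once this bookkeeping is in place, the remainder of the argument is formal.
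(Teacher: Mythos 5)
Your proposal is correct and its skeleton coincides with the paper's proof: both check full faithfulness only on the generating substack $[\ger{G}/H]_\IC$ (using that Cauchy completion preserves full faithfulness), compute the target Hom via the free--forgetful adjunction as $\DirSum_{h \in H}\Hom_{\cat{C}^U}(\tilde{X},\tilde{Y}\tensor h)$ and the source Hom via the equivariantisation/pushforward--pullback adjunction as $\DirSum_{h \in H}\Hom_{\cat{C}^U}(\tilde{X}\tensor h,\tilde{Y})$, and both obtain essential surjectivity from the fact that $\Mod_A(\stC)$ is Karoubi-generated by free modules while the source is the Cauchy completion of $[\ger{G}/H]_\IC$. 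The one place you diverge is inside the full faithfulness step, and it is exactly the point you flag as the main obstacle: matching the two direct-sum decompositions term by term only produces an \emph{abstract} isomorphism of Hom-sheaves, and one must still verify that this isomorphism is the map induced by $\Phi$, which drags in the 2-cells $\kappa_h$, $\mu_{h,h'}$ and the associator. The paper sidesteps this bookkeeping with a small trick: it first proves \emph{fullness} constructively --- a generating morphism $\tilde{f}:\tilde{X}\to\tilde{Y}\tensor h$ descends along the quotient functor $\ger{G}\to[\ger{G}/H]$ to a morphism $f:X\to Y$ whose image under $\Phi$ is $\kappa_h(\tilde{Y})\comp(\tilde{f}\tensor\id_A)$, so every summand of the target Hom is hit --- and then deduces \emph{faithfulness} for free by observing that the two Hom-sheaves have the same rank as $\Cinf$-modules, so a surjection between them is forced to be injective. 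If you reorganise your argument in this fullness-then-rank-count order, the compatibility verification you were worried about disappears entirely; as written, your route is viable but genuinely requires carrying out that coherence check, which is the nontrivial residue of your proof.
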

\begin{proof}
    The essential image of $[\ger{G}/H] \to \Mod_A(\stC)$ constructed in
    Lemma~\ref{lem:mapFromGoverHToModA} contains the essential
    image of $- \tensor A: \ger{G} \to \Mod_A(\stC)$.
    Lemma~\ref{lem:ModAKaroubiCompletionOfFreeModules} identifies
    $\Mod_A(\stC)$ as the Cauchy completion of the essential image of
    the free module map $-\tensor A:\stC \to \Mod_A(\stC)$.
    Meanwhile,
    Proposition~\ref{prop:SkyIsCauchyCompletionOfGCPlus} identifies
    $\Sky_{[\orb{M}/H]}^{[\ger{G}/H]}$ as the Cauchy completion of the
    essential image of ${[\ger{G}/H]}_\IC$.
    This shows the map in the statement is essentially surjective.

    Full faithfulness may be checked on the map
    $[\ger{G}/H]_{\IC} \to \Mod_A(\stC)$ --- the property is
    preserved under Cauchy completion.
    Every object $X \in [\ger{G}/H]_\IC(U)$ locally lifts to an object
    in $\ger{G}_\IC(U)$. If $\tilde{X}$ is such a lift, then so is
    $\tilde{X} \tensor h$.
    The map ${[\ger{G}/H]}_\IC \to \Mod_A(\stC)$ sends
    \[
        X \mapsto \tilde{X} \tensor A.
    \]
    We must prove that the map
    \[
        \Hom_{{[\ger{G}/H]}_\IC(U)}(X,Y) \to
        \Hom_{\Mod_A(\cat{C}^U)}(\tilde{X} \tensor A, \tilde{Y} \tensor A)
    \]
    is an isomorphism for all $X, Y \in [\ger{G}/H]_\IC$ with
    chosen lifts $\tilde{X},\tilde{Y} \in \ger{G}_\IC$.
    The right-hand side simplifies to
    \[
        \Hom_{\Mod_A(\cat{C}^U)}(\tilde{X} \tensor A, \tilde{Y} \tensor A) \simeq
        \Hom_{\cat{C}^U}(\tilde{X}, \tilde{Y} \tensor A) \simeq
        \DirSum_{h \in H}\Hom_{\cat{C}^U}(\tilde{X}, \tilde{Y} \tensor h),
    \]
    exhibiting every morphism in $\Mod_A(\cat{C}^U)$ as a direct sum of morphisms
    of the form
    \[
        \tilde{X} \tensor A \xto{\tilde{f} \tensor \id_A}
        (\tilde{Y} \tensor h) \tensor A \xto{\kappa_h(\tilde{Y})} \tilde{Y} \tensor A.
    \]
    Here, $\tilde{f}:\tilde{X} \to \tilde{Y} \tensor h$ is a morphism in
    $\cat{C}(U)$, and $\kappa_h: (- \tensor h) \tensor A \to - \tensor A$
    the obvious isomorphism (see also the proof of Lemma~\ref{lem:mapFromGoverHToModA}).

    Applying the quotient functor $\ger{G} \to [\ger{G}/H]$ to $\tilde{f}$,
    we obtain a morphism $f:X \to Y$, whose image under
    $[\ger{G}/H] \to \Mod_A(\cat{C}^U)$ is $\tilde{f} \tensor A$.
    This shows fullness of the map.

    To see the map is also faithful, we use the commutative square
    \[\begin{tikzcd}
            {\ger{G}_\IC(U)} & {\Sky_{\orb{M}}^{\ger{G}}(U)} \\
            {[\ger{G}/H]_\IC(U)} & {\Sky_{[\orb{M}/H]}^{[\ger{G}/H]}(U)}
            \arrow["q"', from=1-1, to=2-1]
            \arrow["{{(q\times \id_U)}_\ast}", from=1-2, to=2-2]
            \arrow[from=1-1, to=1-2]
            \arrow[from=2-1, to=2-2]
        \end{tikzcd}\]
    to write
    \begin{align*}
        \Hom_{[\ger{G}/H]_\IC(U)}(X,Y) & =
        \Hom_{[\ger{G}/H]_\IC}\left(
        {(q \times \id_U)}_\ast \tilde{X},
        {(q \times \id_U)}_\ast \tilde{Y}
        \right)                                 \\
                                       & \simeq
        \Hom_{\Sky_{\orb{M}}^{\ger{G}}}\left(
        {(q \times \id_U)}^\ast {(q \times \id_U)}_\ast \tilde{X},
        \tilde{Y}
        \right)                                 \\
                                       & \simeq
        \DirSum_{h \in H} \Hom_{\cat{C}^U}(\tilde{X} \tensor h, \tilde{Y}).
    \end{align*}
    This has the same rank (as a $\Cinf$-module) as
    $\Hom_{\Mod_A(\cat{C}^U)}(\tilde{X} \tensor A, \tilde{Y} \tensor A)$,
    so fullness implies faithfulness.
\end{proof}


\begin{example}
    \label{ex:groupalgebrasInCategorifiedGroupRings}
    Let $\Vec^\omega[G]$ be a categorified group ring associated to a
    Lie group $G$ and 3-cocycle $\omega \in \H^3(\B G, \IC^\times)$,
    as constructed in Section~\ref{sec:categorifiedGroupRings}.
    Let $H$ be a finite subgroup $i: H \into G$ and pick a trivialisation of the
    pullback 3-cocycle $i^\ast \omega$ on $H$.
    This data induces a map $\Vec[H] \to \Vec^\omega[G]$.
    Denote by $\theta \in \H^2(G, \IC^\times)$ the transgressed cocycle of
    $\omega$, classifying the gerbe over $G$.
    The trivialisation of $i^\ast \omega$ induces descent data for 
    the gerbe over $G$ along the quotient map $G \onto G/H$.
    This exhibits $\theta$ as the image of a class
    $\theta/H$ under the morphism $\H^2(G/H,\IC^\times) \to \H^2(G, \IC^\times)$.

    The stack of modules $\Mod_{\IC[H]}(\Vec^\omega[G])$ is a manisimple category,
    equivalent to the linear stack $\Vec^{\theta/H}[G/H]$
    (we slightly abuse notation here: this stack does not have a monoidal product).
\end{example}

A particular example is provided by $\SU(2)$ at level 
$k \in \H^3(\B\SU(2),\IC^\times) \simeq \IZ$, and the subgroup
$i:\IZ/2 \into \SU(2)$. The relevant map on cohomology,
\begin{alignat*}{1}
    i^\ast:\H^3(\B \SU(2),\IC^\times) & \to \H^3(\B \IZ/2),
\end{alignat*}
is the surjective map $\IZ \onto \IZ/2$. Hence there is a map of tensor stacks
$\Vec[\IZ/2] \to \Vec^k[\SU(2)]$ precisely when $k$ is even.

Recall the underlying manifold of $\SU(2)$ is $S^3$.
The quotient manifold $S^3/(\IZ/2)$ is $\RP^3$, the underlying manifold of
$\SO(3)$. Gerbes over $\RP^3$ are classified by the integers, and
the pullback map
\begin{alignat*}{1}
    \H^2(\RP^3,\IC^\times) \to \H^2(S^3,\IC^\times)
\end{alignat*}
is the map $\IZ \xto{\cdot 2} \IZ$.
Now assume $k$ is even, and pick a functor
$\Vec[\IZ/2] \to \Vec^k[\SU(2)]$. By abuse of notation, we denote the manisimple category of
$\IZ/2$-modules in $\Vec^{k}[\SU(2)]$ by $\Vec^{k/2}[\RP^3]$ 
(Recall this is the notation we used for categorified group rings, 
but the manisimple category $\Vec^{k/2}[\RP^3]$ does not carry a monoidal structure.)
In the next section, we show that when $k$ is divisible by 4, the category 
$\Vec^{k/2}[\RP^3]$ naturally acquires the structure of a manifusion category 
(and in fact turns into $\Vec^{k/4}[\SO(3)]$).

\subsection{Central Group Actions on Tensor Stacks}
\label{sec:centralGroupActions}
Recall that the Drinfel'd centre of a tensor category $\cat{C}$ is the
braided tensor category $\DZ \cat{C}$ with
\begin{itemize}
    \item objects: tuples $(X \in \cat{C}, \gamma:X \tensor - \to - \tensor X)$, s.t.\ $\gamma$
          satisfies the hexagon equation,
    \item morphisms $(X, \gamma) \to (X^\prime, \gamma^\prime)$: morphisms $f: X \to X^\prime$
          compatible with $\gamma$ and $\gamma^\prime$.
\end{itemize}
A natural isomorphism $\gamma$ as above is called a \emph{half-braiding} for $X$.

A \emph{commutative algebra} in a tensor category $\cat{C}$ is
an algebra $A$ in its Drinfel'd centre whose multiplication
commutes with the braiding $\beta$ in $\DZ \cat{C}$:
\[\begin{tikzcd}
        {A \tensor A} && {A \tensor A} \\
        & A.
        \arrow["{\beta(A,A)}", from=1-1, to=1-3]
        \arrow["m"', from=1-1, to=2-2]
        \arrow["m", from=1-3, to=2-2]
    \end{tikzcd}\]

In~\cite{pareigis1995braiding}, it was shown that the category of modules over
a commutative algebra $A$ in $\cat{C}$ is monoidal (subject to certain
completeness conditions on $\cat{C}$).
In this section, we extend this result
to the context of orbifold tensor categories in the case of a finite group algebra.

We recall the necessary details from~\cite{pareigis1995braiding}.
Let $(A,\gamma:A \tensor - \to - \tensor A)$ be a commutative algebra in $\cat{C}$.
The half-braiding $\gamma$ may be used to equip a right module $M$ with a left
module structure via
\[
    A \tensor M \xto{\gamma(M)} M \tensor A \to M.
\]
The commutativity condition ensures that the left module structure is compatible
with the right module structure, making $M$ a bimodule.
The tensor product of bimodules $M,N$ is computed by the coequaliser
(whose existence is assumed)
\[
    M \tensor_A N \define
    \colim(M\tensor A \tensor N \rightrightarrows M \tensor N).
\]
The tensor product of free right modules $M = X \tensor A, N = Y \tensor A$ is
given by a free module
\[
    (X \tensor A) \tensor_A (Y \tensor A) \simeq (X \tensor Y) \tensor A.
\]

Now we carry the above story over to our setup.
As explained in~\cite{street2004centre}, the notion of Drinfel'd centre admits
an internalisation to any braided monoidal bicategory.
In the bicategory $\SmSt$ of stacks over $\Man$, this is equivalent to the braided monoid
\begin{equation*}
    \DZ \stC = \End({}_\stC\stC_\stC)
\end{equation*}
of endomorphisms of $\stC$, considered as a $\stC$-$\stC$-bimodule.
It is equipped with a canonical monoidal morphism $\DZ \stC \to \stC$.
The centre is computed by a limit. Hence the functor
$\ev_U: \SmSt \to \VCat$ that evaluates a stack at $U \in \Man$
induces a monoidal comparison functor
$(\DZ\stC)(U) \to \DZ(\cat{C}^U)$.
This functor should be thought of as including those half-braidings that are smooth.

Let $\Vec[H]$ denote the braided fusion category with simple objects $H$,
equipped with the trivial associator and trivial braiding.
\begin{defn}
    A \emph{central group action} of a finite abelian group $H$ on a
    tensor stack $\stC$ is a braided monoidal morphism of linear stacks
    $\Vec[H] \to \DZ \stC$.
\end{defn}

A central group action
of $H$ on $\stC$ induces a commutative algebra $A$ in $\stC$, the image of
the commutative algebra $\IC[H]$ under the composite
$\Vec[H] \to \DZ \stC \to \stC$.
For each $U \in \Man$, we obtain a commutative algebra $A^U \in \cat{C}^U$.
Forgetting the braiding, the algebra $A$ is a group algebra in $\stC$.
In Section~\ref{sec:groupAlgebras}, we constructed the stack of modules over a
group algebra in an orbifold tensor category, and showed it is an orbisimple category
(Proposition~\ref{prop:ModAisOrbisimple}).

\begin{thm}
    \label{thm:quotientByCentralGroupActionIsOrbifoldTensorCategory}
    Let $A$ be the algebra induced by a central group action
    $\Vec[H] \to \DZ \stC$ on an orbifold tensor category.
    The stack $\Mod_A(\stC)$ is naturally an orbifold tensor category.
\end{thm}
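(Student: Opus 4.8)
The statement combines two ingredients already established: first, Proposition~\ref{prop:ModAisOrbisimple} shows that for any \emph{group} algebra $A$ in an orbifold tensor category $\stC$, the stack $\Mod_A(\stC)$ is an orbisimple category; second, the classical result of~\cite{pareigis1995braiding} shows that modules over a \emph{commutative} algebra form a monoidal category. The plan is to lift the Pareigis construction from the level of the category of points $\cat{C} = \stC(\point)$ to the level of the whole tensor stack, and then invoke Proposition~\ref{prop:ModAisOrbisimple} to see that the underlying linear stack is orbisimple. Combining these gives that $\Mod_A(\stC)$ is a tensor stack whose underlying linear stack is orbisimple, which is exactly the definition (Definition~\ref{def:orbifoldTensorCategory}) of an orbifold tensor category.

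\textbf{Step 1: Assemble the relative tensor product as a morphism of linear stacks.}
The central group action $\Vec[H] \to \DZ\stC$ equips $A$ with a half-braiding $\gamma$ in the smooth sense, i.e.\ for each $U \in \Man$ the commuting object $A^U \in \cat{C}^U$ carries a half-braiding making it a commutative algebra in $\DZ(\cat{C}^U)$. First I would observe that the relative tensor product $M \tensor_{A^U} N$ exists pointwise in $\cat{C}^U$: it is computed as the coequaliser of $M \tensor A^U \tensor N \rightrightarrows M \tensor N$, and such coequalisers exist because $\cat{C}^U$ is idempotent-complete (Corollary~\ref{cor:CUisKaroubiComplete}) and the relevant coequaliser of a reflexive pair of maps between modules is split by a section coming from the unit $\ONE \to A$. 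Concretely, for free modules the formula $(X \tensor A) \tensor_A (Y \tensor A) \simeq (X \tensor Y) \tensor A$ holds on the nose, and every module is a summand of a free one (Lemma~\ref{lem:ModAKaroubiCompletionOfFreeModules}), so the relative tensor product is determined by its value on free modules and is computed by splitting an idempotent. Because restriction functors $f^\ast$ are monoidal and preserve idempotent splittings, these pointwise tensor products are compatible with the stack structure, and I would package them into a morphism of linear stacks $\tensor_A : \Mod_A(\stC) \boxtimes_\Vect \Mod_A(\stC) \to \Mod_A(\stC)$ using Lemma~\ref{lem:specifyVectModuleMapEasily} to reduce the coherence of the $\Vect$-module compatibility to a check on objects of the form $X \boxtimes Y$.

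\textbf{Step 2: Transport the associator, unitors, and coherence.}
The unit object is the free module $\ONE \tensor A \simeq A$ viewed as a module over itself. The associator and unitors for $\tensor_A$ are induced from those of $\stC$ via the free-module description, exactly as in~\cite{pareigis1995braiding}; the half-braiding $\gamma$ enters precisely to make $\tensor_A$ well-defined (balanced) and to furnish the associativity constraint. I would verify the pentagon and triangle on free modules, where the computation reduces to the pentagon and triangle already satisfied by $\stC$ together with associativity and commutativity of the multiplication on $A$, and then extend to all modules by Karoubi-generation (Lemma~\ref{lem:ModAKaroubiCompletionOfFreeModules}) since both sides are built from free modules by splitting idempotents and the constraints are natural. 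This equips $\Mod_A(\stC)$ with the structure of a monoid in $\LinSt$, i.e.\ a tensor stack (Definition~\ref{def:tensorStack}).

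\textbf{Step 3: Identify the underlying linear stack and conclude.}
Forgetting the braiding, $A$ is a finite group algebra in $\stC$, so Proposition~\ref{prop:ModAisOrbisimple} applies and gives an equivalence of linear stacks $\Mod_A(\stC) \simeq \Sky^{[\ger{G}/H]}_{[\orb{M}/H]}$, exhibiting the underlying linear stack as orbisimple. Together with the tensor structure from Steps 1--2, this shows $\Mod_A(\stC)$ is a tensor stack with orbisimple underlying linear stack, hence an orbifold tensor category by Definition~\ref{def:orbifoldTensorCategory}. \textbf{The main obstacle} I anticipate is Step 1: verifying that the relative tensor product genuinely assembles into a morphism of \emph{stacks} (rather than merely a pointwise-defined operation), i.e.\ that the coequalisers can be chosen compatibly under the restriction functors $f^\ast$ and under descent. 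The cleanest route around this is to avoid computing coequalisers directly and instead \emph{define} $\tensor_A$ on free modules by the formula $(X \tensor A)\tensor_A(Y\tensor A) \define (X \tensor Y)\tensor A$, which is manifestly a morphism of stacks, and then use that $\Mod_A(\stC)$ is the Cauchy completion of the free modules to extend $\tensor_A$ uniquely; functoriality and stack-compatibility then follow formally because Cauchy completion is an absolute (pullback-stable) construction, as exploited throughout Section~\ref{sec:LieCatsCauchyCompletion}.
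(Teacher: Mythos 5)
Your proposal is correct and follows essentially the same route as the paper's proof: invoke Proposition~\ref{prop:ModAisOrbisimple} for orbisimplicity, define $\tensor_{A^U}$ on free modules via $(X \tensor A^U) \tensor_{A^U} (Y \tensor A^U) \simeq (X \tensor Y) \tensor A^U$, extend uniquely to all modules by Karoubi-generation (Lemma~\ref{lem:ModAKaroubiCompletionOfFreeModules}), and obtain stack-compatibility from the fact that $A$ is preserved under the pullback functors. The ``cleanest route'' you identify at the end --- defining the product on free modules and extending through the Cauchy/Karoubi completion rather than constructing coequalisers directly --- is exactly what the paper does.
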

\begin{proof}
    Proposition~\ref{prop:ModAisOrbisimple} assures us that $\Mod_A(\stC)$
    is orbisimple. It remains to equip it with a monoidal structure.

    We do this objectwise: the category
    \[
        \left(\Mod_A(\stC)\right)(U) = \Mod_{A^U}(\cat{C}^U)
    \]
    is monoidal with tensor product $\tensor_{A^U}$
    as long as the necessary colimit in the diagram exists.
    The tensor product of free modules in $\Mod_{A^U}(\cat{C}^U)$ exists: it
    is given by
    \[
        (X \tensor A^U) \tensor_{A^U} (Y \tensor A^U) \simeq
        (X \tensor Y) \tensor A^U.
    \]
    By Lemma~\ref{lem:ModAKaroubiCompletionOfFreeModules}, $\Mod_{A^U}(\cat{C}^U)$
    is Karoubi-generated by free modules.
    The monoidal structure on a category uniquely extends to its Karoubi completion:
    given a pair of objects $(X,e), (Y,f)$ picked out by idempotents
    $e:X \to X$ and $f:Y \to Y$, respectively,
    their tensor product is given by
    \[
        (X,e) \tensor (Y,f) \simeq (X \tensor Y,e \tensor f).
    \]
    This extends to a monoidal structure on the stack, because $A$ is
    preserved under pullback along $f:V \to U$, and thus
    \[
        f^\ast(- \tensor_{A^U} -) \simeq f^\ast - \tensor_{A^V} f^\ast -. \qedhere
    \]
\end{proof}

\begin{definition}
    \label{defn:quotientOfOrbifoldTensorCategoryByCentralGroupAction}
    We call $\Mod_A(\stC)$, equipped with the structure of an orbifold
    tensor category constructed in
    Theorem~\ref{thm:quotientByCentralGroupActionIsOrbifoldTensorCategory}
    the \emph{quotient of $\stC$ by $H$},
    and denote it by $\stC^H$.
\end{definition}

\begin{example}
    \label{ex:centralSubgroupInCategorifiedGroupRing}
    Let $\Vec^\omega[G]$ be a categorified group ring associated to a
    Lie group $G$ and $\omega \in \H^3(\B G, \IC^\times)$.
    At certain $\omega$, central subgroups $H \into Z(G) \to G$
    admit a lift to a central group action
    $\Vec[H] \to \DZ \Vec^\omega[G]$. Such a functor induces the data
    of a 3-cocycle $\omega/H \in \H^3(\B (G/H), \IC^\times)$
    which lifts $\omega$ against
    \[
        \H^3(\B (G/H), \IC^\times) \to \H^3(\B G, \IC^\times).
    \]
    The quotient of $\Vec^\omega[G]$ by $H$ is given by
    the categorified group ring
    \[
        {(\Vec^\omega[G])}^H \simeq \Vec^{\omega/H}[G/H].
    \]
\end{example}
As a particular example,
the map $\H^3(\B \SO(3),\IC^\times) \to \H^3(\B \SU(2),\IC^\times)$
is the map $\IZ \xto{\cdot 4} \IZ$.
We show in
Examples~\ref{ex:centreOfSUn}
that the inclusion of the centre $\IZ/2 \into \SU(2)$
admits a unique lift to a central group action
$\Vec[\IZ/2] \into \DZ \Vec^k[\SU(2)]$ precisely when $k \in \IZ$
is a multiple of 4.
The associated quotient is the categorified group ring $\Vec^{k/4}[\SO(3)]$.
Its underlying manisimple category is $\Vec^{k/2}[\RP^3]$, see the end of 
Section~\ref{sec:groupAlgebras}.

\subsection{Constructing Interpolated Fusion Categories}
\label{sec:constructInterpolatedFusionCats}
In this section, we explain how one may
leverage the results of Section~\ref{sec:centralGroupActions}
to construct new families of manifusion categories.
We study the Drinfel'd centres of
categorified group rings $\Vec^\omega[G]$ and fusion categories $\cat{D}$.
The product $\Vec^\omega[G] \boxtimes_\Vect \cat{D}$ has centre
\begin{equation*}
    \DZ(\Vec^\omega[G] \boxtimes_\Vect \underline{\cat{D}}) \simeq
    \DZ\Vec^\omega[G] \boxtimes_\Vect \DZ\underline{\cat{D}}.
\end{equation*}
As fusion categories embed into the bicategory of
manifusion categories, we have an equivalence
\[
    \DZ(\underline{\cat{D}}) \simeq \underline{\DZ(\cat{D})},
\]
so $\DZ(\cat{D})$ may be computed treating $\cat{D}$ as an ordinary
fusion category.
We find braided inclusions $\Vec^{\omega^\prime,\beta}[H] \to \Vec^\omega[G]$
(where $H$ is a finite abelian group, and $\omega^\prime, \beta$ indicate
the associator and braiding, respectively) first in
$\DZ(\cat{D})$, and then look for maps from the \emph{oppositely} braided category
$\Vec^{-\omega^\prime,-\beta}[H]$ to $\DZ(\Vec^\omega[G])$.
The diagonal embedding
\begin{equation*}
    \Vec[H] \into
    \Vec^{-\omega^\prime,-\beta}[H] \boxtimes
    \Vec^{\omega^\prime,\beta}[H] \to
    \DZ(\Vec^\omega[G] \boxtimes \cat{D})
\end{equation*}
now yields a central $H$-action on $\Vec^\omega[G] \boxtimes_\Vect \cat{D}$.
We denote the quotient by
\begin{equation*}
    \Vec^\omega[G] \boxtimes_{\Vec[H]} \cat{D} \define
    \left(\Vec^\omega[G] \boxtimes_\Vect \cat{D}\right)^H.
\end{equation*}
It is an orbifold tensor category whose underlying
orbifold is given by $[G \times \Irr(\cat{D})/H]$.
The \emph{free algebra morphism}
\begin{equation*}
    - \tensor \IC[H]: \Vec^\omega[G] \boxtimes \cat{D} \to
    \Vec^\omega[G] \boxtimes_{\Vec[H]} \cat{D}
\end{equation*}
is given by tensoring with the group algebra of $H$, and equipping
the result with the free algebra structure. It is monoidal,
so the fusion rules and associator of its essential image are
easy to determine from those on the domain.
When the $H$-action is free, this functor is in fact essentially
surjective, and the monoidal structure on the quotient
is completely described by it.

When drawing pictures of the space of simple objects of
interpolated fusion categories
constructed as above, we mark the image of the objects of $\cat{D}$ under the free
algebra morphism. These simple objects then carry the same monoidal structure as
the corresponding simples in $\cat{D}$. The remaining category ``interpolates''
between them. This is why we call the resulting categories
interpolated fusion categories.

We now compute the Drinfel'd centres of categorified group rings.
Let $G$ be a compact connected Lie group. In~\cite{weis2022centre},
we computed the centre of its invertible part $G_\omega = {(\Vec^\omega[G])}^\times$
in the setting of smooth 2-groups.
All of the material in this section is discussed in more detail there.
To construct interpolated fusion categories,
only the invertible part of the centre is of relevance to us.
\begin{lem}
    \label{lem:InvertiblesofDZisDZofInvertibles}
    \begin{equation*}
        {(\DZ \Vec^\omega[G])}^\times = \DZ ({\Vec^\omega[G]}^\times)
    \end{equation*}
\end{lem}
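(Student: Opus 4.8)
The plan is to exhibit mutually inverse braided monoidal functors between the two sides. Write $\cat{C} \define \Vec^\omega[G]$ for the tensor stack and $\cat{C}^\times = G_\omega$ for its invertible part, which is presented by the stack $\ger{G}_\IC$ of Section~\ref{sec:categorifiedGroupRings}: the objects $(f,\tau)\cdot\Cinf \in \ger{G}_\IC(U)$ are exactly those $U$-families that are pointwise a single invertible simple $\IC_{f(u)}$, and are thus invertible with inverse $u \mapsto \IC_{f(u)^{-1}}$. The key structural input is Proposition~\ref{prop:SkyIsCauchyCompletionOfGCPlus}, which identifies $\cat{C} = \Sky^\ger{G}_G$ as the Cauchy completion of $\ger{G}_\IC^+$; in particular every $U$-family is locally a direct summand of a finite direct sum of invertible objects.

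First I would build the restriction functor $R\colon (\DZ\cat{C})^\times \to \DZ(\cat{C}^\times)$. The canonical monoidal morphism $\DZ\cat{C} \to \cat{C}$ is strong monoidal, so it carries an invertible object $(X,\gamma)$ of the centre to an invertible object $X$ of $\cat{C}$, i.e.\ $X \in \cat{C}^\times$; restricting the half-braiding $\gamma\colon X \tensor - \to - \tensor X$ to the sub-tensor-stack $\cat{C}^\times$ of invertibles yields an object of $\DZ(\cat{C}^\times)$. This assignment is manifestly braided monoidal and functorial.

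The substance is the construction of a quasi-inverse $E\colon \DZ(\cat{C}^\times) \to (\DZ\cat{C})^\times$, extending a half-braiding defined against invertibles to one against all objects. Given $(X,\gamma) \in \DZ(\cat{C}^\times)$ and an arbitrary $U$-family $\f{Y}$, I would choose locally a decomposition exhibiting $\f{Y}$ as a summand of $\DirSum_i \f{L}_i$ with each $\f{L}_i$ invertible (Corollary~\ref{cor:locally_karoubi_completion_distributes}), and define the component $\gamma_{\f{Y}}$ by additivity and naturality from the given components $\gamma_{\f{L}_i}$, restricted along the splitting idempotent. Independence of the chosen local decomposition follows because two such decompositions share their invertible summands in a neighbourhood (Corollary~\ref{cor:locally_share_summand}) and $\gamma$ is natural; the locally defined half-braidings then agree on overlaps and glue to a global smooth natural transformation by descent. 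The hexagon identity for the extension is inherited from the hexagon for $\gamma$ on invertibles, since $\tensor$ is additive and compatible with the $\Vect$-module structure, so both sides of the hexagon are determined by their restriction to invertible summands. Finally $E(X,\gamma)$ is invertible in $\DZ\cat{C}$: its underlying object $X$ has inverse $X^\vee$, and a half-braiding is automatically a natural isomorphism, so $(X,\gamma)$ admits an inverse $(X^\vee,\bar\gamma)$ with $\bar\gamma$ induced from $\gamma$; hence $E$ lands in $(\DZ\cat{C})^\times$.

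I expect the well-definedness and smoothness of this extension to be the main obstacle: one must verify that a smooth family of half-braidings against the $2$-group $G_\omega$ assembles into a genuine smooth half-braiding against arbitrary $U$-families of twisted skyscraper sheaves. This is exactly where the local structure theory of Section~\ref{sec:orbisimples} --- the local normal form of Corollary~\ref{cor:locally_karoubi_completion_distributes} together with Corollary~\ref{cor:locally_share_summand} --- does the work, guaranteeing both that the pointwise-defined data is consistent across overlapping local decompositions and that it descends to a global section. With $E$ in hand, the identities $R\circ E \simeq \id$ and $E\circ R \simeq \id$ are immediate: $R$ recovers the original half-braiding on invertibles, and $E$ reconstructs the unique extension, since a half-braiding against all of $\cat{C}$ is determined by its restriction to the generating invertibles. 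This establishes the claimed equality.
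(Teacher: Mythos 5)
Your proposal is correct and takes essentially the same route as the paper: since the simple objects of $\Vec^\omega[G]$ are exactly the invertible ones and every family is locally a direct sum of invertible basic mobiles (Proposition~\ref{prop:SkyIsCauchyCompletionOfGCPlus}), restriction of half-braidings to the invertible part is an equivalence, with the inverse given by additive extension --- the paper compresses your entire functor $E$ into the single sentence ``by naturality, this data determines $\gamma$ up to equivalence.'' The one point you gloss (as does the paper) is that well-definedness of the additive extension requires naturality of $\gamma$ against \emph{non-invertible} morphisms between invertibles (transition matrices between two local decompositions have such entries), which is not part of the data of $\DZ({\Vec^\omega[G]}^\times)$ but follows by checking pointwise, using that morphisms of families embed faithfully into matrix-valued functions (Proposition~\ref{prop:locallyMorphismsAreFamiliesOfMatrices}) and that any non-zero map between invertible simples is an isomorphism.
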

\begin{proof}
    All simple objects of $\Vec^\omega[G]$ are invertible and vice versa.
    A half-braiding $\gamma$ of $\IC_g \in \Vec^\omega[G]$
    determines a half-braiding for $\IC_g \in \Vec^\omega[G]^\times$.
    By naturality, this data determines $\gamma$ up to equivalence.
\end{proof}

The invertible part of the Drinfel'd centre is a braided categorical group.
We recall the characterisation of these by quadratic forms.
Any braided categorical group is equivalent to $\Vec^\omega[H]^\times$
for some finite group $H$ and cocycle
$\omega \in \H^3(\B H, \IC^\times) \iso \H^4(\B H, \IZ)$.
If $H$ is abelian, the tensor product is commutative at the level
of isomorphism classes, so one may ask for braidings.
A braiding $\beta$ equips $H$ with a quadratic form
\begin{align*}
    q: H & \to \IC^\times               \\
    h    & \mapsto \beta_{\IC_h,\IC_h},
\end{align*}
sending each group element to the self-braiding of the associated object,
viewed as a morphism in $\End(\IC_{2h}) \iso \IC$.
A map $q:H \to \IC^\times$ is called a \emph{quadratic form} if $q(nx)=n^2q(x)$ and
the associated polarisation form 
\begin{align*}
    \sigma: H \times H &\to \IC^\times \\
                (h,h') &\mapsto q(h+h') - q(h) - q(h')
\end{align*}
is bilinear.
We denote the abelian group of quadratic forms by $\Quad(H,\IC^\times)$.
By a classical Theorem of Eilenberg and MacLane~\cite{eilenberg1954groups}, this
assignment establishes an isomorphism between equivalence classes
of braided categorical groups that categorify $H$ and $\Quad(H,\IC^\times)$.

\begin{example}
    \label{ex:quadformsonZn}
    All quadratic forms on $\IZ/n$ are of the form
    \begin{equation*}
        k \mapsto q^{k^2}
    \end{equation*}
    for some complex number $q$ of unit length.
    In order for this to be a well defined map, $q$ must satisfy
    $q^{n^2} = 1$. In order for it to be a quadratic form,
    it must satisfy $q^{2n}=1$.
    This establishes a bijection between quadratic forms on $\IZ/n$
    and $n$-th/$2n$-th roots of unity for odd/even $n$.

    In the case $n=2$, this means quadratic forms correspond to
    powers of $\exp(\pi/4)=\ii$. These give the four braided
    monoidal structures on $\IZ/2$:
    $\IZ/2$-graded vector spaces $\Vec_{\IZ/2}$, the semion $\Semi$,
    super vector spaces $\sVec$ and the anti-semion $\bSemi$.
\end{example}

The set of braidings on a particular fusion category
$\Vec^\omega[H]$ is freely and transitively acted upon by
the set of bicharacters $H \times H \to \IC^\times$,
denoted $\Bilin(H,\IC^\times)$.
The action is given by pointwise multiplication.
The braidings form a (possibly empty) torsor for
the group of bicharacters.
In other words, there is an exact sequence beginning in
\begin{equation*}
    0 \to \Bilin(H,\IC^\times) \to \Quad(H,\IC^\times) \to \H^3(\B H,\IC^\times).
\end{equation*}

\begin{example}
    \label{ex:braidingZn}
    The bicharacters of $\IZ/n$ are given by
    \begin{equation*}
        (i,j) \mapsto \tilde{q}^{ij}
    \end{equation*}
    for an $n$-th root of unity $\tilde{q}$.
    Combined with Example~\ref{ex:quadformsonZn}, this tells
    us that for odd $n$, $\Vec^\omega[\IZ/n]$ only admits
    a braiding if $\omega$ is trivial, while for even $n$,
    there are $n$ braidings for $\omega \in \{0,n/2\}$,
    where $n/2$ denotes
    the unique element of order 2 in $\H^4(\B \IZ/n,\IZ)$.
\end{example}

We finish this section by quickly recalling the centres computed 
in~\cite{weis2022centre}.
Let $G$ be a compact simply-connected Lie group and $\omega$ a
cocycle in $\mathrm{H}^3(\B G,\IC^\times)=\mathrm{H}^4(\B G,\IZ)$.
Let $T_G \into G$ be a maximal torus of $G$, then $\omega$
defines a real-valued inner product
$I_\omega:\mathfrak{t} \tensor \mathfrak{t} \to \IR$
on the Lie algebra $\mathfrak{t}$ of $T_G$. This
identification is chosen such that the norm
of every lift of the identity of $G$ is an \emph{even} integer.
Pick a lift $\bar{z}$ of each element $z \in Z(G)$ against the map
$\mathfrak{t} \to T_G \to G$.
\begin{thm}[{\cite[Thm 5.3]{weis2022centre}}]
    \label{thm:centreOfCategorifiedGroupRingInManifusionChapter}
    The Drinfel'd centre of $\Vec^\omega[G]^\times$ is
    the braided categorical group with
    underlying group $Z(G)$ and quadratic form given by
    \begin{align*}
        q: Z(G) & \to \IC^\times                                       \\
        z       & \mapsto \exp(\tfrac{1}{2}I_\omega(\bar{z},\bar{z})),
    \end{align*}
    where $\exp:\IR \to \IC^\times$ is shorthand for
    the exponential map $x \mapsto e^{\ii \pi x}$ with
    kernel $\IZ \into \IR$.
\end{thm}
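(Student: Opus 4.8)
The plan is to lean on the Eilenberg--MacLane classification recalled just above: a braided categorical group categorifying a finite abelian group $H$ is determined up to equivalence by a single quadratic form $q\in\Quad(H,\IC^\times)$. By Lemma~\ref{lem:InvertiblesofDZisDZofInvertibles} the invertible part of $\DZ\Vec^\omega[G]$ agrees with $\DZ(\Vec^\omega[G]^\times)=\DZ(G_\omega)$, which is such a braided categorical group. It therefore suffices to (i) identify its underlying group of isomorphism classes of objects and (ii) compute the resulting self-braiding quadratic form. Throughout I would use the smooth $2$-group presentation $\Vec^\omega[G]^\times=G_\omega$ from Section~\ref{sec:categorifiedGroupRings}, whose objects are the points of $G$, with $\Aut(\IC_g)=\IC^\times$ and associator $\omega(g,h,k)$.

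For (i): an object of the centre is a simple $\IC_g$ together with a smooth half-braiding $\gamma$, whose component $\gamma_h\colon\IC_g\tensor\IC_h\isoto\IC_h\tensor\IC_g$ is an isomorphism $\IC_{gh}\isoto\IC_{hg}$. Such a morphism exists only when $gh=hg$, so requiring $\gamma$ to be defined and smooth on the whole connected group $G$ forces $g\in Z(G)$; conversely each $z\in Z(G)$ admits half-braidings once the relevant obstruction in $\H^2(G,\IC^\times)$ is seen to vanish in the smooth connected setting. This identifies the underlying group of $\DZ(G_\omega)$ with $Z(G)$.

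For (ii): fix $z\in Z(G)$. The hexagon axiom turns the scalars $(\gamma_h)_h$ into a smooth $\tau_z$-twisted homomorphism $G\to\IC^\times$, where $\tau_z(h,k)=\omega(h,k,z)\,\omega(z,h,k)\,\omega(h,z,k)^{-1}$ is the $2$-cocycle obtained by transgressing $\omega$ against $z$; the half-braidings for $z$ then form a torsor under the smooth characters of $G$, and the self-crossing $\beta_{\IC_z,\IC_z}\in\End(\IC_{2z})\iso\IC$ extracts $q(z)$. I would compute this scalar by restricting all the data to a maximal torus $T_G\ni z$ and then to the one-parameter subgroup through a chosen lift $\bar z\in\mathfrak{t}$. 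On $T_G$ the transgressed cocycle is controlled by the $W$-invariant even inner product $I_\omega$ attached to $\omega\in\H^4(\B G,\IZ)$, and running the half-braiding around its self-crossing produces exactly $\exp(\tfrac12 I_\omega(\bar z,\bar z))$ in the convention $\exp(x)=e^{\ii\pi x}$. The evenness and integrality of $I_\omega$ on the kernel lattice of $\mathfrak{t}\to T_G$ then guarantee both that the answer is independent of the lift $\bar z$ and that the polarisation of $q$ is bilinear, so that $q$ is a genuine quadratic form.

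The crux is the explicit evaluation in (ii): converting the abstractly defined self-braiding --- built from the associator $\omega$ together with a trivialisation of $\tau_z$ --- into the geometric inner product $I_\omega$. This demands a careful simplicial cocycle representative for $\omega$ (in the spirit of the triple $(\lambda,\mu,\omega)$ of Section~\ref{sec:categorifiedGroupRings}), close bookkeeping of how transgression to $G$ and the subsequent passage to the self-braiding interact with the normalisation forcing identity-lifts to have even norm, and care with the factor $\tfrac12$ and the sign conventions. The well-definedness checks for $q$ are the easier part, and the whole computation is carried out in detail in~\cite{weis2022centre}.
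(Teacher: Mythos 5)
This paper does not actually prove Theorem~\ref{thm:centreOfCategorifiedGroupRingInManifusionChapter}: it is imported wholesale from \cite[Thm 5.3]{weis2022centre} (the text says it is ``quickly recalling the centres computed in'' that reference, and no proof follows the statement). So there is no in-paper argument to compare you against; your proposal has to stand or fall on its own.

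As a standalone proof it has a genuine gap, in two places. The frame is fine --- Lemma~\ref{lem:InvertiblesofDZisDZofInvertibles} plus the Eilenberg--MacLane classification by quadratic forms, and your step (i) correctly forces $g \in Z(G)$ since a component $\gamma_h\colon\IC_{gh}\to\IC_{hg}$ can only be nonzero when $gh=hg$. But your existence claim for half-braidings is imprecise in a way that matters: read as cohomology of the manifold $G$, the group $\H^2(G,\IC^\times)\iso\H^3(G,\IZ)\iso\IZ$ does \emph{not} vanish for $G$ compact, simple and simply connected, so ``the relevant obstruction is seen to vanish'' is not a statement one can wave at. What the argument actually needs is the vanishing of smooth (Segal--Mitchison) group cohomology, $\H^2_{\mathrm{sm}}(G,\IC^\times)\iso\H^3(\B G,\IZ)=0$, together with $\Hom_{\mathrm{sm}}(G,\IC^\times)\iso\H^2(\B G,\IZ)=0$; the second identity is also what makes the half-braiding unique and hence $q$ well defined, and neither is stated in your proposal. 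The decisive gap, however, is step (ii): the identity $\beta_{\IC_z,\IC_z}=\exp(\tfrac{1}{2}I_\omega(\bar z,\bar z))$ \emph{is} the theorem, and you never compute it. There is no formula for the transgressed cocycle $\tau_z$ in terms of the simplicial representative $(\lambda,\mu,\omega)$, no evaluation of the self-crossing scalar after restricting to $T_G$ and the one-parameter subgroup through $\bar z$, and no verification of independence of the lift beyond invoking evenness of $I_\omega$. Closing by outsourcing ``the whole computation'' to \cite{weis2022centre} is circular, since that citation is exactly the result to be proved. What you have is a correct strategy outline --- very plausibly the same strategy as the cited paper --- but not a proof.
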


If $G$ is also \emph{simple}, then $\mathrm{H}^4(\B G, \IZ) = \IZ$,
with preferred generator the class inducing the smallest
positive-definite inner product on its Lie algebra
(and that of its maximal torus).
For such groups, we use $k \in \IZ$ to denote $k$
times that preferred generator in its degree 4 cohomology.
\begin{example}
    \label{ex:centreOfSUn}
    As the level $k$ varies, the Drinfel'd centre of
    $\Vec^k[\SU(n)]^\times$
    displays all braided monoidal structures
    on $Z(\SU(n))=\IZ/n$. The braided
    categorical group corresponds to the quadratic form
    \begin{align*}
        q: \IZ/n & \to \IC^\times                  \\
        1        & \mapsto \exp(k\tfrac{n-1}{2n}).
    \end{align*}

    For $\SU(2)$, the centre cycles through the 4 braided monoidal
    structures on $\IZ/2$:
    \begin{equation*}
        \DZ \Vec^k[\SU(2)]^\times =
        \begin{cases}
            {\Vec[\IZ/2]}^\times              & k \equiv 0 \mod 4  \\
            {\Semi}^\times            & k \equiv 1 \mod 4  \\
            {\sVec}^\times                    & k \equiv 2 \mod 4  \\
            {\bSemi}^\times & k \equiv 3 \mod 4.
        \end{cases}
    \end{equation*}
\end{example}
\begin{example}
    \label{ex:centreOfOddSpin}
    For odd Spin groups $\mathrm{Spin}(2n+1)$, the centre $\DZ \Vec^k[\mathrm{Spin}(2n+1)]^\times$
    alternates between $\Vec[\IZ/2]^\times$ ($k$ even) and $\sVec^\times$ ($k$ odd).
\end{example}

Let $T$ be a compact torus, and $\omega$ a cocycle
in $\mathrm{H}^4(\B T,\IZ)$. Denote by $\mathfrak{t}$ its Lie algebra,
and by $\tau=I_\omega(-,\cdot):\mathfrak{t} \to \mathfrak{t}^\ast$ the homomorphism
induced by the inner product corresponding to $\omega$.
Write $\Lambda=\Hom(T,U(1)) \subset \mathfrak{t}^\ast$ for the group of
characters of $T$ and $\Pi=\Hom(U(1),T) \subset \mathfrak{t}$ for its group of
cocharacters.
\begin{thm}[{\cite[Prop 4.1]{weis2022centre}}]
    The Drinfel'd centre of $\Vec^\omega[T]^\times$ is
    the braided categorical group with underlying group the
    quotient
    \begin{equation*}
        \frac{\mathfrak{t} \dirSum \Lambda}{\Pi},
    \end{equation*}
    along the inclusion $\pi \mapsto (\pi, \tau(\pi))$,
    and quadratic form induced by
    \begin{align*}
        q: \mathfrak{t} \dirSum \Lambda & \to \IC^\times                                       \\
        (x, \lambda)                    & \mapsto \lambda(x) \exp(-\tfrac{1}{2}I_\omega(x,x)).
    \end{align*}
\end{thm}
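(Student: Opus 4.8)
Following \cite{weis2022centre}, the plan is to compute $\DZ\Vec^\omega[T]^\times$ by classifying smooth half-braidings directly, using the universal cover $\mathfrak{t} \onto T = \mathfrak{t}/\Pi$ to linearise the associator. First I would identify the objects. Since $\Vec^\omega[T]^\times$ is a smooth categorical group, each simple $\IC_t$ ($t\in T$) is invertible and $\IC_t \tensor \IC_s$ agrees with $\IC_s \tensor \IC_t$ as the simple object $\IC_{ts}$. A half-braiding for $\IC_t$ is thus a smooth assignment of scalars, i.e.\ a smooth function $\gamma:T \to \IC^\times$, and an object of the centre is a pair $(\IC_t,\gamma)$; morphisms are the smooth natural isomorphisms compatible with the half-braidings. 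Isomorphism classes are therefore governed by the two hexagon axioms relating $\gamma$ to the associator $\omega$.

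The heart of the argument is solving the hexagon. Pulled back along $\mathfrak{t} \onto T$, the associator $\omega$ becomes cohomologically the inner product $I_\omega$, so on the cover a half-braiding is forced to be an honest (twisted) character determined by the pair of a point $x \in \mathfrak{t}$ and a functional in $\mathfrak{t}^\ast$; this is the linear model underlying the simply connected case of Theorem~\ref{thm:centreOfCategorifiedGroupRingInManifusionChapter}. Descending to $T$, the smoothness of $\gamma$ forces the functional part to be an honest character $\lambda \in \Lambda$ (integrality against $\Pi$), while a lift $x \in \mathfrak{t}$ of $t$ is only well-defined up to the cocharacter lattice $\Pi$. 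Keeping the function $\gamma$ fixed while replacing $x$ by $x+\pi$ forces $\lambda \mapsto \lambda + \tau(\pi)$, which is precisely the embedding $\pi \mapsto (\pi,\tau(\pi))$. Hence isomorphism classes biject with $(\mathfrak{t}\dirSum\Lambda)/\Pi$, the claimed underlying group.

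It then remains to read off the braided structure. By the Eilenberg--MacLane classification recalled above, the braiding is encoded by the self-braiding $\beta_{(\IC_t,\gamma),(\IC_t,\gamma)} = \gamma(t)$, and solving the hexagon on the diagonal evaluates this to $\lambda(x)\exp(-\tfrac12 I_\omega(x,x))$, giving the quadratic form $q$. A direct computation shows the associated polarisation is the bilinear form $((x,\lambda),(x',\lambda')) \mapsto \lambda(x')\,\lambda'(x)\,\exp(-I_\omega(x,x'))$; both $q$ and its polarisation descend to the quotient precisely because the normalisation of $I_\omega$ makes $I_\omega(\pi,\pi)$ even for every $\pi \in \Pi$, so that $\exp(\tfrac12 I_\omega(\pi,\pi)) = 1$.

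The main obstacle is the hexagon step: showing that over the positive-dimensional, non-simply-connected torus the smooth half-braidings are exactly the $\tau$-twisted characters, and bookkeeping how the degree-$3$ associator $\omega$ transgresses into the degree-$2$ braiding data. This is where the cocharacter lattice $\Pi$ genuinely enters (it is invisible in the simply connected setting of Theorem~\ref{thm:centreOfCategorifiedGroupRingInManifusionChapter}), and the remaining verifications (that $q$ is quadratic, the $\Pi$-invariance, and naturality in $\omega$) are routine.
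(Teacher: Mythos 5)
First, a point of order: the paper itself contains no proof of this theorem. It is imported verbatim from the companion paper \cite{weis2022centre} (with \cite{freed2010topological} cited for a sketch), so your attempt cannot be compared against an in-paper argument and must be judged on its own terms.

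Your skeleton --- linearise along $\mathfrak{t} \onto T$, solve the hexagon upstairs, descend and track the $\Pi$-ambiguity of lifts --- is the right shape, and your end checks are sound: the polarisation of $q$ is indeed $((x,\lambda),(x',\lambda')) \mapsto \lambda(x')\,\lambda'(x)\,\exp(-I_\omega(x,x'))$, and $q$ descends to $(\mathfrak{t}\dirSum\Lambda)/\Pi$ because a class in $\H^4(\B T,\IZ)\cong\mathrm{Sym}^2(\Lambda)$ induces an even bilinear form on $\Pi$. The genuine gap is the step you yourself label ``the main obstacle'', together with the formulation that is supposed to feed into it. For $\omega \neq 0$, a half-braiding is \emph{not} a smooth function $\gamma:T\to\IC^\times$: a nonzero class in $\H^3(\B T,\IC^\times)$ admits no representative by a smooth function on $T^{\times 3}$, which is exactly why in this paper the associator of $\Vec^\omega[G]$ exists only as cocycle data on a simplicial cover of $\B G$ (Section~\ref{sec:categorifiedGroupRings}), i.e.\ as a multiplicative gerbe. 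Accordingly, a half-braiding for $\IC_t$ is a section of a line bundle over $T$ determined by that multiplicative gerbe --- equivalently, function-level data on the cover $\mathfrak{t}$ together with equivariance data against a chosen trivialisation of the pulled-back associator --- and the hexagon is an equation of such sections, not of functions. (Your function picture is literally correct only when $\omega=0$, where it reproduces the paper's computation $\pi_0\DZ\Vec[S^1]^\times = T \dirSum \Lambda$.) All the substantive claims in your second paragraph --- that solutions upstairs are parameterised by pairs in $\mathfrak{t}\dirSum\mathfrak{t}^\ast$, that descent forces the functional to lie in $\Lambda$, that replacing $x$ by $x+\pi$ shifts it by $\tau(\pi)$, and that the self-braiding evaluates to $\lambda(x)\exp(-\tfrac{1}{2}I_\omega(x,x))$ --- are statements about this gerbe-level hexagon, and your proposal asserts their outcomes rather than proving them. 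In short: you correctly predict the answer and correctly isolate where $\Pi$ and $\Lambda$ must enter, but the only nonroutine part of the proof (the transgression bookkeeping for half-braidings of a 2-group presented by cover data) is precisely the part left undone, so as a proof the proposal is incomplete.
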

(See also~\cite{freed2010topological} for a sketch proof of this.)

\begin{example}
    The Drinfel'd centre of $\Vec[S^1]^\times$
    (with trivial associator) has underlying group
    \begin{equation*}
        \pi_0 \DZ \Vec[S^1]^\times = \frac{\mathfrak{t}}{\Pi} \dirSum \Lambda = T \dirSum \Lambda = S^1 \dirSum \IZ
    \end{equation*}
    whose quadratic form simply pairs the character with the element of $T$:
    $q(x,\lambda)=\lambda(x)$. 

    The Drinfel'd centre of $\Vec^k[S^1]^\times$ for $k \neq 0$ splits
    as a direct sum $\IR \dirSum \Lambda/\Pi$. The second summand is a
    finite group isomorphic to $\IZ/2k$. The braiding on it is
    encoded by the quadratic form
    \begin{align*}
        q: \IZ/2k & \to \IC^\times               \\
        1         & \mapsto \exp(\tfrac{1}{4k}).
    \end{align*}
\end{example}

Let $G$ be any compact connected Lie group. It admits a cover $\pi:\tilde{G} \to G$,
where $\tilde{G}=T \times G_i$ is a product of a torus $T$ and
simply-connected groups $\{G_i\}$.
The cover can be chosen such that $Z \define \ker(\tilde{G} \to G)$
is a finite subgroup of the centre of $\tilde{G}$.
Let $\omega$ be a cocycle in $\H^3(\B G,\IC^\times)$.
The cocycle pulled back to $\tilde{G}$, $\pi_\ast \omega$, decomposes as a product of
a cocycle $\omega_T \in \H^3(\B T,\IC^\times)$ and cocycles
$\omega_i \in \H^3(\B G_i,\IC^\times)$.
The Drinfel'd centre of $\Vec^{\pi^\ast \omega}[\tilde{G}]^\times$ is the product
\begin{equation*}
    \DZ \Vec^{\pi^\ast \omega}[\tilde{G}]^\times =
    \left(\boxtimes_i \DZ \Vec^{\omega_i}[G_i]^\times\right) \boxtimes
    \DZ \Vec^{\omega_T}[T]^\times.
\end{equation*}
There is a braided inclusion $Z \into \DZ \Vec^{\pi^\ast \omega}[\tilde{G}]^\times$,
where $Z$ carries the trivial braiding.
We denote by $Z^\prime$ the subcategory on objects that braid trivially with
all of $Z$. This category has a braided $Z$-action.
\begin{thm}[{\cite[Thm 5.7]{weis2022centre}}]
    The Drinfel'd centre of $\Vec^\omega[G]^\times$ is
    the quotient $Z^\prime/Z$ equipped with the quadratic form
    induced by that on $Z^\prime$.
\end{thm}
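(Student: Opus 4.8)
The plan is to compute $\DZ\Vec^\omega[G]^\times$ by descent along the central extension $Z \to \tilde{G} \xto{\pi} G$. The starting point is that $\Vec^\omega[G]^\times$ is the quotient of $\Vec^{\pi^\ast\omega}[\tilde{G}]^\times$ by the central group action of $Z$, in the sense of Section~\ref{sec:centralGroupActions}: the isotropic inclusion $Z \into \DZ\Vec^{\pi^\ast\omega}[\tilde{G}]^\times$ (carrying the trivial braiding) composes with the forgetful functor to give a central $Z$-action, and $(\Vec^{\pi^\ast\omega}[\tilde{G}]^\times)^Z \simeq \Vec^\omega[G]^\times$ by (the invertible part of) Example~\ref{ex:centralSubgroupInCategorifiedGroupRing}, since $\tilde{G}/Z = G$ and $(\pi^\ast\omega)/Z = \omega$. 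The centre of the cover is already known: it is the product $\big(\boxtimes_i \DZ\Vec^{\omega_i}[G_i]^\times\big) \boxtimes \DZ\Vec^{\omega_T}[T]^\times$ computed factorwise by the quoted theorems. Thus everything reduces to understanding how the braided categorical group $\DZ$ transforms under condensation of the isotropic subgroup $Z$.

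First I would set up the correspondence between half-braidings upstairs and downstairs. An object of $\DZ\Vec^\omega[G]^\times$ is a central object $\IC_g$, $g \in Z(G)$, equipped with a half-braiding $\gamma$ (centrality is forced, since $\IC_g \tensor \IC_h \simeq \IC_h \tensor \IC_g$ requires $gh = hg$ for all $h$); pulling back along $\pi$ turns $\gamma$ into a half-braiding on $\IC_{\tilde{g}}$ for a lift $\tilde{g} \in Z(\tilde{G})$, hence an object of $\DZ\Vec^{\pi^\ast\omega}[\tilde{G}]^\times$. The content is that the half-braidings arising this way are exactly those compatible with the $Z$-action on $\Vec^{\pi^\ast\omega}[\tilde{G}]^\times$, and that this compatibility is equivalent to the object braiding trivially with all of $Z$ --- i.e.\ to lying in $Z^\prime$. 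The forward direction is descent: a half-braiding on the quotient restricts to a $Z$-transparent one; the reverse direction uses that a $Z$-transparent half-braiding supplies precisely the equivariance datum needed to descend $\IC_{\tilde{g}}$ through the quotient functor.

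Next I would pass to the quotient. Two objects of $Z^\prime$ differing by the image of $Z$ descend to isomorphic objects of $\DZ\Vec^\omega[G]^\times$ --- condensing $Z$ identifies such an orbit --- so the underlying group of the centre downstairs is $Z^\prime/Z$. For the quadratic form, the key point is that $Z$ is isotropic: its self-braidings and its mutual braidings with $Z^\prime$ are trivial, so for $a \in Z^\prime$ and $z \in Z$ one has $q(a+z) = q(a)\,q(z)\,\sigma(a,z) = q(a)$, and the form descends to $Z^\prime/Z$. Identifying this induced form as the claimed one then follows by transporting the explicit formulas of the factorwise centres through the quotient, using the pairing between $T$ and $\Lambda$ and the centre formula for the simple factors.

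The hardest step is the two-way half-braiding correspondence of the middle paragraph, carried out smoothly rather than merely on isomorphism classes. Because $G$ is non-abelian, naturality of $\gamma$ is imposed against all objects $\IC_h$ with $h \in G$, not just the central ones, so one must check that transparency with respect to $Z$ genuinely captures descent of this full naturality datum; and because the centre mixes a continuous torus contribution with the finite contributions of the $Z(G_i)$, the descent must be verified to respect the smooth (topological) structure of the half-braiding, not only its restriction to points. I expect the bookkeeping of this smooth equivariant descent --- rather than any single algebraic identity --- to be the main obstacle.
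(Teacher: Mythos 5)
First, a fact you could not have known: this paper contains no proof of the statement. The theorem is quoted from the companion work~\cite{weis2022centre} (Thm 5.7 there), where it is established in the setting of smooth 2-groups; here it serves only as an input to the construction of interpolated fusion categories. So there is no in-paper argument to compare yours against --- only the surrounding setup (the braided inclusion $Z \into \DZ\Vec^{\pi^\ast\omega}[\tilde{G}]^\times$ with trivial braiding, the centraliser $Z'$, the braided $Z$-action), which does point at exactly the condensation strategy you outline.

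On the merits of your proposal: the skeleton --- realise $\Vec^\omega[G]^\times$ as the quotient of $\Vec^{\pi^\ast\omega}[\tilde{G}]^\times$ by the isotropic $Z$, identify the centre of the quotient with $Z$-transparent objects modulo $Z$, and descend the quadratic form --- is the natural route, and your last-paragraph computation $q(a+z)=q(a)\,q(z)\,\sigma(a,z)=q(a)$ for $a \in Z'$, $z \in Z$ is correct. Two things need repair, though. First, your ``pulling back along $\pi$'' is not available as stated: for convolution products the grading is \emph{covariantly} functorial, so the only monoidal functor in sight is the quotient/free-module functor $\pi_!:\Vec^{\pi^\ast\omega}[\tilde{G}]^\times \to \Vec^\omega[G]^\times$; there is no monoidal functor in the other direction through which half-braidings transport. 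For the invertible objects at hand this is fixable by hand --- the preimage of $Z(G)$ in $\tilde{G}$ is exactly $Z(\tilde{G})$, $\pi$ is a covering map, and one can transport components via $\tilde\gamma_{\tilde{h}} \define \gamma_{\pi(\tilde{h})}$ --- but the honest argument is descent along $\pi_!$, not pullback. Second, and more seriously, the two-way correspondence between half-braidings downstairs and $Z$-transparent half-braidings upstairs, carried out for \emph{smooth} half-braidings (naturality against all families, not merely against objects over the point), is precisely the content of the theorem; you name it as the main obstacle and then defer it, so what you have is a strategy outline rather than a proof. A smaller gap: the identification $(\pi^\ast\omega)/Z = \omega$ is not automatic, since $\H^3(\B G,\IC^\times) \to \H^3(\B\tilde{G},\IC^\times)$ can fail to be injective; you must take the braided inclusion $Z \into \DZ\Vec^{\pi^\ast\omega}[\tilde{G}]^\times$ to be the one induced by the fact that $\pi^\ast\omega$ literally descends to $\omega$, so that the quotient cocycle is $\omega$ on the nose rather than some other lift of $\pi^\ast\omega$.
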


\begin{example}
    For $\SO(4)$, we may pick the cover by $\mathrm{Spin}(4)=\SU(2) \times \SU(2)$
    with kernel $\IZ/2$.
    The degree 4 cohomology of $\SO(4)$ is generated by the Euler class $\chi$
    and the first Pontryagin class $p_1$, while $\mathrm{H}^4(\B \SU(2),\IZ)$ is generated
    by the second Chern class $c$.
    The pullback map sends $p_1$ to the sum $2 c_l + 2 c_r$ of
    twice the generator in each factor, and $\chi$ to the difference
    $c_l-c_r$.
    For the cohomology class $\omega=a p_1 + b \chi$,
    one finds that the quadratic form on $Z(\SU(2) \times \SU(2))$ describing
    the braiding on $\DZ \Vec^\omega[\SU(2) \times \SU(2)]^\times$ is
    \begin{align*}
        q: \IZ/2 \times \IZ/2 & \to \IC^\times       \\
        (1,0)                 & \mapsto \ii^{2a + b} \\
        (0,1)                 & \mapsto \ii^{2a - b} \\
        (1,1)                 & \mapsto 1.
    \end{align*}
    The value on $(1,1)$ is a mere reflection of the fact that this cohomology class
    on $\SU(2) \times \SU(2)$ comes from a cohomology class on its quotient by
    the diagonal copy of $\IZ/2$.

    Whether the object corresponding to $(1,0)$ braides trivially with $(1,1)$ is
    equivalent to whether $q(1,0)=q(0,1)$. This happens if $b$ is even,
    as then $2a+b \equiv 2a-b \mod 4$.
    As a result, the size of the centre varies with the choice of cohomology class:
    \begin{equation*}
        \DZ \Vec^{a \cdot p_1 + b \cdot \chi}[\SO(4)]^\times =
        \begin{cases}
            \Vec[\IZ/2]^\times & 2a+b  \equiv 0 \mod 4 \\
            \sVec^\times       & 2a+b  \equiv 2 \mod 4 \\
            \Vec^\times        & \text{ else.}
        \end{cases}
    \end{equation*}
\end{example}

Given the characterisation of braided 2-groups via quadratic forms, the
construction of interpolated fusion categories reduces to computing
and comparing quadratic forms on finite groups.
If we find a braided 2-group $(H,q) \in \DZ \cat{D}$ in the centre of
a fusion category $\cat{D}$, and the oppositely braided 2-group
$(H,-q) \in \DZ \Vec^\omega[G]$ in the centre of a categorified group ring,
we can form an interpolated fusion category
$\Vec^\omega[G] \boxtimes_{\Vec[H]} \cat{D}$.

\subsection{Interpolated Quantum Group Categories}
\label{sec:interpolatedQGroups}
Let $G$ be a simple, simply-connected Lie group with
Lie algebra $\mathfrak{g}$, denote by $h^\vee$ its dual
Coxeter number, and $m$ the lacity.
Choose a maximal torus $T \subset G$, with corresponding
Lie algebra $\mathfrak{t} \subset \mathfrak{g}$.
Denote by $\langle\cdot,\cdot\rangle$ the
inner product on $\mathfrak{t}^\ast$, normalised such that
short roots have squared length $2$, and let $k \in \IZ_{\geq 1}$.
Set $\ell = m(h^\vee + k)$ and $q = \exp(1 / (2\ell))$.
Denote by $\cat{C}(\mathfrak{g},k)$ the fusion
category obtained as a semisimple quotient of the category of
twisting modules for the small quantum group $u_q(\mathfrak{g})$.
It is in fact a \emph{braided} fusion category, and
thus embeds into its Drinfel'd centre both via
its braiding and its opposite braiding.
Introductions to the construction and structure of these
categories can be found in~\cite{sawin2006quantum,schopieray2018lie}.

The simple objects of $\cat{C}(\mathfrak{g},k)$
correspond to integral weights in the interior of
the Weyl alcove, a certain convex subset of $\mathfrak{t}$
depending on $k$.
The centre $Z(G)$ injects into the group of isometries
of the Weyl alcove.
This map furnishes a bijection between $Z(G)$ and the set of highest weights sitting
at those corners of the Weyl alcove that are isometric to the corner around
the unit object $0$ (with the exception of $\cat{C}(\mathfrak{e}_8,2)$).

\begin{example}
    For $\SU(n)$, the Weyl alcove has precisely $n$ corners isometric to
    the corner around $0$. The corresponding highest weights are of the
    form $k\lambda_i$, where $\lambda_i$ denotes the $i$-th fundamental
    weight. Let $\lambda_1$ denote a fundamental weight corresponding
    to either end of the Dynkin diagram of $\SU(n)$. Then $k\lambda_1$
    $\tensor$-generates a subcategory of $\cat{C}(\mathfrak{g},k)$
    which categorifies $\IZ/n$.
\end{example}

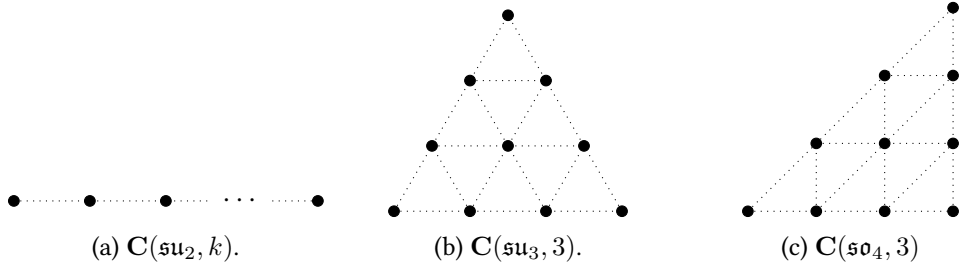
\begin{figure}[ht]
    \centering
    \begin{subfigure}[t]{0.3\textwidth}
        \centering
        \begin{minipage}[t]{0.94\textwidth}
            \centering
            \begin{tikzpicture}
                \filldraw[black] (0,0) circle (2pt); 
                \filldraw[black] (1,0) circle (2pt); 
                \filldraw[black] (2,0) circle (2pt); 
                \filldraw[black] (4,0) circle (2pt); 
                \node at (3,0) {$\cdots$};
                \draw[dotted] (0,0) -- (2.6,0);
                \draw[dotted] (3.4,0) -- (4,0);
            \end{tikzpicture}
        \end{minipage}
        \caption{$\cat{C}(\mathfrak{su}_2,k)$.}
        \label{fig:Csu2k}
    \end{subfigure}
    \begin{subfigure}[t]{0.3\textwidth}
        \centering
        \begin{minipage}[t]{0.9\textwidth}
            \centering
            \begin{tikzpicture}
                \tikzmath{\k = 3;}
                \foreach \x in {0,...,\k}{
                        \tikzmath{\ex=(\k+\x)/2; \ey=(\k-\x)*sin(60);
                            \fy=\x*sin(60);}
                        \draw[dotted] (\x,0) -- (\ex,\ey);
                        \draw[dotted] (\k-\x,0) -- (\k-\ex,\ey);
                        \draw[dotted] (\x/2,\fy) -- (\k-\x/2,\fy);
                        \foreach \y in {0,...,\x}{
                                \tikzmath{\px=\x-\y/2;\py=\y*sin(60);}
                                \filldraw (\px,\py) circle (2pt);
                            }}
            \end{tikzpicture}
        \end{minipage}
        \caption{$\cat{C}(\mathfrak{su}_3,3)$.}
        \label{fig:Csu3_3}
    \end{subfigure}
    \begin{subfigure}[t]{0.3\textwidth}
        \centering
        \begin{minipage}[t]{0.9\textwidth}
            \centering
            \begin{tikzpicture}[scale=0.9]
                \tikzmath{\k = 3;}
                \foreach \x in {0,...,\k}{
                        \draw[dotted] (\x,\x) -- (\k,\x);
                        \draw[dotted] (\x,0) -- (\x,\x);
                        \draw[dotted] (\x,0) -- (\k,\k-\x);
                        \foreach \y in {0,...,\x}{
                                \filldraw (\x,\y) circle (2.2pt);}}
            \end{tikzpicture}
        \end{minipage}
        \caption{$\cat{C}(\mathfrak{so}_4,3)$}
    \end{subfigure}
    \caption{Simple objects of Quantum group categories. The dotted lines are visual aids.}
    \label{fig:SimplesOfQGroupCats}
\end{figure}

The ribbon twist on the object $\lambda \in \cat{C}(\mathfrak{g},k)$
is given by
\begin{equation*}
    \theta_\lambda = q^{\langle \lambda, \lambda + 2 \rho \rangle},
\end{equation*}
where $\rho$ denotes the sum of the fundamental weights.
The categories $\cat{C}(\mathfrak{g},k)$ are unitary modular tensor categories.
As a result, the self-braiding of an invertible object is equal to its
ribbon twist.
Denote by $I$ the basic inner product on $\mathfrak{t}$ corresponding to
the generator of $\mathrm{H}^4(\B G, \IZ) \iso \IZ$, with normalisation
such that short coroots have square norm 2.
By~\cite[Prop 10]{henriques2017classification},
the twist on a highest weight $\lambda=k\omega$ can be calculated using
\begin{equation*}
    \langle k\lambda_i,k\lambda_i+2\rho \rangle =
    \tfrac{k}{2} I(\lambda_i^\vee,\lambda_i^\vee).
\end{equation*}

The coweights $\lambda_i^\vee \in \mathfrak{t}$ are
lifts of $z_i \in Z(G)$ against $\mathfrak{t} \onto T \subset G$
(see~\cite{weis2022centre} for more details).
Under the bijection of $Z(G)$ with the highest weights $\lambda_i$, 
this induces the quadratic form
\begin{align*}
    q: Z(G) &\to U(1) \\
    z_i &\mapsto \exp\left(\tfrac{k}{2}I(\bar{z_i},\bar{z_i})\right).
\end{align*}

\begin{example}
    For $\SU(n)$, the self-twists are given by
    \begin{equation*}
        \theta_{k \lambda_j} =
        q^{k^2 \langle \lambda_j, \lambda_j \rangle +
                2k \langle \lambda_j, \rho \rangle}
        =\exp{\tfrac{k \cdot j \cdot (n-j)}{2n}}.
    \end{equation*}
    Evaluating for our chosen generator $\lambda_1$,
    \begin{equation*}
        \theta_{k \lambda_1} =
        \exp{k\tfrac{n-1}{2n}}.
    \end{equation*}
\end{example}

The quadratic form on $Z(G)$ encoding the braiding on
$\cat{C}(\mathfrak{g},k)^\times$
is \emph{equal} to the qua\-dra\-tic form that encodes
$\DZ(\Vec^k[G])^\times$
(see Theorem~\ref{thm:centreOfCategorifiedGroupRingInManifusionChapter}),
and hence the inverse of the quadratic form on $\DZ(\Vec^{-k}[G])^\times$.
There is thus a braided embedding
\begin{equation*}
    \Vec[Z(G)] \into
    \DZ(\Vec^{-k}[G] \boxtimes_\Vect \cat{C}(\mathfrak{g},k)),
\end{equation*}
where $\Vec[Z(G)]$ carries the trivial braiding.
The resulting interpolated fusion category
\begin{equation*}
    \Vec^{-k}[G] \boxtimes_{\Vec[Z(G)]} \cat{C}(\mathfrak{g},k)
\end{equation*}
has set of simples a disjoint union of manifolds of the form
$G/H$, where $H$ is a finite central subgroup of $G$.

\begin{example}
    \label{ex:interpolatedsu2levelk}
    For $\SU(2)$, the quantum group category at level $k$
    has $k+1$ simple objects $[0] \ldots [k]$ which we
    think of as arranged on a line (see Figure~\ref{fig:Csu2k}).
    Tensoring with $Z(\SU(2))=\IZ/2$ induces the flip action on this line.
    The simples of
    $\Vec^{-k}[\SU(2)] \boxtimes \cat{C}(\mathfrak{su}_2,k)$ have underlying
    manifold the disjoint union of $k+1$ copies of $S^3$.
    When $k+1$ is even, the diagonal action of $\IZ/2$ is free on the set
    of connected components. The interpolated fusion category
    $\Vec^{-k}[\SU(2)] \boxtimes_{\Vec[\IZ/2]} \cat{C}(\mathfrak{su}_2,k)$
    then has manifold of simples $\coprod_{k/2} S^3$.
    Antipodal points on each copy of $S^3$ correspond to pairs
    $[i]$-$[k-i]$. If $k+1$ is odd, the object
    $[k/2] \in \cat{C}(\mathfrak{su}_2,k)$ is fixed under the $\IZ/2$-action.
    The set of simples of the interpolated fusion category then has one further
    component whose underlying manifold is $S^3/(\IZ/2)=\RP^3$.
    Any maximal torus $S^1 \into \SU(2)$ contains the centre $\IZ/2$, and
    we can restrict along the embedding to obtain the manifold tensor
    subcategory $\Vec^k[S^1] \boxtimes_{\Vec[\IZ/2]} \cat{C}(\mathfrak{su}_2,k)$.
    We indicate the construction as a quotient by depicting what happens to
    the manifold of simples for the case $k=4$ below.
    At the top, we depict the manifold of simples of
    $\Vec^4[S^1]\boxtimes_\Vect \cat{C}(\mathfrak{su}_2,4)$.
    We label the simples $[i]$ (short for $\ONE \boxtimes [i]$),
    as well as their image $-[i] \define -1 \boxtimes [k-i]$ under the action of
    $-1 \in \IZ/2$.

    \[
        \begin{tikzpicture}
            \tikzmath{\k=4;}
            \foreach \x in {0,...,\k}{
            \draw[ultra thick] (3*\x+1,0) circle (1);
            \filldraw (3*\x+1,1) circle (2pt) node[above] {$[$\x$]$};
            \tikzmath{\y=int(\k-\x);}
            \filldraw (3*\x+1,-1) circle (2pt) node[below] {$-[$\y$]$};
            }
            \draw[->,thick] (7,-2) -- (7,-3);
            \begin{scope}[shift={(3,-5)}]
                \draw[ultra thick] (1,0) circle (1);
                \filldraw (1,1) circle (2pt)
                node[above] {$[0]$};
                \filldraw (1,-1) circle (2pt)
                node[below] {$[4]$};
                \draw[ultra thick] (4,0) circle (1);
                \filldraw (4,1) circle (2pt)
                node[above] {$[1]$};
                \filldraw (4,-1) circle (2pt)
                node[below] {$[3]$};
                \draw[ultra thick] (7,0) circle (0.5);
                \filldraw (7,0.5) circle (2pt)
                node[above] {$[2]$};
            \end{scope}
        \end{tikzpicture}
    \]
\end{example}

The same construction as in Example~\ref{ex:interpolatedsu2levelk}
can be repeated for $\IZ(n) = Z(\SU(n)) \into \SU(n)$ for all $n$.
It can also be restricted to any monoidal subcategory closed under the
action of the centre.

\begin{example}
    The category $\cat{C}(\mathfrak{su}_3,k)$ has simples corresponding to points
    on a triangular lattice in a convex subset of $\IR^2$
    (see Figure~\ref{fig:Csu3_3}).
    We can interpolate using $\SU(3)$ at level $-k$ to obtain the manifusion
    category
    $\Vec^{-k}[\SU(3)] \boxtimes_{\Vec[\IZ/3]} \cat{C}(\mathfrak{su}_3,k)$.
    Below, we draw a subcategory obtained by the restricting along the embedding
    of a circle $S^1 \into \SU(3)$. The simple objects of
    $\cat{C}(\mathfrak{su}_3,k)$ are integral
    points in the weight lattice of $\mathfrak{su}_3$.
    We label them by their coordinates.
    \begin{equation*}
        \begin{tikzpicture}
            \begin{scope}[shift={(-4,0)}]
                \tikzmath{\a0 = 210; \a1=\a0+120; \a2=\a1+120;};
                \draw[ultra thick] (1,0) circle (1);
                \filldraw ({1+cos(\a0)},{sin(\a0)}) circle (2pt)
                node[below left] {$[0,0]$};
                \filldraw ({1+cos(\a1)},{sin(\a1)}) circle (2pt)
                node[below right] {$[3,0]$};
                \filldraw ({1+cos(\a2)},{sin(\a2)}) circle (2pt)
                node[above] {$[0,3]$};
            \end{scope}
            \begin{scope}[shift={(-1,0)}]
                \tikzmath{\a0 = 180; \a1=\a0+120; \a2=\a1+120;};
                \draw[ultra thick] (1,0) circle (1);
                \filldraw ({1+cos(\a0)},{sin(\a0)}) circle (2pt)
                node[left] {$[0,1]$};
                \filldraw ({1+cos(\a1)},{sin(\a1)}) circle (2pt)
                node[below right] {$[2,0]$};
                \filldraw ({1+cos(\a2)},{sin(\a2)}) circle (2pt)
                node[above right] {$[1,2]$};
            \end{scope}
            \begin{scope}[shift={(2,0)}]
                \tikzmath{\a0 = 240; \a1=\a0+120; \a2=\a1+120;};
                \draw[ultra thick] (1,0) circle (1);
                \filldraw ({1+cos(\a0)},{sin(\a0)}) circle (2pt)
                node[below left] {$[1,0]$};
                \filldraw ({1+cos(\a1)},{sin(\a1)}) circle (2pt)
                node[right] {$[2,1]$};
                \filldraw ({1+cos(\a2)},{sin(\a2)}) circle (2pt)
                node[above left] {$[0,2]$};
            \end{scope}
            \begin{scope}[shift={(5,0)}]
                \draw[ultra thick] (1,0) circle (0.5);
                \filldraw (1,0.5) circle (2pt)
                node[above] {$[1,1]$};
            \end{scope}
        \end{tikzpicture}
    \end{equation*}

\end{example}

\subsection{Interpolated Tambara-Yamagami Categories}
\label{sec:interpolatedTambaraYamagami}
In~\cite{tambara1998tensor}, Tambara and Yamagami obtained a complete
classification of fusion categories which contain only one non-invertible
object.
For convenience, we reproduce here the description of the Tambara-Yamagami
categories (see also Example~\ref{ex:TambaraYamagamiFusionCategory} in 
Appendix~\ref{ch:background}).
The Tambara-Yamagami fusion category $\TY(A, \chi, \tau)$
associated to an abelian group $A$, a symmetric non-degenerate
bicharacter
$\chi: A \tensor A \to \IC^\times$, and a square-root $\tau$ of
$1/|A|$ has set of simple objects $A \coprod \{m\}$.
In what follows, let $a,b,c \in A$.
The fusion rules and non-trivial associators are: \\
\begin{minipage}{.4\textwidth}
    \vspace{-1.3em}
    \begin{align*}
        a \tensor b & = a\!+\!b,            \\
        a \tensor m & = m \tensor a = m     \\
        m \tensor m & = \DirSum_{a \in A} a
    \end{align*}
\end{minipage}%
\begin{minipage}{.6\textwidth}
    \vspace{-1.3em}
    \begin{align*}
        \alpha_{a,m,b} & = \chi(a,b) \id_m: m \to m,     \\
        \alpha_{m,a,m} & = \DirSum_{b,c \in A} \chi(a,b)
        \delta_{b,c} \id_b: \DirSum_{b \in A} b \to
        \DirSum_{c \in A} c                              \\
        \alpha_{m,m,m} & = \DirSum_{b,c \in A}
        \tau \chi(b,c)^{-1} \id_m:
        \DirSum_{b \in A} m \to \DirSum_{c \in A} m.
    \end{align*}
\end{minipage}%

The Drinfel'd centre of a Tambara-Yamagami category
is computed in~\cite{izumi2001structure,gelaki2009centers}
--- see also~\cite{shimizu2011frobenius} for a review.
The centre contains a family of invertible objects $X_{a,\epsilon} = (a, \beta_{a,-})$
parameterised by an underlying object $a \in H$, and a square root $\epsilon$
of $\chi(a,a)$. The half-braiding has components
\begin{align*}
    \beta_{a,b} & = \chi(a,b) \id_{a+b} \\
    \beta_{a,m} & = \epsilon \id_{m}.
\end{align*}
The tensor product among them is given by
$X_{a,\epsilon} \tensor X_{b,\eta} = X_{a+b,\epsilon \eta \chi(a,b)}$.
The associator is inherited from $\TY$, and thus trivial.
For a fixed $a \in H$, there are precisely two possible values of $\epsilon$.
Pick a set of numbers $\{\epsilon_a\}_{a \in H}$ squaring to
$\epsilon_a^2=\chi(a,a)$ such that
$\epsilon_a \epsilon_b \chi(a,b) = \epsilon_{a+b}$.
Then $\epsilon:H \to \IC^\times$ is a \emph{quadratic refinement}
of $\chi$: a quadratic form whose associated bilinear form
is $\chi$.
The choice of such a quadratic refinement is unique up to multiplication by a
homomorphism $s:H \to \IZ/2$.
A quadratic refinement of $\chi$ induces an inclusion of fusion categories
\begin{align*}
    \Vec[H] & \into \DZ(\TY(H,\chi,\tau)) \\
    a       & \mapsto X_{a,\epsilon_a}.
\end{align*}
The induced braiding on $\Vec[H]$ is given by
$\beta_{a,b}=\chi(a,b) \id_{a+b}$, and
so the quadratic form is the square of
$\epsilon$, $q(a)=\chi(a,a)=\epsilon_a^2$.

Now we mix and match quadratic forms to construct
interpolated Tambara-Yamagami categories.
Let $\Vec^{\omega^\prime,\beta}[H] \into \DZ(\Vec^\omega[G])$ be a braided inclusion.
It can be used to interpolate a Tambara-Yamagami category with $\TY(H,\chi,\tau)$
if the quadratic form $q$ induced by $\beta$ on $H$ is the square
of a quadratic form $\epsilon$ whose associated
bilinear form $\chi$ is non-degenerate.

\begin{example}
    Let $H = \IZ/p$ ($p$ an odd prime) be a finite subgroup of $S^1$. It admits
    a monoidal map to the Drinfel'd centre of $\Vec^k[S^1]$ when $k$
    is a multiple of $p$. The induced quadratic form on $\IZ/p$ is given by
    $q:1 \mapsto \exp(\tfrac{k/p}{p})$.
    This form is a square of another quadratic form iff $k$ admits a
    square-root mod $p$. As $p$ is odd, this is always the case.
    The bilinear form associated to the square-root is non-degenerate iff
    $k$ is \emph{not} a multiple of $p^2$.
    Hence, we get that the allowed levels $k \in \H^4(\B S^1,\IZ)=\IZ$ for building the
    interpolated Tambara-Yamagami categories are
    \begin{equation*}
        k \in p\IZ \backslash p^2\IZ.
    \end{equation*}

    Pick such a $k$, and pick a square root $\epsilon:\IZ/p \to \IC^\times$
    of the induced quadratic form $q:n \mapsto \exp(n^2\cdot k/p^2)$ on $\IZ/p$.
    Denote by $\chi$ the bilinear form associated to $\epsilon$.
    Lastly, pick $\tau \in \left\{\tfrac{1}{\sqrt{p}},-\tfrac{1}{\sqrt{p}}\right\}$.
    Then we may form an interpolated Tambara-Yamagami category
    \[
        \Vec^{-k}[S^1] \boxtimes_{\Vec[\IZ/p]} \TY(\IZ/p,\chi,\tau).
    \]
    Its manifold of simples is comprised of two circles, one interpolating the objects
    corresponding to
    $\IZ/p$ in $\TY(\IZ/p,\chi,\tau)$, the other containing the image of the object
    $m \in \TY(\IZ/p,\chi,\tau)$.
    The circles are to be thought of as having different sizes: tensoring with
    $m$ induces the $p$-fold covering map $S^1 \to S^1$ from the unit circle
    to the circle containing $m$.
    An example for $p=5$ is depicted below.
    \begin{equation*}
        \begin{tikzpicture}
            \draw[ultra thick] (0,0) circle (1);
            \filldraw (0,1) circle (2pt)
            node[above] {$1$};
            \filldraw ({sin(72)},{cos(72)}) circle (2pt)
            node[above right] {$\omega$};
            \filldraw ({sin(2*72)},{cos(2*72)}) circle (2pt)
            node[below right] {$\omega^2$};
            \filldraw ({sin(3*72)},{cos(3*72)}) circle (2pt)
            node[below left] {$\omega^3$};
            \filldraw ({sin(4*72)},{cos(4*72)}) circle (2pt)
            node[above left] {$\omega^4$};
            \draw[ultra thick] (3,0) circle (0.4);
            \filldraw (3,0.4) circle (2pt)
            node[above] {$m$};
        \end{tikzpicture}
    \end{equation*}
\end{example}

\section{Structure of Orbifusion Categories}
\label{sec:orbifusion}
In this section we study the interplay of the smooth structure with
the monoidal structure on an orbifusion category.
We show that tensoring with objects induces a smooth action on the support
of families of skyscraper sheaves.
Further, we show in Corollary~\ref{cor:manifoldONEmanifoldALL} that
if the monoidal unit sits at a point with trivial stabiliser, every 
point on the underlying orbifold has trivial stabiliser.
We also heavily constrain the structure
in a neighbourhood of the monoidal unit:
In Corollary~\ref{cor:gerbeTrivialAtONE}, we show that the gerbe must
admit a trivialisation over a neighbourhood of the support of the monoidal unit.
Lemma~\ref{lem:A_tensor_A}
shows that the tensor product near the unit is encoded by a single smooth
multiplication (up to isomorphism).

\subsection{Tensor Products and Support Functions}
Let $\stC \simeq \Sky_{\orb{M}}^{\ger{G}}$ be an orbifold tensor category, and
$\f{S}:U \to \cat{C}$ a $U$-family.
Recall Definition~\ref{def:supportLift}: a set of support lifts
for $\f{S}$ with respect to an orbifold patch $q:M \to \orb{M}$ with
a trivialisation $\tau$ of $q^\ast\ger{G}$
is a set of smooth maps $\{\tilde{s_i}:U \to M\}$, such that
the induced mobile family
\[
    \bar{\f{S}} \define \Ind^{\tilde{s}_i^\ast \tau}(q \comp \tilde{s}_i)
    = \DirSum_i \Gr(q \comp \tilde{s}_i)_\ast (\tilde{s}_i^\ast\tau \cdot \Cinf)
\]
is a mobile cover for $\f{S}$. This means it contains the family as a direct summand
$\f{S} \subset \bar{\f{S}}$ and induces the same support function
$\supp \f{S} = \supp \bar{\f{S}}:U \to |\orb{M}|$.

The Lemma below is a technical rephrasing of the following claim:
The tensor product plays well with local support lifts, in that
any support lifts of a tensor product $\f{S} \tensor \f{T}$
depend smoothly on any pair of support lifts for $\f{S}$ and $\f{T}$.
\begin{lem}
    \label{lem:tensoring_induces_smooth_function_on_local_support_lifts}
    Let $q:M \to \orb{M}$, $q':M' \to \orb{M}$,
    $q'':M'' \to \orb{M}$ be charts of $\orb{M}$ that
    are disjoint unions of contractible spaces.
    Let $\f{S}:U \to \cat{C}$, $\f{T}:V \to \cat{C}$ be families
    of skyscraper sheaves,
    and fix a pair of points $x \in U$, $y \in V$.
    Assume $\f{S}$ and $\f{T}$ have single-valued local support lifts
    $\tilde{s}:U \to M$ and $\tilde{t}:U \to M'$ around $x$ and $y$,
    respectively.
    Then any set of local support lifts $\{{\tilde{r}}_i:U \to M''\}$ of
    $\f{S} \tensor \f{T}$ around $(x,y)$, depends smoothly on the
    support lifts $\tilde{s}$ and $\tilde{t}$:
    There are smooth functions $f_i:U \times V \to M''$ such that
    ${\tilde{r}}_i = f_i \comp (\tilde{s} \times \tilde{t})$.
\end{lem}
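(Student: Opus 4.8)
The plan is to reduce the whole statement to a single \emph{universal} tensor product living over the charts $M \times M'$, and then to pull it back along $\tilde{s}\times\tilde{t}$. First I would replace $\f{S}$ and $\f{T}$ by the mobile covers supplied by their single-valued support lifts. Since $\Ind$ commutes with pullback, and trivialisations are locally irrelevant (Lemma~\ref{lem:trivialisationsInIndDontMatterLocally} and Notation~\ref{notation:IndWithoutTrivialisation}), these covers are
\[
    \bar{\f{S}} \simeq \tilde{s}^\ast\Ind(q), \qquad \bar{\f{T}} \simeq \tilde{t}^\ast\Ind(q'),
\]
where $\Ind(q):M \to \cat{C}$ and $\Ind(q'):M' \to \cat{C}$ are the tautological basic mobiles carried by the charts themselves (i.e.\ with support lifts $\id_M$ and $\id_{M'}$). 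Because the tensor product preserves direct summands, $\f{S}\tensor\f{T}$ is a summand of $\bar{\f{S}}\tensor\bar{\f{T}}$, so it suffices to understand the support lifts of the latter.

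Next I would identify $\bar{\f{S}}\tensor\bar{\f{T}}$ as a pullback of a family over $M \times M'$ that is independent of $\f{S}$ and $\f{T}$. Set $\IA \define \Ind(q)\tensor\Ind(q') \in \cat{C}^{M\times M'}$, the global tensor product of the two tautological mobiles. Using the naturality $2$-isomorphisms that make the pullback functors monoidal, together with $\pi_1\comp(\tilde{s}\times\tilde{t}) = \tilde{s}\comp p_1$ and $\pi_2\comp(\tilde{s}\times\tilde{t}) = \tilde{t}\comp p_2$, one computes
\[
    (\tilde{s}\times\tilde{t})^\ast\IA \simeq p_1^\ast\tilde{s}^\ast\Ind(q)\tensor^{U\times V}p_2^\ast\tilde{t}^\ast\Ind(q') \simeq p_1^\ast\bar{\f{S}}\tensor^{U\times V}p_2^\ast\bar{\f{T}} = \bar{\f{S}}\tensor\bar{\f{T}}.
\]
Thus the family whose support lifts I must produce is literally a pullback of the universal family $\IA$.

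I would then apply Lemma~\ref{lem:local_support_lifts_exist} to $\IA$ itself: it admits local support lifts near the point $(\tilde{s}(x),\tilde{t}(y)) \in M\times M'$. After discarding the summands of $\IA$ whose support leaves the suborbifold $[M''/\Gamma'']$ (these do not meet $\supp(\f{S}\tensor\f{T})$ near $(x,y)$, since the given $\tilde{r}_i$ land in $M''$), I may take the relevant lifts to land in $M''$, say $\{F_j : W \to M''\}$ on a neighbourhood $W$ of $(\tilde{s}(x),\tilde{t}(y))$. Mobile covers pull back to mobile covers, and support is preserved under pullback (Lemma~\ref{lem:behaviourOfSupportUnderPushfwdAndPullback}), so $\{F_j\comp(\tilde{s}\times\tilde{t})\}$ is a set of support lifts for $(\tilde{s}\times\tilde{t})^\ast\IA = \bar{\f{S}}\tensor\bar{\f{T}}$.

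Finally, $\f{S}\tensor\f{T}$ is a summand of $\bar{\f{S}}\tensor\bar{\f{T}}$, and both now carry support lifts to the common chart $M''$. Corollary~\ref{cor:summands_yield_subset_of_support_lifts} then forces each given lift $\tilde{r}_i$ to be a $\Gamma''$-translate of the germ of some $F_j\comp(\tilde{s}\times\tilde{t})$; writing the relevant deck transformation as $g\in\Gamma''$ acting on $M''$ and setting $f_i \define g\cdot F_j : W \subset M\times M' \to M''$ (smooth and manifestly independent of $\f{S},\f{T}$), I obtain $\tilde{r}_i = f_i\comp(\tilde{s}\times\tilde{t})$, as claimed. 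The main obstacle is the third step: transporting the local support lifts of the universal family $\IA$ back along $\tilde{s}\times\tilde{t}$ while juggling the three distinct charts---in particular, checking that the summand of $\IA$ supported in $[M''/\Gamma'']$ already captures all of $\supp(\f{S}\tensor\f{T})$ near $(x,y)$, so that Corollary~\ref{cor:summands_yield_subset_of_support_lifts} applies with $M''$ fixed throughout. The surrounding naturality and pullback identities are routine bookkeeping.
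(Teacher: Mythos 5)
Your proof is correct and takes essentially the same route as the paper's: both replace $\f{S}$ and $\f{T}$ by mobile covers pulled back from the charts, identify $\f{S} \tensor \f{T}$ as a summand of $(\tilde{s}\times\tilde{t})^\ast\left(\Ind(q)\tensor\Ind(q')\right)$, and then combine the existence of local support lifts for this universal family with Corollary~\ref{cor:summands_yield_subset_of_support_lifts} and the smoothness of $\Gamma$-translations and chart-transition maps. The only cosmetic differences are that the paper allows multiplicities ${(\Ind q)}^{\dirSum n}$ in the mobile covers (guarding against a single-valued lift occurring with multiplicity) and performs the transition into the chart $M''$ after, rather than before, applying the corollary.
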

\begin{proof}
    We can use the chart maps $q,q'$ to induce mobile families
    \begin{align*}
        \f{I}  \define {(\Ind q)}^{\dirSum n}:M \to \cat{C} &  &
        \f{I}' \define {(\Ind q')}^{\dirSum n'}:M' \to \cat{C},
    \end{align*}
    where the integers $n, n'$ are chosen large enough such that
    $\f{S}$ and $\f{T}$ have mobile covers given by
    \begin{align*}
        \f{S} \subset \overline{\f{S}} \define
        \tilde{s}^\ast\f{I} &  &
        \f{T} \subset \overline{\f{T}} \define
        \tilde{t}^\ast\f{I}',
    \end{align*}
    and
    \begin{equation*}
        \f{S} \tensor \f{T} \subset
        (\tilde{s} \times \tilde{t})^\ast(\f{I} \tensor \f{I}').
    \end{equation*}
    As we are working up to isomorphism, we need not pick explicit trivialisations
    of the pullback gerbes over $M,M'$ and $M''$
    (see Lemma~\ref{lem:trivialisationsInIndDontMatterLocally} and
    Notation~\ref{notation:IndWithoutTrivialisation}).

    Given local support lifts $\{\tilde{f_i}:M \times M' \to M'''\}$ of
    $\f{I} \tensor \f{I}'$ to some chart $q''': M''' \to \orb{M}$,
    $(\tilde{s} \times \tilde{t})^\ast(\f{I} \tensor \f{I}')$
    has local support lifts
    $\{\tilde{f_i}\comp(\tilde{s} \times \tilde{t}):U \times V \to M'''\}$.
    By Corollary~\ref{cor:summands_yield_subset_of_support_lifts},
    any local support lift of $\f{S} \tensor \f{T}$ to the chart $M'''$
    is a $\Gamma$-translate of one of the above. As their value agrees at $x$,
    they can also be lifted to the chart $M''$. Both $\Gamma$-translation
    and the transition maps between charts are smooth, hence the composition
    of these maps with the maps $\tilde{f_i}$ yield the desired smooth map
    $f_i:M \times M' \to M''$.
\end{proof}

\begin{rmk}
    Corollary~\ref{cor:locally_karoubi_completion_distributes} tells
    us that any family $\f{S}$ locally decomposes into summands with
    single-valued support, so the assumption in the Lemma above
    is not restrictive at all.
\end{rmk}

In particular,
Lemma~\ref{lem:tensoring_induces_smooth_function_on_local_support_lifts}
says that the operations of left- or right-tensoring a family
$\f{S}:U \to \cat{C}$ with an object $X \in \cat{C}$
induce postcomposition with a smooth multivalued function (whose domain is $U$)
on support lifts.
Recall a family $\f{S}:U \to \cat{C}$ was called \'etale\-/submersive\-/immersive
if it locally admits a set of support lifts which have the respective property
(Definition~\ref{def:etaleimmersivesubmersive}).
\begin{lem}
    \label{lem:maximal_rank_preserved_by_tensoring_with_object}
    Let $\f{S}:U \to \cat{C}$ be an \'etale/submersive/immersive
    $U$-family, and $X \in \cat{C}$.
    Then $X \tensor \f{S}$ is \'etale/submersive/immersive.
\end{lem}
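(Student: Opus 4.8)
The plan is to reduce to the case where $X$ is simple, to describe how left-tensoring by $X$ postcomposes support lifts with fixed smooth maps, and then to show those maps are local diffeomorphisms by exploiting pointwise dualisability. First I would reduce to $X$ simple: writing $X \iso \DirSum_j X_j$ as a sum of simple objects gives $X \tensor \f{S} \iso \DirSum_j (X_j \tensor \f{S})$, and a direct sum of families is \'etale/submersive/immersive precisely when each summand is, since by Lemma~\ref{lem:germ_of_support_lift_unique_up_to_Gamma} their support lifts are the union of those of the summands. Using Corollary~\ref{cor:locally_karoubi_completion_distributes}, I may further assume near the point of interest that $\f{S}$ has a single-valued support lift $\tilde{s}: U \to M$ to a chart $q: M \to \orb{M}$.

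Next I would apply Lemma~\ref{lem:tensoring_induces_smooth_function_on_local_support_lifts} to the two families $X: \point \to \cat{C}$ and $\f{S}: U \to \cat{C}$. Letting $\f{I} \define {(\Ind q)}^{\dirSum n}: M \to \cat{C}$ be the universal mobile family on $M$ (whose support lift is $\id_M$), the Lemma shows that the support lifts of $X \tensor \f{S}$ are, up to the $\Gamma$-action, of the form $g_i \comp \tilde{s}$, where the maps $g_i: M \to M''$ are the support lifts of $X \tensor \f{I}$ to some chart $q'': M'' \to \orb{M}$. Since $\dim M = \dim M'' = \dim \orb{M}$, and since composing a submersion, immersion or local diffeomorphism with a local diffeomorphism preserves each property, the statement reduces to the single claim: every support lift $g_i$ of $X \tensor \f{I}$ is a local diffeomorphism.

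The key step is this claim, and it is where dualisability enters. Let $\f{T} \subseteq X \tensor \f{I}$ be a summand with single-valued support lift $g$ near a point $p \in M$. Because $\cat{C}$ is semisimple (Corollary~\ref{cor:catOfPointsDirSumOfRepCats}) and $X$ is dualisable in $\cat{C}$ (the orbifusion hypothesis), I have the pointwise adjunction isomorphism $\Hom_\cat{C}(X^\vee \tensor \f{T}(p), \f{I}(p)) \iso \Hom_\cat{C}(\f{T}(p), X \tensor \f{I}(p))$, whose right-hand side is nonzero as it contains the inclusion of the summand $\f{T}(p)$. Hence $X^\vee \tensor \f{T}(p)$ and $\f{I}(p)$ share a simple summand. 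Extending this pointwise morphism to a morphism of families via Proposition~\ref{prop:locallyMorphismsAreFamiliesOfMatrices} and invoking Corollary~\ref{cor:locally_share_summand}, the families $X^\vee \tensor \f{T}$ and $\f{I}$ share a nonzero summand near $p$. Now the support lifts of $\f{I}$ are $\Gamma$-translates of $\id_M$, while those of $X^\vee \tensor \f{T}$ are $\Gamma$-translates of $h_j \comp g$ (applying the Lemma once more, to the simple object $X^\vee$). By Corollary~\ref{cor:summands_yield_subset_of_support_lifts} together with the germ-uniqueness of support lifts (Lemma~\ref{lem:germ_of_support_lift_unique_up_to_Gamma}), a shared summand forces $h_j \comp g$ to agree, near $p$, with an element of $\Gamma$; in particular $h_j \comp g$ is a local diffeomorphism, so $Dg$ is injective, and by equidimensionality $g$ is itself a local diffeomorphism. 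As this applies to every branch, the claim follows.

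I expect the main obstacle to be precisely this dualisability step. One must be careful to use only the pointwise dualisability of $\cat{C}$ --- dualisability of nonconstant families is exactly the later Theorem~\ref{thm:global_dualisability}, so it cannot be assumed here --- and to correctly transport the resulting pointwise adjunction into a statement about shared summands of families, and thence, via the germ-uniqueness of support lifts, into the rigidity statement that $h_j \comp g$ is locally a $\Gamma$-element. Once that identity is in hand, the three cases (\'etale, submersive, immersive) are uniform and the remaining bookkeeping is routine.
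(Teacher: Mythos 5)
Your overall strategy is genuinely close to the paper's: both arguments turn the inclusion of a summand $\f{T} \into X \tensor \f{S}$ into a nonzero morphism $X^\vee \tensor \f{T} \to \f{S}$ via duality, deduce a shared summand from Corollary~\ref{cor:locally_share_summand}, and compare support lifts using Lemma~\ref{lem:tensoring_induces_smooth_function_on_local_support_lifts}. But there is a genuine gap at exactly the step you flag as delicate. You produce a nonzero morphism $X^\vee \tensor \f{T}(p) \to \f{I}(p)$ by the \emph{pointwise} adjunction and then claim to extend it to a morphism of families ``via Proposition~\ref{prop:locallyMorphismsAreFamiliesOfMatrices}''. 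That proposition only embeds the space of \emph{family} morphisms into matrix-valued functions; it gives no mechanism for promoting an arbitrary pointwise morphism to a family morphism, and in general no such extension exists. (Take two basic mobiles over $U=\IR$ whose support graphs meet tangentially at the single point $p$: pointwise at $p$ the $\Hom$-space is nonzero, but by the support constraint in the proof of Proposition~\ref{prop:locallyMorphismsAreFamiliesOfMatrices} every family morphism vanishes outside the interior of the coincidence locus, hence vanishes identically near $p$.) The constant-extension device in the paper starts from the value $F(x)$ of an \emph{existing} family morphism $F$, which is what guarantees the constant matrix satisfies those support constraints; your pointwise morphism is not known to arise this way, so Corollary~\ref{cor:locally_share_summand} cannot be invoked as written.

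The gap is fixable, and the fix shows that the worry motivating your detour is unfounded. The adjunction $\Hom(X^\vee \tensor \f{T}, \f{S}) \simeq \Hom(\f{T}, X \tensor \f{S})$ at the level of $U$-families requires only dualisability of the \emph{constant} family at $X$ in $\cat{C}^U$, which follows from pointwise dualisability of $X$: the pullback along $U \to \point$ is monoidal, and monoidal functors preserve duality data. It does not presuppose Theorem~\ref{thm:global_dualisability}, which concerns duals of arbitrary nonconstant families. This family-level adjunction is precisely what the paper uses: the image of the inclusion $\f{T} \into X \tensor \f{S}$ under it is a \emph{nowhere-zero family} morphism $X^\vee \tensor \f{T} \to \f{S}$, after which your argument goes through. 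Once repaired, your route differs from the paper's only in bookkeeping: you reduce to the universal \'etale family $\f{I} = {(\Ind q)}^{\dirSum n}$ on the chart and prove its support lifts after tensoring are local diffeomorphisms, then transfer along $\tilde{s}$; the paper works with $\f{S}$ directly and closes with the one-line observation that postcomposition by a smooth map cannot increase rank, which also renders your preliminary reduction to simple $X$ unnecessary.
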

\begin{proof}
    It is enough to check this near a point $x \in U$.
    Corollary~\ref{cor:locally_karoubi_completion_distributes} tells us
    that $X \tensor \f{S}$ locally decomposes into summands which admit
    a single local support lift. We will show that each of these
    is \'etale/submersive/immersive.
    Let $\f{T} \subset X \tensor \f{S}$ be a subfamily,
    with a single local support lift $s:U \to M$ to a chart $M \to \orb{M}$.
    Then the inclusion $\f{T} \into X \tensor \f{S}$ is a nowhere-zero morphism in
    \begin{equation*}
        \Hom(X^\vee \tensor \f{T}, \f{S}) \simeq
        \Hom(\f{T}, X \tensor \f{S}),
    \end{equation*}
    so by Corollary~\ref{cor:locally_share_summand}, $X^\vee \tensor \f{T}$ and
    $\f{S}$ share a summand on an open neighbourhood of $x$, which is
    \'etale/submersive/immersive by assumption.
    There must then exist a set of local support lifts $\{s_j':U \to M'\}$ of
    $X^\vee \tensor \f{T}$, which contains an
    \'etale/submersive/immersive map.
    By Lemma~\ref{lem:tensoring_induces_smooth_function_on_local_support_lifts},
    this map is obtained by postcomposing the local support lift $s$
    with a smooth function $M \to M'$.
    As postcomposition by smooth maps cannot increase the rank,
    the map $s$ must already be \'etale/submersive/immersive.
    This proves the assertion.
\end{proof}

\begin{lem}
    \label{lem:mobile_preserved_by_tensoring_with_object}
    Let $\f{S}:U \to \cat{C}$ be a \emph{mobile} family
    in an \emph{effective} orbifold tensor category
    and $X \in \cat{C}$,
    then both $X \tensor \f{S}$ and $\f{S} \tensor X$ are mobile.
\end{lem}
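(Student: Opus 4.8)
The plan is to reduce the statement to the behaviour of a single \emph{universal} basic mobile living over an étale chart, where the étale/submersive machinery of Lemmas~\ref{lem:maximal_rank_preserved_by_tensoring_with_object} and~\ref{lem:submersive_implies_mobile} applies directly. Since mobility is a local condition, is preserved under direct sums, and the tensor product distributes over direct sums, it suffices to work in a neighbourhood of a chosen point $x \in U$ and to assume that $\f{S}$ is a single basic mobile (Corollary~\ref{cor:locally_karoubi_completion_distributes}). Thus I would write $\f{S} \iso \Gr(s)_\ast \Cinf$ with $s = q \comp \tilde{s}$, where $q: M \to \orb{M}$ is an étale chart and $\tilde{s}: U \to M$ is a support lift.

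The key device is the basic mobile $\f{I} \define \Ind(q): M \to \cat{C}$ over the chart $M$ \emph{itself}, whose support lift is the identity $\id_M$. Using the canonical isomorphism $\tilde{s}^\ast \Ind^\tau(q) \iso \Ind^{\tilde{s}^\ast\tau}(q \comp \tilde{s})$ together with Lemma~\ref{lem:trivialisationsInIndDontMatterLocally}, I obtain $\f{S} \iso \tilde{s}^\ast \f{I}$ locally. Now the crucial observation is that, because the chart $q$ is representably étale, the support lift $\id_M$ of $\f{I}$ is étale, so $\f{I}$ is an \emph{étale} family in the sense of Definition~\ref{def:etaleimmersivesubmersive}. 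Hence by Lemma~\ref{lem:maximal_rank_preserved_by_tensoring_with_object} the family $X \tensor \f{I}$ over $M$ is again étale, and in particular submersive. Since $\orb{M}$ is effective, Lemma~\ref{lem:submersive_implies_mobile} then makes $X \tensor \f{I}$ mobile.

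Finally I would transport this back along $\tilde{s}$. As the pullback functors are monoidal and the constant family $X$ satisfies $\tilde{s}^\ast X \iso X$, naturality of $\tensor$ gives
\[
    X \tensor \f{S} \iso X \tensor \tilde{s}^\ast \f{I} \iso \tilde{s}^\ast\!\left(X \tensor \f{I}\right).
\]
Because pullbacks of basic mobiles are basic mobiles, pullbacks of mobile families are mobile, so $X \tensor \f{S}$ is mobile; summing over the local basic-mobile summands of $\f{S}$ finishes the left-handed case. The right-handed statement $\f{S} \tensor X$ follows verbatim, invoking the right-tensoring version of Lemma~\ref{lem:maximal_rank_preserved_by_tensoring_with_object} (its proof uses the dual adjunction $\Hom(\f{T} \tensor {}^\vee\!X, \f{S}) \iso \Hom(\f{T}, \f{S} \tensor X)$ in place of the left one).

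The main obstacle --- and the one genuinely new idea --- is recognising that the universal basic mobile $\f{I}$ over an étale chart is itself an \emph{étale} family, which is precisely what converts the problem into one already solved by the submersive-implies-mobile lemma. Everything else (the reduction to a single basic mobile, the rewriting $\f{S} \iso \tilde{s}^\ast\f{I}$, and the transport of mobility through the pullback $\tilde{s}^\ast$) is a formal consequence of locality and of the naturality of the tensor product under pullback, so I expect no serious difficulty there.
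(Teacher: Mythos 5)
Your proposal is correct and takes essentially the same route as the paper's proof: reduce locally to a basic mobile, observe that $\Ind q$ over an \'etale chart is an \'etale family, combine Lemma~\ref{lem:maximal_rank_preserved_by_tensoring_with_object} with Lemma~\ref{lem:submersive_implies_mobile} (using effectivity) to conclude $X \tensor \Ind q$ is mobile, and transport this back along the support lift since mobility is preserved under pullback. The extra details you supply --- the explicit identification $\f{S} \iso \tilde{s}^\ast\f{I}$ via Lemma~\ref{lem:trivialisationsInIndDontMatterLocally}, monoidality of the pullback functors, and the remark that the right-handed case needs the right-tensoring variant of Lemma~\ref{lem:maximal_rank_preserved_by_tensoring_with_object} via the dual adjunction --- merely make explicit steps the paper leaves implicit.
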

\begin{proof}
    We check this is true at $x \in U$.
    Around $x$, $\f{S}$ decomposes as a direct sum of basic mobiles,
    so it suffices to show the statement if $\f{S}$ is a basic mobile.
    Without loss of generality, $\f{S} \iso \Ind f$
    for a smooth map $f:U \to \orb{M}$ with a local lift $\tilde{f}:U \to M$,
    to a chart $q:M \to \orb{M}$.
    The chart map $q$ is \'etale, and thus the associated basic mobile
    $\Ind q$ is an \'etale family.
    We can compute the tensor product via a pullback:
    \begin{equation*}
        X \tensor \f{S} \iso X \tensor f^\ast\Ind q \iso f^\ast {(X \tensor \Ind q)}.
    \end{equation*}
    Now we combine
    Lemma~\ref{lem:maximal_rank_preserved_by_tensoring_with_object}
    with Lemma~\ref{lem:submersive_implies_mobile}
    (which is only valid in effective orbifold tensor categories) to conclude that
    $X \tensor \Ind q$ is \'etale, thus submersive, thus mobile.
    Being mobile is preserved under pullback, which proves $X \tensor \f{S}$ is mobile.
    The proof for $\f{S} \tensor X$ is identical. 
\end{proof}

We can now heavily constrain the structure of effective 
orbifold tensor categories.
\begin{cor}
\label{cor:manifoldONEmanifoldALL}
    If the unit $\ONE$ in an effective orbifold tensor category $\stC$ 
    is supported at a point with trivial stabiliser $\Gamma=1$,
    then $\stC$ is a manifusion category.
\end{cor}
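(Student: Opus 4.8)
The plan is to show that \emph{every} point of $\orb{M}$ has trivial stabiliser; this forces $\orb{M}$ to be a manifold and thereby exhibits $\stC$ as a manifusion category. The engine is Lemma~\ref{lem:mobile_preserved_by_tensoring_with_object}, which applies precisely because $\orb{M}$ is effective, combined with the unit axiom and the description of simple objects in $\cat{C}$ from Corollary~\ref{cor:catOfPointsDirSumOfRepCats}.

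First I would record that the unit $\ONE$ is itself mobile. Its support is a single point $e$ with $\Gamma_e = 1$, so the category at $e$ is $\Rep^{\theta_e}(\Gamma_e) \iso \Vec$, and the basic mobile there is the line $\IO_e \iso \IC^{\theta_e}[\Gamma_e] \iso \IC$. As recorded in the Example following Lemma~\ref{lem:cat_at_point_props}, an object supported at $e$ is mobile exactly when it is a direct sum of copies of this twisted group ring, and every object of $\Vec$ is such a sum; hence $\ONE$ is mobile. Using this, I would promote mobility to all of $\cat{C}$: for any $X \in \cat{C}$ the unitor gives $\ONE \tensor X \iso X$, and Lemma~\ref{lem:mobile_preserved_by_tensoring_with_object}, applied with the mobile $\point$-family $\ONE$, shows $\ONE \tensor X$ is mobile. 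Since being mobile is an isomorphism-invariant condition, $X$ is mobile. In particular every simple object is mobile.

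The key deduction is then that a simple mobile object is supported at a trivial-stabiliser point. A mobile object is a direct sum of basic mobiles, so a \emph{simple} mobile object must be isomorphic to a single basic mobile $\IO_y \iso \IC^{\theta_y}[\Gamma_y]$ at its point of support $y$. By Proposition~\ref{prop:decompositionTwistedGroupRing} the twisted group ring decomposes as $\DirSum_{V} V^{\dirSum \dim V}$, so it is simple as an object only when $\Gamma_y$ has a single one-dimensional $\theta_y$-twisted irreducible, i.e.\ when $\sum_V (\dim V)^2 = |\Gamma_y| = 1$. Thus $\Gamma_y = 1$, and the support of every simple object is a trivial-stabiliser point.

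Finally I would observe that every point $y \in |\orb{M}|$ supports a simple object: the nonzero basic mobile $\IO_y$ lies in $\cat{C}^{(y)}$ and decomposes into simples supported at $y$. Consequently every stabiliser $\Gamma_y$ is trivial, so $\orb{M}$ is (equivalent to) a manifold $M$, the underlying linear stack $\Sky_{\orb{M}}^{\ger{G}}$ is manisimple, and $\stC$ is a manifold tensor category; together with the autonomy of $\cat{C}$ and the simplicity of $\ONE \iso \IO_e$, this makes $\stC$ manifusion. The only genuine subtlety is the third step: recognising that a simple object can coincide with a twisted group ring only for the trivial group. Everything else is a bookkeeping application of the stated results.
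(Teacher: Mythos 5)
Your proof is correct and follows essentially the same route as the paper's: both establish that $\ONE$ is mobile, propagate mobility to every object via $X \simeq \ONE \tensor X$ and Lemma~\ref{lem:mobile_preserved_by_tensoring_with_object}, and then use Lemma~\ref{lem:cat_at_point_props} together with the decomposition of Proposition~\ref{prop:decompositionTwistedGroupRing} to force all stabilisers to be trivial. The only difference is presentational: the paper argues by contradiction (a nontrivial $\Gamma_x$ would yield a non-mobile object, namely a proper summand of the twisted group ring), whereas you run the same computation directly on simple objects, observing that a simple mobile must be a basic mobile $\IC^{\theta_y}[\Gamma_y]$ and hence $|\Gamma_y|=1$.
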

\begin{proof}
    We prove this by contradiction. Assume $\ONE$ is supported at 
    a point with trivial stabiliser, so $\ONE$ is mobile.
    Then Lemma~\ref{lem:mobile_preserved_by_tensoring_with_object}
    implies any object $X \in \cat{C}$ is mobile:
    A tensor product is mobile if either of the factors is, and 
    $X \simeq \ONE \tensor X$.

    Now assume there exists a point
    $x$ of the underlying orbifold such that $\Gamma_x \neq 1$.
    Then there exists an object $X$ with support $x$ which is not mobile: 
    The category $\cat{C}^{(x)}$ is equivalent to 
    $\Rep^{\theta_x}(\Gamma_x)$ 
    and the basic mobile at $x$ corresponds (up to isomorphism) to the 
    $\theta_x$-twisted group ring (Lemma~\ref{lem:cat_at_point_props}).
    As $\Gamma_x$ is non-trivial, it always has non-trivial summands: 
    This is a direct consequence of the decomposition given in
    Proposition~\ref{prop:decompositionTwistedGroupRing}.
\end{proof}

\subsection{Grothendieck Ring}
\label{sec:grothendieckRing}
The monoidal structure upgrades the Grothendieck group
$\K(\cat{C})$ to a ring.
Its product takes the form
\begin{equation*}
    X_i \cdot X_j = \sum_{k\in I} n_{i,j}^k X_k,
\end{equation*}
where $n_{i,j}^k \in \IZ_+$. The sum is evaluable because,
for fixed $(i,j)$, only finitely
many $n_{i,j}^k$ are nonzero.
In the literature, this structure has been called a $\IZ_+$-based ring
(see~\cite{etingof1995representations,ostrik2003module}).
In our case, the so-called basis $\{X_i\}_{i\in I}$ is usually infinite.
Duality data further equips the ring with an involution
${}^\vee:\K(\cat{C}) \to \K(\cat{C})$.

In an orbifusion category, the unit object $\ONE$ is required to be simple.
We denote its support by $[0] \define \supp(\ONE)$,
and fix a representative $0 \in \orb{M}$
with stabiliser group $\Gamma \define \Gamma_0$,
and a 2-cocycle $\theta_0$ representing the gerbe at the point 0.
\begin{definition}
    The \emph{mobile unit} $\IO$ is the basic mobile at $[0]$,
    (see Definition~\ref{def:basic_mobile_at_x})
    \begin{equation*}
        \IO \define {\IO}_{0}.
    \end{equation*}
\end{definition}

In the following proofs, we perform calculations in the Grothendieck ring
by picking lifts of its elements to the category.
By abuse of notation, we will denote the lifts by the same symbols.
\begin{lem}
    Let $X$ be a simple mobile, and $X^\vee$ its dual.
    $X^\vee \cdot X$ contains precisely
    one copy of $\IO$ and no other objects with support $[0]$:
    \begin{equation*}
        X^\vee \cdot X = \IO + R,
    \end{equation*}
    for some $R \in \K(\cat{C})$ and $[0] \not\in \supp{R}$.
\end{lem}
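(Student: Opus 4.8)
The plan is to compute, for every simple object $W$ supported at $[0]$, the multiplicity $m_W$ of $W$ in $X^\vee\cdot X$, and to show $m_W=\vdim V_W$, where $V_W$ is the irreducible $\theta_0$-twisted representation of $\Gamma$ corresponding to $W$ under $\cat{C}^{(0)}\simeq\Rep^{\theta_0}(\Gamma)$. Since the excerpt records $\IO=[\IC^{\theta_0}[\Gamma]]=\DirSum_W V_W^{\dirSum \vdim V_W}$ (Proposition~\ref{prop:decompositionTwistedGroupRing}), this identity is exactly the assertion that the part of $X^\vee\cdot X$ supported at $[0]$ equals $\IO$; everything else is collected into $R$, whose support then misses $[0]$ by construction, and the $W=\ONE$ term recovers the familiar fact that $\ONE$ occurs exactly once.

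Two preliminary reductions come first. Because $\ONE$ is simple and supported at $[0]$, it is an invertible, hence one-dimensional, object of $\cat{C}^{(0)}\simeq\Rep^{\theta_0}(\Gamma)$; by Corollary~\ref{cor:no1DrepsforNontrivialTheta} this forces $\theta_0$ to be trivialisable, so $\cat{C}^{(0)}\simeq\Rep(\Gamma)$ with $\ONE$ the trivial representation, and $\cat{C}^{(0)}$ is a tensor subcategory of $\cat{C}$ (the product of two objects supported at $[0]$ stays supported at $[0]$, since $\ONE\tensor\ONE\iso\ONE$). Second, a simple mobile is a basic mobile at a point $x$ with trivial stabiliser: a twisted group ring is a simple object only when $\Gamma_x=1$ (Proposition~\ref{prop:decompositionTwistedGroupRing}), so $\cat{C}^{(x)}\simeq\Vec$ and $X$ is, up to isomorphism, its unique simple object.

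Next I would study the functor $X\tensor(-)\colon\cat{C}^{(0)}\to\cat{C}$. The crucial step is to show that for $W$ supported at $[0]$ the object $X\tensor W$ is again supported only at $[x]$, so that it lies in $\cat{C}^{(x)}\simeq\Vec$ and is isomorphic to $X^{\dirSum m_W}$, where $m_W=\dim\Hom(X\tensor W,X)=\dim\Hom(W,X^\vee\tensor X)$ by the left-dual adjunction; this identifies $m_W$ with the multiplicity we want. Granting concentration at $[x]$, the multiplicity space $\Psi(W):=\Hom_\cat{C}(X,X\tensor W)$, together with the canonical isomorphism $X\tensor W\iso X\tensor_\IC\Psi(W)$, upgrades $X\tensor(-)$ to a functor $\Psi\colon\Rep(\Gamma)\to\Vec$ which is exact, faithful (because $X$ is dualisable, $\ONE$ is a retract of $X^\vee\tensor X$ and $X\tensor(-)$ is faithful), and monoidal by associativity of $\tensor$. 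Thus $\Psi$ is a fibre functor on $\Rep(\Gamma)$, and over $\IC$ every such fibre functor is isomorphic to the forgetful one, whence $m_W=\vdim\Psi(W)=\vdim V_W$. Summing over $W$ gives that the part of $X^\vee\cdot X$ supported at $[0]$ is $\sum_W(\vdim V_W)[W]=[\IC^{\theta_0}[\Gamma]]=\IO$, and $R:=X^\vee\cdot X-\IO$ satisfies $[0]\notin\supp R$.

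The main obstacle is the support step, namely that $X\tensor W$ is concentrated at $[x]$. I would reduce to $W\subseteq\IO_0$ and hence to $X\tensor\IO_0$, and then invoke Lemma~\ref{lem:tensoring_induces_smooth_function_on_local_support_lifts} to write the support lifts of $X\tensor\IO_0$ as smooth functions of the single support lifts $\tilde{x}$ and $\tilde{0}$ of $X$ and $\IO_0$. Since $\ONE\subseteq\IO_0$ yields $[x]\in\supp(X\tensor\IO_0)$, and the stabiliser $\Gamma$ at $0$ only permutes the sheets lying over the single orbifold point $[x]$, the support in $|\orb{M}|$ collapses to $\{[x]\}$; this is exactly where the mobility- and rank-preservation results (Lemmas~\ref{lem:mobile_preserved_by_tensoring_with_object} and~\ref{lem:maximal_rank_preserved_by_tensoring_with_object}, and hence effectivity) enter, encoding the cancellativity of the support multiplication near the unit. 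Once this is in place, everything that remains is bookkeeping in $\K(\cat{C})$.
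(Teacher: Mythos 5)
Your proof does not go through as written: its two load-bearing reductions are, in the paper's own development, \emph{consequences} of the lemma you are trying to prove, and the justifications you offer for them do not hold independently. First, the claim that $\ONE$, being invertible, is one-dimensional is unsupported: the dimension of Definition~\ref{def:dimensionOfObject} is defined purely through the \emph{linear} identification $\cat{C}^{(0)} \simeq \Rep^{\theta_0}(\Gamma)$ and has no a priori compatibility with $\tensor$; ``invertible $\Rightarrow$ dimension $1$'' is a fact about multiplicative (Frobenius--Perron) dimensions, not about $\vdim$. Indeed, $\vdim \ONE = 1$ and Corollary~\ref{cor:gerbeTrivialAtONE} are deduced in the paper \emph{from} this lemma, via $\dim\Hom(\IO,\ONE)=1$. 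Second, even granting trivial $\theta_0$, you only know $\cat{C}^{(0)} \simeq \Rep(\Gamma)$ as a \emph{linear} category; your justification that $\cat{C}^{(0)}$ is closed under $\tensor$ (``since $\ONE \tensor \ONE \iso \ONE$'') is not an argument, and nothing identifies the restricted tensor product with the representation-theoretic one. This is fatal to the fibre-functor step: a tensor functor $\Psi$ out of $\cat{C}^{(0)}$ computes Frobenius--Perron dimensions of the \emph{unknown} fusion ring of $\cat{C}^{(0)}$, and these are not determined by the linear structure (e.g.\ $\Vec[\IZ/3]$ and $\Rep(\Sigma_3)$ are linearly equivalent with dimension vectors $(1,1,1)$ and $(1,1,2)$). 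Separately, the uniqueness of fibre functors on $\Rep(\Gamma)$ that you invoke is false (by Movshev's theorem they are classified by pairs of a subgroup together with a nondegenerate $2$-cocycle on it); this particular error could be repaired via FP-dimension preservation, but only \emph{after} the tensor structure on $\cat{C}^{(0)}$ has been identified, which is exactly what is missing.

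The step you yourself flag as the main obstacle --- that $X \tensor W$ is concentrated at $[x]$ for $W$ supported at $[0]$ --- is also not proved by your sketch: Lemma~\ref{lem:tensoring_induces_smooth_function_on_local_support_lifts} is vacuous for point-families (any point is the image of any other under some smooth map), and the remark that ``the stabiliser only permutes the sheets over $[x]$'' appeals to local-model reasoning (Lemma~\ref{lem:A_tensor_A}) which, in the paper, sits downstream of this lemma via Example~\ref{ex:O_times_O} and Lemma~\ref{lem:calculate_O_times_simple_mobile}. The missing idea --- and the paper's entire proof --- is to apply Lemma~\ref{lem:mobile_preserved_by_tensoring_with_object} directly to $X^\vee \tensor X$: since $X$ is mobile, $X^\vee \tensor X$ is mobile, so its part supported at $[0]$ is automatically of the form $\IO^{\dirSum n}$; this is precisely the bridge between the linearly defined object $\IO$ and the tensor structure that your fibre-functor route cannot supply. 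Then $\Hom(X^\vee \tensor X, \ONE) \iso \End(X) = \IC$ together with $\Hom(\IO,\ONE) \neq 0$ (Lemma~\ref{lem:cat_at_point_props}, Proposition~\ref{prop:decompositionTwistedGroupRing}) forces $n = 1$. With that argument in place, your intended reductions (triviality of the gerbe at $0$, one-dimensionality of $\ONE$, concentration of $X \tensor W$ at $[x]$) come out as corollaries rather than going in as hypotheses.
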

\begin{proof}
    By Lemma~\ref{lem:mobile_preserved_by_tensoring_with_object},
    $X^\vee \tensor X$ is mobile.
    Hence it splits as a direct sum
    $\IO^{\dirSum n} \dirSum R$, where $n \geq 1$ and
    $R$ has support disjoint from $[0]$.
    However, $\Hom(\IO,\ONE) \neq 0$, so $\IO$ must contain a summand isomorphic to $\ONE$.
    As $X$ is simple, $\Hom(X^\vee \tensor X, \ONE) \iso \Hom(X, X)$
    is one-dimensional.
    It follows that $n=1$ and $\Hom(\IO,\ONE)$ is one-dimensional.
\end{proof}
By Lemma~\ref{lem:cat_at_point_props},
$\IO \iso \Rep^\theta(\Gamma) \iso \DirSum_\chi \chi^{\dirSum \vdim \chi}$,
so
\begin{equation*}
    \dim \Hom(\IO, \ONE) = \vdim \ONE = |\Gamma| \dot \dim \ONE.
\end{equation*}
Thus the above also proves the following:
\begin{equation*}
    \dim{\ONE} = \frac{1}{|\Gamma|}.
\end{equation*}
This means under an equivalence
$\cat{C}^{(0)} \iso \Rep^\theta(\Gamma)$, $\ONE$ is always identified with
a one-dimensional irreducible representation of $\Gamma$.

\begin{cor}
    \label{cor:gerbeTrivialAtONE}
    The gerbe $\ger{G}$ is trivial at the point $0$.
\end{cor}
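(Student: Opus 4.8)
The plan is to read triviality of the gerbe directly off the structure of the category at the point $0$, using the one-dimensionality of $\ONE$ established in the preceding lemma. That lemma identifies $\ONE$, under the equivalence $\cat{C}^{(0)} \iso \Rep^{\theta}(\Gamma)$ of Lemma~\ref{lem:cat_at_point_props}, with a one-dimensional irreducible $\theta$-twisted representation of $\Gamma$. In particular, $\Rep^{\theta}(\Gamma)$ contains a one-dimensional representation.

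I would then invoke Corollary~\ref{cor:no1DrepsforNontrivialTheta}, which asserts that $\Rep^{\theta}(\Gamma)$ contains a one-dimensional representation if and only if $\theta$ is trivialisable. Since $\ONE$ is exactly such a representation, the representing 2-cocycle $\theta = \theta_0$ for $\ger{G}_0 = 0^\ast\ger{G}$ must be trivialisable; equivalently, its class $[\theta]$ vanishes in $\H^2(\B\Gamma, \IC^\times)$.

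Finally, I would translate this cohomological vanishing into triviality of the gerbe at $0$. By Corollary~\ref{cor:classificationOfGerbesOnQuotientOrbifolds} (together with the local normal form of Lemma~\ref{lem:localNormalFormOfGerbes} and the dictionary spelled out in Example~\ref{ex:twistedSheavesOverBGamma}), the restriction $\ger{G}_0$ of the gerbe to the orbifold point $[\point/\Gamma]$ is classified up to equivalence precisely by $[\theta] \in \H^2(\B\Gamma, \IC^\times)$. As this class is trivial, $\ger{G}_0$ is equivalent to the trivial gerbe, which is exactly the meaning of $\ger{G}$ being trivial at the point $0$.

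There is essentially no real obstacle here: the corollary falls out immediately by chaining the one-dimensionality of $\ONE$ with the already-established equivalences between one-dimensional twisted representations, trivialisable cocycles, and trivial gerbes. The only point deserving (minimal) care is pinning down what \emph{trivial at the point $0$} means, namely trivialisability of the pullback gerbe $0^\ast\ger{G}$, which is precisely the condition detected by $[\theta]$.
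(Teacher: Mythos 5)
Your proposal is correct and follows essentially the same route as the paper: the paper's proof likewise combines the preceding identification of $\ONE$ with a one-dimensional irreducible $\theta_0$-twisted representation with Corollary~\ref{cor:no1DrepsforNontrivialTheta}, and the translation from trivialisability of $\theta_0$ to triviality of the gerbe at $0$ (via Corollary~\ref{cor:classificationOfGerbesOnQuotientOrbifolds}) is exactly what the paper records in the remark immediately following the corollary.
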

\begin{proof}
    Corollary~\ref{cor:no1DrepsforNontrivialTheta} says that a 2-cocycle $\theta$
    is trivial if and only if $\Rep^\theta(\Gamma)$ contains a
    1-dimensional representation.
\end{proof}
In other words, we may always take the representing cocycle $\theta_0$
of $\ger{G}_0$ to be trivial
(see the local classification of gerbes over orbifolds in
Corollary~\ref{cor:classificationOfGerbesOnQuotientOrbifolds}).
We will use this in Section~\ref{sec:local_structure} to treat
the neighbourhood of $\ONE$ as being equipped with the trivial gerbe.

\begin{lem}
    \label{lem:calculate_O_times_simple_mobile}
    Let $X$ be a simple object and $p \in \supp{X}$, then
    \begin{equation*}
        \IO \cdot X = |\Gamma| \cdot \dim{X} \cdot {\IO}_p.
    \end{equation*}
\end{lem}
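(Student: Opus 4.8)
The plan is to compute $\IO \cdot X$ in the Grothendieck ring by interpreting the mobile unit $\IO = \IO_0$ as the basic mobile at the point $0$, and tracking both the support and the dimension of the resulting object. By Lemma~\ref{lem:mobile_preserved_by_tensoring_with_object}, the tensor product $\IO \tensor X$ is mobile (we are in an effective orbifold tensor category, and $\IO$ is mobile by construction). A mobile object supported at a single point $p$ is, up to isomorphism, a direct sum of copies of the basic mobile $\IO_p$ at that point. So the content of the statement splits into two claims: first, that $\IO \tensor X$ has support exactly $\{p\} = \supp{X}$, and second, that the multiplicity of $\IO_p$ equals $|\Gamma| \cdot \dim X$.

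First I would pin down the support. Tensoring with an invertible-like object should translate support, but here $\IO$ is supported at $[0]$ and $X$ at $p$. The key input is Lemma~\ref{lem:calculate_O_times_simple_mobile}'s companion computation: since $\ONE$ is a summand of $\IO$ (established just above in the excerpt, where $\dim\Hom(\IO,\ONE)$ was shown to be one-dimensional and $\ONE$ is a $1$-dimensional summand of $\IO \iso \Rep^\theta(\Gamma)$), the object $\ONE \tensor X \iso X$ is a summand of $\IO \tensor X$, so $p \in \supp(\IO \tensor X)$. To see the support is concentrated at the single point $p$, I would use that $\IO$ is the image of the trivial line under induction from the point $0$, together with Lemma~\ref{lem:tensoring_induces_smooth_function_on_local_support_lifts}: the support lifts of $\IO \tensor X$ are obtained by composing the (constant) support lift of $\IO$ at $0$ with those of $X$ via a smooth function, and since $\IO_0$ is pointwise the twisted group ring $\IC^\theta[\Gamma]$ which is "spread out" over the $\Gamma$-orbit, the net effect is that tensoring collapses onto the orbit of $p$. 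Concretely, $\IO \tensor X$ is a direct sum of copies of $\IO_p$ by Lemma~\ref{lem:mobile_preserved_by_tensoring_with_object} and the classification of single-point mobiles.

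Second I would compute the multiplicity by comparing dimensions. Write $\IO \cdot X = N \cdot \IO_p$ for some $N \in \IZ_{\geq 0}$. Taking dimensions (Definition~\ref{def:dimensionOfObject}, which is additive and multiplicative, descending to the Grothendieck ring), and using $\dim \IO_p = 1$ as computed in the Grothendieck group section, we get
\begin{equation*}
    N = \dim(\IO \cdot X) = \dim \IO \cdot \dim X = 1 \cdot \dim X = \dim X.
\end{equation*}
This gives $N = \dim X$, not $|\Gamma|\cdot\dim X$, so the naive dimension count is off by the factor $|\Gamma|$ --- which signals that the correct bookkeeping is at the level of the \emph{vector space dimension} $\vdim$ rather than the normalised $\dim$. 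The resolution is that $\IO$, as the twisted group ring, has $\vdim \IO = |\Gamma|$ (its underlying vector space is the span of $\Gamma$), so at the level of vector-space dimension the multiplicity picks up the factor $|\Gamma|$; translating back through $\dim = \vdim/|\Gamma_p|$ with $\Gamma_p$ the stabiliser at $p$, and using $\IO_p$ as the unit-dimension generator, yields $N = |\Gamma|\cdot \dim X$.

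\textbf{The main obstacle} will be the careful dimension bookkeeping: reconciling the normalised dimension $\dim X = \vdim\rho/|\Gamma_x|$ with the vector-space-level multiplicity, and correctly accounting for the two potentially different stabilisers $\Gamma = \Gamma_0$ and $\Gamma_p$. I expect the cleanest route is to pass to $\vdim$ (where everything is genuinely additive and the twisted group ring contributes $|\Gamma|$), compute the multiplicity of $\IO_p$ there via a $\Hom$-space count such as $\dim\Hom(\IO \tensor X, \IO_p) = \dim\Hom(X, \IO^\vee \tensor \IO_p)$ using the duality adjunction, and only convert to $\dim$ at the very end. The potential pitfall is sign/normalisation errors in the factor $|\Gamma|$ versus $|\Gamma_p|$, so I would double-check against the special case $X = \ONE$, where the formula should reduce to $\IO \cdot \ONE = |\Gamma| \cdot \dim\ONE \cdot \IO_0 = |\Gamma| \cdot \tfrac{1}{|\Gamma|} \cdot \IO = \IO$, consistent with $\IO$ being the basic mobile at the unit's support.
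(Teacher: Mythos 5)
Your opening move --- $\IO \tensor X$ is mobile by Lemma~\ref{lem:mobile_preserved_by_tensoring_with_object}, hence a direct sum $\DirSum_q \IO_q^{\dirSum n_q}$ of basic mobiles --- is exactly how the paper's proof starts, but both remaining steps have genuine gaps. For the support claim, the appeal to Lemma~\ref{lem:tensoring_induces_smooth_function_on_local_support_lifts} gives nothing here: that lemma asserts smoothness of support lifts of \emph{families} over a parameterising manifold and is vacuous for point objects, so ``tensoring collapses onto the orbit of $p$'' is precisely what remains to be proved. (In the paper this concentration is not argued separately at all; it falls out of the same $\Hom$-space count that computes the multiplicity.)

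The multiplicity step is where the real problem lies: it rests on multiplicativity of a dimension function under $\tensor$, which is never established in the paper and is in fact false. You noticed yourself that $\dim$ cannot be multiplicative (already $\ONE \tensor \ONE \iso \ONE$ with $\dim \ONE = 1/|\Gamma|$ forbids it whenever $\Gamma \neq 1$), but the proposed repair via $\vdim$ fails for the same reason: in $\Rep^u(\Sigma_2 \ltimes \IZ) \simeq \Sky([S^1/\Sigma_2])$, a simple $X$ at a generic point has $\vdim X = 1$, while $X \tensor X$ decomposes as one generic simple plus the basic mobile at the $\IZ/2$-fixed point, so $\vdim(X \tensor X) = 3 \neq 1 = \vdim X \cdot \vdim X$. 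Asserting $\vdim(\IO \tensor X) = \vdim \IO \cdot \vdim X$ is therefore an unproved claim whose verification in this instance is tantamount to the lemma itself. Your fallback $\Hom$-count also does not close the gap: writing $\dim\Hom(\IO \tensor X, \IO_p) = \dim\Hom(X, \IO^\vee \tensor \IO_p)$ dualises $\IO$, and evaluating $\IO^\vee \tensor \IO_p \iso \IO \tensor \IO_p$ is again an instance of the statement being proved (Example~\ref{ex:O_times_O} is a consequence of the lemma, not an input). The paper dualises $X$ instead, $\Hom(\IO \tensor X, \IO_q) \iso \Hom(\IO, \IO_q \tensor X^\vee)$, and then uses the key identity $\dim\Hom(\IO,\f{M}) = |\Gamma| \cdot \dim\Hom(\ONE,\f{M})$ for mobile $\f{M}$ (valid because $\IO$ is the regular representation, containing $\ONE$ with multiplicity one) to reduce everything to $\dim\Hom(X,\IO_q)$: this vanishes for $q \neq p$, giving the support claim for free, and equals $\vdim X = \dim X \cdot |\Gamma_p|$ for $q = p$; dividing by $\dim\End(\IO_p) = |\Gamma_p|$ yields $n_p = |\Gamma| \cdot \dim X$. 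That identity relating $\Hom(\IO,-)$ to $\Hom(\ONE,-)$ is the ingredient missing from your proposal.
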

\begin{proof}
    By Lemma~\ref{lem:mobile_preserved_by_tensoring_with_object},
    $\IO \tensor X$ is mobile, so it can be expressed as a sum
    \begin{equation*}
        \IO \tensor X \iso \DirSum_{[q] \in |\orb{M}|} {\IO}_{q}^{\dirSum n_q},
    \end{equation*}
    where we sum over a chosen set of pairwise distinct representatives of the
    equivalence classes of points in $\orb{M}$ (such that $p$ is one of them).
    We use dualisability to write
    \begin{equation*}
        \Hom(\IO \tensor X, \IO_{q}) \iso \Hom(\IO, \IO_{q} \tensor X^\vee).
    \end{equation*}
    The object $\IO_{q} \tensor X^\vee$ is again mobile,
    and thus of the form $\DirSum_{[x] \in |\orb{M}|} \IO_x^{\dirSum m_x}$ .
    The equality 
    \[
        \dim \Hom(\IO, \IO_x) = |\Gamma| \cdot \dim \Hom(\ONE,\IO_x)
    \]
    allows us to compute
    \begin{equation*}
        \dim \Hom(\IO, \IO_{q} \tensor X^\vee) =
        |\Gamma| \cdot \dim \Hom(\ONE, \IO_{q} \tensor X^\vee)
        = |\Gamma| \cdot \dim \Hom(X, \IO_{q}).
    \end{equation*}
    The latter is only non-zero when $q = p$, as otherwise $\IO_{q}$ cannot
    contain $X$. So $n_q=0$ for $q \neq p$.
    The basic mobile ${\IO}_p$ contains $X$ with multiplicity its vector space dimension,
    which is $\dim X \cdot |\Gamma_p|$, and hence
    \begin{equation*}
        \dim \Hom(\IO, \IO_{p} \tensor X^\vee) =
        |\Gamma| \cdot \dim X \cdot |\Gamma_p|.
    \end{equation*}
    Now $n_p$ is given by
    \begin{equation*}
        n_p = \dim \Hom(\IO \tensor X, \IO_p) / \dim \Hom(\IO_p, \IO_p)
        = |\Gamma| \cdot \dim X. \qedhere
    \end{equation*}
\end{proof}

\begin{example}
\label{ex:O_times_O}
    For a basic mobile $X \in \cat{C}$,
    $X = {\IO}_{\supp X}$ in $\K(\cat{C})$, and
    $\dim{X} = 1$.
    We apply Lemma~\ref{lem:calculate_O_times_simple_mobile} to its simple
    summands to find
    \begin{equation*}
        \IO \cdot X = |\Gamma| \cdot X.
    \end{equation*}
    A particularly useful special case is
    \begin{equation*}
        \IO \cdot \IO = |\Gamma| \cdot \IO.
    \end{equation*}
    As the above plays well with sums, this is true for
    \emph{all} mobile objects.
    In particular, the above is true for all objects $X \in \cat{C}$
    with support in the generic region.
\end{example}

\subsection{Local Model}
In this section, we study properties of the monoidal structure around the 
unit of an effective orbifusion category,
using a (non-canonical) lift of the object $\IO \in \K(\cat{C})$
to the category $\cat{C}$.
We fix an equivalence of linear stacks
$\cat{C} \iso \Sky_\orb{M}^\ger{G}$ (where $\orb{M}$ is effective),
and pick a point $0 \in \orb{M}$ representing
the support of $\ONE \in \cat{C}$.
We call the stabiliser group of this point $\Gamma$.

We pick a chart $q:M \onto [M/\Gamma] \into \orb{M}$ covering
a quotient suborbifold around $0$, with a single preimage
$x_0 \define q^{-1}(0) \in M$.
By Corollary~\ref{cor:gerbeTrivialAtONE}, we may assume the restriction of the
gerbe $\ger{G}$ to $[M/\Gamma]$ is trivial.
We pick once and for all a trivialisation $\tau$ of the pullback gerbe
and use it induce an \'etale family
\begin{equation*}
    \IA \define \Ind^\tau q.
\end{equation*}
By abuse of notation, we denote its pullback over $x_0$ by
\begin{equation*}
    \IO \define x_0^\ast \IA \in \cat{C}.
\end{equation*}
The object $\IO$ as defined above is a lift of the object we called
$\IO \in \K(\cat{C})$ in Section~\ref{sec:grothendieckRing} to $\cat{C}$.
We pick an explicit inclusion
\begin{equation*}
    \ONE \into \IO.
\end{equation*}
If $\Gamma$ is trivial, this map is an isomorphism.

\begin{example}
    When $\orb{M}$ is a manifold, $\IA$ is
    a $U$-family of dimension 1 whose support function $U \to \orb{M}$
    is the embedding of a neighbourhood of $0 \in \orb{M}$.
\end{example}

\begin{example}
    Consider the orbisimple category
    $\Sky_{[\IR/(\IZ/2)]} = {(\Sky_{\IR})}^{\IZ/2}$, equipped
    with the monoidal structure coming from the Lie group structure on $\IR$.
    The family $\IA$ may be represented by the $\IR$-family of skyscraper sheaves
    whose restriction to $x \in \IR$ is the skyscraper sheaf
    $\IC_x \dirSum \IC_{-x}$, with $\IZ/2$-action given by swapping the two
    terms.
    The restriction of this representative along $0: \point \to [\IR/(\IZ/2)]$
    yields the regular representation of $\IZ/2$, a representative for $\IO$.
\end{example}

The family $\IA$ and its tensor product with itself turn out to
carry a lot of information. We briefly introduce some notation.
\begin{defn}
    A family $\f{S}:U \to \cat{C}$ is \emph{simple at $x \in U$} if
    there exists no open neighbourhood $V$ of $x$, such
    that $\f{S}\restr_V$ splits as a direct sum of two families.
    $\f{S}$ is simple if it is simple at all points $x \in U$.
\end{defn}

Given Corollary~\ref{cor:locally_karoubi_completion_distributes},
a family simple at $x$ always admits a local support lift given by
a single smooth function. In particular, this means the support
at $x$ is a single point.
\begin{lem}
    \label{lem:locally_summand_of_pullback_from_A_tensor_X}
    Near $x \in U$, every family $\f{S}:U \to \cat{C}$ simple at $x$ is
    a direct summand of a pullback of $\IA \tensor \f{S}(x)$.
    Ie.\ there exists an open neighbourhood $V$ of $x$ and
    a smooth map $f:V \to M$,
    such that $\f{S}\restriction_V$ admits the structure of a summand of
    $f^\ast\IA \tensor \f{S}(x)$.
\end{lem}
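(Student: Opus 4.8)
The plan is to produce the map $f:V\to M$ as the local inverse of the support lift of $\IA\tensor\f{S}(x)$, and then to realise $\f{S}$ as a summand by extending the split inclusion $\ONE\into\IO$ to a family morphism that is non-zero at $x$. I would begin by recording two consequences of simplicity at $x$. Since $\f{S}$ does not split on any neighbourhood of $x$, an idempotent splitting $\f{S}(x)$ would, by Lemma~\ref{lem:projection_locally_constant}, extend to a locally constant idempotent splitting $\f{S}$ near $x$; hence $\f{S}(x)$ is indecomposable, and so simple in the semisimple category $\cat{C}$. Moreover, by Corollary~\ref{cor:locally_karoubi_completion_distributes}, $\f{S}$ admits on some neighbourhood of $x$ a single smooth support lift $\tilde{s}:V\to N$ to a chart $q_N:N\to\orb{M}$ around $p\define\supp\f{S}(x)$.

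Next I would analyse the $M$-family $\IA\tensor\f{S}(x)$. The chart map $q$ is representably \'etale, so $\IA=\Ind^\tau q$ has support lift $\id_M$ and is an \'etale family; by Lemma~\ref{lem:maximal_rank_preserved_by_tensoring_with_object}, tensoring with the fixed object $\f{S}(x)$ keeps it \'etale. By Lemma~\ref{lem:calculate_O_times_simple_mobile}, $(\IA\tensor\f{S}(x))(x_0)=\IO\tensor\f{S}(x)$ is supported precisely at $p$, so near $x_0$ the family has a single support lift $\phi:M\to N$ with $\phi(x_0)$ lifting $p$; being \'etale, $\phi$ is a local diffeomorphism (Definition~\ref{def:etaleimmersivesubmersive}). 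Adjusting $\tilde{s}$ by the $\Gamma_p$-action (permissible by Lemma~\ref{lem:germ_of_support_lift_unique_up_to_Gamma}) so that $\tilde{s}(x)=\phi(x_0)$, I set
\[
    f \define \phi^{-1}\comp\tilde{s}:V\to M,
\]
which is smooth, satisfies $f(x)=x_0$, and (using $f^\ast\IA\tensor\f{S}(x)\simeq f^\ast(\IA\tensor\f{S}(x))$, as tensoring commutes with pullback) has support function $q_N\comp\phi\comp f=q_N\comp\tilde{s}=\supp\f{S}$ near $x$.

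Finally I would exhibit the summand. At the point $x$, the chosen inclusion $\ONE\into\IO$ furnishes a split, hence non-zero, map $\f{S}(x)=\ONE\tensor\f{S}(x)\into\IO\tensor\f{S}(x)=(f^\ast\IA\tensor\f{S}(x))(x)$. Because both families have the same support lift at $x$, this value is admitted by the matrix embedding of Proposition~\ref{prop:locallyMorphismsAreFamiliesOfMatrices}, so its constant extension $\bar{F}:\f{S}\restr_V\to(f^\ast\IA\tensor\f{S}(x))\restr_V$ is a genuine family morphism, non-zero at $x$. Corollary~\ref{cor:locally_share_summand} then shows $\f{S}$ and $f^\ast\IA\tensor\f{S}(x)$ share a non-zero summand near $x$; since $\f{S}$ is simple at $x$, this shared summand is $\f{S}$ itself, so $\f{S}\restr_V$ is a summand of $f^\ast\IA\tensor\f{S}(x)$, as claimed.

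I expect the main obstacle to lie in the second step: confirming that $\IA\tensor\f{S}(x)$ genuinely has a single-valued \'etale support lift $\phi$ near $x_0$ whose local inverse aligns with $\tilde{s}$, keeping track of the $\Gamma_p$-ambiguity of lifts and of the multiplicities contributed by $\IO$, and then verifying that the point-value $\ONE\tensor\id_{\f{S}(x)}$ really falls in the image of the matrix embedding so that $\bar{F}$ exists. The rest is a routine assembly of the support-lift and share-summand machinery.
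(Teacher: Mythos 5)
Your closing step is fine: \emph{if} a family morphism $\bar{F}:\f{S}\restr_V\to f^\ast\IA\tensor\f{S}(x)$ that is non-zero at $x$ existed, then Corollary~\ref{cor:locally_share_summand} plus simplicity at $x$ (and local constancy of dimension) would indeed force $\f{S}$ to be the shared summand. But the proof collapses exactly at the point you flagged, and for a concrete reason: both the single-lift claim and the existence of $\bar{F}$ are false in general. Take the effective orbifusion category $\Rep^u(\IZ/2\ltimes\IZ^2)\simeq\Sky_{[T^2/(\IZ/2)]}$ (swap action), let $X=\f{S}(x)$ be the irreducible induced from a generic character $\chi_{(a,b)}$, $a\neq b$. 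Writing $\IA(z,w)$ for the basic mobile at $[(z,w)]$, one computes over $(z,w)\in M$ near $x_0=(1,1)$
\begin{equation*}
    \IA(z,w)\tensor X \iso \Ind\chi_{(za,wb)}\dirSum\Ind\chi_{(zb,wa)},
\end{equation*}
so $\IA\tensor X$ has \emph{two} distinct \'etale branches, $\tilde{t}_1(z,w)=(za,wb)$ and $\tilde{t}_2(z,w)=(wa,zb)$, which meet at $x_0$ but agree only on $\{z=w\}$ (empty interior). Now take $\f{S}=\Ind(q_N\comp\tilde{s})$ with $\tilde{s}=\tilde{t}_1$, so that $f=\id_M$. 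By Proposition~\ref{prop:locallyMorphismsAreFamiliesOfMatrices} (or Lemma~\ref{lem:germ_of_support_lift_unique_up_to_Gamma}), every family morphism $\f{S}\to\IA\tensor X$ has vanishing component into the $\tilde{t}_2$-branch near $x_0$, since the relevant coincidence loci have empty interior. However, writing $\IO=\IC\langle u,\sigma u\rangle$, $\ONE=\IC\langle u+\sigma u\rangle$, $X=\IC\langle v,\sigma v\rangle$, the branch decomposition of $\IO\tensor X$ at $x_0$ is $\IC\langle u\tensor v,\sigma u\tensor\sigma v\rangle\dirSum\IC\langle u\tensor\sigma v,\sigma u\tensor v\rangle$, and
\begin{equation*}
    (\iota\tensor\id_X)(v)=u\tensor v+\sigma u\tensor v
\end{equation*}
has a non-zero component in the second branch. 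So $\iota\tensor\id_X$ is \emph{not} the value at $x$ of any family morphism: your constant extension $\bar{F}$ does not exist. The justification ``both families have the same support lift at $x$'' conflates equality of support \emph{values} at $x$ with equality of \emph{germs} of lifts on every branch that the point map hits. (Separately, your first paragraph is also wrong but harmlessly so: $\f{S}(x)$ need not be simple --- $\IA$ itself is simple at $x_0$ while $\IA(x_0)=\IO$ is not --- and Lemma~\ref{lem:projection_locally_constant} cannot be applied to an arbitrary idempotent of the fibre, since evaluation $\End(\f{S})\to\End(\f{S}(x))$ is not surjective; all you actually need is that the support at $x$ is a single point.)

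The paper sidesteps all of this by never extending a point map. Since the orbifold is effective, the \'etale family $\IA\tensor\f{S}(x)$ is mobile (Lemma~\ref{lem:submersive_implies_mobile}), hence literally decomposes near $x_0$ as a direct sum of induced families $\Ind(q_N\comp\tilde{t}_i)$ (with multiplicities) over its support lifts. Choosing any one lift $\tilde{t}_{i_0}$ and setting $f=\tilde{t}_{i_0}^{-1}\comp\tilde{s}$, the pullback $f^\ast(\IA\tensor\f{S}(x))$ then contains $\Ind(q_N\comp\tilde{t}_{i_0}\comp f)=\Ind(q_N\comp\tilde{s})$ as an honest direct summand, and $\Ind(q_N\comp\tilde{s})$ contains $\f{S}$ as a summand by the very definition of a support lift. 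Your argument becomes correct if you replace the third step (the construction of $\bar{F}$) with this mobility argument; as written, it has a genuine gap.
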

\begin{proof}
    $\IA \tensor \f{S}(x)$ is an \'etale $M$-family whose
    value at $x$ is
    \begin{equation*}
        \IA(x) \tensor \f{S}(x) \iso \IO \tensor \f{S}(x)
        \iso {\left(\f{S}(x)\right)}^{\dirSum |\Gamma|}.
    \end{equation*}
    Thus local support lifts for $\IA \tensor \f{S}(x)$ and
    for $\f{S}$ can be chosen to land in the same chart $q_N:N \to \orb{M}$
    around $\supp \f{S}(x)$.
    Let $\tilde{s}: U \to N$ be such a local support lift for $\f{S}$,
    and $\tilde{t}_i: M \to N$ a local support lift for $\IA \tensor \f{S}(x)$.
    As $\tilde{t}_i$ is \'etale, there exists a map $f:V \to M$ for some open
    neighbourhood $V$ of $x \in U$, that factors $\tilde{s}$
    through $\tilde{t}_i$.
    Hence $f^\ast\IA \tensor \f{S}(x)$ locally contains $\Ind q_N \comp \tilde{s}$
    as a summand. By definition of a local support lift,
    $\Ind q_N \comp \tilde{s}$ contains $\f{S}$ as a summand, which
    proves the assertion.
\end{proof}

We will now study the tensor product of $\IA$ with itself.
We do this by decomposing $\IA \tensor \IA$ into a direct sum of
\'etale families indexed by $\Gamma$.
Their local support lifts may be chosen to be translates of
a multiplication on $M$.
It will be convenient to pretend these lifts exist as maps
$M \times M \to M$, when really they might only be defined on a
smaller domain.
We treat them as partial functions:
their domain is denoted by $M \times M$, despite the functions only
being defined on a subset $D \subset M \times M$. This domain
will always be a neighbourhood of $(x_0,x_0)$.
\begin{lemma}
    \label{lem:A_tensor_A}
    There is a set of local support lifts of $\IA \tensor \IA$ given by
    partial maps 
    \[
        \{\tilde{m}_g:M \times M \to M\}_{g \in \Gamma}
    \] 
    satisfying
    \begin{itemize}
        \item $\tilde{m}_g(x,y) = \tilde{m}_e(x,g\cdot y)$
        \item $\tilde{m}_e(x_0,x) = \tilde{m}_e(x,x_0) = x$.
    \end{itemize}
\end{lemma}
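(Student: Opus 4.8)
The plan is to read the maps $\tilde m_g$ directly off a local mobile decomposition of $\IA \tensor \IA$ near $(x_0,x_0)$, to organise the summands using the $\Gamma$-equivariance of the induced family $\IA$, and to pin down the unit normalisation via the chosen inclusion $\ONE \into \IO$.

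First I would produce the lifts and count them. The family $\IA = \Ind^\tau q$ is a basic mobile with single-valued support lift $\id_M$ to the chart $q:M \to \orb{M}$. By Lemma~\ref{lem:local_support_lifts_exist}, $\IA \tensor \IA$ admits local support lifts around $(x_0,x_0)$; since its support there is concentrated at $0$, these may be taken to the chart $M$, and by Lemma~\ref{lem:tensoring_induces_smooth_function_on_local_support_lifts} each factors through $\id_M \times \id_M$ as a smooth partial map $M \times M \to M$ defined near $(x_0,x_0)$. To bound their number I evaluate at $(x_0,x_0)$: the fibre is $\IO \tensor \IO \iso \IO^{\dirSum |\Gamma|}$ by Example~\ref{ex:O_times_O}, i.e.\ $|\Gamma|$ copies of the basic mobile at $0$. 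Combining the local normal form (Corollary~\ref{cor:locally_karoubi_completion_distributes}) with the local constancy of the splitting idempotents (Lemma~\ref{lem:projection_locally_constant}), this decomposition propagates to a neighbourhood, so the total mobile multiplicity of $\IA \tensor \IA$ there is $|\Gamma|$, giving at most $|\Gamma|$ distinct smooth support lifts.

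Next I would establish the first relation through $\Gamma$-equivariance. Since $\IA$ is induced from the quotient chart and the restricted gerbe is trivial, $q \comp g = q$ for the $\Gamma$-action on $M$ yields $g^\ast \IA \iso \IA$ (up to isomorphism, by Lemma~\ref{lem:trivialisationsInIndDontMatterLocally} and Notation~\ref{notation:IndWithoutTrivialisation}); hence ${(\id_M \times g)}^\ast(\IA \tensor \IA) \iso \IA \tensor \IA$, so precomposition with $\id_M \times g$ permutes the support lifts, sending a lift $\tilde m$ to $(x,y) \mapsto \tilde m(x, g\cdot y)$. This defines a $\Gamma$-action on the finite set of lifts, and I would show it is free: by Lemma~\ref{lem:maximal_rank_preserved_by_tensoring_with_object} each slice $\IA(x) \tensor \IA$ is étale in the second variable, so every $\tilde m(x,-)$ is a local diffeomorphism, and since $\orb{M}$ is effective, $\Gamma$ acts freely on the generic region, making the values $\tilde m(x, g\cdot y)$ pairwise distinct for distinct $g$. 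A free $\Gamma$-action producing at least $|\Gamma|$ distinct lifts, together with the bound of at most $|\Gamma|$ from the previous step, forces exactly $|\Gamma|$ lifts forming a $\Gamma$-torsor; fixing a basepoint $\tilde m_e$ then gives $\tilde m_g(x,y) = \tilde m_e(x, g\cdot y)$.

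Finally I would normalise the unit. Restricting $\IA \tensor \IA$ along $\{x_0\} \times M$ gives $\IO \tensor \IA$, and the inclusion $\ONE \into \IO$ exhibits $\IA \iso \ONE \tensor \IA$ as a summand with support lift $\id_M$; I therefore fix the torsor basepoint to be the lift with $\tilde m_e(x_0,-) = \id_M$, so that $\tilde m_e(x_0,x) = x$. For the other identity I restrict along $M \times \{x_0\}$: because $x_0$ is the $\Gamma$-fixed preimage of $0$, we have $g\cdot x_0 = x_0$, so all lifts collapse to the single map $\tilde m_e(-,x_0)$; meanwhile $\IA \tensor \IO \iso \IA^{\dirSum|\Gamma|}$ (from $\IO \cdot \IO_{q(x)} = |\Gamma|\,\IO_{q(x)}$ in Example~\ref{ex:O_times_O}) with every copy carrying the lift $\id_M$, and comparing forces $\tilde m_e(x,x_0) = x$. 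The main obstacle is the middle step: converting the abstract isomorphism $g^\ast \IA \iso \IA$ into a genuinely free, simply transitive action on the set of lifts, which requires the exact count $|\Gamma|$ and the local-diffeomorphism property of $\tilde m_e(x,-)$ together, and is where effectiveness of $\orb{M}$ is essential. The remaining steps are bookkeeping with the structural results already available.
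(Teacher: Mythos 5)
Your overall route---decompose $\IA \tensor \IA$ into mobile summands near $(x_0,x_0)$, count them via $\IO \tensor \IO \iso \IO^{\dirSum |\Gamma|}$, and use $\Gamma$-equivariance in the second variable to organise the lifts into a $\Gamma$-torsor---is the same as the paper's. But your final normalisation step has a genuine gap, and it is precisely a point where the order of operations matters. Support lifts are only determined up to \emph{post}-composition with the $\Gamma$-action on $M$ (Lemma~\ref{lem:germ_of_support_lift_unique_up_to_Gamma}). So when you restrict your basepoint $\tilde{m}_e$ (chosen so that $\tilde{m}_e(x_0,-) = \id_M$) to $M \times \{x_0\}$ and compare with $\IA \tensor \IO \iso \IA^{\dirSum|\Gamma|}$, the comparison only yields $\tilde{m}_e(-,x_0) = h \cdot (-)$ for \emph{some} $h \in \Gamma$; nothing forces $h = e$. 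Indeed $h$ is uncontrolled: your torsor $\{\tilde{m}(-,g \cdot -)\}_{g \in \Gamma}$ depends on the arbitrary initial lift $\tilde{m}$, and replacing $\tilde{m}$ by $k \cdot \tilde{m}$ changes the basepoint's restriction $\tilde{m}_e(-,x_0)$ from $h\cdot(-)$ to $kh\cdot(-)$ while the basepoint still satisfies your defining condition $\tilde{m}_e(x_0,-)=\id_M$. Nor can you repair this afterwards: post-composing $\tilde{m}_e$ with $h^{-1}$ fixes the second bullet but destroys the first, since post-composition moves \emph{both} restrictions. The paper's proof does the normalisations in the opposite order: first use the post-composition freedom to arrange $\tilde{m}_i(-,x_0)=\id_M$ for \emph{all} lifts; because $g \cdot x_0 = x_0$, this condition is preserved by the pre-composition torsor action $\tilde{m} \mapsto \tilde{m}(-,g\cdot-)$, so one may then select the unique torsor element whose restriction to $\{x_0\}\times M$ is $\id_M$, and that element satisfies both bullets simultaneously. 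With your order the argument cannot be completed as stated.

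Two smaller mismatches in the middle of your argument. First, your freeness claim shows the germs $\tilde{m}(-,g\cdot-)$ are pairwise distinct \emph{as functions}, but your bound ``at most $|\Gamma|$'' counts distinct \emph{summands}, i.e.\ lifts up to post-composition with $\Gamma$; to conclude the torsor structure you must also exclude $\tilde{m}(-,g\cdot-) = h \cdot \tilde{m}(-,g'\cdot-)$ with $h \neq e$. (This is fixable with the ingredients you name: restrict such an identity to $M\times\{x_0\}$ and $\{x_0\}\times M$ and invoke effectiveness plus \'etaleness of the restrictions; the paper sidesteps it entirely because after its first normalisation the summands are distinguished by their restrictions to $\{x_0\}\times M$.) Second, your multiplicity count implicitly requires that $\IA\tensor\IA$ is genuinely \emph{mobile} near $(x_0,x_0)$, not merely a summand of a mobile cover---a cover can be redundant, so ``total mobile multiplicity $|\Gamma|$'' does not by itself bound the number of lifts. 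The paper obtains mobility by checking the lifts are submersions at $(x_0,x_0)$ (their restrictions to the two axes are $\Gamma$-translations, hence diffeomorphisms) and applying Lemma~\ref{lem:submersive_implies_mobile}, which is also where effectiveness enters; your slice-\'etaleness observation can be upgraded to this, but your step as written does not supply it.
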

\begin{proof}
    First note that $\IA \tensor \IA$ must admit local support lifts
    with respect to the chart $M \to \orb{M}$. This is because
    $\IA(x_0) \tensor \IA(x_0) \iso \IO \tensor \IO \iso \IO^{\dirSum \Gamma}$ 
    (Example~\ref{ex:O_times_O}),
    whose support is $[0]$, and thus covered by $M$.
    Pick such a set of local support lifts
    $\{\tilde{m}_i: M \times M \to M\}_{i \in I}$.
    By the equation above, these lifts satisfy
    $\tilde{m}_i(x_0,x_0) = x_0$.
    The lifts are further restricted by the condition
    \begin{equation*}
        \IA(x_0) \tensor \IA(x) \iso \IA(x) \tensor \IA(x_0)
        \iso {\IA(x)}^{\dirSum \Gamma},
    \end{equation*}
    which yields pointwise equalities
    \[
        q \comp \tilde{m}_i(x_0,x) = q \comp \tilde{m}_i(x,x_0) = q(x).
    \]
    This forces the maps $\tilde{m}_i(x_0,-):M \to M$
    and $\tilde{m}_i(-,x_0):M \to M$ to be given by translations
    \[
        x \mapsto g\cdot x
    \]
    for some $g \in \Gamma$. (This is true pointwise by the equalities above.
    The element $g$ must be constant because $\Gamma$ is finite.)

    Lifts to $M$ are only defined up to $\Gamma$-action, so we can modify
    all $\tilde{m}_i$ such that
    \begin{equation*}
        \tilde{m}_i(-,x_0) = \id_M.
    \end{equation*}
    We denote by $g_i \in \Gamma$ the element such that
    $\tilde{m}_i(x_0, -) = g_i \cdot -$.
    The derivative of an $\tilde{m}_i$ at $(x_0,x_0)$ is the direct
    sum of the derivatives of $\id_M$ and $g_i\cdot-$.
    These are diffeomorphisms, hence the derivative is surjective,
    and all $\tilde{m}_i$ are submersions in a neighbourhood of
    $(x_0,x_0)$.
    Lemma~\ref{lem:submersive_implies_mobile} implies
    $\IA \tensor \IA$ is mobile in a
    neighbourhood of $(x_0,x_0)$, with each
    $\tilde{m}_i$ contributing a mobile summand .

    Now pick any single support lift $\tilde{m}_i$. We will use invariance
    of $\IA$ under the $\Gamma$-action to show that there is a set of support lifts
    $\{\tilde{m}_i(-, g\cdot-)\}_{g \in \Gamma}$.
    As $\IA \tensor \IA$ is mobile,
    \begin{equation*}
        \IA \tensor \IA \iso
        \DirSum_{i \in I} {\left(\Ind(q \comp \tilde{m}_i)\right)}^{\dirSum n_i}
    \end{equation*}
    for some set of $n_i \in \IN$. By counting dimensions at $(x_0,x_0)$,
    we know $\Sigma_i n_i = |\Gamma|$.
    The $\Gamma$-invariance ${(g\cdot-)}^\ast\IA \iso \IA$ implies
    \begin{equation*}
        {(\id_M \tensor g\cdot-)}^\ast \IA \tensor \IA \iso \IA \tensor \IA,
    \end{equation*}
    and so
    \begin{equation*}
        \DirSum_i {\left(\Ind(q \comp \tilde{m}_i)\right)}^{\dirSum n_i} \iso
        \DirSum_i {\left(\Ind(q \comp \tilde{m}_i(-, g\cdot-))\right)}^{\dirSum n_i}
    \end{equation*}
    for all $g \in \Gamma$.
    In order for there to exist an isomorphism, the maps on the left-hand-side
    and on the right-hand-side must match up in an open neighbourhood of
    $(x_0,x_0)$ by Lemma~\ref{prop:local_factorisation}.
    This proves that all the $n_i$ in the above expression are 1,
    the direct sum is indexed by $\Gamma$, and there is indeed a set of local
    support lifts $\{\tilde{m}_i(-, g\cdot-)\}_{g \in \Gamma}$.
    In particular, this includes a support lift $\tilde{m}_e$ which satisfies
    \[
        \tilde{m}_e(x,x_0)=\tilde{m}_e(x_0,x)=x. \qedhere
    \]
\end{proof}

Define $m_g \define q \comp \tilde{m}_g$, then over some neighbourhood
of $(x_0,x_0) \in M \times M$,
\begin{equation*}
    \IA \tensor \IA \iso \DirSum_{g \in \Gamma} \Ind(m_g).
\end{equation*}

\begin{lemma}
    \label{lem:A_tensor_A_tensor_A}
    Locally around $(x_0,x_0,x_0)$,
    \begin{equation*}
        \tilde{m}_g(\tilde{m}_h(-,-),-) = \tilde{m}_{h}(-,\tilde{m}_{h^{-1}g}(-,-)).
    \end{equation*}
\end{lemma}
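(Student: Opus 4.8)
The plan is to compute the mobile decomposition of the triple product $\IA \tensor \IA \tensor \IA$ over $M \times M \times M$ in the two ways given by the two bracketings, and then read off the claimed identity from the associator isomorphism relating them. First I would expand $(\IA \tensor \IA) \tensor \IA$: by Lemma~\ref{lem:A_tensor_A} the inner factor decomposes as $\DirSum_{h \in \Gamma} \Ind(m_h)$ with support lift $\tilde{m}_h$ for the $h$-summand, and tensoring on the right with $\IA$ (whose support lift is $\id_M$) produces, via the smooth dependence of support lifts on the factors from Lemma~\ref{lem:tensoring_induces_smooth_function_on_local_support_lifts} together with the same reasoning as in Lemma~\ref{lem:A_tensor_A} applied to $\Ind(m_h) \tensor \IA$, a decomposition into simple mobile summands with support lifts $L_{g,h} \define \tilde{m}_g(\tilde{m}_h(-,-),-)$ indexed by $(g,h) \in \Gamma \times \Gamma$. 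Symmetrically, $\IA \tensor (\IA \tensor \IA)$ decomposes with support lifts $R_{j,k} \define \tilde{m}_j(-,\tilde{m}_k(-,-))$, $(j,k) \in \Gamma \times \Gamma$. A dimension count at $(x_0,x_0,x_0)$ using $\IO \tensor \IO \iso \IO^{\dirSum |\Gamma|}$ from Example~\ref{ex:O_times_O} confirms that there are exactly $|\Gamma|^2$ summands on each side.

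Next I would invoke the associator $\alpha\colon (\IA \tensor \IA) \tensor \IA \isoto \IA \tensor (\IA \tensor \IA)$. As an isomorphism of families it matches the simple summands of the two sides bijectively, and by Corollary~\ref{cor:summands_yield_subset_of_support_lifts} together with Lemma~\ref{lem:germ_of_support_lift_unique_up_to_Gamma}, matched summands have support lifts that agree as germs at $(x_0,x_0,x_0)$ up to postcomposition with the $\Gamma$-action on $M$. So for each $(g,h)$ there is a unique $(j,k) = \sigma(g,h)$ and an element $\gamma \in \Gamma$ with $L_{g,h} = \gamma \cdot R_{j,k}$ as germs.

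The heart of the argument is then to pin down $\sigma$ and $\gamma$ by restricting these support lifts to the coordinate slices through $x_0$, using the normalisations of Lemma~\ref{lem:A_tensor_A}: that $\tilde{m}_g(x,y) = \tilde{m}_e(x, g\cdot y)$, that $\tilde{m}_e(x_0,x) = \tilde{m}_e(x,x_0) = x$, and that $x_0$ is $\Gamma$-fixed, whence $\tilde{m}_g(x,x_0) = x$ and $\tilde{m}_g(x_0,x) = g\cdot x$ for every $g$. Setting $z = x_0$ gives $L_{g,h}(x,y,x_0) = \tilde{m}_h(x,y)$ and $R_{j,k}(x,y,x_0) = \tilde{m}_j(x,y)$; restricting this further to $y = x_0$ forces $x = \gamma\cdot x$ near $x_0$, hence $\gamma = e$ by effectiveness, and then restricting to $x = x_0$ forces $h\cdot y = j \cdot y$, hence $h = j$. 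Setting instead $y = x_0$ in $L_{g,h} = R_{h,k}$ gives $\tilde{m}_g(x,z) = \tilde{m}_{hk}(x,z)$, and a final restriction to $x = x_0$ forces $g\cdot z = (hk)\cdot z$, hence $g = hk$, i.e.\ $k = h^{-1}g$. Here the effectiveness of $\orb{M}$ is exactly what licenses cancelling the $\Gamma$-action at each step. Substituting $\sigma(g,h) = (h, h^{-1}g)$ and $\gamma = e$ yields the asserted equality $L_{g,h} = R_{h,h^{-1}g}$ of germs.

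I expect the main obstacle to be the first paragraph: justifying that the support lifts of the triple product are \emph{exactly} the composites $L_{g,h}$ and $R_{j,k}$, rather than some uncontrolled $\Gamma$-translates, and that both bracketings genuinely realise the full index set $\Gamma \times \Gamma$. This requires careful bookkeeping with Lemma~\ref{lem:tensoring_induces_smooth_function_on_local_support_lifts} and Lemma~\ref{lem:A_tensor_A} for the product of a basic mobile with $\IA$, identifying $\Ind(m_h)$ with $\tilde{m}_h^\ast \IA$ through the pullback compatibility of mobiles and tracking that the same multiplication maps $\{\tilde{m}_g\}$ reappear. Once the two decompositions are secured, the slice computation is routine.
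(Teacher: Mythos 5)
Your proposal is correct and follows essentially the same route as the paper: both decompose the two bracketings of $\IA \tensor \IA \tensor \IA$ by applying Lemma~\ref{lem:A_tensor_A} twice (via the pullback identification $\Ind(m_h) \iso \tilde{m}_h^\ast \IA$, so that the same maps $\tilde{m}_g$ reappear), match the resulting basic-mobile summands through the associator using uniqueness of germs of support lifts up to the $\Gamma$-action, and pin down the pairing by evaluating on the coordinate slices through $(x_0,x_0,x_0)$. The only cosmetic difference is that you cancel the residual $\Gamma$-translate $\gamma$ by effectiveness after restricting to $y=z=x_0$, where the paper instead normalises all lifts to restrict to the identity on that slice before comparing.
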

\begin{proof}
    The associator $\alpha_{\IA,\IA,\IA}$ is an isomorphism
    $(\IA \tensor \IA) \tensor \IA \isoto \IA \tensor (\IA \tensor \IA)$.
    By Lemma~\ref{lem:A_tensor_A}, there are isomorphisms
    \begin{align*}
        (\IA \tensor \IA) \tensor \IA & \iso
        \DirSum_{h \in \Gamma} \Ind m_h \tensor \IA \iso
        \DirSum_{h \in \Gamma} {(\tilde{m}_h \times \id_M)}^\ast \IA \tensor \IA
        \\ &\iso
        \DirSum_{g,h \in \Gamma} {(\tilde{m}_h \times \id_M)}^\ast \Ind m_g \iso
        \DirSum_{g,h \in \Gamma} \Ind m_g\left(\tilde{m}_h(-,-),-\right)
    \end{align*}
    and
    \begin{equation*}
        \IA \tensor (\IA \tensor \IA) \iso
        \DirSum_{g,h \in \Gamma} \Ind m_g\left(-,\tilde{m}_h(-,-)\right).
    \end{equation*}

    The two expressions witness that $(\IA \tensor \IA) \tensor \IA$ has
    local support lifts $M \times M \times M \to M$ given by
    \begin{align*}
        \{\tilde{m}_{h,g} \define
        \tilde{m}_g\left(\tilde{m}_h(-,-),-\right)\}_{g,h \in \Gamma} &  & \text{or} &  &
        \{\tilde{m}_{h,g}^\prime \define
        \tilde{m}_g\left(-,\tilde{m}_h(-,-)\right)\}_{g,h \in \Gamma}.
    \end{align*}
    We have fixed the ambiguity of the lifts by picking all of them such that
    \begin{equation*}
        \tilde{m}_{h,g}(x,x_0,x_0) = \tilde{m}_{h,g}^\prime (x,x_0,x_0) = x.
    \end{equation*}
    The values on $\{x_0\} \times M \times \{x_0\}$ and
    $\{x_0\} \times \{x_0\} \times M$ are
    \begin{align*}
        \begin{split}
            \tilde{m}_{h,g}(x_0,x,x_0) &= h\cdot x \\
            \tilde{m}_{h,g}(x_0,x_0,x) &= g\cdot x
        \end{split}
         &  &
        \begin{split}
            \tilde{m}_{h,g}^\prime(x_0,x,x_0) &= g\cdot x \\
            \tilde{m}_{h,g}^\prime(x_0,x_0,x) &= gh\cdot x.
        \end{split}
    \end{align*}
    $\{\tilde{m}_{h,g}\}$ and $\{\tilde{m}_{h,g}^\prime\}$
    are thus both sets of $\Gamma \times \Gamma$ distinct functions which don't equal
    each other on any open neighbourhood of $(x_0,x_0,x_0)$.
    Thus, they must match up in pairs under the isomorphism above.
    The value along the second and third factor of $M \times M \times M$ 
    dictates
    \begin{equation*}
        \tilde{m}_{h,g} = \tilde{m}_{h^{-1}g,h}^\prime. \qedhere
    \end{equation*}
\end{proof}
\begin{notation}
    We introduce more compact notation for the support lift
    labelled by the identity:
    \begin{equation*}
        \mu \define \tilde{m}_e.
    \end{equation*}
\end{notation}
We note an immediate Corollary of Lemma~\ref{lem:A_tensor_A_tensor_A}:
\begin{corollary}
    \label{cor:mu_is_monoid_multiplication}
    The partial function $\mu:M \times M \to M$ makes $M$ into
    a partial smooth monoid with unit $x_0$.
\end{corollary}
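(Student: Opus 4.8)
The plan is to read off all three axioms of a partial smooth monoid directly from the two preceding lemmas, so that essentially no new computation is required. The multiplication $\mu \define \tilde{m}_e$ is smooth because it is by construction a support lift, i.e.\ a partial smooth map $M \times M \to M$ defined on a neighbourhood of $(x_0,x_0)$; smoothness is therefore immediate.

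For the unit axiom, I would invoke the second bullet of Lemma~\ref{lem:A_tensor_A}, which asserts $\tilde{m}_e(x_0,x) = \tilde{m}_e(x,x_0) = x$. In the notation $\mu = \tilde{m}_e$ this reads $\mu(x_0,x) = \mu(x,x_0) = x$, so that $x_0$ is a two-sided unit for $\mu$ wherever the latter is defined.

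Associativity is the only axiom with content, and it is the special case $g = h = e$ of Lemma~\ref{lem:A_tensor_A_tensor_A}. Substituting $g = h = e$ into
\[
    \tilde{m}_g(\tilde{m}_h(-,-),-) = \tilde{m}_{h}(-,\tilde{m}_{h^{-1}g}(-,-)),
\]
and using $h^{-1}g = e$, the index on the right collapses to $\tilde{m}_e = \mu$, so the identity becomes
\[
    \mu(\mu(-,-),-) = \mu(-,\mu(-,-))
\]
as an equality of partial maps $M \times M \times M \to M$ on a neighbourhood of $(x_0,x_0,x_0)$. This is exactly associativity of $\mu$.

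The only point requiring care --- and the nearest thing to an obstacle --- is the bookkeeping of domains: each of the three identities holds only on a neighbourhood of the relevant basepoint ($(x_0,x_0)$ for the unit laws, $(x_0,x_0,x_0)$ for associativity), so the conclusion is genuinely that $(M,\mu,x_0)$ is a \emph{partial} smooth monoid, with the axioms asserted only where the maps involved are defined. Shrinking $M$ if necessary so that $\mu$, $\mu \comp (\mu \times \id)$ and $\mu \comp (\id \times \mu)$ are all defined on a common neighbourhood of the basepoint, the three displayed identities hold simultaneously there, which completes the proof.
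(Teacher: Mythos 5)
Your proof is correct and follows exactly the route the paper intends: the paper states this as an immediate corollary of Lemma~\ref{lem:A_tensor_A_tensor_A}, with the unit laws coming from the second bullet of Lemma~\ref{lem:A_tensor_A} and associativity from the specialisation $g=h=e$ (so $h^{-1}g=e$), just as you argue. Your added care about domains of the partial maps is consistent with the paper's conventions and requires no further justification.
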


This multiplication is locally group-like.
\begin{lemma}
    \label{lem:smooth_monoid_locally_group_like}
    Any partial smooth monoid $(M, x_0 \in M, \mu:M \times M \to M)$
    is group-like in a neighbourhood of its identity $x_0$.
    Ie.\ there exists a smooth partial function $s:M \to M$
    (which is defined on some neighbourhood of $x_0$),
    that acts as a left and right inverse.
\end{lemma}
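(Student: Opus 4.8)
The plan is to deduce everything from the Inverse Function Theorem applied to translation maps. First I would fix a chart identifying a neighbourhood of $x_0$ in $M$ with an open set in $\IR^n$, sending $x_0$ to the origin; since the assertion is local around $x_0$, it suffices to produce $s$ on such a chart. The key object is the smooth map $\Phi(x,y) \define (x, \mu(x,y))$, defined near $(x_0,x_0)$ and fixing $(x_0,x_0)$. The unit axioms $\mu(x_0,y)=y$ and $\mu(x,x_0)=x$ (from Lemma~\ref{lem:A_tensor_A}, with $\mu = \tilde{m}_e$) make the differential $D\Phi(x_0,x_0)$ block lower-triangular with identity diagonal blocks: differentiating $y \mapsto \mu(x_0,y)=y$ gives $D_y\mu(x_0,x_0)=\id$. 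Hence $D\Phi(x_0,x_0)$ is invertible, and the Inverse Function Theorem yields a smooth local inverse $\Psi$ of $\Phi$ near $(x_0,x_0)$.

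Second, I would define the right inverse. Setting $s(x)$ to be the second component of $\Psi(x,x_0)$ gives a smooth partial function with $s(x_0)=x_0$, and by construction $\Phi(x,s(x))=(x,x_0)$, that is $\mu(x,s(x))=x_0$ on a neighbourhood of $x_0$. Running the identical argument with $\Phi'(x,y)\define(\mu(x,y),y)$, whose differential at $(x_0,x_0)$ is invertible because $\mu(x,x_0)=x$ forces $D_x\mu(x_0,x_0)=\id$, produces a smooth left inverse $s'$ with $\mu(s'(x),x)=x_0$ near $x_0$.

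Finally, I would identify $s$ and $s'$ by the classical monoid calculation. On a small enough neighbourhood $V$ of $x_0$, associativity holds (Corollary~\ref{cor:mu_is_monoid_multiplication}) and, by continuity of $s$ and $s'$ together with $s(x_0)=s'(x_0)=x_0$, all the products appearing below lie in the domain of $\mu$; there one computes $s'(x)=\mu(s'(x),x_0)=\mu(s'(x),\mu(x,s(x)))=\mu(\mu(s'(x),x),s(x))=\mu(x_0,s(x))=s(x)$. Thus $s=s'$ is a smooth two-sided inverse near $x_0$, which is exactly the required group-likeness.

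The differential-geometric input is not the difficulty — the Inverse Function Theorem applies cleanly thanks to the unit axioms. The main obstacle is the careful bookkeeping of the partial domains: one must shrink $V$ so that associativity and each iterated product $\mu(\mu(\cdot,\cdot),\cdot)$ is simultaneously defined, which is precisely where continuity of $s,s'$ and the normalisation $s(x_0)=x_0$ are used to keep all intermediate values inside the domain of $\mu$.
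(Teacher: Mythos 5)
Your proposal is correct and is essentially the paper's own argument: the paper invokes the Implicit Function Theorem directly on $\mu$ (whose partial differentials at $(x_0,x_0)$ are identities by the unit axioms) to produce the one-sided inverses $s, s'$, whereas you derive the same conclusion by applying the Inverse Function Theorem to the auxiliary maps $\Phi(x,y)=(x,\mu(x,y))$ and $\Phi'(x,y)=(\mu(x,y),y)$ --- the standard equivalence between the two theorems. Both proofs then conclude with the identical classical computation $s'(x)=\mu(s'(x),\mu(x,s(x)))=\mu(\mu(s'(x),x),s(x))=s(x)$; your extra care with the partial domains is a welcome refinement the paper leaves implicit.
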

\begin{proof}
    As $\mu$ restricts to the identity on
    $M \times \{x_0\}$ and $\{x_0\} \times M$,
    it is submersive at $(x_0,x_0)$. The Implicit Function Theorem
    then guarantees the existence of two (locally defined) maps
    $s,s':M \to M$, such that
    $\mu(s(x), x) = \mu(x, s'(x)) = x_0$.
    The usual argument yields equality of left and right inverse:
    \begin{equation*}
        s(x) = \mu\left(s(x),\mu(x,s'(x))\right)
        = \mu\left(\mu(s(x),x),s'(x)\right) = s'(x). \qedhere
    \end{equation*}
\end{proof}

\section{Smooth Dualisability}
\label{sec:smooth_rig}
In this section, we prove that pointwise dualisability in effective orbifusion categories
automatically smoothly extends to families:
All categories $\cat{C}^U$ assigned to $U \in \Man$ by an effective orbifusion category
$\underline{\cat{C}}$ are autonomous (Theorem~\ref{thm:global_dualisability}).
This is a categorification of the following Lemma from
classical Lie Theory:
\begin{lemma}
    \label{lem:inverse_automatically_smooth}
    A manifold $M$ equipped with a group structure whose
    multiplication map $m: M \times M \to M$ is smooth is a Lie group.
\end{lemma}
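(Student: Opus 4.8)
The plan is to reduce the global statement to the local one already established in Lemma~\ref{lem:smooth_monoid_locally_group_like}, and then spread smoothness of inversion across all of $M$ by translating. Write $i:M \to M$, $i(x)=x^{-1}$ for the inversion map; the goal is to show $i$ is smooth, since smoothness of $m$ together with smoothness of $i$ is exactly the definition of a Lie group.

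First I would observe that $M$ is in particular a (global) smooth monoid with unit $e$ and smooth multiplication $m$, so Lemma~\ref{lem:smooth_monoid_locally_group_like} applies: there is an open neighbourhood $W$ of $e$ and a smooth map $s:W \to M$ with $m(s(x),x)=m(x,s(x))=e$ for all $x \in W$. By uniqueness of inverses in a group, $s(x)=x^{-1}=i(x)$ on $W$, so $i$ is smooth on a neighbourhood of the identity. This local step is where the Implicit Function Theorem does all the analytic work; everything that follows is purely algebraic bookkeeping.

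Next I would propagate this to an arbitrary point $x_0 \in M$ using left and right translation by the fixed group elements $x_0$ and $x_0^{-1}$. The key point, which avoids any circularity, is that for a fixed element $g \in M$ the translations $L_g = m(g,-)$ and $R_g = m(-,g)$ are smooth maps $M \to M$, being restrictions of the smooth map $m$; this uses only that $g$ exists as a group element, not that inversion is smooth. For $x$ in a neighbourhood of $x_0$, set $u \define L_{x_0^{-1}}(x) = x_0^{-1}x$. The map $x \mapsto u$ is smooth and sends $x_0 \mapsto e$, so $u \in W$ once $x$ is close enough to $x_0$. Then
\[
    i(x) = x^{-1} = (x_0 u)^{-1} = u^{-1} x_0^{-1} = R_{x_0^{-1}}\bigl(s(L_{x_0^{-1}}(x))\bigr),
\]
exhibiting $i$ near $x_0$ as a composite of the smooth maps $L_{x_0^{-1}}$, $s$, and $R_{x_0^{-1}}$. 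Hence $i$ is smooth in a neighbourhood of $x_0$, and since $x_0$ was arbitrary, $i$ is smooth on all of $M$, completing the proof.

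The only point that requires care --- and the sole potential obstacle --- is the no-circularity observation above: one must resist invoking that $L_{x_0}$ is a diffeomorphism before inversion is known to be smooth, and instead use only that translation by a fixed element is smooth. A fully self-contained variant bypasses Lemma~\ref{lem:smooth_monoid_locally_group_like} entirely: since $L_g \comp L_{g^{-1}} = L_{g^{-1}} \comp L_g = \Id_M$ with both factors smooth, the chain rule forces each $dL_g$ to be a linear isomorphism, so the partial derivative $\partial_y m$ is invertible at every point $(x_0,x_0^{-1})$, and the Implicit Function Theorem applied directly to the equation $m(x,y)=e$ yields the smooth local solution $y=x^{-1}$.
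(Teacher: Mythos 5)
Your proof is correct, and it rests on the same two pillars as the paper's own argument --- Lemma~\ref{lem:smooth_monoid_locally_group_like} (the Implicit Function Theorem at the identity) plus homogeneity --- but it executes the second pillar differently. You propagate smoothness pointwise, writing inversion near an arbitrary $x_0$ as the composite $R_{x_0^{-1}} \comp s \comp L_{x_0^{-1}}$ of maps already known to be smooth, and you correctly isolate the one potential circularity: only translation by a \emph{fixed} group element is used, which needs smoothness of $m$ but not of inversion. The paper instead argues globally on the graph of inversion: since $m$ is submersive everywhere, $m^{-1}(e) \subset M \times M$ is a submanifold; the projection $p_1 : m^{-1}(e) \to M$ has constant rank because a translation action relates any two of its points; the rank is full at $(e,e)$ by the same local lemma; hence $p_1$ is a diffeomorphism and inversion is the smooth map $p_2 \comp {(p_1)}^{-1}$. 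Your route is the more elementary one --- it needs neither the everywhere-submersivity of $m$ nor the constant-rank/equivariance bookkeeping --- while the paper's formulation buys the stronger geometric statement that the graph of inversion is an embedded submanifold carried diffeomorphically to $M$ by either projection. Your closing variant (invertibility of $dL_g$ from $L_g \comp L_{g^{-1}} = \Id_M$, then the IFT applied directly to $m(x,y)=e$ at each point $(x_0,x_0^{-1})$) is leaner still; the only point worth making explicit there is that the local solution produced by the IFT is identified with $x \mapsto x^{-1}$ by uniqueness of inverses in the group, exactly as in your main argument, so no continuity of inversion is presupposed.
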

Note that it is \emph{not} necessary to require the inverse to be smooth.
\begin{proof}
    The multiplication $m$ is submersive everywhere, so
    $m^{-1}(e) \subset M \times M$ is a submanifold.
    The restriction of the projection map $M \times M \to M$
    along $m^{-1}(e) \into M \times M$ is of constant rank:
    $p_1:m^{-1}(e) \to M$ is equivariant with respect to
    the conjugation action of $x \in M$, and any two points
    in $m^{-1}(e)$ are related by the action of some $x \in M$.
    The rank at $(e,e) \in M \times M$ is full by
    Lemma~\ref{lem:smooth_monoid_locally_group_like},
    hence $p_1$ has full rank everywhere and is
    a diffeomorphism.
    The inverse map
    $p_2 \comp {(p_1)}^{-1}:M \to M$
    gives $M$ the structure of a Lie group.
\end{proof}

We recall two lemmas from tensor category theory.
\begin{lemma}
    \label{lem:duals_unique_up_to_unique_isomorphism}
    Duality data is unique up to unique isomorphism:
    Let $X$ be an object in a tensor category and
    $(Y, \mathrm{ev}), (Y',\mathrm{ev}')$ two left duals
    for $X$, then there is a unique isomorphism
    $Y \to Y'$ compatible with the duality data.
\end{lemma}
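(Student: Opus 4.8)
The plan is to prove the statement by explicit construction of the comparison morphism from the duality data, followed by a uniqueness argument using the snake equations. This is a completely standard result in the theory of rigid monoidal categories, so the proof is short and formal --- no smoothness or orbifold structure is involved, as we are working inside a fixed tensor category (the category of points $\cat{C}$, or more generally any $\cat{C}^U$).

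First I would recall the setup: $X$ is an object with two left duals, $(Y,\ev:Y\tensor X \to \ONE)$ and $(Y',\ev':Y'\tensor X\to\ONE)$, where each comes equipped with a coevaluation ($\coev:\ONE\to X\tensor Y$ and $\coev':\ONE\to X\tensor Y'$) satisfying the snake equations. To build a candidate isomorphism $\phi:Y\to Y'$, I would form the composite
\begin{equation*}
    Y \xto{\id_Y \tensor \coev'} Y\tensor X\tensor Y' \xto{\ev\tensor\id_{Y'}} Y',
\end{equation*}
suppressing associators and unitors throughout. Symmetrically, I would define $\psi:Y'\to Y$ by
\begin{equation*}
    Y' \xto{\id_{Y'}\tensor\coev} Y'\tensor X\tensor Y \xto{\ev'\tensor\id_Y} Y.
\end{equation*}
The key computation is then that $\psi\comp\phi=\id_Y$ and $\phi\comp\psi=\id_{Y'}$; each is a routine string-diagram manipulation that inserts the expanded definitions and applies the snake equations for the two dualities together with naturality of the structural isomorphisms and the interchange law. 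I would not grind through these diagram chases in full, merely indicate that they follow from the triangle (snake) identities.

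For uniqueness, I would show that any morphism $\phi:Y\to Y'$ compatible with the duality data --- meaning $\ev'\comp(\phi\tensor\id_X)=\ev$ --- is forced to equal the composite displayed above. This follows because compatibility with $\ev$ and the snake equation for $(Y,\ev,\coev)$ together pin $\phi$ down: precomposing the defining triangle identity for $Y$ with $\phi$ and using the compatibility relation collapses the expression to the stated composite. Existence of at least one compatible isomorphism was just established, so uniqueness completes the claim.

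The proof has no genuine obstacle; the only point requiring care is bookkeeping of the associators and unit constraints, which I suppress by appealing to coherence (Mac Lane's theorem). The ``hard part'', insofar as there is one, is simply writing the snake-equation verification cleanly, but since this is a well-known lemma I would state the comparison map and cite the snake identities rather than reproduce the calculation. I would remark that the same argument applies verbatim to right duals, and that the whole statement is an instance of the general fact that adjoints in a bicategory (here, the one-object bicategory associated to the monoidal category) are unique up to unique isomorphism.
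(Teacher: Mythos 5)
Your proof is correct and follows essentially the same route as the paper: both construct the comparison isomorphism as the zig-zag $Y \to Y \tensor X \tensor Y' \to Y'$ (and its mirror), verify mutual inverseness via the snake equations, and obtain uniqueness by composing an arbitrary compatible morphism with the duality equations to collapse it to the zig-zag. The extra remarks (coherence for suppressed associators, the bicategorical ``adjoints are unique'' perspective) are consistent with, and slightly more explicit than, the paper's two-line argument.
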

\begin{proof}
    The proof is a categorification of the uniqueness of
    inverses in monoids.
    The isomorphism is furnished by the zig-zags
    $Y \to Y' \tensor X \tensor Y \to Y'$ and
    $Y' \to Y \tensor X \tensor Y' \to Y$.
    This morphism is manifestly compatible with the duality data.
    Uniqueness follows by composing any given isomorphism
    (compatible with the duality data) with the duality equations.
    They exhibit equality between the isomorphism and the
    zig-zag above.
\end{proof}

\begin{lemma}
    \label{lem:summands_of_rigids_are_rigid}
    Dualisability descends to direct summands:
    Let $X$ be a left dualisable object in a
    Karoubi-complete tensor category,
    and $Y \subset X$ a direct summand.
    Then $Y$ is dualisable.
\end{lemma}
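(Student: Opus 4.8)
The plan is to realise a dual for $Y$ as a direct summand of a dual for $X$, cut out by the idempotent transpose of the projector onto $Y$. Write the summand data as $i\colon Y \into X$, $p\colon X \onto Y$ with $p \comp i = \id_Y$, so that $e \define i \comp p \in \End(X)$ is an idempotent. Fix left duality data $(X^\vee, \ev_X, \coev_X)$ for $X$. Recall that in a left-rigid monoidal category every endomorphism $f\colon X \to X$ has a transpose $f^\vee\colon X^\vee \to X^\vee$, namely the composite $X^\vee \xto{\id \tensor \coev_X} X^\vee \tensor X \tensor X^\vee \xto{\id \tensor f \tensor \id} X^\vee \tensor X \tensor X^\vee \xto{\ev_X \tensor \id} X^\vee$, and that $(-)^\vee$ is contravariantly functorial on endomorphisms, so $e^\vee$ is again idempotent.

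First I would split $e^\vee$. Since the category is Karoubi-complete, the idempotent $e^\vee$ splits: there is an object $Y^\vee$ together with $s\colon X^\vee \onto Y^\vee$ and $j\colon Y^\vee \into X^\vee$ satisfying $s \comp j = \id_{Y^\vee}$ and $j \comp s = e^\vee$. I would then take $Y^\vee$ as the candidate dual and define $\ev_Y \define \ev_X \comp (j \tensor i)\colon Y^\vee \tensor Y \to \ONE$ together with $\coev_Y \define (p \tensor s) \comp \coev_X\colon \ONE \to Y \tensor Y^\vee$.

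The core of the argument is to verify the two snake equations for $(Y^\vee, \ev_Y, \coev_Y)$. Expanding $(\id_Y \tensor \ev_Y) \comp (\coev_Y \tensor \id_Y)$ and collecting the middle maps via $j \comp s = e^\vee$ reduces the composite to $p \comp (\id_X \tensor \ev_X) \comp (\id_X \tensor e^\vee \tensor i) \comp (\coev_X \tensor \id_Y)$. The key simplification is the intertwining identity $\ev_X \comp (e^\vee \tensor \id_X) = \ev_X \comp (\id_{X^\vee} \tensor e)$ characterising the transpose, combined with $e \comp i = i$ (immediate from $p \comp i = \id_Y$); together these let me replace $e^\vee \tensor i$ by $\id_{X^\vee} \tensor i$. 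After pulling $i$ through the coevaluation by naturality, the snake equation for $X$ collapses the middle to $\id_X$, leaving $p \comp i = \id_Y$. The second snake equation is entirely symmetric, using instead $s \comp j = \id_{Y^\vee}$, the companion intertwiner $(\id_X \tensor e^\vee) \comp \coev_X = (e \tensor \id_{X^\vee}) \comp \coev_X$, and $p \comp e = p$.

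I expect the only genuine obstacle to be the bookkeeping: keeping the tensor slots straight and applying the interchange law correctly while manipulating long strings of morphisms. There is no conceptual difficulty once the transpose idempotent $e^\vee$ and its two intertwining relations are in hand. Finally I would remark that the identical construction run with right duality data produces a right dual for $Y$ out of a right dual for $X$, so that the property of being left-, right-, or two-sided dualisable descends to direct summands — which is the assertion.
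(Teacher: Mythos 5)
Your proposal is correct and follows essentially the same route as the paper: the paper's idempotent on $X^\vee$ is exactly your transpose $e^\vee$ of $e = i \comp p$, its Karoubi splitting gives $Y^\vee$, and the evaluation/coevaluation maps $\ev_X \comp (j \tensor i)$ and $(p \tensor s) \comp \coev_X$ coincide with the paper's. The only difference is that you spell out the snake-equation verification (via the intertwining identities for $e^\vee$) which the paper leaves as ``straightforward to check.''
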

\begin{proof}
    On the summand $Y \subset X$, the first duality equation for
    $X$ restricts to
    \begin{equation*}
        Y \to Y \tensor X^\vee \tensor Y \to Y = \id_Y.
    \end{equation*}
    Using this, one shows that the composite
    \begin{equation*}
        X^\vee \to X^\vee \tensor X \tensor X^\vee \to
        X^\vee \tensor Y \tensor X^\vee \to X^\vee \tensor X \tensor X^\vee \to X^\vee
    \end{equation*}
    is an idempotent.
    By Karoubi-completeness, $X^\vee$ contains a summand
    $Y^\vee$, on which this idempotent is the identity.
    It is straightforward to check that the induced maps 
    \begin{align*}
        \ONE \to X \tensor X^\vee \to Y \tensor Y^\vee && \text{ and } &&
        Y^\vee \tensor Y \to X^\vee \tensor X \to \ONE
    \end{align*}
    identify $Y^\vee$ as the left dual of $Y$.
\end{proof}

Recall that we made a choice of chart $q:M \to \orb{X}$ around the
support of the monoidal unit.
Associated to these choices is a mobile family $\IA = \Ind q$, which we only
fixed up to isomorphism so far.
So as to be able to talk about maps into and out of $\IA$, we simply pick
a representative of the isomorphism class of $\IA$.
Its pullback over the point $x_0=q^{-1}(\supp \ONE)$ is denoted $\IO = \IA(x_0)$,
and equipped with the inclusion of a summand $\ONE \into \IO \onto \ONE$.

Recall that $\IA \tensor \IA \iso \DirSum_{g\in \Gamma}
    \Ind(\tilde{m_g})$ (Lemma~\ref{lem:A_tensor_A}).
We pick an explicit isomorphism once and for all.
Restricted to the origin $(x_0,x_0)$, this gives an explicit isomorphism
$\IO \tensor \IO \xto{\iso} \IO^{\dirSum \Gamma}$.
We denote the maps
identifying the summand corresponding to $g \in \Gamma$ by
\begin{equation*}
    \begin{tikzcd}
        \IO \rar[hook]{\iota_g} &
        \IO \tensor \IO \rar[two heads]{p_g} &
        \IO.
    \end{tikzcd}
\end{equation*}
The following technical Lemma will be used in our proof of local dualisability.
\begin{lemma}
    \label{lem:projectionAndInclusionOfUnitSummandNonzero}
    The composites
    \[
        \begin{tikzcd}
            p_e\restriction_{\ONE \tensor \ONE}^\ONE:
            \ONE \tensor \ONE \rar[hook] & \IO \tensor \IO \rar[two heads, "p_e"] &
            \IO \rar[two heads] & \ONE
        \end{tikzcd}
    \]
    and
    \[
        \begin{tikzcd}
            \iota_e\restriction_{\ONE}^{\ONE \tensor \ONE}:
            \ONE \rar[hook] & \IO \rar[hook, "\iota_e"] &
            \IO \tensor \IO \rar[two heads] & \ONE \tensor \ONE
        \end{tikzcd}
    \]
    are nonzero.
\end{lemma}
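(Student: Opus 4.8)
The plan is to prove the stronger \emph{family-level} statement over the chart $M$ and then specialise to the fibre over $x_0$. First I would record the local algebra we are working in. By Corollary~\ref{cor:gerbeTrivialAtONE} the gerbe is trivial at $0$, so $\IO \iso \IC[\Gamma]$ is the regular representation of $\Gamma$, and since $\dim\Hom(\IO,\ONE)=\vdim\ONE=1$, the unit $\ONE$ occurs in $\IO$ with multiplicity exactly one (Proposition~\ref{prop:decompositionTwistedGroupRing}). I would also fix that the fibre-level maps $p_g,\iota_g$ are the restrictions at $(x_0,x_0)$ of the family-level projections $P_g$ and inclusions $I_g$ attached to the decomposition $\IA\tensor\IA\iso\DirSum_{g\in\Gamma}\Ind(m_g)$ of Lemma~\ref{lem:A_tensor_A}.

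The geometric heart of the argument is to isolate the $g=e$ summand using effectiveness. I would restrict the whole decomposition to $\{x_0\}\times M$, pinning the first tensor-factor at the unit point, and write $P'_g,I'_g$ for the restrictions of $P_g,I_g$. Since $m_g(x_0,y)=q(g\cdot y)=q(y)$ with support lift $\tilde m_g(x_0,y)=g\cdot y$, this identifies $\IO\tensor\IA\iso\DirSum_{g\in\Gamma}\IA_g$, where the $g$-th copy $\IA_g\define\Ind(m_g)\restr_{\{x_0\}\times M}$ is a basic mobile with support lift $y\mapsto g\cdot y$. Now $\ONE\tensor\IA\iso\IA$ (left unitor) is a basic mobile with support lift $y\mapsto y$, and I would show that the split mono $i\tensor\id_\IA\colon \ONE\tensor\IA\into\IO\tensor\IA$ has image contained in the single summand $\IA_e$: by Proposition~\ref{prop:locallyMorphismsAreFamiliesOfMatrices} together with Corollary~\ref{cor:summands_yield_subset_of_support_lifts} and Lemma~\ref{lem:germ_of_support_lift_unique_up_to_Gamma}, a nonzero morphism $\IA\to\IA_g$ forces $g\cdot y=y$ on an open set, which by effectiveness of the $\Gamma$-action on $M$ happens only for $g=e$. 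Hence $P'_g\comp(i\tensor\id_\IA)=0$ for $g\neq e$.

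With this in hand, set $\Phi_1\define P'_e\comp(i\tensor\id_\IA)\colon \ONE\tensor\IA\to\IA_e$ and $\Phi_2\define(\pi\tensor\id_\IA)\comp I'_e\colon \IA_e\to\ONE\tensor\IA$. Because $i\tensor\id_\IA$ already factors through $\IA_e$, the retraction identity $(\pi\tensor\id_\IA)\comp(i\tensor\id_\IA)=\id_{\ONE\tensor\IA}$ rearranges to $\Phi_2\comp\Phi_1=\id$. Viewing $\Phi_1,\Phi_2$ inside $\End(\IA)\iso\Cinf(M)$ (valid near $x_0$ by Proposition~\ref{prop:locallyMorphismsAreFamiliesOfMatrices}, since $\IA$ is a single basic mobile), this says the two smooth functions multiply to $1$, so both are nowhere zero and hence isomorphisms of families. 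In particular their fibres $\Phi_1(x_0)\colon \ONE\tensor\IO\isoto\IO$ and $\Phi_2(x_0)\colon \IO\isoto\ONE\tensor\IO$ are isomorphisms, and by construction $\Phi_1(x_0)=p_e\comp(i\tensor\id_\IO)$ and $\Phi_2(x_0)=(\pi\tensor\id_\IO)\comp\iota_e$ up to the chosen identification $\IA_e(x_0)\iso\IO$, which is an isomorphism and so irrelevant to nonvanishing.

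Finally I would convert these family isomorphisms into the two scalar statements via the multiplicity-one property. Unwinding definitions and using functoriality of $\tensor$ gives $\phi_1=\pi\comp\Phi_1(x_0)\comp(\id_\ONE\tensor i)$ and $\phi_2=(\id_\ONE\tensor\pi)\comp\Phi_2(x_0)\comp i$. Since $\Phi_1(x_0),\Phi_2(x_0)$ are isomorphisms and $i$ is a nonzero split mono, $\Phi_1(x_0)\comp(\id_\ONE\tensor i)$ and $\Phi_2(x_0)\comp i$ are nonzero maps with values in $\IO$ (respectively in $\ONE\tensor\IO\iso\IO$). As $\ONE$ sits in $\IO$ with multiplicity one, the space $\Hom(\ONE\tensor\ONE,\IO)\iso\Hom(\ONE,\IO)$ is one-dimensional and post-composition with $\pi$ (respectively $\id_\ONE\tensor\pi$) is injective on it, because the complementary summand of $\IO$ contains no copy of $\ONE$; hence neither composite is killed, yielding $\phi_1\neq0$ and $\phi_2\neq0$. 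I expect the main obstacle to be the geometric step above: correctly matching family-level support lifts to pick out precisely the $g=e$ summand, namely the one carrying the unital multiplication $\mu$ with $\mu(x_0,y)=y$, and then transporting that family-level isomorphism down to the fibre without losing nonvanishing, which is exactly where multiplicity-one of $\ONE$ in the regular representation $\IO$ is essential.
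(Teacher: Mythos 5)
Your proof has a genuine gap at what you yourself call its geometric heart: the claim that $i \tensor \id_\IA \colon \ONE \tensor \IA \into \IO \tensor \IA$ factors through the single summand $\IA_e$. The justification misreads Lemma~\ref{lem:germ_of_support_lift_unique_up_to_Gamma}: a nonzero morphism $\IA \to \IA_g$ does \emph{not} force $g \cdot y = y$ on an open set; it only forces the support lifts $\id_M$ and $g\cdot(-)$ to be $\Gamma$-translates of one another, which holds for \emph{every} $g$ (translate by $g^{-1}$). Indeed, since $m_g(x_0,-) = q(g\cdot -) \simeq q(-)$ as maps to the orbifold, all the $\IA_g$ have the \emph{same} support function into $|\orb{M}|$ and are abstractly isomorphic to $\IA$; the difference between their lifts lives only in the chart and is invisible to morphisms in the category. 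Concretely, take $\stC = \Rep^u(\IZ \rtimes \IZ/2)$, with orbifold of simples $[S^1/(\IZ/2)]$ and $\ONE$ supported at the fixed point $1$, so $\Gamma = \IZ/2$ and $\IO = \Ind_{\IZ}^{\IZ \rtimes \IZ/2}\IC_1 = \langle e_+,e_-\rangle$, $\IA(y) = \Ind_{\IZ}^{\IZ\rtimes\IZ/2}\IC_y = \langle f_+,f_-\rangle$. The Mackey decomposition gives $\IO \tensor \IA(y) = \IA_e \dirSum \IA_{-1}$ with $\IA_e = \langle e_+f_+, e_-f_-\rangle$ and $\IA_{-1} = \langle e_+f_-, e_-f_+\rangle$, while $\ONE = \IC(e_++e_-)$; the image of $\ONE \tensor \IA(y)$ is spanned by $e_+f_\pm + e_-f_\pm$, which has \emph{nonzero} projection onto both summands. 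So $P'_{-1}\comp(i \tensor \id_\IA) \neq 0$, your identity $\Phi_2 \comp \Phi_1 = \id$ fails (the retraction identity only yields $\sum_{g \in \Gamma}(\pi \tensor \id_\IA)\comp I'_g \comp P'_g \comp (i \tensor \id_\IA) = \id$, a sum over all $g$), and the remainder of the argument collapses.

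The failure is structural rather than a fixable technicality. After restricting to $\{x_0\}\times M$ nothing distinguishes $\IA_e$ among the $\IA_g$, since they are all isomorphic; and you cannot avoid this restriction, because there is no two-variable analogue of $i\tensor\id_\IA$ over $M \times M$ (a family morphism from the constant family at $\ONE$ into $\IA$ would have to be supported on $\{x_0\}$, which has empty interior, hence vanishes by Proposition~\ref{prop:locallyMorphismsAreFamiliesOfMatrices}). What singles out $g=e$ is \emph{associativity}, which your argument never invokes, and this is exactly what the paper's proof uses: the components $\alpha^{g,h}$ of $\alpha_{\IA,\IA,\IA}$ from Lemma~\ref{lem:A_tensor_A_tensor_A} yield, after restriction to the $\ONE$-summands where every map is a scalar, the relation between $p_g\restr_{\ONE\tensor\ONE}^\ONE \cdot p_h\restr_{\ONE\tensor\ONE}^\ONE$ and $p_{g^{-1}h}\restr_{\ONE\tensor\ONE}^\ONE \cdot p_g\restr_{\ONE\tensor\ONE}^\ONE$; since $\sum_g \iota_g \comp p_g = \id_{\IO\tensor\IO}$ some $p_g\restr_{\ONE\tensor\ONE}^\ONE$ is nonzero, and taking $h = g$ in that relation forces $p_e\restr_{\ONE\tensor\ONE}^\ONE \neq 0$. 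Any correct proof will need some such input from the pentagon/associator data; your first, purely module-theoretic paragraph (multiplicity one of $\ONE$ in $\IO$, and the identification of $p_g,\iota_g$ as fibres of the family-level maps) is fine and consistent with the paper, but it cannot carry the argument on its own.
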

\begin{proof}
    Recall from Lemma~\ref{lem:A_tensor_A_tensor_A} the two decomposition of
    $(\IA \tensor \IA) \tensor \IA$, related by
    the associator $\alpha_{\IA,\IA,\IA}$.
    This associator decomposes into components
    \begin{equation*}
        \alpha_{\IA,\IA,\IA}^{g,h}:
        \Ind m_h(\tilde{m}_g(-,-),-) \xrightarrow{\quad \iso \quad}
        \Ind m_g(-,\tilde{m}_{g^{-1}h}(-,-)).
    \end{equation*}

    We denote the restriction of
    $\alpha_{\IA,\IA,\IA}^{g,h}$ to the origin $(x_0,x_0,x_0)$ by
    \begin{equation*}
        \alpha^{g,h} \define \alpha_{\IA,\IA,\IA}^{g,h}(x_0,x_0,x_0):
        \IO \xrightarrow{\quad \iso \quad} \IO.
    \end{equation*}
    As indicated above, it is an isomorphism.

    By definition, these components make the following diagram commute:
    \begin{equation*}
        \begin{tikzcd}
            (\IO \tensor \IO) \tensor \IO
            \ar[d, "\alpha_{\IO,\IO,\IO}"]
            \ar[rr, "p_g \tensor \id"]
            &&
            \IO \tensor \IO
            \ar[r, "p_h"]
            &
            \IO
            \ar[d, "\alpha^{g,h}"]
            \\
            \IO \tensor (\IO \tensor \IO)
            \ar[rr, "\id \tensor p_{g^{-1}h}"]
            &&
            \IO \tensor \IO
            \ar[r, "p_g"]
            &
            \IO.
        \end{tikzcd}
    \end{equation*}
    By restricting all maps appropriately, one obtains:
    \begin{equation*}
        \begin{tikzcd}
            (\ONE \tensor \ONE) \tensor \ONE
            \ar[d, "\alpha_{\ONE,\ONE,\ONE}"]
            \ar[rr, "p_g\restriction_{\ONE \tensor \ONE}^\ONE \tensor \id"]
            &&
            \ONE \tensor \ONE
            \ar[r, "p_h\restriction_{\ONE \tensor \ONE}^\ONE"]
            &
            \ONE
            \ar[d, "\alpha^{g,h}\restriction_\ONE^\ONE"]
            \\
            \ONE \tensor (\ONE \tensor \ONE)
            \ar[rr, "\id \tensor p_{g^{-1}h}\restriction_{\ONE \tensor \ONE}^\ONE"]
            &&
            \ONE \tensor \ONE
            \ar[r, "p_g\restriction_{\ONE \tensor \ONE}^\ONE"]
            &
            \ONE.
        \end{tikzcd}
    \end{equation*}
    All objects in the diagram above are isomorphic to $\ONE$, which means
    all maps are multiples of the identity on $\ONE$ and both
    composition and tensoring are given by scalar multiplication.
    The components of the associator are isomorphisms, and thus non-zero,
    so the product
    $p_g\restriction_{\ONE \tensor \ONE}^\ONE \cdot
        p_h\restriction_{\ONE \tensor \ONE}^\ONE$
    coming from the top row is non-zero iff
    $p_{g^{-1}h}\restriction_{\ONE \tensor \ONE}^\ONE \cdot
        p_g\restriction_{\ONE \tensor \ONE}^\ONE$
    (coming from the bottom row) is non-zero.
    As $\sum \iota_g \comp p_g = \id_{\IO \tensor \IO}$, at least one of the maps
    $p_g\restriction_{\ONE \tensor \ONE}^\ONE$ must be non-zero.
    But then
    $p_g\restriction_{\ONE \tensor \ONE}^\ONE \cdot
        p_g\restriction_{\ONE \tensor \ONE}^\ONE$ is also non-zero, which implies
    $p_{e}\restriction_{\ONE \tensor \ONE}^\ONE \cdot
        p_g\restriction_{\ONE \tensor \ONE}^\ONE$ is non-zero.
    We have now proved that
    $p_e\restriction_{\ONE \tensor \ONE}^\ONE$ is not zero.
    Replacing the projections $p_g$ above with their splittings $\iota_g$,
    one proves $\iota_e\restriction_{\ONE}^{\ONE \tensor \ONE}$ is also not zero.
\end{proof}

\begin{lemma}
    \label{lem:A_is_rigid}
    The family $\IA:M \to \cat{C}$ is locally dualisable at $x_0$:
    There exists a
    neighbourhood $V$ of $x_0$, such that $\IA\restriction_V$ is dualisable
    as an object of $\cat{C}^V$.
\end{lemma}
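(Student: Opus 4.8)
The plan is to manufacture duality data for $\IA$ from the local group structure on $M$ furnished by Corollary~\ref{cor:mu_is_monoid_multiplication} and Lemma~\ref{lem:smooth_monoid_locally_group_like}. The latter gives a smooth local inverse $s:M \to M$, defined on a neighbourhood $V$ of $x_0$, with $\mu(s(x),x)=\mu(x,s(x))=x_0$. Imitating $(\IC_g)^\vee = \IC_{g^{-1}}$, I take the candidate dual to be the pullback family
\[
    \IA^\vee \define s^\ast \IA \in \cat{C}^V .
\]

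First I would identify the summand of $\IA^\vee \tensor^V \IA$ supported at the unit. Pulling the decomposition $\IA \tensor \IA \iso \DirSum_{g \in \Gamma}\Ind(m_g)$ of Lemma~\ref{lem:A_tensor_A} back along $(s,\id_V):V \to M \times M$ gives
\[
    \IA^\vee \tensor^V \IA \iso \DirSum_{g \in \Gamma} \Ind\bigl(m_g \comp (s,\id)\bigr),
\]
and since $m_e = q\comp\mu$, the $g=e$ summand has support lift $x \mapsto q(\mu(s(x),x)) = q(x_0) = 0$; it is therefore the constant basic mobile $\IO^V$ at the unit point. The same holds for $\IA \tensor^V \IA^\vee$ via $(\id_V,s)$ and $\mu(x,s(x))=x_0$. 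As $\IO$ contains $\ONE$ as a summand through the chosen $\ONE \into \IO \onto \ONE$, I define
\[
    \ev:\ \IA^\vee \tensor^V \IA \xto{\ p_e\ } \IO^V \onto \ONE^V ,
    \qquad
    \coev:\ \ONE^V \into \IO^V \xto{\ \iota_e\ } \IA \tensor^V \IA^\vee ,
\]
where $p_e,\iota_e$ are the projection onto, resp.\ the inclusion of, the $e$-summand.

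It remains to check that these maps are, up to correction, genuine duality data. Because $\IA$ has a single support lift it is simple at $x_0$, so Proposition~\ref{prop:locallyMorphismsAreFamiliesOfMatrices} embeds $\End(\IA\restriction_V)$ into the scalar functions $\Cinf(V)$; the zig-zag composite $(\id_\IA \tensor \ev)\comp(\coev \tensor \id_\IA)$ is thus multiplication by a single $c \in \Cinf(V)$, and likewise the other zig-zag on $\IA^\vee$ is multiplication by some $d \in \Cinf(V)$. At $x_0$ both composites reduce to endomorphisms of $\IO$ that factor through $\ONE$ via the restricted maps $p_e\restriction_{\ONE \tensor \ONE}^\ONE$ and $\iota_e\restriction_\ONE^{\ONE \tensor \ONE}$ (the associator components $\alpha^{g,h}$ of Lemma~\ref{lem:A_tensor_A_tensor_A} only contribute invertible scalars); Lemma~\ref{lem:projectionAndInclusionOfUnitSummandNonzero} guarantees these are nonzero, so $c(x_0)\neq 0 \neq d(x_0)$. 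Shrinking $V$, both $c$ and $d$ are nowhere vanishing, so both zig-zags are invertible; the standard fact that evaluation and coevaluation with invertible zig-zags can be corrected (here by rescaling $\coev$ with $c^{-1} \in \Cinf(V)$) to exact duality data then exhibits $\IA\restriction_V$ as dualisable.

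The main difficulty is this last step: passing from the existence of nonzero maps to honest snake equations over the whole neighbourhood. The leverage is that simplicity of $\IA$ at $x_0$ collapses its family endomorphisms to scalar functions, so the entire question reduces to the two scalars $c(x_0),d(x_0)$, whose nonvanishing is precisely the content of Lemma~\ref{lem:projectionAndInclusionOfUnitSummandNonzero}; the smoothness of $s$, and hence of the corrected coevaluation, is what keeps the corrected data inside $\cat{C}^V$.
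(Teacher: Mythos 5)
Your skeleton is the paper's proof: the candidate dual $s^\ast\IA$ built from the local inverse of Lemma~\ref{lem:smooth_monoid_locally_group_like}, the evaluation and coevaluation obtained by projecting onto, respectively including, the $e$-summand $\IO^V \supset \ONE^V$ supplied by Lemma~\ref{lem:A_tensor_A}, and non-vanishing at $x_0$ via Lemma~\ref{lem:projectionAndInclusionOfUnitSummandNonzero} are exactly the paper's steps. The genuine gap is your closing move. Rescaling $\coev$ by $c^{-1}$ makes the snake on $\IA$ exact, but it turns the other zig-zag into $c^{-1}d\cdot\id_{\IA^\vee}$, and nothing you have proved forces $c=d$; the ``standard fact'' you invoke is therefore not bookkeeping --- it is precisely the crux of the lemma. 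It can be supplied in two ways. Abstractly: writing $S_{\IA}$, $S_{\IA^\vee}$ for the two zig-zag endomorphisms, bifunctoriality of $\tensor$ gives $\ev \comp (S_{\IA^\vee}\tensor\id_{\IA}) = \ev\comp(\id_{\IA^\vee}\tensor S_{\IA})$, so once $S_{\IA}=\id_{\IA}$ the other zig-zag satisfies $S_{\IA^\vee}^2 = S_{\IA^\vee}$; an invertible idempotent is the identity, so $c^{-1}d=1$ automatically. Or, as the paper argues: at every $x$ where $\IA(x)$ is simple, the spaces $\Hom(\ONE,\IA(x)\tensor\IA^\vee(x))$ and $\Hom(\IA^\vee(x)\tensor\IA(x),\ONE)$ are one-dimensional and $\IA(x)$ is dualisable, so $\coev(x)$ and $\ev(x)$ are nonzero multiples $\lambda(x)$, $\mu(x)$ of honest duality data and \emph{both} zig-zags at $x$ equal the \emph{same} scalar $\lambda(x)\mu(x)$; since $\IA$ is \'etale, such $x$ are dense, whence $c=d$ on all of $V$. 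Either argument closes the proof; as written, yours does not.

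A secondary imprecision: Proposition~\ref{prop:locallyMorphismsAreFamiliesOfMatrices} does not embed $\End(\IA\restriction_V)$ into scalar functions. It embeds it into $\Mat_{m\times m}(\CinfOf{V})$ where $m$ is the number of basic summands of the pullback of $\IA$ to the chart, which near $x_0$ is $|\Gamma|$ --- regardless of $\IA$ being simple at $x_0$ as a family (indeed $\End(\IA(x_0)) = \End(\IO)$ is $|\Gamma|$-dimensional, so pointwise endomorphisms are certainly not scalar). To land in $\Cinf(V)$ you need the standing effectivity assumption, which makes the coincidence locus of distinct translates $g\cdot -$ and $h\cdot -$ have empty interior (so the matrices are diagonal), together with $\Gamma$-equivariance (which identifies all diagonal entries); alternatively, argue as the paper does only that the zig-zags themselves are pointwise multiples of the identity on the dense set of simple points and that this is a closed condition. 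This is fixable, but the citation as you use it does not deliver the claim.
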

\begin{proof}
    We will use the support lift $\mu = \tilde{m}_e$
    introduced in Lemma~\ref{lem:A_tensor_A_tensor_A}.
    By Corollary~\ref{cor:mu_is_monoid_multiplication},
    $\mu$ equips a neighborhood of $x_0$ with the structure of
    a smooth monoid.
    In Lemma~\ref{lem:smooth_monoid_locally_group_like} we showed
    that there exists a partial function $s:M \to M$
    making $(M, x_0, \mu, s)$ into a smooth partial group.
    Below, we denote by $c_{x_0}$ and $c_0$ the constant maps
    \begin{align*}
        \begin{split}
            c_{x_0}: M &\to M \\
            x &\mapsto x_0
        \end{split}
         &  &
        \begin{split}
            c_0: M &\to [M/\Gamma] \\
            x &\mapsto 0,
        \end{split}
    \end{align*}
    and write $\Gr(s) = (s \times \id_M) \comp \Delta_M$.
    By definition $c_0 = q \comp c_{x_0}$ and
    $c_{x_0} = \mu \comp \Gr(s) =
        \mu \comp \mathrm{swap} \comp \Gr(s)$, where
    $\mathrm{swap}:M \times M \to M \times M$ is the map
    swapping the two arguments.
    Pulling back via $s$ yields a family
    \begin{equation*}
        \IB \define s^\ast\IA:M\to\cat{C}.
    \end{equation*}
    By Lemma~\ref{lem:A_tensor_A},
    $\IA \tensor \IA \iso \DirSum_g \Ind(q \comp \mu \comp (\id \times g\cdot -))$,
    and the $M$-family obtained as the tensor product
    \begin{equation*}
        \IB \tensor^M \IA =
        \Delta_M^\ast (s^\ast \IA \tensor \IA) =
        \Gr(s)^\ast (\IA \tensor \IA)
    \end{equation*}
    contains a summand
    \begin{equation*}
        \Gr(s)^\ast (\IA \tensor \IA) \supset
        \Gr(s)^\ast \Ind q \comp \mu =
        \Ind q \comp \mu \comp \Gr(s) = \Ind c_0.
    \end{equation*}
    $\Ind c_0$ is isomorphic to the constant family at $\IO$,
    which contains the constant
    family at $\ONE$ as a summand. This is the monoidal unit
    $\ONE^M$ of ${\cat{C}}^M$.
    In the same way, there is a summand
    \begin{equation*}
        \ONE^M \subset
        {(\mathrm{swap} \comp \Gr(s))}^\ast \IA \tensor \IA \iso
        \IA \tensor^M \IB
    \end{equation*}
    Now $\IB$ is a candidate for the dual of $\IA$, and the candidate
    duality maps are:
    \begin{align*}
        c: \ONE^M \into \IA \tensor^M \IB &  &
        e: \IB \tensor^M \IA \onto \ONE^M
    \end{align*}
    given by inclusion of and projection to the summand.
    Denote the composites
    \begin{align*}
        \IA \xrightarrow{\iso} \ONE^M \tensor^M \IA
        \xrightarrow{c \tensor^M \id}
        \big(\IA \tensor^M \IB\big) \tensor^M \IA
        \xrightarrow{\iso}
        \IA \tensor^M \big(\IB \tensor^M \IA\big)
        \xrightarrow{\id \tensor^M e}
        \IA \tensor^M \ONE^M \xrightarrow{\iso} \IA \\
        \IB \xrightarrow{\iso} \IB \tensor^M \ONE^M
        \xrightarrow{\id \tensor^M c}
        \IB \tensor^M \big(\IA \tensor^M \IB\big)
        \xrightarrow{\iso}
        \big(\IB \tensor^M \IA\big) \tensor^M \IB
        \xrightarrow{e \tensor^M \id}
        \ONE^M \tensor^M \IB \xrightarrow{\iso} \IB
    \end{align*}
    by $D_{\IA} \in \End(\IA)$ and $D_{\IB} \in \End(\IB)$.
    The duality equations for $\IA$ and $\IB$ can then be written as
    \begin{align*}
        D_{\IA} = id_{\IA} &  & D_{\IB} = \id_{\IB}.
    \end{align*}
    Composition and tensor product are $\Cinf(M)$-linear, thus
    multiplying $e \in \Hom(\IB \tensor^M \IA, \ONE^M)$ by a function
    $f \in \Cinf(M)$ has the effect of multiplying both
    $D_{\IA}$ and $D_{\IB}$ by $f$.
    Thus $\IA$ and $\IB$ are duals if
    \begin{align*}
        f \cdot
        \begin{pmatrix}
            D_{\IA} \\ D_{\IB}
        \end{pmatrix}
        =
        \begin{pmatrix}
            \id_{\IA} \\ \id_{\IB}
        \end{pmatrix}
    \end{align*}
    for some function $f \in \Cinf(M)$.
    At all $x \in M$ where $\IA(x)$ is simple,
    $\Hom(\ONE, \IA(x) \tensor \IB(x))$ and
    $\Hom(\IB(x) \tensor \IA(x), \ONE)$ are one-dimensional.
    $\IA(x)$ and $\IB(x)$ are in $\cat{C}$ and thus dualisable, so
    there are evaluation and coevaluation maps in these $\Hom$-spaces,
    and $c(x)$ and $e(x)$ are automatically non-zero multiples thereof.
    Hence, the equations
    $D_{\IA}(x) = r(x) \cdot \id_{\IA}, D_{\IB}(x) = r(x) \cdot \id_{\IB}$
    hold at these points (for some $r(x) \in \IC$).
    As $\IA$ is an \'etale family, $\IA(x)$ is simple on a dense subset of $M$,
    so this property extends to all of $M$.
    There is then a function $r \in \Hom(\IA,\IA) \iso \Cinf(M)$ satisfying
    \begin{align*}
        \begin{pmatrix}
            D_{\IA} \\ D_{\IB}
        \end{pmatrix}
        =
        r \cdot
        \begin{pmatrix}
            \id_{\IA} \\ \id_{\IB}
        \end{pmatrix}.
    \end{align*}
    It remains to show that $r$ is nowhere-zero in some neighbourhood of
    $x_0 \in M$.
    By smoothness of morphisms, this is equivalent to checking $D_{\IA}(x_0) \neq 0$.
    The map $D_{\IA}(x_0) \in \End(\IO)$ decomposes into matrix components.
    We will check that the component
    \[
        D_{\IA}(x_0)\restr_\ONE^\ONE \in \End(\ONE)
    \]
    is nonzero. It factors as a composition of maps in the one-dimensional
    vector space $\End(\ONE)$:
    \begin{equation*}
        \ONE \xrightarrow{\iso} \ONE \tensor \ONE
        \xrightarrow{c(x_0)\restr^{\ONE \tensor \ONE} \tensor \id}
        \big(\ONE \tensor \ONE\big) \tensor \ONE
        \xrightarrow{\iso}
        \ONE \tensor \big(\ONE \tensor \ONE\big)
        \xrightarrow{\id \tensor e(x_0)\restr_{\ONE \tensor \ONE}}
        \ONE \tensor \ONE \xrightarrow{\iso} \ONE.
    \end{equation*}
    As the isomorphisms are clearly non-zero,
    the proof reduces to showing that
    \begin{equation*}
        c(x_0)\restr^{\ONE \tensor \ONE}=
        \iota_e\restriction_{\ONE}^{\ONE \tensor \ONE}:
        \ONE \to \IO \tensor \IO \onto \ONE \tensor \ONE
    \end{equation*}
    and
    \begin{equation*}
        e(x_0)\restr_{\ONE \tensor \ONE}=
        p_e\restriction_{\ONE \tensor \ONE}^{\ONE}:
        \ONE \tensor \ONE \into \IO \tensor \IO \to \ONE
    \end{equation*}
    are nonzero.
    This was done in Lemma~\ref{lem:projectionAndInclusionOfUnitSummandNonzero}.
\end{proof}

\begin{lemma}
    \label{lem:local_dualisability}
    Every family is locally dualisable. Given a $U$-family $\f{S}$, for any point
    $x \in U$, there exists a neighbourhood $V$ of $x$, such that
    $\f{S}\restriction_V$ is dualisable.
\end{lemma}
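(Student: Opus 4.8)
The plan is to reduce the statement to the local dualisability of the distinguished family $\IA$ established in Lemma~\ref{lem:A_is_rigid}, and then transport dualisability along the three operations that preserve it: pullbacks, tensor products, and passage to direct summands.

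First I would reduce to the case where $\f{S}$ is simple at $x$. By Corollary~\ref{cor:locally_karoubi_completion_distributes}, in a neighbourhood of $x$ the family decomposes as a finite direct sum $\DirSum_i (\Gr(s_i)_\ast \tau_i \cdot E_i, p_i)$; after shrinking $V$ and splitting each vector bundle $E_i$ into line bundles, every summand has single-valued support and is therefore simple at $x$. Since a finite direct sum of (left and right) dualisable objects is again dualisable in any $\dirSum$-complete monoidal category, with dual the direct sum of the duals, it suffices to treat a single such summand, so I may assume $\f{S}$ is simple at $x$.

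Next I would apply Lemma~\ref{lem:locally_summand_of_pullback_from_A_tensor_X}: on some neighbourhood $V$ of $x$ there is a smooth map $f:V \to M$, with $f(x)=x_0$, exhibiting $\f{S}\restriction_V$ as a direct summand of $f^\ast\IA \tensor \f{S}(x)$. Here $\f{S}(x)=x^\ast\f{S} \in \cat{C}$ is dualisable, because the category of points of an orbifusion category is autonomous (Definition~\ref{def:orbifusion}), and its constant family over $V$ remains dualisable. The factor $f^\ast\IA$ is dualisable near $x$: Lemma~\ref{lem:A_is_rigid} provides a neighbourhood $W$ of $x_0$ on which $\IA$ is dualisable, and since $f(x)=x_0$ I may shrink $V$ so that $f(V)\subset W$; pulling back the duality data along $f$ then equips $f^\ast\IA\restriction_V$ with left and right duals. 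As the tensor product of dualisable objects is dualisable, $f^\ast\IA \tensor \f{S}(x)$ is dualisable over $V$. Finally, $\cat{C}^V$ is Karoubi complete by Corollary~\ref{cor:CUisKaroubiComplete}, so Lemma~\ref{lem:summands_of_rigids_are_rigid} (and its right-handed mirror) identifies the summand $\f{S}\restriction_V$ as dualisable.

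The one delicate point—and the main obstacle—is ensuring that the support lift $f$ produced by Lemma~\ref{lem:locally_summand_of_pullback_from_A_tensor_X} can be arranged so that $f(x)=x_0$, since the dualisability of $\IA$ is only known on a neighbourhood of $x_0$ and I need it transported to $f^\ast\IA$ near $x$. This is precisely what the \'etale factorisation $\tilde{t}_i \comp f = \tilde{s}$ in the proof of that lemma supplies, with the lift $\tilde{t}_i$ chosen so that $\tilde{t}_i(x_0)=\tilde{s}(x)$; inverting the local diffeomorphism $\tilde{t}_i$ then forces $f(x)=x_0$. Everything else is a formal consequence of the monoidal structure, so once this base-point matching is secured the remaining steps are routine.
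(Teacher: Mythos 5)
Your proof is correct and follows essentially the same route as the paper's: reduce to a family simple at $x$, realise it locally as a summand of $f^\ast\IA \tensor \f{S}(x)$ via Lemma~\ref{lem:locally_summand_of_pullback_from_A_tensor_X}, and conclude via Karoubi completeness and Lemma~\ref{lem:summands_of_rigids_are_rigid}. If anything you are more careful than the paper on one point: the paper simply asserts that $\IA \tensor \f{S}(x)$ has dual ${\f{S}(x)}^\vee \tensor \IA^\vee$ and that duals pull back along any smooth map, whereas you correctly observe that $\IA^\vee$ only exists on a neighbourhood of $x_0$ (Lemma~\ref{lem:A_is_rigid}), and you arrange the support-lift factorisation so that $f(x)=x_0$, ensuring $f$ lands in that neighbourhood before pulling back the duality data.
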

\begin{proof}
    Dualisability extends to direct sums, so we can assume $\f{S}$ is simple
    at $x$.
    Then by Lemma~\ref{lem:locally_summand_of_pullback_from_A_tensor_X},
    $\f{S}$ is a summand of a pullback from $\IA \tensor \f{S}(x)$ on some
    neighbourhood $V$ of $x$.
    $\IA \tensor \f{S}(x)$ has a dual ${\f{S}(x)}^\vee \tensor \IA^\vee$.
    They remain dual when pulled back via any smooth map.
    As ${\cat{C}}^U$ is Karoubi-complete,
    Lemma~\ref{lem:summands_of_rigids_are_rigid} shows
    $\f{S}\restriction_V$ is dualisable.
\end{proof}

The following is the central theorem of this section.
\begin{thm}
    \label{thm:global_dualisability}
    In an orbifusion category, ${\cat{C}}^U$ is autonomous for all $U \in \Man$.
\end{thm}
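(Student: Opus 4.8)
The plan is to bootstrap from the local dualisability result (Lemma~\ref{lem:local_dualisability}) to a global statement using a partition of unity argument, combined with the uniqueness of duals (Lemma~\ref{lem:duals_unique_up_to_unique_isomorphism}). The key point is that local duals, being unique up to unique isomorphism, glue into a global dual via descent --- this is exactly the kind of statement the stack structure is designed to handle.

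More concretely, let $\f{S}:U \to \cat{C}$ be a $U$-family. First I would invoke Lemma~\ref{lem:local_dualisability} to obtain an open cover $\{V_i\}$ of $U$ together with duals $(\f{S}_i^\vee, \ev_i, \coev_i)$ of $\f{S}\restr_{V_i}$ in $\cat{C}^{V_i}$. On each overlap $V_{ij} = V_i \cap V_j$, both $\f{S}_i^\vee\restr_{V_{ij}}$ and $\f{S}_j^\vee\restr_{V_{ij}}$ are duals of $\f{S}\restr_{V_{ij}}$, so Lemma~\ref{lem:duals_unique_up_to_unique_isomorphism} furnishes a \emph{unique} isomorphism $\phi_{ij}:\f{S}_i^\vee\restr_{V_{ij}} \isoto \f{S}_j^\vee\restr_{V_{ij}}$ compatible with the duality data. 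The uniqueness clause is exactly what guarantees the cocycle condition $\phi_{jk} \comp \phi_{ij} = \phi_{ik}$ on triple overlaps: both sides are isomorphisms of duals compatible with the evaluation maps, hence equal. Since $\cat{C} = \Sky^\ger{G}_\orb{M}$ is a stack (so $\cat{C}^U = \stC(U)$ satisfies descent), the local duals $\f{S}_i^\vee$ glue along the $\phi_{ij}$ to a global object $\f{S}^\vee \in \cat{C}^U$.

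It then remains to glue the evaluation and coevaluation maps. The maps $\ev_i:\f{S}_i^\vee \tensor \f{S}\restr_{V_i} \to \ONE$ are compatible with the $\phi_{ij}$ by construction (this is precisely the compatibility in Lemma~\ref{lem:duals_unique_up_to_unique_isomorphism}), so by the sheaf property of $\sHom$-spaces (the descent condition upgrading $\Hom$ to a sheaf, as in Section~\ref{sec:linearStacks}) they glue to a global $\ev:\f{S}^\vee \tensor \f{S} \to \ONE^U$; similarly for $\coev$. The snake equations for $(\f{S}^\vee, \ev, \coev)$ are equalities of morphisms, hence may be checked locally on the cover $\{V_i\}$, where they hold by assumption. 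This exhibits $\f{S}^\vee$ as a left dual of $\f{S}$. The identical argument with left and right interchanged produces a right dual, so $\f{S}$ is dualisable and $\cat{C}^U$ is autonomous.

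The main obstacle is making the gluing of duality \emph{data} (not just objects) rigorous: one must check that the evaluation morphisms really do agree on overlaps as sections of the sheaf $\sHom(\f{S}^\vee \tensor \f{S}, \ONE)$, which requires tracking the compatibility isomorphisms carefully through the descent for the monoidal product $\tensor$ (the naturality 2-cells for $\tensor$ under pullback recorded after Definition~\ref{def:orbifoldTensorCategory}). The uniqueness half of Lemma~\ref{lem:duals_unique_up_to_unique_isomorphism} does the heavy lifting here, as it converts what would otherwise be a delicate coherence verification into an automatic consequence; the remaining bookkeeping is routine but must be done with care to ensure the glued $\ev$ and $\coev$ are genuinely compatible with the glued object $\f{S}^\vee$ rather than merely each $\ev_i$ being compatible with its local $\f{S}_i^\vee$.
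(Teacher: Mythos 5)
Your proposal is correct and follows essentially the same route as the paper's own proof: invoke local dualisability (Lemma~\ref{lem:local_dualisability}), use the uniqueness clause of Lemma~\ref{lem:duals_unique_up_to_unique_isomorphism} to obtain canonical comparison isomorphisms on double overlaps whose cocycle condition on triple overlaps holds automatically, and glue the resulting descent datum using the stack property of $\stC$. Your additional care in gluing the evaluation/coevaluation morphisms via the sheaf property of $\sHom$ and checking the snake equations locally is a correct and slightly more explicit account of what the paper compresses into its final sentence (the opening mention of a ``partition of unity'' is a misnomer, since the argument is pure descent and no partition of unity is ever used).
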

\begin{proof}
    Let $\f{S}:U \to \cat{C}$ be a $U$-family.
    By Lemma~\ref{lem:local_dualisability}, there exists a cover
    $Y \onto U$ such that $\f{S}$ is dualisable when pulled back
    to $Y$. Pick duality data $(\f{T}:Y \to \cat{C}, \mathrm{ev})$
    for $\f{S}\restriction_Y$.
    Pulling back to the double- and triple- overlaps
    $Y^{[2]} \define Y \times_U Y$ and
    $Y^{[3]} \define Y \times_U Y \times_U Y$,
    we obtain duality data for
    $\f{S}\restriction_{Y^{[2]}}$ and
    $\f{S}\restriction_{Y^{[3]}}$.
    By Lemma~\ref{lem:duals_unique_up_to_unique_isomorphism},
    there exist unique isomorphisms between all of them.
    These yield canonical isomorphisms on $Y^{[2]}$ which
    commute on $Y^{[3]}$ by uniqueness.
    Thus local dualisability gives us a valid descent datum
    which can be glued to a global dual of $\f{S}$ on $U$.
\end{proof}

Denote by $\stC^\opp$ the stack obtained from $\stC$ by pointwise replacing
$\cat{C}^U$ by $\cat{C}^{U,\opp}$.
\begin{cor}
    \label{cor:smooth_dualising_functor}
    An orbifusion category admits a smooth dualising functor
    \[
        {(-)}^\vee: \stC \to \stC^\opp.
    \]
\end{cor}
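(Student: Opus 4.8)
The plan is to promote the pointwise autonomy of Theorem~\ref{thm:global_dualisability} to a single morphism of stacks. First, for each $U \in \Man$ I would choose, for every family $\f{S} \in \cat{C}^U$, left-duality data $(\f{S}^\vee, \ev_\f{S}, \coev_\f{S})$; this exists by the theorem, and by Lemma~\ref{lem:duals_unique_up_to_unique_isomorphism} the choice is canonical up to unique compatible isomorphism. On a morphism $f: \f{S} \to \f{S}'$ I would set $f^\vee: {\f{S}'}^\vee \to \f{S}^\vee$ to be the usual transpose built from $\coev_{\f{S}'}$, $f$, and $\ev_\f{S}$. That $(-)^\vee$ preserves identities and reverses composition is the standard string-diagram argument valid in any tensor category, so I would cite rather than reproduce it. This produces, level by level, a functor $(-)^\vee_U: \cat{C}^U \to \cat{C}^{U,\opp}$.

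The real content is assembling these functors into a morphism of stacks, i.e.\ making them compatible with the pullback functors. Recall from the explicit description of the tensor-stack data that each pullback functor $g^\ast: \cat{C}^U \to \cat{C}^V$ (for $g: V \to U$) is monoidal: the naturality 2-isomorphisms for $\tensor$ together with the compatibility with $\ONE$ supply exactly the monoidal coherence. A monoidal functor carries left-duality data to left-duality data, so $(g^\ast\f{S}^\vee, g^\ast\ev_\f{S}, g^\ast\coev_\f{S})$ is left-duality data for $g^\ast\f{S}$. Comparing with the chosen dual $(g^\ast\f{S})^\vee$, Lemma~\ref{lem:duals_unique_up_to_unique_isomorphism} yields a unique isomorphism
\[
    \nu_g(\f{S}): g^\ast(\f{S}^\vee) \isoto (g^\ast\f{S})^\vee
\]
compatible with the duality data, and its naturality in $\f{S}$ again follows from the uniqueness clause (applied to transposes). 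These $\nu_g$ are the structure 2-cells of the prospective stack morphism.

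The main obstacle is coherence: I must verify that the $\nu_g$ glue, i.e.\ satisfy the cocycle condition over iterated pullbacks $W \xrightarrow{h} V \xrightarrow{g} U$, and are compatible with the associator and unitor coherence cells of the monoidal pullbacks. Here the uniqueness lemma does all the work: both $\nu_h(g^\ast\f{S}) \comp h^\ast\nu_g(\f{S})$ and $\nu_{g\comp h}(\f{S})$ (transported along the stack coherence isomorphism $h^\ast g^\ast \simeq (g\comp h)^\ast$) are isomorphisms of left-duality data for $(g\comp h)^\ast\f{S}$, hence coincide by Lemma~\ref{lem:duals_unique_up_to_unique_isomorphism}; every other coherence square commutes for the same reason, so no genuine computation is required. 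The collection $((-)^\vee_U, \nu_g)$ therefore defines a morphism of stacks $(-)^\vee: \stC \to \stC^\opp$. I would close by noting that this functor is smooth by construction---it is assembled entirely from the smooth duality data furnished by Theorem~\ref{thm:global_dualisability}---and that it respects the $\Cinf$-linear structure on the Hom-sheaves, since transposition is $\Cinf$-linear.
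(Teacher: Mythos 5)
Your proposal is correct and follows essentially the same route as the paper: build the pointwise dualising functors ${(-)}^\vee_U$ from Theorem~\ref{thm:global_dualisability}, then let Lemma~\ref{lem:duals_unique_up_to_unique_isomorphism} produce the unique comparison isomorphisms with the pullback functors and force the cocycle condition. The only difference is that you make explicit a step the paper leaves implicit --- that the pullback functors are monoidal and therefore carry duality data to duality data, which is what justifies comparing $g^\ast(\f{S}^\vee)$ with ${(g^\ast\f{S})}^\vee$ in the first place --- which is a welcome clarification rather than a divergence.
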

\begin{proof}
    By Theorem~\ref{thm:global_dualisability}, there exist functors
    ${(-)}_U^\vee:\cat{C}^U \to \cat{C}^{U,\opp}$ that assign duals.
    Duals are unique up to unique isomorphism (Lemma~\ref{lem:duals_unique_up_to_unique_isomorphism}),
    hence any two dualising functors are uniquely naturally isomorphic.
    There are thus unique natural isomorphisms
    \[
        \xi_f:f^\ast \comp {(-)}_U^\vee \eqto {(-)}_V^\vee \comp f^\ast
    \]
    for any $f: V \to U$ in $\Man$.
    By uniqueness, these natural isomorphisms satisfy the cocycle condition
    for pairs of composable morphisms $f \comp g: W \to V \to U$,
    so the data $\{{(-)}_U^\vee,\xi_f\}$ indeed assembles into a map of stacks.
\end{proof}

\begin{lemma}
    All simple objects supported on the unit connected component of a
    manifusion category are invertible.
\end{lemma}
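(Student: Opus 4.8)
The plan is to reduce invertibility of a simple object to a statement about the fibrewise dimension of a single global family, and then to use connectedness to pin that dimension to $1$. First I would record the structural facts I need. In a manifusion category the underlying orbifold is a manifold $M$, so every stabiliser is trivial; by Corollary~\ref{cor:catOfPointsDirSumOfRepCats} each point $x \in M$ therefore supports a unique simple object, which I denote $\IC_x$, and $\dim \IC_x = 1$ by Definition~\ref{def:dimensionOfObject}. The unit is $\ONE \iso \IC_0$ with $\dim \ONE = 1$. Writing $M_0$ for the connected component of $M$ containing $0$, the claim is exactly that $\IC_p$ is invertible for every $p \in M_0$.

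Next I would introduce the global tautological family $\f{J} \define \Gr(\Delta)_\ast \Cinf \in \cat{C}^{M_0}$, the pushforward of the trivial line bundle along the diagonal $\Delta\colon M_0 \to M_0 \times M_0$; this is a basic mobile $M_0$-family with $x^\ast \f{J} \iso \IC_x$. Since $M_0$ is effective, the Smooth Dualisability Theorem (Theorem~\ref{thm:global_dualisability}) supplies a dual $\f{J}^\vee \in \cat{C}^{M_0}$. The pullback functors along points are monoidal, hence preserve duals, so $x^\ast \f{J}^\vee \iso \IC_x^\vee$ and therefore
\[
    x^\ast(\f{J} \tensor \f{J}^\vee) \iso \IC_x \tensor \IC_x^\vee,
    \qquad
    x^\ast(\f{J}^\vee \tensor \f{J}) \iso \IC_x^\vee \tensor \IC_x .
\]
Now $\f{J} \tensor \f{J}^\vee$ and $\f{J}^\vee \tensor \f{J}$ are $M_0$-families, so by Lemma~\ref{lem:dim_locally_constant} their fibrewise dimensions are locally constant, hence constant on the connected $M_0$. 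Evaluating at $0$, where $\f{J}(0) \iso \ONE$ and $\f{J}^\vee(0) \iso \ONE^\vee \iso \ONE$, both products are $\ONE$ of dimension $1$; so $\dim(\IC_p \tensor \IC_p^\vee) = \dim(\IC_p^\vee \tensor \IC_p) = 1$ for all $p \in M_0$.

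Finally I would conclude. Fix $p \in M_0$ and put $X \define \IC_p$. As $X$ is a nonzero simple object, the snake identities force the coevaluation $\coev\colon \ONE \to X \tensor X^\vee$ and the evaluation $\ev\colon X^\vee \tensor X \to \ONE$ to be nonzero; since $\cat{C}$ is semisimple (Corollary~\ref{cor:catOfPointsDirSumOfRepCats}) and $\ONE$ is simple, these split and exhibit $\ONE$ as a direct summand of both $X \tensor X^\vee$ and $X^\vee \tensor X$. Because every simple has dimension $1$ while both products have total dimension $1$, no further summand can occur, so $X \tensor X^\vee \iso \ONE \iso X^\vee \tensor X$ and $X$ is invertible with inverse $X^\vee$. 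The one genuinely load-bearing point — and the step to treat with care — is the constancy of the fibrewise dimension of $\f{J} \tensor \f{J}^\vee$ on $M_0$: it is where both the global dual (Theorem~\ref{thm:global_dualisability}) and the local constancy of dimension (Lemma~\ref{lem:dim_locally_constant}) enter, and it is exactly what confines the conclusion to the unit component, since on another component the constant value may exceed $1$.
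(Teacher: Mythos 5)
Your proof is correct and follows essentially the same route as the paper's: both rest on smooth dualisability (Theorem~\ref{thm:global_dualisability}) to dualise a family, local constancy of dimension (Lemma~\ref{lem:dim_locally_constant}), and connectedness to transport $\dim(\ONE^\vee \tensor \ONE)=1$ to $\dim(X^\vee \tensor X)=1$, followed by a summand-plus-dimension count. The only difference is cosmetic: the paper pulls back along a path $p:[0,1]\to M$ from $\supp \ONE$ to $\supp X$ and works with $\Ind p$, whereas you work with a single tautological family over the whole unit component; your final step also spells out, slightly more explicitly than the paper, why a nonzero (co)evaluation plus dimension $1$ forces $X \tensor X^\vee \iso \ONE \iso X^\vee \tensor X$.
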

\begin{proof}
    Let $X \in \cat{C}$ be simple with support in the unit component
    of the underlying manifold $M$. Pick a path $p: [0,1] \to M$
    from $p(0) = \supp \ONE$ to $p(1)= \supp x$, and form the associated family
    $\Ind p$. There are isomorphisms $\Ind p(0) \iso \ONE$, $\Ind p(1) \iso X$.
    Then, by Lemma~\ref{lem:dim_locally_constant},
    \begin{equation*}
        \dim \ONE = \dim \ONE \tensor \ONE =
        \dim \left( {(\Ind p)}^\vee \tensor \Ind p \right) =
        \dim \left(X^\vee \tensor X\right).
    \end{equation*}
    As a result, $X^\vee \tensor X \iso \ONE$.
\end{proof}

\begin{corollary}
\label{cor:unitComponentIsLieGroup}
    In a manifusion category, the isomorphism classes of objects supported on
    the unit component always form a Lie group.
\end{corollary}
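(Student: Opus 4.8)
The plan is to present the set in question as a connected manifold carrying a smooth group multiplication, and then invoke Lemma~\ref{lem:inverse_automatically_smooth}. Since a manifusion category has underlying orbisimple category $\Sky^\ger{G}_M$ for a \emph{manifold} $M$, every point has trivial stabiliser, so by Corollary~\ref{cor:catOfPointsDirSumOfRepCats} the isomorphism classes of simple objects of $\cat{C}$ are in natural bijection with the points of $M$, irrespective of the gerbe. Under this identification the (simple) objects supported on the unit component correspond to the points of the connected component $M_0 \subseteq M$ containing $x_0 \define \supp \ONE$; in particular $M_0$ is a connected manifold. I would define a binary operation $m \colon M_0 \times M_0 \to M$ by letting $m(x,y)$ be the support of $X_x \tensor X_y$, where $X_x$ is the simple object at $x$. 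The preceding Lemma guarantees that each $X_x$ is invertible; products and duals of invertibles are invertible, and an invertible object has $\End \iso \IC$ and is hence simple. Thus $X_x \tensor X_y$ is simple, supported at a single point, so $m$ is well-defined and single-valued, and the simple objects on $M_0$ are exactly the invertible objects supported there.

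Next I would check that $(M_0, m, x_0)$ is an abstract group. The unitors give $X_{x_0} \iso \ONE$ and $\ONE \tensor X_x \iso X_x \iso X_x \tensor \ONE$, so $x_0$ is a two-sided unit, while associativity at the level of supports is read off the associator $\alpha$. For closure, $M_0 \times M_0$ is connected and $m$ is continuous (smoothness is shown below), so its image is a connected subset of $M$ meeting $M_0$ at $m(x_0,x_0)=x_0$, whence $m(M_0 \times M_0) \subseteq M_0$. Inverses come from duals: $X_x \tensor X_x^\vee \iso \ONE$, and to see $\supp(X_x^\vee)$ lies in $M_0$ I would apply the smooth dualising functor of Corollary~\ref{cor:smooth_dualising_functor} to a path family $\Ind p$ joining $x_0$ to $x$; its dual is a continuous family of simple objects starting at $\supp(\ONE^\vee)=x_0$, so its endpoint $\supp(X_x^\vee)$ lies in the same component $M_0$. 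Hence the objects on $M_0$ form a subgroup of the group of invertible objects.

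The key analytic input is smoothness of $m$, which I would establish locally using Lemma~\ref{lem:tensoring_induces_smooth_function_on_local_support_lifts}. Fixing $(x,y)$ and choosing local trivialisations of the gerbe near $x$ and $y$, one obtains local tautological families $\IA_x, \IA_y$, each \'etale with support lift the identity. Since every object in sight is invertible, $\IA_x \tensor \IA_y$ is simple at each point, so by Corollary~\ref{cor:locally_karoubi_completion_distributes} it has a single-valued local support lift, and by Lemma~\ref{lem:tensoring_induces_smooth_function_on_local_support_lifts} this lift depends smoothly on the (identity) support lifts of the factors. As this lift is precisely $m$ near $(x,y)$, and $(x,y)$ was arbitrary, $m$ is globally smooth. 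Having produced a connected manifold $M_0$ with a smooth group operation, Lemma~\ref{lem:inverse_automatically_smooth} upgrades it to a Lie group, completing the proof.

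I expect the main obstacle to be the combination of global smoothness with closure in the unit component: there need not be a single global tautological family over $M_0$ when the gerbe is non-trivial, so smoothness must be argued purely locally through the support-lift machinery and then patched, and the connectedness arguments must be used to keep both products and duals inside $M_0$ rather than merely inside $M$.
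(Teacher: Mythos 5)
Your proof is correct and takes essentially the same route as the paper's: the paper's (one-sentence) proof likewise reads the smooth manifold off the Grothendieck group of the stack, takes the smooth associative multiplication $m:M\times M \to M$ from the tensor product, uses the preceding lemma for pointwise invertibility, and concludes with Lemma~\ref{lem:inverse_automatically_smooth}. The only difference is that you spell out what the paper leaves implicit --- well-definedness and smoothness of $m$ via the support-lift machinery, and the connectedness arguments keeping products and duals inside the unit component.
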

\begin{proof}
    The Grothendieck group underlying the stack encodes a smooth manifold
    $M$, the tensor product an associative smooth multiplication map
    $m:M \times M \to M$, and pointwise invertibility implies there exists a
    smooth inverse $s:M \to M$ by Lemma~\ref{lem:inverse_automatically_smooth}.
\end{proof}

\include{acks}

\appendix

\section{Background Material}
\label{ch:background}
In this section, we review helpful background material.
we do not prove any new statements, and will omit details 
when convenient. We provide suitable references instead.

Manifold and orbifold tensor categories are (2-)monoids
in monoidal bicategories.
We review the relevant notions in 
Section~\ref{sec:monoidalStructures}.
The specific monoidal bicategories we work in will be categories of stacks over $\Man$, 
the site of smooth manifolds. Sheaves and stacks are reviewed in Section~\ref{sec:sheavesAndStacks}.

\subsection{Monoidal Structures}
\label{sec:monoidalStructures}
A unital (1-)monoid is a set equipped with a unit element and an associative multiplication.
This is the beginning of a hierarchy: the notion of a unital $n$-monoid makes sense in a unital 
$(n+1)$-monoid. 
A unital 1-monoid is called commutative if its multiplication commutes with the swap map in the
category of sets.
This notion makes sense in a monoidal category equipped with a braiding 
(a braided unital \emph{2-monoid}).
More generally, there are $n$ levels of ``commutativity'' in the world of $n$-monoids, and  
the notion of a $k$-commutative $n$-monoid makes sense inside a $k$-commutative
$(n+1)$-monoid. See~\cite{baez1998higher} for a precise statement, and~\cite{baez2010lectures}
for a discussion of $n$-categories with varying levels of commutativity.

In this Section, we recall some of the above theory in the context of
\emph{tensor categories} (linear 1-monoids), and in the context of \emph{monoidal bicategories}
(2-monoids).

\subsubsection{Tensor Categories}
\label{sec:tensorCategories}
In this section, we work towards the definition of a fusion category.
We begin by introducing the necessary notions from linear category theory
before we go on to introduce tensor structures and duals. We provide 
examples along the way.
A textbook account of the theory of tensor categories may be found in~\cite{etingof2016tensor}.

We denote by $\VecInf$ the category of vector spaces over $\IC$,
and by $\Vec \subset \VecInf$ the full subcategory on finite-dimensional
vector spaces.
We use the term \emph{linear category} to refer to
a category enriched over $\VecInf$.

Let $\cat{C}$ be a linear category.
\begin{defn}
    A \emph{zero object} in $\cat{C}$ is an object 
    $0 \in \cat{C}$ such that every object admits 
    a unique map to and from $0$.
\end{defn}
The zero object induces a morphism $0:X \to 0 \to Y$ between any
pair of objects. These morphisms are called \emph{zero morphisms}.
\begin{defn}
    A \emph{summand} of an object $X \in \cat{C}$ is an object
    $Y \in \cat{C}$ with a pair of maps
    \[\begin{tikzcd}
    	X & Y & X
    	\arrow["p", two heads, from=1-1, to=1-2]
    	\arrow["i", hook, from=1-2, to=1-3]
    \end{tikzcd}\]
    satisfying $p \comp i=\id_Y$.
\end{defn}

\begin{defn}
    A \emph{direct sum} of two objects $X,Y \in \cat{C}$ 
    is an object $Z$ equipped with four maps
    \[\begin{tikzcd}
    	X & Z & Y
    	\arrow["{i_X}"', from=1-1, to=1-2]
    	\arrow["{i_Y}", from=1-3, to=1-2]
    	\arrow["{p_X}"', curve={height=6pt}, from=1-2, to=1-1]
    	\arrow["{p_Y}", curve={height=-6pt}, from=1-2, to=1-3]
    \end{tikzcd}\]
    exhibiting $X$ and $Y$ as summands of $Z$ which are further
    orthogonal and exhaust $Z$:
    \begin{align*}
        p_Y \comp i_X = 0 = p_X \comp i_Y && 
        i_X \comp p_X + i_Y \comp p_Y = \id_Z.
    \end{align*}
\end{defn}
Direct sums are unique up to unique isomorphism, so it makes sense 
to speak of ``the'' direct sum, denoted by $X \dirSum Y$.
Direct sums are simultaneously limits and colimits. Their universal
property may be written entirely as a set of equations.
This makes them an example of an 
\emph{absolute limit}: a limit preserved by any functor.

\begin{defn}
    An object $X \in \cat{C}$ is \emph{simple},
    if every monomorphism $Y \into X$ is either an isomorphism
    or a zero morphism.
\end{defn}
The inclusion of a summand is always a monomorphism, so this 
implies summands of a simple morphism are isomorphic to it.

\begin{example}
    A vector space $V \in \VecInf$ is simple iff it is 1-dimensional.
\end{example}

\begin{example}
\label{ex:VecX}
    Let $X$ be a set. We denote by $\Vec[X]$ the linear category
    of $X$-graded vector spaces. Its objects are finite direct sums
    of objects of the form $V_x$, where $V \in \Vec$ is a finite-dimensional 
    vector space and $x \in X$. 
    The morphism spaces in $\Vec[X]$ are generated under direct sum by
    $\Hom(V_x,W_x)=\Hom_{\Vec}(V,W)$ and $\Hom(V_x,W_{x'})=0$ when 
    $x\neq x'$.
    The simple objects of $\Vec[X]$ are those of the form
    $V_x$ where $V$ is 1-dimensional.
\end{example}

\begin{example}
    Let $\Rep(G)$ be the category of finite-dimensional complex representations 
    of a group $G$. The simple objects of $\Rep(G)$ are the 
    irreducible representations.
\end{example}

The classical Schur's Lemma from representation theory admits a 
generalisation to linear categories:
\begin{lemma}[Schur's Lemma]
    For any pair of simple objects $X, Y \in \cat{C}$
    the morphism space $\Hom(X,Y)$ is either 0 or 1-dimensional 
    (in which case $X \iso Y$).
\end{lemma}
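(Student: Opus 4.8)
The plan is to prove Schur's Lemma by studying a nonzero morphism $f: X \to Y$ between simple objects and showing it must be an isomorphism, after which the one-dimensionality of $\Hom(X,Y)$ follows from the fact that $\End(X)$ is a division algebra over the algebraically closed field $\IC$. First I would dispose of the trivial case: if $\Hom(X,Y)=0$ there is nothing to prove, so assume there is a nonzero $f: X\to Y$. The key categorical inputs are the notion of simple object (every monomorphism into it is an isomorphism or zero) and the ambient structure of a $\VecInf$-enriched category with direct sums; I would want to work with images, so the cleanest route is to exploit that in this setting nonzero morphisms between simples are forced to be iso by a kernel/cokernel or monomorphism argument.

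The main steps, in order, are as follows. First I would argue that a nonzero $f: X \to Y$ is a monomorphism: if it had a nontrivial kernel, the inclusion of that kernel into $X$ would be a monomorphism that is neither an isomorphism (since $f \neq 0$) nor zero (since the kernel is nontrivial), contradicting simplicity of $X$. Dually, the image of $f$ is a nonzero subobject of $Y$, and since $f$ is a monomorphism with nonzero image, simplicity of $Y$ forces the monomorphism $\im f \into Y$ to be an isomorphism. Combining these, $f$ itself is an isomorphism, so $X \iso Y$ whenever $\Hom(X,Y)\neq 0$. This already gives the parenthetical assertion. Second, to pin down the dimension, I would specialise to $Y = X$ (using the isomorphism just constructed to transport the problem) and consider the algebra $\End(X)$: every nonzero endomorphism is an isomorphism by the argument above, so $\End(X)$ is a finite-dimensional division algebra over $\IC$.

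The final step invokes algebraic closure: a finite-dimensional division algebra over an algebraically closed field is the field itself, so $\End(X) \iso \IC$, hence one-dimensional; transporting back along the isomorphism $X \iso Y$ gives $\dim \Hom(X,Y) = 1$. The one subtlety I would be careful about is that the definition of simple in the excerpt is phrased purely in terms of monomorphisms, so I must ensure kernels and images are available and behave correctly; the categories of interest here ($\Vec$, $\Rep(G)$, $\Vec[X]$, and more generally the semisimple abelian categories arising as $\Sky^\ger{G}_\orb{M}(\point)$ via Corollary~\ref{cor:catOfPointsDirSumOfRepCats}) are abelian, so kernels, cokernels, and image factorisations all exist.

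The hard part, or rather the only genuine subtlety, will be justifying the existence and good behaviour of kernels and images from the bare monomorphism-based definition of simplicity, rather than the argument itself, which is the standard categorified Schur argument. If one does not wish to assume abelianness, the cleanest workaround is to phrase the monomorphism step directly: given nonzero $f:X\to Y$, factor it (in the relevant semisimple setting) and observe that the induced maps to and from simple summands are either zero or iso, then count. Since every simple object we care about lives inside some $\cat{C}^{(x)} \simeq \Rep^{\theta_x}(\Gamma_x)$, which is genuinely abelian and finite-dimensional over $\IC$, I expect no real obstruction and would simply note that the classical Schur argument applies verbatim in each such summand.
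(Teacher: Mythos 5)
The paper itself gives no proof of this lemma: it appears in the background appendix, which explicitly defers details to references (e.g.\ \cite{etingof2016tensor}), so there is no argument of the paper's to compare yours against. Your proposal is the standard Schur argument and its first half is sound: in an abelian setting, a nonzero map between simples has zero kernel and full image, hence is an isomorphism, and you correctly flag that kernels and images are not guaranteed by the paper's bare definition of a linear ($\VecInf$-enriched) category, so one must work inside the abelian categories that actually arise (such as $\Rep^{\theta_x}(\Gamma_x)$ and their direct sums, via Corollary~\ref{cor:catOfPointsDirSumOfRepCats}).

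The genuine gap is in the last step. You assert that $\End(X)$ is a \emph{finite-dimensional} division algebra over $\IC$, but nothing in the paper's definitions forces this: a linear category may have infinite-dimensional $\Hom$-spaces, and then the statement is simply false. For instance, the category of $\IC(t)$-vector spaces is $\IC$-linear and $\dirSum$-complete, the one-dimensional $\IC(t)$-space is simple in the paper's monomorphism sense, yet its endomorphism algebra is $\IC(t)$, which is infinite-dimensional over $\IC$. So finite-dimensionality of $\Hom$-spaces (or some substitute, e.g.\ countable dimension together with the uncountability of $\IC(t)$ over $\IC$) must be added as a hypothesis, not merely observed in examples; once it is, the eigenvalue argument (any $\phi \in \End(X)$ has an eigenvalue $\lambda$, and $\phi - \lambda\,\id_X$ is non-invertible, hence zero) closes the proof. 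Your closing remark that every simple of interest lives in some $\cat{C}^{(x)} \simeq \Rep^{\theta_x}(\Gamma_x)$, where $\Hom$-spaces are finite-dimensional, is exactly the right way to supply this hypothesis in the paper's setting, but it should be promoted from an afterthought to an explicit assumption in the statement you are proving.
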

In particular, the endomorphism ring of a simple object 
$X \in \cat{C}$ may be identified with the complex numbers 
$\End(X)=\IC$.

\begin{defn}
    A linear category $\cat{C}$ is \emph{semisimple},
    if every object is a finite direct sum of simple objects,
    and every such finite direct sum exists.
\end{defn}
This condition implies, in particular, that all idempotents
\[
    e:X \to X \text{\ s.t.\ } e^2 = e
\]
admit a splitting: a direct summand $Y \subset X$ whose structure maps
\[\begin{tikzcd}
	X & Y & X
	\arrow["p", two heads, from=1-1, to=1-2]
	\arrow["i", hook, from=1-2, to=1-3]
\end{tikzcd}\]
compose to $i \comp p = e$.

The category $\Vec$ is semisimple, and so is $\Vec[X]$.
Semisimplicity of $\Rep(G)$ depends on the group:
when $G$ is finite, $\Rep(G)$ is automatically semisimple.

\begin{defn}
\label{def:tensorCategory}
    A \emph{tensor category} is a linear category $\cat{C}$,
    equipped with a choice of \emph{unit object} 
    $\ONE \in \cat{C}$, a \emph{tensor product} functor
    $\tensor: \cat{C} \times \cat{C} \to \cat{C}$,
    an \emph{associator} natural isomorphism
    \begin{equation*}
        \alpha: - \tensor (- \tensor -) \Rightarrow
        - \tensor (- \tensor -),
    \end{equation*}
    and \emph{unitor} natural isomorphisms
    \begin{align*}
        l: \ONE \tensor - &\Rightarrow - \\
        r: - \tensor \ONE &\Rightarrow -.
    \end{align*}

    These data are subject to the \emph{triangle equations}
    \begin{align*}
        r_X \tensor \id_Y = (\id_X \tensor l_Y) \comp \alpha_{X,\ONE,Y}
    \end{align*}
    for all $X,Y \in \cat{C}$ and the \emph{pentagon equations}
    \begin{equation*}
        (\id_W \tensor \alpha_{X,Y,Z}) 
        \comp \alpha_{W,X \tensor Y,Z} \comp 
        (\alpha_{W,X,Y} \tensor \id_Z)
        = 
        \alpha_{W, X, Y \tensor Z} \comp 
        \alpha_{W \tensor X, Y, Z},
    \end{equation*}
    for all $W,X,Y,Z \in \cat{C}$.
\end{defn}

As explained in~\cite[Rmk 2.2.9]{etingof2016tensor}, the unitors
present in Definition~\ref{def:tensorCategory} may be suppressed:
the data $(\cat{C},\ONE,\tensor,\alpha,l,r)$ is equivalent to 
the data $(\cat{C},\ONE,\tensor,\alpha)$, 
together with an explicit choice of isomorphism $\ONE \tensor \ONE \to \ONE$.
We will usually suppress unitors.

\begin{defn}
    A \emph{tensor functor} of tensor categories is a 
    linear functor $F:\cat{C} \to \cat{D}$, equipped with
    natural transformations
    $\mu: (F-) \tensor (F-) \eqto F(-\tensor -)$ and 
    $\eps:F \ONE_\cat{C} \to \ONE_\cat{D}$
    satisfying associativity and 
    unitality conditions.
\end{defn}

\begin{example}
\label{ex:VecAsAMonoidalCategory}
    The categories $\Vec$ and $\VecInf$ both become
    tensor categories when equipped with 
    the usual tensor product of vector spaces.
    The unitors and associators are the canonical identity 
    morphisms.\footnote{
        In fact, this monoidal structure on $\VecInf$ 
    (considered as an ordinary category) 
    is necessary to even define the notion of $\VecInf$-enriched category. 
    It is straightforward to check that this
    monoidal structure descends to the $\VecInf$-enriched context.}
\end{example}

\begin{example}
\label{ex:VecXAsTensorCategory}
    Recall the linear category $\Vec[X]$ from Example~\ref{ex:VecX}.
    Equip the set $X$ with a unital, associative multiplication $\cdot$.
    Then $\Vec[X]$ inherits a tensor structure given by 
    the convolution product
    \[
        V_x \tensor W_{x'} = (V \tensor W)_{x \cdot x'},
    \]
    with associators and unitors induced by those on $\Vec$.
\end{example}

\begin{example}
\label{ex:VecOmegaX}
    One may ask 
    what other solutions to the pentagon equation there are
    for $\Vec[X]$ equipped with the convolution product.
    It is straightforward to check that the pentagon equation 
    boils down to the equation of a 3-cocycle in the monoid cohomology 
    of $X$ with coefficients in $\IC^\times$. 
    This is the cohomology of the classifying space $\B X$ of $X$.
    Given such a 3-cocycle $\omega \in Z^3(\B X, \IC^\times)$,
    we denote the resulting tensor category by $\Vec^\omega[X]$.
    It has the same structure as $\Vec[X]$, with the exception that 
    the associator has components additively extended from
    \[
        \alpha_{V_x,V'_{y},V''_{z}} = 
        \omega(x,y,z) \cdot \alpha^\Vec_{V,V',V''}.
    \]
    It is straightforward to check that a tensor functor
    $\Vec^\omega[X] \to \Vec^{\omega'}[X]$ whose underlying
    linear functor is the identity 
    $\Vec[X] \to \Vec[X]$ is equivalent to the data of 
    a 2-cocycle $\mu \in Z^2(\B X, \IC^\times)$ whose boundary
    is $d\mu = \omega - \omega'$.
    It is in this sense that tensor structures on $\Vec[X]$
    are classified by the cohomology group $\H^3(\B X, \IC^\times)$.
\end{example}

\begin{example}
\label{ex:VectMAsTensorCategory}
    Let $M$ be a manifold. Then the linear category $\Vect(M)$ of
    vector bundles over $M$ admits a tensor structure given by
    the (pointwise) tensor product of vector bundles.
\end{example}

\begin{example}
\label{ex:RepGAsTensorCategory}
    The category $\Rep(G)$ of representations of a group $G$ 
    is a tensor category under the pointwise tensor product of representations.
\end{example}

\begin{defn}
\label{def:dualOfObject}
    Let $X \in \cat{C}$ be an object in a linear category.
    A \emph{left dual} for $X$ is an object $X^\vee$, such that
    there exist evaluation and coevaluation morphisms
    \begin{align*}
        \ev: X^\vee \tensor X \to \ONE && \coev: \ONE \to X \tensor X^\vee,
    \end{align*}
    satisfying the \emph{snake equations}: The composites
\[\begin{tikzcd}[column sep=large]
	X & {(X \tensor X^\vee) \tensor X} & {X \tensor (X^\vee \tensor X)} & X \\
	{X^\vee} & {X^\vee \tensor (X \tensor X^\vee)} & {(X^\vee \tensor X) \tensor X^\vee} & {X^\vee}
	\arrow["{\coev \tensor \id_X}", from=1-1, to=1-2]
	\arrow["{\alpha_{X,X^\vee,X}}", from=1-2, to=1-3]
	\arrow["{\id_X \tensor \ev}", from=1-3, to=1-4]
	\arrow["{\id_{X^\vee} \tensor \coev}", from=2-1, to=2-2]
	\arrow["{{\ \alpha^{-1}_{X^\vee,X,X^\vee}}}", from=2-2, to=2-3]
	\arrow["{\ev \tensor \id_X}", from=2-3, to=2-4]
\end{tikzcd}\]
    must be equal to the respective identity morphisms.
    In that case, $X$ is called a \emph{right dual} of $X^\vee$.
    An object that admits both a right and left dual is called 
    \emph{dualisable}.
    A tensor category $\cat{C}$ is \emph{autonomous} if every object 
    $X \in \cat{C}$ is dualisable.
\end{defn}

\begin{defn}
    An \emph{invertible} object in a monoidal category 
    is an object $X$, such that there exists an object $X^{-1}$ with 
    isomorphisms $X^{-1} \tensor X \iso \ONE \iso X \tensor X^{-1}$.
\end{defn}
We call $X^{-1}$ the inverse of $X$. It is unique up to isomorphism.
It is straightforward to check that it may be equipped with duality data
which identify it as both the left and right dual of $X$.
Being invertible is a property, and 
the invertible objects of a tensor category 
form a subset of its dualisable objects.

\begin{example}
    In $\VecInf$, duality data reduces to the usual 
    notion of duality data familiar from linear algebra.
    The dualisable objects in $\VecInf$ 
    are precisely the finite-dimen\-sio\-nal vector spaces,
    making $\Vec \subset \VecInf$ the maximal autonomous subcategory of 
    $\VecInf$.
    A vector space is invertible precisely when it is 1-dimensional.
\end{example}

\begin{example}
    The dual of a finite-dimensional $G$-representation 
    $(V,\rho) \in \Rep(G)$ is 
    provided by its contragredient representation
    $(V^\vee,\rho^\vee:g \mapsto \rho(g^{-1})^\vee)$.
\end{example}

\begin{example}
\label{ex:DualsInVecX}
    Consider the tensor category $\Vec^\omega[X]$ of $X$-graded vector spaces
    with twisted associator (Example~\ref{ex:VecOmegaX}).
    An object of the form $V_x$ admits a left dual precisely 
    if $x$ admits a left inverse $x^{l}$. A dual is then given by 
    $V^\vee_{x^{l}}$.
    Let $X^\sim \subset X$ be the maximal subgroup of $X$, consisting 
    of those elements $x \in X$ that admit a left and right inverse.
    Then $\Vec^\omega[X^\sim] \subset \Vec[X]$ is the maximal autonomous 
    subcategory.
    An object $V_x \in \Vec^\omega[X]$ is invertible when $x \in X^\sim$ 
    is invertible and $V$ is 1-dimensional. All invertible objects 
    of $\Vec^\omega[X]$ are of this form.
\end{example}

\begin{defn}
    A \emph{fusion category} is a semisimple autonomous tensor category,
    with finitely many isomorphism classes of simple objects,
    and simple monoidal unit $\ONE$.
\end{defn}

\begin{example}
    The category $\Rep(G)$ of representations of a group $G$
    is fusion iff it is semisimple and there are only finitely many
    irreducible representations up to isomorphism.
    This is always the case if $G$ is finite.
\end{example}

\begin{example}
\label{ex:VecGIsFusionCategory}
    As explained in Example~\ref{ex:DualsInVecX},
    the semisimple category $\Vec^\omega[X]$ is autonomous iff $X$ is a group.
    Its isomorphism classes of simple objects are in correspondence with 
    elements of $X$. 
    Let $H$ be a finite group and $\omega \in Z^3(\B H, \IC^\times)$
    a 3-cocycle. Then $\Vec^\omega[H]$ is a fusion category.
\end{example}
As explained in Example~\ref{ex:VecOmegaX}, tensor structures
on $\Vec[H]$ which lift the product on $G$ are classified 
up to equivalence by $\H^3(\B H,\IC^\times)$.
The category $\Vec^\omega[H]$ has the special property that
all of its simple objects are invertible. 
A fusion category with this property is called \emph{pointed}. 
Every pointed fusion category is of the form $\Vec^\omega[H]$.

\begin{example}
\label{ex:FibonacciFusionCategory}
    The smallest example of a non-pointed fusion category is 
    given by the \emph{Fibonacci categories} $\categoryname{Fib}^{\pm}$.
    They have two simple objects $\ONE$ and $X$, with tensor product
    given by 
    \[
        X \tensor X = \ONE \dirSum X.
    \]
    There are two non-equivalent solutions to the pentagon 
    equation~\cite{ostrik2003fusion,moore1989classical},
    both featuring the golden ratio $\phi$.
\end{example}

Tambara and Yamagami obtained in~\cite{tambara1998tensor} a complete
classifications of fusion categories with a single non-invertible object.
These are the Tambara-Yamagami categories.
\begin{example}
    \label{ex:TambaraYamagamiFusionCategory}
    The \emph{Tambara-Yamagami category} associated to a finite abelian
    group $A$, a bicharacter $\chi:A \tensor A \to \IC^\times$,
    and a choice of square root $\tau = \pm \sqrt{{|A|}^{-1}}$
    has set of simple objects $A \cup \{m\}$.
    Let $a,b,c \in A$.
    The tensor product in $\TY(A,\chi,\tau)$ is given by
    \begin{align*}
        a \tensor b = a\!+\!b && 
        a \tensor m = m \tensor a = m &&
        m \tensor m = \DirSum_{a \in A} a
    \end{align*}
    and the non-trivial components of the associator take the values
    \begin{align*}
        \alpha_{a,m,b} & = \chi(a,b) \id_m: m \to m \\
        \alpha_{m,a,m} & = \DirSum_{b,c \in A} \chi(a,b)
        \delta_{b,c} \id_b: \DirSum_{b \in A} b \to
        \DirSum_{c \in A} c                              \\
        \alpha_{m,m,m} & = \DirSum_{b,c \in A}
        \tau \chi(b,c)^{-1} \id_m:
        \DirSum_{b \in A} m \to \DirSum_{c \in A} m.
    \end{align*}
\end{example}

\subsubsection{Braided Tensor Categories and the Drinfel'd Centre}
\label{sec:braidedTensorCatsAndDrinfeldCentre}
Let $X \in \cat{C}$ be an object in a tensor category. 
\begin{defn}
\label{def:halfBraiding}
    A \emph{half-braiding} on $X$ is a natural isomorphism
    \[
        \gamma: X \tensor - \eqto - \tensor X
    \]
    satisfying, for all $Y,Z \in \cat{C}$, the hexagon equation
    \[
        \alpha_{Y,Z,X} \comp \gamma_{Y \tensor Z} \comp \alpha_{X,Y,Z} =
        \left(\id_Y \tensor \gamma_{Z}\right) \comp 
        \alpha_{Y,X,Z} \comp 
        \left(\gamma_{Y} \tensor \id_Z\right).
    \]
\end{defn}
The hexagon equation says that $\gamma$ is multiplicative 
(ie.\ compatible with the tensor product), and has the consequence that 
there is a unique natural morphism 
$(X \tensor Y) \tensor Z \eqto X \tensor (Y \tensor Z)$.

\begin{defn}
\label{def:braidingsAndSymmetricBraidings}
    A \emph{braiding} on a tensor category $\cat{C}$ is a natural isomorphism
    \[
        \beta: - \tensor \cdot \eqto \cdot \tensor -,
    \]
    such that for all $X \in \cat{C}$,
    $\beta_{X, -}: X \tensor - \eqto - \tensor X$ is a half-braiding for $X \in \cat{C}$
    and $\beta_{-, X}: - \tensor X \eqto X \tensor -$
    is a half-braiding for $X$ in $\cat{C}^{\tensor\opp}$,
    the category equipped with the opposite monoidal structure.
    A tensor category equipped with a braiding is called a \emph{braided
    tensor category}.

    A braiding is \emph{symmetric} if 
    $\beta_{b,a} \comp \beta_{a,b} = \id_{a \tensor b}$ for all 
    $a,b \in \cat{C}$.
    A tensor category equipped with a symmetric braiding is called 
    a \emph{symmetric tensor category}.
\end{defn}
A braided functor of braided categories is a tensor functor whose structure
natural transformations are compatible with the braiding. Ie.\ it is a property for 
a tensor functor to be braided.

\begin{example}
    The usual swap isomorphism of vector spaces 
    $V \tensor W \to W \tensor V$ equips $\Vec$ with a braiding.
    This braiding pulls back to a braiding on 
    $\Rep(G)$ via the forgetful functor $\Rep(G) \to \Vec$.
\end{example}

\begin{example}
    The category of (twisted) $G$-graded vector spaces cannot admit a braiding
    if $G$ is nonabelian, since the existence of a braiding on a category 
    $\cat{C}$ implies $X \tensor Y \iso Y \tensor X$ for all $X,Y \in \cat{C}$.
    Now assume $G=H$ is an abelian group.
    Braided structures on $\Vec[H]$ that lift the group structure on $H$ 
    turn out to be classified by their associated quadratic form 
    \begin{align*}
        q:H &\to \IC^\times \\
        h &\mapsto \beta_{\IC_h,\IC_h}.
    \end{align*}
    This fact is proved in~\cite{eilenberg1954groups}.
    We discuss this for the case of Lie groups
    in~\cite{weis2022centre}.
\end{example}

\begin{example}
\label{ex:fourBraidingsOnVecZ2}
    There are 4 non-equivalent braided tensor structures on the linear 
    category $\Vec[\IZ/2]$ that lift the group structure on $\IZ/2$.
    We write $\Vec[\IZ/2]$ to denote the standard braided tensor 
    structure, given by the trivial associator and trivial braiding 
    inherited from $\Vec$.
    Perhaps better-known is the category $\sVec$ of super vector spaces:
    it also has trivial associator, but the self-braiding of the 
    simple object $\IC_{-1}$ is $\beta_{-1,-1}=-1$.

    The remaining two braided tensor structures are the semion $\Semi$ and 
    anti-semion $\bSemi$. Both of them have twisted associator,
    with $\alpha_{-1,-1,-1}=-1$.
    In $\Semi$, we have $\beta_{-1,-1}=\ii$, and in $\bSemi$, 
    $\beta_{-1,-1}=-\ii$. 
\end{example}

\begin{example}
\label{ex:RepkLG}
    Let $G$ be a compact simple connected Lie group, and 
    $k \in Z^4(\B G, \IZ)$ a $\IZ$-valued 4-cocycle.
    The \emph{loop group} $LG=\Cinf(S^1,G)$ has an associated 
    central extension
    \[
        \IC^\times \to {\widetilde{LG}}^k \to LG.
    \]
    The representations of this central extension are extensively
    studied in~\cite{pressley1985loop}.
    There is a particularly interesting class of representations,
    cut out by the condition of having 
    \emph{positive energy}.
    The subcategory $\Rep^k(LG)$ of such representations 
    is a braided tensor category.
    These categories play an important role in the study of 
    Conformal and Topological Field Theories, in particular
    that of Chern-Simons Theory. 
    When $G$ is simply-connected, the category $\Rep^k(LG)$
    is braided equivalent to the Quantum group category 
    $\cat{C}(\mathfrak{g},k)$ associated to its Lie algebra 
    $\mathfrak{g}$ and the level $k$.
    See also Section~\ref{sec:interpolatedQGroups}.
\end{example}

The notion of the centre of a monoid categorifies to the 
setting of tensor categories.
\begin{defn}
    The \emph{Drinfel'd centre} $\DZ \cat{C}$ of a tensor category $\cat{C}$
    has objects pairs $(X,\gamma)$ where $X \in \cat{C}$ and
    $\gamma$ is a half-braiding for $X$.
\end{defn}
The centre comes equipped with a tensor functor 
$\DZ \cat{C} \to \cat{C}$ which forgets the half-braiding.
It also carries a braiding $\beta$, with components
\[
    \beta_{(X,\gamma),(Y,\gamma')} = \gamma_{Y}.
\]

\begin{example}
    The Drinfel'd centres $\DZ \Vec^\omega[G]$ of pointed fusion categories
    are computed explicitly in~\cite{coste2000finite,gruen2018computing}.
    Let $G=H$ be abelian and $\omega$ trivial.
    Then the simple objects of $\DZ \Vec[H]$ correspond to pairs $(h,\chi)$
    of an element $h \in H$ and a character $\chi \in \widehat{H}$,
    The simple object corresponding to $(h,\chi)$ is 
    $\IC_h \in \Vec[H]$, equipped with the half-braiding
    $\gamma_{\IC_{h'}}=\chi(h')$.
    The resulting braiding on the centre is again given by 
    pairing characters with the group elements.
\end{example}

In~\cite{weis2022centre}, we computed the analogue of 
this centre in the context of smooth 2-groups. We use the
results of this computation to construct \emph{interpolated 
fusion categories}
in Section~\ref{sec:constructInterpolatedFusionCats}.

\subsubsection{Algebras and Modules}
\label{sec:algebrasAndModules}
We recall how one may define algebras and modules internal to (braided) tensor categories.

\begin{defn}
    An \emph{algebra} in a tensor category $(\cat{C},\ONE,\tensor)$
    is an object $A \in \cat{C}$, together with a unit 
    $\eta: \ONE \to A$ and multiplication $\mu:A \tensor A \to A$,
    satisfying unitality
    \begin{align*}
        \mu \comp (\eta \tensor \id_A) = l_A &&
        \mu \comp (\id_A \tensor \eta) = r_A
    \end{align*}
    and associativity
    \begin{equation*}
        \mu \comp (\mu \tensor \id_A) =
        \mu \comp (\id_A \tensor \mu).
    \end{equation*}
\end{defn}

\begin{defn}
    A \emph{commutative algebra} in a braided tensor category 
    $(\cat{C}, \ONE, \tensor, \beta)$
    is an algebra $(A,\mu) \in (\cat{C},\ONE,\tensor)$,
    such that 
    \[
        \mu \comp \beta_{A,A} = \mu: A \tensor A \to A.
    \]
\end{defn}

\begin{defn}
    A (right) \emph{module} over an algebra object $A \in \cat{C}$ 
    is an object $M \in \cat{C}$, together with an action
    $a:M \tensor A \to M$, compatible with the multiplication:
    The two induced morphisms
    $(M \tensor A) \tensor A \to M$ given by
    \begin{equation*}
        a \comp (\id_A \tensor \mu_A) = a \comp (a \tensor \id_A) 
        \comp \alpha_{\cat{C}}
    \end{equation*}
    agree.
\end{defn}
Right modules and morphisms between them (required to commute with the action maps) 
assemble into a category $\Mod_A(\cat{C})$.
Left modules are defined exactly analogously.

When $A$ is commutative, a right module structure on $M \in \cat{C}$
induces a left-module structure by precomposing with the braiding.
This can be done in different ways. In particular, 
$a: M \tensor A \to A$ may
be precomposed with either of 
$\beta_{A,M},\beta_{M,A}^{-1}:A \tensor M \to M \tensor A$.
\begin{defn}
    \label{def:localModule}
    A right module $M \in \Mod_A(\cat{C})$ over a 
    commutative algebra $A$ is \emph{local} if the two 
    induced left-module structures agree.
\end{defn}
In a symmetric tensor category, $\beta_{A,M}=\beta_{M,A}^{-1}$, 
and all modules over commutative algebras are automatically local.

\subsubsection{Symmetric Monoidal Bicategories}
In this section, we briefly recall bicategories, equipped with varying amounts 
of monoidal structure.
The history of the definition of symmetric monoidal bicategories is long: 
A detailed review is given in~\cite[Sec 2.1]{schommer2009classification}.
Given the work of~\cite{gordon1995coherence} on tricategories, it is possible to 
define a monoidal bicategory in a single line.
\begin{definition}
\label{defn:monoidalBicategoryAsTricategory}
    A \emph{monoidal bicategory} is a tricategory with a single object.
\end{definition}
Of course, this definition is only helpful if one is already
familiar with tricategories. 
We use Definition~\ref{defn:monoidalBicategoryAsTricategory}
as an excuse to reduce the amount of details we spell out below.
We try to be precise about the data required, but will only give an 
idea of the coherence laws this data is required to satisfy.
The interested reader is invited to consult the 
excellent~\cite{leinster1998basic} for precise definitions 
of bicategories and morphisms between them,
and~\cite{stay2016compact} for a detailed account of 
the definitions of monoidal, braided monoidal, sylleptic monoidal and 
symmetric monoidal bicategories.

\begin{defn}
    A \emph{bicategory} $\twocat{B}$ is given by the following data:
    \begin{itemize}
        \item a collection $\ob{\twocat{B}}$ of objects
        \item a category $\Hom_{\twocat{B}}(a,b)$ of morphisms for each pair 
        $a,b \in \ob{\twocat{B}}$
        \item composition functors 
        $\comp_{a,b,c}:\Hom_{\twocat{B}}(b,c) \times \Hom_{\twocat{B}}(a,b) \to \Hom_{\twocat{B}}(a,c)$
        \item identities $\id_a \in \Hom_{\twocat{B}}(a,a)$.
        \item associator and unitor natural transformations
    \end{itemize}
    The associators identify the 
    two different ways of composing three morphisms,
    and unitors trivialise composing with identity morphism.
    This data is subject to generalisations of the pentagon
    and triangle equations recalled in Section~\ref{sec:tensorCategories}.
\end{defn}
The objects of $\Hom_{\twocat{B}}(a,b)$ in the definition above are called 
1-morphisms (or \emph{1-cells}),
and the morphisms between them in $\Hom_{\twocat{B}}(a,b)$ 
are the 2-morphisms (or \emph{2-cells}) of $\twocat{B}$.

A bicategory is called a \emph{2-category} if its associator and 
unitor natural transformations are trivial. This means composition 
is associative and unital on the nose.

\begin{example}
\label{ex:OneCatasBicat}
    Every 1-category $\cat{C}$ defines a bicategory whose 
    objects and 1-morphisms are those of $\cat{C}$, and 
    all of whose 2-morphisms, associators and unitors are trivial.
\end{example}

\begin{example}
\label{ex:VCatAsBicat}
    Let $\cat{V}$ be a monoidal category. Then
    $\cat{V}$-enriched categories assemble into a bicategory
    $\cat{V}\Cat$, whose 1-cells are $\cat{V}$-functors
    and 2-cells are $\cat{V}$-natural transformations.
\end{example}

\begin{defn}
\label{def:functorOfBicats}
    A \emph{functor} of bicategories $\twocat{A} \to \twocat{B}$ is given by
    \begin{itemize}
        \item a map on objects $F:\ob{\twocat{A}} \to \ob{\twocat{B}}$
        \item functors on morphism categories
        $F_{a,b}:\Hom_{\twocat{A}}(a,b) \to \Hom_{\twocat{B}}(F(a),F(b))$
        \item compositor and identitor natural isomorphisms.
    \end{itemize}
    The compositor natural transformations implement compatibility of 
    the functor with the composition maps in $\twocat{A}$ and $\twocat{B}$.
    The identitors identify the image of identity 1-morphisms in $\twocat{A}$ with 
    identity 1-morphisms in $\twocat{B}$.
    The coherences this data has to satisfy are compatibility conditions 
    of the compositors and identitors with the 
    associator 2-cells and unitor 2-cells of $\twocat{A}$ and 
    $\twocat{B}$.
\end{defn}

\begin{defn}
    \label{def:transformationOfBicatFunctors}
    A \emph{transformation} of functors of bicategories
    $t:F \to G$ between $F,G:\twocat{A} \to \twocat{B}$
    consists of the data of
    \begin{itemize}
        \item a 1-cell $t(A):F(A) \to G(A)$ for each 
        object $A \in \twocat{A}$
        \item natural transformations implementing 
        lax naturality of $t(A)$
    \end{itemize}
    The natural transformations are 2-cells
    $t(f):G(f) \comp t(A) \to t(B) \comp F(f)$ for each 
    1-cell $f:A \to A'$ in $\twocat{A}$.
    The coherences again consist of compatibility conditions with 
    compositors and unitors.
\end{defn}

\begin{defn}
    \label{def:modificationOfBicatFunctors}
    A \emph{modification} of two transformations $m:s \to t$
    consists of 
    \begin{itemize}
        \item a 2-cell $m(A):s(A) \to t(A)$ for each $A \in \twocat{A}$,
    \end{itemize}
    subject to compatibility with the 2-cells
    $s(f),t(f)$ for all 1-cells $f$ in $\twocat{A}$.
\end{defn}

We now turn to the definitions of monoidal structures on bicategories.
As is to be expected, the
coherence conditions involved grow bigger and more numerous.
We will not attempt to describe them --- 
an excellent geometric exposition
is provided in~\cite[Ch 4]{stay2016compact}.
For a complete definition of monoidal bicategories, 
braided monoidal bicategories,
sylleptic monoidal bicategories and symmetric monoidal bicategories
all in one, see~\cite[Def 2.3]{schommer2009classification}.
\begin{defn}
    A \emph{monoidal bicategory} is given by a bicategory
    $\twocat{B}$ equipped with
    \begin{enumerate}
        \item a unit object $1 \in \twocat{B}$
        \item a product functor\footnote{
                  The product $\twocat{A} \times \twocat{B}$ of 
                  two bicategories is given by
                  by taking products of all data.
              }
              \[
                  \boxtimes:\twocat{B} \times \twocat{B} \to \twocat{B}
              \]
        \item invertible associator and unitor transformations
              \begin{align*}
                  (- \boxtimes -) \boxtimes - & \eqto - \boxtimes (- \boxtimes -) \\
                  - \boxtimes 1               & \eqto \Id_\twocat{B}              \\
                  1 \boxtimes -               & \eqto \Id_\twocat{B}
              \end{align*}
        \item
              invertible modifications filling the pentagon and
              triangle coherences (these take the form of equations  
              in Definition~\ref{def:tensorCategory}), 
              as well as an invertible modification
              implementing compatibility of the left and right unitor
    \end{enumerate}
\end{defn}

The coherence theorems in~\cite{gordon1995coherence} allow 
us to work with less structure. Every monoidal bicategory
is equivalent to a \emph{monoidal 2-category}. That is a 
monoidal bicategory whose underlying bicategory is a 2-category.
We follow~\cite{mccrudden2000balanced} and work in this 
generality.

\begin{defn}
    A \emph{braided monoidal bicategory} is a monoidal bicategory 
    equipped with
    \begin{enumerate}
        \item an invertible braiding transformation 
        $b: \boxtimes \eqto \boxtimes \comp \swap$,
        where
        $\swap: \twocat{B} \times \twocat{B} \to 
        \twocat{B} \times \twocat{B}$
        denotes the usual swap functor
        \item invertible modifications filling the two 
        hexagon coherences (which are equations 
        in Definition~\ref{def:braidingsAndSymmetricBraidings}).
    \end{enumerate}
\end{defn}

\begin{defn}
    A \emph{sylleptic monoidal bicategory} is a braided monoidal
    bicategory equipped with invertible modifications
    $\sigma_{x,y}: \Id_{x \tensor y} \to b_{y,x} \tensor b_{x,y}$
    for all $x,y \in \twocat{B}$.
\end{defn}

In a sylleptic monoidal bicategory, there are two modifications 
$b \eqto b \comp b \comp b$ corresponding to 
the two bracketings of the right hand side. (We neglect the 
compositor structure and identify both bracketings with 
$b \comp b \comp b$.)
\begin{defn}
    A \emph{symmetric monoidal bicategory} is a sylleptic monoidal 
    bicategory in which the two canonical modificiations 
    $b \eqto b \comp b \comp b$ agree.
\end{defn}

\subsubsection{Monoids in Monoidal Bicategories}
\label{sec:monoidsInMonoidalBicategories}
The notion of a monoid internal to a monoidal bicategory was introduced
in~\cite{street1997monoidal} in the context 
Gray monoids. The slightly weaker framework of monoidal 2-categories
is chosen in~\cite{mccrudden2000balanced}.
Definitions in the most general context of monoidal bicategories
may be extracted from the definitions of 2-groups and braided 2-groups 
in~\cite[Sec 3]{schommer2011central} by removing the invertibility condition.
All three of these contexts are equivalent by the coherence 
theorems of~\cite{gordon1995coherence}.

\begin{notation}
    We denote by \emph{monoid} the notion
    more commonly referred to by the name 
    \emph{pseudomonoid} in the literature.
\end{notation}

The bicategory with one object, one 1-morphism and one 2-morphism 
is a symmetric monoidal bicategory
in a canonical way. By forgetting structure, it is also canonically
a sylleptic, braided or simply a monoidal bicategory.
Denote this bicategory by $\oneObjBicat$.
As explained in~\cite[Sec 2-4]{mccrudden2000balanced},
one may define 
\begin{itemize}
    \item monoids in a monoidal bicategory 
    \item braided monoids in a braided monoidal bicategory 
    \item symmetric monoids in a sylleptic monoidal bicategory
\end{itemize}
as monoidal/braided monoidal/sylleptic monoidal morphisms 
\[
    \oneObjBicat \to \twocat{B}.
\]
Here, $\twocat{B}$ is monoidal/braided monoidal/sylleptic monoidal,
and $\oneObjBicat$ is equipped with its canonical 
monoidal/braided monoidal/sylleptic monoidal structure. 
We flesh out the resulting structure below.
\begin{defn}
\label{def:monoidInBicategory}
    A \emph{monoid} in a monoidal bicategory $(\twocat{B},\boxtimes,1)$ is an object 
    $\cat{C} \in \twocat{B}$, equipped with
    \begin{enumerate}
        \item a unit $\ONE: 1 \to \cat{C}$,
        \item a tensor morphism 
        $\tensor:\cat{C} \boxtimes \cat{C} \to \cat{C}$,
        \item invertible associator and unitor 2-cells
        \begin{align*}
            \alpha: (- \tensor -) \tensor - &\eqto
            - \tensor (- \tensor -) \\
            l: (\ONE \tensor -) &\eqto \Id_{\cat{C}} \\
            r: (- \tensor \ONE) &\eqto \Id_{\cat{C}},
        \end{align*}
    \end{enumerate}
    satisfying the pentagon equation and triangle equation 
    from Definition~\ref{def:tensorCategory} (phrased internal
    to $\twocat{B}$).
\end{defn}

\begin{defn}
\label{def:braidedMonoidInBicategory}
    A \emph{braided monoid} in a braided monoidal bicategory 
    $(\twocat{B},\boxtimes,1,b)$ is a monoid 
    $(\cat{C},\tensor,\ONE)$ in $\twocat{B}$, 
    equipped with an invertible braiding 2-cell
    \[
        \beta: \tensor \eqto \tensor \comp b.
    \]
    satisfying the hexagon equation from 
    Definition~\ref{def:braidingsAndSymmetricBraidings} (phrased internal
    to $\twocat{B}$).
\end{defn}

The 2-cell $\beta$ above induces a 2-cell
\[\begin{tikzcd}
	\boxtimes & {\boxtimes \comp b} & {\boxtimes \comp b \comp b}.
	\arrow["\beta", from=1-1, to=1-2]
	\arrow["{\beta \comp b}", from=1-2, to=1-3]
\end{tikzcd}\]
If $\twocat{B}$ is equipped with a sylleptic monoidal structure,
the syllepsis 2-cell $\sigma: \Id \to b \comp b$
induces another 2-morphism 
$\boxtimes \comp \sigma: \boxtimes \to \boxtimes \comp b \comp b$.
\begin{defn}
\label{def:symmetricMonoidInBicategory}
    A \emph{symmetric monoid} in a sylleptic monoidal bicategory
    $(\twocat{B},\boxtimes,1,b,\sigma)$ is a braided monoid
    $(\cat{C},\tensor,\ONE,\beta)$ in $\twocat{B}$, such that 
    the two canonical 2-cells 
    $\boxtimes \to \boxtimes \comp b \comp b$ agree.
\end{defn}

\begin{example}
    The archetypical example is provided by the bicategory $\cat{V}$-$\Cat$ of 
    $\cat{V}$-enriched 1-categories. (Here, $\cat{V}$ is some symmetric monoidal
    1-category.)
    $\cat{V}$-$\Cat$ is symmetric monoidal under Kelly's product of enriched 
    categories~\cite{kelly1982basic}.
    Monoids therein are $\cat{V}$-enriched monoidal categories,
    braided monoids are $\cat{V}$-enriched braided monoidal categories,
    and symmetric monoids are $\cat{V}$-enriched symmetric monoidal categories.
    In particular, for $\cat{V}=\VecInf$, we recover Definition~\ref{def:tensorCategory} 
    of a tensor category.
\end{example}

In analogy with the lower dimensional situation, we may ask for modules over monoid objects.
Let $(\cat{C},\tensor,\ONE)$ be a monoid in a monoidal bicategory $(\twocat{B},\boxtimes,1)$.
\begin{defn}
    A (right) \emph{module} over $\cat{C}$ is an object $\cat{M} \in \twocat{B}$, 
    equipped with 
    \begin{itemize}
        \item an action map 
        \[
            \odot: \cat{M} \boxtimes \cat{C} \to \cat{M}
        \]
        \item an invertible compatibility 2-cell 
        \[
            s: (- \odot -) \odot - \eqto - \odot (- \tensor -),
        \]
    \end{itemize}
    satisfying a coherence relating $s$ and the associator 2-cell of $\cat{C}$,
    as well as a unitality condition.
\end{defn}

\begin{example}
    In the monoidal bicategory $\cat{V}$-$\Cat$, modules over monoids recover the
    usual notion of module categories over monoidal categories. 
    See~\cite{ostrik2003module} for a discussion of the case $\cat{V}=\VecInf$.
\end{example}

\subsection{Sheaves and Stacks}
\label{sec:sheavesAndStacks}
Associated to a subset of a manifold\footnote{We will always talk about 
$\Cinf$-smooth paracompact manifolds.} $U \subset M$ is its ring of functions $\Cinf(U)$.
There is a restriction map $\Cinf(U) \to \Cinf(V)$ associated to any inclusion $V \into U$.
Given a cover $\coprod U_i \to U$ and functions $f_i \in \Cinf(U_i)$ which are compatible on 
overlaps $U_i \cap U_j$, there
is a unique function $f \in \Cinf(U)$ that restricts to the $f_i$.
In other words, the assignment
\[
    U \mapsto \Cinf(U)
\]
satisfies the \emph{descent} or \emph{gluing} condition.
Such an assignment is called a \emph{sheaf}. 

Stepping up the categorical ladder, we may instead consider the assignment 
\[
    U \mapsto \Vect(U),
\]
sending a subset $U \subset M$ to the category of vector bundles over it.
There are restriction functors $\Vect(U) \to \Vect(V)$ associated to $V \into U$.
This assignment also satisfies a descent condition over a cover $\coprod U_i \to U$:
given vector bundles $P_i \in \Vect(U_i)$ and isomorphisms 
$\phi_{ij}:P_i\restriction_{U_{ij}} \eqto P_j\restriction_{U_{ij}}$
which are compatible on triple overlaps $U_{ijk}$,
there is a vector bundle $P \in \Vect(U)$, unique up to unique isomorphism, that 
glues this data.
This makes $\Vect$ an example of a \emph{2-sheaf} or \emph{stack}.

Below, we briefly recall some notions from the theory of sheaves and stacks relevant 
to this document.
Standard references for this topic are~\cite{bourbaki2006theorie,
tennison1975sheaf,kashiwara2005categories,maclane2012sheaves,moerdijk2002introduction,johnstone2002sketches,giraud1966cohomologie}.
We recommend~\cite{vistoli2004notes,metzler2003topological} as thorough introductions.


\subsubsection{Sheaves}
Recall that a \emph{site} is a category $\cat{C}$ equipped with a \emph{Grothendieck topology}: 
for each object $U \in \cat{C}$, a collection of covers $\{f_i:U_i \to U\}_{i \in I}$.
We assume that all fibre products $U_{ij \cdots k} \define U_i \times_U U_j \times_U \cdots U_k$ exist for each cover. We use suggestive indexing for the projections 
$p_{ij\cdots k}: U_{i'j'\cdots k'} \to U_{ij\cdots k}$ when the domain is clear from context.
Subcategories and overcategories of a site inherit Grothendieck topologies obtained 
by restricting along the forgetful functors. 
A full subcategory $\cat{D}$ of a site $\cat{C}$ is \emph{dense} if each object in $\cat{C}$ admits a 
cover by objects in $\cat{D}$.
Recall that we use $\VecInf$ to denote the category of vector spaces over $\IC$ (possibly infinite-dimensional),
and $\Vec \subset \VecInf$ to denote the full subcategory of finite-dimensional vector spaces.

\begin{example}
    We equip the category $\Man$ of smooth, paracompact manifolds with the structure of a site by specifying covers to be surjective submersions.
    It has a dense subsite $\CartSp$, the full subcategory on Cartesian spaces $\IR^n$.
\end{example}

\begin{defn}
    \label{def:sheaf}
A \emph{sheaf} over a site $\cat{C}$ (valued in vector spaces) is a functor  
$\sh{F}: \cat{C}^\opp \to \VecInf$ that satisfies \emph{descent}: 
For any cover $\mscr{U} = {\left\{U_i \to U\right\}}_{i \in I}$, the map
\begin{equation*}
    \sh{F}(U) \to \sh{F}(\mscr{U}) \define \lim\left( \prod_i \sh{F}(U_i) \rightrightarrows \prod_{i,j} \sh{F}(U_{ij}) \right)
\end{equation*}
is an isomorphism.
We denote the category of sheaves by $\Shv(\cat{C}) \subset \Fun(\cat{C}^\opp,\VecInf)$.
\end{defn}

An element $s \in \sh{F}(U)$ is called a \emph{section of $\sh{F}$ over $U$}. Given a cover 
$\mscr{U} = \{f_i:U_i \to U\}$, we can restrict $s$ to the cover via the 
\emph{restriction maps} $f_i^\ast \define \sh{F}(f_i)$. The resulting sections 
$f_i^\ast s \in \sh{F}(U_i)$ will agree upon further restriction to the overlaps $U_{ij}$. 
The descent condition requires that given sections $s_i \in \sh{F}(U_i)$
which restrict to equal sections in $\sh{F}(U_{ij})$, there is a \emph{unique} section over 
$U$ which restricts to the $s_i$.
We refer to a compatible choice of sections over a cover as a \emph{descent datum}. 
The descent condition can now be stated succinctly: 
\emph{Descent data for a sheaf can be glued uniquely.}

\begin{lemma}[Comparison Lemma]
\label{lem:comparisonLemma}
    The restriction functor $\Shv(\cat{C}) \to \Shv(\cat{D})$ induced by the inclusion of 
    a dense subsite $\cat{D} \into \cat{C}$ is an equivalence of categories.
\end{lemma}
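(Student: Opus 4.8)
The plan is to exhibit an explicit quasi-inverse to the restriction functor $i^\ast \colon \Shv(\cat{C}) \to \Shv(\cat{D})$ induced by the inclusion $i \colon \cat{D} \into \cat{C}$, and to reduce everything to a single ``independence of cover'' statement. On presheaves $i^\ast$ is precomposition with $i^\opp$, hence (as $\VecInf$ is complete) it has a right adjoint given by right Kan extension, $i_\ast \define \mathrm{Ran}_{i^\opp}$, with pointwise formula
\[
    (i_\ast \sh{G})(U) \iso \lim_{(D \to U)} \sh{G}(D),
\]
the limit running over the overcategory $\cat{D}/U$ of $\cat{D}$-objects mapping to $U$, the restriction maps of $\sh{G}$ supplying the diagram. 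Because $i$ is fully faithful, so is $i^\opp$, and the counit $i^\ast i_\ast \Rightarrow \Id$ is a natural isomorphism; thus $i^\ast \comp i_\ast \iso \Id$ on presheaves, and a fortiori on $\Shv(\cat{D})$ once we know $i_\ast$ lands in sheaves. It therefore suffices to prove: (i) $i_\ast$ carries sheaves on $\cat{D}$ to sheaves on $\cat{C}$; and (ii) for every $\sh{F} \in \Shv(\cat{C})$ the unit $\sh{F} \Rightarrow i_\ast i^\ast \sh{F}$ is an isomorphism. Granting both, $i^\ast$ and $i_\ast$ restrict to mutually inverse equivalences.

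First I would prove (ii), the assertion that a sheaf is reconstructed from its values on the subsite: for each $U \in \cat{C}$ the canonical map $\sh{F}(U) \to \lim_{(D\to U)} \sh{F}(D)$, assembled from the restriction maps $\sh{F}(U) \to \sh{F}(D)$, is an isomorphism. The input is density: choose a cover $\{D_i \to U\}$ of $U$ by objects of $\cat{D}$. The sheaf condition along this cover identifies $\sh{F}(U)$ with the equaliser of $\prod_i \sh{F}(D_i) \rightrightarrows \prod_{i,j} \sh{F}(D_i \times_U D_j)$. One then compares this finite (\v{C}ech-type) limit with the limit over all of $\cat{D}/U$. A map $D \to U$ from an arbitrary $\cat{D}$-object is glued from its restrictions along $\{D \times_U D_i \to D\}$, and these overlaps are again covered by $\cat{D}$-objects; iterating the sheaf condition for $\sh{F}$ shows that every compatible family over $\cat{D}/U$ is determined by, and extends, its components on the $D_i$ and their pairwise fibre products. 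This is precisely a cofinality argument: the sieve generated by a $\cat{D}$-cover is cofinal in $\cat{D}/U$ for computing $\sh{F}$-limits.

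Step (i) rests on the same mechanism. To check descent for $i_\ast \sh{G}$ along a cover $\{U_\alpha \to U\}$ in $\cat{C}$, I would refine it to a cover by $\cat{D}$-objects (density) and reduce the triple-overlap bookkeeping to $\cat{D}$ by covering each fibre product $U_\alpha \times_U U_\beta$ by $\cat{D}$-objects as well; the hypothesis that fibre products exist and are $\cat{D}$-coverable is exactly what makes this possible. Unwinding the defining limit of $i_\ast \sh{G}$, descent data for $i_\ast \sh{G}$ over the cover become matching families for $\sh{G}$ over the induced $\cat{D}$-cover, and the sheaf property of $\sh{G}$ glues them uniquely. Concretely, both (i) and (ii) follow once one establishes the key lemma that the limit defining $i_\ast$ may be computed from any single $\cat{D}$-cover together with its iterated fibre products, and that the result is stable under passing to refinements.

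The main obstacle is precisely this last ``independence of cover'' lemma: controlling the interaction between the (large) limit over $\cat{D}/U$ and the finite descent diagrams attached to individual covers, and verifying that the identifications are compatible under refinement. Everything else is either formal adjunction bookkeeping (full faithfulness of $i_\ast$ and the counit isomorphism) or a direct translation of descent data; the cofinality and refinement argument is where density and the existence of $\cat{D}$-coverable fibre products genuinely enter, and where care is needed to keep the comparison maps coherent.
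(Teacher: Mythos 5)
The paper never proves this lemma: it is quoted as classical background (the appendix explicitly disclaims new proofs and defers to references such as SGA~4 and Mac\,Lane--Moerdijk), so there is no in-paper argument to compare yours against. Your proposal is the standard Kan-extension proof of the Comparison Lemma, and its outline is correct: $i^\ast$ has right adjoint $i_\ast = \mathrm{Ran}_{i^\opp}$ with the pointwise limit formula over $\cat{D}/U$, full faithfulness of $i$ makes the counit invertible, and the two substantive checks are exactly your (i) and (ii), both of which reduce --- as you say --- to refining through $\cat{D}$-covers, using separatedness of the sheaf on overlaps, and then gluing. Your step (ii) argument (restrict a compatible family to a $\cat{D}$-cover $\{D_i \to U\}$, verify agreement on $D_i \times_U D_j$ by covering those fibre products with $\cat{D}$-objects, glue, then check the glued section reproduces the whole family by the same trick) is precisely the crux and is sound.

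Two points deserve more care than your sketch gives them, though neither breaks the argument. First, in the situation the paper actually invokes ($\CartSp \subset \Man$), the fibre products $D_i \times_U D_j$ of $\cat{D}$-objects are generally not objects of $\cat{D}$, so the sheaf condition on the subsite cannot literally be phrased via the \v{C}ech diagram of Definition~\ref{def:sheaf}; it must be phrased with matching families (equivalently, sieves). Your step (i) implicitly adopts this formulation --- ``descent data \ldots become matching families for $\sh{G}$'' --- which is the right move, but it should be made explicit since it is the only formulation under which the statement typechecks. Second, two terminological slips: the \v{C}ech equaliser is a small limit, not a ``finite'' one, and your ``cofinality'' claim is not cofinality of a subcategory in the categorical sense (which would hold for arbitrary diagrams) but only cofinality relative to sheaf-valued diagrams; you qualify it correctly, but the sheaf condition, not category theory alone, is what drives the comparison.
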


\begin{example}
    A sheaf over $\Man$ is completely determined by its restriction to $\CartSp$.
\end{example}

The category $\Shv(\cat{C})$ directly inherits the additive structure of $\VecInf$.
The inclusion of the full subcategory $\iota: \Shv(\cat{C}) \into \Fun(\cat{C}^\opp,\VecInf)$ admits a left adjoint, the \emph{sheafification functor}
\begin{equation*}
    {(-)}^\shff: \Fun(\cat{C}^\opp, \VecInf) \to \Shv(\cat{C}).
\end{equation*}
The counit of this adjunction is a natural isomorphism ${(\iota-)}^\shff \isoto \Id$, 
which allows us to transport the monoidal structure on $\VecInf$ across to $\Shv(\cat{C})$.
For $\sh{F},\sh{G} \in \Shv(\cat{C})$, 
\begin{equation*}
    \sh{F} \tensor \sh{G} \define {(\iota\sh{F} \tensor \iota\sh{G})}^\shff.
\end{equation*}
The functors above are additive and so $\Shv(\cat{C})$ receives the structure of a symmetric tensor category.

Sheaves on a topological space $X$ are sheaves over the site $\Op(X)$ of open sets of $X$. 
By abuse of notation, we denote the associated sheaf category by $\Shv(X)$.
A map $f:X \to Y$ of topological spaces induces an adjoint pair of tensor functors
\begin{equation*}
    f^{-1}: \Shv(Y) \rightleftarrows \Shv(X) :f_\ast.
\end{equation*}
The \emph{pushforward} is given by
\begin{align*}
    f_\ast\sh{F}(U) \define \sh{F}(f^{-1}(U)),
\end{align*}
and the \emph{pullback} $f^{-1}\sh{F}$ can be computed as the sheafification of the functor
\begin{equation*}
    \mathrm{pre}{f^{-1}\sh{F}}: U \mapsto \colim_{V \supset f(U)} \sh{F}(V).
\end{equation*}

The \emph{stalk} of a sheaf $\sh{F} \in \Shv(M)$ at a point $x: \point \to M$ is the pullback 
$x^{-1}\sh{F} \in \Shv(\point) = \VecInf$.
The \emph{support} of $\sh{F}$ is the subset of points of $M$ with nonzero stalk.

A \emph{ringed site} is a site together with an algebra object in its category of sheaves, the \emph{structure ring} or \emph{structure sheaf} $\shO$.
When working with a ringed site $(\cat{C}, \sh{O})$, we will use $\Shv_\shO(\cat{C})$ to refer to the category of $\sh{O}$-modules.

Let $M$ be a manifold. The sheaf of smooth functions $\CinfOf{M}:{\Op(M)}^\opp \to \VecInf$ equips $\Op(M)$ 
with the structure of a ringed site.
A smooth map $f: M \to N$ of manifolds induces a morphism of structure sheaves $f^\shff:f^{-1}{\CinfOf{N}} \to \CinfOf{M}$, 
and the adjoint functor pair can be upgraded to an adjunction of $\Cinf$-modules.
The pushforward of sheaves automatically descends to a functor 
of module categories 
$f_\ast: \Shv_{\CinfOf{M}}(M) \to \Shv_{\CinfOf{N}}(N)$ 
(and we indeed denote it by the same symbol).
Its left adjoint is given by
\begin{equation*}
    f^\ast \sh{F} = f^{-1}\sh{F} \tensor_{{f^{-1} \CinfOf{N}}} {\CinfOf{M}}.
\end{equation*}

\begin{defn}
\label{def:vectorBundle}
    A \emph{vector bundle} on $M$ is a $\Cinf$-module in $\sh{F} \in \Shv_{\CinfOf{M}}(M)$, for which there exists an open cover $\{f_i: U_i \to M\}$ over which 
    $\sh{F}$ is free of finite rank: for all $i$,
    \begin{equation*}
        f_i^\ast\sh{F} \iso {(\CinfOf{U_i})}^{\dirSum r_i}
    \end{equation*}
    for some $r_i \in \IN$.
\end{defn}

\begin{defn}
\label{def:skyscraperSheaf}
    A \emph{skyscraper sheaf} on a manifold $M$ is a $\Cinf$-module with support a 
    finite subset of $M$, such that pullbacks over any point are finite-dimensional.
\end{defn}
The \emph{category of skyscraper sheaves} on $M$ will play a central role in the definition of 
manifusion categories (Definition~\ref{def:orbifusion}).
The category of skyscraper sheaves on $M$ is equivalent to a direct sum of copies of $\Vec$,
indexed by the underlying set of $M$.
It is in this sense that we view skyscraper sheaves on $M$ as \emph{vector spaces graded by $M$}.
Using the language of stacks, we will also keep track of the smooth structure of $M$.

\subsubsection{Stacks}
\label{sec:stacks}

Let $\cat{C}$ be a 1-category, and $\CAT$ the 2-category of categories. 
We will write $\cat{C}^\opp \to \CAT$ to denote the maximally weak
notion of functor between them: a morphism of underlying bicategories.

\begin{definition}
    A \emph{stack} over a site $\cat{C}$ is a functor $\st{F}: \cat{C}^\opp \to \CAT$ satisfying \emph{descent}:
    The natural functor
    \begin{equation*}
        \st{F}(U) \to \st{F}(\mscr{U}) 
        \define
        \lim \left(\prod_i \st{F}(U_i)
        \substack{\to \\[-1em] \to} 
        \prod_{i,j} \st{F}(U_{ij}) 
        \substack{\to \\[-1em] \to \\[-1em] \to}
        \prod_{i,j,k} \st{F}(U_{ijk}) \right)
    \end{equation*}
    is an equivalence for each cover $\mscr{U}=\{U_i \to U\}_{i \in I}$ of $U$.
\end{definition}

The 2-limit $\st{F}(\mscr{U})$ associated to a cover $\mscr{U}=\{U_i \to U\}$ 
can be computed explicitly as the \emph{category of descent data}.
It has 
\begin{itemize}
    \item objects: collections of sections $\{X_i \in \sh{F}(U_i)\}$, 
    equipped with isomorphisms on overlaps 
    $\{\phi_{ij}: p_i^\ast X_i \eqto p_j^\ast X_j \in \sh{F}(U_{ij})\}$. 
    The isomorphisms are required to be compatible on triple intersections:
        \begin{equation*}
            p_{ij}^\ast\phi_{ij} \comp p_{jk}^\ast\phi_{jk} = p_{ik}^\ast\phi_{ik}
        \end{equation*}
    in $\sh{F}(U_{ijk})$ for all triples $i,j,k$. 
    \item morphisms $\{\{X_i\},\{\phi_{ij}\}\} \to \{\{X_i^\prime\},\{\phi_{ij}^\prime\}\}$: collections $\{f_i: X_i \to X_i^\prime\}$, satisfying
        \begin{equation*}
            \phi_{ij}' \comp p_i^\ast f_i = p_j^\ast f_j \comp \phi_{ij}
        \end{equation*}
        in $\sh{F}(U_{ij})$ for all pairs $i,j$.
\end{itemize}

\begin{rmk}
    The notion of descent datum is visibly a categorification of that for shea\-ves:
    Given a cover $\mscr{U}=\{U_i\}$ of $U$, and sections $\{X_i \in \sh{F}(U_i)\}$ of a stack,
    we can no longer ask them to restrict to \emph{equal} sections over the overlaps $U_{ij}$.
    Rather, a descent datum now further contains isomorphisms of these restricted 
    sections in $\sh{F}(U_{ij})$.
    These isomorphisms are then required to satisfy an equation over triple overlaps.
\end{rmk}

The bicategory $\St/\cat{C}$ of stacks over $\cat{C}$ is the full subcategory of $\Fun(\cat{C}^\opp, \CAT)$ on those functors that satisfy descent.
The data of a stack is a category $\sh{F}(U)$ for each $U \in \cat{C}$, 
a \emph{pullback functor} $f^\ast \define \sh{F}(f)$ for each morphism $f \in \cat{C}$,
as well as invertible natural transformations $\eta_{f,g}: f^\ast \comp g^\ast \to {(g \comp f)}^\ast$, $\epsilon_U: \id_U^\ast \to \Id_{\sh{F}(U)}$ 
which we leave implicit. This data is required to satisfy appropriate coherence conditions.

\begin{prop}\cite{giraud1966cohomologie} 
    The inclusion $\St/\cat{C} \into \Fun(\cat{C}^\opp,\CAT)$ admits a left 2-adjoint, the \emph{stackification functor}
    \begin{equation*}
        {(-)}^\shff:\St/\cat{C} \to \Fun(\cat{C}^\opp,\CAT).
    \end{equation*}
\end{prop}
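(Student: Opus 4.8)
The plan is to construct the left adjoint explicitly by imitating the classical sheafification \textbf{one categorical level up}, following Giraud. Recall that we already have sheafification for set- (equivalently $\VecInf$-) valued presheaves, and an explicit description of the descent category $\st{F}(\mscr{U})$ associated to a cover $\mscr{U}=\{U_i \to U\}$. The reflector $a\colon \Fun(\cat{C}^\opp,\CAT)\to \St/\cat{C}$ will be built as a two–stage ``plus'' construction: first I turn an arbitrary presheaf of categories into a \emph{prestack} (one whose $\Hom$-presheaves are already sheaves) by sheafifying morphisms, and then I glue objects by a filtered $2$-colimit of descent categories over covers. The composite $\st{F}\to a\st{F}$ will then be shown to be universal among maps into stacks, which is precisely the assertion that the inclusion $\St/\cat{C}\into \Fun(\cat{C}^\opp,\CAT)$ admits $a$ as a left $2$-adjoint.

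For the first stage, fix $\st{F}\colon\cat{C}^\opp\to\CAT$, an object $U\in\cat{C}$, and $X,Y\in\st{F}(U)$. The assignment $(f\colon V\to U)\mapsto \Hom_{\st{F}(V)}(f^\ast X, f^\ast Y)$ defines a presheaf of sets on the slice site $\cat{C}/U$, which I sheafify using the sheafification functor available from the previous subsection. Assembling these sheafified $\Hom$-sets over all $U$ produces a presheaf of categories $\st{F}^s$ with the same objects as $\st{F}$ but with sheaf $\Hom$s, together with a canonical morphism $\st{F}\to\st{F}^s$. I will check that $\st{F}^s$ is a prestack and that $\st{F}\to\st{F}^s$ is initial among morphisms from $\st{F}$ into prestacks; this is a routine consequence of the universal property of sheafification applied hom-wise.

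For the second stage, given a prestack $\st{G}$ I set $\st{G}^+(U)\define \colim_{\mscr{U}} \st{G}(\mscr{U})$, the filtered $2$-colimit of the descent categories $\st{G}(\mscr{U})$ taken over the covers of $U$ ordered by refinement. The key combinatorial input is that this system is cofiltered: any two covers admit a common refinement (their fibre-product cover), so the indexing category is filtered and the colimit commutes with the finite limits appearing in the descent condition. Pullback along $f\colon V\to U$ induces functors on covers, hence on the colimits, making $\st{G}^+$ a presheaf of categories with a natural map $\st{G}\to\st{G}^+$. I then put $a\st{F}\define (\st{F}^s)^+$ and verify the universal property: for any stack $\st{H}$, the induced functor $\Hom(a\st{F},\st{H})\to\Hom(\st{F},\st{H})$ is an equivalence of categories, $2$-naturally in $\st{F}$ and $\st{H}$, which is exactly the desired $2$-adjunction.

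The main obstacle is the heart of the second stage: showing that \emph{one} application of the plus construction to a \emph{prestack} already yields a genuine stack, i.e. that effective descent for objects holds and not merely separatedness of morphisms. Here the prestack hypothesis is essential — because the $\Hom$-presheaves are already sheaves, a descent datum valued in $\st{G}^+$ can be refined to one realised over a single cover, and its transition isomorphisms, being sections of sheaves, glue coherently so that the cocycle identity over triple overlaps upgrades to an actual object of $\st{G}^+(U)$. Making this precise demands careful $2$-categorical bookkeeping: tracking the pseudofunctoriality isomorphisms $\eta_{f,g}$ through the colimit, checking that the gluing functor is an equivalence onto the descent category, and finally verifying the triangle identities of the adjunction. (Alternatively one could bypass the explicit model by invoking a $2$-categorical adjoint functor theorem for accessible reflective localizations, but the plus construction is more in keeping with the concrete treatment of sheaves above.)
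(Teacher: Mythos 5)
The paper offers no proof of this proposition: it sits in the background appendix, which states explicitly that no new statements are proved there, and the result is simply cited from Giraud. So there is nothing to compare your argument against line by line; what you have written is the standard construction from the literature (Giraud, Laumon--Moret-Bailly, the Stacks Project), and it is the right strategy. Your two stages --- sheafify the $\Hom$-presheaves to obtain a prestack, then take the filtered $2$-colimit of descent categories $\st{G}(\mscr{U})$ over covers and show that a single application suffices for a prestack --- are exactly how stackification is built, and you correctly locate the hard point: effective descent for objects after one plus construction, which uses that the $\Hom$s are already sheaves. Three pieces of bookkeeping to tighten if you wrote this out in full. First, terminology: the category of covers of $U$ under refinement is cofiltered, so the $2$-colimit is filtered when indexed over the opposite category (your text says both). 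Second, that indexing category is genuinely $2$-categorical: two different refinement maps between the same pair of covers induce distinct but canonically isomorphic functors $\st{G}(\mscr{U}) \to \st{G}(\mscr{V})$ on descent categories, so $\st{G}^+(U)$ must be taken as a pseudo-colimit and shown to be well defined up to equivalence --- this is precisely the coherence you defer. Third, in the first stage you need composition to descend to the sheafified $\Hom$s; this follows because sheafification preserves finite limits (in particular binary products), the same fact you invoke to commute descent with filtered colimits in stage two. None of these is a gap in the approach: your outline is a correct proof skeleton for a result the paper treats as a black box.
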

The universal property of the adjoint implies there is a morphism $\st{F} \to {\st{F}}^\shff$.
\begin{lemma}\cite[\href{https://stacks.math.columbia.edu/tag/02ZN}{Tag 02ZN}]{stacks-project}
\label{lem:stackificationLocallyGlued}
    For any section of the stackification $X \in {\st{F}}^\shff(U)$, there exists a cover
    $\mscr{U}=\{f_i:U_i \to U\}$ such that all $f_i^\ast X \in {\st{F}}^\shff(U_i)$ are in the essential image of
    $\st{F}(U_i) \to {\st{F}}^\shff(U_i)$.
\end{lemma}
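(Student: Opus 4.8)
The plan is to use the explicit construction of stackification as a filtered $2$-colimit of descent categories over covers, rather than to argue abstractly from the universal property. Recall that for a $2$-presheaf $\st{F}:\cat{C}^\opp \to \CAT$, its stackification may be computed as
\[
    \st{F}^\shff(U) \simeq \colim_{\mscr{U}} \st{F}(\mscr{U}),
\]
where the $2$-colimit runs over the system of covers $\mscr{U}=\{U_i \to U\}$ of $U$ ordered by refinement (a filtered system, since any two covers admit a common refinement), and $\st{F}(\mscr{U})$ is the category of descent data recalled in Section~\ref{sec:stacks}. The canonical map $\st{F}(U) \to \st{F}^\shff(U)$ is induced by sending a section to its trivial descent datum over the identity cover $\{U = U\}$. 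First I would fix this description and note that it produces a stack with the required universal property; this is standard, and I would cite the Stacks Project rather than reproduce the verification.

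Given this description, the proof is almost tautological. Let $X \in \st{F}^\shff(U)$. By the colimit formula, $X$ is represented by a descent datum $(\{X_i \in \st{F}(U_i)\}, \{\phi_{ij}\})$ relative to some cover $\mscr{U}=\{f_i:U_i \to U\}$, where $\phi_{ij}: p_i^\ast X_i \eqto p_j^\ast X_j$ over $U_{ij}=U_i \times_U U_j$ satisfies the cocycle condition on triple overlaps. I would take precisely this cover $\mscr{U}$ as the cover claimed in the statement.

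It then remains to exhibit, for each $i$, an isomorphism in $\st{F}^\shff(U_i)$ between $f_i^\ast X$ and the image of $X_i$ under $\st{F}(U_i) \to \st{F}^\shff(U_i)$. The restriction $f_i^\ast X$ is represented by pulling the descent datum back along $f_i$: over the pullback cover $\{U_{ij} \to U_i\}_j$ it has component objects $p_j^\ast X_j$ with gluings induced by the $\phi_{jj'}$. The image of $X_i$, refined to the same cover, has component objects $p_i^\ast X_i$ with the canonical gluings. I would then check that the family $\{\phi_{ij}\}_j$ of isomorphisms $p_i^\ast X_i \eqto p_j^\ast X_j$ constitutes a morphism of descent data between these two objects, the required compatibility over the triple overlaps $U_{ijj'}$ being exactly the cocycle identity relating $\phi_{ij}$, $\phi_{jj'}$ and $\phi_{ij'}$ recorded in Section~\ref{sec:stacks}. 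Since each $\phi_{ij}$ is invertible, this morphism is an isomorphism, which furnishes the desired identification.

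The main obstacle is bookkeeping rather than conceptual: I must ensure that isomorphisms in the $2$-colimit are genuinely computed through common refinements, so that the comparison above lives in $\st{F}^\shff(U_i)$ and not merely in some auxiliary descent category. Concretely, I would observe that the trivial datum of $X_i$ over $\{U_i = U_i\}$ and the restriction $f_i^\ast X$ over $\{U_{ij} \to U_i\}_j$ can be compared on the common refinement $\{U_{ij} \to U_i\}_j$ (the identity cover being refined by the pullback cover via the $U_{ij} \to U_i$), and that the cocycle condition guarantees the comparison isomorphism is compatible with the gluings. Once this refinement argument is in place, the verification of the morphism-of-descent-data axioms is the routine calculation sketched above.
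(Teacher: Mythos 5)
The paper does not actually prove this lemma --- it is quoted with a citation to the Stacks Project --- so the comparison is with the standard argument, and your proposal is essentially that argument: by construction, a section of the stackification is represented by a descent datum $(\{X_i\},\{\phi_{ij}\})$ over some cover, one takes that cover, and the gluing isomorphisms $\phi_{ij}$ themselves furnish the isomorphism of descent data identifying $f_i^\ast X$ with the image of $X_i$, the compatibility over triple overlaps being exactly the cocycle condition. Your refinement bookkeeping at the end is also the right point to be careful about.

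One caveat deserves correction. Your one-step colimit formula $\st{F}^\shff(U) \simeq \colim_{\mscr{U}} \st{F}(\mscr{U})$ is valid only when the $\Hom$-presheaves of $\st{F}$ are already sheaves (i.e.\ when $\st{F}$ is a prestack in the separated sense). In the generality of the paper, $\st{F}$ is an arbitrary object of $\Fun(\cat{C}^\opp,\CAT)$, and for such $\st{F}$ the one-step descent-data colimit need \emph{not} be a stack, because morphisms may fail to glue; your parenthetical claim that it ``produces a stack with the required universal property'' is false at this level of generality. Stackification is a two-step process: first sheafify the $\Hom$-presheaves to obtain a prestack $\st{F}'$, then apply the descent-data colimit to $\st{F}'$. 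This does not damage your proof, because the first step is the identity on objects, so $\st{F}(U_i) \to \st{F}'(U_i)$ is essentially surjective, and running your argument verbatim with $\st{F}'$ in place of $\st{F}$ yields the statement for $\st{F}$ itself. With that adjustment the proof is complete and agrees with the cited one.
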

In other words, each section of $\st{F}^\shff$
still locally decomposes into sections of $\st{F}$.

\begin{example}
\label{ex:BG}
    The \emph{classifying stack} $\B G = [\point / G]$ of principal $G$-bundles is a stack on $\Man$. 
    It assigns to a manifold $U$ the category of $G$-principal bundles over $U$. 
    $\B G$ can be obtained as the stackification of the prestack which assigns to $U$ its 
    category of trivial $G$-principal bundles.
    In this example, Lemma~\ref{lem:stackificationLocallyGlued} amounts to the statement that
    any principal $G$-bundle is locally trivial.
\end{example}

\begin{example}
    The \emph{representable} stack $\underline{X}$ associated to an object $X \in \cat{C}$ assigns to 
    $Y \in \cat{C}$ the discrete category $\underline{X}(Y) \define \Hom(Y,X)$.
\end{example}

The Yoneda Lemma familiar from presheaves carries over to this context.
\begin{lemma}[2-Yoneda Lemma]
\label{lem:yonedaStacks}
    Let $X \in \cat{C}$, $\st{Y} \in \St/\cat{C}$, then 
    \begin{equation*}
        \Hom_{\St/\cat{C}}(\underline{X},\st{Y}) \simeq \st{Y}(X).
    \end{equation*}
\end{lemma}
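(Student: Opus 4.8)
The plan is to run the standard Yoneda argument in the bicategorical setting, using that the inclusion $\St/\cat{C} \into \Fun(\cat{C}^\opp,\CAT)$ is fully faithful: morphisms and $2$-morphisms of stacks are precisely morphisms and modifications of the underlying $2$-presheaves. In particular the descent condition on $\st{Y}$ plays no role, and the statement holds verbatim for an arbitrary $2$-presheaf $\st{Y}$.

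First I would write down the evaluation functor
\[
    \ev:\Hom_{\St/\cat{C}}(\underline{X},\st{Y}) \to \st{Y}(X),
\]
sending a morphism $F$ to its value $F_X(\id_X)$ on the universal element $\id_X \in \underline{X}(X)=\Hom(X,X)$, and a modification $m:F \to G$ to the component $m_X(\id_X)$. Conversely I would construct a functor $\Phi$ in the other direction. Given $P \in \st{Y}(X)$, define $\Phi_P:\underline{X}\to\st{Y}$ by the components $(\Phi_P)_Z:\underline{X}(Z)=\Hom(Z,X)\to\st{Y}(Z)$, $g \mapsto g^\ast P$; since $\underline{X}(Z)$ is a discrete category this prescription on objects is all that is required. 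The pseudonaturality square for a map $f:W \to Z$ compares $f^\ast(g^\ast P)$ with $(g\comp f)^\ast P$, and is filled by the coherence isomorphism $\eta_{f,g}$ that is part of the data of the stack $\st{Y}$.

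Then I would verify the two composites are equivalences. For $\ev \comp \Phi$, one has $\ev(\Phi_P)=\id_X^\ast P \eqto P$ via the coherence cell $\epsilon_X$, naturally in $P$. For $\Phi \comp \ev$, I would exhibit a modification $\Phi_{F_X(\id_X)} \eqto F$ whose component at $g:Z\to X$ is the pseudonaturality $2$-cell of $F$; this identifies $(\Phi_{F_X(\id_X)})_Z(g)=g^\ast F_X(\id_X)$ with $F_Z(g^\ast \id_X)=F_Z(g)$, using that $g^\ast\id_X = g$ in $\underline{X}$ (the pullback along $g$ is precomposition with $g$). The genuinely laborious part is purely bookkeeping: checking that the $\eta$-built squares defining $\Phi_P$ are compatible with composition in $\cat{C}$, so that $\Phi_P$ is really a morphism of $2$-presheaves, and that the $F$-cells above are natural enough to constitute a modification. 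Both are forced by the stack coherence axioms relating $\eta$ and $\epsilon$ together with the pseudofunctoriality of $F$, so the main obstacle is organising this diagram-chase rather than any conceptual difficulty.
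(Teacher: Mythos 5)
Your proof is correct. Note that the paper itself gives no proof of this lemma: it appears in the background appendix, which explicitly states standard results without proof, so there is nothing to compare your argument against. What you give is the standard bicategorical Yoneda argument, and all the key points are handled properly: evaluation at the universal element $\id_X \in \underline{X}(X)$, the inverse functor $P \mapsto (g \mapsto g^\ast P)$ whose pseudonaturality squares are filled by the compositor cells $\eta_{f,g}$ of $\st{Y}$, the counit $\id_X^\ast P \eqto P$ via $\epsilon_X$, and the modification $\Phi_{F_X(\id_X)} \eqto F$ built from the pseudonaturality cells of $F$ together with the identity $g^\ast \id_X = g$ in $\underline{X}$. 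Your opening observation is also the right one for this paper's setup: since $\St/\cat{C}$ is defined as the \emph{full} subbicategory of $\Fun(\cat{C}^\opp,\CAT)$ on descent-satisfying functors, the Hom category in the statement agrees with the one computed in the ambient 2-presheaf bicategory, so descent plays no role and the lemma is simply the 2-Yoneda lemma for pseudofunctors restricted to stacks.
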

We will hence use the notations $X: U \to \st{F}$ and $X \in \st{F}(U)$ interchangeably to refer to 
a section of $\st{F}$ over $U$.

\section{Twisted Sheaf Theory for \'Etale Stacks}
\label{app:twistedSheaves}
A smooth map of manifolds $f:M \to N$ induces an adjunction of $\Cinf$-modules
\[
    f^\ast:\ShC(N) \leftrightarrows \ShC(M):f_\ast.
\]
We study the analogous situation, generalised in two directions:
We work over \emph{\'etale stacks} instead of just manifolds, and we work with
twisted sheaves rather than ordinary sheaves of $\Cinf$-modules.
We generalise the functors $f_\ast$ and $f^\ast$ to this setup,
and show that they are again adjoint functors
(Proposition~\ref{prop:twistedPullbackPushfwdAdjunction}).

\'Etale stacks and their categories of (non-twisted) sheaves were previously studied
in~\cite{carchedi2010sheaf,carchedi2011categorical,carchedi2013etale}.
Treatments of twisted sheaves over manifolds and schemes may be
found in~\cite{caldararu2000derived}
and~\cite{lieblich2007moduli,lieblich2008twisted}, but we could not find a
proof of the pullback\-pushforward adjunction in these references.

The authors of~\cite{bunke2007sheaf} introduce a derived pullback-pushforward
ad\-junc\-tion of shea\-ves over a gerbe. However, their notion of sheaves over a gerbe
differs from that of sheaves twisted by that gerbe.

\begin{notation}
    We denote by $\VecInf$ the category of vector spaces over $\IC$,
    and by $\Vec \subset \VecInf$ its subcategory of finite-dimensional vector spaces.
    We write $\VCat$ for the 2-category of $\VecInf$-enriched categories
    that are complete under finite direct sums.
    We use the word sheaf to refer to a $\VecInf$-valued sheaf.
    The sheaf $\Cinf$ is the sheaf of smooth $\IC$-valued functions over the
    site $\Man$.
    We freely use the term \emph{line bundle} to refer to a sheaf of
    $\Cinf$-modules which is locally equivalent to $\Cinf$.
    In this appendix, we will use the word stack to refer to a stack over the site $\Man$ of
    smooth manifolds, equipped with the surjective submersion topology.
    We will be concerned with both stacks valued in groupoids and stacks valued in $\VCat$.
    Given a map $Y \onto X$, we will denote the iterated fibre products
    $Y \times_X Y \cdots Y$ by $Y^{[n]}$, and the simplicial projection maps by
    $p_{ij \cdots k}:Y^{[m]} \to Y^{[n]}$.
\end{notation}

\subsection{The \'Etale Site of an \'Etale Stack}
\begin{defn}
    A map of stacks $\st{Y} \to \st{X}$ is
    \emph{representable} if for any map
    $U \to \st{X}$ whose codomain is a manifold,
    the pullback $U \times_{\st{X}} \st{Y}$ is
    representable by a manifold.
    We call $\st{Y} \to \st{X}$ representably
    \'etale/sub\-mer\-sive/surjective
    if it is representable and, for any map
    $U \to \st{X}$ with domain a manifold, the pullback map
    $U \times_{\st{X}} \st{Y} \to U$ is
    \'etale/sub\-mer\-sive/surjective as a map of manifolds.\footnote{
        We use the word \emph{\'etale} to mean a local diffeomorphism.
    }
\end{defn}
The pasting law for pullback squares implies that
these properties are preserved under pullback along any
map of stacks $\st{Z} \to \st{X}$.
Using further that the property of being \'etale/\-submersive/\-surjective is preserved
under composition, it is easy to see that the set of representably
\'etale/\-submersive/\-surjective maps are closed under
composition.

\begin{defn}
    An \emph{atlas} for a stack $\st{X}$ is a representably surjective submersion
    $Y \onto \st{X}$ whose codomain $Y$ is a manifold.
    If the map is further representably \'etale, we call it an \emph{\'etale atlas}.
\end{defn}

\begin{defn}
    A \emph{differentiable stack} is a stack which admits an atlas.
    A differentiable stack is an \emph{\'etale stack} if the atlas may be chosen to be \'etale.
\end{defn}

\begin{notation}
    We denote the bicategory of differentiable stacks by $\DiffSt$ and that of \'etale stacks by $\EtSt$.
\end{notation}

\begin{example}
    Every manifold $M$ is an \'etale stack via the identity atlas $M \to M$.
\end{example}

\begin{example}
    Let $X$ be a manifold with an action by a Lie group $G$.
    The quotient stack $[X/G]$ admits an atlas $X \onto [X/G]$,
    and is thus differentiable.
    The map $X \onto [X/G]$ is an \'etale atlas iff $G$ is discrete.
    However, $[X/G]$ may be an \'etale stack even if $G$ is nondiscrete:
    let $X=G$ with the usual right action, then the quotient stack
    is $[G/G] \simeq \point$.
\end{example}

A differentiable stack $\st{X}$ has an associated bicategory $\DiffSt/\st{X}$,
the overcategory of $\st{X}$. Its objects are maps
\[
    \begin{tikzcd}
        \st{Y} \rar[r,"y"] & \st{X},
    \end{tikzcd}
\]
its 1-morphisms $y \to y'$ are pairs $(f, \phi)$ as below
\[\begin{tikzcd}
        {\st{Y}} && {\st{Y}'} \\
        & {\st{X}}
        \arrow[""{name=0, anchor=center, inner sep=0}, "y"', from=1-1, to=2-2]
        \arrow["{y'}", from=1-3, to=2-2]
        \arrow["f", from=1-1, to=1-3]
        \arrow["\phi", shorten <=12pt, shorten >=8pt, Rightarrow, from=1-3, to=0]
    \end{tikzcd}\]
and its 2-morphisms $(f, \phi) \to (f', \phi')$ are 2-cells
\[\begin{tikzcd}
        {\st{Y}} && {\st{Y}'.}
        \arrow[""{name=0, anchor=center, inner sep=0}, "{f'}"', curve={height=12pt}, from=1-1, to=1-3]
        \arrow[""{name=1, anchor=center, inner sep=0}, "f", curve={height=-12pt}, from=1-1, to=1-3]
        \arrow["\alpha"', shorten <=2pt, shorten >=2pt, Rightarrow, from=1, to=0]
    \end{tikzcd}\]
such that $\phi = \phi' \comp \alpha$.

We now build the \'etale overcategory of an \'etale stack $\orb{M}$ in two steps.
First, we restrict to the full subcategory of $\DiffSt/\orb{M}$ on objects
$y:\st{Y} \to \orb{M}$ where $y$ is representably \'etale.
Then we form the homotopy 1-category, modding out by 2-cells.
We spell out the result in the following definition.

\begin{defn}
    The \emph{\'etale overcategory} $\StackEt(\orb{M})$  of an \'etale stack $\orb{M}$ has
    \begin{itemize}
        \item objects: representably \'etale maps $y:\st{X} \to \orb{M}$
        \item morphisms $(y \to y')$: pairs $(f, \phi):y \to y'$ as above,
              modulo the equivalence relation generated by 2-cells ---
              $(f, \phi) \sim (f', \phi')$ if there exists a 2-cell $\alpha: f \to f'$
              such that
              \begin{center}
                  \begin{minipage}{0.15\textwidth}
                  \end{minipage}
                  \begin{minipage}{0.3\textwidth}
                      \centering
                      \[\begin{tikzcd}
                              {\st{Y}} && {\st{Y}} \\
                              & {\st{X}}
                              \arrow[""{name=0, anchor=center, inner sep=0}, "y"', from=1-1, to=2-2]
                              \arrow["{y'}", from=1-3, to=2-2]
                              \arrow["f", from=1-1, to=1-3]
                              \arrow["\phi", shorten <=12pt, shorten >=8pt, Rightarrow, from=1-3, to=0]
                          \end{tikzcd}\]
                  \end{minipage}
                  \begin{minipage}{0.1\textwidth}
                      \centering
                      \[=\]
                  \end{minipage}
                  \begin{minipage}{0.3\textwidth}
                      \centering
                      \[\begin{tikzcd}
                              {\st{Y}} && {\st{Y}} \\
                              & {\st{X}.}
                              \arrow[""{name=0, anchor=center, inner sep=0}, "y"', from=1-1, to=2-2]
                              \arrow["{y'}", from=1-3, to=2-2]
                              \arrow[""{name=1, anchor=center, inner sep=0}, "{f'}"', from=1-1, to=1-3]
                              \arrow[""{name=2, anchor=center, inner sep=0}, "f", curve={height=-18pt}, from=1-1, to=1-3]
                              \arrow["{\phi'}", shorten <=12pt, shorten >=8pt, Rightarrow, from=1-3, to=0]
                              \arrow["\alpha", shorten <=2pt, shorten >=2pt, Rightarrow, from=2, to=1]
                          \end{tikzcd}\]
                  \end{minipage}
                  \begin{minipage}{0.15\textwidth}
                  \end{minipage}
              \end{center}
    \end{itemize}
\end{defn}
We equip the \'etale overcategory with the structure of a site by declaring covers of $y:\st{Y} \to \orb{M}$
to be collections $\{(f_i,\phi_i):y_i \to y\}_{i \in I}$ such that the map $\coprod f_i$ is a
representably surjective submersion.

The property of being representably \'etale may be checked on an \'etale atlas:
\begin{lemma}
    \label{lem:checkRepresentablyEtaleOnAnyEtaleAtlas}
    A map $f:\st{X} \to \st{Y}$ of \'etale stacks is representably \'etale if
    there exists an \'etale atlas $Y \onto \st{Y}$ such that
    $Y \times_{\st{Y}} \st{X} \to Y$ is an \'etale map of manifolds.
\end{lemma}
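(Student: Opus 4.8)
The plan is to verify the defining condition of representable \'etaleness directly, reducing an arbitrary base change to the given one by descent along the atlas. Let $Y \onto \st{Y}$ be the given \'etale atlas, so that $Y \times_{\st{Y}} \st{X} \to Y$ is an \'etale map of manifolds, and let $u:U \to \st{Y}$ be an arbitrary map from a manifold. First I would form the fibre product $\tilde{U} \define U \times_{\st{Y}} Y$. Since $Y \onto \st{Y}$ is a representably \'etale surjective submersion, $\tilde{U}$ is a manifold and the projection $\pi:\tilde{U} \to U$ is an \'etale surjection, i.e.\ a cover in $\Man$.

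Next I would exploit the pasting law for pullback squares in two ways. On the one hand, pulling $\st{X}$ back through the composite $\tilde{U} \to Y \to \st{Y}$ gives
\[
    \tilde{U} \times_{\st{Y}} \st{X} \simeq \tilde{U} \times_Y (Y \times_{\st{Y}} \st{X}).
\]
As $Y \times_{\st{Y}} \st{X} \to Y$ is \'etale by hypothesis, and \'etale maps are submersions, this fibre product is representable by a manifold whose projection to $\tilde{U}$ is \'etale (base change preserves \'etaleness). On the other hand, writing $\st{Z} \define U \times_{\st{Y}} \st{X}$ and pulling $\st{X}$ back through $\tilde{U} \to U \to \st{Y}$, the same pasting law gives $\tilde{U} \times_{\st{Y}} \st{X} \simeq \tilde{U} \times_U \st{Z}$. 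Combining the two identifications, $\tilde{U} \times_U \st{Z}$ is representable by a manifold that is \'etale over $\tilde{U}$.

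The final step is a descent argument along the cover $\pi:\tilde{U} \onto U$. Since the base change $\tilde{U} \times_U \st{Z}$ is representable, I would invoke descent for manifolds along surjective submersions (manifolds form a stack on $\Man$ with the submersion topology): the canonical isomorphisms over $\tilde{U}^{[2]}$ equip $\tilde{U} \times_U \st{Z}$ with a descent datum, which glues to a manifold $Z$ over $U$ satisfying $\tilde{U} \times_U Z \simeq \tilde{U} \times_U \st{Z}$ compatibly with the descent data; uniqueness of gluing then identifies $Z \simeq \st{Z}$ as stacks over $U$, so $\st{Z} = U \times_{\st{Y}} \st{X}$ is representable by a manifold. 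Likewise, since $\tilde{U} \times_U \st{Z} \to \tilde{U}$ is \'etale and \'etaleness of maps of manifolds is local on the target (it descends along the surjective submersion $\pi$), the map $U \times_{\st{Y}} \st{X} \to U$ is \'etale. As $u$ was arbitrary, this exhibits $f$ as representably \'etale.

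I expect the descent step to be the main obstacle, or at least the part demanding the most care: one must use both that manifolds satisfy descent along surjective submersions (so that representability is local on the target) and that the \'etale property itself descends along such covers. Both are standard in this setting --- the submersion topology on $\Man$ is subcanonical, and \'etaleness, being the local-diffeomorphism condition, may be checked after pulling back along a surjective submersion --- but spelling out the gluing and its compatibility with the universal property of $U \times_{\st{Y}} \st{X}$ is where the genuine bookkeeping lies.
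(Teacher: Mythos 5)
Your proposal is correct, and its first half is the paper's argument in different notation: pulling back along the atlas and pasting pullback squares is exactly the paper's ``cube'', and your $\tilde{U} \times_U \st{Z} \simeq \tilde{U} \times_Y (Y \times_{\st{Y}} \st{X})$ is the paper's manifold $U \times_{\st{Y}} X$. The two proofs diverge only at the final step, and the divergence is instructive. The paper concludes by ``2-out-of-3 for \'etale maps of manifolds'': the composite $U \times_{\st{Y}} X \to U \times_{\st{Y}} \st{X} \to U$ is \'etale and the first map is a surjective representably \'etale atlas, hence the second map is \'etale. Read literally, this presupposes that the middle object $U \times_{\st{Y}} \st{X}$ is already a manifold --- but representability is part of what must be proved, and 2-out-of-3 fails for general stacks (the composite $\point \to [\point/\Gamma] \to \point$ is \'etale and the first map is an \'etale atlas, yet $[\point/\Gamma] \to \point$ is not representable; of course this configuration violates the lemma's hypothesis, but the paper's final step never invokes that hypothesis to exclude it). Your descent step is precisely the missing ingredient: representability and \'etaleness are local on the target along the cover $\tilde{U} \onto U$, so $\st{Z}$ is a manifold \'etale over $U$. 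In that sense your route is the more complete one. The one caveat is that ``manifolds form a stack on $\Man$'' is not purely formal: subcanonicality is easy, but effectivity of descent is a genuine theorem (a Godement-type quotient criterion, where the glued equivalence relation on $\tilde{U} \times_U \st{Z}$ is closed precisely because the base $U$ is Hausdorff --- this is what rules out line-with-two-origins pathologies). That fact does hold here, so your proof is sound, but it is carrying exactly the weight that the paper's appeal to 2-out-of-3 leaves implicit.
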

\begin{proof}
    Let $g:U \to \st{Y}$ be a map of stacks with codomain a manifold.
    We must show that the map $g^\ast f:U \times_{\st{Y}} \st{X} \to U$
    is \'etale.
    Consider the cube of pullback squares
    \[\begin{tikzcd}[column sep=tiny, row sep=tiny]
            & {U \times_{\st{Y}} X} && X \\
            {U\times_{\st{Y}}Y} && Y \\
            & {U \times_{\st{Y}}\st{X}} && {\st{X}} \\
            U && {\st{Y}.}
            \arrow["{\ }"{description}, from=3-2, to=3-4]
            \arrow["{\ }"{description}, from=1-2, to=3-2]
            \arrow["f", from=3-4, to=4-3]
            \arrow[from=4-1, to=4-3]
            \arrow[from=3-2, to=4-1]
            \arrow[from=2-3, to=4-3]
            \arrow[from=1-4, to=2-3]
            \arrow[from=1-4, to=3-4]
            \arrow[from=2-1, to=2-3]
            \arrow[from=2-1, to=4-1]
            \arrow[from=1-2, to=2-1]
            \arrow[from=1-2, to=1-4]
        \end{tikzcd}\]

    The unlabeled maps in the diagram below
    \[\begin{tikzcd}
            {U \times_{\st{Y}} X} & {U\times_{\st{Y}}Y} \\
            {U \times_{\st{Y}}\st{X}} & U
            \arrow[from=1-2, to=2-2]
            \arrow[from=1-1, to=1-2]
            \arrow[from=1-1, to=2-1]
            \arrow["{g^\ast f}"', from=2-1, to=2-2]
        \end{tikzcd}\]
    are \'etale because they are pullbacks of representably \'etale maps.
    The 2-out-of-3 property for \'etale maps of manifolds implies that
    $g^\ast f$ is \'etale, which concludes the proof.
\end{proof}

\begin{lemma}
    \label{lem:allMapsAreRepresentablyEtale}
    Let
    \[\begin{tikzcd}
            {\st{Y}} && {\st{Y}'} \\
            & {\orb{M}}
            \arrow["y"', from=1-1, to=2-2]
            \arrow["y'",from=1-3, to=2-2]
            \arrow["f", from=1-1, to=1-3]
            \arrow["\phi", shorten <=12pt, shorten >=8pt, Rightarrow, from=1-3, to=0]
        \end{tikzcd}\]
    be a morphism in $\StackEt(\orb{M})$.
    Then $f$ is a representably \'etale map of \'etale stacks.
\end{lemma}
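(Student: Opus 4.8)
The plan is to reduce the statement to the elementary $2$-out-of-$3$ property for local diffeomorphisms of manifolds, by pulling everything back along a single manifold atlas. First I would invoke the checking criterion of Lemma~\ref{lem:checkRepresentablyEtaleOnAnyEtaleAtlas}: to show $f$ is representably \'etale it suffices to produce one \'etale atlas $Y' \onto \st{Y}'$ for which the pullback $Y' \times_{\st{Y}'} \st{Y} \to Y'$ is an \'etale map of manifolds. Since $\orb{M}$ is an \'etale stack, I would fix an \'etale atlas $M \onto \orb{M}$ and set $Y' \define M \times_{\orb{M}} \st{Y}'$. Because $y'$ is representable, $Y'$ is a manifold; because $y'$ is representably \'etale, the projection $Y' \to M$ is \'etale; and because $M \onto \orb{M}$ is representably \'etale and surjective, its pullback $Y' \onto \st{Y}'$ along $y'$ is again representably \'etale and surjective, hence an \'etale atlas of $\st{Y}'$.

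Next I would identify the relevant iterated pullback. Stacking the square defining $Y' \times_{\st{Y}'} \st{Y}$ over the square defining $Y'$ and applying the pasting law for pullbacks exhibits $Y' \times_{\st{Y}'} \st{Y}$ as the fibre product $M \times_{\orb{M}} \st{Y}$, where $\st{Y}$ is regarded as lying over $\orb{M}$ via the composite $y' \comp f$. The invertible $2$-cell $\phi \colon y' \comp f \simeq y$ is exactly what lets me replace this composite by the structure map $y$, giving $Y' \times_{\st{Y}'} \st{Y} \simeq M \times_{\orb{M}} \st{Y} \enifed Z$, the fibre product now taken along $y$. Since $y$ is representable, $Z$ is a manifold, and since $y$ is representably \'etale, the projection $Z \to M$ is an \'etale map of manifolds.

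Finally I would apply the cancellation property. The two projections assemble into a triangle $Z \to Y' \to M$ whose composite agrees, up to the coherence isomorphisms tracked in the previous step, with the direct projection $Z \to M$. As both $Z \to M$ and $Y' \to M$ are \'etale, the $2$-out-of-$3$ property for local diffeomorphisms---if $g$ and $g \comp h$ are local diffeomorphisms then so is $h$, immediate from $d(g \comp h) = dg \comp dh$ together with the inverse function theorem---shows that $Z \to Y'$, i.e.\ the projection $Y' \times_{\st{Y}'} \st{Y} \to Y'$, is \'etale. Lemma~\ref{lem:checkRepresentablyEtaleOnAnyEtaleAtlas} then delivers that $f$ is representably \'etale, as claimed.

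The only genuine bookkeeping lives in the second paragraph: one must carry the coherence isomorphisms through the pasting of the two pullback squares carefully enough to see simultaneously that $Y' \times_{\st{Y}'} \st{Y}$ is identified with the fibre product along $y$ (this is where $\phi$ is used, and where invertibility of the $2$-cell matters) and that the resulting triangle over $M$ commutes up to coherent isomorphism. Everything else is a direct appeal to the stability of representably \'etale maps under pullback and composition, already recorded in this appendix, and to the cancellation property for local diffeomorphisms.
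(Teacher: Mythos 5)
Your proof is correct and takes essentially the same route as the paper's: pull the whole triangle back along an \'etale atlas $M \onto \orb{M}$, observe that both legs $M \times_{\orb{M}} \st{Y} \to M$ and $M \times_{\orb{M}} \st{Y}' \to M$ are \'etale maps of manifolds, apply the cancellation property for local diffeomorphisms to conclude the pulled-back $f$ is \'etale, and finish with Lemma~\ref{lem:checkRepresentablyEtaleOnAnyEtaleAtlas}. The only difference is presentational: you make explicit the pasting-law identification $Y' \times_{\st{Y}'} \st{Y} \simeq M \times_{\orb{M}} \st{Y}$ and the role of the invertible $2$-cell $\phi$ in it, which the paper leaves implicit.
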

\begin{proof}
    An \'etale atlas $M \onto \orb{M}$ pulls back to a representably \'etale
    map $M \times_{\orb{M}} \st{Y} \to \st{Y}$. As $y: \st{Y} \to \orb{M}$
    is representable, $M \times_{\orb{M}} \st{Y}$ is a manifold and thus
    forms an \'etale atlas for $\st{Y}$. Hence $\st{Y}$ is an \'etale stack,
    and the same is true for $\st{Y}'$.
    Pulling the diagram in the statement back along $\pi^\ast:M \onto \orb{M}$,
    we obtain
    \[\begin{tikzcd}
            {M \times_{\orb{M}} \st{Y}} && {M \times_{\orb{M}} \st{Y}'} \\
            & {M}
            \arrow["\pi^\ast y"', from=1-1, to=2-2]
            \arrow["\pi^\ast y'",from=1-3, to=2-2]
            \arrow["\pi^\ast f", from=1-1, to=1-3]
        \end{tikzcd}\]
    Both $\pi^\ast y$ and $\pi^\ast y'$ are \'etale by assumption,
    hence so is $\pi^\ast f$.
    But $\pi^\ast f$ is the pullback of $f$ along the \'etale atlas map
    $M \times_{\orb{M}} \st{Y}' \to \st{Y}'$, so we
    may use Lemma~\ref{lem:checkRepresentablyEtaleOnAnyEtaleAtlas}
    to finish the proof.
\end{proof}

\begin{rmk}
    Let $(x: \st{X} \to \orb{M}) \in \StackEt(\orb{M})$, and consider the overcategory $\StackEt(\orb{M})/x$.
    Lemma~\ref{lem:allMapsAreRepresentablyEtale} implies that forgetting
    the map to $\orb{M}$ is a functor
    \[
        \StackEt(\orb{M})/x \to \StackEt(\st{X}).
    \]
\end{rmk}

\begin{defn}
    The \emph{\'etale site} of an \'etale stack $\orb{M}$ is the subsite
    $\Et(\orb{M}) \subset \StackEt(\orb{M})$
    on those objects $x: X \to \orb{M}$ where $X$ is a manifold.
\end{defn}
Note that there are no 2-cells between two maps $f,f':X \to Y'$, so every
1-morphism in $\Et(\orb{M})$ admits a unique representative $(f,\phi)$,
and there is a forgetful functor $\Et(\orb{M}) \to \Man$.
The notion of covers on $\Et(\orb{M})$ agrees with the one pulled back from $\Man$ via this functor:
A collection $\{(f_i,\phi_i):x_i \to x\}_{i \in I}$ is a cover in $\Et(\orb{M})$ iff the map
$\coprod f_i$ is a surjective submersion. This is equivalent to asking $\{f_i\}_{i \in I}$ to be a cover in $\Man$.
By Lemma~\ref{lem:allMapsAreRepresentablyEtale}, all the maps $f_i$ are automatically
\'etale, so this reduces to the condition that the union be surjective.

\begin{defn}
    Let $Y \onto \orb{M}$ be an \'etale atlas. We denote the \emph{site associated to the atlas $Y$}
    by $\StackEt(Y/\orb{M})$. Its objects are
    \'etale maps $(\tilde{x}:\st{X} \to Y)$ with source an \'etale stack $\st{X}$,
    and morphisms $\tilde{x} \to \tilde{x}'$ are morphisms of the composites
    \[
        x:\st{X} \xto{\tilde{x}} Y \onto \orb{M}
    \]
    in $\StackEt(\orb{M})$.
    We denote by $\Et(Y/\orb{M})$ the full subsite on objects $(\tilde{x}:X \to Y) \in \StackEt(Y/\orb{M})$
    where the source $X$ is a manifold.
\end{defn}
These sites $\Et(Y/\orb{M})$ are built so as to make the functors
$\Et(Y/\orb{M}) \to \Et(\orb{M})$ and $\StackEt(Y/\orb{M}) \to \StackEt(\orb{M})$ fully faithful.
We have introduced 4 sites associated to an \'etale stack, and now show that
for the purposes of studying sheaves and stacks over $\orb{M}$, we can freely
move between these sites.
\begin{lemma}
    \label{lem:everythingIsDense}
    The functors in the diagram below
    \[\begin{tikzcd}[row sep=tiny,column sep=tiny]
            & {\StackEt(Y/\orb{M})} \\
            {\Et(Y/\orb{M})} && {\StackEt(\orb{M})} \\
            & {\Et(\orb{M})}
            \arrow[hook, from=2-1, to=1-2]
            \arrow[hook, from=1-2, to=2-3]
            \arrow[hook, from=3-2, to=2-3]
            \arrow[hook, from=2-1, to=3-2]
        \end{tikzcd}\]
    are inclusions of dense subsites.
\end{lemma}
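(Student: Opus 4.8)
The plan is to verify density for each of the four inclusions directly from the definition recalled earlier: every object of the larger site must admit a cover by objects of the smaller one, where a single map is a cover precisely when it is a representably surjective submersion. Fullness of these inclusions is already built into the definitions (as the text notes immediately above the lemma), and the four Grothendieck topologies agree because covers are defined throughout by (representably) surjective submersions; so density is the only real content. I would organise the four functors into two families according to the construction needed.

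The first family consists of the two ``vertical'' inclusions $\Et(Y/\orb{M}) \into \StackEt(Y/\orb{M})$ and $\Et(\orb{M}) \into \StackEt(\orb{M})$, which replace an étale-stack source by a manifold. Given an object whose source is an étale stack $\st{X}$, I would choose an étale atlas $X_0 \onto \st{X}$ with $X_0$ a manifold, which exists by the very definition of an étale stack. Composing $X_0 \onto \st{X}$ with the structure map ($\tilde{x}:\st{X} \to Y$, respectively $y:\st{X} \to \orb{M}$) and using that representably étale maps are closed under composition, I obtain an object of the smaller site whose source is the manifold $X_0$. The atlas map $X_0 \onto \st{X}$ is a representably surjective submersion, hence a single-map cover in the larger site. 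This settles both functors in one stroke.

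The second family consists of $\StackEt(Y/\orb{M}) \into \StackEt(\orb{M})$ and $\Et(Y/\orb{M}) \into \Et(\orb{M})$, which pass from objects over $\orb{M}$ to objects over $Y$. Given $y:\st{X}\to\orb{M}$ in the larger site, I would form the fibre product $\st{X} \times_{\orb{M}} Y$. Its projection to $Y$ is the pullback of $y$, hence representably étale, so this object genuinely lies in the over-$Y$ site; its projection to $\st{X}$ is the pullback of the atlas $Y \onto \orb{M}$, hence a representably surjective submersion, giving a single-map cover. For the manifold inclusion $\Et(Y/\orb{M}) \into \Et(\orb{M})$, I additionally need $X \times_{\orb{M}} Y$ to be representable: this follows because $Y \onto \orb{M}$ is representable and $X$ is a manifold, and then $X \times_{\orb{M}} Y \to Y$ is an étale map of manifolds by Lemma~\ref{lem:allMapsAreRepresentablyEtale} (or directly, as a pullback of the representably étale map $x$).

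The main subtlety I expect is not any single estimate but the bookkeeping of ensuring that the constructed maps really are morphisms in the relevant sites and satisfy the representability and étale constraints: in the second family I must confirm the fibre products are representable exactly where a manifold source is required, which rests on representability of the atlas $Y \onto \orb{M}$, and I must check compatibility of all structure maps to $\orb{M}$ (resp. $Y$) with the ambient $2$-cells. This last point is handled automatically by the universal property of the pullback and, inside $\Et(\orb{M})$ and $\Et(Y/\orb{M})$, by the absence of nonidentity $2$-cells between maps of manifolds, so the $2$-categorical coherence collapses to an ordinary commutativity check.
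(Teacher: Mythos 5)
Your proof is correct, and all four density checks go through; but your decomposition differs from the paper's. The paper first observes that, since all four functors are fully faithful inclusions of subsites with the induced topologies, density is inherited along nested inclusions: it therefore suffices to prove the single statement that the smallest site $\Et(Y/\orb{M})$ is dense in the largest one $\StackEt(\orb{M})$. That one statement is proved by exactly your Family~2 construction: given $x:\st{X} \to \orb{M}$, the fibre product $Y \times_{\orb{M}} \st{X}$ is a \emph{manifold} (because $x$, being representably \'etale, is in particular representable, and $Y$ is a manifold), it maps \'etale-ly to $Y$ (as a pullback of $x$), and it covers $\st{X}$ via the pullback of the atlas projection. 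The point you did not exploit is precisely this automatic representability of the fibre product: had you noted it in your $\StackEt(Y/\orb{M}) \into \StackEt(\orb{M})$ case, your covering object would already lie in $\Et(Y/\orb{M})$, and the formal sandwich argument would make your Family~1 (the \'etale-atlas construction for the vertical inclusions) unnecessary. What each approach buys: yours is self-contained and verifies each inclusion on its own terms, which makes the role of the two different constructions (atlases versus fibre products) explicit; the paper's is shorter, needs only one construction, and isolates the one genuinely geometric input --- everything else is formal nonsense about nested dense subsites.
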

\begin{proof}
    As all the functors are fully faithful inclusions of subsites, it suffices to prove $\Et(Y/\orb{M})$
    is dense in $\StackEt(\orb{M})$.
    We need to check that every object $(x: \st{X} \to \orb{M}) \in \StackEt(\orb{M})$
    admits a cover by objects in $\Et(Y/\orb{M})$.
    Form the pullback of $Y \onto \orb{M}$ with $x$. Then the map
    $Y \times_{\orb{M}} \st{X} \onto \st{X}$ is an \'etale atlas
    for $\st{X}$. Indeed, the map is representably \'etale because it is pulled back
    from the representably \'etale map $Y \onto \orb{M}$, and the fibre product is a manifold
    because $x$ is representably \'etale.
    In particular, $Y \times_{\orb{M}} \st{X} \to Y$ is an object in $\Et(Y/\orb{M})$,
    and together with the 2-cell $\phi$
    \[\begin{tikzcd}
            {Y \times_{\orb{M}} \st{X}} & {\st{X}} \\
            Y & {\orb{M}}
            \arrow[""{name=0, anchor=center, inner sep=0}, from=1-1, to=2-2]
            \arrow["x", from=1-2, to=2-2]
            \arrow[from=1-1, to=1-2]
            \arrow[from=2-1, to=2-2]
            \arrow[from=1-1, to=2-1]
            \arrow["\phi", shorten <=5pt, shorten >=3pt, Rightarrow, from=1-2, to=0]
            \arrow[shorten <=4pt, shorten >=6pt, Rightarrow, no head, from=2-1, to=0]
        \end{tikzcd}\]
    provided by the universal property of the pullback, it covers
    $x: \st{X} \to \orb{M}$ in $\StackEt(\orb{M})$.
\end{proof}

By the Comparison Lemma (Lemma~\ref{lem:comparisonLemma}), this means that sheaf and stack categories over all these
sites are canonically equivalent.
We henceforth refer to sheaves/stacks over any of these sites as sheaves/stacks over $\orb{M}$.

We equip $\orb{M}$ with the ($\VecInf$-valued) structure sheaf $\Cinf$ of smooth $\IC$-valued functions.
This sheaf is pulled back from $\Man$ via the forgetful functor $\Et(\orb{M}) \to \Man$.
\begin{notation}
    We write $\Shv(\orb{M})$ to denote the category of $\VecInf$-valued sheaves over $\orb{M}$ and
    $\ShC(\orb{M})$ to denote its category of $\Cinf$-modules.
\end{notation}

In what follows, we show that $\StackEt(\orb{M})$ admits certain limits.
To clarify the following section, we explicitly describe the category of maps
$\st{W} \to \st{X} \times_\st{Z} \st{Y}$ from a stack $\st{W}$ into a fibre product of stacks:
An object of $(\st{X} \times_\st{Z} \st{Y})(\st{W})$ is the datum of three maps ${f_{\st{X}}}, {f_{\st{Y}}}, {f_{\st{Z}}}$ and
two 2-cells $\phi_{\st{X}}, \phi_{\st{Y}}$ as below.
\[\begin{tikzcd}
        {\st{W}} & {\st{W}} & {\st{W}} & {\st{W}} & {\st{Y}} \\
        {\st{X}} & {\st{Y}} & {\st{Z}} & {\st{X}} & {\st{Z}}
        \arrow["{f_{\st{X}}}", from=1-1, to=2-1]
        \arrow["{f_{\st{Y}}}", from=1-2, to=2-2]
        \arrow["{f_{\st{Z}}}", from=1-3, to=2-3]
        \arrow[from=1-4, to=2-4]
        \arrow[from=2-4, to=2-5]
        \arrow[""{name=0, anchor=center, inner sep=0}, from=1-4, to=2-5]
        \arrow[from=1-4, to=1-5]
        \arrow[from=1-5, to=2-5]
        \arrow["{\phi_{\st{X}}}", shorten >=2pt, Rightarrow, from=2-4, to=0]
        \arrow["{\phi_{\st{Y}}}", shorten >=2pt, Rightarrow, from=1-5, to=0]
    \end{tikzcd}\]
A morphism between two objects is a triple of 2-cells
$\alpha_{\st{X}/\st{Y}/\st{Z}}:f_{\st{X}/\st{Y}/\st{Z}} \to f_{\st{X}/\st{Y}/\st{Z}}'$
such that both
\[\begin{tikzcd}
        {\st{W}} && {} & {\st{W}} \\
        {\st{X}} & {\st{Z}} & {} & {\st{X}} & {\st{Z}}
        \arrow[""{name=0, anchor=center, inner sep=0}, "{f_{\st{X}}}"', curve={height=24pt}, from=1-1, to=2-1]
        \arrow[from=2-1, to=2-2]
        \arrow[""{name=1, anchor=center, inner sep=0}, "{f_{\st{Z}}'}", from=1-1, to=2-2]
        \arrow["{=}"{description}, draw=none, from=1-3, to=2-3]
        \arrow["{f_{\st{X}}}"', from=1-4, to=2-4]
        \arrow[from=2-4, to=2-5]
        \arrow[""{name=2, anchor=center, inner sep=0}, "{f_{\st{Z}}'}", curve={height=-18pt}, from=1-4, to=2-5]
        \arrow[""{name=3, anchor=center, inner sep=0}, from=1-4, to=2-5]
        \arrow[""{name=4, anchor=center, inner sep=0}, from=1-1, to=2-1]
        \arrow["{\phi_{\st{X}}'}"', shorten >=2pt, Rightarrow, from=2-1, to=1]
        \arrow["{\alpha_{\st{Z}}}", shorten <=3pt, shorten >=3pt, Rightarrow, from=3, to=2]
        \arrow["{\phi_{\st{X}}}"', shorten >=2pt, Rightarrow, from=2-4, to=3]
        \arrow["{\alpha_{\st{X}}}"'{pos=0}, shorten <=5pt, shorten >=5pt, Rightarrow, from=0, to=4]
    \end{tikzcd}\]
and the analogous equation involving $\alpha_\st{Y}, \phi_\st{Y}$ and
$\phi'_\st{Y}$ hold.
The canonical morphisms from $\st{X} \times_{\st{Z}} \st{Y}$ to $\st{X}$, $\st{Y}$ and $\st{Z}$
are the obvious forgetful maps.
\begin{rmk}
    One may also describe the category $\Hom(\st{W},\st{X} \times_\st{Z} \st{Y})$ as
    consisting of pairs $\st{W} \to \st{X}, \st{W} \to \st{Y}$, equipped with
    a single 2-cell relating the two resulting composites $\st{W} \to \st{Z}$.
    There is a map from the stack as we described it above to this one,
    given by forgetting the data $\st{W} \to \st{Z}$ and composing the 2-cells
    $\phi_\st{X}$ and $\phi_\st{Y}$
    to obtain a single 2-cell. It is straightforward to check that this map is an
    equivalence.
\end{rmk}

Let $\st{X} \to \orb{M}$ and $\st{Y} \to \orb{M}$ be objects in
$\StackEt(\orb{M})$. Then $\st{X} \times_\orb{M} \st{Y} \to \orb{M}$ is again
an object of $\StackEt(\orb{M})$, because the property of being representably \'etale
is preserved under pullback and closed under composition.
Similary, given a pair of maps with common target in $\StackEt(\orb{M})$
\[\begin{tikzcd}
        & {\st{Y}} \\
        {\st{X}} & {\st{Z}} \\
        && {\orb{M},}
        \arrow[from=2-2, to=3-3]
        \arrow[from=2-1, to=2-2]
        \arrow[from=1-2, to=2-2]
        \arrow[""{name=0, anchor=center, inner sep=0}, curve={height=-12pt}, from=1-2, to=3-3]
        \arrow[""{name=1, anchor=center, inner sep=0}, curve={height=12pt}, from=2-1, to=3-3]
        \arrow[shorten >=2pt, Rightarrow, from=2-2, to=1]
        \arrow[shorten >=4pt, Rightarrow, from=2-2, to=0]
    \end{tikzcd}\]
the pullback $\st{X} \times_\st{Z} \st{Y} \to \orb{M}$ is an object in
$\StackEt(\orb{M})$. The map $\st{X} \times_{\st{Z}} \st{Y} \to \st{X}$ is
representably \'etale because it is pulled back
from $\st{Y} \to \st{Z}$, which is representably \'etale by
Lemma~\ref{lem:allMapsAreRepresentablyEtale}. The map to $\orb{M}$ is
(equivalent to the map) obtained from
it by composing with the representably \'etale map $\st{X} \to \orb{M}$.

\begin{lemma}
    \label{lem:StackEtHasBinaryProdsAndPullbacks}
    The site $\StackEt(\orb{M})$ has binary products and pullbacks, computed by
    \[\begin{tikzcd}
            {\st{X}} & {\st{Y}} & {} & {\st{X} \times_{\orb{M}} \st{Y}} \\
            {\orb{M}} & {\orb{M}} & {} & {\orb{M}}
            \arrow[""{name=0, anchor=center, inner sep=0}, from=1-1, to=2-1]
            \arrow[""{name=1, anchor=center, inner sep=0}, from=1-2, to=2-2]
            \arrow[from=1-4, to=2-4]
            \arrow["\simeq"{description}, draw=none, from=1-3, to=2-3]
            \arrow["\times"{marking}, Rightarrow, draw=none, from=0, to=1]
        \end{tikzcd}\]
    and
    \[\begin{tikzcd}
            && {\st{Y}} &&& {\st{X} \times_{\st{Z}} \st{Y}} \\
            \lim & {\st{X}} & {\st{Z}} && \simeq &&&& {} \\
            &&& {\orb{M}} && {\orb{M}}
            \arrow[from=2-3, to=3-4]
            \arrow["f", from=2-2, to=2-3]
            \arrow["g"', from=1-3, to=2-3]
            \arrow[""{name=0, anchor=center, inner sep=0}, curve={height=-12pt}, from=1-3, to=3-4]
            \arrow[""{name=1, anchor=center, inner sep=0}, curve={height=12pt}, from=2-2, to=3-4]
            \arrow[from=1-6, to=3-6]
            \arrow["\alpha"', shorten >=2pt, Rightarrow, from=2-3, to=1]
            \arrow["\beta"', shorten >=4pt, Rightarrow, from=2-3, to=0]
        \end{tikzcd}\]
    respectively.
\end{lemma}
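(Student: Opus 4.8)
The plan is to deduce both statements from the $2$-dimensional universal property of the fibre product of stacks; the content of the lemma is that this $2$-universal property descends to a strict $1$-categorical universal property in $\StackEt(\orb{M})$. Recall that the objects $\st{X} \times_{\orb{M}} \st{Y} \to \orb{M}$ and $\st{X} \times_\st{Z} \st{Y} \to \orb{M}$ have already been shown to lie in $\StackEt(\orb{M})$, since representable étaleness is stable under pullback and composition, so it remains only to check the mapping-set computation. Since $\StackEt(\orb{M})$ is the homotopy $1$-category obtained from the slice $(2,1)$-category of representably étale maps by passing to $2$-isomorphism classes of morphisms, I would phrase everything in terms of the mapping groupoids $\underline{\Hom}_{/\orb{M}}(\st{W},-)$ and their sets of components $\Hom_{\StackEt(\orb{M})}(\st{W},-) = \pi_0\,\underline{\Hom}_{/\orb{M}}(\st{W},-)$.

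First I would invoke the explicit description of $\Hom(\st{W},\st{X}\times_\st{Z}\st{Y})$ recorded just before the statement: for every test object $\st{W} \to \orb{M}$ in $\StackEt(\orb{M})$, the $2$-universal property gives an equivalence of groupoids
\[
\underline{\Hom}_{/\orb{M}}(\st{W},\st{X}\times_\st{Z}\st{Y}) \;\simeq\;
\underline{\Hom}_{/\orb{M}}(\st{W},\st{X}) \times^{h}_{\underline{\Hom}_{/\orb{M}}(\st{W},\st{Z})} \underline{\Hom}_{/\orb{M}}(\st{W},\st{Y}),
\]
the homotopy fibre product of mapping groupoids, whose objects are triples $(a,b,\gamma)$ with $\gamma$ a $2$-cell identifying the two composites into $\st{Z}$. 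Applying $\pi_0$, the desired $1$-universal property is exactly the assertion that the comparison map
\[
\pi_0\!\left(A \times^h_C B\right) \longrightarrow \pi_0 A \times_{\pi_0 C} \pi_0 B
\]
is a bijection, where $A,B,C$ abbreviate the three mapping groupoids. For the binary product the relevant base $C = \underline{\Hom}_{/\orb{M}}(\st{W},\orb{M})$ is the terminal (contractible) groupoid, so the homotopy fibre product is an ordinary product, with which $\pi_0$ commutes automatically; this case needs nothing further.

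The pullback case is where the real work lies. Surjectivity of the comparison map holds for any homotopy pullback of groupoids: given classes $[a],[b]$ whose images agree in $\pi_0 C$, a choice of connecting $2$-cell $\gamma$ produces a preimage, which is precisely the existence half of the universal property (a map $\st{W} \to \st{X}\times_\st{Z}\st{Y}$ compatible with the projections). Injectivity — the uniqueness half — is the main obstacle, since in general $\pi_0$ fails to preserve homotopy pullbacks, the failure being controlled by the automorphism groups of $C$. Here I would use the rigidity coming from representable étaleness: for representably étale $z:\st{Z}\to\orb{M}$ the whiskering functor $z_\ast$ is faithful on $2$-cells, so every object of $\underline{\Hom}_{/\orb{M}}(\st{W},\st{Z})$ has trivial automorphism group (a $2$-cell $\alpha$ with $z\cdot\alpha = \id$ must itself be the identity). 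This is the same rigidity already needed to make composition in $\StackEt(\orb{M})$ well defined. Granting it, any two triples $(a,b,\gamma)$, $(a',b',\gamma')$ with $[a]=[a']$, $[b]=[b']$ can be compared: writing $F,G$ for postcomposition with $f,g$ and choosing isomorphisms $\alpha: a\to a'$, $\beta: b\to b'$, the two isomorphisms $\gamma'$ and $G\beta\comp\gamma\comp (F\alpha)^{-1}$ of $C$ are parallel, hence equal by triviality of automorphisms, so $(\alpha,\beta)$ is an isomorphism of triples and $[(a,b,\gamma)] = [(a',b',\gamma')]$. Thus the comparison map is injective.

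The hard part, therefore, is isolating and justifying the rigidity statement — faithfulness of whiskering by a representably étale map on $2$-cells, equivalently triviality of the automorphism groups of the mapping groupoids over $\orb{M}$; once that is in hand, both universal properties follow formally from the standard behaviour of $\pi_0$ on homotopy (co)limits of groupoids. I would present this rigidity as the one genuinely geometric input — the same fact underlying Lemma~\ref{lem:allMapsAreRepresentablyEtale} and the well-definedness of the site — and keep the remainder as the formal $\pi_0$-bookkeeping sketched above.
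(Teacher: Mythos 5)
Your proof is correct, and it takes a recognisably different route from the paper's. The paper argues by direct unwinding: a morphism $(\st{W}\to\orb{M}) \to (\st{X}\times_{\st{Z}}\st{Y}\to\orb{M})$ is described as three maps plus three 2-cells, this data is sent to a pair of maps with their 2-cells by discarding the component into $\st{Z}$, and the verification that this assignment is a bijection on 2-isomorphism classes is declared ``straightforward to check''. You instead phrase both universal properties as the statement that $\pi_0$ carries the relevant homotopy pullbacks of mapping groupoids to pullbacks of sets, which isolates a single geometric input: triviality of automorphisms in $\underline{\Hom}_{/\orb{M}}(\st{W},\st{Z})$, i.e.\ faithfulness of whiskering with the representably \'etale map $z:\st{Z}\to\orb{M}$. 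This is exactly the point buried in the paper's ``straightforward'' step: there, injectivity requires producing a 2-cell $f_{\st{Z}}\Rightarrow f_{\st{Z}}'$ compatible with both $\phi_{\st{X}}$ and $\phi_{\st{Y}}$, and the two candidate 2-cells (one built from the $\phi_{\st{X}}$'s, one from the $\phi_{\st{Y}}$'s) only visibly agree after whiskering with $z$ --- so faithfulness is needed to conclude they agree on the nose. Your packaging makes this explicit, which is its main advantage; the paper's direct approach avoids any machinery of homotopy pullbacks but hides the same content.

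Two repairs are needed. First, your stated justification for the rigidity --- that it is ``the same rigidity already needed to make composition in $\StackEt(\orb{M})$ well defined'' --- is misplaced: composition of 2-isomorphism classes is well defined in the homotopy category of any bicategory without any such input. The place the paper actually invokes this rigidity is the Remark following Lemma~\ref{lem:ShCIsAStack}, where the 2-cell $\gamma$ comparing a composite with its chosen representative is asserted to be \emph{unique}. Second, since the rigidity carries the entire weight of your argument, it should be proved rather than deferred, and the proof uses only representability: given a 2-cell $\alpha$ from $h$ to $h$, where $h:\st{W}\to\st{Z}$ underlies an object $(h,\phi)$ of $\underline{\Hom}_{/\orb{M}}(\st{W},\st{Z})$, with $z\cdot\alpha=\id$, restrict along an atlas $W\onto\st{W}$; the pair $(h,\phi)$ restricts to a section over $W$ of the fibre product $W\times_{\orb{M}}\st{Z}$, which is a manifold because $z$ is representable, hence 0-truncated, so the automorphism of this section induced by $\alpha$ is the identity; thus $\alpha$ restricts to the identity over $W$, and since 2-cells satisfy descent along the cover $W\onto\st{W}$, $\alpha=\id$. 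With that lemma supplied, your argument is complete.
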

\begin{proof}
    We begin by proving the statement for binary products:
    A morphism $(\st{W} \to \orb{M}) \to (\st{X} \times_{\orb{M}} \st{Y} \to \orb{M})$
    is a pair of a morphism $\st{W} \to \st{X} \times_{\orb{M}} \st{Y}$ and a 2-cell
    $\phi$.
    Each such morphism is canonically isomorphic (via $\phi$ itself)
    to one where the composite
    $\st{W} \to \st{X} \times_{\orb{M}} \st{Y} \to \orb{M}$ is equal to $\st{W} \to \orb{M}$
    and the 2-cell $\phi$ is the trivial 2-cell.
    The remaining data is precisely a pair of maps $\st{W} \to \st{X}$, $\st{W} \to \st{Y}$,
    and a pair of 2-cells between the composites $\st{W} \to \orb{M}$.
    This identifies $(\st{X} \times_{\orb{M}} \st{Y} \to \orb{M})$ as the product.

    A morphism $(\st{W} \to \orb{M}) \to (\st{X} \times_{\st{Z}} \st{Y} \to \orb{M})$
    is equivalent to the data of three maps and three 2-cells as below.
    \[\begin{tikzcd}
            {\st{W}} & {\st{W}} & {\st{W}} & {\st{W}} & {\st{Y}} & {\st{W}} & {\st{Z}} \\
            {\st{X}} & {\st{Y}} & {\st{Z}} & {\st{X}} & {\st{Z}} & { } & { }
            \arrow[from=1-1, to=2-1]
            \arrow[from=1-2, to=2-2]
            \arrow[from=1-3, to=2-3]
            \arrow[from=1-4, to=2-4]
            \arrow[from=2-4, to=2-5]
            \arrow[""{name=0, anchor=center, inner sep=0}, from=1-4, to=2-5]
            \arrow[from=1-4, to=1-5]
            \arrow[from=1-5, to=2-5]
            \arrow[from=1-6, to=1-7]
            \arrow[""{name=1, anchor=center, inner sep=0}, "{\orb{M}}"{marking}, draw=none, from=2-6, to=2-7]
            \arrow["{\phi_{\st{X}}}"', shorten >=2pt, Rightarrow, from=2-4, to=0]
            \arrow["{\phi_{\st{Y}}}"', shorten >=2pt, Rightarrow, from=1-5, to=0]
            \arrow[""{name=2, anchor=center, inner sep=0}, shorten >=8pt, from=1-6, to=1]
            \arrow[shorten >=8pt, from=1-7, to=1]
            \arrow["\kappa"', shorten >=3pt, Rightarrow, from=1-7, to=2]
        \end{tikzcd}\]
    The universal property of the pullback says that a map into the pullback of
    $(f, \alpha):(\st{X} \to \st{Z})/\orb{M}$ with
    $(g,\beta):(\st{Y} \to \st{Z})/\orb{M}$ is the data of two maps and two 2-cells
    \[\begin{tikzcd}
            {\st{W}} & {\st{W}} & {\st{W}} & {\st{Y}} \\
            {\st{X}} & {\st{Y}} & {\st{X}} & {\orb{M}}
            \arrow[from=1-1, to=2-1]
            \arrow[from=1-2, to=2-2]
            \arrow[from=1-3, to=2-3]
            \arrow[from=2-3, to=2-4]
            \arrow[""{name=0, anchor=center, inner sep=0}, from=1-3, to=2-4]
            \arrow[from=1-3, to=1-4]
            \arrow[from=1-4, to=2-4]
            \arrow["{\lambda_{\st{X}}}"', shorten >=2pt, Rightarrow, from=2-3, to=0]
            \arrow["{\lambda_{\st{Y}}}"', shorten >=2pt, Rightarrow, from=1-4, to=0]
        \end{tikzcd}\]
    such that
    \[\begin{tikzcd}
            {\st{W}} & {\st{X}} & {\st{Z}} & {} & {\st{W}} & {\st{Y}} & {\st{Z}} \\
            & {\orb{M}} && {} && {\orb{M}.}
            \arrow["\simeq"{description}, draw=none, from=1-4, to=2-4]
            \arrow[from=1-1, to=1-2]
            \arrow[from=1-2, to=1-3]
            \arrow[""{name=0, anchor=center, inner sep=0}, from=1-1, to=2-2]
            \arrow[from=1-2, to=2-2]
            \arrow[""{name=1, anchor=center, inner sep=0}, from=1-3, to=2-2]
            \arrow[""{name=2, anchor=center, inner sep=0}, from=1-5, to=2-6]
            \arrow[from=1-5, to=1-6]
            \arrow[from=1-6, to=1-7]
            \arrow[""{name=3, anchor=center, inner sep=0}, from=1-7, to=2-6]
            \arrow[from=1-6, to=2-6]
            \arrow["{\phi_{\st{Y}}}"', shorten >=2pt, Rightarrow, from=1-6, to=2]
            \arrow["{\phi_{\st{X}}}"', shorten >=2pt, Rightarrow, from=1-2, to=0]
            \arrow["\beta"', shorten >=2pt, Rightarrow, from=1-6, to=3]
            \arrow["\alpha"', shorten >=2pt, Rightarrow, from=1-2, to=1]
        \end{tikzcd}\]
    Given the data of a map into $(\st{X} \times_{\st{Z}} \st{Y} \to \orb{M})$ as
    above, one may obtain such data by forgetting the map $\st{W} \to \st{Z}$
    and setting
    \[\begin{tikzcd}
            {\st{W}} && {\st{Y}} && {\st{W}} && {\st{Y}} \\
            &&& \define && {\st{Z}} \\
            {\st{X}} && {\orb{M}} && {\st{X}} && {\orb{M}}
            \arrow[""{name=0, anchor=center, inner sep=0}, from=1-5, to=2-6]
            \arrow[from=2-6, to=3-7]
            \arrow[from=1-5, to=3-5]
            \arrow[""{name=1, anchor=center, inner sep=0}, from=3-5, to=3-7]
            \arrow[from=1-5, to=1-7]
            \arrow[""{name=2, anchor=center, inner sep=0}, from=1-7, to=3-7]
            \arrow[from=3-5, to=2-6]
            \arrow[from=1-7, to=2-6]
            \arrow[""{name=3, anchor=center, inner sep=0}, from=1-1, to=3-3]
            \arrow[from=3-1, to=3-3]
            \arrow[from=1-3, to=3-3]
            \arrow[from=1-1, to=1-3]
            \arrow[from=1-1, to=3-1]
            \arrow["{\phi_{\st{X}}}"', shorten <=9pt, shorten >=6pt, Rightarrow, from=3-5, to=0]
            \arrow["{\phi_{\st{Y}}}", shorten <=13pt, shorten >=8pt, Rightarrow, from=1-7, to=0]
            \arrow["\alpha", shorten <=3pt, shorten >=3pt, Rightarrow, from=2-6, to=1]
            \arrow["\beta", shorten <=5pt, shorten >=5pt, Rightarrow, from=2-6, to=2]
            \arrow["{\lambda_{\st{X}}}"', shorten >=6pt, Rightarrow, from=3-1, to=3]
            \arrow["{\lambda_{\st{Y}}}"', shorten >=6pt, Rightarrow, from=1-3, to=3]
        \end{tikzcd}\]
    It is straightforward to check that the 2-cells satisfy the required condition,
    and that this assignment is an isomorphism.
    This identifies $(\st{X} \times_{\st{Z}} \st{Y} \to \orb{M})$ as the pullback.
\end{proof}

\subsection{The Untwisted Pullback-Pushforward Adjunction}
Let $f: \orb{N} \to \orb{M}$ be a map of \'etale stacks.
It defines a functor
\begin{alignat*}{1}
    f^{-1}: \StackEt(\orb{M}) & \to \StackEt(\orb{N})                                  \\
    (\st{X} \to \orb{M})      & \mapsto (\st{X} \times_{\orb{M}} \orb{N} \to \orb{N}).
\end{alignat*}

\begin{lemma}
    \label{lem:fInvPreservesStuff}
    The functor $f^{-1}$ preserves covers, pullbacks and binary products.
\end{lemma}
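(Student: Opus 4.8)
The plan is to verify each of the three preservation properties in turn, using the explicit description of limits in $\StackEt(\orb{M})$ established in Lemma~\ref{lem:StackEtHasBinaryProdsAndPullbacks}. The unifying principle is that $f^{-1}$ is defined by pullback along $\orb{N} \to \orb{M}$, so all three claims reduce to standard compatibility properties of iterated fibre products of stacks --- namely that pullbacks commute with pullbacks. The key technical input is the pasting law for pullback squares, which lets me rearrange nested fibre products freely.

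First I would handle covers. A cover of $(\st{X} \to \orb{M})$ in $\StackEt(\orb{M})$ is a family $\{(f_i,\phi_i): \st{X}_i \to \st{X}\}$ whose induced map $\coprod \st{X}_i \to \st{X}$ is representably surjective submersive. Applying $f^{-1}$ pulls this family back along $\orb{N} \to \orb{M}$; since representable surjective submersions are preserved under pullback (the pasting law again, together with stability of surjective submersions of manifolds), the resulting family $\{\st{X}_i \times_{\orb{M}} \orb{N} \to \st{X} \times_{\orb{M}} \orb{N}\}$ is again a cover. I should also note $\coprod(\st{X}_i \times_{\orb{M}} \orb{N}) \simeq (\coprod \st{X}_i) \times_{\orb{M}} \orb{N}$, which holds because pullback is a right adjoint-like operation that commutes with the coproduct here (or simply check it objectwise).

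Next I would treat pullbacks and binary products together, as the latter is the special case where the common target is $\orb{M}$ itself. Given a cospan $\st{X} \to \st{Z} \leftarrow \st{Y}$ over $\orb{M}$, Lemma~\ref{lem:StackEtHasBinaryProdsAndPullbacks} computes the pullback as the ordinary fibre product $\st{X} \times_{\st{Z}} \st{Y}$ equipped with its canonical map to $\orb{M}$. Applying $f^{-1}$ gives $(\st{X} \times_{\st{Z}} \st{Y}) \times_{\orb{M}} \orb{N}$, while computing the pullback of the images $f^{-1}\st{X} \to f^{-1}\st{Z} \leftarrow f^{-1}\st{Y}$ gives $(\st{X} \times_{\orb{M}} \orb{N}) \times_{(\st{Z} \times_{\orb{M}} \orb{N})} (\st{Y} \times_{\orb{M}} \orb{N})$. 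The required equivalence between these two expressions is a purely formal manipulation of fibre products: both present the limit of the diagram $\st{X} \to \st{Z} \leftarrow \st{Y}$, $\orb{N} \to \orb{M}$ with the appropriate maps, and the pasting law identifies them. The binary product claim follows by specialising $\st{Z} = \orb{M}$.

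The main obstacle is not conceptual but bookkeeping: $\StackEt(\orb{M})$ is a $1$-category obtained by truncating a bicategory (quotienting out $2$-cells), so I must confirm that the equivalences of stacks I use descend to genuine isomorphisms in the truncated hom-sets, and that the $2$-cells $\phi_{\st{X}}, \phi_{\st{Y}}$ appearing in the explicit description of maps into a fibre product are correctly tracked under pullback along $f$. The cleanest way to sidestep delicate diagram chases is to invoke the universal properties directly: I would argue that $f^{-1}$ preserves these limits by showing it is itself a base-change functor, and base change preserves the relevant limits because limits commute with limits. Concretely, for any test object $\st{W} \to \orb{N}$ I can compute $\Hom_{\StackEt(\orb{N})}(\st{W}, f^{-1}(-))$ and match it, via adjunction-style reasoning through the forgetful functor to $\StackEt(\orb{M})$, with the corresponding hom into the limit. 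This reduces everything to the already-established universal properties in Lemma~\ref{lem:StackEtHasBinaryProdsAndPullbacks} and avoids re-deriving the explicit $2$-cell data.
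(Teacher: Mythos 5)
Your proof is correct and follows essentially the same route as the paper: covers are handled by stability of representable surjective submersions under pullback, and pullbacks and binary products by observing (via Lemma~\ref{lem:StackEtHasBinaryProdsAndPullbacks}) that they are computed as fibre products in $\DiffSt$, where the commutation of limits with limits identifies the two iterated fibre products. Your extra care about the $2$-cell bookkeeping in the truncated category is a fair point the paper elides, but it resolves exactly as you suggest, by appealing to the universal properties.
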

\begin{proof}
    The functor $f^{-1}$ amounts to computing a pullback in $\DiffSt$.
    Covers are collections of jointly surjective representable submersions,
    a condition which is preserved under pullback.

    By Lemma~\ref{lem:StackEtHasBinaryProdsAndPullbacks}, both pullbacks and
    binary products in $\StackEt(\orb{M})$ may be computed as pullbacks in
    $\DiffSt$. Limits commute with limits, in particular, pullback squares
    pull back to pullback squares along $f$. Hence $f^{-1}$ preserves pullbacks
    and binary products.
\end{proof}

Let $y:\st{Y} \to \orb{N}$ be an object in $\StackEt(\orb{N})$. The arrow category $y \downarrow f^{-1}$ is the
category of pairs $(x: \st{X} \to \orb{M},(g,\phi):y \to f^{-1}(x))$.
It may equivalently be thought of as the category of equivalence classes of diagrams
\[\begin{tikzcd}
        {\st{Y}} & {\st{X}} \\
        {\orb{N}} & {\orb{M},}
        \arrow["y"', from=1-1, to=2-1]
        \arrow["x", from=1-2, to=2-2]
        \arrow["f"', from=2-1, to=2-2]
        \arrow["g", from=1-1, to=1-2]
        \arrow["\phi", Rightarrow, from=1-2, to=2-1]
    \end{tikzcd}\]
where $x$ and $f$ are fixed and the equivalence relation is generated by the obvious action of 2-cells
\[\begin{tikzcd}
        {\st{Y}} & {\st{X}.}
        \arrow[""{name=0, anchor=center, inner sep=0}, "{g'}", curve={height=-12pt}, from=1-1, to=1-2]
        \arrow[""{name=1, anchor=center, inner sep=0}, "g"', curve={height=12pt}, from=1-1, to=1-2]
        \arrow["\gamma", shorten <=3pt, shorten >=3pt, Rightarrow, from=0, to=1]
    \end{tikzcd}\]

\begin{lemma}
    \label{lem:yOverfInvIsCofiltered}
    The category $y \downarrow f^{-1}$ is cofiltered, ie.\ every
    finite diagram in $y \downarrow f^{-1}$ admits a cone.
\end{lemma}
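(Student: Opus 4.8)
The plan is to show $y \downarrow f^{-1}$ is cofiltered by verifying the two standard conditions: that it is non-empty (equivalently, every finite family of objects admits a common cone), and that every parallel pair of morphisms admits a cone over it. Since cofilteredness is equivalent to the existence of cones over all finite diagrams, it suffices to check these generating cases, and I would proceed by reducing everything to the universal property of the pullback $\st{Y} \times_\orb{N} (\orb{N} \times_\orb{M} \st{X})$ together with the fact (Lemma~\ref{lem:StackEtHasBinaryProdsAndPullbacks}) that $\StackEt(\orb{N})$ has binary products and pullbacks.

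First I would check non-emptiness. The key observation is that an object of $y \downarrow f^{-1}$ is a pair $(x:\st{X} \to \orb{M}, (g,\phi): y \to f^{-1}(x))$, and by the universal property of $f^{-1}(x) = \st{X} \times_\orb{M} \orb{N}$, such a $(g,\phi)$ is the same datum as a map $\st{Y} \to \st{X}$ over the $2$-cell witnessing $f \comp y \simeq x \comp g$. Taking $\st{X} = \orb{M}$ with $x = \id_\orb{M}$, the composite $f \comp y: \st{Y} \to \orb{M}$ furnishes a canonical object, so the category is non-empty. For a finite family $\{(x_i, (g_i,\phi_i))\}_{i=1}^n$, I would form the product $x_1 \times_\orb{M} \cdots \times_\orb{M} x_n$ in $\StackEt(\orb{M})$ (which exists and is computed as an iterated fibre product over $\orb{M}$); since $f^{-1}$ preserves binary products and pullbacks (Lemma~\ref{lem:fInvPreservesStuff}), the map $y \to f^{-1}(x_1 \times_\orb{M} \cdots \times_\orb{M} x_n) \simeq f^{-1}(x_1) \times_\orb{N} \cdots \times_\orb{N} f^{-1}(x_n)$ assembled from the $(g_i,\phi_i)$ via the universal property of the product gives a cone.

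Next I would handle parallel pairs. Given two morphisms $(a,\psi), (a',\psi'): (x, (g,\phi)) \to (x'', (g'',\phi''))$ in $y \downarrow f^{-1}$, these are in particular two maps $a,a': x \to x''$ in $\StackEt(\orb{M})$ making the relevant triangles commute over $\st{Y}$. I would form their equaliser inside $\StackEt(\orb{M})$, realised as the pullback of $(a,a'): x \to x'' \times_\orb{M} x''$ against the diagonal $x'' \to x'' \times_\orb{M} x''$ (both legs available since $\StackEt(\orb{M})$ has products and pullbacks). Applying $f^{-1}$, which preserves these limits, and using that the two composites agree after pulling back to $\st{Y}$ (this is exactly the condition that $(a,\psi)$ and $(a',\psi')$ are morphisms in the arrow category), the map $y$ factors through $f^{-1}$ of this equaliser, yielding the required cone.

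The main obstacle will be bookkeeping the $2$-cells carefully: because $\StackEt(\orb{M})$ is a $1$-category obtained by quotienting a bicategory by $2$-cells, I must check that the equivalence classes of the witnessing $2$-cells $\phi$ behave correctly under the product and pullback constructions, so that the cones I build genuinely represent morphisms in $y \downarrow f^{-1}$ rather than merely in the underlying bicategory. Concretely, the delicate point is verifying that the universal maps produced by Lemma~\ref{lem:StackEtHasBinaryProdsAndPullbacks} are compatible with the chosen representatives $(g_i,\phi_i)$ up to the equivalence relation generated by $2$-cells; once the preservation properties of $f^{-1}$ in Lemma~\ref{lem:fInvPreservesStuff} are invoked, the remaining verifications are routine diagram chases, so I would not grind through them in full.
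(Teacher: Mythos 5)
Your proposal is correct and follows essentially the same route as the paper: reduce cofilteredness to the empty diagram, finite products, and equalisers (realised via products and pullbacks, whose existence comes from Lemma~\ref{lem:StackEtHasBinaryProdsAndPullbacks} and whose preservation by $f^{-1}$ comes from Lemma~\ref{lem:fInvPreservesStuff}), with the same canonical object $(\id_{\orb{M}}, f \comp y)$ witnessing non-emptiness. The only cosmetic difference is that you spell out the equaliser as a pullback against the diagonal and flag the $2$-cell bookkeeping explicitly, which the paper leaves implicit.
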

\begin{proof}
    As any finite diagram may be built out of finite products and equalisers,
    it is enough to show that these admit cones.
    Hence we need only check cones over the empty diagram, binary products,
    and equalisers.

    The category $\StackEt(\orb{M})$ has both binary products and pullbacks by Lemma~\ref{lem:StackEtHasBinaryProdsAndPullbacks}.
    Lemma~\ref{lem:fInvPreservesStuff} says that $f^{-1}$ preserves these.
    Since equalisers may be built out of binary products and pullbacks, the same is true for equalisers.

    Giving a cone over the empty diagram amounts to showing
    $y \downarrow f^{-1}$ is non-empty.
    The category always contains the object
    \[\begin{tikzcd}
            {\st{Y}} & {\orb{M}} \\
            {\orb{N}} & {\orb{M}.}
            \arrow["y"', from=1-1, to=2-1]
            \arrow["f"', from=2-1, to=2-2]
            \arrow["\id", from=1-2, to=2-2]
            \arrow["{f \comp y}", from=1-1, to=1-2]
            \arrow["\id", shorten <=2pt, Rightarrow, from=1-2, to=2-1]
        \end{tikzcd}\]
    A cone over a pair of objects is given by their binary product as illustrated below
    \[\begin{tikzcd}
            {\st{Y}} & {} & {} \\
            & {\st{X}} & {\st{X}'} \\
            {\orb{N}} & {} & {}
            \arrow["y"', from=1-1, to=3-1]
            \arrow[from=1-1, to=2-2]
            \arrow["\quad"{description, pos=0.65}, from=1-1, to=2-3]
            \arrow[""{name=0, anchor=center, inner sep=0}, "{\st{X}\times_{\orb{M}}\st{X}'}"{description,style={font=\small}}, from=1-2, to=1-3]
            \arrow[""{name=1, anchor=center, inner sep=0}, "{\orb{M}.}"{description,style={font=\small}}, draw=none, from=3-2, to=3-3]
            \arrow[shorten >=26pt, from=1-1, to=0]
            \arrow[shorten <=8pt, from=0, to=2-2]
            \arrow[shorten <=8pt, from=0, to=2-3]
            \arrow["f"', shorten >=8pt, from=3-1, to=1]
            \arrow[shorten >=8pt, from=2-2, to=1]
            \arrow[shorten >=8pt, from=2-3, to=1]
        \end{tikzcd}\]
    Similarly, the cone over a parallel pair of morphisms is provided by their equaliser.
\end{proof}

A classical result of topos theory (see~\cite[Rem C.2.3.7]{johnstone2002sketches} or~\cite[Sec 4]{shulman2012exact})
says that a functor $f^{-1}$ which satisfies the conditions proved in Lemma~\ref{lem:fInvPreservesStuff}
and Lemma~\ref{lem:yOverfInvIsCofiltered}
induces a pair of adjoint functors
\begin{equation*}
    f^*_0: \Shv(\orb{M}) \leftrightarrows \Shv(\orb{N}) :f_{\ast,0}
\end{equation*}
between the categories of $\VecInf$-valued sheaves.\footnote{We add the subscript 0 to distinguish these functors from the
    analogous functors induced on the categories of $\Cinf$-modules introduced below.}
The pushforward functor $f_{\ast,0}$ precomposes $\sh{F} \in \Shv(\orb{N})$ with $f^{-1}$
\begin{equation*}
    f_{\ast,0} \sh{F}: (\st{X} \to \orb{M}) \mapsto \sh{F}(\st{X} \times_{\orb{M}} \orb{N})
\end{equation*}
and the pullback $f^\ast_0$ takes $\sh{F}' \in \Shv(\orb{M})$ to the sheafification of the presheaf
\begin{equation*}
    \text{pre}f^\ast_0 \sh{F}: (y:\st{Y} \to \orb{N}) \mapsto
    \colim_{x \in {(y \downarrow f^{-1})}^\opp} \sh{F}(x).
\end{equation*}
By Lemma~\ref{lem:yOverfInvIsCofiltered}, the colimit in the above expression is filtered and thus commutes with
finite limits. Sheafification similarly commutes with finite limits~\cite[Lem A.4.4.6]{johnstone2002sketches}.
As a result, $f^*_0$ not only preserves colimits (as a left adjoint), it also preserves finite limits.

Recall that the category $\Shv(\orb{M})$ is equipped with a ring object $\CinfOf{\orb{M}}$, which sends an object
$(x:\st{X} \to \orb{M}) \in \StackEt(\orb{M})$ to the ring of smooth $\IC$-valued functions on $\st{X}$.
There is a natural map $f_\#: \CinfOf{\orb{M}} \to f_{\ast,0}\CinfOf{\orb{N}}$
with components
\[
    f_\#(\st{X} \to \orb{M}): \Cinf(\st{X}) \to \Cinf(\st{X} \times_{\orb{M}} \orb{N})
\]
given by restriction of functions along the map $\st{X} \times_{\orb{M}} \orb{N} \to \st{X}$.

The data $(f^\ast_0 \dashv f_{\ast,0}, f_\#):(\Shv(\orb{N}),\CinfOf{\orb{N}}) \to (\Shv(\orb{M}), \CinfOf{\orb{M}})$
is known as a \emph{morphism of ringed topoi}.
It is a standard result of topos theory~\cite[\href{https://stacks.math.columbia.edu/tag/03A4}{Chapter 03A4}]{stacks-project})
that a morphism of ringed topoi gives rise to an adjunction of module categories, so
we obtain the following.
\begin{proposition}
    \label{prop:PullbackPushforwardAdjunctionEtaleStacks}
    A map $f: \orb{N} \to \orb{M}$ of \'etale stacks gives rise to an adjoint pair
    of functors
    \[
        f^\ast: \ShC(\orb{M}) \leftrightarrows \ShC(\orb{N}): f_\ast.
    \]
    The left adjoint is equipped with a canonical monoidal structure
    \[
        f^\ast(\sh{F} \tensor_{\CinfOf{\orb{M}}} \sh{F}') \simeq
        f^\ast(\sh{F}) \tensor_{\CinfOf{\orb{N}}} f^\ast(\sh{F}').
    \]
\end{proposition}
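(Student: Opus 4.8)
The plan is to assemble the ingredients developed in the preceding discussion into a morphism of ringed topoi and then invoke the standard passage to module categories, treating the monoidal claim separately by a base-change computation.

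First I would record that Lemmas~\ref{lem:fInvPreservesStuff} and~\ref{lem:yOverfInvIsCofiltered} verify exactly the hypotheses needed for the site-level functor $f^{-1}$ to induce a geometric morphism of $\VecInf$-valued sheaf topoi. Concretely, $f^{-1}$ preserves covers, pullbacks and binary products (Lemma~\ref{lem:fInvPreservesStuff}) and the comma categories $y \downarrow f^{-1}$ are cofiltered (Lemma~\ref{lem:yOverfInvIsCofiltered}); the cited topos-theoretic result then produces the adjunction $f^\ast_0 \dashv f_{\ast,0}$ on $\Shv$, with $f_{\ast,0}$ given by precomposition with $f^{-1}$ and $f^\ast_0$ by a filtered colimit followed by sheafification. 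The cofilteredness moreover makes the colimit defining $f^\ast_0$ filtered, hence finite-limit-preserving, so that $f^\ast_0$ is left exact in addition to being cocontinuous.

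Next I would upgrade this to a morphism of ringed topoi. I would exhibit the restriction map $f_\#: \CinfOf{\orb{M}} \to f_{\ast,0}\CinfOf{\orb{N}}$, whose component over $(x: \st{X} \to \orb{M})$ is restriction of smooth functions along $\st{X} \times_{\orb{M}} \orb{N} \to \st{X}$, and check that it is a morphism of ring objects. Passing to adjoints gives $f^\ast_0 \CinfOf{\orb{M}} \to \CinfOf{\orb{N}}$, and the standard result that a morphism of ringed topoi induces an adjunction on module categories then yields the desired $f^\ast \dashv f_\ast$ on $\ShC$, with the module pullback computed as $f^\ast \sh{F} = f^\ast_0 \sh{F} \tensor_{f^\ast_0 \CinfOf{\orb{M}}} \CinfOf{\orb{N}}$.

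Finally, for the monoidal structure I would use that the inverse-image functor $f^\ast_0$ of a geometric morphism is strong symmetric monoidal (being a cocontinuous, left-exact left adjoint, with filtered colimits commuting with finite tensor products), which gives $f^\ast_0(\sh{F} \tensor_{\CinfOf{\orb{M}}} \sh{F}') \simeq f^\ast_0 \sh{F} \tensor_{f^\ast_0 \CinfOf{\orb{M}}} f^\ast_0 \sh{F}'$, and then combine it with the elementary cancellation isomorphism for relative tensor products,
\[
    (A \tensor_R B) \tensor_R S \simeq (A \tensor_R S) \tensor_S (B \tensor_R S),
\]
applied with $R = f^\ast_0 \CinfOf{\orb{M}}$ and $S = \CinfOf{\orb{N}}$. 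Chaining these identifications produces $f^\ast(\sh{F} \tensor \sh{F}') \simeq f^\ast \sh{F} \tensor f^\ast \sh{F}'$, naturally in $\sh{F}, \sh{F}'$. I expect the genuinely standard parts --- the existence of the geometric morphism and of the module adjunction --- to require only citing the quoted topos-theoretic results; the one step demanding care is confirming that $f^\ast_0$ really is monoidal for the tensor over the structure sheaf, i.e.\ that its left exactness (from cofilteredness) together with cocontinuity suffices to commute it past the coequalizer defining the relative tensor product. This is the only point not literally recorded in the preceding lemmas, and it is where I would focus the written argument.
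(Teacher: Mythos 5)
Your proposal is correct and follows essentially the same route as the paper: the two lemmas on $f^{-1}$ (preservation of covers, pullbacks, products, and cofilteredness of $y \downarrow f^{-1}$) yield the geometric morphism $f^\ast_0 \dashv f_{\ast,0}$ on $\VecInf$-valued sheaves, the restriction map $f_\#$ upgrades this to a morphism of ringed topoi, and the module-category adjunction with $f^\ast \sh{F} = \CinfOf{\orb{N}} \tensor_{f^\ast_0 \CinfOf{\orb{M}}} f^\ast_0 \sh{F}$ follows from the standard result cited from the Stacks project. The only difference is that you unpack the monoidal structure (strong monoidality of $f^\ast_0$ plus the cancellation isomorphism for relative tensor products), whereas the paper treats this as part of the cited standard package for morphisms of ringed topoi; your unpacking is correct and is indeed the argument hiding behind that citation.
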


These functors are defined as follows.
Using $f_\#$, the image of a $\CinfOf{\orb{N}}$-module $\sh{F}$ under $f_{\ast,0}$ naturally becomes a $\CinfOf{\orb{M}}$-module.
The resulting functor
$f_\ast: \ShC(\orb{N}) \to \ShC(\orb{M})$
has a left adjoint
\begin{align*}
    f^\ast: \ShC(\orb{M}) & \to \ShC(\orb{N})                                                            \\
    \sh{F}                & \mapsto \CinfOf{\orb{N}} \tensor_{f^\ast_0 \CinfOf{\orb{M}}} f^\ast_0\sh{F}.
\end{align*}
The tensor product in the expression above involves the morphism $f^\#:f^\ast_0 \CinfOf{\orb{M}} \to \CinfOf{\orb{N}}$,
which corresponds to $f_\#$ via the adjunction $(f^\ast_0 \dashv f_{\ast,0})$.

For a map between manifolds $f:M \to N$ (viewed as a map of \'etale stacks),
the adjunction $(f^\ast \dashv f_\ast)$ reduces to the usual adjunction of
$\Cinf$-module categories over manifolds.

\begin{lemma}
    \label{lem:PushfwdsAndPullbacksCompose}
    Consider a diagram
    \[\begin{tikzcd}
            & {\orb{N}} \\
            {\orb{M}} && {\orb{O}}
            \arrow["f", from=2-1, to=1-2]
            \arrow["g", from=1-2, to=2-3]
            \arrow[""{name=0, anchor=center, inner sep=0}, "h"', from=2-1, to=2-3]
            \arrow["\gamma"', shorten >=3pt, Rightarrow, from=1-2, to=0]
        \end{tikzcd}\]
    of \'etale stacks.
    There are natural equivalences
    \begin{align*}
        g_\ast \comp f_\ast \simeq h_\ast &  &
        f^\ast \comp g^\ast \simeq h^\ast.
    \end{align*}
\end{lemma}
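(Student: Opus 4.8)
The plan is to reduce the statement for pushforwards to the functoriality of the site-level functor $f^{-1}$, and then to deduce the statement for pullbacks by uniqueness of adjoints.

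First I would establish a natural equivalence $f^{-1} \comp g^{-1} \simeq h^{-1}$ of functors $\StackEt(\orb{O}) \to \StackEt(\orb{M})$. Recall that $g^{-1}$ sends $(\st{X} \to \orb{O})$ to $(\st{X} \times_{\orb{O}} \orb{N} \to \orb{N})$, and $f^{-1}$ sends this to $((\st{X} \times_{\orb{O}} \orb{N}) \times_{\orb{N}} \orb{M} \to \orb{M})$. By the pasting law for pullback squares in $\DiffSt$ (together with Lemma~\ref{lem:StackEtHasBinaryProdsAndPullbacks}, which computes the relevant limits in $\StackEt$ as honest fibre products of stacks), this iterated fibre product is canonically equivalent to $\st{X} \times_{\orb{O}} \orb{M}$ formed along the composite $\orb{M} \xto{f} \orb{N} \xto{g} \orb{O}$. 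The 2-cell $\gamma: g \comp f \eqto h$ then induces an equivalence identifying this with $\st{X} \times_{\orb{O}} \orb{M}$ formed along $h$, i.e.\ with $h^{-1}(\st{X})$. I would check that these equivalences are natural in $\st{X}$ and compatible with the forgetful maps to $\orb{M}$, using that $f^{-1}$ preserves the relevant structure (Lemma~\ref{lem:fInvPreservesStuff}).

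Next, since $f_{\ast,0}$ is precomposition with $f^{-1}$ at the level of $\VecInf$-valued sheaves, the equivalence $f^{-1} \comp g^{-1} \simeq h^{-1}$ immediately yields $g_{\ast,0} \comp f_{\ast,0} \simeq h_{\ast,0}$. To promote this to the $\Cinf$-module pushforwards $f_\ast$, I would verify that the structure maps compose correctly: that $h_\#: \CinfOf{\orb{O}} \to h_{\ast,0}\CinfOf{\orb{M}}$ agrees, under the above equivalence, with the composite of $g_\#$ and $g_{\ast,0}(f_\#)$. As $f_\#$, $g_\#$, $h_\#$ are all restriction of functions along the respective structure maps, this is a routine check using that restriction along $\st{X} \times_{\orb{O}} \orb{M} \to \st{X}$ factors through $\st{X} \times_{\orb{O}} \orb{N}$. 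This establishes $g_\ast \comp f_\ast \simeq h_\ast$. For pullbacks I would then invoke uniqueness of adjoints: by Proposition~\ref{prop:PullbackPushforwardAdjunctionEtaleStacks}, $f^\ast \dashv f_\ast$ and $g^\ast \dashv g_\ast$, so $f^\ast \comp g^\ast$ is left adjoint to $g_\ast \comp f_\ast \simeq h_\ast$; since $h^\ast \dashv h_\ast$ and left adjoints are unique up to canonical isomorphism, $f^\ast \comp g^\ast \simeq h^\ast$. The monoidal structures agree because the monoidal structure on a monoidal left adjoint is determined by its underlying functor.

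The main obstacle will be the first step: making the equivalence $f^{-1} \comp g^{-1} \simeq h^{-1}$ genuinely natural and coherent rather than merely objectwise. The pasting law disposes of the objects, but carefully tracking the 2-cells --- the associator data of the iterated fibre products and the comparison induced by $\gamma$ --- so that the resulting natural transformation satisfies the expected coherences is where the real bookkeeping lies. Everything downstream (the $\Cinf$-module upgrade and the pullback statement) then follows formally.
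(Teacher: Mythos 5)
Your proposal is correct and follows essentially the same route as the paper's own proof: both reduce everything to the pushforward statement, obtain it from a natural equivalence $f^{-1} \comp g^{-1} \simeq h^{-1}$ of the site-level functors (pasting of pullbacks together with the 2-cell $\gamma$), and then deduce the pullback statement from uniqueness/composition of adjoints. The only difference is that you spell out two points the paper leaves implicit --- the coherence of the site-level equivalence and the compatibility of the structure maps $f_\#$, $g_\#$, $h_\#$ needed to pass from $\VecInf$-valued sheaves to $\Cinf$-modules --- which is extra care within the same argument, not a different approach.
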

\begin{proof}
    As adjoints compose, it is enough to show this for the pushforwards.
    The pushforwards send a sheaf to its postcomposition with $f^{-1}/g^{-1}/h^{-1}$,
    so it suffices to provide a natural transformation
    \[
        \eta_{\gamma}:g^{-1} \comp f^{-1} \eqto h^{-1}.
    \]
    Its component $\eta_{\gamma}(\st{X} \to \orb{M})$ is given by the natural map
    \begin{align*}
        \orb{M} \times_{\orb{N}} \orb{N} \times_{\orb{O}} \st{X} \eqto
        \orb{M} \times_{\orb{O}} \st{X}.
    \end{align*}
    The 2-cell $\gamma$ may be used to build the relevant 2-cells between the 
    composites to $\orb{M}$. This makes the above map a morphism in 
    $\StackEt(\orb{M})$.
\end{proof}

\subsection{The Stack of Twisted Sheaves}
\begin{lemma}
    \label{lem:ShCIsAStack}
    The functor
    \begin{align*}
        \ShC: \Man^\opp & \to \VCat                             \\
        U               & \mapsto \ShC(U)                       \\
        (f:U \to V)     & \mapsto (f^\ast: \ShC(V) \to \ShC(U))
    \end{align*}
    is a stack (valued in linear, $\dirSum$-complete categories).
\end{lemma}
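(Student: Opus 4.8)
The plan is to verify the two halves of the definition of a $\VCat$-valued stack separately: first that $\ShC$ is a well-defined pseudofunctor landing in $\VCat$, and then that it satisfies descent along surjective-submersion covers.

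For pseudofunctoriality, I would take the pullback functors $f^\ast$ from the manifold case of Proposition~\ref{prop:PullbackPushforwardAdjunctionEtaleStacks}, and obtain the required coherence isomorphisms $\eta_{f,g}: f^\ast \comp g^\ast \eqto (g\comp f)^\ast$ from Lemma~\ref{lem:PushfwdsAndPullbacksCompose}; the pentagon and triangle coherences follow from the naturality and uniqueness built into that lemma. Each $\ShC(U)$ is $\VecInf$-enriched because morphisms of $\Cinf$-modules form a $\IC$-vector space, and it is closed under direct sums since $\Cinf$-modules are formed pointwise; the functors $f^\ast$, being left adjoints, preserve these direct sums and are $\IC$-linear, hence are $1$-morphisms in $\VCat$. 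This places $\ShC$ in $\Fun(\Man^\opp,\VCat)$.

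Descent is the substance of the proof. A cover in $\Man$ is a family whose total map is a surjective submersion, so I would reduce, by passing to disjoint unions, to a single surjective submersion $\pi: Y \onto U$, and show that the comparison functor $\ShC(U) \to \ShC(\mscr{U})$ into the category of descent data $(\sh{F} \in \ShC(Y), \phi: p_1^\ast \sh{F} \eqto p_2^\ast \sh{F})$ is an equivalence. The key geometric input is that surjective submersions admit local sections: every $u \in U$ has an open neighbourhood $V$ and a smooth $s: V \to Y$ with $\pi \comp s = \id_V$, so $U$ carries an open cover $\{V_\alpha\}$ with sections $s_\alpha$. Given descent data $(\sh{F},\phi)$, I would set $\sh{G}_\alpha \define s_\alpha^\ast \sh{F} \in \ShC(V_\alpha)$ and pull $\phi$ back along the maps $(s_\alpha,s_\beta): V_\alpha \cap V_\beta \to Y^{[2]}$ (using Lemma~\ref{lem:PushfwdsAndPullbacksCompose} to identify the pullbacks) to obtain gluing isomorphisms $\sh{G}_\alpha|_{V_{\alpha\beta}} \eqto \sh{G}_\beta|_{V_{\alpha\beta}}$; pulling the cocycle identity for $\phi$ back along $(s_\alpha,s_\beta,s_\gamma): V_{\alpha\beta\gamma} \to Y^{[3]}$ yields the cocycle condition for these gluings. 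By the classical fact that sheaves of $\Cinf$-modules form a stack on the small site of $U$, this data glues to $\sh{G} \in \ShC(U)$. Pulling $\phi$ back along $(\id, s_\alpha \comp \pi): \pi^{-1}(V_\alpha) \to Y^{[2]}$ produces a canonical isomorphism $\pi^\ast \sh{G} \eqto \sh{F}$ compatible with $\phi$, giving essential surjectivity, and the analogous gluing argument applied to the sheaf $\sHom$ on $U$ gives full faithfulness.

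The hard part will be the bookkeeping that the constructed quasi-inverse is independent of the chosen local sections and cover, and that the pullback coherence $2$-cells from Lemma~\ref{lem:PushfwdsAndPullbacksCompose} intertwine correctly with both the cocycle condition and the open-cover gluing. I expect this to be routine but notation-heavy, and I would streamline it by isolating the only genuinely new content, namely the reduction from surjective submersions to open covers via local sections, and invoking the classical descent of sheaves of modules along open covers for everything else.
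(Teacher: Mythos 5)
Your proposal is correct, and it rests on the same geometric inputs as the paper's proof: reduction to a single surjective submersion $Y \onto U$ (since $\ShC$ sends coproducts to products), existence of local sections of submersions, and the observation that the descent isomorphism $\phi$ together with its cocycle condition supplies all the gluing data. Where you diverge is in how the gluing step is packaged. You fix an open cover $\{V_\alpha\}$ of $U$ with sections $s_\alpha: V_\alpha \to Y$, set $\sh{G}_\alpha \define s_\alpha^\ast \sh{F}$, and invoke classical descent of $\Cinf$-modules along open covers; this is what forces the bookkeeping you flag at the end --- independence of the chosen sections and cover, and compatibility of the coherence $2$-cells with the gluing. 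The paper sidesteps that entirely by working with the full subcategory $\Op(Y/U) \subset \Op(U)$ of \emph{all} pairs $(V, v: V \to Y)$ of an open set together with a lift: this is a dense subsite (density is precisely your local-sections fact), the glued sheaf is defined on it by $\bar{\sh{F}}(V,v) \define \Gamma\bigl(V, v^\ast\sh{F}\bigr)$ with restriction maps given by ${(w, v \comp \iota)}^\ast\phi$, and functoriality of these restriction maps \emph{is} the cocycle condition --- no choices are made, so there is nothing to compare, and the Comparison Lemma (Lemma~\ref{lem:comparisonLemma}) then extends $\bar{\sh{F}}$ to $\Op(U)$. So your route is correct but pays in the ``routine but notation-heavy'' verification, which is exactly what the dense-subsite formulation buys you out of; conversely, your proposal is more explicit than the paper's terse proof about essential surjectivity (the isomorphism $\pi^\ast\sh{G} \eqto \sh{F}$ via ${(\id, s_\alpha \comp \pi)}^\ast\phi$) and full faithfulness (gluing $\sHom$), both of which the paper leaves implicit after constructing the candidate inverse on objects.
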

\begin{proof}
    We check the gluing condition. The functor $\ShC$ clearly sends coproducts to
    products, so it is enough to check gluing for a surjective submersion $Y \onto U$.
    We exhibit an inverse to the functor sending a sheaf to its corresponding descent datum
    over $Y$.
    Given a descent datum
    $(\sh{F} \in \ShC(Y), \phi:p_1^\ast\sh{F} \to p_2^\ast\sh{F} \in \ShC(Y^{[2]}))$,
    we construct a sheaf $\bar{\sh{F}} \in \ShC(U)$.
    Consider the full subcategory $\Op(Y/U) \to \Op(U)$ on open sets $V \subset U$
    equipped with a factorisation
    \[
        V \xto{v} Y \to U.
    \]
    It is a dense subsite of $\Op(U)$.
    The sheaf $\bar{\sh{F}}$ assigns to $V \xto{v} Y$ the global sections of the
    pullback sheaf $v^\ast \sh{F}$ over $V$.
    Given an inclusion $\iota: W \into V$, and two chosen lifts $w:W \to Y$, $v:V \to Y$,
    there is an induced map $(w,v \comp \iota):W \to Y^{[2]}$.
    The restriction map $\iota^\ast:\bar{\sh{F}}(v) \to \bar{\sh{F}}(w)$ is given by
    ${(w,v \comp \iota)}^\ast \phi$.
    Functoriality of this assignment is equivalent to the cocycle condition satisfied by
    $\phi$.
\end{proof}

Now let $\orb{M}$ be an \'etale stack.
Recall there is a forgetful functor $\Et(\orb{M}) \to \Man$. We can restrict $\ShC$ along
this functor to obtain the stack $\ShC$ over $\Et(\orb{M})$.
This stack extends to $\StackEt(\orb{M})$ via the dense inclusion $\Et(\orb{M}) \to \StackEt(\orb{M})$.

\begin{rmk}
    We may describe the resulting stack over $\StackEt(\orb{M})$ explicitly.
    For each isomorphism class of morphisms $[(f,\phi)]$, pick a representative $(f,\phi)$ and send it
    to the pullback functor $f^\ast$. Let $(f,\phi)$, $(g,\kappa)$ be composable morphisms as above, and
    denote the chosen representative of the class $[(f,\phi) \comp (g, \kappa)]$ by $(h,\lambda)$.
    Then there exists a unique 2-cell $\gamma: f \comp g \to h$ that identifies
    $(f,\phi) \comp (g,\kappa) \eqto (h,\lambda)$.
    This 2-cell induces a natural transformation of the associated pullback functors.
    That natural transformation is the value of the compositor
    $\eta_{(f,\phi),(g,\kappa)}$ which is part of the data of $\ShC$.
\end{rmk}

Let $\st{X}$ be a differentiable stack and consider
the bicategory $\DiffSt/\st{X}$.
Denote ${\ger{I}}_{\st{X}} \define \st{X} \times [\point/\IC^\times]$.
The category of maps
$(\st{W} \to \st{X}) \to (\ger{I}_{\st{X}} \to \st{X})$
in $\DiffSt/\st{X}$ is the category of $\IC^\times$-bundles over $\st{W}$.
The tensor product of $\IC^\times$-bundles gives $\ger{I}_{\st{X}}$ the structure
of a group object in $\DiffSt$.
The following definition is a special case of~\cite[Def 70]{schommer2011central}:
\begin{defn}
    \label{defn:ICtimesGerbe}
    A \emph{$\IC^\times$-gerbe over $\st{X}$} is a stack
    $\ger{G} \in \DiffSt/\st{X}$, equipped with
    a right action of $\ger{I}_{\st{X}}$,
    locally equivalent to $\ger{I}_{\st{X}} \to \st{X}$:
    There exists a representable submersion $f:\st{Y} \onto \st{X}$ such that the pullback
    $\ger{G} \times_{\st{X}} \st{Y} \in \DiffSt/\st{Y}$ is $[\point/\IC^\times]$-equivariantly
    equivalent to ${\ger{I}}_{\st{Y}}$ with the standard $\ger{I}_{\st{Y}}$-action.

\end{defn}
From now on, we refer to $\IC^\times$-gerbes simply as gerbes
and suppress the subscript from the trivial gerbe $\ger{I}$ when the base is clear.
We say a gerbe $\ger{G}$ is \emph{trivial} if it admits an equivalence
$\ger{I} \eqto \ger{G}$.

Let $\ger{G} \to \orb{M}$ be a gerbe over an \'etale stack.
The gerbe $\ger{G}$ defines a groupoid-valued stack over the \'etale site of $\orb{M}$. It assigns
to $x:\st{X} \to \orb{M}$ the category of trivialisations of
$x^\ast\ger{G}=\ger{G}\times_{\orb{M}}\st{X}$:
\begin{align*}
    \ger{G}: {\StackEt(\orb{M})}^\opp & \to \Gpd                                    \\
    (x:\st{X} \to \orb{M})            & \mapsto \Hom(x^\ast\ger{I},x^\ast \ger{G}).
\end{align*}
For the trivial gerbe $\ger{I}$, this stack is the stack of $\IC^\times$-bundles,
which is equivalent to the stack of complex line bundles (and invertible
maps between them).
The sheaf of sections of a complex line bundle is a $\Cinf$-module,
so we obtain a map of stacks (valued in categories) $\ger{I} \to \ShC$.
The stack $\ger{I}$ acts on $\ShC$ by composing with the tensor product of
$\Cinf$-modules:
\[
    \ger{I} \times \ShC \to \ShC \times \ShC \to \ShC.
\]
The 2-cells exhibiting this as an action are induced by the associator 2-cells
for the tensor product on $\ShC$.

Every trivialisation $\tau \in \Hom(\ger{I},\ger{G})$ over $\orb{M}$ has an inverse
$\tau^{-1}$. Given another trivialisation $\kappa$ of $\ger{G}$, the composite
$\tau^{-1} \kappa$ is an endomorphism of $\ger{I}$, and thus equivalent to the data
of a line bundle $\mscr{L}_{\tau,\kappa}$ over $\orb{M}$.
\begin{notation}
    We write $\tau^{-1} \kappa \cdot \sh{F}$ to denote the action of the
    line bundle $\mscr{L}_{\tau, \kappa}$ on a $\Cinf$-module sheaf $\sh{F} \in \ShC(\orb{M})$.
\end{notation}
Let $\ger{G}$ be a trivial gerbe over $\orb{M}$.
We define the category $\Shv^{\ger{G}}(\orb{M})$:
Its objects are pairs $(\tau: \ger{I} \to \ger{G}, \sh{F} \in \ShC(\orb{M}))$,
which we denote by $\tau \cdot \sh{F}$.
The $\Hom$-sets are given by
\[
    \Hom_{\Shv^{\ger{G}}(\orb{M})}(\tau \cdot \sh{F}, \tau' \cdot \sh{F}') =
    \Hom_{\ShC(\orb{M})}(\sh{F}, \tau^{-1} \tau' \cdot \sh{F}').
\]
A pair of morphisms
\[\begin{tikzcd}
        {\tau \cdot \sh{F}} & {\tau'\cdot \sh{F}'} & {\tau'' \cdot \sh{F}''}
        \arrow["\phi", from=1-1, to=1-2]
        \arrow["\varphi", from=1-2, to=1-3]
    \end{tikzcd}\]
is given by morphisms of sheaves $\bar{\phi}:\sh{F} \to \tau^{-1}\tau \cdot \sh{F}'$
and $\bar{\varphi}: \sh{F}' \to {\tau'}^{-1} \tau'' \cdot \sh{F}''$.
Their composite $\varphi \comp \phi: \tau \cdot \sh{F} \to \tau'' \cdot \sh{F}''$
is given by the morphism of sheaves
\[\begin{tikzcd}
        {\sh{F}} & {\tau^{-1}\tau'\cdot \sh{F}'} & {\tau^{-1}\tau'\cdot{\tau'}^{-1}\tau'' \cdot \sh{F}''} & {\tau^{-1}\tau''\cdot\sh{F}''}.
        \arrow["\bar{\phi}", from=1-1, to=1-2]
        \arrow["\bar{\varphi}", from=1-2, to=1-3]
        \arrow["\simeq", from=1-3, to=1-4]
    \end{tikzcd}
\]

Denote by $\lambda: \ger{G}' \eqto \ger{G}$ a morphism of gerbes
(necessarily an isomorphism). It induces an equivalence
\begin{align*}
    \Shv^{\ger{G}}(\orb{M}) & \eqto \Shv^{\ger{G}'}(\orb{M})   \\
    \tau \cdot \sh{F}       & \mapsto \lambda\tau \cdot \sh{F}
\end{align*}
with inverse given by $\lambda^{-1}$.
In particular, a trivialisation $\tau$ of $\ger{G}$ induces an equivalence
\[
    \Shv^{\ger{I}}(\orb{M}) \eqto \Shv^{\ger{G}}(\orb{M}).
\]

\begin{lemma}
    \label{lem:tauCdotIsEquivalence}
    Denote by $\tau_0$ the canonical trivialisation of $\ger{I}$.
    The functor
    \begin{align*}
        \tau_0 \cdot -: \ShC(\orb{M}) & \to \Shv^{\ger{I}}(\orb{M}) & \\
        \sh{F}                        & \mapsto \tau_0 \cdot \sh{F} &
    \end{align*}
    is an equivalence.
\end{lemma}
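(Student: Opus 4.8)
The plan is to show that $\tau_0 \cdot -$ is fully faithful and essentially surjective, and hence an equivalence. The whole argument hinges on a single observation: since $\tau_0$ is the canonical trivialisation of $\ger{I}$, the endomorphism $\tau_0^{-1}\tau_0$ is the identity of $\ger{I}$, so its associated line bundle $\mscr{L}_{\tau_0,\tau_0}$ is canonically the trivial bundle $\Cinf$. Under the unitor of the tensor product, the action $\tau_0^{-1}\tau_0 \cdot \sh{F}' = \mscr{L}_{\tau_0,\tau_0} \tensor_{\Cinf} \sh{F}'$ is then canonically identified with $\sh{F}'$. First I would record this identification together with its compatibility with the canonical isomorphisms $\mscr{L}_{\tau,\tau'} \tensor \mscr{L}_{\tau',\tau''} \eqto \mscr{L}_{\tau,\tau''}$ packaging the $\ger{I}$-action on $\ShC$, as these supply all the coherence data needed below.

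For full faithfulness, I would unwind the definition: a morphism $\tau_0 \cdot \sh{F} \to \tau_0 \cdot \sh{F}'$ in $\Shv^{\ger{I}}(\orb{M})$ is by definition a morphism of $\Cinf$-modules $\bar{\phi}\colon \sh{F} \to \tau_0^{-1}\tau_0 \cdot \sh{F}'$. By the identification above, the codomain is canonically $\sh{F}'$, giving a natural bijection $\Hom_{\Shv^{\ger{I}}(\orb{M})}(\tau_0 \cdot \sh{F}, \tau_0 \cdot \sh{F}') \iso \Hom_{\ShC(\orb{M})}(\sh{F}, \sh{F}')$. I would then check this bijection respects composition: the composition rule in $\Shv^{\ger{I}}(\orb{M})$ inserts the isomorphism $\tau_0^{-1}\tau_0 \cdot \tau_0^{-1}\tau_0 \cdot \sh{F}'' \eqto \tau_0^{-1}\tau_0 \cdot \sh{F}''$, which under $\mscr{L}_{\tau_0,\tau_0} \iso \Cinf$ reduces to the multiplication isomorphism $\Cinf \tensor_{\Cinf} \Cinf \iso \Cinf$. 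Hence composition matches that of $\ShC(\orb{M})$ on the nose, and identities are sent to identities, so $\tau_0 \cdot -$ is fully faithful.

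For essential surjectivity, an arbitrary object of $\Shv^{\ger{I}}(\orb{M})$ has the form $\tau \cdot \sh{F}$ for an automorphism $\tau\colon \ger{I} \eqto \ger{I}$ and $\sh{F} \in \ShC(\orb{M})$. Setting $\sh{G} \define \tau_0^{-1}\tau \cdot \sh{F} = \mscr{L}_{\tau_0,\tau} \tensor_{\Cinf} \sh{F}$, I would exhibit an isomorphism $\tau \cdot \sh{F} \iso \tau_0 \cdot \sh{G}$. A morphism $\tau \cdot \sh{F} \to \tau_0 \cdot \sh{G}$ is a $\Cinf$-module map $\sh{F} \to \tau^{-1}\tau_0 \cdot \sh{G} = \mscr{L}_{\tau,\tau_0} \tensor \mscr{L}_{\tau_0,\tau} \tensor \sh{F}$, and since $\tau^{-1}\tau_0 \comp \tau_0^{-1}\tau = \tau^{-1}\tau = \id_{\ger{I}}$, the composite line bundle $\mscr{L}_{\tau,\tau_0} \tensor \mscr{L}_{\tau_0,\tau}$ is canonically $\Cinf$; this yields a canonical isomorphism, whose inverse is constructed from the opposite composite. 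Rather than repeat these computations, I would prefer to package them into an explicit quasi-inverse $R\colon \Shv^{\ger{I}}(\orb{M}) \to \ShC(\orb{M})$, $R(\tau \cdot \sh{F}) = \tau_0^{-1}\tau \cdot \sh{F}$, and produce natural isomorphisms $R \comp (\tau_0 \cdot -) \eqto \Id$ and $(\tau_0 \cdot -) \comp R \eqto \Id$ from the two canonical identifications above.

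The only real work is coherence bookkeeping: I expect the main obstacle to be verifying that the canonical isomorphisms $\mscr{L}_{\tau,\tau'} \tensor \mscr{L}_{\tau',\tau''} \eqto \mscr{L}_{\tau,\tau''}$ interact correctly with the composition law for morphisms of twisted sheaves, so that $R$ is genuinely functorial and the unit and counit are natural. All of this is formal, following from the coherence of the tensor product of line bundles on $\orb{M}$ and the explicit definition of composition in $\Shv^{\ger{I}}(\orb{M})$, so I would treat it as routine once the canonical identifications are in place.
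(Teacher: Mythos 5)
Your proposal is correct and follows essentially the same route as the paper's proof: full faithfulness via the canonical identification $\tau_0^{-1}\tau_0 \simeq \Cinf$, and essential surjectivity by exhibiting $\tau \cdot \sh{F} \iso \tau_0 \cdot (\tau_0^{-1}\tau \cdot \sh{F})$. The extra coherence bookkeeping and the explicit quasi-inverse $R$ you propose are a more detailed packaging of what the paper leaves implicit, but the mathematical content is identical.
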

\begin{proof}
    Full faithfulness uses the natural equivalence $\tau_0^{-1} \tau_0 \simeq \Cinf$:
    \[
        \Hom_{\Shv^\ger{G}(\orb{M})}(\tau_0 \cdot \sh{F}, \tau_0 \cdot \sh{F}') \define
        \Hom_{\ShC(\orb{M})}(\sh{F},\tau_0^{-1}\tau_0 \cdot \sh{F}') \simeq
        \Hom_{\ShC(\orb{M})}(\sh{F},\sh{F}').
    \]
    The functor is also essentially surjective: the object
    $\kappa \cdot \sh{F} \in \Shv^{\ger{G}}(\orb{M})$ is equivalent
    to $\tau_0 \cdot (\tau_0^{-1}\kappa \cdot \sh{F})$
    via the map of sheaves
    \[
        \sh{F} \xto{\id} \sh{F} \eqto \kappa^{-1} \tau_0 \tau_0^{-1} \kappa \cdot
        \sh{F}. \qedhere
    \]
\end{proof}

We can compose the functor $\tau_0 \cdot -$ with the equivalence
$\Shv^{\ger{I}}(\orb{M}) \eqto \Shv^{\ger{G}}(\orb{M})$
induced by a trivialisation $\tau$ to obtain an equivalence
\begin{align*}
    \tau \cdot -: \ShC(\orb{M}) & \eqto \Shv^\ger{G}(\orb{M}) \\
    \sh{F}                      & \mapsto \tau \cdot \sh{F}.
\end{align*}

Let $f:\orb{N} \to \orb{M}$ be a map of \'etale stacks.
A trivialisation $\tau$ of $\ger{G} \to \orb{M}$ pulls back to a trivialisation
$f^\ast\tau$ of $f^\ast \ger{G} \to \orb{N}$.
We define a pullback functor (keeping the same symbol as for $\Cinf$-modules)
\begin{align*}
    f^\ast:\Shv^{\ger{G}}(\orb{M}) & \to \Shv^{f^\ast\ger{G}}(\orb{N})        \\
    \kappa \cdot \sh{F}            & \mapsto f^\ast\kappa \cdot f^\ast\sh{F}.
\end{align*}

We denote this pullback functor by the same functor as for sheaves of $\Cinf$-modules.
For any choice of trivialisation $\tau$, the square
\[\begin{tikzcd}
        {\Shv^{\ger{G}}(\orb{M})} & {\Shv^{f^\ast\ger{G}}(\orb{N})} \\
        {\ShC(\orb{M})} & {\ShC(\orb{N})}
        \arrow["{f^\ast}", from=1-1, to=1-2]
        \arrow["{f^\ast}"', from=2-1, to=2-2]
        \arrow["{\tau \cdot -}", from=2-1, to=1-1]
        \arrow["{f^\ast\tau \cdot -}"', from=2-2, to=1-2]
    \end{tikzcd}\]
commutes. Now Lemma~\ref{lem:PushfwdsAndPullbacksCompose} implies
that a 2-cell $\gamma:g \comp f \eqto h$ induces a natural equivalence
\[
    f^\ast \comp g^\ast \simeq h^\ast.
\]

Let $\pi:M \onto \orb{M}$ be an atlas such that the
the pullback gerbe $\pi^\ast \ger{G}$ is trivial.
We define a $\VCat$-valued 2-presheaf
\begin{align*}
    \Shv^\ger{G}: {\Et(M/\orb{M})}^\opp & \to \VCat                                   \\
    (x:X \to M)                         & \mapsto \Shv^{x^\ast\ger{G}}(X)             \\
    ((f,\phi):x \to y)                  & \mapsto (f^\ast:\Shv^{y^\ast\ger{G}}(Y) \to
    \Shv^{x^\ast\ger{G}}(X)).
\end{align*}
\begin{rmk}
    We choose $\Et(M/\orb{M})$ because it is a dense subsite of $\StackEt(\orb{M})$
    where every object admits a trivialisation of the pullback gerbe. One could instead
    define this presheaf directly on the (dense) subsite of objects over which the
    pullback gerbe admits a trivialisation.
\end{rmk}

\begin{lemma}
    \label{lem:ShvGIsAStack}
    The functor $\Shv^\ger{G}$ is a stack over $\Et(M/\orb{M})$.
\end{lemma}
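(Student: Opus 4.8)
The plan is to transport the stack structure of $\ShC$ (Lemma~\ref{lem:ShCIsAStack}) across the trivialisation-induced equivalences $\tau \cdot -$ (Lemma~\ref{lem:tauCdotIsEquivalence}). Since $\Shv^\ger{G}$ manifestly sends coproducts to products (twisted sheaves over a disjoint union split accordingly), it suffices to verify effective descent along a single surjective cover $\{(f_i,\phi_i):x_i \to x\}_i$ of an arbitrary object $x:X \to M$ of $\Et(M/\orb{M})$, using the iterated fibre products supplied by Lemma~\ref{lem:StackEtHasBinaryProdsAndPullbacks}. The crucial structural input is that the pullback gerbe is \emph{trivialisable over every object of this site}: by construction $\pi^\ast\ger{G}$ is trivial over $M$, so a fixed trivialisation $\tau$ of $\pi^\ast\ger{G}$ pulls back along the \'etale structure map $\tilde{x}:X \to M$ to a trivialisation $\tau_X \define \tilde{x}^\ast\tau$ of $x^\ast\ger{G}$, and likewise over each $X_i$ and over the overlaps $X_{ij}, X_{ijk}$ appearing in the descent diagram.

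First I would fix these trivialisations coherently. Over $X$ use $\tau_X$; over $X_i$, rather than choosing an arbitrary trivialisation, I would use the image $\tau_{X_i}$ of $f_i^\ast\tau_X$ under the canonical identification $f_i^\ast(x^\ast\ger{G}) \simeq x_i^\ast\ger{G}$ supplied by the $2$-cell $\phi_i$; and similarly over the higher overlaps, transporting along the structure maps and their $2$-cells. With these choices the square relating the twisted and untwisted pullback functors (the commuting square recorded immediately after Lemma~\ref{lem:tauCdotIsEquivalence}) commutes up to canonical natural isomorphism for every edge of the descent diagram. Consequently the equivalences $\tau_{(-)} \cdot -$ assemble into a pseudonatural equivalence between the cosimplicial diagram computing $\ShC$-descent and the one computing $\Shv^\ger{G}$-descent along the cover, the coherences being matched using Lemma~\ref{lem:PushfwdsAndPullbacksCompose} and the compositor $2$-cells of $\ShC$.

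Taking limits, this pseudonatural equivalence yields an equivalence of descent categories $\operatorname{Desc}(\Shv^\ger{G}) \simeq \operatorname{Desc}(\ShC)$, compatible with the canonical comparison functors out of the apex, where on the apex the equivalence is $\tau_X \cdot -: \ShC(X) \eqto \Shv^{x^\ast\ger{G}}(X)$. Since $\ShC$ is a stack, its comparison functor $\ShC(X) \to \operatorname{Desc}(\ShC)$ is an equivalence; chasing the resulting commuting square then forces the comparison functor $\Shv^{x^\ast\ger{G}}(X) \to \operatorname{Desc}(\Shv^\ger{G})$ to be an equivalence as well, which is precisely effective descent for $\Shv^\ger{G}$.

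The hard part will be the $2$-cell bookkeeping in the second paragraph: verifying that the transported trivialisations $\tau_{X_i}, \tau_{X_{ij}}, \dots$ are mutually compatible \emph{up to the canonical coherence isomorphisms}, so that the $\tau_{(-)}\cdot-$ genuinely form a morphism of descent diagrams rather than merely a levelwise equivalence. Concretely, on a double overlap $X_{ij}$ the two restrictions $p_1^\ast\tau_{X_i}$ and $p_2^\ast\tau_{X_j}$ both descend from $\tau_X$, but only after invoking $\phi_i$, $\phi_j$ and the fibre-product $2$-cell; one must check these combine coherently and that the induced comparison satisfies the cocycle condition on triple overlaps $X_{ijk}$. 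Because every trivialisation in sight is pulled back from the single trivialisation $\tau$ over $M$, this compatibility is forced by coherence, but making it airtight is where the genuine work lies.
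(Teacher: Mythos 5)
Your proof is correct, and its key step coincides with the paper's: both normalise the trivialisations on the cover to be \emph{pulled back} from a single trivialisation on the base object (your $\tau_{X_i}$, transported along $\phi_i$, is exactly the paper's choice $\kappa = f^\ast\tau$ for a chosen trivialisation $\tau$ over $X$), and this is what identifies $\ger{G}$-twisted descent data with untwisted ones. You also correctly handle the one genuinely dangerous point: since morphisms of $\Et(M/\orb{M})$ only commute over $\orb{M}$, not over $M$, one must transport $f_i^\ast\tau_X$ along the $2$-cells $\phi_i$ rather than use $\tilde{x}_i^\ast\tau$ directly --- the latter would differ from the former by a possibly non-trivial line bundle, and the comparison squares would then fail to commute canonically. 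Where you diverge from the paper is the endgame. After the normalisation, the paper does not transport the stack property formally: it constructs the glued object by hand, realising the glued sheaf $\bar{\sh{F}}$ as a sheaf on the site $\Et(Y/X)$ attached to the atlas $f:Y \onto X$, with restriction maps built from the descent isomorphism $\varphi$ (functoriality being exactly the cocycle condition), and then verifies that gluing and restriction are mutually inverse; Lemma~\ref{lem:ShCIsAStack} is invoked only as an analogy, not as a black box. You instead assemble the equivalences $\tau_{(-)}\cdot-$ into a pseudonatural equivalence of descent diagrams and transport the stack property of $\ShC$ across the induced equivalence of descent categories. Your route is more modular and genuinely reuses Lemma~\ref{lem:ShCIsAStack}, but it defers the substance to the $2$-categorical bookkeeping you flag at the end: checking that the comparison $2$-cells on double and triple overlaps, each induced by pullback coherences applied to the single trivialisation $\tau_X$, paste compatibly so that one has an honest morphism of descent diagrams. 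That verification is routine (it is forced by the pseudofunctoriality of pullback, as you say) but it is precisely the content that the paper's explicit construction makes visible; neither argument avoids this work, the two proofs merely package it differently.
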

\begin{proof}
    Let $\tilde{x}:X \to M$ be an object of $\Et(M/\orb{M})$,
    and $\tilde{y}:Y \to M$ an object covering $\tilde{x}$ via a morphism
    $(f,\phi):\tilde{y} \onto \tilde{x}$. Note in particular, that this identifies
    $f:Y \onto X$ as an atlas of $X$.
    A descent datum for $\Shv^\ger{G}$ over the cover $(f,\phi):\tilde{y} \onto \tilde{x}$
    is given by a pair
    $(\kappa\cdot\sh{F} \in \Shv^{\tilde{y}^\ast\pi^\ast\ger{G}}(Y),
        \varphi:p_1^\ast(\kappa\cdot\sh{F}) \eqto p_2^\ast(\kappa\cdot\sh{F}))$,
    where $p_1,p_2:Y^{[2]} \to Y$ are the two projections.
    The isomorphism $\varphi$ satisfies a cocycle condition
    over $Y^{[3]}$.

    Without loss of generality, we may fix the trivialisation used in the descent datum
    to be the pullback of a chosen trivialisation $\tau$ of $\ger{G}$ over $X$: $\kappa = f^\ast \tau$.
    The isomorphism $\varphi$ then corresponds to an isomorphism of sheaves
    $p_1^\ast \sh{F} \eqto (p_1^\ast f^\ast \tau^{-1} p_2^\ast f^\ast \tau) \cdot p_2^\ast\sh{F}
        \simeq p_2^\ast \sh{F}$.\footnote{
        The last equivalence uses that $\tau^{-1}$ and $\tau$ are pulled back along the same map, and thus cancel.
    }

    We now describe a twisted sheaf
    $\tilde{x}^\ast\tau \cdot \bar{\sh{F}} \in \Shv^{\tilde{x}^\ast\pi^\ast\ger{G}}(X)$
    which is obtained by gluing this descent datum.
    This is analogous to the construction in the proof of Lemma~\ref{lem:ShCIsAStack}.
    We view $\bar{\sh{F}} \in \ShC(X)$ as a sheaf over the site
    $\Et(Y/X)$ associated to the atlas $f:Y \onto X$.
    The sheaf $\bar{\sh{F}}$ assigns to an object
    \begin{align*}
        (u: U \to Y) & \mapsto \sh{F}(u) = u^\ast\sh{F}(U),
    \end{align*}
    the global sections of the pullback sheaf $u^\ast\sh{F}$ over $U$.
    A morphism $v \to u$ in $\Et(Y/X)$ is a map $g:V \to U$ such that the composites
    \[
        f\comp v = f \comp u \comp g: V \to X
    \]
    are equal.
    As a result, the two maps $u \comp g, v:V \to Y$ assemble into a map
    $(u \comp g,v):V \to Y^{[2]}$.
    We pull back $\varphi$ along this map to obtain an isomorphism of sheaves
    \[
        g^\ast {u}^\ast \sh{F} \simeq
        {(u \comp g)}^\ast \sh{F} \eqto
        v^\ast\sh{F}
    \]
    over $V$.
    The sheaf $\bar{\sh{F}}$ assigns to the morphism $g$ the map
    \[
        u^\ast\sh{F}(U) = \sh{F}(u:U \to Y) \xto{\sh{F}(g)} \sh{F}(u \comp g:V \to Y) =
        (g^\ast u^\ast\sh{F})(V) \eqto v^\ast\sh{F}(V).
    \]
    The fact that this assignment is functorial is equivalent to the cocycle condition satisfied
    by $\varphi$ over $Y^{[3]}$.

    We have constructed a functor
    \[
        \Desc(Y/X) \to \Shv^\ger{G}(X),
    \]
    where $\Desc(Y/X)$ denotes the category of descent data of $\Shv^\ger{G}$
    for the cover $(f,\phi):\tilde{y} \onto \tilde{x}$ ---
    recall these were objects $\tilde{y}:Y \to M$ and
    $\tilde{x}:X \to M$.
    It is straightforward to check the composite $\Shv^\ger{G}(X) \to \Desc(Y/X) \to \Shv^\ger{G}(X)$
    is equivalent to the identity functor.
    The composite $\Desc(Y/X) \to \Shv^\ger{G} \to \Desc(Y/X)$ is given by
    \[
        (\sh{F}, \varphi) \mapsto \tau \cdot \bar{\sh{F}} \mapsto (f^\ast\bar{\sh{F}},
        \bar{\varphi}=\id_{p^\ast\bar{\sh{F}}}),
    \]
    where $\bar{\sh{F}}$ is the sheaf constructed above, and $p: Y^{[2]}=Y\times_X Y \to X$
    is the canonical projection.
    The sheaf $f^\ast\bar{\sh{F}}$ sends
    \[
        (u:U \to Y) \mapsto \bar{\sh{F}}(f\comp u:U \to Y \to X)=\sh{F}(u':U \to Y)
    \]
    where $u':U \to Y$ is a choice of factorisation of $f \comp u$. The
    induced map $(u,u'):U \to Y^{[2]}$ may be used to pull back $\varphi$
    and obtain an isomorphism
    \[
        u^\ast \sh{F} \eqto u'^\ast \sh{F}
    \]
    These isomorphisms assemble into an equivalence of descent data
    $(\sh{F},\varphi) \to (f^\ast\bar{\sh{F}},\id )$
    with components
    \[
        \sh{F}(u:U \to Y) = u^\ast \sh{F}(U) \eqto u'^\ast \sh{F}(U) =
        \sh{F}(u':U \to Y)=f^\ast\bar{\sh{F}}(u:U \to Y).
    \]
    The fact that this defines a morphism of descent data is again a
    consequence of the cocycle condition satisfied by $\varphi$.
\end{proof}

\begin{defn}
    Let $\ger{G} \to \orb{M}$ be a gerbe over an \'etale stack.
    We call $\Shv^\ger{G}$ the
    \emph{stack of $\ger{G}$-twisted sheaves over $\orb{M}$}.
    A \emph{$\ger{G}$-twisted sheaf over $\orb{M}$} is a global section
    $\sh{F} \in \Shv^\ger{G}(\id_{\orb{M}})$.
\end{defn}

\begin{rmk}
    The stack $\Shv^\ger{G}$ over $\orb{M}$ is equipped with a map
    \begin{align*}
        \ger{G} \times \ShC & \to \Shv^\ger{G}          \\
        (\tau,\sh{F})       & \mapsto \tau \cdot \sh{F}
    \end{align*}
    and natural equivalences between the two composites
    \[
        \ger{G} \times \ger{I} \times \ShC \rightrightarrows
        \ger{G} \times \ShC \to \Shv^\ger{G},
    \]
    whose components are isomorphisms
    $(\tau \cdot \mscr{L}) \cdot \sh{F} = \tau \cdot (\mscr{L} \cdot \sh{F})$.
    These equivalences satisfy a cocycle condition which exhibits
    $\Shv^\ger{G}$ as the colimit
    \[\begin{tikzcd}[column sep=small]
            {\ger{G} \times_{\ger{I}} \ShC=} & {\quad \ger{G} \times \ger{I} \times \ger{I} \times \ShC} & {\ger{G} \times \ger{I} \times \ShC} & {\ger{G} \times \ShC.}
            \arrow[shift right=1, from=1-3, to=1-4]
            \arrow[shift left=1, from=1-3, to=1-4]
            \arrow[from=1-2, to=1-3]
            \arrow[shift right=2, from=1-2, to=1-3]
            \arrow[shift left=2, from=1-2, to=1-3]
            \arrow["\quad \colim"{marking}, draw=none, from=1-1, to=1-2]
        \end{tikzcd}\]
    This is akin to the construction of a line bundle $P \times_{\IC^\times} \IC$ associated to a $\IC^\times$-bundle $P$.
\end{rmk}

\subsection{The Twisted Pullback-Pushforward Adjunction}
Let $\ger{G} \to \orb{M}$ be a trivial gerbe over an \'etale stack and
$f:\orb{N} \to \orb{M}$ a map of \'etale stacks.
Pick a trivialisation $\tau$ of $\ger{G}$. We define a functor
\begin{align*}
    f_\ast^\tau:\Shv^{f^\ast\ger{G}}(\orb{N}) & \to \Shv^{\ger{G}}(\orb{M})                                       \\
    \kappa \cdot \sh{F}                       & \mapsto \tau \cdot f_\ast((f^\ast\tau^{-1}) \kappa \cdot \sh{F}).
\end{align*}

\begin{lem}
    The functor $f_\ast^\tau$ is right adjoint to $f^\ast$.
\end{lem}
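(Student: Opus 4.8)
The plan is to establish the adjunction $f^\ast \dashv f_\ast^\tau$ by reducing it to the already-established untwisted adjunction of Proposition~\ref{prop:PullbackPushforwardAdjunctionEtaleStacks}. The key observation is that both $f^\ast$ and $f_\ast^\tau$ are defined by conjugating the corresponding untwisted functors with the trivialisation equivalences $\tau \cdot -$ and $f^\ast\tau \cdot -$ of Lemma~\ref{lem:tauCdotIsEquivalence}. Concretely, I would first record that the commuting square appearing just before this lemma says precisely that $f^\ast \comp (\tau \cdot -) \simeq (f^\ast\tau \cdot -) \comp f^\ast$, i.e.\ the twisted pullback is identified with the untwisted pullback under the two trivialisations. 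The definition of $f_\ast^\tau$ is engineered so that the analogous statement holds on the pushforward side: unwinding $f_\ast^\tau(\kappa \cdot \sh{F}) = \tau \cdot f_\ast((f^\ast\tau)^{-1}\kappa \cdot \sh{F})$, one sees that $f_\ast^\tau \comp (f^\ast\tau \cdot -) \simeq (\tau \cdot -) \comp f_\ast$, because $(f^\ast\tau)^{-1}(f^\ast\tau) \simeq \Cinf$ cancels.

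With these two compatibilities in hand, the proof is a transport-of-structure argument. First I would exhibit a natural isomorphism of $\Hom$-sets
\[
    \Hom_{\Shv^{f^\ast\ger{G}}(\orb{N})}(f^\ast(\kappa' \cdot \sh{F}'), \kappa \cdot \sh{F})
    \simeq
    \Hom_{\Shv^\ger{G}(\orb{M})}(\kappa' \cdot \sh{F}', f_\ast^\tau(\kappa \cdot \sh{F})).
\]
Since the trivialisation functors $\tau \cdot -$ and $f^\ast\tau \cdot -$ are equivalences of categories, it suffices to verify this after replacing every twisted sheaf by its image under the equivalence, which lands us back in the untwisted categories $\ShC(\orb{N})$ and $\ShC(\orb{M})$. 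There the desired natural isomorphism is exactly the adjunction unit-counit data of Proposition~\ref{prop:PullbackPushforwardAdjunctionEtaleStacks}, translated across the equivalences using the two compatibility isomorphisms above. I would spell out the unit $\eta: \Id \to f_\ast^\tau f^\ast$ and counit $\eps: f^\ast f_\ast^\tau \to \Id$ by conjugating the untwisted unit and counit, and then check the triangle identities; these follow formally because conjugation by an equivalence preserves the triangle identities.

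The main obstacle I anticipate is purely bookkeeping: one must confirm that the adjunction thus constructed is genuinely independent of the auxiliary choice of trivialisation $\tau$, so that $f_\ast$ is well-defined as stated in Proposition~\ref{prop:twistedSheavesAdjunction} (which drops the superscript $\tau$). For a second trivialisation $\tau'$, the two candidate pushforwards $f_\ast^\tau$ and $f_\ast^{\tau'}$ differ by tensoring with the line bundle $\mscr{L}_{\tau,\tau'}$ before and after applying $f_\ast$, and the projection formula $f_\ast(\sh{F} \tensor f^\ast \sh{G}) \simeq f_\ast \sh{F} \tensor \sh{G}$ (established elsewhere in this appendix) furnishes a canonical natural isomorphism $f_\ast^\tau \simeq f_\ast^{\tau'}$. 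By uniqueness of adjoints (up to unique natural isomorphism), these identifications are automatically coherent, which is what allows the global pushforward functor $f_\ast$ of Proposition~\ref{prop:twistedSheavesAdjunction} to be glued from the local, trivialisation-dependent functors $f_\ast^\tau$ over a cover on which $\ger{G}$ trivialises. The remaining naturality checks—that the $\Hom$-isomorphism is natural in both variables—are routine, inherited directly from naturality of the untwisted adjunction.
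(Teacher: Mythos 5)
Your proof is correct and takes exactly the paper's route: the paper's own (two-sentence) argument is precisely that the equivalences $\tau \cdot -$ and $f^\ast\tau \cdot -$ of Lemma~\ref{lem:tauCdotIsEquivalence} identify the twisted $f^\ast$ and $f_\ast^\tau$ with the untwisted pullback and pushforward of Proposition~\ref{prop:PullbackPushforwardAdjunctionEtaleStacks}, and your transport-of-structure argument is just this spelled out in detail. Your closing discussion of independence of $\tau$ goes beyond the lemma itself but matches the paper's subsequent remark, which likewise drops the superscript by uniqueness of adjoints.
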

\begin{proof}
    The equivalences $\tau \cdot -: \ShC(\orb{M}) \to \Shv^\ger{G}(\orb{M})$
    and $f^\ast \tau \cdot -: \ShC(\orb{N}) \to \Shv^{f^\ast \ger{G}}(\orb{N})$
    identify the functors $f^\ast$ and $f_\ast$ with the pullback
    and pushforward functors of Proposition~\ref{prop:PullbackPushforwardAdjunctionEtaleStacks}.
\end{proof}
The adjoint to a functor is unique up to natural isomorphism, so we drop
the superscript $\tau$ from $f_\ast^\tau$.
Below, we transport this adjunction to the case of non-trivial gerbes.
If the gerbe $\ger{G}$ is non-trivial, there is no $\ger{G}$-twisted
sheaf over $\orb{M}$ of the form $\tau \cdot \sh{F}$.
We can instead describe them by descent data. A twisted sheaf is then given
by the data of
a cover $\pi:Y \onto \orb{M}$ with a trivialisation $\tau$ of $\pi^\ast Y$,
a sheaf $\sh{F}$ over $Y$, and an isomorphism of sheaves
$\varphi: p_1^\ast \sh{F} \to p_1^\ast \tau^{-1} p_2^\ast \tau \cdot p_2^\ast\sh{F}$
over $Y^{[2]}$, such that
\[\begin{tikzcd}
        {p_1^*\sh{F}} && {p_1^*\tau^{-1}p_3^\ast\tau\cdot p_3^*\sh{F}} \\
        {p_1^*\tau^{-1}p_2^\ast\tau\cdot p_2^*\sh{F}} && {p_1^*\tau^{-1}p_2^\ast\tau\cdot p_2^\ast\tau^{-1} p_3^\ast\tau\cdot p_3^*\sh{F}}
        \arrow["{p_{12}^*\varphi}"', from=1-1, to=2-1]
        \arrow["{p_1^*\tau^{-1}p_2^\ast\tau\cdot p_{23}^*\varphi}"', from=2-1, to=2-3]
        \arrow["\simeq"{marking}, draw=none, from=2-3, to=1-3]
        \arrow["{p_{13}^\ast\varphi}", from=1-1, to=1-3]
    \end{tikzcd}\]
commutes.

By Lemma~\ref{lem:ShvGIsAStack}, $\Shv^{\ger{G}}(\orb{M}) \simeq \Desc(Y/\orb{M})$
where $\pi:Y \onto \orb{M}$ is any cover such that $\pi^\ast\ger{G}$ is trivial.
This cover pulls back to a cover $f^\ast Y = Y \times_{\orb{M}} \orb{N}
    \onto \orb{N}$ and trivialisations $\tau$ of $\pi^\ast\ger{G}$ over $Y$ pull back to
trivialisations $f^\ast \tau$ of $f^\ast\pi^\ast \ger{G}$ over $f^\ast Y$.\footnote{
    Note that, even when $Y \onto \orb{M}$ is an atlas, this is not necessarily
    true for $f^\ast Y \onto \orb{N}$:
    the pullback $f^\ast Y$ is not representable by a manifold unless
    $f: \orb{M} \to \orb{N}$ is a representable map.
}
Pullbacks commute, so there are equivalences
${(f^\ast Y)}^{[n]} \simeq f^\ast(Y^{[n]})$,
and $f:\orb{N} \to \orb{M}$ extends to a map of simplicial diagrams
\[\begin{tikzcd}
        {f^\ast Y^{[3]}} & {f^\ast Y^{[2]}} & {f^\ast Y} & {\orb{N}} \\
        {Y^{[3]}} & {Y^{[2]}} & Y & {\orb{M}.}
        \arrow[from=1-4, to=2-4]
        \arrow[from=1-3, to=1-4]
        \arrow[from=2-3, to=2-4]
        \arrow[from=1-3, to=2-3]
        \arrow[shift left=1, from=1-2, to=1-3]
        \arrow[shift right=1, from=1-2, to=1-3]
        \arrow[shift right=1, from=2-2, to=2-3]
        \arrow[shift left=1, from=2-2, to=2-3]
        \arrow[shift left=2, from=1-1, to=1-2]
        \arrow[shift right=2, from=1-1, to=1-2]
        \arrow[from=1-1, to=1-2]
        \arrow[from=2-1, to=2-2]
        \arrow[shift right=2, from=2-1, to=2-2]
        \arrow[shift left=2, from=2-1, to=2-2]
        \arrow[from=1-1, to=2-1]
        \arrow[from=1-2, to=2-2]
    \end{tikzcd}\]
In other words, the map $f$ commutes with the simplicial maps
$p_{ij\cdots k}:Y^{[m]} \to Y^{[n]}$, and the squares formed by corresponding
maps in the two rows are pullback squares.

We establish the adjunction
\[
    \Shv^{\ger{G}}(\orb{M}) \leftrightarrows \Shv^{f^\ast\ger{G}}(\orb{N})
\]
through a pair of functors
\[
    f^\ast: \Desc(Y/\orb{M}) \leftrightarrows \Desc(f^\ast Y/\orb{N}) :f_\ast.
\]
Denote by $\mscr{L}$ the line bundle corresponding to
$p_2^\ast\tau^{-1}p_1^\ast\tau$ over $Y^{[2]}$. It is equipped with a
natural equivalence
$p_{12}^\ast\mscr{L} \tensor p_{23}^\ast\mscr{L} \simeq p_{13}^\ast\mscr{L}$ over $Y^{[3]}$.
A descent datum in $\Desc(Y/\orb{M})$ is uniquely
equivalent to a descent datum of the form
\[
    \left(\sh{F} \in \ShC(Y), \varphi:p_1^\ast\sh{F} \eqto \mscr{L} \cdot p_2^\ast\sh{F}
    \in \ShC(Y^{[2]})\right),
\]
and similarly for $\Desc(f^\ast Y/\orb{N})$.

We define the pullback functor by
\begin{alignat*}{1}
    f^\ast: \Desc(Y/\orb{M}) & \to \Desc(f^\ast Y/\orb{N})                     \\
    (\sh{F}, \varphi)        & \mapsto (f^\ast \sh{F}, \tilde{f}^\ast\varphi),
\end{alignat*}
where $\tilde{f}^\ast\varphi$ is the composite
\[
    p_1^\ast f^\ast \sh{F} \simeq f^\ast p_1^\ast \sh{F} \xto{f^\ast \varphi}
    f^\ast(\mscr{L} \cdot p_2^\ast \sh{F}) \eqto
    f^\ast\mscr{L} \cdot f^\ast p_2^\ast \sh{F} \simeq
    f^\ast \mscr{L} \cdot p_2^\ast f^\ast \sh{F}.
\]
(The first and last equivalences come from the fact that $p_i$ commutes with $f$,
the third equivalence is given by the monoidal structure of $f^\ast$.)
Up to composition by natural equivalences, the cocycle condition for
$\tilde{f}^\ast\varphi$ is the image of the coycle condition for $\varphi$ under the
functor $f^\ast:\ShC(Y^{[3]}) \to \ShC(f^\ast Y^{[3]})$, and thus
$(f^\ast\sh{F},\tilde{f}^\ast\varphi)$ is indeed a valid descent datum.

\begin{lemma}[Base change]
    \label{lem:baseChange}
    Let
    \[\begin{tikzcd}
            {f^\ast\st{Y}} & {\st{X}} \\
            {\st{Y}} & {\st{Z}}
            \arrow["p"', two heads, from=2-1, to=2-2]
            \arrow["f", from=1-2, to=2-2]
            \arrow["p", two heads, from=1-1, to=1-2]
            \arrow["f"', from=1-1, to=2-1]
            \arrow["\lrcorner"{anchor=center, pos=0.125}, draw=none, from=1-1, to=2-2]
            \arrow[shorten <=4pt, shorten >=4pt, Rightarrow, from=1-2, to=2-1]
        \end{tikzcd}\]
    be a pullback square of \'etale stacks where $p$ is representably \'etale.   
    Then the natural transformation
    \[
        p^\ast f_\ast \to p^\ast f_\ast p_\ast p^\ast \simeq p^\ast p_\ast
        f_\ast p^\ast \to f_\ast p^\ast
    \]
    is an equivalence of functors $\ShC(\st{X}) \to \ShC(\st{Y})$.
\end{lemma}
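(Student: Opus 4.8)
The plan is to unwind the mate construction of the base-change transformation and then show it is an isomorphism of sheaves on $\st{Y}$ by evaluating on a dense family of test objects, exploiting that pullback along a representably \'etale map is nothing but restriction. Write $\bar{p}\colon f^\ast\st{Y}\to\st{X}$ for the top map and $\bar{f}\colon f^\ast\st{Y}\to\st{Y}$ for the left map, so that $\bar{p}$ is again representably \'etale (pullbacks of representably \'etale maps are representably \'etale) and $f^\ast\st{Y}=\st{Y}\times_{\st{Z}}\st{X}$. The transformation $\beta\colon p^\ast f_\ast\Rightarrow\bar{f}_\ast\bar{p}^\ast$ in the statement is assembled from the unit of $(\bar{p}^\ast\dashv\bar{p}_\ast)$, the functoriality isomorphism $f_\ast\bar{p}_\ast\simeq p_\ast\bar{f}_\ast$ (Lemma~\ref{lem:PushfwdsAndPullbacksCompose}), and the counit of $(p^\ast\dashv p_\ast)$; I would first record this description precisely so the concrete identifications below can be matched against it.

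The key input is that pullback along a representably \'etale map is computed by restriction. First I would observe that for representably \'etale $g$ the comparison map $g^\#\colon g_0^\ast\Cinf\to\Cinf$ is an isomorphism, since \'etale maps are local diffeomorphisms and hence preserve the structure sheaf; consequently $g^\ast=g_0^\ast$ on $\Cinf$-modules. This is the one place where representability (as opposed to a general map) is essential. Next, because a representably \'etale $g\colon\st{A}\to\st{B}$ induces a cover-preserving functor of \'etale sites $(u\colon U\to\st{A})\mapsto(g\comp u\colon U\to\st{B})$, the functor $g_0^\ast$ is the associated restriction functor; concretely, on any manifold test object $u\colon U\to\st{A}$ one has $g^\ast\sh{G}(u)\simeq\sh{G}(g\comp u)$, the defining colimit (cofiltered by Lemma~\ref{lem:yOverfInvIsCofiltered}) being computed by its initial term for a local homeomorphism. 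I expect this restriction lemma to be the main technical obstacle, as it requires chasing the colimit formula for $g_0^\ast$ against the \'etale site and verifying the $\Cinf$-module structure is unaffected.

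Granting this, the proof reduces to a direct comparison on test objects. By density of manifold test objects (Lemma~\ref{lem:everythingIsDense}) together with the fact that $\ShC$ is a stack (Lemma~\ref{lem:ShCIsAStack}), it suffices to show $\beta(\sh{F})$ is an isomorphism on $V$-sections for every $(v\colon V\to\st{Y})$ with $V\in\Man$. For such a $v$, the composite $p\comp v\colon V\to\st{Z}$ is again representably \'etale, so the restriction lemma and the pushforward formula $f_\ast\sh{F}(x\colon X_1\to\st{Z})\simeq\sh{F}(X_1\times_{\st{Z}}\st{X})$ give
\[
    (p^\ast f_\ast\sh{F})(v)\simeq(f_\ast\sh{F})(p\comp v)\simeq\sh{F}\big(V\times_{\st{Z}}\st{X}\to\st{X}\big).
\]
On the other hand, using $V\times_{\st{Y}}f^\ast\st{Y}\simeq V\times_{\st{Z}}\st{X}$ (composition of pullback squares) and the restriction lemma for the representably \'etale $\bar{p}$,
\[
    (\bar{f}_\ast\bar{p}^\ast\sh{F})(v)\simeq(\bar{p}^\ast\sh{F})\big(V\times_{\st{Z}}\st{X}\to f^\ast\st{Y}\big)\simeq\sh{F}\big(V\times_{\st{Z}}\st{X}\to\st{X}\big).
\]
Both sides are canonically the same $\Cinf$-module, naturally in $\sh{F}$ and in $v$.

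The final step is to check that $\beta$ itself realises this common identification, i.e.\ that the unit and counit building the mate collapse, on manifold test objects, to the canonical comparison of the two fibre-product descriptions. This is a diagram chase using naturality of the unit and counit and the explicit colimit formulas; once verified, $\beta(\sh{F})$ is an isomorphism on a generating family of sections, hence an isomorphism of sheaves, and naturality in $\sh{F}$ completes the argument. As noted, the principal difficulty is the restriction lemma for representably \'etale pullbacks, together with the bookkeeping that matches the abstract mate with the concrete section-level isomorphism.
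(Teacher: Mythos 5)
Your proposal is correct, but it takes a genuinely different route from the paper's proof. The paper argues by descent to an atlas: it chooses \'etale atlases $Z \onto \st{Z}$ and $X \onto \st{X}$ fine enough that $f$ is represented by a map of manifolds $\bar{f}:X \to Z$, pulls the whole square back to the resulting square of manifolds, invokes the \emph{classical} base change theorem for \'etale maps of manifolds there, and concludes because pullback along the atlas $Y = p^\ast Z \onto \st{Y}$ is conservative and carries the stack-level transformation to the manifold-level one. You instead compute directly on the \'etale site: your key lemma is that for a representably \'etale $g$ the comma category $u \downarrow g^{-1}$ has an initial object $(g \comp u,\mathrm{can})$, so the defining colimit collapses, $g^\#$ is an isomorphism, and $g^\ast$ is literally precomposition $\sh{G} \mapsto \sh{G}(g \comp -)$; with this, both $p^\ast f_\ast\sh{F}$ and $\bar{f}_\ast\bar{p}^\ast\sh{F}$ evaluate on any test object $v:V \to \st{Y}$ to $\sh{F}(V \times_{\st{Z}}\st{X} \to \st{X})$. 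Both routes defer a comparable bookkeeping step: you must check that the mate $\beta$ induces the canonical identification on sections (which you flag), while the paper asserts without proof that the restriction of $p^\ast f_\ast \to f_\ast p^\ast$ to the atlas \emph{is} the manifold-level transformation --- a claim whose verification secretly contains the compatibility of $f_\ast$ with \'etale restriction, i.e.\ essentially your restriction lemma. What each buys: the paper's proof is shorter and outsources the analytic content to the classical theorem; yours is self-contained, avoids choosing atlases, and isolates the reusable fact that representably \'etale pullback is restriction. One small patch: in your second display you apply the restriction formula to the stack-valued test object $V \times_{\st{Y}} f^\ast\st{Y} \to f^\ast\st{Y}$, which is not a manifold, so you should either run your initial-object argument in $\StackEt$ rather than $\Et$, or invoke density (Lemma~\ref{lem:everythingIsDense}) to extend the formula; this is a one-line fix, not a gap.
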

\begin{proof}
    The component of this natural transformation on a sheaf $\sh{F} \in \ShC(\st{X})$
    is given by a morphism $p^\ast f_\ast \sh{F} \to f_\ast p^\ast \sh{F}$ in
    $\ShC(\st{Y})$.
    It is enough to exhibit this as an isomorphism on an \'etale atlas of $\st{Y}$.

    Pick an etale atlas $Z \onto \st{Z}$ and an \'etale atlas $X \onto \st{X}$
    fine enough such that the map $f:\st{X} \to \st{Z}$ is represented by a map
    $\bar{f}:X \to Z$. This yields atlases $Y \define p^\ast Z \onto \st{Y}$
    and $f^\ast Y = f^\ast p^\ast Z \onto f^\ast\st{Y}$.
    The resulting pullback square of manifolds
    \[\begin{tikzcd}
            {f^\ast Y} & {X} \\
            {Y} & {Z}
            \arrow["\bar{p}"', two heads, from=2-1, to=2-2]
            \arrow["\bar{f}", from=1-2, to=2-2]
            \arrow["\bar{p}", two heads, from=1-1, to=1-2]
            \arrow["\bar{f}"', from=1-1, to=2-1]
            \arrow["\lrcorner"{anchor=center, pos=0.125}, draw=none, from=1-1, to=2-2]
        \end{tikzcd}\]
    maps to the pull back square in the statement via representably \'etale maps.
    It also satisfies the property that $\bar{p}$ is \'etale, and thus
    the classical base change theorem implies
    \[
        \bar{p}^\ast \bar{f}_\ast \to \bar{f}_\ast \bar{p}^\ast
    \]
    is a natural equivalence in $\ShC(Y)$.
    But this is exactly the image of the natural transformation
    \[
        p^\ast f_\ast \to f_\ast p^\ast
    \]
    under the conservative functor $\ShC(\st{Y}) \to \ShC(Y)$.
\end{proof}

\begin{lemma}[Projection Formula]
    \label{lem:projectionFormula}
    Let $f:\st{X} \to \st{Y}$ be a map of \'etale stacks, $\sh{F} \in \ShC(\st{X})$ a
    sheaf of $\Cinf$-modules, $\mscr{L} \in \ShC(\st{Y})$ a line bundle.
    Then the natural map
    \[
        \mscr{L} \tensor f_\ast \sh{F} \to f_\ast(f^\ast \mscr{L} \tensor \sh{F})
    \]
    is an isomorphism. This isomorphism is compatible with the monoidal structure:
    given another line bundle $\mscr{L}' \in \ShC(\st{Y})$,
    \[\begin{tikzcd}
            {\mscr{L} \tensor \mscr{L}'\tensor f_\ast \sh{F}} & {\mscr{L} \tensor f_\ast(f^\ast\mscr{L}'\tensor\sh{F})} \\
            {f_\ast(f^\ast(\mscr{L} \tensor \mscr{L}') \tensor \sh{F})} & {f_\ast(f^\ast\mscr{L} \tensor f^\ast\mscr{L}' \tensor \sh{F})}
            \arrow[from=1-1, to=2-1]
            \arrow[from=1-1, to=1-2]
            \arrow[from=1-2, to=2-2]
            \arrow["\simeq"{marking}, draw=none, from=2-1, to=2-2]
        \end{tikzcd}\]
    commutes.
\end{lemma}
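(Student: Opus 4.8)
The plan is to first pin down the natural morphism and then prove it is an isomorphism by a local triviality argument. The natural map is the standard projection morphism: since $f^\ast$ is strong monoidal (Proposition~\ref{prop:PullbackPushforwardAdjunctionEtaleStacks}), its right adjoint $f_\ast$ acquires a canonical lax monoidal structure $f_\ast\sh{A} \tensor f_\ast\sh{B} \to f_\ast(\sh{A}\tensor\sh{B})$, obtained as the mate of the composite $f^\ast(f_\ast\sh{A}\tensor f_\ast\sh{B}) \simeq f^\ast f_\ast\sh{A} \tensor f^\ast f_\ast\sh{B} \to \sh{A}\tensor\sh{B}$ that applies the counit $\eps$ to each factor. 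The projection map is then
\[
\mscr{L} \tensor f_\ast\sh{F} \xto{\eta_\mscr{L}\tensor\id} f_\ast f^\ast\mscr{L} \tensor f_\ast\sh{F} \to f_\ast(f^\ast\mscr{L} \tensor \sh{F}),
\]
where $\eta$ is the unit of the adjunction. This composite is manifestly natural in $\sh{F}$, $\mscr{L}$, and $f$.

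To prove it is an isomorphism, I would argue locally on $\st{Y}$. Being an isomorphism of $\Cinf$-modules may be checked after pulling back along a cover, and since $\mscr{L}$ is a line bundle there is an \'etale atlas $\pi: Y \onto \st{Y}$ over which $\pi^\ast\mscr{L} \eqto \Cinf$ is trivial. Forming the pullback square with $f$ produces a map $\tilde{f}: Y\times_{\st{Y}}\st{X} \to Y$, whose top edge I denote $q$. Base change (Lemma~\ref{lem:baseChange}), applicable because $\pi$ is representably \'etale, gives $\pi^\ast f_\ast \simeq \tilde{f}_\ast q^\ast$, while Lemma~\ref{lem:PushfwdsAndPullbacksCompose} gives $q^\ast f^\ast \simeq \tilde{f}^\ast\pi^\ast$. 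Feeding these equivalences, together with the strong monoidality of the pullback functors, into the source and target of the projection map identifies its pullback along $\pi$ with the projection map for $\tilde{f}$, the line bundle $\pi^\ast\mscr{L}$, and the sheaf $q^\ast\sh{F}$. Under the trivialisation $\pi^\ast\mscr{L} \eqto \Cinf$ this reduces to an endomorphism of $\tilde{f}_\ast q^\ast\sh{F}$, which is the identity because $\Cinf$ is the monoidal unit of $\ShC$: the unit morphism $\eta_\Cinf$ composed with the lax structure is the unitor by the triangle identities. Hence the map is an isomorphism over $Y$, and therefore over $\st{Y}$.

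The final compatibility square with a second line bundle $\mscr{L}'$ is a coherence statement that I would verify by the same reduction. After passing to an atlas trivialising both $\mscr{L}$ and $\mscr{L}'$ simultaneously, every morphism in the square becomes a unitor composed with the identity, and commutativity follows from the coherence of the monoidal unitors on $\ShC$ together with the triangle identities of the adjunction.

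The main obstacle I anticipate is the bookkeeping in the reduction step: one must check that the web of natural equivalences coming from base change (Lemma~\ref{lem:baseChange}), the composition of pullbacks (Lemma~\ref{lem:PushfwdsAndPullbacksCompose}), and the monoidal structures of $f^\ast$, $\pi^\ast$, $q^\ast$ assembles so that the pullback of the projection morphism genuinely \emph{equals} the projection morphism of the base-changed data, rather than being merely abstractly isomorphic to it. This amounts to the compatibility of the mate construction defining the lax structure on $f_\ast$ with base change; it follows formally from the uniqueness of mates, but stating it precisely requires care.
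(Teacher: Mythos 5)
Your proof is correct, but it takes a genuinely different route from the paper's. For the isomorphism statement, both arguments begin by localising along an \'etale atlas $\pi:Y \onto \st{Y}$, but where you then use local triviality of $\mscr{L}$ to reduce to the unit bundle $\Cinf$ and conclude by doctrinal adjunction (the unit axiom of the mate lax monoidal structure on $f_\ast$), the paper instead chooses the atlas fine enough that $f$ itself is represented by a map of manifolds and quotes the classical projection formula for sheaves over manifolds as a black box, exactly parallel to its proof of Lemma~\ref{lem:baseChange}. Your version is more self-contained --- it essentially re-proves the classical statement rather than citing it --- at the price of the mate-compatibility bookkeeping you honestly flag (that pulling back the projection morphism along $\pi$, through base change and the monoidality of pullbacks, really yields the projection morphism of the base-changed data); note the paper's reduction implicitly relies on the same kind of compatibility, so this cost is not unique to your argument. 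For the compatibility square with $\mscr{L}'$ the two proofs genuinely diverge: the paper gives a purely diagrammatic argument, tiling the square into a naturality square for $\tensor$, a lax-monoidality square for $f_\ast$, a naturality square for the lax structure, and one square commuting because the lax structure on $f_\ast$ is induced by the strong monoidal structure on $f^\ast$ --- no localisation and no coherence-by-trivialisation; your proof localises a second time and appeals to coherence of unitors, which does work (morphisms form a sheaf, so pullback along a cover is faithful) but inherits the bookkeeping burden again. The paper's tiling argument is the cleaner option for this part, since it is formal and would apply verbatim to any adjunction whose left adjoint is strong monoidal.
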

\begin{proof}
    The first part of the proof proceeds analogously to the proof of Lemma~\ref{lem:baseChange}:
    pick an atlas $Y \onto \st{Y}$. Then the fact that the natural map is an isomorphism
    follows from the classical projection formula for sheaves over manifolds.

    Consider the following tiling of the diagram in the statement:
    \[\begin{tikzcd}[column sep=tiny]
            {\mscr{L}\tensor\mscr{L}'\tensor f_\ast\sh{F}} & {\mscr{L}\tensor f_\ast f^\ast\mscr{L}' \tensor f_\ast\sh{F}} & {\mscr{L}\tensor f_\ast(f^\ast\mscr{L}'\tensor \sh{F})} \\
            & {f_\ast f^\ast\mscr{L}\tensor f_\ast f^\ast\mscr{L}'\tensor f_\ast\sh{F}} & {f_\ast f^\ast \mscr{L}\tensor f_\ast(f^\ast \mscr{L}'\tensor\sh{F})} \\
            {f_\ast f^\ast(\mscr{L}\tensor\mscr{L}')\tensor f_\ast\sh{F}} & {f_\ast(f^\ast\mscr{L}\tensor f^\ast\mscr{L}')\tensor f_\ast\sh{F}} \\
            \\
            {f_\ast(f^\ast(\mscr{L}\tensor \mscr{L}')\tensor \sh{F})} && {f_\ast(f^\ast\mscr{L}\tensor f^\ast\mscr{L}'\tensor \sh{F}).}
            \arrow[from=1-1, to=1-2]
            \arrow[from=1-2, to=1-3]
            \arrow[from=1-3, to=2-3]
            \arrow[""{name=0, anchor=center, inner sep=0}, from=2-3, to=5-3]
            \arrow["\simeq"', no head, from=5-3, to=5-1]
            \arrow[from=1-2, to=2-2]
            \arrow[from=2-2, to=2-3]
            \arrow["{\text{naturality of } \tensor}"{description}, draw=none, from=2-2, to=1-3]
            \arrow[from=1-1, to=3-1]
            \arrow[""{name=1, anchor=center, inner sep=0}, from=3-1, to=5-1]
            \arrow["\simeq", no head, from=3-1, to=3-2]
            \arrow[""{name=2, anchor=center, inner sep=0}, from=3-2, to=5-3]
            \arrow[from=2-2, to=3-2]
            \arrow["{\text{lax monoidality of } f_\ast}"{description, pos=0.4}, draw=none, from=3-2, to=0]
            \arrow["{\text{naturality of lax monoidal structure}}"{description}, Rightarrow, draw=none, from=1, to=2]
        \end{tikzcd}\]
    The labelled squares commute for the reason indicated, while the empty square
    commutes because the lax monoidal structure of $f_\ast$ is induced by
    the monoidal structure of $f^\ast$.
\end{proof}

The pushforward functor of twisted sheaves is given by
\begin{alignat*}{1}
    f_\ast: \Shv^{f^\ast\ger{G}}(\orb{N}) & \to \Shv^{\ger{G}}(\orb{M}) \\
    (\sh{F},\varphi)                      & \mapsto
    (f_\ast\sh{F},\tilde{f}_\ast\varphi),
\end{alignat*}
where $\tilde{f}_\ast\varphi$ is the composite
\[
    p_1^\ast f_\ast \sh{F} \simeq f_\ast p_1^\ast \sh{F}
    \xto{f_\ast\varphi} f_\ast(f^\ast\mscr{L} \tensor p_2^\ast \sh{F})
    \eqot \mscr{L} \tensor f_\ast p_2^\ast \sh{F} \simeq
    \mscr{L} \tensor p_2^\ast f_\ast\sh{F},
\]
making use of base change (Lemma~\ref{lem:baseChange}) and the projection
formula (Lem\-ma~\ref{lem:projectionFormula}).
The cocycle condition for $\tilde{f}_\ast\varphi$ follows from
the coycle condition for $\varphi$ and the compatibility of the projection formula
with the tensor product (Lemma~\ref{lem:projectionFormula}).

\begin{proposition}
    \label{prop:twistedPullbackPushfwdAdjunction}
    The functors defined above are adjoint
    \[
        f^\ast: \Shv^\ger{G}(\orb{M}) \leftrightarrows
        \Shv^{f^\ast\ger{G}}(\orb{N}): f_\ast.
    \]
\end{proposition}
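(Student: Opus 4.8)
The plan is to deduce the twisted adjunction from the untwisted adjunction of Proposition~\ref{prop:PullbackPushforwardAdjunctionEtaleStacks} by working entirely with descent data. By Lemma~\ref{lem:ShvGIsAStack}, a choice of cover $\pi:Y \onto \orb{M}$ trivialising $\ger{G}$ identifies $\Shv^\ger{G}(\orb{M}) \simeq \Desc(Y/\orb{M})$ and $\Shv^{f^\ast\ger{G}}(\orb{N}) \simeq \Desc(f^\ast Y/\orb{N})$. Under these equivalences the functors $f^\ast$ and $f_\ast$ act on the underlying sheaf over $Y$ (resp.\ $f^\ast Y$) exactly as the untwisted functors, while transporting the descent isomorphism $\varphi$ via the monoidal structure of $f^\ast$ (to form $\tilde{f}^\ast\varphi$) and via base change together with the projection formula (to form $\tilde{f}_\ast\varphi$). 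So the whole problem is to promote the untwisted adjunction to the level of descent data.

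First I would exhibit the unit $\eta$ and counit $\eps$ of the untwisted adjunction as morphisms of descent data. For $(\sh{F},\varphi) \in \Desc(Y/\orb{M})$ the untwisted unit $\eta_\sh{F}:\sh{F} \to f_\ast f^\ast \sh{F}$ is a map of sheaves over $Y$; the task is to check that it intertwines $\varphi$ with $\tilde{f}_\ast\tilde{f}^\ast\varphi$, i.e.\ that the compatibility square over $Y^{[2]}$ relating the two descent isomorphisms commutes, and dually for $\eps$ over $f^\ast Y$. Granting this, the triangle identities hold automatically, since they already hold for the underlying untwisted adjunction, and thus $(\eta,\eps)$ assembles into an adjunction $f^\ast \dashv f_\ast$ on the descent categories.

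Equivalently, and more transparently, I would show that the untwisted adjunction bijection $\Hom_{\ShC(f^\ast Y)}(f^\ast\sh{F},\sh{G}) \isoto \Hom_{\ShC(Y)}(\sh{F},f_\ast\sh{G})$ restricts to a bijection between the subspaces of morphisms that respect the descent isomorphisms. A morphism of descent data $f^\ast(\sh{F},\varphi) \to (\sh{G},\psi)$ is a sheaf map $\beta:f^\ast\sh{F} \to \sh{G}$ making a square over $f^\ast Y^{[2]}$ (built from $\tilde{f}^\ast\varphi$ and $\psi$) commute; one must verify that its adjunct $\bar{\beta}:\sh{F} \to f_\ast\sh{G}$ makes the corresponding square over $Y^{[2]}$ (built from $\varphi$ and $\tilde{f}_\ast\psi$) commute, and conversely.

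The main obstacle is precisely this compatibility check, which is a diagram chase over $Y^{[2]}$ unwinding the definitions of $\tilde{f}^\ast\varphi$ and $\tilde{f}_\ast\psi$. The chase requires three coherences for the untwisted functors: the naturality of the unit and counit with respect to the simplicial projections $p_1,p_2$ (using that each $p_i$ commutes with $f$); the compatibility of the base-change isomorphism of Lemma~\ref{lem:baseChange} with the unit and counit; and the compatibility of the projection-formula isomorphism of Lemma~\ref{lem:projectionFormula} with them. Each of these is a standard consequence of the uniqueness of adjoints and may be checked on an \'etale atlas, where it reduces to the classical statements for sheaves on manifolds via the conservative restriction functor $\ShC(\st{Y}) \to \ShC(Y)$ already used in the proofs of those lemmas. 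Once these coherences are in hand, the square over $Y^{[2]}$ tiles into commuting cells, which finishes the argument.
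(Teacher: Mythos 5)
Your proposal is correct and follows essentially the same route as the paper: both pass to descent categories via Lemma~\ref{lem:ShvGIsAStack}, use the faithful forgetful functors to the underlying sheaf categories, and show that the untwisted adjunction bijection of Proposition~\ref{prop:PullbackPushforwardAdjunctionEtaleStacks} restricts to the subspaces of descent-compatible morphisms. The only difference is one of emphasis: where the paper declares this restriction ``manifestly preserved,'' you correctly identify it as the real work --- the diagram chase over $Y^{[2]}$ using the coherence of the unit/counit with the base-change and projection-formula isomorphisms --- and outline how to discharge it on an \'etale atlas.
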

\begin{proof}
    The functors $f^\ast,f_\ast$ we defined between the categories
    $\Shv^\ger{G}(\orb{M}) \simeq \Desc(Y/\orb{M})$ and
    $\Shv^{f^\ast \ger{G}}(\orb{N}) \simeq \Desc(f^\ast Y/ \orb{N})$ make the diagram
    \[\begin{tikzcd}
            {\Desc(Y/\orb{M})} & {\Desc(f^\ast Y/\orb{N})} \\
            \\
            {\ShC(\orb{M})} & {\ShC(\orb{N})}
            \arrow["{f^\ast}"', shift right=1, curve={height=6pt}, from=1-1, to=1-2]
            \arrow["{f_\ast}"', shift right=1, curve={height=6pt}, from=1-2, to=1-1]
            \arrow[from=1-1, to=3-1]
            \arrow[from=1-2, to=3-2]
            \arrow[""{name=0, anchor=center, inner sep=0}, "{f^\ast}"', shift right=1, curve={height=6pt}, from=3-1, to=3-2]
            \arrow[""{name=1, anchor=center, inner sep=0}, "{f_\ast}"', shift right=1, curve={height=6pt}, from=3-2, to=3-1]
            \arrow["\dashv"{anchor=center, rotate=90}, draw=none, from=0, to=1]
        \end{tikzcd}\]
    commute, where the vertical functors discard the isomorphism $\varphi$
    of a descent datum $(\sh{F},\varphi)$.
    These functors are faithful: the map
    \[
        \Hom_{\Desc}((\sh{F},\varphi),(\sh{F}',\varphi')) \into
        \Hom_{\ShC}(\sh{F},\sh{F}')
    \]
    includes those maps of sheaves $\sh{F} \to \sh{F}'$ that make the diagram
    \[
        \begin{tikzcd}
            {p_1^\ast\sh{F}} & {p_1^\ast \sh{F}'} \\
            {p_2^\ast \sh{F}} & {p_2^\ast \sh{F}'}
            \arrow["\varphi", from=1-1, to=2-1]
            \arrow["{\varphi'}", from=1-2, to=2-2]
            \arrow[from=1-1, to=1-2]
            \arrow[from=2-1, to=2-2]
        \end{tikzcd}
    \]
    commute.
    This condition is manifestly preserved by the functors
    \[
    f^\ast:\ShC(\orb{M}) \leftrightarrows \ShC(\orb{N}):f_\ast,
    \]
    hence the natural isomorphism
    \[
        \Hom_{\ShC(\orb{M})}(f^\ast \sh{F},\sh{F}') \simeq
        \Hom_{\ShC(\orb{N})}(\sh{F},f_\ast \sh{F}')
    \]
    downstairs restricts to a natural isomorphism upstairs, proving the statement.
\end{proof}

\printbibliography

\end{document}